\newcommand*\circled[1]{\tikz[baseline=(char.base)]{
            \node[shape=circle,draw,inner sep=0.5pt] (char) {#1};}}
\newcommand{\beq}{\begin{equation}}
\newcommand{\eeq}{\end{equation}}
\newcommand{\beqs}{\begin{equation*}}
\newcommand{\eeqs}{\end{equation*}}
\newcommand{\beal}{\begin{align}}
\newcommand{\eeal}{\end{align}}
\newcommand{\beals}{\begin{align*}}
\newcommand{\eeals}{\end{align*}}
\newcommand{\ben}{\begin{eqnarray}}
\newcommand{\een}{\end{eqnarray}}
\newcommand{\beno}{\begin{eqnarray*}}
\newcommand{\eeno}{\end{eqnarray*}}
\renewcommand{\div}{{\rm div\,}}
\newcommand{\Id}{{\rm Id}\,}
\newcommand{\Supp}{{\rm Supp}\,}
\newcommand{\Rmnum}[1]{\uppercase\expandafter{\romannumeral #1} }
 \numberwithin{equation}{section}
\newtheorem{thm}{Theorem}[section]
\newtheorem{lem}[thm]{Lemma}
\newtheorem{prop}[thm]{Proposition}
\newtheorem{rmk}[thm]{Remark}
\newtheorem{cor}[thm]{Corollary}
\newtheorem{definition}{Definition}
\def\curl{\mathop{\rm curl}\nolimits}
\def\il{|\!|\!|}
\def \d {\mathrm {d}}
\def\cA{{\mathcal A}}
\def\cB{{\mathcal B}}
\def\cC{{\mathcal C}}
\def\cE{{\mathcal E}}
\def\cF{{\mathcal F}}
\def\cG{{\mathcal G}}
\def\cH{{\mathcal H}}
\def\cI{{\mathcal I}}
\def\cK{{\mathcal K}}
\def\cL{{\mathcal L}}
\def\cM{{\mathcal M}}
\def\cN{{\mathcal N}}
\def\cO{{\mathcal O}}
\def\cQ{{\mathcal Q}}
\def\cR{{\mathcal R}}
\def\cS{{\mathcal S}}
\def\cT{{\mathcal T}}
\def\cU{{\mathcal U}}
\def\cW{{\mathcal W}}
\def\cZ{{\mathcal Z}}
\let\f=\frac
\def \p {\partial}
\def\mR {\mathbb{R}}
\def\ep{\varepsilon}
\def \pt {\partial_{t}}
\def \vr {\varrho}
\def\si {\sigma}
\def\na{\nabla}
\def\izt{\int_0^t}
\def \tta {\tilde{\theta}}
\def \bp {\mathbb{P}}
\def \bn {\textbf{n}}
\def  \bN {\textbf{N}}
\def \bq{\mathbb{Q}}
\def\uhco{L_t^2\underline{H}_{co}}
\def \hcob{L_t^2\tilde{H}}
\def \hco {L_t^2H_{co}}
\def \infco{L_t^{\infty}H_{co}}
\def \uinfco{L_t^{\infty}\underline{H}_{co}}
\def \infcok{L_t^{\infty}H_{co,\sqrt{\kappa}}}
\def \izt{\int_0^t}
\def \iomega{\int_{\Omega}}
\def\izto{\int_0^t\int_{\Omega}}
\def \kpa {\kappa}
\def\lat{\Lambda\big(\f{1}{c_0},\cA_{m,t}\big)}
\def \lab {\Lambda\big(\f{1}{c_0},}
\def \lae{\Lambda\big(\f{1}{c_0},\cN_{m,t}\big)}
\def \hcoch {L_t^2\cH}
\def \infcoch {L_t^{\infty}\cH}
\def \omer {\omega_{r_0}}
\def \tsigma {\tilde{\sigma}}
\def\tlu {\tilde{u}}
\def \tz {\tilde{z}}
\def \tt {\tilde{t}}
\def \tb {\tilde{b}}
\title{Uniform regularity in the low Mach number and inviscid limits for the full Navier-Stokes system in domains with boundaries }
\author{ Changzhen Sun}
\address{
Institut de Mathématiques de Toulouse – UMR 5219, Université de Toulouse; CNRS, Université Paul
Sabatier, 118 route de Narbonne, 31062 Toulouse Cedex 9, France}
\email{changzhen.sun@math.univ-toulouse.fr}
\begin{document}
\maketitle

\begin{abstract}
In the present work, motivated by the studies on the low Mach number limit problem, we establish uniform regularity estimates with respect to the Mach number  for the non-isentropic compressible Navier-Stokes system  in smooth domains with  Navier-slip boundary conditions, in the general case of ill-prepared initial data. The thermal conduction is taken into account and the large variation of temperature is allowed.
Moreover, the obtained regularity estimates are also uniform in the Reynolds number $\text{Re}\in[1,+\infty),$  Péclet  number $\text{Pe}\in [1,+\infty),$ provided 
$$\big|\f{1}{\text{Re}}-\f{\iota_0}{\text{Pe}}\big|\lesssim \f{1}{\text{Pe}^{\f{1}{2}}}\f{1}{\text{Re}},$$ 
where $\iota_0$ is a fixed constant independent of Mach number, Reynolds number and Péclet number. The large temperature variation as well as the interactions of two kinds of boundary layers are the main obstacles of the proof.


\end{abstract}
\quad \textbf{Keywords:} uniform regularity, low Mach number limit, fast oscillation, boundary layers
\tableofcontents
\section{Introduction}
 In this manuscript, we consider the following scaled non-isentropic compressible Navier-Stokes system: 
\beq \label{NCNS-S}
 \left\{
\begin{array}{l}
 \displaystyle\pt \rho^{\varepsilon} +\div( \rho^{\varepsilon} u^\varepsilon)=0,\\
 \displaystyle\pt ( \rho^{\varepsilon} u^{\varepsilon})+\div(\rho^{\varepsilon}u^{\varepsilon}\otimes u^{\varepsilon} )+
\f{\nabla P^{\ep}}{\ep^2}-\mu \,\div\mathcal{L}u^{\varepsilon}=0,  \qquad \text{$(t,x)\in \mathbb{R}_{+}\times \Omega $} \\[5pt]
 \displaystyle \pt(\rho^{\ep} e^{\ep})+\div (\rho^{\ep} u^{\ep} e^{\ep})+P^{\ep}\div u^{\ep}=\kappa \Delta \cT^{\ep}+\mu\ep^2\cL u^{\ep}\cdot \mathbb{S}u^{\ep},
\end{array}
\right.\\
\eeq
where $\Omega\subset \mathbb{R}^3$
is a smooth domain in $\mathbb{R}^3$, $\rho^{\ep}(t,x),$  $u^{\ep}(t,x)$ and $\mathcal{T}^{\ep}$ are respectively the density, the velocity, the temperature
of the fluid.  Representing the  pressure and the total energy
respectively, $P^{\ep}, e^{\ep}$ are given functions of the density and the temperature. We assume that the fluid is  polytropic, 
that is, 
\beqs 
P^{\ep}=R\rho^{\ep}\mathcal{T}^{\ep}, \qquad e^{\ep}=C_v\mathcal{T}^{\ep},
\eeqs
where the two fixed positive constants $R, C_v$ denote respectively the generic gas constant and the heat at constant volume. Moreover,
the viscous stress tensor takes the form:
$$\mathcal{L}u^{\varepsilon}=2\lambda_1\,\mathbb{S}u^{\varepsilon}+\lambda_2 \,\div u^{\varepsilon} \text{Id},\quad \mathbb{S} u^{\varepsilon}=\frac{1}{2}(\nabla u^{\varepsilon}+\nabla^{t}u^{\varepsilon}),$$ 
where $\lambda_1,\lambda_2$ are viscosity parameters that are assumed to be constant and to satisfy the physical condition: 
$$\mu>0, \quad 2\mu+3\lambda>0.$$ Defined as the ratio of characteristic fluid velocity to the sound speed,  the  Mach number $\ep$ is a dimensionless parameter that meassures the compressibility of the fluid.
The parameters  $\mu, \kappa \in (0,1]$ are respectively the inverse of the Reynold number and Péclet number that characterize the effects of hydrodynamical dissipation and thermal dissipation. In the current work, they are assumed to be small: 
$$\ep\in (0,1],\quad  \mu \in (0,1], \quad \kappa\in (0,1].$$

As we are considering the system  in a domain with boundaries, 
we shall supplement the system \eqref{NCNS-S}
with the Navier-slip boundary condition on the velocity and the Neumann boundary condition on the temperature
\begin{equation}\label{bdyconditions}
u^{\ep}\cdot \bn=0, \quad
 \Pi(\mathbb{S} u^{\ep} \bn)+a\Pi u^{\ep}=0,\quad \p_{\bn}\cT^{\ep}=0 \quad 
  \text{on }  {\partial{\Omega}}
\end{equation}
where $\bn$ is the unit outward normal vector and  $a$ is a constant related to the slip length
(our analysis  can be easily extended to  the case where $a$ is a smooth function).  We use the notation $\Pi f$ for  the tangential part of a  vector $f,$
$\Pi f^{\ep}=f^{\ep}-(f^{\ep}\cdot \bn)\cdot \bn.$ Let us remark that Navier-slip boundary conditions
  can be expressed as a non-homogeneous Dirichlet condition on $\curl u^{\ep} \times \bn,$ 
  \beq\label{bd-curlun}
\curl u^{\ep}\times \bn
=
2\Pi(-a u^{\ep}+D \bn\cdot u^{\ep} )\quad \text{on }  {\partial{\Omega}}.
\eeq
The special  case $\curl u^{\ep}\times \bn=0$ corresponds to the choice $a = D \bn.$ 

  Taking the smallness of Mach number into account, the scaled system \eqref{NCNS-S} can be derived from the original non-isentropic compressible Navier-Stokes system by introducing suitable change of variable. As a matter of fact, one finds \eqref{NCNS-S} by performing the following scaling:
   $$\rho(t,x)=\rho^{\ep}(\ep t,x),\, u(t,x)=\ep u^{\ep}(\ep t,x),\, \cT=\cT^{\ep}(\ep t,x),\,\tilde{\mu}=\ep \mu,\,\tilde{\kappa}=\ep\kappa,$$
   where $(\rho, u, \cT)$ solve the following system:
   \beq \label{NCNS-O}
 \left\{
\begin{array}{l}
 \displaystyle\pt \rho +\div( \rho u)=0,\\
 \displaystyle\pt ( \rho u)+\div(\rho u\otimes u)-\tilde{\mu} \,\div\mathcal{L}u+
\nabla P=0,  \qquad \text{$(t,x)\in \mathbb{R}_{+}\times \Omega $}, \\
 \displaystyle \pt(\rho e)+\div (\rho u e)+P \div u=\tilde{\kappa} \Delta \cT+\tilde{\mu} \cL u\cdot \mathbb{S}u, 
\end{array}
\right.\\
\eeq
with
\beqs 
P =R\rho\mathcal{T}, \qquad e=C_v\mathcal{T}.
\eeqs

 We are interested in the limit of the solution to \eqref{NCNS-S} as $\ep$ tends to $0.$ Since the pressure depends both on the density and the temperature, 
 it is not very clear to see what should be the limit system as $\ep\rightarrow 0.$ 
 However, as noted in \cite{MR2211706}, it is more convenient to review the system \eqref{NCNS-S} as the equations of the pressure $P^{\ep},$ the velocity $u^{\ep}$ and the temperature $\cT^{\ep}$
 and thus to consider the following equivalent system: 
 \beq \label{NCNS-S1}
 \left\{
\begin{array}{l}
 \displaystyle(\pt
 +u^\varepsilon\cdot \nabla) P^{\ep}+\gamma P^{\ep}\div u^{\ep}-(\gamma-1)\kappa\Delta\cT^{\ep}=(\gamma-1)\mu\ep^2\cL u^{\ep}\cdot \mathbb{S}u^{\ep},\\[5pt]
 \displaystyle \rho^{\varepsilon}(\pt+  u^{\varepsilon}\cdot\nabla) u^{\ep}+\f{\nabla P^{\ep}}{\ep^2}-\mu \,\div\mathcal{L}u^{\varepsilon}=0,  \qquad \text{$(t,x)\in \mathbb{R}_{+}\times \Omega $}, \\[5pt]
 \displaystyle \rho^{\ep}C_v(\pt+u^{\ep} \cdot\nabla)\cT^{\ep}+P^{\ep}\div u^{\ep}=\kappa \Delta \cT^{\ep}+\mu\ep^2\cL u^{\ep}\cdot \mathbb{S}u^{\ep},
\end{array}
\right.\\
\eeq
 where $\gamma=1+C_v$ denotes the ratio of specific heat. We  can thus deduce formally that the limit system takes the form: 
 \beq\label{NINS}
 \left\{
\begin{array}{l}
 \displaystyle \gamma \overline{P}\,\div u^0=(\gamma-1)\kappa\Delta\cT^0,\\
  \displaystyle \rho^0(\pt+u^0\cdot\nabla)u^0+\nabla\pi^0-\mu\div\cL u^0=0,\\
   \displaystyle  C_v \gamma \rho^0(\pt+u^0\cdot\nabla)\cT^0=\kappa\Delta\cT^0,
 \end{array}
\right.\\
 \eeq
 where $\rho^0=\overline{P}/R\cT^0.$
Naturally, if the temperature $\cT^{\ep}$ is assumed to be a constant $\overline{\cT},$ then the non-isentropic system \eqref{NCNS-S1} reduces to the isentropic system and the limit system \eqref{NINS} reduces to the incompressible  Navier-Stokes system. This limit process is therefore frequently referred to as the incompressible limit when the isentropic fluids are considered.

 The current work is motivated by the verification of this limit process for strong solutions to \eqref{NCNS-S1}. 
  Partially due to the appearance of the
   singular term $\f{\na P^{\ep}}{\ep^2}$ 
   in the system \eqref{NCNS-S1}, this limit process is considered to be a singular limit. Since we are dealing with strong solutions, the first (and highly non-trivial) part is to establish some uniform estimates independent of the Mach number $\ep,$ so that the solutions to \eqref{NCNS-S1} for any $\ep\in (0,1]$ exist on a time interval independent of $\ep.$ Once this has been done, the convergence of solutions of \eqref{NCNS-S1} to 
  that of \eqref{NINS} will be shown when $\Omega$ is an exterior domain outside a compact smooth domain in $\mR^3.$ 

   Due to its great physical interests, 
   the low Mach number limit problem has been investigated widely in several different contexts, depending on the type of solutions (strong solutions or weak solutions),  the generality of the system (isentropic or non-isentropic), the properties of the domain  (whole space, torus or domain with boundaries), as well as the type of the initial data considered. 
   
   We will first review some works concerning the incompressible limit for \textit{isentropic} compressible system.
   The initial works, credited to Ebin \cite{MR431261}, Klainerman-Majda \cite{MR615627,MR668409}, focus on the study of the local strong solution of isentropic fluids occupied in a domain without boundaries ($\mR^3$ or $\mathbb{T}^3$), with well-prepared initial data ($(\div u_0^{\ep},\nabla P_0^{\ep}/\ep)=\cO(\ep)$). Later, the same problem is considered by Ukai \cite{MR849223} in the whole space and Gallagher \cite{MR1794519} on 
   the torus for ill-prepared initial data ($(\div u_0^{\ep},\nabla P_0^{\ep}/\ep)=\cO(1)$).
Recently, the author and his collaborators investigate in \cite{MR4403626} the incompressible limit of the strong solution in domain with boundaries for ill-prepared data, by showing the uniform regularity estimates.
  The incompressbile limit of weak solutions for the viscous system was first investigated by Lions and Masmoudi. In \cite{MR1628173,MR1710123}, they prove that the  weak solutions of the isentropic compressible Navier-Stokes system converge to the solution of the incompressible system.  
  In general, for ill-prepared data, one can only obtain weak convergence in time, nevertheless, Desjardins and Grenier  establish in \cite{MR1702718} the local strong convergence by using the dispersion of acoustic wave in the whole space. There are also many other related works, one can see for instance \cite{MR1308856,MR1917042,MR3803773,MR1886005,MR3563240,MR1697038,MR2575476,MR2338352,MR918838,MR3916820,MR4134150,MR3240080}.
  For more exhaustive information, one can refer to the well-written survey papers by 
 Alazard \cite{MR2425022}, Danchin \cite{MR2157145}, Feireisl \cite{MR3916821}, Gallagher \cite{MR2167201},
 Jiang-Masmoudi \cite{MR3916820},
 Schochet \cite{MR3929616}.

  Let us focus on the low Mach number limit of the strong solution to non-isentropic system, which is more related to the interest of the current work.  For the non-isentropic 
  {inviscid} system (that is $\mu=\kappa=0$ in system \eqref{NCNS-S}),  Metivier and Schochet \cite{MR1834114} study the low Mach number limit of strong solution in the whole space $\mR^3$ for ill-prepared data. More precisely, they establish uniform estimates 
  and justify the 
  the convergence of the solution of the non-isentopic Euler system to the solution of the  incompressible inhomogeneous Euler system. 
  In a bounded domain, the local strong solution is established  by Schochet \cite{MR834481} and the low Mach number limit is shown for the well-prepared data. As for the ill-prepared data, Alazard \cite{MR2106119} establishes the uniform estimates in domains with boundaries. Moreover, for an exterior domain, he proves the similar convergence result as in  \cite{MR1834114}. 
  With regards to the  non-isentropic Navier-Stokes system \eqref{NCNS-S1}, there are several works depending on the property of the domain and the type of the initial data. In the whole space, 
  Alazard proves \cite{MR2211706} the uniform  estimates and the low Mach number limit for \textit{ill-prepared} data. It is worthy to mention that 
  the estimates obtained in \cite{MR2211706} are uniform not only in Mach number $\ep$, but also in the Reynolds number $\text{Re}=\mu^{-1}$ and the Péclet number $\text{Pe}=\kappa^{-1}$. We also refer to \cite{MR3197662} for the results on the non-isentopic MHD system. 
  However, for the non-isentropic viscous system in a domain $\Omega\subset \mR^3$ with boundaries, there are only a few works and all of them deal with \textit{well-prepared} data. 
  In  \cite{MR3531754}, Jiang and Ou obtain uniform estimates 
 for \eqref{NCNS-S} with Dirichlet boundary condition imposed on the velocity ($u^{\ep}|_{\p\Omega}=0$), for {well-prepared} data in the case of vanishing thermal conductivity ($\kappa=0$). The authors in \cite{MR3274763,JU2022131} consider the 
 low Mach number limit for \eqref{NCNS-S1} in the case of 
 $\kappa>0$ with the \textit{vorticity-slip} boundary condition $\curl u^{\ep}\times \bn=0.$ 
 Under the well-prepared assumption 
 they can establish uniform estimates in the Mach number and prove the convergence of 
 \eqref{NCNS-S1} to the isentropic incompressible Navier-Stokes system. 
 Nevertheless, 
 in the general case of ill-prepared initial data and Navier-slip boundary conditions \eqref{bd-curlun},
 the low Mach number limit for non-isentropic viscous system \eqref{NCNS-S1}
 has not been addressed so far.
 

   
   On the other hand, as in the concrete physics, the dimensionless physical quantities are likely to be related, it is interesting to obtain uniform regularity estimates in all of these parameters. 
   Therefore, in this work, we aim to establish uniform estimates not only in the Mach number, but also in the Reynolds number $\text{Re}=\mu^{-1}$ and the Péclet number $\text{Pe}=\kpa^{-1}$ (here the parameter $\kpa$ appears since we consider non-isentropic system), which will be convenient to  study simultaneously the low Mach number and the inviscid limit for the \textit{strong solutions}.
   As will be explained carefully in Subsection 1.3, there are several difficulties in order to obtain such a result.
 %

The similar problem has been studied by Alazard \cite{MR2211706} in the whole space.
As noted in \cite{MR1834114}, the non-isentropic system 
is not uniformly linear stable due to the non-constant temperature, which prevents one from obtaining high regularity estimates by differentiating the equation.  Alazard \cite{MR2211706} can overcome this problem in the whole space by splitting the frequencies and 
by employing the para-differential calculus. 
Nevertheless, it is unclear how could 
this strategy be applied 
 in the  domain with boundaries. Moreover, in the presence of the boundaries, 
due to the non-vanishing  viscosity $\mu$ and thermal conductivity $\kpa,$ two different kinds of boundary layers, namely the viscous boundary layer (due to the non-zero viscosity) and the thermal boundary layer (due to the non-zero thermal conductivity) will show up and interact with each other. This phenomenon can make the analysis rather delicate and some suitable assumption between the thermal conductivity $\kpa$ and the viscosity $\mu$ has to be made.

Due to the appearance of the boundary layers,
 to get uniform high order estimates, we need to use a functional framework based on 
 conormal Sobolev spaces that minimize the use of normal derivatives close to the boundary
in the spirit of \cite{MR2885569,MR3590375, masmoudi2021incompressible, MR4403626}.
We remark that such kind of spaces has
been widely used to study initial boundary value problems for   parabolic
and hyperbolic equations, see for example \cite{MR658735}, \cite{MR233064,MR440182},  \cite{MR1070840,MR2151414,MR2130346}.
 \subsection{Conormal Sobolev spaces and notations}
To define the conormal Sobolev norms,  we take a finite set of generators of vector fields 
that are tangent to the boundary of $\Omega$: $Z_{j}(1\leq j\leq M)$. 
Due to the appearance of the fast oscillations, it is also necessary to involve the scaled  time  derivative
$Z_0=\varepsilon \partial_t.$ 

We set 
$$Z^I=Z_0^{\alpha_0}\cdots Z_{M}^{{\alpha_M}}, \quad I=(\alpha_0,\alpha_1,\cdots \alpha_M)\in \mathbb{N}^{M+1}.$$ Note that $Z^I$ contains not only spatial derivatives but also the scaled time  derivative $\ep\pt.$
 We introduce the following Sobolev conormal spaces: for $p=2$ or $+\infty,$
$$L_t^pH_{co}^{m}=\{f\in L^{p}\big([0,t],L^2(\Omega)\big),Z^{I}f\in L^{p}\big([0,t],L^2(\Omega)\big),|I|\leq m\},$$
equipped with the norm:
\beq\label{not1}
\|f\|_{L_t^pH_{co}^m}=\sum_{|I|\leq m}\|Z^I f\|_{L^{p}([0,t],L^2(\Omega))},
\eeq
where $|I|=\alpha_0+\cdots\alpha_M.$
Since the number of time derivatives and spatial conormal derivatives need sometimes  to be distinguished,   we shall also use  the notation:
\beq
\|f\|_{L_t^p\mathcal{H}^{j,l}}=\sum_{I=(k,\tilde{I}), k\leq j,|\tilde{I}|\leq l}\|Z^I f\|_{L^{p}([0,t],L^2(\Omega))}
\eeq
and to simplify, we will use  $L_t^p\mathcal{H}^{l}=L_t^p\mathcal{H}^{0,l}.$
We will also use sometimes the following  norm that preclude the highest time derivatives:
\beq\label{defuhco} 
\|f\|_{L_t^p\underline{H}_{co}^m}=\sum_{j+l\leq m, j\leq m-1}\|f\|_{L_t^p\mathcal{H}^{j,l}}
\eeq
Moreover, we introduce for convenience the  notation:
\begin{align*}
   \|f\|_{L_t^pH_{co,\nu}^m}=\nu\|f\|_{L_t^pH_{co}^{m}}+\|f\|_{L_t^pH_{co}^{m-1}}.
\end{align*}
For the space modeled on  $L^{\infty}$, we shall use the following notation: 
\beq\label{not3}
\interleave f\interleave_{m,\infty,t}
=\sum_{|I|\leq m}\|Z^I f\|_{L^{\infty}([0,t]\times\Omega)}.
\eeq
To measure pointwise regularity at a given time $t$ (in particular also with $t=0$), we shall use the semi-norms
 \beq\label{normfixt}
 \begin{aligned}
 \|f(t)\|_{H_{co}^m}=
 \sum_{|I|\leq m}&\|(Z^I f)(t)\|_{L^2(\Omega)}, \quad \|f(t)\|_{\mathcal{H}^{j,l}}=\sum_{I=(k,\tilde{I}),k\leq j,|\tilde{I}|\leq l}\|Z^I f(t)\|_{L^2(\Omega)},\\ & \|f(t)\|_{m,\infty}=\sum_{|I|\leq m}\|(Z^I f)(t)\|_{L^{\infty}(\Omega)}.
 \end{aligned}
 \end{equation}
Finally, to measure regularity  along the boundary, we use 
\beq\label{bdynorm}
  |f|_{L_t^p\tilde{H}^s(\partial\Omega)}=\sum_{j=0}^{[s]}|(\varepsilon\partial_t)^j f|_{L^p([0, t], {H}^{s-j}(\partial\Omega))}. 
  \eeq

Let us recall, how the vector fields $Z_{j}$, $1 \leq j \leq M$ can be defined. We consider  
  $\Omega\in \mathbb{R}^3$   a smooth 
  domain whose  
boundaries can be covered by a finite number of charts. More precisely, there exist sets $\Omega_i, \, i=0,\cdots N$ such that 
 \begin{equation}\label{covering1}
     \Omega\subset\Omega_0\cup_{i=1}^{N} \Omega_i,\quad  \Omega_0\Subset \Omega, \quad \Omega_i\cap \partial \Omega\neq \varnothing, 
 \end{equation}
and  $\Omega_{i}\cap \p\Omega$ is the graph of a smooth  function $z=\varphi_i(x_1,x_2)$. Note that any smooth bounded domain or exterior domain outside a smooth compact set in $\mR^3$ admit such property.

In $\Omega_{0}$, we just take the vector fields $\partial_{k}$, $k=1, \, 2, \, 3$.
 To define  appropriate  vector  fields near the boundary, we use the local coordinates  in each  $\Omega_{i}:$ 
 \begin{equation}\label{local coordinates}
 \begin{aligned}
 \Phi_{i}:\quad &
 (- \delta_{i}, \delta _{i})^2 \times (0, \epsilon_{i})
 \rightarrow \Omega_{i}\cap\Omega\\
 &\qquad\quad(y,z)^{t}\rightarrow \Phi_{i}(y,z)=(y, \varphi_i(y)+z)^{t}
 \end{aligned}
 \end{equation}
 and we define the vector fields (up to some smooth cut-off functions compactly supported in $\Omega_{i}$) as :
 \begin{equation}\label{spatialvectorinlocal}
Z_{k}^{i}=\partial_{y^k}=\partial_k+\partial_k\varphi_i\partial_3, \quad k=1,2\qquad Z_{3}^{i}=\phi(z)
(\partial_1\varphi_1\partial_1+\partial_2\varphi_1\partial_2-\partial_3),
\end{equation}
  where $\phi(z)=\frac{z}{1+z},$ and  $\partial_k, k=1,2,3$ are   the derivations with respect to the original  coordinates of $\mathbb{R}^3.$ 
  We remark that if $\Omega=\mR_{+}^3,$ the conormal vector fields can be defined globally due to the flat boundary:
  \beqs
  Z_1=\p_{1}, \, Z_2=\p_{2}, \, Z_3=\f{z}{1+z}\p_z.
  \eeqs
  
  We shall denote by $\bn$ the unit outward normal to the the boundary.
  In each  $\Omega_{i}$, we can extend   it to $\Omega_{i}$   by setting
  $$\bn(\Phi_i(y,z))=\frac{1}{|\textbf{N}|}\textbf{N}, \quad \textbf{N}(\Phi_i(y,z))=(\partial_1\varphi_i(y),\partial_2\varphi_i(y),-1)^{t}.$$
 In the same way, the projection on vector fields tangent to the boundary, 
   $$\Pi=\text{Id}-\bn\otimes \bn$$
   can be extended in $\Omega_{i}$ by using the extension of $\bn$.

Let us observe that by identity
\beqs
\Pi(\p_{\bn}u^{\ep})=\Pi((\nabla u^{\ep})\bn)=2\Pi(\mathbb{S}u^{\ep} )-\Pi ((D u^{\ep})\bn)
\eeqs
with $[(\nabla u^{\ep}) \bn]_i=\sum_{j=1}^3 \bn_k \p_k u_i^{\ep}, [(D u^{\ep}) \bn]_i=\sum_{k=1}^3  \p_i u_k^{\ep} \bn_k,$ 
the boundary conditions
\eqref{bdyconditions}
 can be reformulated as:
 \begin{equation}\label{bdryconditionofu}
    u^{\ep}\cdot \bn=0, \quad \Pi (\partial_{\bn} u^{\ep})=\Pi[ -2a u^{\ep}+(D \bn)u^{\ep}], \quad \p_{\bn}\theta^{\ep}=0 \text{ on }  {\partial\Omega},
 \end{equation}
 where $[(D\bn) u^{\ep}]_i=\sum_{k=1}^3  \p_i \bn_k u_k^{\ep}.$ 
\subsection{Main results}

Since the pressure $P^{\epsilon},$ the temperature $\cT^{\ep}$ are strictly positive, we denote 
$$\sigma^{\ep}=\colon\f{\ln P^{\ep}-\ln \overline{P}}{\ep},\quad \theta^{\ep}=\colon \ln \cT^{\ep}-\ln \overline{\cT} $$
or equivalently,
$$P^{\ep}= \overline{P}e^{\ep\sigma^{\ep}}, \quad \cT^{\ep}=\overline{\cT} e^{\theta^{\ep}}$$
where $\overline{P},\overline{\cT}$ are two reference constants. 
The system \eqref{NCNS-S1} can then be written in the following way:
\beq\label{NCNS-S2}
 \left\{
\begin{array}{l}
 \displaystyle\f{1}{\gamma}(\pt+u^\varepsilon\cdot \nabla) \sigma^{\ep}+\f{\div u^{\ep}}{\ep}-\f{\gamma-1}{\gamma\ep}\kappa \Gamma(\ep\sigma^{\ep})\div(\beta(\theta^{\ep})\nabla\theta^{\ep})=\f{\gamma-1}{\gamma}\mathfrak{N}^{\ep} 
 ,\\[5pt]
 \displaystyle \f{1}{R\beta(\theta^{\ep})}(\pt+  u^{\varepsilon}\cdot\nabla) u^{\ep}+\f{\nabla \sigma^{\ep}}{\ep}-\mu\Gamma(\ep\sigma^{\ep}) \,\div\mathcal{L}u^{\varepsilon}=0,  \qquad \text{$(t,x)\in \mathbb{R}_{+}\times \Omega $}, \\[5pt]
 \displaystyle \f{C_v}{R}(\pt+u^{\ep} \cdot\nabla)\theta^{\ep}+\div u^{\ep}-\kpa\Gamma(\ep\sigma^{\ep})\div(\beta(\theta^{\ep})\nabla\theta^{\ep})=\ep\mathfrak{N}^{\ep},
\end{array}
\right.\\
\eeq
where we denote:
\beq\label{defgamma-beta}
\Gamma(\ep\sigma^{\ep})=\f{1}{P^{\ep}}(\ep\sigma^{\ep}),\quad \beta(\theta^{\ep})=\cT^{\ep}(\theta^{\ep}), \quad \mathfrak{N}^{\ep}=\mu\ep\Gamma(\ep\sigma^{\ep})\cL u^{\ep}\cdot \mathbb{S}u^{\ep}.
\eeq
From now on, we will work on the above system. 

 In the study of the strong solution of the above hyperbolic-parabolic system with parameters $(\ep,\mu,\kpa)$ small, there shall be two kinds of boundary layers-the viscous boundary layer with width $(\ep\mu)^{\f{1}{2}}$ (incorporated with the fast oscillation effects)  due to the non-zero viscous term in the equation of the velocity $u^{\ep},$
 the thermal boundary layer with width $\kpa^{\f{1}{2}}$ due to the non-vanishing thermal conductivity in the equation of $\theta^{\ep}.$  The interactions of these two kinds of boundary layers can be quite involved, which may indeed prevent one from establishing uniform in $(\ep,\mu,\kpa)$ estimates for \eqref{NCNS-S2}. 
 However, we are able to construct such uniform estimates provided the following assumption on $\mu$ and $\kpa$ holds:
 \begin{align}\label{assumption-mukpa}
      \big|\mu-\f{\kpa}{C_v\gamma \lambda_1}\big|\lesssim \mu\kpa^{\f{1}{2}}.
 \end{align}
 We will explain why this assumption needs to be imposed in the next subsection.
 

In order to establish uniform high regularity estimates, 
we shall use the following quantity:
\beq\label{defcalNmt}
\mathcal{N}^{\mu,\kpa}_{m,T}(\si^{\ep},u^{\ep}, \theta^{\ep})=\mathcal{E}_{m, T}^{\mu,\kpa}(\si^{\ep},u^{\ep}, \theta^{\ep})+\mathcal{A}_{m,T}^{\mu,\kpa}(\si^{\ep},u^{\ep}, \theta^{\ep})
\eeq
where
 \begin{align}\label{defcEmt}
 \mathcal{E}^{\mu,\kpa}_{m,T}(\si^{\ep},u^{\ep},  \theta^{\ep})=
 \mathcal{E}^{\mu,\kpa}_{m,T}(\si^{\ep},u^{\ep})+\mathcal{E}_{m,T}^{\kpa}(\theta^{\ep}),\,\,\, \mathcal{A}^{\mu,\kpa}_{m,T}(\si^{\ep},u^{\ep},  \theta^{\ep})=\mathcal{A}_{m,T}^{\mu, \kpa}(\si^{\ep},u^{\ep})
  + \mathcal{A}^{\kpa}_{m,T}(\theta^{\ep}), 
 \end{align}
 with
 \beq\label{defcEmsigmau}
 \begin{aligned}
 \mathcal{E}^{\mu,\kpa}_{m,T}(\si^{\ep},u^{\ep})
& =\colon \ep\mu^{\f{1}{2}}\big(\|(\sigma^{\ep}, u^{\ep})\|_{L_T^{\infty}\underline{H}_{co}^{m}}+\|\na(\sigma^{\ep}, u^{\ep})\|_{L_T^{\infty}H_{co}^{m-1}}+\kpa^{\f{1}{2}}\|\na u^{\ep}\|_{L_T^{2}\underline{H}_{co}^{m}}\big)\\
&\quad+ 
\ep^{\f{1}{2}}\mu\|\na^2 u^{\ep} \|_{L_T^{2}H_{co,\sqrt{\ep}}^{m-1}}+\ep^{\f{2}{3}}\mu^{\f{1}{2}}\|\na^2 u^{\ep} \|_{L_T^{2}H_{co}^{m-2}}+\ep\mu\|\na^2 (\sigma^{\ep}, u^{\ep})\|_{L_T^{\infty}H_{co}^{m-2}} \\
 &\quad  +\|(\sigma^{\ep}, u^{\ep})\|_{L_T^{\infty}H_{co}^{m-1}}+\|\na(\si^{\ep},u^{\ep})\|_{L_T^{\infty}H_{co}^{m-2}}+\mu^{\f{1}{2}}\|\na(\si^{\ep},u^{\ep})\|_{L_T^{2}H_{co}^{m-1}}\\
 &\qquad\qquad\qquad+ \kpa^{\f{1}{2}}\|\na(\sigma^{\ep}, \div u^{\ep})\|_{L_T^{2}H_{co}^{m-2}} +\|\na^2\sigma^{\ep}\|_{L_T^{\infty}H_{co}^{m-3}},
 \end{aligned}
 \eeq
 \beq\label{defcEmtheta}
\begin{aligned}
   \cE_{m,T}^{\kpa}(\theta^{\ep})&= \kpa^{\f{1}{2}}\|\theta^{\ep}\|_{L_T^{\infty}\underline{H}_{co}^{m}}+\|\theta^{\ep}\|_{L_T^{\infty}{H}_{co}^{m-1}}
   +\|\nabla\theta^{\ep}\|_{L_T^{\infty}H_{co,\sqrt{\kpa}}^{m-1}}\\
   &\quad + 
   \kpa^{\f{1}{2}}\|\na^2\theta^{\ep}\|_{L_T^{\infty}H_{co,\sqrt{\kpa}}^{m-2}\cap L_T^2H_{co,\sqrt{\kpa}}^{m-1}}+\kpa \|\na^3\theta^{\ep}\|_{L_T^{2}H_{co,\sqrt{\kappa}}^{m-2}},
\end{aligned}
\eeq
\beq\label{defcAmt}
\begin{aligned}
  &\mathcal{A}_{m,T}^{\mu,\kpa}(\si^{\ep},u^{\ep})
  =\colon \il (\si^{\ep},u^{\ep})\il_{m-3,\infty,T}+ \il (\na\si^{\ep}, (\ep\mu)^{\f{1}{2}}\na u^{\ep})\il_{m-4,\infty,T}\\
 &\qquad\qquad\qquad\qquad\qquad\qquad+ \il (\nabla u^{\ep}, \ep\mu^{\f{1}{2}}\na^2 (\sigma^{\ep}, u^{\ep})) 
 \il_{m-5,\infty,T},\\
   &\mathcal{A}_{m,T}^{\kpa}(\theta^{\ep})=\colon \il (\theta^{\ep}, \kpa \na\theta^{\ep})\il_{m-3,\infty,T}
 +\kpa^{\f{1}{2}}\il \na\theta^{\ep}\il_{m-4,\infty,T}\\
 &\qquad\qquad\qquad\quad +\il
  (\nabla \theta^{\ep}, \kpa\na^2\theta^{\ep})\il_{m-5,\infty,T}
 + \kpa^{\f{1}{2}}\il\na^2\theta^{\ep}\il_{m-6,\infty,T}.
 \end{aligned}
 \eeq
 Note that the norms involved in the above definitions are defined in \eqref{not1}-\eqref{not3}.
\begin{definition}[Compatibility condition]
We say that $(\sigma_0^{\ep},u_0^{\ep},\theta_0^{\ep})$ satisfy the compatibility conditions up to order $m$ if for any $j=0,1\cdots m-1,$
\beqs
(\varepsilon\partial_t)^{j}(u^{\ep}\cdot\bn)\big|_{t=0}=(\ep\pt)^j (\p_{\bn}\theta^{\ep})|_{t=0}
=0, \quad \Pi \big[\mathbb{S}\big((\varepsilon\partial_t)^{j}u^{\ep}|_{t=0}\big)\bn\big]=-a\Pi \big[(\varepsilon\partial_t)^{j}u^{\ep}|_{t=0}\big] \,\text{ on } \, \partial\Omega.\, 
\eeqs
\end{definition}
Note that the restriction of the time derivatives of the solution at the  initial time can be expressed  inductively by using the equations. For example, we have
$$(\varepsilon\partial_t u^{\varepsilon})(0)=R\beta(\theta^{\ep}_0)
\big(\ep\mu\Gamma(\ep\sigma_0^{\ep}) \div \mathcal{L}u_{0}^{\ep}-\nabla\sigma_0^{\ep}\big)-\varepsilon u_0^{\ep}\cdot\nabla u_0^{\ep}.$$ 
We thus define the admissible space for the initial data as 
\begin{align*}
Y_{m}=\bigg\{(\sigma^{\ep}_0,u_0^{\ep},\theta^{\ep}_0)\big|
  Y_{m, \mu,\kpa}^{\ep}(0)<+\infty,
 (\sigma_{0}^{\ep},u_0^{\ep},\theta_0^{\ep}) \text{ satisfy  the compatibility  conditions up to order $m$}
\bigg\}
\end{align*}
where
\begin{equation}\label{initialnorm}
\begin{aligned}
 Y_{m, \mu,\kpa}^{\ep}(0)
 &=: \|\mu^{\f{1}{2}}(\ep\sigma^{\ep}, \ep u^{\ep}), \kpa^{\f{1}{2}}\theta^{\ep})(0)\|_{\underline{H}_{co}^m}
 + \|\mu^{\f{1}{2}}\na(\ep\sigma^{\ep}, \ep u^{\ep}), \kpa^{\f{1}{2}}\na\theta^{\ep})(0)\|_{{H}_{co}^{m-1}} \\
&\quad +\|(\si^{\ep},u^{\ep},\theta^{\ep})(0)\|_{H^{m-1}_{co}}+\|\big(\nabla (\si^{\ep},u^{\ep}, \theta^{\ep}), \pt\theta^{\ep}\big)(0)\|_{H^{m-2}_{co}}+\|\ep\mu\na^2\sigma^{\ep}(0)\|_{H_{co}^{m-2}}
 \\&\qquad 
 +\|\big(\na(\theta^{\ep}, u^{\ep}), \kpa^{\f{1}{2}}\na^2 \theta^{\ep}, \ep\mu^{\f{1}{2}}\na^2 (\sigma^{\ep}, u^{\ep})\big)(0)\|_{m-5,\infty}.
 \end{aligned}
\end{equation}
We refer to \eqref{normfixt} for the definition of the above norms. Note that by using inductively the equations to express
the time derivatives, $Y_{m, \mu,\kpa}^{\ep}(0)$ can indeed be expressed in terms of the initial data. 

The main result of this work is the following uniform estimates:

\begin{thm}[Uniform estimates 
]\label{thm1}
Given an integer $m\geq 7$ and a $C^{m+2}$ smooth domain $\Omega$ which admits the property \eqref{covering1}.
Define the set  \begin{align}\label{defsetA}
     A=\big\{ (\mu,\kpa)\in (0,1]^2\big| \,\mu,\kpa \textnormal{ satisfy the relation \eqref{assumption-mukpa}} \big\}.
 \end{align}
 Consider a family of  initial data  depending only on $\ep,$ such that  $(\sigma_0^{\ep},u_0^{\ep}, \theta_0^{\ep})\in Y_{m}, \, u_0^{\ep}|_{\p\Omega}=0$
and  
\beq\label{aption-initial}
\begin{aligned}
&\qquad\quad
\sup_{(\mu, \kpa) \in A,\ep\in(0,1]}Y_{m, \mu,\kpa}^{\ep}(0)<+\infty.
\end{aligned}
\eeq
There exist $\ep_0\in (0, 1]$ and $T_{0}>0,$ such that, for any $0< \ep\leq \ep_0$, any $(\mu,\kpa)\in A,$ the
system \eqref{NCNS-S2}, \eqref{bdryconditionofu} has a unique solution $(\si^{\ep},u^{\ep}, \theta^{\ep})$
which satisfies:
 \beq\label{uniformes}
 \sup_{(\mu, \kpa) \in A,\ep\in(0,1]}\mathcal{N}_{m,T_0}^{\mu,\kpa}(\si^{\ep},u^{\ep}, \theta^{\ep})< +\infty,
 \eeq
where $\mathcal{N}_{m,T_0}^{\mu,\kpa}(\si^{\ep}, u^{\ep}, \theta^{\ep})$ is defined in \eqref{defcalNmt}. 
\end{thm}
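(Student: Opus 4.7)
The proof will be a continuation argument driven by uniform a priori estimates. For each fixed $(\varepsilon,\mu,\kappa)$, standard parabolic-hyperbolic local theory yields a strong solution on a maximal interval $[0,T^\ast_{\varepsilon,\mu,\kappa})$. The goal is to show that
$$\mathcal{N}^{\mu,\kpa}_{m,t}(\si^{\ep},u^{\ep},\theta^{\ep})\leq \Lambda\big(\tfrac{1}{c_0},\,Y_{m,\mu,\kpa}^{\ep}(0)\big)+t\,\Lambda\big(\tfrac{1}{c_0},\,\mathcal{N}^{\mu,\kpa}_{m,t}\big),$$
uniformly in $(\varepsilon,\mu,\kappa)\in (0,1]\times A$, where $c_0$ is a lower bound for $\cT^{\ep}$ and $P^{\ep}$. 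A standard ansatz bounding $\mathcal{A}^{\mu,\kpa}_{m,t}$ a priori will let one treat the coefficients $\Gamma(\ep\si^{\ep})$ and $\beta(\theta^{\ep})$ as smooth perturbations of constants, and the closed inequality above will then yield a time $T_0$ independent of the parameters.

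The first block of estimates concerns conormal $L^2$ norms. Applying $Z^I$ for $|I|\leq m$ to the symmetrized system \eqref{NCNS-S2} and testing against $(Z^I\si^{\ep},Z^I u^{\ep},Z^I\theta^{\ep})$ with the natural weights $1/\gamma$, $1/(R\beta(\theta^{\ep}))$, and $C_v/R$, the singular acoustic contribution
$$\langle \nabla Z^I\si^{\ep}/\ep,\,Z^I u^{\ep}\rangle+\langle \div Z^I u^{\ep}/\ep,\,Z^I\si^{\ep}\rangle$$
integrates by parts to a boundary term that vanishes since $u^{\ep}\cdot\bn=0$. The commutators between $Z^I$ and $1/\ep$-operators are absorbed because $Z^I$ involves the \emph{scaled} time derivative $\ep\pt$; an extra $\ep$ is thus produced whenever one differentiates a coefficient in time, compensating the singular $1/\ep$. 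The weights $\ep\mu^{1/2}$, $\ep^{1/2}\mu$, $\kpa^{1/2}$, etc.\ that decorate $\mathcal{E}^{\mu,\kpa}_{m,T}$ are dictated by the boundary-layer scalings and must be tracked throughout this step.

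The second block is the recovery of normal derivatives and of the top-order $\nabla$ norms. Using the momentum equation I would express $\nabla\si^{\ep}/\ep$ in terms of $\pt u^{\ep}$, transport, and $\mu\,\div\mathcal{L}u^{\ep}$. For $\nabla u^{\ep}$, I would split into $\div u^{\ep}$ (controlled by the pressure equation, where the term $\kpa\,\div(\beta\nabla\theta^{\ep})$ already couples the two boundary layers) and $\curl u^{\ep}$ (which solves a transport--diffusion equation free of the singular term, with boundary data encoded by \eqref{bd-curlun}). For $\nabla\theta^{\ep}$, the parabolic structure together with the Neumann condition $\p_{\bn}\theta^{\ep}=0$ gives maximal-regularity-type bounds, with the weights $\kpa^{1/2}$ and $\kpa$ tracking the thermal layer thickness $\sqrt{\kpa}$. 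Pointwise bounds collected in $\mathcal{A}^{\mu,\kpa}_{m,T}$ will follow by anisotropic Sobolev embedding once one normal derivative is available.

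The main obstacle is the interaction of two boundary layers of different widths: the viscous layer in $u^{\ep}$ of thickness $\sqrt{\ep\mu}$ and the thermal layer in $\theta^{\ep}$ of thickness $\sqrt{\kpa}$. These are coupled through the term $\kpa\,\Gamma(\ep\si^{\ep})\div(\beta(\theta^{\ep})\nabla\theta^{\ep})$ in the pressure equation and through $\beta(\theta^{\ep})$ in the momentum equation, so that neither $\div u^{\ep}$ nor $\na\theta^{\ep}$ separately carries a single boundary-layer profile. To overcome this, I will introduce an \emph{effective unknown} that diagonalizes the leading boundary contribution; concretely, I would replace $u^{\ep}$ (or a suitable combination of $\si^{\ep}$ and $\theta^{\ep}$) by a corrected variable $\tilde u^{\ep}=u^{\ep}+\lambda \nabla\theta^{\ep}$ with $\lambda$ chosen so that the principal part of the normal-derivative equation for $\tilde u^{\ep}$ involves a single diffusion scale. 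The residual error terms are then proportional to $\mu-\kpa/(C_v\gamma\lambda_1)$, which by \eqref{assumption-mukpa} is of size $\mu\sqrt{\kpa}$ and can be absorbed by the available dissipation $\mu^{1/2}\|\na(\si^{\ep},u^{\ep})\|_{L^2H^{m-1}_{co}}$ and $\kpa^{1/2}\|\na^2\theta^{\ep}\|_{L^2 H^{m-1}_{co,\sqrt{\kpa}}}$. The constant $C_v\gamma\lambda_1$ is precisely what equalizes the linearized viscous and thermal dissipation rates at the boundary. Once this step is settled, the closed inequality follows by gathering all estimates and applying Gronwall on a small time scale, and the theorem is concluded by a continuity argument.
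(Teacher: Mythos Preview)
Your continuation framework is right, but two of the three key mechanisms you propose do not work, and the third misidentifies where assumption \eqref{assumption-mukpa} actually enters.

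First, the claim that commutators between $Z^I$ and the $1/\ep$-operators are absorbed because $Z^I$ involves the scaled time derivative is only true when $Z^I=(\ep\pt)^k$. For \emph{spatial} conormal fields $Z_j$, the commutators $[Z^I,\nabla]$ and $[Z^I,\div]$ do not carry any extra $\ep$, and since $\nabla\sigma^\ep/\ep$ and $\div u^\ep/\ep$ sit in the equation, these produce genuinely singular terms of size $O(1/\ep)$ in $L^2$. This is exactly why the paper cannot run a direct energy estimate at order $m-1$; instead it controls the compressible part $(\nabla\sigma^\ep,\div u^\ep)$ by iterating the equations and reduces the incompressible part to $v=\mathbb{P}(r_0 u^\ep)$, which solves an equation with \emph{no} singular term. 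Second, $\curl u^\ep$ does \emph{not} solve a transport--diffusion equation free of the singular part: taking $\curl$ of $\eqref{NCNS-S2}_2$ produces $-\nabla(1/(R\beta))\times\pt u^\ep$, which is $O(1/\ep)$ for ill-prepared data. The paper removes this by working with the modified vorticity $\omega_{r_0}=\curl(r_0 u^\ep)$, where the weight $r_0=(R\beta)^{-1}\exp(\ep R\tilde\sigma/C_v\gamma)$ is designed so that $(\pt+u\cdot\nabla)r_0$ is nonsingular. Third, your effective unknown $\tilde u=u+\lambda\nabla\theta$ is essentially the variable $w$ of Section~\ref{sec-timeder}, which is used only for the time-derivative block; it does not resolve the boundary-layer interaction in the vorticity estimate. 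The constraint \eqref{assumption-mukpa} actually enters in a different place: after localizing $\omega_{r_0}\times\bn$ to the half-space and splitting off the pieces $\zeta_3,\zeta_4$ carrying the boundary data, the remainder $\zeta_5$ must be further split as $\zeta_{51}+\zeta_{52}$ so that $\|\p_z\zeta_{51}\|_{L^2_tL^2}$ and $\mu^{1/2}\|\p_z\zeta_{52}\|_{L^\infty_{t,x}}$ are both bounded (Lemma~\ref{lemzeta5-prop}). Bounding the first requires $\mu^{-1}\|\cQ\|_{L^2}$ to be finite, where $\cQ$ contains precisely the combination $(\mu-\kappa/(C_v\gamma\lambda_1))u\,\Delta\theta$; since only $\kappa^{1/2}\|\Delta\theta\|_{L^2}$ is uniformly bounded, one is forced into \eqref{assumption-mukpa}. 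Finally, note the a priori inequality is $\mathcal{N}_{m,T}\leq \Lambda_0+(T+\ep)^{\vartheta_0}\Lambda_1(\mathcal{N}_{m,T})$, not $\Lambda_0+T\Lambda_1$: the $\ep$-contribution is why one must take $\ep\leq\ep_0$ small, not only $T\leq T_0$.
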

\begin{rmk}
Since our main focus is on the low Mach number limit, we only keep the dependence of the solutions on the Mach number $\ep,$ but when $\mu, \kpa$ vary, they do depend on $(\mu, \kpa).$
\end{rmk}
\begin{rmk}
In view of \eqref{uniformes}, 
\eqref{defcEmt} and the third line of \eqref{defcEmsigmau}, 
we have the following  uniform in $\ep$ estimates for 
$(\sigma^{\ep}, u^{\ep}):$ 
\begin{align}\label{prop-uni-sigmau}
    \|(\sigma^{\ep}, u^{\ep})\|_{L_{T_0}^{\infty}H_{co}^{m-1}}+\|\na (\sigma^{\ep}, u^{\ep})\|_{L_{T_0}^2H_{co,\sqrt{\mu}}^{m-1}}\lesssim \cE_{m,{T_0}}^{\mu,\kpa}(\sigma^{\ep}, u^{\ep})<+\infty.
\end{align}
Due to the bad commutation properties of the space conormal derivatives with the singular part of the system, we can  only control $(\sigma^{\ep}, u^{\ep})$ uniformly (in $\ep, \mu, \kpa$)  to the regularity $m-1.$ Moreover, due to the dissipation effects, we can also control the highest regularity in $L_t^2$ type norm with a weight $\mu^{\f{1}{2}}.$
\end{rmk}
\begin{rmk}
In light of \eqref{defcEmtheta}, we have the following uniform in $(\ep, \mu, \kpa)$ estimates for $\theta^{\ep}$ in the $m-1$ regularity
\begin{align}\label{uni-theta-intro}
  \|\theta^{\ep}\|_{L_{T_0}^{\infty}H_{co}^{m-1}}+\|(\na, \pt)\theta^{\ep}\|_{L_{T_0}^{\infty}H_{co}^{m-2}}\lesssim \cE_{m,{T_0}}^{\kpa}(\theta^{\ep}),
\end{align}
and $\kappa-$dependent estimates in highest regularity:
\begin{align*}
   \kpa^{\f{1}{2}} \big(\|\theta^{\ep}\|_{L_{T_0}^{\infty}\underline{H}_{co}^{m}}+\|\na\theta^{\ep}\|_{L_{T_0}^{\infty}H_{co}^{m-1}}\big)\lesssim \cE_{m,{T_0}}^{\kpa}(\theta^{\ep}).
\end{align*}
This is because that,  $\theta^{\ep}$ solves $\eqref{NCNS-S2}_3$-the convection-diffusion system, transported by the velocity $u^{\ep},$ which admits the property \eqref{prop-uni-sigmau}.
\end{rmk}
\begin{rmk}\label{rmkmuapproxkpa}
By the assumption \eqref{assumption-mukpa} on $\mu, \kappa,$ we can find a constant $C_0>1,$ such that
$\f{1}{C_0} \mu \leq \kappa\leq C_0 \mu$ which means that $\kpa$ and $\mu$ are proportional. 
The more restrictive assumption  $\eqref{assumption-mukpa}$ is 
made to control the $L_t^{\infty}H_{co}^{m-2}$ norm of $\na u^{\ep},$ see Remark \ref{rmk-mukpa-1}. Nevertheless, let us comment that, in the above result, the Mach number $\ep$ and the Reynolds number $\mu^{-1}$ can be completely independent. 
\end{rmk}
\begin{rmk}
One may expect that when $\mu,\ep>0, \kpa=0,$ the uniform estimates in $(\ep,\mu)$ would be much easier to obtain due to the lack of thermal boundary layer.
It turns out not to be the case. Indeed, when $\kpa=0,$ as the temperature is freely transported by the velocity $u^{\ep},$ its second normal derivative 
has a size 
$(\ep\mu)^{-\f{1}{2}}.$ This would be a crucial issue when proving the uniform estimates for the (modified) vorticity 
which solves a transport-diffusion equation with a source term involving second normal derivative of $\theta^{\ep}.$ 
\end{rmk} 
\begin{rmk}\label{rmk-smallvartheta}
Based on the uniform estimate \eqref{uniformes}, if in addition, the temperature admits small variation or in other words $\theta^{\ep}=\ep \theta_1^{\ep},$ where $$\sup_{\ep\in (0, 1]}\big(\|\theta_1^{\ep}|_{t=0}\|_{ H_{co}^{m-1}(\Omega)}+\|\na\theta_1^{\ep}|_{t=0}\|_{ H_{co}^{m-2}(\Omega)\cap L^{\infty}(\Omega)}\big)<+\infty, $$
then by following the calculations in Section 4 and Section 8, this property can be propagated, namely:
\begin{align}
\sup_{(\mu, \kpa) \in A}
 \sup_{\ep \in (0, \ep_{0}]} \big( 
 \|\theta_1\|_{L_{T_0}^{\infty}H_{co}^{m-1}}+\|\na \theta_1\|_{L_{T_0}^{\infty}H_{co}^{m-2}\cap L^{\infty}([0,T_0]\times\Omega)} 
 \big)< +\infty.
\end{align}
\end{rmk}
\begin{rmk}
In order to prove uniform estimates in $\mu,\kpa$, it is assumed, due to some technical reasons (explained at the end of Subsection \ref{subs133}), in Theorem \ref{thm1} that the initial velocity vanishes on the boundary. However, it is somehow not a very stringent assumption in the context of the low Mach number limit problem. Indeed,
the compatibility conditions together with the assumption \eqref{aption-initial}
imply that  the data are  prepared (in the sense that the data can be small compared to $\ep$) on the boundary.  Nevertheless, this assumption can be dropped if one only proves uniform estimates in the Mach number $\ep$, see the following theorem.
\end{rmk}
Following the similar (and easier) arguments as in the proof of Theorem \ref{thm1}, we have also:
\begin{thm}[Uniform estimates fixed $(\mu,\kpa)$]
Assume that $\Omega$ is a $C^{m+2} (m\geq 7)$ smooth domain that admits the property \eqref{covering1}.
Let $\mu,\kpa\in (0,1]^2$ be fixed and 
suppose the initial data $(\sigma_0^{\ep},u_0^{\ep}, \theta_0^{\ep})\in Y_{m}$ satisfying
\beqs
\begin{aligned}
&\qquad\quad
\sup_{\ep\in(0,1]}Y_{m, \mu,\kpa}^{\ep}(0)<+\infty,
\end{aligned}
\eeqs
then the system \eqref{NCNS-S2}, \eqref{bdryconditionofu} admits a unique solution $(\si^{\ep},u^{\ep}, \theta^{\ep})$ which enjoys the uniform in $\ep$ estimate:
\begin{align}\label{unies-fixmukap}
\sup_{\ep \in (0, \ep_{1}]}\mathcal{N}_{m,T_1}^{\mu,\kpa}(\si^{\ep},u^{\ep}, \theta^{\ep})<_{\mu,\kpa}+\infty.
\end{align}
\end{thm}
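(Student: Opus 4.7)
The plan is to follow the scheme developed in the proof of Theorem~\ref{thm1}, exploiting the substantial simplifications available when $\mu,\kpa\in(0,1]$ are kept fixed. Since the viscous and thermal diffusion coefficients are now constants of order one, the two boundary layers of widths $(\ep\mu)^{1/2}$ and $\kpa^{1/2}$ no longer compete asymptotically, the balance condition \eqref{assumption-mukpa} becomes vacuous, and the technical requirement $u_0^\ep|_{\p\Omega}=0$ imposed in Theorem~\ref{thm1} (which served to close a delicate vorticity argument uniformly in $\mu$) is not needed. The task reduces to establishing a priori estimates uniform only in the Mach number $\ep$, while allowing constants to depend on $(\mu,\kpa)$.

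I would proceed in three steps. First, local existence and uniqueness of a strong solution to \eqref{NCNS-S2}, \eqref{bdryconditionofu} on some maximal interval $[0, T_{\max}^\ep)$ follows from classical parabolic-hyperbolic theory for the compressible Navier-Stokes-Fourier system with Navier-slip and Neumann boundary conditions, once the compatibility conditions to order $m$ are in force. Second, on any $[0, T] \subset [0, T_{\max}^\ep)$ on which the solution remains in a fixed ball and $\si^\ep, \theta^\ep$ stay away from their threshold values, I would prove an a priori inequality of the schematic form
\beqs
\cN_{m,T}^{\mu,\kpa}(\si^\ep, u^\ep, \theta^\ep) \leq \Lambda\big(\tfrac{1}{c_0}, Y_{m,\mu,\kpa}^\ep(0)\big) + T \, \Lambda\big(\tfrac{1}{c_0}, \cN_{m,T}^{\mu,\kpa}(\si^\ep, u^\ep, \theta^\ep)\big),
\eeqs
uniformly in $\ep\in(0,1]$. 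Third, a standard continuation argument then produces a time $T_1 > 0$, independent of $\ep$ but allowed to depend on $(\mu,\kpa)$, on which \eqref{unies-fixmukap} holds.

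The substantive content lies in the second step. Applying the conormal derivatives $Z^I$ with $|I|\leq m$ to \eqref{NCNS-S2} and testing against the natural entropy-like symmetrizer produces an energy identity in which the singular contributions from $\ep^{-1}\nabla\si^\ep$ and $\ep^{-1}\div u^\ep$ cancel at leading order by integration by parts, the boundary contribution $\ep^{-1}\int_{\p\Omega}\si^\ep(u^\ep\cdot\bn)\,dS$ vanishing by \eqref{bdryconditionofu}. The delicate point is to control the commutators of purely spatial conormal fields with $\ep^{-1}\nabla$ and $\ep^{-1}\div$; the scaled time vector $Z_0=\ep\pt$ is built into the functional framework precisely so that iterated time differentiation of the equations preserves the singular scaling and avoids such commutators at the highest level. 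Normal derivatives are then recovered from the parabolic structure of the momentum and temperature equations: with $\mu,\kpa$ fixed of order one, the Lamé operator and the Laplacian yield coercive control of $\nabla^2 u^\ep$ and $\nabla^2\theta^\ep$ in $L_T^2 H_{co}^{m-1}$ without any delicate $(\mu,\kpa)$-weighted bookkeeping of the sort displayed in \eqref{defcEmsigmau}--\eqref{defcEmtheta}.

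The main obstacle I expect is the same structural phenomenon already central to Theorem~\ref{thm1}: the poor commutation of spatial conormal derivatives with the singular terms costs one derivative on the acoustic part, so that the uniform-in-$\ep$ control of $(\si^\ep, u^\ep)$ is capped at regularity $m-1$, exactly as reflected in \eqref{prop-uni-sigmau}. Once this loss is compensated by the extra time-regularity provided by $Z_0=\ep\pt$ and by the elliptic regularization of the viscous and thermal parts, the pointwise bounds recorded in $\cA_{m,T}^{\mu,\kpa}$ follow from anisotropic Sobolev embeddings in the same manner as in Theorem~\ref{thm1}, and the final estimate \eqref{unies-fixmukap} closes by Grönwall and continuation.
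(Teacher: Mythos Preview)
Your overall scheme---local existence, an $\ep$-uniform a priori inequality, and continuation---is exactly what the paper intends (its own proof is the single sentence ``following the similar (and easier) arguments as in the proof of Theorem~\ref{thm1}''), and you are right that the constraint \eqref{assumption-mukpa} and the condition $u_0^\ep|_{\p\Omega}=0$ become superfluous once $(\mu,\kpa)$ are fixed. However, two points in your sketch would not close as written.

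First, your schematic a priori estimate has the form $\cN_{m,T}\leq \Lambda(Y_m(0))+T\,\Lambda(\cN_{m,T})$, but the correct inequality (Proposition~\ref{prop-uniform es}) carries a factor $(T+\ep)^{\vartheta_0}$, not $T$ alone: several commutator and boundary terms throughout Sections~2--11 are controlled only with an extra power of $\ep$, not of $T$ (see e.g.\ the estimates for $\cK_2$, $\cI_2$, $\cI_7$, the $\ep$-weighted vorticity bounds, and many others). Consequently the continuation argument in Section~13 requires choosing \emph{both} $T_1$ and an $\ep_1$ small---this is why the theorem is stated for $\ep\in(0,\ep_1]$. Your claim that ``a standard continuation argument then produces a time $T_1>0$, independent of $\ep$'' omits this smallness in $\ep$.

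Second, you understate what survives of the machinery when $(\mu,\kpa)$ are fixed. The sentence ``testing against the natural entropy-like symmetrizer produces an energy identity in which the singular contributions \dots\ cancel'' is only valid for $Z^I=(\ep\pt)^j$; for spatial conormal fields the commutators $[Z^I,\nabla]$, $[Z^I,\div]$ generate genuinely singular $\ep^{-1}$-terms that do \emph{not} cancel and cannot be absorbed by parabolic regularity. The resolution---even with $\mu,\kpa$ fixed---still requires the decomposition into compressible and (modified) incompressible parts, energy estimates on $v=\mathbb{P}(r_0 u)$ with $r_0$ as in \eqref{defr0} to neutralize the non-constant temperature coefficient, the induction in Steps~5 and~9 to recover $(\nabla\sigma,\div u)$ in spatial conormal regularity, and the heat-kernel splitting of the modified vorticity $\omega_{r_0}^{\bn}$ (Section~8) to handle its non-homogeneous Dirichlet condition. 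What genuinely simplifies is only the interaction analysis of Subsection~\ref{subs133}: the further splitting $\zeta_5=\zeta_{51}+\zeta_{52}$ and the delicate estimate \eqref{zeta51-52} are no longer needed, since the problematic commutator $\mu^{1/2}\cZ^\gamma(r_0^{-1})\p_z\zeta_5$ can be bounded directly when $\mu,\kpa$ are order-one constants.
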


To prove Theorem \ref{thm1}, the crucial step is to show the following uniform a priori estimate
which is the \textbf{heart} of this paper:
\begin{prop}\label{prop-uniform es}
Assume that  $(\sigma_0^{\ep},u_0^{\ep}, \theta_0^{\ep})$ satisfy the same assumptions as in Theorem \ref{thm1}.
Let $\bar{c}>0$ be a constant such that: 
\beqs -{\bar{c}}\leq \ep\sigma_0^{\ep}(x),\,  \theta_0^{\ep}(x)\leq \bar{c},\quad  \forall x\in \Omega, \ep \in (0,1].
\eeqs 
Denote $r_0(s_1, s_2)=\f{1}{R\beta(s_1)}\exp({\f{Rs_2}{C_v\gamma}}).$
Let $c_0\in (0,1]$ be such that:
\beq\label{preasption1}
\begin{aligned}
\forall s  \in  \big[-3{\bar{c}},  3\bar{c}\big],&  \,\quad  c_0\leq \Gamma(s), \beta(s) \leq 1/{c_0}, \,  \quad |(\Gamma,\beta)|_{C^m\big(\big[-3{\bar{c}},  3\bar{c}\big]\big)}\leq {1}/{c_0}, \\
&\forall\, (s_1, s_2) \in  \big[-3{\bar{c}},  3\bar{c}\big]^2, \,\, c_0\leq r_0(s_1,s_2)\leq {1}/{c_0}.
\end{aligned}
\eeq
Assume that  for some $T \in (0, 1]$ the following assumption holds:
\begin{equation}\label{preasption}
-3{\bar{c}}\leq \ep\sigma^{\ep}(t,x), \ep \tsigma^{\ep}(t,x),  
\theta^{\ep}(t,x)\leq 3\bar{c}, \quad \forall (t,x)\in [0,T]\times\Omega,  \forall \ep\in[0,1],
\end{equation}
where $\tsigma^{\ep}=\sigma^{\ep}-\ep\mu(2\lambda_1+\lambda_2){\Gamma}(\ep\sigma^{\ep})\div u^{\ep}.$
Then, there exist  
two  polynomials $\Lambda_0, \Lambda_1$ (whose coefficients are independent of $\ep, \mu, \kpa$), and a constant $\vartheta_0>0,$
 such that, for any $\ep \in (0, 1], (\mu,\kpa)\in A,$ we have for a smooth enough solution of \eqref{NCNS-S2}  on  $[0, T]$
 the following estimate:
 \beq\label{enerineq}
 \mathcal{N}_{m,T}^{\mu,\kpa}(\si^{\ep}, u^{\ep}, \theta^{\ep})\leq \Lambda_0\big( \f{1}{c_0}, 
 Y_{m, \mu,\kpa}^{\ep}(0)\big)+
 (T+\ep)^{\vartheta_0} \Lambda_1\big(\f{1}{c_0}
,\mathcal{N}_{m,T}^{\mu,\kpa}(\sigma^{\ep},u^{\ep},\theta^{\ep})\big),
 \eeq
 where $
Y_{m, \mu,\kpa}^{\ep}(0),\, \mathcal{N}^{\mu,\kpa}_{m,T}(\si^{\ep}, u^{\ep}, \theta^{\ep})$ are defined in \eqref{initialnorm} \eqref{defcalNmt} respectively.
\end{prop}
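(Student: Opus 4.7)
The plan is to close a nonlinear Grönwall-type inequality for $\cN^{\mu,\kpa}_{m,T}$ by systematically assembling conormal energy estimates, a modified-vorticity parabolic estimate, algebraic recovery of the normal derivatives from the equations, and anisotropic Sobolev embeddings, while keeping the singular $1/\ep$ couplings in skew-symmetric form so that they drop out of the energy identities. Throughout, the pointwise assumptions \eqref{preasption}, \eqref{preasption1} make all nonlinear coefficients Lipschitz on a fixed range, so that commutators $[Z^I,\cdot]$ with $|I|\le m$ and multiplicative estimates are bounded by polynomials in $\cN^{\mu,\kpa}_{m,T}$ and $c_0^{-1}$.

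\textbf{Step 1: symmetrization and effective density.} Rewrite \eqref{NCNS-S2} so the singular couplings $\div u^\ep/\ep$ and $\na\sigma^\ep/\ep$ are symmetric between $\sigma^\ep$ and $u^\ep$. Following \cite{MR4403626,MR2211706}, substitute at the top order the effective pressure $\tsigma^\ep = \sigma^\ep - \ep\mu(2\lambda_1+\lambda_2)\Gamma(\ep\sigma^\ep)\div u^\ep$, which absorbs the loss of a derivative produced by $\na\div u^\ep$ coming from $\div\cL u^\ep$; this is exactly why \eqref{preasption} is posed on both $\sigma^\ep$ and $\tsigma^\ep$.

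\textbf{Step 2: conormal energy estimates.} Apply $Z^I$, $|I|\le m$, to the symmetrized system and test against $Z^I(\sigma^\ep, u^\ep, \theta^\ep)$ weighted by the natural symmetrizer $\mathrm{diag}\bigl(\gamma,(R\beta(\theta^\ep))^{-1}\mathrm{Id},C_v/(R\beta(\theta^\ep))\bigr)$. The singular $1/\ep$ couplings integrate out because $u^\ep\cdot\bn=0$; the viscous and heat terms give nonnegative dissipation plus boundary contributions that are treated using \eqref{bdryconditionofu} as in \cite{MR2885569,MR4403626}, producing lower-order trace norms which are absorbed by interior terms through trace and interpolation. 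This yields the $L^\infty_T\underline H^m_{co}$ and $L^2_T H^m_{co}$ pieces of $\cE^{\mu,\kpa}$ and $\cE^\kpa$ with the correct $\ep\mu^{1/2}$ and $\kpa^{1/2}$ weights from \eqref{defcEmsigmau}--\eqref{defcEmtheta}.

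\textbf{Step 3: normal derivatives and the modified vorticity.} The conormal estimates control only tangential regularity; normal regularity must be recovered from the equations. Inverting $\div$ in the mass equation gives $\p_{\bn}\sigma^\ep$; the heat equation gives $\p_{\bn}^{2}\theta^\ep$ with weight $\kpa^{-1/2}$ inside the thermal layer; the momentum equation gives $\p_{\bn}^{2}u^\ep$ with weight $(\ep\mu)^{-1/2}$ inside the viscous layer, producing the $\ep\mu^{1/2}$, $\ep^{1/2}\mu$, $\ep\mu$ factors in the first two lines of \eqref{defcEmsigmau}. To get the $L^\infty_T H^{m-2}_{co}$ bound on $\na u^\ep$ in the third line of \eqref{defcEmsigmau}, use that $\omega^\ep=\curl u^\ep$ satisfies a transport-diffusion equation whose source, after substituting $\div u^\ep$ from the $\theta^\ep$ equation, contains a $\p_{\bn}^2\theta^\ep$ contribution of size $\kpa^{-1/2}$; define a \emph{modified vorticity} aligned with $\iota_0=1/(C_v\gamma\lambda_1)$ so that the leading viscous and thermal contributions cancel, leaving a residual of order $|\mu-\kpa/(C_v\gamma\lambda_1)|\cdot\kpa^{-1/2}\lesssim \mu$, which is then absorbed by a standard parabolic estimate. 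This is where \eqref{assumption-mukpa} is genuinely used.

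\textbf{Step 4: $L^\infty$ bounds and closing.} The $\cA$-norms of \eqref{defcAmt} are produced by anisotropic Sobolev embeddings in the spirit of \cite{MR2885569} from the conormal and normal bounds of Steps 2--3, combined with a transport argument for $\sigma^\ep$ and a maximum-principle estimate for $\theta^\ep$. Every right-hand side is then estimated by $Y^\ep_{m,\mu,\kpa}(0)$ plus polynomial source terms integrated over $[0,T]$: time integration yields factors $T^{\vartheta_0}$, while wherever a singular $1/\ep$ meets a quantity gained through fast-oscillation integration by parts in time (using $Z_{0}=\ep\pt$) one extracts a factor $\ep^{\vartheta_0}$, producing \eqref{enerineq}. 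The genuine obstacle is Step 3: reconciling the algebraic cancellation forced by \eqref{assumption-mukpa} with the two distinct boundary-layer scales $(\ep\mu)^{1/2}$ and $\kpa^{1/2}$, and simultaneously with the weighted fast-oscillation structure of the full norm $\cN^{\mu,\kpa}_{m,T}$, is the technical heart of the proposition.
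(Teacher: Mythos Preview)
Your outline captures the broad architecture (conormal energy, vorticity, normal recovery, $L^\infty$ closure), but Step~2 contains a genuine gap and Step~3 underestimates the mechanism actually needed.

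\textbf{Step 2 does not work as written.} The claim that ``the singular $1/\ep$ couplings integrate out because $u^\ep\cdot\bn=0$'' is only valid when $Z^I$ contains \emph{only} time derivatives $(\ep\pt)^j$. For spatial conormal fields $Z_1,\dots,Z_M$, the commutators $[Z^I,\nabla]\sigma^\ep/\ep$ and $[Z^I,\div]u^\ep/\ep$ are nonzero and singular; this is precisely the obstacle emphasized in Section~1.3.1. The paper therefore never performs a direct $H^m_{co}$ energy estimate on $(\sigma^\ep,u^\ep)$ at the uniform level. Instead it (i) does the highest-order estimate only with an extra $\ep$ weight (Section~2), (ii) does the uniform estimate only for time derivatives, after replacing $u^\ep$ by $w^\ep=u^\ep-\tfrac{\gamma-1}{\gamma}\overline\Gamma\kpa\beta(\theta^\ep)\nabla\theta^\ep$ so that the penalized operator becomes skew-symmetric (Section~3), and (iii) recovers spatial conormal regularity of $(\nabla\sigma^\ep,\div u^\ep)$ by an \emph{induction} that trades one spatial conormal derivative for one time derivative via the equations and elliptic estimates on $\tsigma^\ep$ (Sections~6 and~10). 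Your symmetrizer-based energy identity at $|I|=m$ simply does not close.

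\textbf{Step 3 misses the actual cancellation mechanism.} The difficulty is not only the $\kpa^{-1/2}$-sized $\p_{\bn}^2\theta^\ep$ in the vorticity source; it is first that $\curl u^\ep$ itself cannot be estimated uniformly because $[\mathbb P,\tfrac{1}{R\beta}]\pt u^\ep$ and $\nabla(\tfrac{1}{R\beta})\times\pt u^\ep$ are $O(\ep^{-1})$ for ill-prepared data. The paper therefore works with $r_0u^\ep$, $r_0=\tfrac{1}{R\beta}\exp(\ep R\tsigma^\ep/C_v\gamma)$, and studies $v=\mathbb P(r_0u^\ep)$ and $\omega^{\bn}_{r_0}=\curl(r_0u^\ep)\times\bn$ (Sections~7--8). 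Even then, the nonhomogeneous Dirichlet condition on $\omega^{\bn}_{r_0}$ forces a Duhamel/Green-function splitting into several pieces $\zeta_3+\zeta_4+\zeta_5$, and a \emph{further} splitting $\zeta_5=\zeta_{51}+\zeta_{52}$ is needed to handle the commutator $\mu^{1/2}[\cZ^\gamma,r_0^{-1}]\p_z\zeta_5$, which involves the product of two boundary-layer quantities neither of which is uniformly bounded. The relation \eqref{assumption-mukpa} enters only at this last stage (Lemma~\ref{lemzeta5-prop}): after the algebraic identity $G_{r_0}=\lambda_1\overline\Gamma\bigl(\mu-\tfrac{\kpa}{C_v\gamma\lambda_1}\bigr)u\Delta\theta+\text{l.o.t.}$ is extracted, one needs $|\mu-\kpa/(C_v\gamma\lambda_1)|\lesssim\mu\kpa^{1/2}$ to bound $\mu^{-1}\|G_{r_0}\|_{L^2}$ and hence $\|\p_z\zeta_{51}\|_{L^2_tL^2}$. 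A single ``modified vorticity aligned with $\iota_0$'' plus a parabolic estimate does not suffice; the interplay of the two layer widths $(\ep\mu)^{1/2}$ and $\kpa^{1/2}$ with the nonhomogeneous boundary data requires the full splitting machinery.
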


Based on the uniform estimates established above, it is possible to have some convergence results in the low Mach number and  inviscid limits. 

In light of the uniform estimate \eqref{uni-theta-intro},
up to extraction of subsequences,  $\theta^{\ep}$ converges in $C([0,T_0], H_{loc}^1(\Omega))$ to some $\theta^0$ as $\ep$ tends to $0.$  However, due to the lack of time compactness, one can only conclude from \eqref{prop-uni-sigmau} that
 $(\sigma^{\ep}, u^{\ep})$ converge in $L_w^2([0,T_0], L_{loc}^2(\Omega))$ to some $(\sigma^0, u^0).$ When $\Omega$ is a bounded domain, the weak in time convergence for  $(\sigma^{\ep}, u^{\ep})$ 
 cannot be in general  
 improved, due to the 
 lack of large time dispersion and the interactions of fast oscillating acoustic waves with the physical boundaries. This will prevent one from justifying the limits
 $(u^0, \theta^0)$ to be the solutions of the following limit system which is equivalent to \eqref{NINS}:
 \beq\label{NINS-1}
 \left\{
\begin{array}{l}
 \displaystyle \gamma \overline{P}\,\div u^0=(\gamma-1)\kappa\div(\beta(\theta^0)\na\theta^0),\\
  \displaystyle \f{\overline{P}}{R\beta(\theta^0)}(\pt+u^0\cdot\nabla)u^0+\nabla\pi^0+ 
  \mu\lambda_1\curl\curl u^0=0, \\
   \displaystyle  \f{C_v}{R} \gamma\overline{P} (\pt+u^0\cdot\nabla)\theta^0=\kappa\div(\beta(\theta^0)\na\theta^0), 
 \end{array}
\right.\\
 \eeq
 supplemented with the boundary condition
 \begin{align}\label{bc-limit}
     u^0\cdot\bn|_{\p\Omega}=\p_{\bn}\theta^0|_{\p\Omega}=0, \quad (\curl u^{0}\times \bn)|_{\p\Omega}
=2\Pi(-a u^{0}+D \bn\cdot u^{0} )|_{\p\Omega}.
 \end{align}
Nevertheless, when $\Omega$ is an exterior domain, 
due to the (global) dispersion property for the wave equation, 
the strong convergence in time can be obtained by 
proving local energy decay, either using the defect measure technique \cite{MR2106119} or using the so-called RAGE theorem  \cite{MR2575476, MR3729430}. 
Consequently, 
it is possible to show
 the following convergence result. 
\begin{thm}[Low mach number limit for fixed $\mu,\kpa $]\label{thm-conv1}
Assume that $\Omega$ is the exterior of a compact smooth domain in $\mR^3$ and  $\mu,\kpa\in (0,1]$ are fixed. Let $(\sigma_0^{\ep}, u_0^{\ep}, \theta_0^{\ep})\in Y_m$ and assume in addition:
\begin{align}\label{additionasp-theta}
   \sup_{\ep\in (0,1]}\| \ep^{\f{1}{2}}\pt\na\theta^{\ep}(0)\|_{H_{co}^1}<+\infty.
\end{align}
 Let $(\sigma^{\ep}, u^{\ep}, \theta^{\ep})\in C([0,T_1], L^2(\Omega))$
be the solutions of the systems \eqref{NCNS-S2} \eqref{bdryconditionofu} with fixed $\mu, \kpa \in (0,1]^2$ satisfying the uniform (in $\ep$) estimate 
\eqref{unies-fixmukap}.
Assume further that
$\theta^{\ep}$ converges in $C([0,T_1], H_{loc}^1(\Omega))\cap C([0,T_1]\times \Omega) $ to $\theta^0,$ which admits the following decay property:
\begin{align}\label{decaytheta0}
    |\theta^0(t,x)|\leq A |x|^{-1-\delta}, \quad
    |\na\theta^0(t,x)|\leq A |x|^{-2-\delta}, \quad\, \forall\, (t,x)\in [0,T_1]\times\Omega.
\end{align}
Then $\sigma^{\ep}\rightarrow 0, \, \div u^{\ep}-\f{\gamma-1}{\gamma}\kappa\Gamma(\ep\sigma^{\ep})\div(\beta(\theta^{\ep})\na\theta^{\ep})\rightarrow 0$ in $L^2([0,T_1], L_{loc}^2(\Omega)).$ Moreover, $u^{\ep}$ converges in $L^2([0,T_1], L_{loc}^2(\Omega))$ to some $u^0,$ and 
$(u^0, \theta^0)$ are the (unique) solutions to the system \eqref{NINS-1} that enjoys the additional regularity:
\begin{align}\label{addition-reg}
  ( u^0 ,\theta^0)\in  L_{T_1}^{\infty}\cH^{0,m-1},\quad (\nabla  u^0, \nabla \theta^0, \pt\theta^0)\in
  L_{T_1}^{\infty}\cH^{0,m-2}
\cap L^{\infty}([0,T_1]\times\Omega).
\end{align}
\end{thm}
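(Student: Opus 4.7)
The plan is to combine the uniform-in-$\ep$ estimate \eqref{unies-fixmukap} (giving weak compactness plus tangential Aubin--Lions), the given strong convergence of $\theta^{\ep}$, and a local energy decay argument for the acoustic subsystem in the exterior domain to pass to the limit in \eqref{NCNS-S2}. First, I would extract a subsequence along which $(\sigma^{\ep},u^{\ep})\rightharpoonup(\sigma^0,u^0)$ weakly-$\ast$ in $L^\infty_{T_1}H^{m-1}_{co}$, with $\nabla u^{\ep}\rightharpoonup\nabla u^0$ in $L^\infty_{T_1}H^{m-2}_{co}$. Since $Z^I u^{\ep}$ is bounded in $L^\infty_t H^1_{loc}$ for $|I|\leq m-2$ and $\pt u^{\ep}$ is controlled (using the momentum equation and the boundedness of $\nabla\sigma^{\ep}/\ep$, which I establish next), the tangential part $Pu^{\ep}$ admits strong $L^2_{T_1}L^2_{loc}$ convergence by Aubin--Lions. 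The difficulty is the acoustic part, for which only weak-in-time convergence is available a priori.

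Next, solving \eqref{NCNS-S2}$_2$ for $\nabla\sigma^{\ep}$ gives $\nabla\sigma^{\ep}=O(\ep)$ in $L^\infty_{T_1}H^{m-2}_{co}$, hence $\sigma^0$ is spatially constant; the decay at infinity (compatible with the exterior geometry and $u^{\ep}\to 0$ in the far field coming from $\sigma^{\ep}\to\sigma^0$ being consistent with momentum balance) then forces $\sigma^0\equiv 0$, as in \cite{MR2106119, MR2575476}. Multiplying \eqref{NCNS-S2}$_1$ by $\ep$ and passing to the weak limit yields the anelastic constraint
\[
\gamma\overline{P}\,\div u^0=(\gamma-1)\kappa\div(\beta(\theta^0)\nabla\theta^0),
\]
provided the term $\kappa\Gamma(\ep\sigma^{\ep})\div(\beta(\theta^{\ep})\nabla\theta^{\ep})$ converges; this follows from the given strong $C_{T_1}H^1_{loc}\cap C_{T_1,x}$ convergence of $\theta^{\ep}$ together with the uniform control of $\nabla^2\theta^{\ep}$ in \eqref{defcEmtheta}. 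The decay assumption \eqref{decaytheta0} on $\theta^0$ ensures integrability needed to identify $\sigma^0=0$ globally and gives the correct behavior at infinity for the limit system.

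The \textbf{main obstacle} is upgrading the weak convergence of the acoustic component $(\sigma^{\ep}, Qu^{\ep})$ (where $Q=\Id-P$ is the potential projector) to strong $L^2_{T_1}L^2_{loc}$ convergence. Introducing an effective acoustic pressure $\tilde\sigma^{\ep}$ that absorbs the thermal source (roughly, $\tilde\sigma^{\ep}=\sigma^{\ep}-\ep\,\mathsf{\Theta}^{\ep}$ with $\mathsf{\Theta}^{\ep}$ constructed so that $\div u^{\ep}-\ep^{-1}\cdot(\ldots)\theta^{\ep}$-terms combine), the pair $(\tilde\sigma^{\ep}, Qu^{\ep})$ satisfies, modulo $O(\ep^{1/2})$ remainders controlled by the uniform bounds and \eqref{additionasp-theta}, a damped acoustic wave system with Neumann-type boundary conditions (compatible with $u^{\ep}\cdot\bn=0$ and $\p_\bn\theta^{\ep}=0$). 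The extra initial regularity \eqref{additionasp-theta} is precisely what is needed to control one time derivative of the thermal forcing $\kappa\div(\beta(\theta^{\ep})\nabla\theta^{\ep})$ uniformly in $\ep$. Because $\Omega$ is exterior to a compact obstacle, the self-adjoint wave operator generating the free acoustic dynamics has purely absolutely continuous spectrum on the orthogonal complement of its (trivial) point spectrum, so the \textsc{RAGE} theorem (as in \cite{MR2575476,MR3729430}) applies and yields $\|\chi_K(\tilde\sigma^{\ep},Qu^{\ep})\|_{L^2_{T_1}L^2}\to 0$ for each compact $K\Subset\Omega$; equivalently, one can argue via defect measures as in \cite{MR2106119}.

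Finally, combining the strong local convergence of $Qu^{\ep}$ with that of $Pu^{\ep}$ gives $u^{\ep}\to u^0$ in $L^2_{T_1}L^2_{loc}(\Omega)$, which together with the strong convergence of $\theta^{\ep}$ lets me pass to the limit in all nonlinearities of \eqref{NCNS-S2}. Reading off the limits of the momentum and temperature equations produces \eqref{NINS-1}, and the boundary conditions \eqref{bc-limit} are recovered by passing to the trace in \eqref{bdryconditionofu} (using that Navier-slip is stable under weak-$\ast$ convergence of tangential traces, and the fact that $\curl u^{\ep}\times\bn$ is determined by $u^{\ep}|_{\p\Omega}$). The regularity \eqref{addition-reg} follows from lower semi-continuity of the uniform norms in \eqref{unies-fixmukap}. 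Uniqueness of $(u^0,\theta^0)$ in this class is obtained by a standard energy estimate for the difference of two solutions of \eqref{NINS-1}, which also implies that the whole family $(\sigma^{\ep},u^{\ep},\theta^{\ep})$ (not just a subsequence) converges.
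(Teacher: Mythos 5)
Your proposal correctly identifies the two main building blocks (Aubin--Lions compactness for the incompressible part, local energy decay via defect measures/RAGE for the acoustic part), but it contains two substantive errors that would cause the argument to break down.

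\textbf{The claim $\nabla\sigma^{\ep}=O(\ep)$ is false.} From $\eqref{NCNS-S2}_2$,
\begin{equation*}
 \nabla\sigma^{\ep}= -\f{1}{R\beta(\theta^{\ep})}(\ep\pt)u^{\ep}-\ep\Big(\f{1}{R\beta}u^{\ep}\cdot\nabla u^{\ep}-\mu\Gamma\div\cL u^{\ep}\Big).
\end{equation*}
For ill-prepared data the term $(\ep\pt)u^{\ep}$ is precisely $O(1)$, not $o(1)$ (indeed $\pt u^{\ep}(0)=\cO(\ep^{-1})$); the uniform estimate \eqref{unies-fixmukap} controls $Z_0 u^{\ep}=\ep\pt u^{\ep}$ but gives no extra smallness. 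So $\nabla\sigma^{\ep}$ is only $O(1)$, the weak limit $\sigma^0$ need not be constant from this argument, and in any case the theorem claims \emph{strong} $L^2_tL^2_{loc}$ convergence of $\sigma^{\ep}$ to $0$, which a constancy argument cannot give. The strong convergence of $\sigma^{\ep}$ must come entirely from the local energy decay for the damped acoustic operator, i.e.\ it should be obtained together with $Qu^{\ep}$, not before. (The paper works with $\tilde\sigma^{\ep}=\sigma^{\ep}-\ep\mu(2\lambda_1+\lambda_2)\Gamma\div u^{\ep}$ and shows $\tilde\sigma^{\ep},\ \ep\pt\tilde\sigma^{\ep}\to 0$ in $L^2_{T_1}L^2_{loc}$ using the wave equation they satisfy, which also requires checking $\p_{\bn}\sigma^{\ep}|_{\p\Omega}\to 0$ in $L^2_tH^{1/2}(\p\Omega)$ via the estimate \eqref{Es-tsigma3}.)

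\textbf{Aubin--Lions cannot be applied to $\mathbb{P}u^{\ep}$ directly.} You write that $\pt u^{\ep}$ is controlled ``using the momentum equation and the boundedness of $\nabla\sigma^{\ep}/\ep$''; but $\nabla\sigma^{\ep}/\ep$ is $\cO(\ep^{-1})$, not bounded. Applying $\mathbb{P}$ to the equation $\pt u^{\ep}=R\beta(\theta^{\ep})\big[\,-\f{\nabla\sigma^{\ep}}{\ep}+\cdots\big]$ does \emph{not} kill the singular term, because $\beta(\theta^{\ep})\nabla\sigma^{\ep}$ is not a gradient when $\theta^{\ep}$ is non-constant, so $\mathbb{P}\big(\beta(\theta^{\ep})\f{\nabla\sigma^{\ep}}{\ep}\big)$ can be $\cO(\ep^{-1})$. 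This is exactly the non-isentropic obstruction the paper emphasizes throughout. The correct object is $\mathbb{P}\big(\f{u^{\ep}}{\beta(\theta^{\ep})}\big)$: writing $\mathbb{P}\big(\f{1}{R\beta}\pt u^{\ep}\big)=-\mathbb{P}\big(\f{\nabla\sigma^{\ep}}{\ep}\big)+\cdots=0+\cdots$ one finds $\pt\mathbb{P}\big(\f{u^{\ep}}{R\beta(\theta^{\ep})}\big)$ bounded in $L^\infty_{T_1}H^{-1}(\Omega)$, and Aubin--Lions then gives compactness of $\mathbb{P}\big(\f{u^{\ep}}{\beta(\theta^{\ep})}\big)$; one then uses the strong convergence of $\theta^{\ep}$ and the already-established strong convergence of $\mathbb{Q}u^{\ep}$ to recover strong convergence of $\mathbb{P}u^{\ep}$. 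Your argument as stated skips this commutation step and would not close.

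The RAGE/defect-measure step, the role of \eqref{additionasp-theta}, and the way you identify the limit system and read off the regularity from lower semicontinuity are all in line with the paper. But you should repair the two points above: drop the $\nabla\sigma^{\ep}=O(\ep)$ claim, obtain $\sigma^{\ep}\to 0$ directly from the local energy decay applied to $(\tilde\sigma^{\ep},\ep\pt\tilde\sigma^{\ep})$, and apply Aubin--Lions to $\mathbb{P}\big(\f{u^{\ep}}{\beta(\theta^{\ep})}\big)$ rather than to $\mathbb{P}u^{\ep}$.
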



\begin{rmk}
Due to the lack of control of the second normal derivatives of the limit velocity $u^0,$ 
the solution to the second equations in \eqref{NINS-1} should be interpreted in the weak sense: 
for any $\psi\in C^{\infty}([0,T_1], C_c^{\infty}(\overline{\Omega}))$ with
 $\psi\cdot\bn|_{\p\Omega}=0,$ the following identity holds: for every  $0<t\leq T_1,$
 \beq\label{incom-EI}
 \begin{aligned}
 &\f{\overline{P}}{R}\int_{\Omega}\big(\f{u^0}{\beta(\theta^0)}\cdot\psi\big)(t,\cdot) \ \d x+\mu\lambda_1\izt\int_{\Omega}\curl u^0 \times \curl\psi 
 \ \d x\d s\\
 &=\f{\overline{P}}{R}\int_{\Omega}\big(\f{u^0}{\beta(\theta^0)}\cdot\psi\big)(0,\cdot) \ \d x
 -\izt\int_{\Omega}\bigg(\f{\overline{P}}{R}\f{u^0}{\beta(\theta^0)}\cdot\nabla u^0+\na\pi^0\bigg)\cdot\psi  \ \d x\d s\\
 &\qquad+\f{\overline{P}}{R}\izt\int_{\Omega}\f{u^0}{\beta(\theta^0)}\cdot\pt\psi+ \pt\big(\f{1}{\beta(\theta^0)}\big)u^0\cdot\psi \ \d x\d s
 +\mu\lambda_1\int_0^t\int_{\p\Omega}
 2\Pi(-a u^{0}+D \bn\cdot u^{0} )\cdot\psi \ \d S_y \d s.
 \end{aligned}
 \eeq
\end{rmk}

The proof of the above theorem will be presented in Section 14.
 
Granted by our uniform estimates in Theorem \ref{thm1},
it is also interesting to study the low Mach number and the inviscid limit in the same time.  More precisely, by assuming $\mu=\f{\kpa}{C_v\gamma\lambda_1}=\ep^{\alpha}, \alpha>0$ one expects to show that when $\ep \rightarrow 0,$ $(\theta^{\ep}, u^{\ep})$ will converge to the solution of the following inhomogeneous incompressible Euler system:
\begin{align*}
    \left\{
\begin{array}{l}
   \displaystyle  \f{C_v}{R} \gamma\overline{P}  (\pt+u^0\cdot\nabla)\theta^0=0,\\
  \displaystyle \f{\overline{P}}{R\beta(\theta^0)}(\pt+u^0\cdot\nabla)u^0+\nabla\pi^0=0, \quad \div u^0=0.
 \end{array}
\right.
\end{align*}
Due to some technical reasons, it is difficult to prove such a result when the temperature admits large variation (in other words, $\theta^{\ep}$ is only bounded).
Nevertheless, in the spirit of Remark \ref{rmk-smallvartheta}
and the Theorem 3.1  in \cite{MR3070031}, the following convergence result holds:
\begin{thm}
Let $\Omega$ be the exterior of a compact smooth domain in $\mR^3.$ Let $\mu=\f{\kpa}{C_v\gamma\lambda_1}=\ep^{\alpha}, 0<\alpha<\f{10}{3}.$ Given $(\sigma_0^{\ep}, u_0^{\ep}, \theta_0^{\ep})$ satisfying the assumption stated in Theorem \ref{thm1} and also the following: $u_0^{\ep}\rightarrow u_0^0$ in $(L^2(\Omega))^3$ where $\mathbb{P}u_0^0\in (H^s(\Omega))^3 $ with 
$s>\f{5}{2}.$
Suppose further that
$$\sup_{\ep\in (0, 1]}\big(\|\theta_0^{\ep}/\ep\|_{ H_{co}^{m-1}(\Omega)}+\|\na\theta_0^{\ep}/\ep\|_{ H_{co}^{m-1}(\Omega)\cap L^{\infty}(\Omega)}\big)<+\infty. $$
Then $(u^{\ep}, \theta^{\ep}/\ep )$ converges in $L_{loc}^{\infty}\big((0, \tilde{T}_0], (L_{loc}^2(\Omega))^3\big)\times L_{loc}^{\infty}\big((0, \tilde{T}_0], L_{loc}^q(\Omega)\big), (1\leq q <2)$ 
to $(v^0, \theta_1^0)$ which are the solutions to the 
following Euler-Boussinesq equations:
\begin{align}\label{euler-bou}
    \left\{
\begin{array}{l}
   \displaystyle  \f{C_v}{R} \gamma\overline{P}  (\pt+v^0\cdot\nabla)\theta_1^0=0,\\
  \displaystyle \f{\overline{P}}{R\beta(0)}(\pt+v^0\cdot\nabla)v^0+\nabla\pi^0=0, \quad \div v^0=0,\quad v^0|_{t=0}=\mathbb{P}u_0^0,
 \end{array}
\right.
\end{align}
where we denote $\tilde{T}_0=\min[T_0, T_{max}]$
where $T_{max}$ is the maximal existence time of the solution to $\eqref{euler-bou}_2$ in $H^s(\Omega).$
\end{thm}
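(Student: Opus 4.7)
The plan is to combine the uniform regularity estimates of Theorem \ref{thm1} with the small-temperature-variation propagation of Remark \ref{rmk-smallvartheta} and a dispersive local-energy-decay analysis for the acoustic system in the exterior domain, essentially transplanting the strategy behind Theorem \ref{thm-conv1} into a setting where $\mu = \kpa/(C_v\gamma\lambda_1) = \ep^{\alpha}$ itself tends to zero. For each $\ep\in(0,\ep_0]$, I set $\mu = \ep^{\alpha}$ and $\kpa = C_v\gamma\lambda_1\ep^{\alpha}$; the pair $(\mu,\kpa)$ then lies trivially in the admissible set $A$ of \eqref{defsetA}, so Theorem \ref{thm1} supplies a common existence time $T_0$ and a uniform bound on $\mathcal{N}^{\mu,\kpa}_{m,T_0}(\sigma^{\ep},u^{\ep},\theta^{\ep})$. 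The hypothesis on $\theta_0^{\ep}/\ep$ is exactly the small-variation assumption of Remark \ref{rmk-smallvartheta}, so writing $\theta^{\ep} = \ep\,\theta_1^{\ep}$ yields uniform control of $\theta_1^{\ep}$ in $L_{T_0}^{\infty}H_{co}^{m-1}$ together with $\na\theta_1^{\ep}\in L_{T_0}^{\infty}H_{co}^{m-2}\cap L^{\infty}([0,T_0]\times\Omega)$.

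With these bounds in hand, a standard Aubin--Lions compactness argument applied to the equation for $\theta_1^{\ep}$ (obtained by dividing the third equation of \eqref{NCNS-S2} by $\ep$) yields, up to subsequence, $\theta_1^{\ep}\to\theta_1^0$ strongly in $L_{loc}^{\infty}((0,T_0],L_{loc}^{q}(\Omega))$ for $1\le q<2$, while $u^{\ep}$ converges weakly-$\ast$ to some $u^0$. In the limit, every term carrying an explicit power of $\ep$ or $\kpa = C_v\gamma\lambda_1\ep^{\alpha}$ in the temperature equation disappears and one obtains formally $\tfrac{C_v}{R}\gamma\overline{P}(\pt+u^0\cdot\nabla)\theta_1^0 = 0$. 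The threshold $\alpha<\tfrac{10}{3}$ enters here and in the momentum equation to guarantee that, after multiplication against the norms of $\na^{2}\theta^{\ep}$ and $\mathcal{L}u^{\ep}$ controlled in \eqref{defcEmsigmau}--\eqref{defcEmtheta} (which carry negative fractional powers of $\ep,\mu,\kpa$), every remainder still tends to zero.

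The main obstacle is upgrading the weak-$\ast$ convergence of $u^{\ep}$ to strong convergence, since $(\sigma^{\ep},\div u^{\ep})$ support fast acoustic oscillations of frequency $O(1/\ep)$ which are compensated only by dispersion. Because $\Omega$ is the exterior of a compact set, I would follow the strategy already invoked for Theorem \ref{thm-conv1}: quantify this dispersion either via Morawetz-type multipliers with defect measures as in \cite{MR2106119}, or via the RAGE theorem applied to the acoustic generator as in \cite{MR2575476,MR3729430}, obtaining local energy decay of the form $\sigma^{\ep}\to 0$ and $\div u^{\ep}\to 0$ in $L^{2}([0,T_0],L_{loc}^{2}(\Omega))$. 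Combined with the compactness of $\mathbb{P}u^{\ep}$ inherited from the conormal bounds and the limiting transport equation for $\theta_1^0$, this upgrades the convergence of $u^{\ep}$ to $v^0$ to strong convergence on compact sets of $(0,\tilde{T}_0]\times\Omega$; the open interval at $t=0$ precisely reflects the initial acoustic layer, after which the incompressible datum $v^0|_{t=0}=\mathbb{P}u_0^0$ emerges. The assumption $u_0^{\ep}\to u_0^0$ in $(L^2(\Omega))^3$ with $\mathbb{P}u_0^0\in H^s$, $s>5/2$, guarantees the local-in-time well-posedness of the limiting Euler--Boussinesq system and hence the existence of $T_{max}>0$.

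Finally, to identify the limit with the Euler--Boussinesq system \eqref{euler-bou}, I pass to the limit in the weak form of the momentum equation, tested against smooth divergence-free vector fields $\psi$ satisfying $\psi\cdot\bn|_{\p\Omega}=0$: the singular pressure term $\nabla\sigma^{\ep}/\ep$ integrates to zero against such $\psi$ after integration by parts using $u^{\ep}\cdot\bn|_{\p\Omega}=0$; the viscous contribution $\mu\lambda_1\curl\curl u^{\ep}$ carries the prefactor $\ep^{\alpha}$ and vanishes in the range $0<\alpha<\tfrac{10}{3}$ once the dissipation norms of \eqref{defcEmsigmau} are invoked; and $1/(R\beta(\ep\theta_1^{\ep}))\to 1/(R\beta(0))$ uniformly, since $\ep\theta_1^{\ep}\to 0$ in $L^{\infty}$. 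This identifies $(v^0,\theta_1^0)$ as the unique $H^s$-classical solution of \eqref{euler-bou} on $[0,T_{max})$, so the convergence in fact holds without subsequence extraction on $[0,\tilde{T}_0]$ with $\tilde{T}_0=\min(T_0,T_{max})$, which is the asserted statement.
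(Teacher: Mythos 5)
The paper does \emph{not} present a self-contained proof of this theorem: it points to Theorem 3.1 of \cite{MR3070031} and observes that the relative-entropy (modulated-energy) method used there for weak solutions, combined with dispersive decay for the acoustic wave equation in the exterior domain, applies a fortiori to the more regular solutions furnished by Theorem \ref{thm1}. Your proposal instead assembles a soft-compactness argument: Aubin--Lions for $\theta_1^{\ep}$ and $\mathbb{P}u^{\ep}$, defect-measure/RAGE local energy decay for $(\sigma^{\ep},\div u^{\ep})$, and passage to the limit in the weak formulation. This is a genuinely different route; the relative-entropy approach directly measures the distance of $(u^{\ep},\theta^{\ep}/\ep)$ from the given $H^s$ Euler--Boussinesq solution via a modulated functional, whence no subsequence extraction and no a posteriori identification of the limit are needed, and it automatically handles weak-strong uniqueness.

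Two points in your proposal are genuine gaps. First, your account of where the threshold $\alpha<\frac{10}{3}$ enters is not correct: the viscous term $\mu\lambda_1\curl\curl u^{\ep}$ tested against a fixed smooth divergence-free $\psi$ integrates by parts to $\mu\lambda_1\int\curl u^{\ep}\cdot\curl\psi$, which the uniform bound $\mu^{1/2}\|\nabla u^{\ep}\|_{L^2_{t,x}}\lesssim1$ already kills for \emph{every} $\alpha>0$; the same holds for the term $\kappa\Gamma\div(\beta\nabla\theta^{\ep})$. A compactness proof would therefore, as written, yield the conclusion without any upper bound on $\alpha$ --- which should make you suspicious. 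In fact the exponent $\frac{10}{3}$ is a quantitative artifact of the relative-entropy balance in \cite{MR3070031}: one trades Strichartz decay of the acoustic part (at rate a small positive power of $\ep$) against the rate at which the $O(\mu)$ dissipative terms perturb the modulated energy along the smooth $H^s$ Euler trajectory, and the competition closes precisely for $\alpha<\frac{10}{3}$. This arithmetic has no analogue in the qualitative argument you describe, so the threshold in the statement remains unexplained in your proof.

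Second, even granting strong $L^2_{loc}$ convergence of $u^{\ep}$ to some $u^0$ and of $\theta_1^{\ep}$ to $\theta_1^0$, you still need to identify the limit with the \emph{unique} $H^s$ solution $v^0$ of \eqref{euler-bou} whose maximal existence time defines $T_{max}$. The limit produced by compactness inherits only the conormal and Lipschitz-type bounds from $\cN_{m,T_0}$, not membership in $H^s$; to conclude $u^0=v^0$ on $[0,T_{max})$ one has to invoke a weak-strong uniqueness statement for the Euler--Boussinesq system, and that statement is exactly the relative-entropy inequality that the paper's cited argument is built around. Your proposal silently assumes uniqueness identifies the limit without producing the uniqueness mechanism. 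If you want to pursue a compactness route, you would at minimum have to supplement it with a separate weak-strong uniqueness lemma for \eqref{euler-bou} (for instance in the Lipschitz class, exploiting the $L^{\infty}$ bound on $\nabla u^0$ from $\cA_{m,T_0}$), and you would then still owe an explanation for the $\alpha<\frac{10}{3}$ restriction, which that route would not produce.
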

The proof is based on the dispersive estimate which is admissible in the case of smooth exterior domain, together
with the relative entropy method which
has been successfully  applied in many singular limit problem  for the weak solutions to the viscous systems. 
The above theorem has been shown in \cite{MR3070031} in the framework of the weak solution, and thus can also be  used for solutions enjoying our additional regularity, we will thus  omit the proof.

\subsection{Difficulties and strategies}
In this subsection, we explain the main difficulties in order to establish uniform in $(\ep,\mu,\kpa)$ estimate
\eqref{enerineq}. In what follows, we will skip the $\ep$-dependence of the solution for notational clarity.
\subsubsection{Uniform  estimates only in the Mach number.}

Indeed, there have already been several  difficulties when one tries to prove  uniform only in Mach number $\ep$ estimates by letting  $\mu$ and $\kpa$ fixed.  
 
 
 Compared to the non-isentropic system in the whole space $\mR^3,$ some  difficulties arise due to the presence of the boundaries. Indeed, the main feature here is the appearance of the boundary layers with fast time oscillations. This prevents us from applying the strategies employed in the vanishing viscosity problems 
 \cite{MR2885569,MR3509004} where the boundary layer occurs
 without time oscillations. More precisely, in \cite{MR2885569,MR3509004}, one controls the high order tangential derivatives by direct energy estimates, and then uses the vorticity to control the normal derivatives. Nevertheless, for the system with low Mach number, even the tangential derivative estimates are not easy to get, since the spatial tangential derivatives 
 do not commute with  $\nabla,\div,$ defined with the standard derivations in $\mR^3$, and thus creates singular commutators.  Without a priori knowledge on the tangential derivatives, the normal derivatives are hard to control. To get around this difficulty, our general strategy is 
 to split the velocity into a compressible part and an incompressible part, 
  and control the former by the divergence of the velocity and the latter by  direct energy estimates.
  In this way, we can recover all the tangential spatial derivatives and thus control the normal derivatives by careful studies on the vorticity.  Such a strategy has been employed in the previous work \cite{MR4403626} where uniform in (and only in) Mach number regularity estimates are established for \textit{isentropic} compressible Navier-Stokes system. However, as explained below, 
  some crucial modifications need to be made in the non-isentropic setting.
 
 As mentioned above, one crucial step to control  the spatial tangential derivatives is to get a uniform control of the incompressible part of the velocity. This requires
to apply the Leray projection (defined in \eqref{proj-P}) on the equation $\eqref{NCNS-S2}_2$ for the velocity $u$ and to perform a direct energy estimates on the resultant equations:
 \begin{align*}
     \f{1}{R\beta}(\pt+u\cdot\na)(\mathbb{P}u)+\na q-\mu\lambda_1\overline{\Gamma}\Delta(\mathbb{P}u)=-[\mathbb{P}, \f{1}{R\beta}]\pt u+\cdots
 \end{align*}
 where  $\cdots$ denotes some terms that can be uniformly bounded in $L_t^2H_{co}^{m-1}.$ 
 Nevertheless, due to the non-constant temperature $\beta(\theta),$ the commutator $[\mathbb{P}, \f{1}{R\beta}]\pt u$ does not vanish and thus by the ill-prepared assumption ($\pt u|_{t=0}$ is of order $\mathcal{O}(\ep^{-1})$), cannot be uniformly bounded in $\ep.$ Such a problem also happens when one studies the vorticity $\omega=\curl u$ which solves 
 the equations:
 \begin{align*}
     \f{1}{R\beta}(\pt+u\cdot\na)\omega-\mu\lambda_1\overline{\Gamma}\Delta \omega=-\na\big( \f{1}{R\beta}\big)\times \pt u+\cdots .
 \end{align*}
 Therefore, instead of working on the incompressible part and the curl of the velocity $u$, we have to study these counterparts of the modified velocity $\f{1}{R\beta(\theta)} u.$ 
  Their estimates would depend on the temperature in an essential way, which not only increase the computational complexity but also
 in turn lead to some technical issues in order to prove uniform in Reynolds number and P\'elect number estimates. 
 We shall explain the latter more carefully in the next subsection.
\subsubsection{New difficulties for the estimates uniform in Reynolds number and P\'eclet number.}
As explained in the last section, we have to study the incompressible part of the quantity: $\f{1}{R\beta}u,$ which satisfies the equations:
\begin{align}\label{eqptu}
     (\pt+u\cdot\na)\big(\mathbb{P}(\f{1}{R\beta}u)\big)+\na q-\mu\lambda_1\overline{\Gamma}\Delta\mathbb{P}\big(\f{1}{R\beta}u\big)=(\f{1}{R\beta})'  u(\pt+u\cdot\na)\theta
  +\cdots
 \end{align}
 In view of the definition \eqref{defcEmt},
 to prove the uniform estimates \eqref{enerineq}, we need to control the $L_t^{\infty}H_{co}^{m-1}$ norm of the above system, which is expected to be accomplished by direct energy estimate. However, by the equations
 $\eqref{NCNS-S2}_3$ for $\theta,$ we have that
 \begin{align*}
     (\pt+u\cdot\na)\theta= \f{R}{C_v}\big(-\div u+\kpa\Gamma(\ep\sigma)\div(\beta(\theta)\nabla\theta)+\ep\mathfrak{N}\big),
 \end{align*}
 which is not uniformly (in $\mu$) bounded in $L_t^2H_{co}^{m-1}.$ Indeed, it turns out that one can  only control $\div u$ in $L_t^2H_{co}^{m-1}$  with a weight $(\mu+\kpa)^{\f{1}{2}}.$ This prevents one from proving the estimates that are uniform in $\mu, \kpa.$ The way to get over this problem is to  introduce the unknown $r_2 u$ where 
 \beqs
 r_2=\f{1}{R\beta(\theta)}\exp (\ep R{\sigma}/C_v\gamma)=\f{1}{R\beta(0)}\exp(-R\tilde{\theta}/{C_v}),\quad \tilde{\theta}=\f{C_v\theta}{R}-\f{\ep\sigma}{\gamma},
 \eeqs
 and then study the incompressible part of this new unknown which solves:
 \begin{align*}
     (\pt+u\cdot\na)\big(\mathbb{P}
    (r_2{u}) \big)+\na q-\mu\lambda_1\overline{\Gamma}\Delta\big(\mathbb{P} (r_2{u})\big)=-\f{R}{C_v} r_2 u(\pt+u\cdot\na)\tilde{\theta}
  +\cdots.
 \end{align*}
 Compared with \eqref{eqptu}, 
 the advantage of this equation is that now 
 $$(\pt+u\cdot\na)\tilde{\theta}=\gamma^{-1}(\kpa\Gamma\div(\beta\na\theta)+\ep\mathfrak{N})$$ which can be uniformly bounded in $\hco^{m-1}.$

Once the (modified) incompressible part $\mathbb{P}(r_2u)$ is controlled, the next
 essential step in order to recover one normal derivative  of the velocity is to study the modified vorticity $\curl(r_2 u)\times \bn=\colon \omega^{\bn}_{r_2},$ where some other difficulties arise.
By straightforward computations, we find that $\omega^{\bn}_{r_2}$ solves a transport-diffusion equation
\begin{align}
    \big(\pt+u\cdot \na-\mu\lambda_1\overline{\Gamma}r_2^{-1}\Delta\big)\omega^{\bn}_{r_2}&=\curl \bigg(-\f{R}{C_v}r_2 u(\pt+u\cdot\na)\tilde{\theta}+\mu\lambda_1\overline{\Gamma}
   r_2^{-1} [\curl\curl, r_2]u \bigg)\times \bn+\cdots\notag\\
   &=\colon \curl G_{r_2}\times\bn+\cdots \label{defG2}
\end{align}
with the non-homogeneous Dirichlet boundary condition:
\begin{align}\label{defbdomegan}
   \omega^{\bn}_{r_2}\big|_{\p\Omega}= \f{R\ep}{C_v\gamma}r_2\p_{\bn}\sigma \Pi u+2r_2\Pi\big(-a u+(D\bn)u\big)\big|_{\p\Omega}.
\end{align}
Note that here and in what follows, we denote $\cdots$ as the terms that can be easily controlled in the spaces we are interested in. 
Due to non-homogeneous Dirichlet boundary condition, 
one cannot get uniform estimates for  $\omega^{\bn}_{r_2}$ by merely energy estimates. 
In order to get the uniform estimate, we split $\omega^{\bn}_{r_2}$ 
into two parts, one of which solves a (variable coefficient) heat equation with the  nontrivial Dirichlet boundary condition and the vanishing source term and thus can be controlled by using the Green function.
The other satisfies a convection-diffusion equation with homogeneous Dirichlet boundary condition which is amenable to energy estimates.  We comment that such an idea has been successfully applied in our previous works \cite{masmoudi2021incompressible, MR4403626} for the 
uniform in Mach number estimates for \textit{isentropic} system in domains with fixed or free boundaries. Nevertheless, as will be explained in the following, there are some significant differences for non-isentropic system and 
extra difficulties 
for uniform estimates in $\ep,\mu,\kpa.$

By applying the above strategy, we reduce the matter after the change of coordinates to the study of the following two systems in half space: 
    \begin{align}\label{defzeta1-intro}
    \left\{
     \begin{array}{l}
       \big(\pt -\mu \lambda_1 \overline{\Gamma}
       ( \f{{1}}{r_2})^{\Psi}\big|_{z=0}\p_z^2\big)\zeta_{1}=0 , \quad (t,x) \in \mR_{+}\times \mR^3 ,\\[3pt]
       \zeta_{1}|_{t=0}=0, \quad  \zeta_{1}|_{z=0}=
        \omega^{\bn}_{r_2}|_{\p\Omega}\circ \Psi,
     \end{array}   
 \right.
\end{align}
\begin{align}\label{defzeta2-intro}
\left\{
    \begin{array}{l}
   (\pt+ \tilde{u} \cdot \na){\zeta_2}-\mu \lambda_1\overline{\Gamma} ( \f{{1}}{r_2})^{\Psi}(\partial_z^2  +\Delta_{{g}})\zeta_2 =\cS_2,  \quad (t,x) \in \mR_{+}\times \mR^3,  \\[3pt]
       \zeta_{2}|_{t=0}= \omega^{\bn}_{r_2}|_{t=0}\circ \Psi , \quad  \zeta_{2}|_{z=0}=0,
     \end{array}   
 \right.
\end{align}
where $\Psi$ is the change of variable associated to the normal geodesic coordinates \eqref{normal geodesic coordinates}, $(\f{{1}}{r_0})^{\Psi}=\f{1}{r_0}\circ \Psi, 
\tilde{u}=(D\Psi)^{-\star}u,$ 
$\Delta_g$ is defined in \eqref{defdeltag} and involves only tangential derivatives. Moreover, 
\beqs
\cS_2=- (\tilde{u} \cdot \na){\zeta_{1}}+\mu \lambda_1\overline{\Gamma}\bigg(\big( \f{
1}{r_0}\big)^{\Psi}\Delta_{{g}}\zeta_{1}+ \big(\big( \f{1}{r_0}\big)^{\Psi}-\big( \f{1}{r_0}\big)^{\Psi}\big|_{z=0
}\big)\p_z^2\zeta_{1}\bigg)+ \big(\curl G_{r_2}\times\bn\big)^{\Psi}+\cdots.
\eeqs

In light of the first term in  $\cS_2,$  to bound $\zeta_2$ in the space $L_t^{\infty}H_{co}^{m-2},$ 
we are obliged to control uniformly $\zeta_1$ in $L_t^{2}H_{co}^{m-1}.$ This in turn requires, the boundedness of  $\omega^{\bn}_{r_2}|_{\p\Omega}$ in $L_t^2H^{m-\f{3}{2}}(\p\Omega).$ Nevertheless, such a bound, due to the quantity $\p_{\bn}\sigma$ in the expression of  $\omega^{\bn}_{r_2}$ (see \eqref{defbdomegan}), is unlikely to be uniform in $\kpa$ (remember that we can only expect $\kpa^{\f{1}{2}}\|\na^2\sigma\|_{\hco^{m-2}}$ to be uniformly controlled).  
The way to circumvent this problem is to 
make further correction to the weight function $r_2.$  
More precisely, one can define 
\begin{align}\label{defr0-intro}
    r_0
    =\f{1}{R\beta(\theta)}\exp (\ep R{\tsigma}/C_v\gamma)\quad 
    \text{ with }\, \tilde{\sigma}=\sigma-\ep\mu(2\lambda_1+\lambda_2)\Gamma\div u 
\end{align}
and switch to study $\omega_{r_0}^{\bn}=\colon\curl(r_0 u)\times\bn,$ whose boundary condition is changed into 
\begin{align}\label{defbdomeganr0}
   \omega^{\bn}_{r_0}\big|_{\p\Omega}= \f{R\ep}{C_v\gamma}r_0\p_{\bn}\tsigma \Pi u+2r_0\Pi\big(-a u+(D\bn)u\big)\big|_{\p\Omega}.
\end{align}
Let us remark that the instant advantage  is that
by the boundary condition (derived from $\eqref{NCNS-S2}_2$),
\begin{align*}
\p_{\bn}\tsigma=-\big(\f{1}{R\beta}(\ep\pt+\ep u\cdot\na)u+\ep\mu\lambda_1\Gamma\curl\curl u+\ep\mu(2\lambda_1+\lambda_2)(\div u)\na\Gamma\big)\cdot\bn \quad \text{ on } \quad {\p\Omega},
\end{align*}
one can control $|\p_{\bn}\tsigma|_{L_t^2H^{m-\f{3
}{2}}(\p\Omega)}$ uniformly in $(\ep,\mu,\kpa).$ 

The above considerations lead us to write $\omega_{r_0}^{\bn}\circ \Psi=\tilde{\zeta}_1+\tilde{\zeta}_2,$ where  $\tilde{\zeta}_1, \tilde{\zeta}_2$ solve respectively the equation \eqref{defzeta1-intro} \eqref{defzeta2-intro} with $r_2$ changed into $r_0.$ 
Nevertheless, by the property \eqref{appen-5}, we find that to control $\tilde{\zeta}_1$ in the space $L_t^2H_{co}^{m-1},$ it requires the boundedness of 
$(\f{1}{r_0})^{\Psi}|_{z=0}$ in the space $L_t^{\infty}H^{m-1}(\mR^2)$ which is missing. 
To overcome this problem, we introduce further splittings: 
$\omega_{r_0}^{\bn}\circ \Psi={\zeta}_3+{\zeta}_4+{\zeta}_5,$ where 
$\zeta_3-\zeta_5$ solves \eqref{eqzeta3}-\eqref{eqzeta5}. 
Note that the boundary condition is also carefully split for latter use. We comment also that $\zeta_5$ solves the similar equation as $\tilde{\zeta}_2$ with small variations. 

To prove uniform estimates for ${\zeta}_5,$ there are still some (essential) troubles which force us to make the assumption \eqref{assumption-mukpa} on $\mu$ and $\kpa.$ We will explain this point more carefully in the next subsection. 

\subsubsection{Remarks on the assumption \eqref{assumption-mukpa}}\label{subs133}
As mentioned in the last subsection, 
the relation \eqref{assumption-mukpa} between $\mu$ and $\kpa$ is made mainly to control the modified vorticity
$\omega_{r_0}^{\bn}$ or more precisely $\zeta_5$ defined as the solution to the following (rough) system: 
\begin{equation}\label{eqzeta5-intro}
\left\{
    \begin{array}{l}
   (\pt+ \tilde{u} \cdot \na){\zeta_5}-\mu \lambda_1\overline{\Gamma} ( \f{1}{r_0})^{\Psi}(\partial_z^2  +\Delta_{{g}})\zeta_5 =\big(\curl G_{r_0} \times \bn\big)^{\Psi}+\cdots,  \quad (t,x) \in \mR_{+}\times \mR_{+}^3,  \\[2.5pt]
\zeta_5|_{z=0}= 0,\quad
\zeta_5|_{t=0}=\zeta|_{t=0}, 
    \end{array}
\right.
\end{equation}
where 
\begin{align}\label{defGr0}
G_{r_0}=\colon  u(\pt+u\cdot\na)r_0+\mu\lambda_1\overline{\Gamma}
   r_0^{-1} [\curl\curl, r_0]u.
 \end{align}
 Since $\zeta_5$ vanishes on the boundary and the equation \eqref{eqzeta5-intro} does not involve any 
 singular terms, we expect to perform a direct energy estimate to control the $L_t^{\infty}H_{co}^{m-2}$ norm of $\zeta_5.$  By doing so, we arrive at the following energy estimate:
 \begin{align}
   &\|\zeta_5(t)\|_{H_{co}^{m-2}}^2+ \mu \iota \|\na \zeta_5\|_{\hco^{m-2}}^2 \leq \|\zeta_5(0)\|_{H_{co}^{m-2}}^2\notag\\
  & +\sum_{|I|\leq m-2}\bigg|\izt\int_{\mR_{+}^3}Z^I(\curl G_{r_0} \times \bn)^{\Psi}\cdot Z^I\zeta_5\, \d x \d s\bigg|+\bigg|\mu\izt\int_{\mR_{+}^3}
   Z^I\big(\big(\f{1}{r_0}\big)^{\Psi}\big) \p_z\zeta_5 \cdot \p_z Z^I \zeta_5 \d x \d s\bigg|+\cdots \label{zeta5-badterm}
 \end{align}
 where $\cdots$ denotes some quantities that are relatively easy to control and $\iota$ stands for some constant independent of $\ep,\mu,\kpa$. Let us comment that the interactions between the thermal boundary layer and the viscous boundary layer are mainly reflected in the first two terms in \eqref{zeta5-badterm}. Indeed,
 we first observe that as (see \eqref{timeder-r0})
 \begin{align*}
     (\pt+u\cdot\na)r_0=-\f{R\kpa}{C_v\gamma}r_0 \Gamma \div(\beta\na\theta)+\cdots,
 \end{align*}
 the term $\curl G_{r_0}\times\bn$ may involve three normal derivatives of $\theta$.
 However, after some calculations, we can only expect $\kpa^{\f{3}{2}}\na^3\theta$ to be uniformly bounded in $\hco^{m-2}.$ The natural idea is to perform an integration by parts in space to obtain (see the definition of $G_{r_0}$ in \eqref{defGr0})
 \begin{align*}
     \bigg|\izt\int_{\mR_{+}^3}Z^I(\curl G_{r_0} \times \bn)\cdot Z^I\zeta_5 \d x \d s\bigg|\lesssim 
     \|\mu^{\f{1}{2}}\na\zeta_5\|_{\hco^{m-2}} \big(\kpa\mu^{-\f{1}{2}}+\mu^{\f{1}{2}}\big)\|\na^2\theta\|_{\hco^{m-2}}\lat+\cdots
 \end{align*}
 where $\Lambda$ represents a polynomial function with respect to its arguments whose coefficients are independent of $(\ep,\mu,\kpa)$. In view of the definition of 
 $\cE_{m,t}(\theta)$ in \eqref{defcEmtheta}, we have the control of $\kpa^{\f{1}{2}}\|\na^2\theta\|_{\hco^{m-2}}.$ This 
 leads us to assume that:
 \begin{align*}
     \mu^{\f{1}{2}}\lesssim \kpa^{\f{1}{2}},\, \kpa\mu^{-\f{1}{2}}\lesssim \kpa^{\f{1}{2}}, \text{ or equivalently }  \, \kappa\sim \mu.
 \end{align*}
 As for the control of the second term in \eqref{zeta5-badterm}, the main problem we encounter is the estimate of 
 \begin{align*}
    \mu^{\f{1}{2}} \|Z^I\big(\big(\f{1}{r_0}\big)^{\Psi}\big) \p_z\zeta_5\|_{L_t^2L^2(\mR_{+}^3)}, \quad \text{ when } |I|=m-2. 
 \end{align*}
 Indeed, 
 due to the viscous boundary layer, the term
 $\mu^{\f{1}{2}}\|\p_z\zeta_5\|_{L_t^qL^P(\mR_{+}^3)}$
is unlikely to be uniformly (in $\ep, \mu$) bounded for any $2\leq q\leq +\infty, \, 2<p\leq +\infty$ (see Remark \ref{rmk-mukpa-1} for more explanations).  
We are thus forced to attribute the $
L_t^{\infty}L^{\infty}(\mR_{+}^3)$ norm on $Z^I\big(\f{1}{r_0}\big)^{\Psi}$ 
which is not uniformly (in $\kappa$) bounded. As a matter of fact, by the Sobolev embedding and the definition \eqref{defcEmtheta} for $\cE_{m,t}(\theta),$ we have at best:
\begin{align*}
    \kpa^{\f{1}{2}}\il\theta \il_{m-2,\infty,t}\lesssim  \kpa^{\f{1}{2}}(\|\na\theta\|_{\infco^{m-1}}+\|\theta\|_{\infco^m})\lesssim \cE_{m,t}(\theta).
\end{align*}
To deal with this problem, we 
split again $\zeta_5$ into two unknowns: $\zeta_5=\zeta_{51}+\zeta_{52},$ and try to prove an estimate like:
\begin{align}\label{zeta51-52}
    \|\p_z \zeta_{51}\|_{L_t^2L^2}
   +\mu^{\f{1}{2}}\il \p_z \zeta_{52} \il_{0,\infty,t}\lesssim \lab Y_m(0)\big)+
\lae.
\end{align}
In this way, the term $ \sum_{|I|=m-2} \mu^{\f{1}{2}} \|Z^I\big(\big(\f{1}{r_0}\big)^{\Psi}\big) \p_z\zeta_5\|_{L_t^2L^2(\mR_{+}^3)}$ can be controlled as:
\begin{align*}
   \mu^{\f{1}{2}}\il r_0^{-1}\il_{m-2,\infty,t}\|\p_z \zeta_{51}\|_{L_t^2L^2}+\|Z (r_0^{-1})\|_{\hco^{m-3}}
\il  \mu^{\f{1}{2}}\p_z \zeta_{52} \il_{0,\infty,t}.
\end{align*}
In order to meet the requirement \eqref{zeta51-52}, we design carefully two systems (see \eqref{eqzeta51} satisfied by $\zeta_{51}, \zeta_{52}).$ One of which is,
roughly speaking,
\begin{align*}
 \left\{  
 \begin{array}{l}
   \big(\p_{t}-\mu\lambda_1\overline{\Gamma}
   b_0\p_z^2\big)  \zeta_{51}=\big(\curl G_{r_0}\times\bn\big)^{\Psi}+\cdots,    \\
 \zeta_{51}|_{z=0}=0,\quad   \zeta_{51}|_{t=0}=0.
    \end{array}
    \right.
\end{align*}
Therefore, to prove the boundedness of $\|\p_z\zeta_{51}\|_{L_t^2L^2},$ one needs to control more or less $\|\mu^{-1}G_{r_0}\|_{L_t^2L^2}.$ 
Nevertheless, in view of the definition \eqref{defGr0}, 
we get after some computations that:
\begin{align*}
    G_{r_0}=\lambda_1\overline{\Gamma} \big(\mu-\f{\kpa}{C_v\gamma \lambda_1} \big)u\Delta\theta +\cdots
\end{align*}
which, combined with the boundedness of 
$\kpa^{\f{1}{2}}\|\Delta\theta\|_{L_t^2L^2},$ leads us to assume the relation \eqref{assumption-mukpa}. At this stage, let us also mention that in order to prove $\mu^{\f{1}{2}}\p_z\zeta_{52}$ to be bounded in $L_{t,x}^{\infty},$ we need to impose some compatibility condition on the data, which is satisfied once the initial velocity vanishes on the boundary. One can refer to 
the proof of Lemma \ref{lemzeta5-prop} for more details.


\subsection{Sketch of the uniform estimates} 
As the uniform regularity estimates are somehow delicate to establish, we outline in this subsection the main steps toward this goal.
 
  $\bullet$ \textbf{Step 1. Highest-order $\ep$-dependent estimate}: 
  $$(\mu+\kappa)^{\f{1}{2}}(\|(\ep\sigma,\ep u,\theta)\|_{L_t^{\infty}\underline{H}_{co}^m},\|(\ep\nabla\sigma,\ep\div u,\nabla\theta)\|_{L_t^{\infty}H_{co}^{m-1}}).$$ 
  As the singular terms appear in the equations of $\sigma$ and $u,$ but not in the equation of $\theta,$ we expect to get the highest-order estimates for $(\sigma,u)$ with a weight $\ep$ and for $\theta$ without weight. The parameter 
  $(\mu+\kappa)^{\f{1}{2}}$ is also added in order to obtain
  the estimates independent of $\mu,\kappa.$ This is consistent with \cite{MR2211706} when the physical domain is the whole space $\mR^3.$
 
 $\bullet$ \textbf{Step 2. Uniform estimates for time derivatives}: $\|(\sigma,u,\kappa\nabla\theta)\|_{L_{t}^{\infty}\cH^{m-1,0}}.$ Since the time derivatives commute with the singular terms, one expects to get the uniform estimates for time derivatives directly by energy estimates. However,
 the
 penalized operator 
 associated to singular terms in the system
 \eqref{NCNS-S2}
 is not skew-symmetric. Inspired by \cite{MR2211706},
 we need to work on the unknown
 $w=u-\kappa\f{\gamma-1}{\gamma}
\Gamma(0) \beta(\theta)\nabla\theta$ and perform energy estimates for $(\sigma,w,\kappa\nabla\theta).$

  $\bullet$ \textbf{Step 3. Uniform estimates for $\theta.$}
  Under suitable a priori knowledge on the properties of $u,$ we can 
  obtain some uniform estimates (in $\ep,\mu$) for $\theta,$ namely the quantities appearing in $\cE_{m,t}(\theta)$ (defined in \eqref{defcEmtheta}). 
Such estimates will be useful in the following steps.

$\bullet$ \textbf{Step 4 (Non)-uniform second normal derivatives.}  In this step, we aim to control the quantities that involve the second normal derivatives of $(u,\sigma)$ in the definition of $\cE_{m,t}(\sigma,u)$ (see \eqref{defcEmsigmau}). Such estimates are achieved by working on $\omega_{\bn}=\curl u\times\bn-2 \Pi (-a u+D\bn\cdot u)$ which satisfies a convection-diffusion equation with vanishing boundary condition and on $\Delta\sigma$ which solves a damped transport equation.

$\bullet$ \textbf{Step 5. Uniform control for the compressible part-I}:
$\kappa^{\f{1}{2}}\|(\nabla\sigma,\div u)\|_{L_t^2H_{co}^{m-1}}.$\\
 In this step, we will use recursively the equation for $(\sigma,u)$ to recover the higher order space conormal derivatives for the compressible part $(\nabla\sigma,\div u).$

$\bullet$  \textbf{Step 6. Uniform estimates for the incompressible part of the modified velocity}:  
Denote $v=\bp (r_0u)$ the incompressible part of the velocity, where $\bp$ is the Leray projection defined in \eqref{proj-P} and $r_0$ is defined in \eqref{defr0-intro}. After lengthy but straightforward computations, one finds that $v$ solves the equations:
\beq\label{eqofv-intro}
\left\{
\begin{array}{l}
(\partial_t+u\cdot\nabla) v - \lambda_1 \mu\f{\overline{\Gamma}}{r_0} \Delta v+\nabla q=G-[\bp,u\cdot\nabla](r_0 u) \quad \text{in}\quad
     {\Omega} \\[7pt]
     v\cdot\bn=0, \quad \Pi(\p_{\bn}v)=\Pi\big(-2a r_0u+(D \bn)  (\nabla\Psi+ r_0u)\big)+
      \f{R\ep}{C_v\gamma}r_0\p_{\bn}\tsigma \Pi u
     \quad \text{on}\quad
     {\p\Omega}
\end{array}
\right.
\eeq
where $\nabla q=\colon\bq (G+\lambda_1 \mu\f{\overline{\Gamma}}{r_0} \Delta v)$ and $G$ is defined in \eqref{def-G}.
We remark that in the derivation of $v,$ we can not naively
consider the term $\bp (u\cdot\na (r_0 u))$ as the source term, as it is not uniformly (in $\mu$) bounded in $L_t^2H_{co}^{m-1}.$ However, we find that it is the case for the commutator $[\bp, u\cdot\nabla](r_0 u),$  see \eqref{es-bpcom}. 

As no singular terms involved in the equation \eqref{eqofv-intro}, one expects to perform direct energy estimates to obtain in the same time:
$$\|v\|_{\infco^{m-1}},\quad \mu^{\f{1}{2}}\|\nabla v\|_{L_t^2H_{co}^{m-1}}.$$ Due to the appearance of the typical term like  $\ep\mu\na^2\sigma,$ the source term $G$ can not be bounded uniformly in
$\hco^{m-1}.$ We will thus split the term $G$ into two parts, one of which can be uniformly controlled in $\hco^{m-1},$ the other is bounded only in $\hco^{m-2}$ but can gain the extra weight $\ep\mu^{\f{1}{2}}$ which is very useful to take benefits of the boundedness of $\mu^{\f{1}{2}}\|\nabla v\|_{L_t^2H_{co}^{m-1}}.$
The pressure $\na q$ is dealt with in the same fashion. We can thus close the estimates for $v.$ 

$\bullet$ \textbf{Step 7. Uniform estimates for the  vorticity.} In this step, we aim to control $\|\nabla u\|_{L_t^{\infty}H_{co}^{m-2}}$ or alternatively $\|\nabla(r_0 u)\|_{L_t^{\infty}H_{co}^{m-2}}.$
It is the most involved step and the main place where the relation \eqref{assumption-mukpa} for $\mu$ and $\kpa$ is used. 
 As the difficulty lies in the estimate near the boundary, we will work on a local charts $\Omega_i.$ 
By the virtue of the identities:
\beqs
\partial_{\bn} (r_0u\cdot \bn) =\div(r_0 u)-(\Pi \partial_{y^1}(r_0 u))^1-(\Pi \partial_{y^2}(r_0 u))^2,\quad \Pi(\partial_{\bn} (r_0 u))=\Pi(\curl(r_0 u)\times \bn)-\Pi[(D \bn) (r_0 u)],
\eeqs
it suffices  to control $\|\curl(r_0 u)\times\bn\|_{L_t^{\infty}H_{co}^{m-2}}.$ We refer to the last subsection for the strategies of this estimate. 

$\bullet$ \textbf{Step 8.  Control of $L_{t,x}^{\infty}$ norm $\cA_{m,t}(\theta).$} We aim to control in this step the  $L_{t,x}^{\infty}$ norms appearing in $\cA_{m,t}(\theta),$ which will be useful in the next step. The most non-trivial part is the control of $\il\na\theta\il_{m-5,\infty,t},$ or more precisely $\il\p_{\bn}\theta\il_{m-5,\infty,t}.$ To this end, we
take benefit of the Green function 
for the transport-diffusion equation \eqref{eqetai}  in half space  with variable coefficients  solved by $\p_{\bn}\theta\circ \Psi,$ where $\Psi$ is the transformation associated to the normal geodesic coordinates
\eqref{normal geodesic coordinates}. 


$\bullet$ \textbf{Step 9. Uniform control for the compressible part-II}:
$\|(\nabla\sigma,\div u)\|_{L_t^{\infty}H_{co}^{m-2}}.$ \\
In this step, we aim to get $L_t^{\infty}$ type energy norms. Such an estimate is achieved again by iteration. Nevertheless, 
in this process, it requires the a-priori knowledge of $\il u \il_{[\f{m}{2}]-1,\infty,t},$ which is hopeful only after  $\div u$ is controlled in
$L_t^{\infty}H_{co}^{[\f{m}{2}]}.$ Therefore, we will perform a two-tier iteration. That is, we first control $(\na\sigma,\div u)$ in $L_t^{\infty}H_{co}^{m-4}$ norm which only requires the $L_{t,x}^{\infty}$ type norms for $\theta, \na\theta,$ and then proceed to get the higher order estimates in $L_t^{\infty}H_{co}^{m-2}$ by using the previous estimate.

$\bullet$ \textbf{Step 10. Control of $\cA_{m,t}(\sigma,u)$ } To finish the uniform estimates, it remains to control the $L_{t,x}^{\infty}$ norms for $(\sigma, u )$ defined in $\cA_{m,t}(\sigma,u).$ By the virtue 
of  the Sobolev embedding \eqref{sobebd},
most of them can be bounded directly by previously controlled quantities. After some arguments, we reduce the matter to the control of $\il (\curl(r_0 u)\times\bn, \ep \mu^{\f{1}{2}}(\p_{\bn}\omega_{\bn}, \Delta\sigma))\il_{m-5,\infty,t}$ which bears some resemblance as the estimate of $\il\na\theta\il_{m-5,\infty,t}$ done in Step 8. \\

\textbf{Organization of the paper.} 
We carry out the estimates outlined in Step 1--Step 10 from  Section 2 to Section 11. In Section 12, we summarize the estimates obtained in the previous sections to show 
Proposition \ref{prop-uniform es}. The proof of Theorem \ref{thm1} is then presented in Section 13. In Section 14, we sketch the proof of the convergence result stated in Theorem \ref{thm-conv1}.
Finally, in appendix, we gather some useful estimates and prove some technical lemmas which are used throughout this work. \\


\textbf{Further notations}

$\bullet$ We denote $\Lambda(\cdot,\cdot)$
a polynomial that may differ from line to line but independent of $\ep\in(0,1], \mu,\kpa \in A.$

$\bullet$ We use the notation  $\lesssim$ for
$\leq C(1/c_0)$ for some number
$C(1/c_0)$ that depends only on $1/c_0$ (in particular, independent of $(\ep,\mu,\kpa)).$

$\bullet$  We use the notation $L_t^p L^2=L^p([0,t], L^2(\Omega)), \, (p=2,+\infty).$


$\bullet$ We use frequently the lighten notation 
$\cN_{m,T},\, \cE_{m,T},$  $\cA_{m,T}$  and
$Y_m(0)$
for $$\cN_{m,T}^{\mu,\kpa}(\sigma^{\ep}, u^{\ep},\theta^{\ep}),\,\, \cE_{m,T}^{\mu,\kpa}(\sigma^{\ep}, u^{\ep},\theta^{\ep}),\,\, \cA_{m,T}^{\mu,\kpa}(\sigma^{\ep}, u^{\ep},\theta^{\ep}), \,\, Y^{\ep}_{m,\mu,\kpa}(0).$$  \\

\textbf{Caution of notations:}

$\bullet$ As there are many parameters/quantities in the system \eqref{NCNS-S2}, it is useful to keep in mind that 
$\lambda_1,\lambda_2, R, C_v, \gamma$ are some physical constants while $\beta(\theta^{\ep}), \Gamma(\ep\sigma^{\ep})$ are some variable functions.

$\bullet$ For notational convenience, we skip the $\ep-$dependence of the solutions when proving the a-priori estimates conducted in Section 2--Section 12.

\section{$\ep-$dependent highest order estimates.}
In this section, we aim to prove some $\ep-$dependent highest order energy estimates, which will be useful to obtain the desired uniform estimates afterwards. The main result in this section is the following: 
\begin{prop}\label{prop-highest}
Assume that \eqref{preasption} is satisfied for some $T\in (0,1]$ and 
suppose  $m\geq 7,$
then for any $\ep \in (0,1], (\mu,\kpa)\in A,$  any $0<t\leq T,$
the following estimate holds:
\beq\label{EE-highest}
\begin{aligned}
  & \kappa \|(\ep \sigma,\ep u,\theta)\|_{\uinfco^m}^2+\kappa\|(\nabla(\ep\sigma,\theta), \ep \div u)\|_{\infco^{m-1}}^2\\
  &\,+\kappa\mu \big(\|\ep\nabla u\|_{\uhco^m}^2+\|\ep\nabla\div u\|_{\hco^{m-1}}^2\big)+\kappa^2\big(\|\nabla\theta\|_{\uhco^m}^2+\|
    \nabla^2\theta\|_{\hco^{m-1}}^2\big)\\
   &\lesssim Y_m^2(0)+(T^{\f{1}{2}}+\ep)\Lambda\big(\f{1}{c_0},\cA_{m,t}\big)\cE_{m,t}^2.
\end{aligned}
\eeq
\end{prop}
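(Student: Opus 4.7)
The plan is to apply the conormal derivatives $Z^I$ with $|I|\le m$ and at most $m-1$ time derivatives to the system \eqref{NCNS-S2}, perform a weighted $L^2$ energy identity on the triple $(\ep Z^I\sigma,\ep Z^I u,Z^I\theta)$, and exploit the acoustic cancellation between the singular terms $\nabla\sigma/\ep$ and $\div u/\ep$. The weights $\ep$ on $(\sigma,u)$ and no weight on $\theta$ are dictated by which equations carry the singular penalization; the further overall weight $\kappa^{\f12}$ seen on the left of \eqref{EE-highest} is what makes the thermal coupling $\tfrac{\kappa}{\ep}\Gamma\div(\beta\nabla\theta)$ in the $\sigma$-equation harmless, since the top-order contribution of $\nabla^2\theta$ can only be absorbed using the weighted thermal dissipation $\kappa^{\f12}\|\nabla^2\theta\|_{L_t^2H_{co}^{m-1}}$ already available in $\cE_{m,t}$.

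Testing the first two equations of \eqref{NCNS-S2} respectively against $\ep^{2}Z^I\sigma$ and $R\beta(\theta)\,\ep^{2}Z^Iu$, then the third against $Z^I\theta$, and using $u\cdot\bn=0$ on $\p\Omega$, the leading singular terms collapse through $\int\div(Z^I\sigma\, Z^Iu)/\ep=0$, leaving commutators
\[
\frac{\ep}{1}\int Z^I\sigma\,[Z^I,\div]u+\ep\int Z^Iu\cdot [Z^I,\nabla]\sigma,
\]
whose singular prefactor $1/\ep$ is absorbed by the $\ep^{2}$ weight carried on the energy $\f12\int(\ep Z^I\sigma)^{2}+|\ep Z^Iu|^{2}$. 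The parabolic contributions from $-\mu\Gamma\div\cL u$ and $-\kappa\Gamma\div(\beta\nabla\theta)$ deliver after an integration by parts the dissipation $\kappa\mu\|\ep\nabla Z^Iu\|_{L_t^{2}L^2}^{2}+\kappa^{2}\|\nabla Z^I\theta\|_{L_t^{2}L^2}^{2}$, which is exactly the $L_t^2$ content of \eqref{EE-highest}. The $L_t^\infty H_{co}^{m-1}$ control of $\nabla(\ep\sigma)$, $\ep\div u$ and $\nabla\theta$ is then obtained by solving algebraically in the equations one order lower: from $\eqref{NCNS-S2}_2$ one reads $\nabla\sigma=-\tfrac{\ep}{R\beta}(\pt+u\cdot\nabla)u+\ep\mu\Gamma\div\cL u$, from $\eqref{NCNS-S2}_1$ one reads $\div u$, and from $\eqref{NCNS-S2}_3$ one reads $\kappa\Gamma\div(\beta\nabla\theta)$, which yields $\nabla\theta$ after expressing $\p_\bn$ through $\div$ and tangential fields using the local frame \eqref{spatialvectorinlocal}.

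The main obstacles are the variable transport multipliers and the boundary contributions. Because $\beta(\theta)$ and $\Gamma(\ep\sigma)$ are not constant, the symmetric multiplier structure produces top-order commutators of the type $[Z^I,\tfrac{1}{R\beta(\theta)}]\pt u$, $[Z^I,\Gamma(\ep\sigma)]\div\cL u$, $[Z^I,u\cdot\nabla]$; each of these is moved to the right-hand side and controlled by $\Lambda(1/c_0,\cA_{m,t})\,\cE_{m,t}$ pointwise in time, the subsequent time integration against Cauchy--Schwarz producing the $T^{\f12}$ factor in \eqref{EE-highest}. Boundary terms arising from integrating the viscous stress by parts against $Z^I u$ are tamed using the reformulated Navier-slip condition \eqref{bdryconditionofu}, which replaces $\Pi\p_\bn u$ by $-2au+(D\bn)u$ on $\p\Omega$; the Neumann condition $\p_\bn\theta=0$ makes the analogous thermal trace vanish outright. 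Finally, the $\ep$ in $(T^{\f12}+\ep)$ reflects the fact that the singular source $\tfrac{\kappa}{\ep}\Gamma\div(\beta\nabla\theta)$ in the $\sigma$-equation, when paired with the $\ep^{2}$-weighted test function, produces a remainder proportional to $\ep\,\kappa^{\f12}\|\nabla^{2}\theta\|_{L_t^{2}H_{co}^{m-1}}\cdot\|\ep Z^I\sigma\|_{L_t^{\infty}L^{2}}$, the extra $\ep$ being the available smallness beyond what is absorbed by the thermal dissipation on the left-hand side.
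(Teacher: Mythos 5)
Your proposal runs into the structural obstruction that the paper explicitly flags at the start of Section~2: a direct energy estimate on \eqref{NCNS-S2} does not close, and the failure is precisely in the $\theta$-equation, not the $\sigma$-equation. Testing $\eqref{NCNS-S2}_3$ against $Z^I\theta$ produces the integral $\int Z^I\theta\,\div Z^Iu\,\d x\,\d s$, which carries no small factor and has no cancellation partner in your multiplier scheme. At top regularity level $m$ the quantity $\|\nabla u\|_{L_t^2H_{co}^m}$ is available only with the weight $\ep\mu^{1/2}\kappa^{1/2}$, and $\|\theta\|_{L_t^\infty H_{co}^m}$ only with $\kappa^{1/2}$, so that integral cannot be absorbed by $\cE_{m,t}^2$. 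The $\kappa\,\ep$-weighted thermal term from $\eqref{NCNS-S2}_1$ that you do discuss is indeed manageable once multiplied by $\ep^2 Z^I\sigma$; it is the dual term in $\eqref{NCNS-S2}_3$ that breaks the argument. There is also a secondary flaw in the choice of multiplier $R\beta(\theta)\,\ep^2 Z^Iu$ on the momentum equation: after integration by parts the singular pair $\ep\int Z^I\sigma\,\div Z^Iu$ and $\ep\int R\beta\,Z^Iu\cdot\nabla Z^I\sigma$ does not collapse into a perfect divergence, because $R\beta(\theta)$ is variable while the coefficient in front of $\div u/\ep$ in $\eqref{NCNS-S2}_1$ is constant.

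The paper's proof removes the first difficulty by replacing $\sigma$ with the combination $\vr=\f{\ep}{\gamma}\sigma-\f{(\gamma-1)C_v}{\gamma R}\theta$ introduced in \eqref{defvr} and rewriting the system as \eqref{newsys}. In that form the momentum equation carries two constant-coefficient singular gradients, $\f{\gamma}{\ep}\nabla\vr$ and $\f{(\gamma-1)C_v}{R\ep}\nabla\theta$; with the multipliers $\kappa\big(\gamma^2\vr^I,\,u_\ep^I,\,\f{(\gamma-1)C_v}{R}\theta^I\big)$ each singular term then pairs into a perfect divergence, cf.\ \eqref{sec2:eq2}, and the problematic $\div u$ in the $\theta$-equation cancels against $\f{(\gamma-1)C_v}{R\ep}\nabla\theta\cdot u_\ep$. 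Finally, your plan to ``read algebraically'' the $L_t^\infty H_{co}^{m-1}$ control of $\nabla(\ep\sigma),\ \ep\div u,\ \nabla\theta$ from the equations cannot work as stated: applying $(\ep\pt)^{m-1}$ to $-\f{\ep}{R\beta}\pt u$ forces control of $(\ep\pt)^m u$, which is exactly the top time derivative excluded from the $\underline{H}_{co}^m$ norm obtained in the first step. The paper instead runs a second, genuinely independent energy estimate, applying $\div Z^J$ to the $u_\ep$-equation and $Z^J\nabla$ to the $\vr$- and $\theta$-equations (system \eqref{sec2:eq9.75}), and this produces the $L_t^\infty$ bounds on the gradients.
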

As the first attempt, one may expect to prove 
the above proposition by performing
direct energy estimates on the system \eqref{NCNS-S2}. Nevertheless, as \eqref{NCNS-S2} is not skew-symmetric,  there shall be some difficulties when one
deals with the problematic terms $\f{\gamma-1}{\gamma}\kappa \Gamma \div(\beta\nabla\theta)$ (in the equation for $\sigma$) and $\div u$ (in the equation for $\theta$). Therefore, 
as in \cite{MR2211706}, we introduce the new unknown 
\beq\label{defvr}
\vr=\f{\ep}{\gamma}\sigma-\f{(\gamma-1)C_v}{\gamma R}\theta,
\eeq
which plays the same role as $\ep\rho.$  
Denote $u_{\ep}=\ep u$ for simplicity, we find after some computations that  $(\vr, u_{\ep},\theta)$ solves the equations which is amenable to the energy estimates: 
\beq\label{newsys}
\left\{
\begin{array}{l}
 (\pt+u\cdot\nabla)\vr+\f{1}{\gamma\ep}\div u_{\ep}=0,      \\[5pt]
 \f{1}{R\beta(\theta)}(\pt+u\cdot\nabla)u_{\ep}  +\f{\gamma}{\ep}\nabla\vr+\f{(\gamma-1)C_v}{R\ep}\nabla\theta-\mu\div\cL u_{\ep}=0,                  \\[5pt]
 \f{C_v}{R}(\pt+u\cdot\nabla)\theta+\f{\div u_{\ep}}{\ep}-\Gamma(\ep\sigma)\div(\beta\nabla\theta)=\mu\ep^2\Gamma(\ep\sigma)\cL u\cdot \mathbb{S}u. 
\end{array}
\right.
\eeq
We shall prove Proposition \ref{prop-highest}  by showing Propositions \ref{prop-highest-1} and \ref{prop-highest-2} which are presented respectively in the following two subsections.

\subsection{$\ep-$dependent  estimates-I}
\begin{prop}\label{prop-highest-1}
Under the same assumption as in Proposition \ref{prop-highest}, for any $\ep \in (0,1], (\mu,\kpa)\in A,$  any $0<t\leq T,$ it holds that:
\begin{align}\label{EE-highest1}
  & \kappa \|(\vr,u_{\ep},\theta)\|_{\uinfco^m}^2+\kappa\mu \|\nabla u_{\ep}\|_{\uhco^m}^2+\kappa^2\|\nabla\theta\|_{\uhco^m}^2\lesssim Y^2_m(0)+(T^{\f{1}{2}}+\ep)\Lambda\big(\f{1}{c_0},\cA_{m,t}\big)\cE_{m,t}^2.
\end{align}
\end{prop}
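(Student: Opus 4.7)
The plan is a weighted symmetric energy estimate on the reformulated system \eqref{newsys}, in the spirit of the whole-space symmetrization of \cite{MR2211706}. I apply $Z^I$ with $|I|\le m$ and at most $m-1$ scaled time derivatives (so that the resulting norm sits in $\underline{H}_{co}^m$), and multiply $\eqref{newsys}_1$, $\eqref{newsys}_2$, $\eqref{newsys}_3$ respectively by $\gamma^{2}Z^I\vr$, $Z^Iu_{\ep}$, and $\tfrac{(\gamma-1)C_v}{R}Z^I\theta$. These weights are chosen precisely so that, after integration by parts in $\Omega$, the four singular couplings $\tfrac{1}{\gamma\ep}\div u_{\ep}$, $\tfrac{\gamma}{\ep}\nabla\vr$, $\tfrac{(\gamma-1)C_v}{R\ep}\nabla\theta$ and $\tfrac{\div u_{\ep}}{\ep}$ pair off and cancel exactly at principal order. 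The boundary terms produced by these integrations by parts vanish at leading order since $Z_0=\ep\pt$ and the spatial $Z_k$ preserve $u_{\ep}\cdot\bn|_{\p\Omega}=0$ and $\p_{\bn}\theta|_{\p\Omega}=0$ up to lower-order contributions absorbable by trace estimates together with the Navier-slip condition \eqref{bdryconditionofu}; this replaces the para-differential machinery of \cite{MR2211706}.

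For the dissipation, $-\mu\int\div\cL(Z^Iu_{\ep})\cdot Z^Iu_{\ep}\,\d x$, after one integration by parts, produces a positive quadratic form bounded below by $c\mu\|\nabla Z^Iu_{\ep}\|_{L^2}^2$ via Korn's inequality, modulo a boundary trace controlled by \eqref{bdryconditionofu} and absorbable in $\cE_{m,t}$. Likewise $-\tfrac{(\gamma-1)C_v}{R}\int\Gamma(\ep\sigma)\div(\beta\na Z^I\theta)\cdot Z^I\theta\,\d x$ produces $c\kappa\|\nabla Z^I\theta\|_{L^2}^2$ (using $\beta\ge c_0$), its boundary trace being killed after commutation by $\p_{\bn}\theta|_{\p\Omega}=0$. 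Summing over admissible $I$ and integrating in time yields the unweighted identity
\begin{align*}
\|(\vr,u_{\ep},\theta)\|_{\uinfco^m}^2+\mu\|\na u_{\ep}\|_{\uhco^m}^2+\kappa\|\na\theta\|_{\uhco^m}^2\lesssim \|(\vr,u_{\ep},\theta)(0)\|_{\underline{H}_{co}^m}^2+\mathrm{errors}.
\end{align*}
Multiplying throughout by $\kappa$ and invoking the relation $\mu\sim\kappa$ from \eqref{assumption-mukpa} (which bounds $\kappa\|(\vr,u_{\ep})(0)\|^2$ by $\mu\|(\ep\sigma,\ep u)(0)\|^2$, a constituent of $Y_m^2(0)$, via $\ep\sigma=\gamma\vr+\tfrac{(\gamma-1)C_v}{R}\theta$ and $u_{\ep}=\ep u$) produces the left-hand side of \eqref{EE-highest1}.

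The main obstacle is bounding the error terms, which come from three families of commutators: transport commutators $[Z^I,u\cdot\na]V$ with $V\in\{\vr,u_{\ep},\theta\}$; superficially singular commutators $[Z^I,\ep^{-1}\div]u_{\ep}$, $[Z^I,\ep^{-1}\na]\vr$ and $[Z^I,\ep^{-1}\na]\theta$; and variable-coefficient commutators such as $[Z^I,(R\beta)^{-1}]\pt u_{\ep}$, $[Z^I,\Gamma(\ep\sigma)]\div(\beta\na\theta)$ and $[Z^I,\beta(\theta)]\Delta\theta$. The second family is only superficially singular: since $Z_0=\ep\pt$ commutes with $\ep^{-1}$ and each spatial $Z_k$ produces only lower-order commutators with $\na,\div$, all such contributions collapse onto conormal derivatives of $u_{\ep},\vr,\theta$ of order at most $m-1$, absorbable in $\cE_{m,t}$. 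The third family is the delicate one: one must use \eqref{newsys} itself to re-express $\pt u_{\ep}$ and $\Delta\theta$ in terms of controlled quantities, combined with Moser-type product estimates splitting each factor into an $L^\infty$ piece (bounded by $\cA_{m,t}$) and an $L^2$ piece (bounded by $\cE_{m,t}$). The small multiplicative factors $T^{1/2}$ (from $\int_0^t 1\cdot f\,\d s\le T^{1/2}\|f\|_{L_t^2}$) and $\ep$ (extractable from $u_{\ep}=\ep u$ and $\mathfrak{N}^{\ep}=O(\ep)$) then yield the prefactor in front of the error, giving the exact form $(T^{1/2}+\ep)\Lambda(1/c_0,\cA_{m,t})\cE_{m,t}^2$ on the right-hand side of \eqref{EE-highest1}.
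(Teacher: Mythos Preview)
Your proposal is correct and follows essentially the same approach as the paper: apply $Z^I$ to the symmetrized system \eqref{newsys}, multiply by the weights $(\gamma^2,\,1,\,\tfrac{(\gamma-1)C_v}{R})$, and exploit the exact cancellation of the singular cross terms after integration by parts. The paper multiplies by $\kappa$ from the outset rather than at the end, and writes the viscous term directly as $\mu\Gamma(\lambda_1\div Z^I\nabla u_\ep+(\lambda_1+\lambda_2)\nabla Z^I\div u_\ep)$ so that integration by parts gives $\lambda_1|Z^I\nabla u_\ep|^2+(\lambda_1+\lambda_2)|Z^I\div u_\ep|^2$ rather than invoking Korn, but these are cosmetic differences; the commutator and boundary-term analysis you outline matches the paper's decomposition into $\cK_0,\dots,\cK_6$ and the accompanying lemma on $(\cC_\vr^I,\cC_{u_\ep}^I,\cC_\theta^I)$.
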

\begin{proof}

Applying the vector field $Z^I, (|I|\leq m-1,$ or $Z^I=Z_{j}Z^{\tilde{I}},|\tilde{I}|\leq m-1, j=1,2\cdots M$) on the system \eqref{newsys} and denoting $(\vr^I,u_{\ep}^I,\theta^I)=Z^I(\vr, u_{\ep},\theta),$ we find the following system:
\beq\label{sec2:eq1}
\left\{
\begin{array}{l}
 (\pt+u\cdot\nabla)\vr^I+\f{1}{\gamma\ep}\div u_{\ep}^I=\cC_{\vr}^I,      \\[5pt]
 \f{1}{R\beta(\theta)}(\pt+u\cdot\nabla)u_{\ep}^I +\f{\gamma}{\ep}\nabla\vr^I+\f{(\gamma-1)C_v}{R\ep}\nabla\theta^I-\mu \Gamma(\lambda_1\div Z^I\nabla u_{\ep}+(\lambda_1+\lambda_2)\na Z^I\div u_{\ep})
 =\cC_{u_{\ep}}^I,                  
 \\[5pt]
 \f{C_v}{R}(\pt+u\cdot\nabla)\theta^I+\f{\div u_{\ep}^I}{\ep}-\Gamma(\ep\sigma)\div(\beta Z^I\nabla\theta)=\cC_{\theta}^I,
\end{array}
\right.
\eeq
where $(\cC_{\vr}^I,\cC_{u_{\ep}}^I,\cC_{\theta}^I)$ are defined as: 
\beq\label{com-vr}
\cC_{\vr}^I=-[Z^I,u\cdot\nabla]\vr-\f{1}{\gamma}[Z^{I},\div]u,
\eeq
\beq\label{com-uep}
\begin{aligned}
\cC_{u_{\ep}}^I=& \underbrace{-\f{1}{R\beta(\theta)}[Z^{I},u\cdot\nabla]u_{\ep}}_{\cC_{u_{\ep,1}}^I}\underbrace{-\f{1}{\beta(\theta)}[Z^{I},\beta(\theta)\nabla]\sigma+ \f{\mu}{\beta(\theta)}[Z^I,\beta\Gamma]\div\cL u_{\ep}}_{\cC_{u_{\ep,2}}^I}\\
&+\underbrace{\mu \Gamma\big(\lambda_1[Z^I,\div]\nabla u_{\ep}+(\lambda_1+\lambda_2)[Z^I,\nabla]\div u_{\ep}\big)}_{\cC_{u_{\ep,3}}^I},
\end{aligned}
\eeq
\beq\label{com-theta}
\begin{aligned}
\cC_{\theta}^I=& -\f{C_v}{R}[Z^{I},u\cdot\nabla]\theta-[Z^{I},\div]u+\mu\ep^2 Z^I(\Gamma
\cL u\cdot \mathbb{S}u)\\
&+\kappa [Z^I,\Gamma\beta]\Delta\theta+\kappa\Gamma\beta[Z^I,\div]\nabla \theta+\kappa Z^I(\Gamma\nabla\beta\cdot\nabla\theta)-\kappa\Gamma\nabla\beta\cdot Z^I\nabla\theta. 
\end{aligned}
\eeq
Multiplying  the equation \eqref{sec2:eq1} by $\kappa(\gamma^2\vr^I,u_{\ep}^I,\f{(\gamma-1)C_v}{R}\theta^I)$ and integrating in space and time, one finds the identity: 
\begin{align}\label{sec2:eq2}
 &  \f{\kappa}{2}\int_{\Omega} \big(\gamma^2|\vr^I|^2+\f{1}{R\beta(\theta)}|u_{\ep}^I|^2+\f{(\gamma-1)C_v^2}{R^2}|\theta^I|^2\big)(t)\d x+\f{(\gamma-1)C_v}{R}\kappa^2 \izt\iomega\beta\Gamma |Z^I\nabla\theta|^2\d x\d s  \notag\\
 & \qquad\qquad\qquad\qquad\qquad\qquad\qquad +\mu\kappa\izt\iomega \lambda_1\Gamma|Z^I\nabla u|^2+(\lambda_1+\lambda_2)\Gamma|Z^I\div u|^2\,\d x\d s \notag\\
 &=\cK_0+\cK_1+\cdots+\cK_6,
\end{align}
where
\begin{align}
 \cK_0&=\f{\kappa}{2}\int_{\Omega} \big(\gamma^2|\vr^I|^2+\f{1}{R\beta(\theta)}|u_{\ep}^I|^2+\f{(\gamma-1)C_v^2}{R^2}|\theta^I|^2\big)(0)\,\d x,\notag\\
 \cK_1&=\f{\kappa}{2}\izt\int_{\Omega}\big(\pt(\f{1}{R\beta})+\div(\f{1}{R\beta}u)\big)|u_{\ep}^{I}|^2+\div u\big(\gamma^2|\vr^I|^2+\f{(\gamma-1)C_v^2}{R^2}|\theta^I|^2\big)\,\d x\d s,\notag\\
\cK_2&= \kappa\mu\izt\int_{\p\Omega}(\lambda_1+\lambda_2)\Gamma Z^I\div u_{\ep} (u_{\ep}^I\cdot\bn)+\lambda_1\Gamma (Z^I\nabla u_{\ep}\cdot\bn)\cdot u_{\ep}^I\, \d S_y\d s=\colon\cK_{21}+\cK_{22},\notag\\
\cK_3&=\kappa^2\f{(\gamma-1)C_v}{R}\int_0^t\int_{\p\Omega}\beta\Gamma (Z^I\nabla\theta\cdot\bn)\theta^I\,\d S_y \d s,\notag\\
\cK_4&=-\kappa^2\f{(\gamma-1)C_v}{R}\izt\int_{\Omega}\beta \theta^I\nabla\Gamma\cdot Z^I\nabla\theta+\beta\Gamma Z^I\nabla\theta\cdot[\nabla,Z^I]\theta \,\d x\d s,\notag\\
\cK_5&=-\kappa\mu\izt\int_{\Omega} (\lambda_1+\lambda_2) (u_{\ep}^I\cdot \nabla\Gamma+\Gamma[\div, Z^I]u_{\ep}) Z^I\div u_{\ep} \notag\\
&\qquad\qquad\qquad\qquad\qquad
+\lambda_1 \big((\na\Gamma\cdot Z^I \na u_{\ep})\cdot u_{\ep}^I+\Gamma Z^I\na u_{\ep}\cdot [\na, Z^I] u_{\ep}\big)
 \,\d x\d s,\notag\\
\cK_6&=\kappa \izt\int_{\Omega} \gamma^2 \cC_{\vr}^I\vr^I+\f{(\gamma-1)C_v}{R}\cC_{\theta}^{I}\theta^I+
\cC_{u_{\ep}}^I\cdot u_{\ep}^I\,\d x\d s.
\label{defk5}
\end{align}
Note that hereafter, $\d S_y$ denotes the surface measure of $\Omega.$ 
We will control $\cK_0-\cK_6$ term by term. First of all,
it follows from the assumption \eqref{preasption} and the property \eqref{preasption1} that:
\begin{align}\label{Gamma-Beta-infty}
 c_0 \leq  \beta(\theta)(t,x),\, \Gamma(\ep\sigma)(t,x)\leq {1}/{c_0}, \quad \,\forall (t,x)\in [0,T]\times \Omega.
\end{align}
Consequently, we have by the definition of $Y_m(0)$ in \eqref{initialnorm} and Remark \ref{rmkmuapproxkpa} that, for any $(\mu,\kpa)\in A, \ep\in (0,1],$
\beq\label{ck0}
\cK_0\lesssim \|\kpa^{\f{1}{2}}(\ep u,\ep\sigma, \theta)(0)\|_{H_{co}^m}^2 \lesssim  Y^2_m(0).
\eeq
Thanks to \eqref{Gamma-Beta-infty}, 
it holds also that 
\beqs
\il(\pt(\f{1}{R\beta}),\div(\f{1}{R\beta}u),\div u)\il_{0,\infty,t}\lesssim \Lambda\big(\f{1}{c_0},
\il(\theta,u,\pt\theta,\nabla\theta,\div u)\il_{0,\infty,t}\big)\lesssim \lat. 
\eeqs
The term  $\cK_1$ can thus be controlled as:
\beq\label{ck1}
\cK_1\lesssim T\kappa\|(\vr^I,u_{\ep}^I,\theta^I)\|_{L_t^{\infty}L^2}^2\lat\lesssim T \lat\cE_{m,t}^2.
\eeq
In order to control the term $\cK_2,$ we estimate successively $\cK_{21}, \cK_{22}.$ First, for $\cK_{21},$  in light of the fact $(\ep\pt)^ k (u\cdot\bn)|_{\p\Omega}=0$ for $k\leq m-1,$ we may assume 
$Z^I$ contains at least one spatial tangential derivative, say $Z^I=\p_y Z^{\tilde{I}}.$ By using the duality between $H^{-\f{1}{2}}(\p\Omega)$ and $H^{\f{1}{2}}(\p\Omega)$ and the trace inequality \eqref{normaltraceineq},
one controls $\cK_{21}$ as: 
\begin{align}\label{k21}
  \cK_{21}&=\kappa\mu(\lambda_1+\lambda_2)\izt\int_{\p\Omega} \Gamma Z^I\div u_{\ep} [Z^I,\bn\cdot]u_{\ep} \d S_y\d s\notag\\
  &\lesssim \kappa\mu |\div u_{\ep}|_{L_t^2\tilde{H}^{m-\f{1}{2}}}|[Z^I,\bn\cdot]u_{\ep}|_{L_t^2{H}^{\f{1}{2}}}\\
  &\lesssim \ep \|(\kpa\mu)^{\f{1}{2}}(\nabla\div u_{\ep},\div u_{\ep})\|_{\hco^{m-1}}  \|(\kpa\mu)^{\f{1}{2}}(\nabla u,u)\|_{\hco^{m-1}}\lesssim \ep \cE_{m,t}^2. \notag
\end{align}
We refer to  \eqref{bdynorm} for the definition of the norm $\hcob^{m-\f{1}{2}}.$
Second, for $\cK_{22},$ we split it further into two terms:
\begin{align*}
   \cK_{22}&=\kappa\mu\lambda_1 \izt\int_{\p\Omega}  \Gamma(Z^I\nabla u_{\ep}\cdot\bn)\cdot u_{\ep}^I\, \d S_y\d s\\
   &=\kappa\mu\lambda_1 \izt\int_{\p\Omega}\Gamma\big([\bn\cdot,Z^I]\nabla u_{\ep}\cdot u_{\ep}^I +Z^I\Pi \p_{\bn}u_{\ep}\Pi u_{\ep}^I +[\Pi,Z^I]\p_{\bn}u_{\ep}\cdot\Pi u_{\ep}^I\big)\,\d S_y\d s\\
   &\quad+\kappa\mu\lambda_1 \izt\int_{\p\Omega}\Gamma Z^I \p_{\bn}u_{\ep}\cdot \bn (u_{\ep}^I\cdot\bn) \,\d S_y\d s=\colon \cK_{221}+\cK_{222}.
\end{align*}
Thanks to the Cauchy-Schwarz inequality, the term $\cK_{221}$ can be bounded as: 
\beqs
\cK_{221}\lesssim \kappa\mu |u_{\ep}^I|_{L_t^2L_y^2}\big(|\nabla u_{\ep}|_{\hcob^{m-1}}+|Z^I\Pi \p_{\bn}u_{\ep}|_{L_t^2L_y^2}\big).
\eeqs
By the virtue of the identity
\beq\label{normalofnormalder}
\partial_{\bn} u\cdot \bn=\div u-(\Pi\partial_{y^1}u)^1-(\Pi\partial_{y^2}u)^2,
\eeq
and the boundary condition \eqref{bdryconditionofu}, one finds that:
\beqs
|\nabla u_{\ep}|_{\hcob^{m-1}}+|Z^I\Pi \p_{\bn}u_{\ep}|_{L_t^2L^2}
\lesssim | (u_{\ep},\p_y u_{\ep})|_{\hcob^{m-1}}+|
\div u_{\ep}|_{\hcob^{m-1}}.
\eeqs
Application of the trace inequality \eqref{traceL2} then yields:
\begin{align*}
\cK_{221}&\lesssim\kappa\mu |u_{\ep}^I|_{L_t^2L_y^2}\big(|(u_{\ep},\p_y u_{\ep})|_{\hcob^{m-1}}+|\div u_{\ep}|_{\hcob^{m-1}}\big)\\
&\lesssim \ep (\kappa\mu)^{\f{1}{2}}\big(\|\nabla u_{\ep}\|_{\uhco^m}+ \|\nabla (\div u_{\ep}, u)\|_{\hco^{m-1}} \big)
(\kappa\mu)^{\f{1}{2}}\big(\|u\|_{\uhco^m}+\|\div u\|_{\hco^{m-1}}\big)\\
&
\lesssim \ep \cE_{m,t}^2,
\end{align*}
where $\|\cdot\|_{\uhco^m}$ is defined in \eqref{defuhco} that precludes the highest time derivatives $\|(\ep\pt)^m\cdot\|_{L_t^2L^2}.$ 
By using the identity \eqref{normalofnormalder}, the boundary condition \eqref{bdryconditionofu} and the
trace inequality \eqref{normaltraceineq}, 
the term $\cK_{222}$ can be treated in the similar way as  $\cK_{21}:$
\begin{align*}
  \cK_{222}&\lesssim   \kappa \mu |\p_{\bn} u_{\ep}|_{_{L_t^2\tilde{H}^{m-\f{1}{2}}}} 
  |[Z^I,\bn\cdot]u_{\ep}|_{L_t^2{H}^{\f{1}{2}}}\notag\\
  &\lesssim \ep  |(\kappa \mu)^{\f{1}{2}}(u_{\ep},\p_y u_{\ep},\div u_{\ep})|_{_{L_t^2\tilde{H}^{m-\f{1}{2}}}}|(\kappa \mu)^{\f{1}{2}} u|_{\hcob^{m-\f{1}{2}}}\\
  &\lesssim 
  \ep \big(\|(\kappa\mu)^{\f{1}{2}}\nabla u_{\ep}\|_{\uhco^m}^2+\|(\kappa\mu)^{\f{1}{2}}(\na\div u_{\ep}, \na u, u)\|_{\hco^{m-1}}^2\big) 
\lesssim \ep\cE_{m,t}^2.
\end{align*}
The above two inequalities lead to that
$\cK_{22} \lesssim\ep\cE_{m,t}^2,$
which, combined with \eqref{k21}, yields:
\beq\label{ck2}
\cK_2\lesssim \ep\cE_{m,t}^2.
\eeq

Let us proceed to estimate $\cK_3.$ In view of the Neumann boundary condition $\p_{\bn}\theta|_{\p\Omega}=0$, we can assume that $Z^I$ contains at least one spatial tangential derivative $\p_y$ (say $Z^I=\p_y Z^{\tilde{I}},|\tilde{I}|\lesssim |I|-1$), otherwise it vanishes. Therefore,
\begin{align*}
    \cK_3&\lesssim \kappa^2 \big|[Z^{I},\bn]\nabla\theta\big|_{L_t^2L^2}|\p_y \theta|_{\hcob^{m-1}}
    \lesssim |\kappa\nabla\theta|_{\hcob^{m-1}}|\kappa \p_y \theta|_{\hcob^{m-1}}.
\end{align*}
Applying again the trace inequality \eqref{traceL2}, one can control $\cK_3$ in the following way:
\begin{align}
     \cK_3 &\lesssim \big(\|\kappa\nabla^2\theta\|_{\hco^{m-1}}\|\kappa \nabla\theta\|_{\hco^{m-1}}+\|\kappa\nabla\theta\|_{\hco^{m-1}}^2\big)^{\f{1}{2}}\cdot\notag\\
     &\qquad \big(\|\kappa\nabla \p_y \theta\|_{\hco^{m-1}}\|\kappa\p_y\theta\|_{\hco^{m-1}}+\|\kappa\p_y\theta\|_{\hco^{m-1}}^2\big)^{\f{1}{2}}\lesssim T^{\f{1}{2}}\cE_{m,t}^2.\notag
\end{align}
Note that in the last inequality, we have used the fact: $$\|\kappa\na\theta\|_{\hco^{m-1}}\lesssim T^{\f{1}{2}}\|\kpa\na \theta\|_{L_t^{\infty}H_{co}^{m-1}}\lesssim T^{\f{1}{2}}\cE_{m,t}.$$
Next, for the term $\cK_4,$ we use the Cauchy-Schwarz inequality to find:
\begin{align}\label{ck4}
\cK_4&\lesssim (\| \kappa \theta^I\|_{L_t^{2}L^2}+\|\kappa[\nabla,Z^I]\theta\|_{L_{t}^{2}L^2})\|\kappa Z^I\nabla\theta\|_{L_t^2L^2}
\Lambda\big(\f{1}{c_0}, \il \ep\sigma\il_{1,\infty,t} \big)
\notag\\
&\lesssim
T^{\f{1}{2}}(\| \kappa \theta^I\|_{L_t^{\infty}L^2}+\|\kappa\nabla\theta\|_{\infco^{m-1}})\|\kappa Z^I\nabla\theta\|_{L_t^2L^2}\Lambda\big(\f{1}{c_0}, \il \ep\sigma\il_{1,\infty,t} \big)\\
&\lesssim T^{\f{1}{2}}\lat\cE_{m,t}^2.\notag
\end{align}
Let us remark that when controlling the  term
$[\nabla,Z^I]\theta,$ we have used the following identity 
which can be shown by induction:
\beq\label{comu}
[Z^{I},\partial_i]=\sum_{j=1}^3\sum_{|{J}|\leq|I|-1} c_{I,{J}} Z^{{J}}\partial_j=\sum_{j=1}^3\sum_{|{{J}}|\leq|I|-1} d_{I,{J}}\partial_j Z^{{J}}
\eeq
where $J$ is a  $(M+1)$ multi-index and $c_{I,J}, d_{I,J}$ are smooth functions that depend on $I$, $J$,  $i$ and the derivatives (up to order $|I|$) of $\nabla \phi$,   $\p_i$ is the derivation in the standard Euclidean coordinates. 

Following the similar arguments as in the estimate of $\cK_5$, we can control the next term $\cK_5$ as: 
\begin{align}\label{ck5}
    \cK_5\lesssim \ep\|(\kpa\mu)^{\f{1}{2}}\na u_{\ep}\|_{\uhco^{m}}\|(\kpa\mu)^{\f{1}{2}}(u, \na u)\|_{\hco^{m-1}}\lat\lesssim \ep\lat\cE_{m,t}^2.
\end{align}
Finally, for the term $\cK_6$ which is defined in \eqref{defk5},  we use again the Cauchy-Schwarz inequality to get:
\begin{align*}
    \cK_{6}\lesssim T^{\f{1}{2}}\|\kpa^{\f{1}{2}}(\vr^I,u_{\ep}^I,\theta^I)\|_{L_t^{\infty}L^2}\|(\cC_{\vr}^I,\cC_{u_{\ep}}^I,\cC_{\theta}^I)\|_{L_t^{2}L^2}.
\end{align*}
It will be shown very soon in the next lemma that:
$$\|\kappa^{\f{1}{2}}(\cC_{\vr}^I,\cC_{u_{\ep}}^I,\cC_{\theta}^I)\|_{L_t^{2}L^2}\lesssim \Lambda\big(\f{1}{c_0},\cA_{m,t}\big)\cE_{m,t},$$
which, together with the previous inequality, yields:
\begin{align}\label{ck6}
    \cK_{6}\lesssim T^{\f{1}{2}}\Lambda\big(\f{1}{c_0},\cA_{m,t}\big)\cE_{m,t}^2.
\end{align}
Collecting the estimates \eqref{ck0}-\eqref{ck1}, \eqref{ck2}-\eqref{ck4} and \eqref{ck5}, \eqref{ck6},
we find \eqref{EE-highest1}  by using the energy identity \eqref{sec2:eq2}.
\end{proof}
\begin{lem}
Under the same assumptions 
as in the the Proposition \ref{prop-highest}
 and given $Z^I=(\ep\pt)^{k},k\leq m-1$ or $Z^I=Z_{j}Z^{\tilde{I}},|I|\leq m-1, j=1,\cdots M,$  the following estimate holds:
\beqs
\kappa^{\f{1}{2}}\|(\cC_{\vr}^I,\cC_{u_{\ep}}^I,\cC_{\theta}^I)\|_{L_t^{2}L^2}\lesssim \Lambda\big(\f{1}{c_0},\cA_{m,t}\big)\cE_{m,t},
\eeqs
where $\cC_{\vr}^I,\cC_{u_{\ep}}^I,\cC_{\theta}^I$ are defined in \eqref{com-vr}-\eqref{com-theta}.
\end{lem}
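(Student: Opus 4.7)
The plan is a standard multilinear commutator expansion combined with Moser-type product estimates in conormal Sobolev spaces, with careful bookkeeping of the weights $\ep$, $\mu$, $\kpa$ so that the external weight $\kpa^{1/2}$ can be absorbed against the norms built into $\cE_{m,t}$. The workhorse is the Leibniz identity
\beq\label{sketch-leibniz}
[Z^I,f]\,g=\sum_{\substack{I_1+I_2=I\\ |I_1|\geq 1}}C_{I_1,I_2}\,Z^{I_1}f\cdot Z^{I_2}g,
\eeq
combined with \eqref{comu} to convert $[Z^I,\na]$ and $[Z^I,\div]$ into expressions of the same shape. For each resulting product I perform a low-high splitting: when $|I_1|\leq[m/2]$ I place $Z^{I_1}f$ in $L_{t,x}^{\infty}$ (controlled by the conormal Sobolev embedding, ultimately by $\lat$), and put the remaining factor in $L_t^2L^2$ via one of the norms appearing in $\cE_{m,t}$; when $|I_2|\leq[m/2]$ the roles are reversed.

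The pieces of $\cC_{\vr}^I$ and the transport/lower-order pieces of $\cC_{u_{\ep}}^I$, $\cC_{\theta}^I$ --- namely $[Z^I,u\cdot\na]\vr$, $[Z^I,\div]u$, $[Z^I,\beta(\theta)\na]\sigma$, $[Z^I,u\cdot\na](u_{\ep},\theta)$ and $Z^I(\Gamma\na\beta\cdot\na\theta)-\Gamma\na\beta\cdot Z^I\na\theta$ --- only involve at most first derivatives of $(\vr,u_{\ep},\sigma,\theta)$, which lie in $L_t^{\infty}H_{co}^{m-2}$ by $\cE_{m,t}$. After multiplying by $T^{1/2}$ (from $L_t^{\infty}\hookrightarrow L_t^2$) and by the benign factor $\kpa^{1/2}$, they are all controlled by $T^{1/2}\lat\,\cE_{m,t}$. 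The quadratic source $\mu\ep^2 Z^I(\Gamma\,\cL u\cdot\mathbb S u)$ is a standard Moser-type product in $H_{co}^{m-1}$: placing one factor in $L_{t,x}^{\infty}$ (giving a term like $\il\na u\il_{[m/2],\infty,t}$, controlled by $\cA_{m,t}$) and the other in $L_t^2L^2$ via $\mu^{1/2}\|\na u\|_{L_t^2H_{co}^{m-1}}\lesssim\cE_{m,t}$, the extra $\ep^2\mu^{1/2}$ leaves generous room for the loss $\kpa^{1/2}$.

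The only delicate step is the treatment of the top-order dissipative commutators $\mu[Z^I,\beta\Gamma]\div\cL u_{\ep}$, $\mu\Gamma[Z^I,\div]\na u_{\ep}$, $\mu\Gamma[Z^I,\na]\div u_{\ep}$ and $\kpa[Z^I,\Gamma\beta]\Delta\theta$, $\kpa\Gamma\beta[Z^I,\div]\na\theta$, since $\na^2u_{\ep}=\ep\na^2u$ is only controlled uniformly with the weight $\ep\mu^{1/2}$ in $L_t^2H_{co}^{m-1}$ and $\na^2\theta$ only with $\kpa^{1/2}$ in the same space. Using \eqref{sketch-leibniz} I force $|I_2|\leq m-2$ on the second-derivative factor, place the coefficient factor in $L_{t,x}^{\infty}$, and schematically obtain
\beals
\kpa^{1/2}\mu\bigl\|Z^{I_1}(\beta\Gamma)\cdot Z^{I_2}\na^2u_{\ep}\bigr\|_{L_t^2L^2}
&\lesssim \lat\cdot\ep\,\kpa^{1/2}\mu\,\|\na^2 u\|_{L_t^2H_{co}^{m-2}},\\
\kpa^{1/2}\cdot\kpa\bigl\|Z^{I_1}(\Gamma\beta)\cdot Z^{I_2}\Delta\theta\bigr\|_{L_t^2L^2}
&\lesssim \lat\cdot\kpa^{3/2}\,\|\na^2\theta\|_{L_t^2H_{co}^{m-2}}.
\eeals
By Remark \ref{rmkmuapproxkpa} ($\kpa\sim\mu$), the first right-hand side is absorbed into $\ep\mu^{1/2}\|\na^2u\|_{L_t^2H_{co,\sqrt\ep}^{m-1}}\lesssim\cE_{m,t}$, and the second into $\kpa^{1/2}\|\na^2\theta\|_{L_t^2H_{co,\sqrt\kpa}^{m-1}}\lesssim\cE_{m,t}$. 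In the opposite regime $|I_1|>[m/2]$ we have $|I_2|\leq[m/2]\leq m-5$ (since $m\geq 7$), so $\na^2 u$ or $\na^2\theta$ appears in $L_{t,x}^{\infty}$ at an order at which the weighted $L^{\infty}$ norms of $\cA_{m,t}$ in \eqref{defcAmt} apply, again exploiting $\kpa\sim\mu$. Summing over all the contributions of \eqref{com-vr}--\eqref{com-theta} yields the announced bound; the main obstacle throughout is precisely the weight-matching in this top-order step.
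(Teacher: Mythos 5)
Your overall plan coincides with the paper's: Leibniz/Moser expansion of the commutators combined with \eqref{comu}, low--high splitting in the conormal scale, and careful matching of the weights $\ep,\mu,\kpa$ against the norms appearing in $\cE_{m,t}$ and $\cA_{m,t}$, with $\kpa\sim\mu$ invoked for the dissipative second-derivative terms. The paper encapsulates the $L^\infty$ bookkeeping in \eqref{useful-inftyhalf}, but the substance is the same.

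One numerical point in your treatment of the top-order dissipative commutators should be tightened, since it fails literally at the smallest admissible $m$. You write ``$|I_1|>[m/2]$ implies $|I_2|\leq [m/2]\leq m-5$ since $m\geq 7$''; but $[m/2]\leq m-5$ is false for $m=7,8,9$ (e.g.\ $m=7$ gives $3\not\leq 2$). What saves the conclusion is the sharper count: from $|I_1|\geq [m/2]+1$ and $|I|\leq m-1$ one gets $|I_2|\leq m-2-[m/2]$, and $m-2-[m/2]\leq m-5\iff [m/2]\geq 3\iff m\geq 6$, so $|I_2|\leq m-5$ does hold for all $m\geq 7$, which is exactly the order at which the weighted $\il\cdot\il_{m-5,\infty}$ entries of $\cA_{m,t}$ apply. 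Likewise, the bound $\il\na u\il_{[m/2],\infty,t}\lesssim\cA_{m,t}$ you invoke for the quadratic source should read $\il\na u\il_{[m/2]-1,\infty,t}$, which is the form guaranteed by \eqref{useful-inftyhalf} (again the off-by-one matters precisely at $m=7$). With these corrections the weight absorption goes through exactly as you describe: $\ep\kpa^{1/2}\mu\|\na^2u\|_{L^2_tH_{co}^{m-2}}\lesssim\ep^{1/2}\mu\|\na^2u\|_{L^2_tH_{co}^{m-2}}\lesssim\cE_{m,t}$, and $\kpa^{3/2}\|\na^2\theta\|_{L^2_tH_{co}^{m-2}}\lesssim\kpa\cdot\kpa^{1/2}\|\na^2\theta\|_{L^2_tH_{co}^{m-2}}\lesssim\cE_{m,t}$.
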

\begin{proof}
We begin with the estimate $\cC_{\vr}^I.$ For the first term $[Z^I,u\cdot\nabla]\vr,$
we write:
$$[Z^I,u\cdot\nabla]\vr=[Z^I,u,\nabla\vr]+u\cdot[Z^I,\nabla]\vr+\nabla\vr\cdot Z^I u,$$ where $$[Z^I,u,\nabla\vr]=Z^I(u\cdot\nabla\vr)-Z^I u\cdot\nabla\vr+u\cdot Z^I\nabla\vr.$$
One can thus use 
the identities \eqref{comu} and \eqref{id-product-com} to obtain that:
\beq\label{sec2:eq100}
\begin{aligned}
&\kappa^{\f{1}{2}}\|[Z^I,u,\nabla\vr]+u\cdot[Z^I,\nabla]\vr\|_{L_t^2L^2}\\
&\lesssim \kappa^{\f{1}{2}}
(\|u\|_{\hco^{m-1}}+\|\nabla\vr\|_{\hco^{m-1}})\Lambda\big(\f{1}{c_0},\il\nabla\vr\il_{[\f{m}{2}]-1,\infty,t}+\il u\il_{[\f{m+1}{2}],\infty,t}\big)\lesssim \lat\cE_{m,t}.
\end{aligned}
\eeq
Note that  in view of the definition of $\cA_{m,t}=\cA_{m,t}(\sigma, u,\theta)$ in \eqref{defcEmt}, it holds by the assumption $m\geq 7$ that:
\beq\label{useful-inftyhalf}
\begin{aligned}
    \il(\na(\theta, u), \na^2(\kpa\theta, \ep\mu^{\f{1}{2}}u))\il&_{[\f{m}{2}]-1,\infty,t}+ \il\na(\sigma, \kpa^{\f{1}{2}}\theta, \ep\mu^{\f{1}{2}}u)\il_{[\f{m}{2}],\infty,t}
    +\il (\sigma, u, \theta,\kpa\na\theta) \il_{[\f{m+1}{2}],\infty,t}\lesssim \cA_{m,t}.
\end{aligned}
\eeq
Moreover, it is direct to see that $\|\kappa^{\f{1}{2}}\nabla\vr \cdot Z^I u\|_{L_t^2L^2}\lesssim \|\kappa^{\f{1}{2}}u\|_{\uhco^{m}}\il\nabla\vr\il_{1,\infty,t},$ which, combined with \eqref{sec2:eq100},
yields that:
\beqs
\|\kpa^{\f{1}{2}}[Z^I,u\cdot\nabla]\vr\|_{L_t^2L^2}\lesssim \lat\cE_{m,t}.
\eeqs
Next, by using again the identity \eqref{comu}, we find that:
\begin{align*}
\|\kpa^{\f{1}{2}}[Z^{I},\div] u\|_{\hco^{m-1}}\lesssim \|\kpa^{\f{1}{2}}\nabla u\|_{\hco^{m-1}}\lesssim \cE_{m,t}.
\end{align*}
Combining the above two inequalities, we finish the estimate of $\cC_{\vr}^{I}$ and find that:
\beq\label{sec2:eq5}
\|\kpa^{\f{1}{2}}\cC_{\vr}^{I}\|_{L_t^2L^2}\lesssim \lat\cE_{m,t}.
\eeq

We now proceed to control the commutator $\cC_{u_{\ep}}^I=\cC_{u_{\ep,1}}^I+\cC_{u_{\ep,2}}^I+\cC_{u_{\ep,3}}^I$ defined in \eqref{com-uep}.
Simply replacing $\vr$ by $u_{\ep}$, the term $\cC_{u_{\ep,1}}^I$ can be controlled in a similar way as the first term of $\cC_{\vr}^I:$ 
\beq\label{sec2:eq4}
\|\kpa^{\f{1}{2}}\cC_{u_{\ep,1}}^I\|_{L_t^2L^2}\lesssim \lat\cE_{m,t}.
\eeq
For $\cC_{u_{\ep,1}}^I,$ we use the commutator estimate \eqref{roughcom} and the identity \eqref{comu}
to get that:
\begin{align*}
    \|\kpa^{\f{1}{2}}\cC_{u_{\ep,2}}^I\|_{L_t^2L^2}&\lesssim \kpa^{\f{1}{2}}\big(\|(Z\beta, Z(\Gamma\beta))\|_{\uhco^{m-1}}+
    \|(\nabla\sigma,\mu\div\cL u_{\ep})\|_{\hco^{m-1}}\big)\\
   & \quad \cdot\Lambda\big(\f{1}{c_0}, \il (Z\beta, Z(\Gamma\beta))\il_{[\f{m-1}{2}],\infty,t}+\il(\nabla\sigma,\mu\div\cL u_{\ep})\il_{[\f{m}{2}]-1,\infty,t}\big).
\end{align*}
Applying the estimates \eqref{esofGamma-1}, \eqref{esofGamma-2} and using \eqref{useful-inftyhalf}, we then derive that:
\begin{align*}
    \|\kpa^{\f{1}{2}}\cC_{u_{\ep,2}}^I\|_{L_t^2L^2}&\lesssim \kpa^{\f{1}{2}}\big(\|(\theta, \ep\sigma)\|_{\uhco^m}+\|(\nabla\sigma,\ep\mu\nabla^2 u)\|_{\hco^{m-1}}\big)\lat\lesssim \lat\cE_{m,t}.
\end{align*}
Next, in view of the identity \eqref{comu}, the term $\cC_{u_{\ep,3}}^I$ can be controlled as:
 \begin{align*}
     \|\kpa^{\f{1}{2}}\cC_{u_{\ep,3}}^I\|_{L_t^2L^2}\lesssim \kappa^{\f{1}{2}}\|\ep\mu\nabla^2 u\|_{\hco^{m-1}}\lesssim \kappa^{\f{1}{2}}\cE_{m,t}.
 \end{align*}
 The previous two inequalities, together with \eqref{sec2:eq4}, yield:
 \beq\label{sec2:eq6}
  \|\kpa^{\f{1}{2}}\cC_{u_{\ep}}^I\|_{L_t^2L^2}\lesssim \lat\cE_{m,t}.
 \eeq 
 
 It remains to deal with the term $\cC_{\theta}^I$ defined in \eqref{com-theta}. The first two terms can be controlled in a similar way as $\cC_{\vr}^I,$ the other terms can be handled  with the help of product estimate \eqref{roughproduct1}, commutator estimate \eqref{roughcom} as well as the properties for $\Gamma(\ep\sigma),\beta(\theta)$ in Corollary \ref{cor-gb}. 
 For instance, the term
 $\kappa[Z^I,\Gamma\beta]\Delta\theta$ can be bounded in the following way:
 \begin{align*}
   \kpa^{\f{1}{2}}  \|\kappa[Z^I,\Gamma\beta]\Delta\theta\|_{L_t^2L^2}&\lesssim 
  \big( \kappa^{\f{1}{2}}\|Z(\Gamma\beta)\|_{\uhco^{m-1}}+\|\kappa^{\f{3}{2}}\Delta\theta\|_{\hco^{m-1}}\big)\\
  &\qquad \cdot \Lambda\big(\f{1}{c_0}, \il Z(\Gamma\beta)\il_{[\f{m-1}{2}],\infty,t}+\il\kappa\Delta\theta\il_{[\f{m}{2}]-1,\infty,t}\big)\\
  &\lesssim \lat\cE_{m,t}.
 \end{align*}
 After some  (slightly lengthy) further computations, we would find:
 \beq\label{sec2:eq7}
  \|\kpa^{\f{1}{2}}\cC_{\theta}^I\|_{L_t^2L^2}\lesssim \lat\cE_{m,t}.
 \eeq 
 The proof is thus complete in view of estimates \eqref{sec2:eq5}, \eqref{sec2:eq6}, \eqref{sec2:eq7}.
\end{proof}

\subsection{$\ep-$dependent estimate-II}
In this subsection, we aim to control $\kappa^{\f{1}{2}}\|(\ep\nabla\sigma,\ep\div u,\nabla\theta)\|_{L_t^{\infty}H_{co}^{m-1}}.$ More precisely, the following result will be shown:
\begin{prop}\label{prop-highest-2}
Under the same assumption as in Proposition \ref{prop-highest}, we have that for any $\ep \in (0,1], (\mu,\kpa)\in A,$  any $0<t\leq T,$
\beq\label{EE-2}
\begin{aligned}
    &\kappa\|(\nabla(\vr,\theta),\div u_{\ep})\|_{\infco^{m-1}}^2+\kpa\mu\|\nabla\div u_{\ep}\|_{\hco^{m-1}}^2+\kpa^2\|
    \nabla^2\theta\|_{\hco^{m-1}}^2\\
    &\lesssim Y^2_{m}(0)+(T+\ep)^{\f{1}{2}}\lat \cE_{m,t}^2.
\end{aligned}
\eeq
\end{prop}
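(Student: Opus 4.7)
My plan is to view the desired estimate as a one-spatial-derivative upgrade of Proposition \ref{prop-highest-1}, obtained by the same differentiate-and-test procedure but with a single carefully chosen multiplier. Applying $Z^I$ with $|I|\le m-1$ to \eqref{newsys} produces the system \eqref{sec2:eq1}. I would then test the \emph{second} equation of \eqref{sec2:eq1} with $\kappa\,\nabla\div u_\ep^I$ and integrate over $[0,t]\times\Omega$. The point is that this one pairing, combined with algebraic substitutions from the first and third equations of \eqref{sec2:eq1}, simultaneously generates every coercive quantity appearing on the left-hand side of \eqref{EE-2}.

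Concretely, the four terms of the pairing contribute as follows. (i) The inertia $\f{\kappa}{R\beta}\pt u_\ep^I\cdot\nabla\div u_\ep^I$, after IBP in space using $\div(\pt u_\ep^I)=\pt\div u_\ep^I$, produces $-\f{\kappa}{2R\beta}\pt|\div u_\ep^I|^2$, hence after time integration the coercive bound $\kappa\|\div u_\ep^I(t)\|_{L^2}^2$. (ii) The pressure cross term $\f{\kappa\gamma}{\ep}\nabla\vr^I\cdot\nabla\div u_\ep^I$, after substituting $\div u_\ep^I/\ep=-\gamma(\pt+u\cdot\nabla)\vr^I+\gamma\cC_\vr^I$ from \eqref{sec2:eq1}$_1$, becomes $-\f{\kappa\gamma^2}{2}\pt|\nabla\vr^I|^2$ modulo lower order, giving $\kappa\|\nabla\vr^I(t)\|_{L^2}^2$. (iii) The temperature cross term $\f{\kappa(\gamma-1)C_v}{R\ep}\nabla\theta^I\cdot\nabla\div u_\ep^I$, after substituting $\div u_\ep^I/\ep=\kpa\Gamma\, Z^I\div(\beta\nabla\theta)-\f{C_v}{R}(\pt+u\cdot\nabla)\theta^I+\cdots$ from \eqref{sec2:eq1}$_3$, generates both $\kappa\|\nabla\theta^I(t)\|_{L^2}^2$ (from the transport part) and, after a further IBP in space on the diffusion part, the parabolic dissipation $\kappa^2\int_0^t\|\Delta\theta^I\|_{L^2}^2$. (iv) The viscous term $-\kappa\mu\Gamma\div\cL u_\ep^I\cdot\nabla\div u_\ep^I$, expanded using $\Delta u=\nabla\div u-\curl\curl u$ together with $\int\curl\curl\cdot\nabla\div=0$ modulo the boundary, yields the dissipation $\kappa\mu(2\lambda_1+\lambda_2)\int_0^t\|\nabla\div u_\ep^I\|_{L^2}^2$. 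Coercivity is ensured by the uniform bounds \eqref{Gamma-Beta-infty} on $\Gamma,\beta,R\beta$.

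The boundary integrals produced by the space IBPs are of the same nature as $\cK_2,\cK_3$ in the proof of Proposition \ref{prop-highest-1} and are handled identically: for (i) one uses $u_\ep\cdot\bn|_{\p\Omega}=0$; for (ii), the normal pressure trace $\p_\bn\vr^I$ is recovered from \eqref{newsys}$_2$ restricted to $\p\Omega$, which expresses it in terms of $\ep$-scaled time derivatives of $u_\ep$ and viscous quantities already controlled; for (iii) the Neumann condition $\p_\bn\theta=0$ eliminates the leading surface integral; for (iv) the Navier--slip rewriting \eqref{bdryconditionofu} via \eqref{normalofnormalder}, combined with the trace inequalities \eqref{normaltraceineq} and \eqref{traceL2}, produces contributions of size $\ep\,\cE_{m,t}^2$. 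The commutators $\kappa\cC_\vr^I,\kappa\cC_{u_\ep}^I,\kappa\cC_\theta^I$, together with the new commutators $[Z^I,\nabla]$ and $[Z^I,\Gamma\beta\div]$ generated by the substitutions from \eqref{sec2:eq1}$_{1,3}$, are all absorbed by the inequalities \eqref{sec2:eq5}--\eqref{sec2:eq7} already proved. Remaining nonlinear errors pick up a prefactor of $(T+\ep)^{1/2}\lat$ by Cauchy--Schwarz, Young's inequality, and H\"older in time; absorbing small multiples of the coercive terms into the left-hand side then yields \eqref{EE-2}.

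The main obstacle is the handling of the two formally $1/\ep$-singular cross terms in (ii) and (iii). A naive Cauchy--Schwarz estimate is useless because $\div u_\ep^I$ itself carries no $\ep$-smallness; the only way out is to trade the $1/\ep$ factor for a time derivative via \eqref{sec2:eq1}$_1$ and \eqref{sec2:eq1}$_3$, as described above. This is precisely where the full coupling between the continuity, momentum, and temperature equations is indispensable, and in particular where it matters that the thermal diffusion in \eqref{newsys}$_3$ carries the same $\kpa$ that appears on the left-hand side of \eqref{EE-2}. A secondary, routine difficulty is keeping track of the substitution commutators when $Z^I$ contains the top time derivative $(\ep\pt)^{m-1}$; these are controlled exactly by the bounds already recorded in \eqref{sec2:eq5}--\eqref{sec2:eq7} at the $\Lambda(1/c_0,\cA_{m,t})\cE_{m,t}$ scale.
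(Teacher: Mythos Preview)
Your single-pairing-with-substitution strategy has a genuine gap in steps (ii) and (iii). When you substitute $\frac{1}{\ep}\div u_\ep^I=-\gamma(\pt+u\cdot\nabla)\vr^I+\gamma\cC_\vr^I$ inside the cross term $\frac{\kappa\gamma}{\ep}\int\nabla\vr^I\cdot\nabla\div u_\ep^I$, what actually appears is $\kappa\gamma^2\int\nabla\vr^I\cdot\nabla\cC_\vr^I$, i.e.\ the \emph{gradient} of the commutator. The bounds \eqref{sec2:eq5}--\eqref{sec2:eq7} control only $\kappa^{1/2}\|\cC_\vr^I\|_{L^2_tL^2}$, not $\kappa^{1/2}\|\nabla\cC_\vr^I\|_{L^2_tL^2}$. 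The dangerous piece is $\nabla[Z^I,\div]u$: by \eqref{comu}, $[Z^I,\div]u$ is a combination of $Z^K\p_l u_j$ with $|K|\le m-2$ (generic components of $\nabla u$, not $\div u$), so its gradient is the full Hessian $\nabla^2 u$ at conormal order $m-2$. In $\cE_{m,t}$ only $\ep^{2/3}\mu^{1/2}\|\nabla^2 u\|_{L^2H_{co}^{m-2}}$ is available, so $\kappa^{1/2}\|\nabla^2 u\|_{L^2H_{co}^{m-2}}$ is off by a factor $\ep^{-2/3}$. Integrating by parts to throw the gradient back onto $\nabla\vr^I$ does not save you either: $\kappa^{1/2}\|\Delta\vr^I\|_{L^2_tL^2}$ requires $\kappa^{1/2}\|\nabla^2\theta\|_{L^2H_{co}^{m-1}}$, which is $\kappa^{-1/2}$ worse than what $\cE_{m,t}(\theta)$ provides. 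The same obstruction arises in (iii) through $\nabla\cC_\theta^I$.

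The paper avoids this by performing three separate pairings rather than one: it applies $\div Z^J$ to the velocity equation and tests with $\frac{\kappa}{\beta}Z^J\div u_\ep$, and separately applies $Z^J\nabla$ (resp.\ $\nabla Z^J$) to the $\vr$ (resp.\ $\theta$) equation and tests with $\kappa\gamma^2 Z^J\nabla\vr$ and $\frac{\kappa C_v(\gamma-1)}{R}Z^J\nabla\theta$. The three singular cross terms then combine via $\gamma\nabla\vr+\frac{C_v(\gamma-1)}{R}\nabla\theta=\ep\nabla\sigma$ and cancel exactly. Crucially, differentiating the $\vr$ equation by $Z^J\nabla$ produces the commutator $[Z^J,\nabla]\div u$, which by \eqref{comu} is $\nabla\div u$ at conormal order $m-2$, and $\kappa^{1/2}\|\nabla\div u\|_{L^2H_{co}^{m-2}}$ is explicitly part of $\cE_{m,t}$. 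The order of operations matters: $\nabla[Z^I,\div]u$ (your route) loses the divergence structure and yields full $\nabla^2 u$, whereas $[Z^J,\nabla]\div u$ (the paper's route) preserves it.
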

\begin{proof}
Denote $h_{\ep}=\f{1}{R\beta}u\cdot\nabla u_{\ep}+\nabla\sigma-\mu\Gamma\div\cL u_{\ep},$ and rewrite the equation of $u_{\ep}$ in \eqref{newsys} as:
\beqs
\f{1}{R}\pt u_{\ep}+\beta(\theta) h_{\ep}=0.
\eeqs
Taking a vector field $Z^J (|J|\leq m-1),$ applying $\div Z^J$ on the above equations,
multiplying by $\f{\kappa }{\beta(\theta)}Z^J\div u_{\ep},$ and integrating in space and time, we find that:
\begin{align}\label{sec2:eq8}
&\f{\kappa}{2} \iomega \f{1}{R\beta} |Z^J\div u_{\ep}|^2(t)\d x-\kpa \izto Z^J(\beta h_{\ep})\nabla\big(\f{1}{\beta}Z^J\div u_{\ep}\big)\d x\d s\notag\\
&= \f{\kappa}{2} \iomega \f{1}{R\beta} |Z^J\div u_{\ep}|^2(0)\d x-\kpa\izto \f{1}{R\beta}\ep\pt[\div, Z^J] u \cdot Z^J \div u_{\ep}\, \d x\d s\\
&\quad +\kpa\izt\int_{\p\Omega} 
\f{1}{R\beta} Z^J\div u_{\ep}  (Z^J \ep\pt u)\cdot\bn\,\d S_y\d s.\notag
\end{align}
Let us write further the second term  in the L.H.S. of the above identity as:
\begin{align}\label{sec2:eq9}
-\kpa \izto Z^J(\beta h_{\ep})\nabla&\big(\f{1}{\beta}Z^J\div u_{\ep}\big)\d x\d s=-\kpa\izto Z^J\nabla\sigma \cdot \nabla Z^J \div u_{\ep}\,\d x\d s\notag\\
&+\kappa\mu(2\lambda_1+\lambda_2) \izto\Gamma |Z^J(\nabla\div u_{\ep})|^2\,\d x \d s-\sum_{j=3}^7\cI_j,
\end{align}
where 
\begin{align}\label{sec2:eq9.5}
&\cI_3=\f{\kappa}{R} \izto  Z^J(u\cdot\nabla u_{\ep})\nabla\big(\f{1}{\beta}Z^J\div u_{\ep}\big)\, \d x\d s,\notag\\
&\cI_4= -\kpa\izto \f{\nabla\beta}{\beta}\cdot (Z^J\nabla\sigma-\mu\Gamma Z^J\div\cL u_{\ep} )Z^J\div u_{\ep}\,\d x\d s,\notag\\
&\cI_5=\kappa \izto\big([Z^J,\beta]\na\sigma -\mu [Z^J, \Gamma\beta]\div\cL u_{\ep}\big)\nabla\big(\f{1}{\beta}Z^J\div u_{\ep}\big) \,\d x\d s,\\
&\cI_6=-\kpa\mu (2\lambda_1+\lambda_2)\izto  
 \Gamma Z^J( \nabla \div u_{\ep}) [\nabla,Z^J]\div u_{\ep}\,\d x\d s,\notag\\
&\cI_7=\kpa\mu \lambda_1 \izto \Gamma Z^J \curl\curl u_{\ep} \cdot\nabla Z^J\div u_{\ep}\,\d x\d s.\notag
\end{align}

Next, applying $ Z^J\nabla $ and $\nabla Z^J$ to the first and the third equation of \eqref{newsys} respectively, we find that:
\beq\label{sec2:eq9.75}
\left\{
\begin{array}{l}
   (\pt+u\cdot\nabla)(Z^J \nabla\vr)+\f{1}{\gamma\ep}\nabla Z^J\div u_{\ep}= \mathscr{C}_{\vr}^J,  \\[5pt]
   \f{C_v}{R}  (\pt+u\cdot\nabla) Z^J \nabla\theta+\f{1}{\ep}\nabla Z^J \div u_{\ep}-\kappa \nabla Z^J[\Gamma\div(\beta\nabla\theta)]
     =\mathscr{C}_{\theta}^J,
\end{array}
\right.
\eeq
where 
\beq\label{defscrrhotheta}
\begin{aligned}
   &\mathscr{C}_{\vr}^J=\colon-Z^J(\nabla u\cdot\nabla\vr)-[Z^J,u\cdot\nabla]\nabla \vr -\f{1}{\gamma}[Z^J,\nabla]\div u,\\
  & \mathscr{C}_{\theta}^J=\colon -\f{C_v}{R}\big([\nabla,Z^J]\pt\theta+[\nabla Z^J, u\cdot\nabla]\theta\big).
\end{aligned}
\eeq
Multiplying the equations \eqref{sec2:eq9.75}  by $\kpa(\gamma^2Z^J\nabla\vr,\f{C_v(\gamma-1)}{R}Z^J\nabla\theta)$, integrating in space and time, we find after suitable integration by parts that:
\begin{align}\label{sec2:eq10}
    &\quad\f{\kpa}{2}\iomega \gamma^2|Z^J\nabla\vr|^2+\f{C_v^2(\gamma-1)}{R^2}|Z^J\nabla\theta|^2(t)\,\d x+ \f{C_v(\gamma-1)}{R}\kpa^2\izto \Gamma\beta|Z^J\Delta\theta|^2\d x\d s\notag\\
    &= \f{\kappa}{2}\iomega \gamma^2|Z^J\nabla\vr|^2+\f{C_v^2(\gamma-1)}{R^2}|Z^J\nabla\theta|^2(0)\,\d x
    -\kpa\izto Z^J\nabla\sigma \cdot \nabla Z^J \div u_{\ep}\,\d x\d s
    +\sum_{j=8}^{12}\cI_j,
    \end{align}
    where 
    \begin{align}\label{sec2:eq11}
   &\cI_{8}=\f{C_v(\gamma-1)}{R}\kpa^2\izt\int_{\p\Omega}Z^J[\Gamma\div(\beta\nabla\theta)]Z^J\nabla\theta\cdot\bn\,\d S_y\d s,\notag\\
   & \cI_9=- \f{C_v(\gamma-1)}{R}\kpa^2\izto\Gamma\beta Z^J\Delta\theta [\div,Z^J]\nabla\theta\,\d x\d s,\notag\\
    &\cI_{10}=- \f{C_v(\gamma-1)}{R}\kpa^2\izto \big([Z^J,\Gamma\beta]\Delta\theta+ Z^J(\Gamma\nabla\beta\cdot\nabla\theta)\big) \div (Z^J \nabla\theta)\,\d x\d s,\\
    &\cI_{11}=\f{\kappa}{2}\izto \div u\big(\gamma^2 |Z^J\nabla\vr|^2+\f{C_v^2(\gamma-1)}{R^2}|Z^J\nabla\theta|^2\big) \,\d x\d s,\notag\\
    & \cI_{12}=\kappa \izto \gamma^2 \mathscr{C}_{\vr}^J \cdot Z^J\nabla\vr +\f{C_v(\gamma-1)}{R} \mathscr{C}_{\theta}^J\cdot Z^J\nabla\theta \,\d x\d s.\notag
\end{align}
Note that the relation $\gamma\nabla\vr+\f{C_v(\gamma-1)}{R}\nabla\theta=\ep\nabla\sigma$ has been used in the derivation of \eqref{sec2:eq10}.
Collecting \eqref{sec2:eq8}, \eqref{sec2:eq9} and \eqref{sec2:eq10}, we find the following identity:
\begin{align}
&\f{\kappa}{2} \iomega \big(\f{1}{R\beta} |Z^J\div u_{\ep}|^2+\gamma^2|Z^J\nabla\vr|^2+\f{C_v^2(\gamma-1)}{R^2}|Z^J\nabla\theta|^2\big)(t)\,\d x\notag\\
&+\kappa \izto \mu(2\lambda_1+\lambda_2)|Z^J(\nabla\div u)|^2+\f{C_v(\gamma-1)}{R} \Gamma\beta|Z^J\Delta\theta|^2\,\d x \d s= \sum_{j=0}^{12}\cI_j,
\end{align}
where $\cT_{3}-\cI_{7}$ and $\cT_{8}-\cT_{11}$ are defined in \eqref{sec2:eq9.5}-\eqref{sec2:eq11}
and
\begin{align*}
  & \cI_0= \f{\kappa}{2} \iomega \big(\f{1}{R\beta} |Z^J\div u_{\ep}|^2+\gamma^2|Z^J\nabla\vr|^2+\f{C_v^2(\gamma-1)}{R^2}|Z^J\nabla\theta|^2\big)(0)\,\d x, \\
   & \cI_1=-\kpa\izto \f{1}{R\beta}\ep\pt[\div, Z^J] u \cdot Z^J \div u_{\ep}\, \d x\d s,\\
& \cI_2=\kpa\izt\int_{\p\Omega} 
\f{1}{R\beta} Z^J\div u_{\ep}  (Z^J \ep\pt u)\cdot\bn\,\d S_y\d s.
\end{align*}
To get the desired energy estimates, it suffices to control the terms $\cT_0-\cT_{12}$ which is the task of the following proof.
By the definition of $Y_m(0)$ in \eqref{initialnorm}
and the property \eqref{Gamma-Beta-infty}, 
one can find that:
\beq\label{i0}
\cI_0\lesssim \|\kpa^{\f{1}{2}}\na(\ep\sigma, \ep u, \theta)(0)\|_{H_{co}^{m-1}}^2\lesssim  Y_m^2(0).
\eeq
For the term $\cI_1,$ we use the Cauchy-Schwarz inequality, the assumption \eqref{preasption} and the identity \eqref{comu} to get that:
\begin{align}
    \cI_1&\lesssim \kpa \big\|Z^J\div u_{\ep}\big\|_{L_t^2L^2}\big\|\ep\pt [\div,Z^J]u \big\|_{L_t^2L^2}\notag\\
    &\lesssim T^{\f{1}{2}}\|\kpa^{\f{1}{2}}\div u_{\ep}\|_{L_t^{\infty}H_{co}^{m-1}}
    \|\kappa^{\f{1}{2}}\nabla u\|_{\hco^{m-1}}\lesssim T^{\f{1}{2}}\cE_{m,t}^2.
\end{align}
For the next term $\cI_2,$ one can assume that $Z^J$ contains at least one spatial tangential derivative $\p_{y^i} \,(i=1,2),$ otherwise it vanishes identically. By 
the fact $$Z^J(\ep\pt u)\cdot\bn= [Z^J\ep\pt, \bn]u$$
and by the duality between $H^{-\f{1}{2}}(\p\Omega)$ and  $H^{\f{1}{2}}(\p\Omega),$ one finds that:
\begin{align*}
    \cI_{2}\lesssim \kpa \big|\div u_{\ep}
    \big|_{\hcob^{m-\f{3}{2}}} \big|[Z^J\ep\pt, \bn]u\big|_{L_t^2H^{\f{1}{2}}}.
\end{align*}
The trace inequality \eqref{normaltraceineq} then yields:
\begin{align}\label{i2}
     \cI_{2}\lesssim \ep \|\kpa^{\f{1}{2}}(\nabla \div u,\div u)\|_{\hco^{m-2}}\|\kpa^{\f{1}{2}}(\nabla u, u)\|_{\hco^{m-1}}\lesssim \ep \cE_{m,t}^2.
\end{align}

We now focus on the estimates of $\cI_3-\cI_{12}$ which are defined in \eqref{sec2:eq9.5}, \eqref{sec2:eq11}. Let us begin 
with the term $\cI_3.$\\[3pt]
$\underline{\text{Estimate of } \cI_3}.$ To avoid losing derivatives, we integrate by parts in  space to write:
$\cI_3= \cI_{3,1}+\cI_{3,2}+\cI_{3,3},$
where
\beq\label{defcI31-I33}
\begin{aligned}
  &\cI_{3,1}=  \f{\kpa}{2}\izto \div(\f{u}{R\beta}) |Z^J\div u_{\ep}|^2\,\d x\d s, \quad   \cI_{3,2}=\kpa\izt\int_{\p\Omega}\f{1}{R\beta}Z^J\div u_{\ep}Z^J(u\cdot\nabla u_{\ep})\cdot\bn\, \d S_y\d s,\\
  &\cI_{3,3}=-\kpa\izto \big([\div,Z^J](u\cdot\nabla u_{\ep})+Z^J(\nabla u\cdot\nabla u_{\ep})+[Z^J, u\cdot\na]\div u_{\ep}\big) \f{1} {R\beta} Z^J  \div u_{\ep}\d x\d s.
\end{aligned}
\eeq
Thanks to \eqref{Gamma-Beta-infty},  the first term $\cI_{3,1}$ can be controlled as: 
\beq\label{i32}
\cI_{3,1}\lesssim T\|\kpa^{\f{1}{2}}Z^J\div u_{\ep}\|_{L^{\infty}L^2}^2\lab \il(\div u, \na\theta)\il_{0,\infty,t}\big)\lesssim T\lat\cE_{m,t}^2.
\eeq
Let us now bound $\cI_{3,2}$ as follows:
\begin{align}
    \cI_{3,2}&\lesssim \kpa^{\f{3}{4}}|\div u_{\ep}|_{\hcob^{m-1}} \kpa^{\f{1}{4}}|Z^J(u\cdot\nabla u_{\ep}) \cdot\bn|_{L_t^2L^2(\p\Omega)}.\notag
\end{align}
Since $(u\cdot\nabla)|_{\p\Omega}=(u_1\p_{y^1}+u_2\p_{y^2})|_{\p\Omega}$ is a tangential vector and $(u\cdot\bn)|_{\p\Omega}=0,$
one can write 
\beqs
Z^J(u\cdot\nabla u_{\ep})\cdot\bn= [Z^J, \bn\cdot](u_1\p_{y_1}u_{\ep}+u_2\p_{y_2} u_{\ep})-Z^J\big((u\cdot \nabla \bn)\cdot u_{\ep}\big) \text{ on } \p\Omega,
\eeqs
which together with the product estimate \eqref{product-bd} and the trace inequality \eqref{traceL2}, gives rise to:
\begin{align*}
    \kpa^{\f{1}{4}}|Z^J(u\cdot\nabla u_{\ep}) \cdot\bn|_{L_t^2L^2(\p\Omega)}&\lesssim 
\kpa^{\f{1}{4}}|u_{\ep}|_{\hcob^{m-1}}\il u \il_{[\f{m}{2}],\infty,t}\\
&\lesssim \ep \|(\kpa^{\f{1}{2}}\nabla u,u)\|_{\hco^{m-1}}\lat.
\end{align*}
This, combined with the following estimate which can be derived again from the trace inequality \eqref{traceL2}
\beqs
\kpa^{\f{3}{4}}|\div u_{\ep}|_{\hcob^{m-1}}\lesssim  \|(\kpa \nabla\div u_{\ep},\kpa^{\f{1}{2}}\div u_{\ep})\|_{\hco^{m-1}}\lesssim \cE_{m,t},
\eeqs
yields that,
\beq\label{i31}
  \cI_{3,2}\lesssim \ep \lat\cE_{m,t}^2.
\eeq 
We now proceed to deal with the term $\cI_{3,3}$ defined in \eqref{defcI31-I33}.
It follows from the identity \eqref{comu} that :
\beq\label{cI33-0}
\cI_{3,3}\lesssim T^{\f{1}{2}}\|\kpa^{\f{1}{2}} Z^J\div u_{\ep}\|_{L^{\infty}L^2}\kpa^{\f{1}{2}} \big(\|(\nabla u\cdot\nabla u_{\ep})\|_{\hco^{m-1}}+\|u\cdot\nabla\na u_{\ep}\|_{\hco^{m-2}} 
+\|[Z^J, u\cdot \na ]\div u_{\ep} \|_{L_t^2L^2} \big). 
\eeq
By counting the derivatives hitting on each term and using \eqref{useful-inftyhalf}, one finds  
that: 
\beq\label{cI33-1}
\begin{aligned}
    \|\kpa^{\f{1}{2}}(\nabla u\cdot\nabla u_{\ep})\|_{\hco^{m-1}}&\lesssim \ep \|\kpa^{\f{1}{2}}\nabla u\|_{\hco^{m-1}}\il \nabla u\il_{[\f{m}{2}]-1,\infty,t}+\|\nabla u\|_{\hco^{m-2}}\il\ep\kpa^{\f{1}{2}}\nabla u\il_{[\f{m-1}{2}],\infty,t}\\
    &\lesssim \cA_{m,t} \cE_{m,t}.
\end{aligned}
\eeq
The term $\kpa^{\f{1}{2}}u\cdot\nabla\nabla u_{\ep}$ is easier to control in the interior domain $\Omega_0$, where the conormal spaces are equivalent to the usual Sobolev norm, we thus
focus on the case near the boundary. Let $\chi_i$ be the cut-off function associated to the chart $\Omega_i.$ 
We have by using the local coordinates that 
\beq\label{id-convetion}
\chi_i u\cdot\nabla =\chi_i \big(u_1\p_{y_1}+u_2\p_{y_2}+\f{u\cdot\bN}{\phi}\phi\p_z\big).
\eeq
Moreover, since $u\cdot\bN|_{\p\Omega}=0,$ it follows from the Hardy inequality and the fundamental theorem of calculus that
\begin{align}\label{hardy-calculus}
    \|\chi_i\f{u\cdot\bN}{\phi}\|_{\hco^l}\lesssim\|(u,\na u)\|_{\hco^l},\quad  \il \f{u\cdot\bN}{\phi}\il_{l,\infty,t}\lesssim \il (u, \na u)\il_{l,\infty,t}.
\end{align}
As a result, one obtains by noticing \eqref{useful-inftyhalf} that
\begin{align*}
\kpa ^{\f{1}{2}}\|\chi_i  u\cdot\nabla \nabla u_{\ep}\|_{\hco^{m-2}}&\lesssim \|(u, \nabla u)\|_{\hco^{m-2}} \big(\il(u, \nabla u)\il_{[\f{m}{2}]-1,\infty,t}+\il \ep\kpa^{\f{1}{2}}\nabla u\il_{[\f{m-1}{2}],\infty,t}\big)\\
&\qquad\qquad\qquad +\ep\|\kpa ^{\f{1}{2}}\nabla u\|_{\hco^{m-1}}\il(u,\nabla u)\il_{0,\infty,t}\\
&\lesssim \lat\cE_{m,t}.
\end{align*}
To summarize, we have found that:
\begin{align}\label{cI33-2}
    \kpa ^{\f{1}{2}}\|  u\cdot\nabla \nabla u_{\ep}\|_{\hco^{m-2}}\lesssim \lat\cE_{m,t}.
\end{align}
Finally, as $[Z^J, u\cdot \na ]\div u_{\ep}=[Z^J, u]\cdot\na\div u_{\ep} +u\cdot [Z^J, \na]\div u_{\ep},$
it follows from  the identity \eqref{comu}, the commutator estimate \eqref{roughcom1} and the fact \eqref{useful-inftyhalf} that:
\beq\label{cI33-3}
\begin{aligned}
&\kpa^{\f{1}{2}}\|[Z^J, u\cdot \na ]\div u_{\ep}\|_{L_t^2L^2}\\
&\lesssim 
\ep\kpa^{\f{1}{2}}\|\na\div u\|_{\hco^{m-2}}\il u\il_{[\f{m}{2}],\infty,t}+\|u\|_{\hco^{m-1}}\il\ep\kpa^{\f{1}{2}}\na\div u\il_{[\f{m-1}{2}]-1,\infty,t}\\
&\lesssim \lat\cE_{m,t}.
\end{aligned}
\eeq
Plugging the estimates \eqref{cI33-1}, \eqref{cI33-2}, \eqref{cI33-3} into \eqref{cI33-0}, we obtain that:
\beqs
\cI_{3,3}\lesssim T^{\f{1}{2}}\lat\cE_{m,t}^2,
\eeqs
which, combined with \eqref{i31} and \eqref{i32}, yields:
\beq\label{i3}
\cI_{3}\lesssim (T^{\f{1}{2}}+\ep)\lat\cE_{m,t}^2.
\eeq
$\underline{\text{Estimate of } \cI_4}.$
In view of the very definition in \eqref{sec2:eq9.5}, we have by the Cauchy-Schwarz inequality and the inequality \eqref{Gamma-Beta-infty} that:
\begin{align}
\cI_4&\lesssim T^{\f{1}{2}}\|\kpa^{\f{1}{2}}\div u_{\ep}\|_{L_t^{\infty}H_{co}^{m-1}}\|\kpa^{\f{1}{2}}(\nabla\sigma, \mu \div\cL u_{\ep})\|_{\hco^{m-1}}\Lambda\big(\f{1}{c_0},\il\nabla\theta\il_{0,\infty,t}\big)\notag\\
&\lesssim T^{\f{1}{2}} \lat\cE_{m,t}^2.
\end{align}
$\underline{\text{Estimate of } \cI_5}.$ Thanks to the  Cauchy-Schwarz inequality and the commutator estimate \eqref{roughcom}, one has that:
\begin{align*}
    \cI_5&=\kappa \izto\big([Z^J,\beta]\na\sigma -\mu [Z^J, \Gamma\beta]\div\cL u_{\ep}\big)\nabla\big(\f{1}{\beta}Z^J\div u_{\ep}\big) \,\d x\d s 
    \notag\\
    &\lesssim \kpa \big\|\big( [Z^J,\beta]\na\sigma,  \mu[Z^J, \Gamma\beta]\div\cL u_{\ep}\big)\big\|_{L_t^2L^2}\|\nabla\big(\f{1}{\beta}Z^J\div u_{\ep})\|_{L_t^2L^2}\notag\\
    &\lesssim T^{\f{1}{2}}\|(Z\beta, Z(\Gamma\beta), \na\sigma, \mu\div\cL u_{\ep})\|_{\infco^{m-2}}\|\kpa(\nabla\div u_{\ep}, \div u_{\ep}, \na\theta)\|_{\hco^{m-1}}\lat .
\end{align*}
Moreover, it follows from the estimates \eqref{esofbeta}, \eqref{esofGamma-2} that
\begin{align*}
\|(Z\beta, Z(\Gamma\beta))\|_{\infco^{m-2}}\lesssim 
\il(\theta,\ep\sigma)\il_{\infco^{m-1}}\lat\lesssim \lat\cE_{m,t}.
\end{align*}
The above two estimates, together with the definition of  $\cE_{m,t}$ (see \eqref{defcEmt}) 
then lead to:
\begin{align}
   \cI_{5} \lesssim  T^{\f{1}{2}}\lat\cE_{m,t}^2.
\end{align}
$\underline{\text{Estimate of } \cI_6}.$ In view of the identity \eqref{comu}, this one can be bounded  as:
\begin{align}
  \cI_6 \lesssim \ep\|(\kpa\mu)^{\f{1}{2}}\nabla\div u_{\ep}\|_{\hco^{m-1}}\|\kpa^{\f{1}{2}}\nabla\div u\|_{\hco^{m-2}}  \lesssim \ep\cE_{m,t}^2.  
\end{align}
$\underline{\text{Estimate of } \cI_7}.$ We split it into two terms:
$\cI_{7}=\cI_{7,1}+\cI_{7,2}$ where 
\begin{align*}
    \cI_{7,1}=\kpa\mu \lambda_1 \izto  \Gamma[Z^J ,\curl]\curl u_{\ep} \cdot\nabla Z^J\div u_{\ep}\,\d x\d s, \\
    \cI_{7,2}=\kpa\mu \lambda_1 \izto \Gamma \curl (Z^J\curl u_{\ep}) \cdot\nabla Z^J\div u_{\ep}\,\d x\d s.
\end{align*}
 Using the identity \eqref{comu}, the fact \eqref{Gamma-Beta-infty}, 
 one can estimate $ \cI_{7,1}$ as:
 \beq\label{i71}
  \cI_{7,1}\lesssim \ep^{\f{1}{2}}\|(\kpa\mu)^{\f{1}{2}}\nabla\div u_{\ep}\|_{\hco^{m-1}} \| (\ep\kpa\mu)^{\f{1}{2}} \nabla^2 u\|_{\hco^{m-2}}\lesssim \ep^{\f{1}{2}}\cE_{m,t}^2.
 \eeq
 Moreover, integrating by parts in space, one controls $\cI_{7,2}$ as follows:
 \begin{align*}
     \cI_{7,2}&=-\kpa\mu\lambda_1 \izt\int_{\p\Omega}\big(Z^J \curl u_{\ep}\times\bn\big)\cdot \Pi\nabla Z^J\div u_{\ep}\,\d S_y\d s\\
     &\qquad +\kpa\mu\lambda_1 \izto Z^J \curl u_{\ep}\cdot( \na\Gamma \times \na Z^J \div u_{\ep})\,\d x\d s\\
     &\lesssim \ep (\kpa\mu)^{\f{1}{2}} |Z^J\curl u\times\bn|_{L_t^2H^{\f{1}{2}}(\p\Omega)}  (\kpa\mu)^{\f{1}{2}}\big|\Pi\nabla Z^J\div u_{\ep}\big|_{L_t^2H^{-\f{1}{2}}(\p\Omega)} \\
     &\qquad+ \ep^2 \|(\kpa\mu)^{\f{1}{2}}\nabla(\div u_{\ep}, u)\|_{\hco^{m-1}}^2 \lab \il\na\sigma\il_{0,\infty,t}\big).
 \end{align*}
 As $\Pi\nabla$ contains only the tangential derivatives, we have by the trace inequality  \eqref{normaltraceineq} that:
 \beqs
  (\kpa\mu)^{\f{1}{2}}\big|\Pi\nabla Z^J\div u_{\ep}\big|_{L_t^2H^{-\f{1}{2}}(\p\Omega)}\lesssim 
  (\kappa\mu)^{\f{1}{2}}\|(\nabla\div u_{\ep},\div u_{\ep})\|_{\hco^{m-1}}.
 \eeqs
 Furthermore, it follows from the  identity \eqref{normalofnormalder}, the identity
 $$Z^J(\curl u)\times\bn=Z^J(\curl u\times\bn)+[Z^J,\bn\times]\curl u,$$
as well as the boundary conditions \eqref{bd-curlun}, \eqref{bdryconditionofu} that  
 \begin{align*}
    & (\kpa\mu)^{\f{1}{2}} |Z^J\curl u\times\bn|_{L_t^2H^{\f{1}{2}}(\p\Omega)} \lesssim 
     (\kpa\mu)^{\f{1}{2}}\big( |u|_{\hcob^{m-\f{1}{2}}}+|\curl u|_{\hcob^{m-\f{3}{2}}}\big)\\
     &\lesssim (\kpa\mu)^{\f{1}{2}}\big( |u|_{\hcob^{m-\f{1}{2}}}+|\div u|_{\hcob^{m-\f{3}{2}}}\big)\lesssim (\kpa\mu)^{\f{1}{2}}\big(\|\nabla u\|_{\hco^{m-1}}+\|\nabla\div u\|_{\hco^{m-2}}\big).
 \end{align*}
 The previous two inequalities then yield $\cI_{7,2}\lesssim \ep\lat\cE_{m,t}^2,$ which, combined with \eqref{i71}, leads to that:
 \beq
  \cI_{7}\lesssim \ep^{\f{1}{2}}\lat\cE_{m,t}^2.
 \eeq
 We now begin to control the terms $\cI_8-\cI_{12}$ defined in $\eqref{sec2:eq11}.$\\[3pt]
 $\underline{\text{Estimate of } \cI_8}.$ Due to the Neumann boundary condition of $\theta,$
 this term vanishes if $Z^J$ contains only the time derivatives, we thus can assume that $Z^J=\p_{y^i} Z^{\tilde{J}}, (i=1,2)$ where $|\tilde{J}|\lesssim m-2.$ Therefore, by integration by parts along the physical boundary $\p\Omega,$ 
 we control $\cI_8$ as follows:
 \begin{align}
   \cI_8&=-\f{C_v(\gamma-1)}{R}\kpa^2\izt\int_{\p\Omega}Z^J\big(\Gamma\div(\beta\nabla\theta)\big)[Z^J,\bn\cdot]\nabla\theta\,\d S_y\d s\notag\\
   &=\f{C_v(\gamma-1)}{R}\kpa^2\izt\int_{\p\Omega}Z^{\tilde{J}}\big(\Gamma\div(\beta\nabla\theta)\big)\p_{y^i}\big([Z^J,\bn\cdot]\nabla\theta\big)\,\d S_y\d s\notag\\
   &\lesssim \kpa^2 \big|\Gamma\div(\beta\nabla\theta)\big|_{\hcob^{m-2}}\big|\nabla\theta\big|_{\hcob^{m-1}}.\notag
 \end{align}
 By the product estimate on the boundary \eqref{product-bd}, the assumption $m\geq 7,$ 
 \begin{align*}
    &\kpa^{\f{5}{4}}|\Gamma\div(\beta\nabla\theta)\big|_{\hcob^{m-2}}\\
    &\lesssim  \big(|Z \Gamma|_{\hcob^{m-3}}+\kpa^{\f{5}{4}}|\div(\beta\nabla\theta)|_{\hcob^{m-2}}\big)  \lab \il(\sigma,\kpa \div(\beta\nabla\theta )\il_{[\f{m}{2}]-1,\infty,t} 
    \big)\\
    & \lesssim  \big|\big(\ep\sigma,\theta,\kpa^{\f{5}{4}}\nabla\theta,\kpa^{\f{5}{4}}\Delta\theta\big)\big|_{\hcob^{m-2}}\lat.
 \end{align*}
Thanks to the trace inequality \eqref{traceL2}:
 \begin{align*}
 & |\ep\sigma,\theta|_{\hco^{m-2}}\lesssim \|(\nabla,\text{Id})(\ep\sigma,\theta)\|_{\hco^{m-2}}\lesssim T^{\f{1}{2}}\cE_{m,t},\\
    & \kpa^{\f{5}{4}}|\Delta\theta|_{\hco^{m-2}}\lesssim \|(\kpa^{\f{3}{2}}\nabla\Delta\theta,\kpa\Delta\theta)\|_{\hco^{m-2}}^{\f{1}{2}}\|\kpa\Delta\theta\|_{\hco^{m-2}}^{\f{1}{2}}\lesssim T^{\f{1}{4}} \cE_{m,t},\\
    &\kpa^{\f{3}{4}}|\nabla\theta|_{\hcob^{m-1}}\lesssim  \|(\kpa \nabla^2\theta, \kpa^{\f{1}{2}}\nabla\theta)\|_{\hco^{m-1}}^{\f{1}{2}}\|\kpa^{\f{1}{2}}\nabla\theta\|_{\hco^{m-1}}^{\f{1}{2}}\lesssim T^{\f{1}{4}}\cE_{m,t}.
 \end{align*}
 Note that in light of the definition \eqref{defcEmt}
 for $\cE_{m,t}=\cE_{m,t}(\sigma, u,\theta)$
 $$ \|(\nabla,\text{Id})(\ep\sigma,\theta)\|_{L_t^{\infty}H_{co}^{m-2}}+ \|\kpa\Delta\theta\|_{L_t^{\infty}H_{co}^{m-2}}+\|\kpa^{\f{1}{2}}\nabla\theta\|_{L_t^{\infty}H_{co}^{m-1}}+\|\kpa^{\f{3}{2}}\na\Delta\theta\|_{\hco^{m-2}}\lesssim \cE_{m,t}.$$
 We thus obtain by collecting the previous three estimates that:
 \begin{align}
     \cI_8\lesssim T^{\f{1}{2}}\lat\cE_{m,t}^2.
 \end{align}
 $\underline{\text{Estimate of } \cI_9}.$ Simply using the Cauchy-Schwarz inequality and identity \eqref{comu}, one can get that:
 \beq
 \cI_9\lesssim \|\kpa\Delta\theta\|_{\hco^{m-1}}\|\kpa\nabla^2\theta\|_{\hco^{m-2}}\lesssim 
 T^{\f{1}{2}}\cE_{m,t}^2.
 \eeq
 $\underline{\text{Estimate of } \cI_{10}}.$ 
 We estimate this term by the Cauchy-Schwarz inequality, the commutator estimate \eqref{roughcom} and the product estimate \eqref{roughproduct1}:
 \begin{align}
   \cI_{10}&\lesssim \|\kpa  \div (Z^J \nabla\theta)\|_{L_t^2L^2}  \big(\|\kpa [Z^J,\Gamma\beta]\Delta\theta\|_{L_t^2L^2}+ \|\kpa\Gamma\nabla\beta\cdot\nabla\theta\|_{\hco^{m-1}}\big) \notag \\
   &\lesssim \|\kpa\Delta\theta\|_{\hco^{m-1}}\big(\|\kpa \Delta\theta\|_{\hco^{m-2}}+\|(\kpa^{\f{1}{2}}\nabla\theta,\theta,\sigma)\|_{\hco^{m-1}}\big)\lat\\
   &\lesssim T^{\f{1}{2}}\lat\cE_{m,t}^2.\notag
 \end{align}
$\underline{\text{Estimate of } \cI_{11}}.$ It is direct to see that:
\begin{align}
    \cI_{11}\lesssim T\kpa\|(\nabla\vr,\nabla\theta)\|_{L_t^{\infty}H_{co}^{m-1}}^2\lab\il
    \div u\il_{0,\infty,t}\big)\lesssim T\lat \cE_{m,t}^2.
\end{align}
$\underline{\text{Estimate of } \cI_{12}}.$ Let us bound this term as:
\beqs
 \cI_{12}\lesssim T^{\f{1}{2}}\|\kpa^{\f{1}{2}}\nabla (\vr,\theta)\|_{L_t^{\infty}H_{co}^{m-1}}\|\kpa^{\f{1}{2}}(\mathscr{C}_{\vr}^J, \mathscr{C}_{\theta}^J)\|_{\hco^{m-1}},
\eeqs
which, together with the following estimate that will be shown in the next lemma, 
\beqs
\|\kpa^{\f{1}{2}}(\mathscr{C}_{\vr}^J, \mathscr{C}_{\theta}^J)\|_{L_t^2L^2}\lesssim \lat\cE_{m,t},
\eeqs
leads to the estimate
\beq\label{i12}
 \cI_{12}\lesssim T^{\f{1}{2}}\lat\cE_{m,t}^2.
\eeq

  Finally, inserting the estimates \eqref{i0}-\eqref{i2}, \eqref{i3}-\eqref{i12} into \eqref{sec2:eq10}, we achieve the  desired estimate \eqref{EE-2}.
\end{proof}
\begin{lem}
Let $\mathscr{C}_{\vr}^J, \mathscr{C}_{\theta}^J$ be defined in \eqref{defscrrhotheta}, we have,  
under the same assumption as in Proposition \ref{prop-highest-2} that, for any $|J|\leq m-1,$ any $0<t\leq T,$
\begin{align*}
    \|\kpa^{\f{1}{2}}(\mathscr{C}_{\vr}^J, \mathscr{C}_{\theta}^J)\|_{L_t^2L^2}\lesssim \lat\cE_{m,t}.
\end{align*}
\end{lem}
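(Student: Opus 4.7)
The plan is to mimic the strategy used for $\cC^I_\vr,\cC^I_{u_\ep},\cC^I_\theta$ in the preceding lemma: bound each summand piece by piece using the product estimate \eqref{roughproduct1}, the commutator estimate \eqref{roughcom}, and the derivation-commutator identity \eqref{comu}, distributing the $\kpa^{\f{1}{2}}$ weight to the factor whose norm is present in $\cE_{m,t}$ with that weight. A useful preliminary observation is that, since $\vr=\f{\ep}{\gamma}\sigma-\f{(\gamma-1)C_v}{\gamma R}\theta$, the gradient $\na\vr$ at every level of conormal regularity is controlled by the corresponding pieces of $\cE_{m,t}$ via $\na\vr=\f{\ep}{\gamma}\na\sigma-\f{(\gamma-1)C_v}{\gamma R}\na\theta$; in particular $\kpa^{\f{1}{2}}\|\na\vr\|_{\infco^{m-1}}\lesssim\cE_{m,t}$ and $\il\na\vr\il_{[\f{m}{2}]-1,\infty,t}\lesssim\cA_{m,t}$ by \eqref{useful-inftyhalf}.

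For $\mathscr{C}_\vr^J$, the plan is to first reduce $[Z^J,\na]\div u$ through \eqref{comu} to $\na\div u$ at conormal order $m-2$, whose $L_t^2L^2$ norm with the $\kpa^{\f{1}{2}}$ weight is controlled by the term $\kpa^{\f{1}{2}}\|\na\div u\|_{\hco^{m-2}}\lesssim\cE_{m,t}$ already present in $\cE_{m,t}$. The transport commutator $[Z^J,u\cdot\na]\na\vr$ is handled exactly as the corresponding piece of $\cC^I_\vr$: use the triple-commutator decomposition introduced just above \eqref{sec2:eq100}, control the triple-commutator and pure derivation-commutator pieces by \eqref{roughproduct1} and \eqref{roughcom}, and treat the remainder $(Z^J u)\cdot\na\na\vr$ near the boundary via the chart expansion \eqref{id-convetion} together with the Hardy bound \eqref{hardy-calculus} to avoid a loss from $\phi\p_z$. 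Finally $\kpa^{\f{1}{2}}\|Z^J(\na u\cdot\na\vr)\|_{L_t^2L^2}$ is bounded by \eqref{roughproduct1}, distributing $\kpa^{\f{1}{2}}$ to $\na\vr$ through the decomposition above.

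For $\mathscr{C}_\theta^J$, the commutator $[\na,Z^J]\pt\theta$ reduces through \eqref{comu} to $\na\pt\theta$ in $\hco^{m-2}$. Since $\pt\theta$ carries the singular factor $\ep^{-1}$ only implicitly, I would substitute using the third equation of \eqref{NCNS-S2}:
\[
\pt\theta=-u\cdot\na\theta-\f{R}{C_v}\div u+\f{R}{C_v}\kpa\,\Gamma(\ep\sigma)\div(\beta(\theta)\na\theta)+\f{R}{C_v}\ep\,\mathfrak{N},
\]
so that $\kpa^{\f{1}{2}}\|\na\pt\theta\|_{\hco^{m-2}}$ splits into four pieces, each matching a norm present in $\cE_{m,t}$: the linear divergence term yields $\kpa^{\f{1}{2}}\|\na\div u\|_{\hco^{m-2}}\lesssim\cE_{m,t}$; the transport piece $\kpa^{\f{1}{2}}\|\na(u\cdot\na\theta)\|_{\hco^{m-2}}$ is handled by \eqref{roughproduct1} together with \eqref{id-convetion}-\eqref{hardy-calculus} near the boundary; the dissipative piece, after the product rule and \eqref{esofGamma-2}, reduces to $\kpa^{\f{3}{2}}\|\na^3\theta\|_{\hco^{m-2}}$ plus lower-order terms, all absorbed into $\cE_{m,t}^\kpa(\theta)$; and the nonlinear remainder $\ep\mathfrak{N}=\ep^2\mu\Gamma(\ep\sigma)\cL u\cdot\mathbb{S}u$, being quadratic in $\na u$ with an $\ep^2\mu$ prefactor, is bounded by $\ep^{\f{1}{2}}\lat\cE_{m,t}$. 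The second commutator $[\na Z^J,u\cdot\na]\theta$ is treated exactly as the transport commutator piece of $\mathscr{C}_\vr^J$.

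The main technical obstacle is the near-boundary handling of the transport commutators, since $u\cdot\na$ is not itself a conormal field and the full normal derivation $\p_z$ is not permitted at high order; this is where the chart expansion \eqref{id-convetion} combined with the Hardy bound \eqref{hardy-calculus} is essential, allowing the contribution of $u\cdot\bN/\phi$ to be absorbed without a derivative loss thanks to the vanishing of $u\cdot\bN$ on $\p\Omega$. Everything else is a routine application of \eqref{roughproduct1}, \eqref{roughcom}, and \eqref{comu}, and the claimed bound $\lat\cE_{m,t}$ follows by assembling the pieces.
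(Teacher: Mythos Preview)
Your proposal is correct and follows essentially the same route as the paper: reduce each commutator via \eqref{comu}, apply the product and commutator estimates of Proposition~\ref{prop-prdcom}, and for the $[\na,Z^J]\pt\theta$ piece substitute $\pt\theta$ through the equation $\eqref{NCNS-S2}_3$ so that each resulting term matches a quantity already contained in $\cE_{m,t}$. One small remark: you invoke the chart expansion \eqref{id-convetion} and the Hardy bound \eqref{hardy-calculus} on the piece $(Z^J u)\cdot\na^2\vr$, but this term does not need that device---since all conormal derivatives sit on $u$, a direct bound $\|u\|_{\hco^{m-1}}\,\kpa^{\f{1}{2}}\il\na^2\vr\il_{0,\infty,t}\lesssim\cE_{m,t}\cA_{m,t}$ suffices (note that $\kpa^{\f{1}{2}}\il\na^2\theta\il_{m-6,\infty,t}$ and $\ep\mu^{\f{1}{2}}\il\na^2\sigma\il_{m-5,\infty,t}$ are both in $\cA_{m,t}$). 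The Hardy trick is instead needed where the undifferentiated $u$ is dotted with a gradient at high conormal order, namely for the $u\cdot[Z^J,\na]\na\vr$ piece and for the transport commutator $[\na Z^J,u\cdot\na]\theta$: writing $u\cdot\na$ through \eqref{id-convetion} converts the normal derivative into $\f{u\cdot\bN}{\phi}Z_3$, and \eqref{hardy-calculus} then controls $\f{u\cdot\bN}{\phi}$ without losing regularity.
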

\begin{proof}
The above estimate can be shown by applying the product and  commutator estimates in Proposition \ref{prop-prdcom}, 
let us only sketch the estimate of 
$\mathscr{C}_{\theta}^J=-\f{C_v}{R}\big([\nabla,Z^J]\pt\theta+[\nabla Z^J, u\cdot\nabla]\theta\big).$ 
It follows from the fact \eqref{comu} and the equation $\eqref{NCNS-S2}_3$
that, 
\begin{align*}
 \kpa^{\f{1}{2}}\| [\nabla,Z^J]\pt\theta \|_{L_t^2L^2}\lesssim \|\na\pt\theta\|_{\hco^{m-2}}&\lesssim \kpa^{\f{1}{2}}\|\na \big((u\cdot\na)\theta, \div u, \Gamma \div(\beta\na\theta)\big)\|_{\hco^{m-2}} \\
 &\lesssim \lat\cE_{m,t}.
\end{align*}
Moreover, we can bound $[\nabla Z^J, u\cdot\nabla]\theta$ as:
\beqs 
\begin{aligned}
    &\kpa^{\f{1}{2}}\|[\nabla Z^J, u\cdot\nabla]\theta\|_{L_t^2L^2}\lesssim \kpa^{\f{1}{2}}\|\na (u\cdot\na)\theta\|_{\hco^{m-2}}\\
    &+\kpa^{\f{1}{2}}
    \big(\|[Z^J, \na u]\na\theta\|_{L_t^2L^2}+\|[Z^J, u\cdot]\na\na\theta\|_{L_t^2L^2}+\|\na^2\theta\|_{\hco^{m-2}}\il u \il_{0,\infty,t} \big)\\
    &\lesssim \lat\cE_{m,t}.
\end{aligned}
\eeqs
\end{proof}
\section{Uniform estimates for the time derivatives}\label{sec-timeder}
In this section, we prove the uniform bound for the (weighted) time derivatives to the order $m-1,$ namely $\|(\sigma,u,\theta)\|_{L_t^2\cH^{m-1,0}}.$ As the time derivatives commute with the singular terms  in the system \eqref{NCNS-S2}, this can be done by energy estimates. Nevertheless, due to the appearance of the term  $\f{\gamma-1}{\gamma}\kappa \Gamma \div(\beta\nabla\theta)$ in the equation $\eqref{NCNS-S2}_1,$ the (linearized) penalized operator associated to the singular terms in the system \eqref{NCNS-S2} 
\begin{align*}
  \left(  \begin{array}{ccc}
        0 &\div   & \f{\gamma-1}{\gamma}\kappa \overline{\Gamma} \div(\beta(0)\nabla\cdot)\\
        \nabla & 0 & 0\\
        0 & 0&0
    \end{array}
    \right)
\end{align*}
is not skew-symmetric. 
Moreover, the term $\div u$ in the equation $\eqref{NCNS-S2}_3$ is troublesome due to the absence of the a-priori control of $\|\div u\|_{L_t^2\cH^{m-1,0}}.$ 
Therefore, 
the system $\eqref{NCNS-S2}$ is not an appropriate target to perform energy estimates. Inspired by \cite{MR2211706}, we introduce the unknown 
\beq\label{def-w}
w=u-\f{\gamma-1}{\gamma}\overline{\Gamma}\kappa \beta(\theta)\nabla\theta
\eeq
and consider the system satisfied by $(\sigma, w, \theta)$ as follows:
\beq\label{sys-uni}
\left\{
\begin{array}{l}
\f{1}{\gamma} (\pt+u\cdot\nabla)\sigma+\f{1}{\ep}\div w=F_{\sigma},      \\[5pt]
 \f{1}{R\beta(\theta)}(\pt+u\cdot\nabla)w  +\f{\nabla\sigma}{\ep}-\mu\overline{\Gamma}\div\cL u+\f{(\gamma-1)}{C_v\gamma}\overline{\Gamma}\kpa\na\big(-\div w+\f{\kpa}{\gamma}\overline{\Gamma} \div(\beta\nabla\theta)\big)\\[5pt]
  \qquad  \qquad  \qquad  \qquad  \qquad  \qquad \qquad  \qquad  \qquad =F_{w}-\ep\f{\kpa}{C_v}\overline{\Gamma}\na F_{\sigma},\\                  
 \f{C_v}{R}(\pt+u\cdot\nabla)\theta+\div w-\kpa\f{\overline{\Gamma}}{\gamma}\div(\beta\nabla\theta)=F_{\theta},
\end{array}
\right.
\eeq
with
\begin{align}
&F_{\sigma}=\f{\gamma-1}{\gamma}\big(\f{\Gamma-\overline{\Gamma}}{\ep}\kappa \div(\beta(\theta)\nabla\theta)+\mu\ep\Gamma(\ep\sigma)\cL u\cdot \mathbb{S}u\big),\notag\\
&F_{\theta}=\kpa (\Gamma-\overline{\Gamma})\div(\beta\nabla\theta) +\mu\ep^2\Gamma(\ep\sigma)\cL u\cdot \mathbb{S}u ,\\
& F_w=\kpa \overline{\Gamma} \f{\gamma-1}{R\gamma}
\nabla u\cdot\nabla\theta-\kpa \overline{\Gamma}\f{\gamma-1}{R\gamma}(\nabla \ln \beta) (\pt+u\cdot\nabla)\theta+\ep\mu\f{\Gamma-\overline{\Gamma}}{\ep}\div\cL u.\notag
\end{align}
\begin{prop}
Suppose that the assumption \eqref{preasption} holds and $m\geq 7,$ then for any $\ep\in(0,1], (\mu,\kpa)\in A,$
any $0<t \leq T,$ we have  the following estimate: 
\begin{align}\label{EE-3}
&\|(\sigma,w,\kpa \nabla\theta)\|^2_{L_t^{\infty}\cH^{m-1,0}}+\kpa\|\div w\|_{L_t^2\cH^{m-1,0}}^2+\mu \|\nabla u\|_{L_t^2\cH^{m-1,0}}^2\notag\\
&\lesssim 
Y_m^2(0)+(T+\ep)^{\f{1}{2}}\lat\cE_{m,t}^2.
\end{align}
\end{prop}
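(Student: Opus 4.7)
The plan is to apply $Z^I=(\ep\pt)^k$ with $k\le m-1$ to each equation of \eqref{sys-uni}, set $(\sigma^I,w^I,\theta^I)=Z^I(\sigma,w,\theta)$, and perform a carefully weighted energy estimate. Since $Z^I$ involves only time derivatives, it commutes with the singular operators $\div/\ep,\nabla/\ep$ and with all spatial differential operators, so no singular commutator appears; this is the whole point of working in $\cH^{m-1,0}$ rather than $H_{co}^{m-1}$. A preliminary observation, crucial for everything that follows, is that thanks to $u\cdot\bn|_{\p\Omega}=0$ and $\p_\bn\theta|_{\p\Omega}=0$, the definition \eqref{def-w} forces $w\cdot\bn|_{\p\Omega}=0$, and consequently $w^I\cdot\bn|_{\p\Omega}=0$.

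I would then test the three equations respectively against $\sigma^I$, $w^I$, and $-\tfrac{R\kpa^2}{C_v}\Delta\theta^I$, and integrate over $[0,t]\times\Omega$. Four structural facts close the energy inequality. First, the singular contributions $\tfrac{1}{\ep}\int\sigma^I\div w^I$ and $\tfrac{1}{\ep}\int w^I\cdot\nabla\sigma^I$ combine to $\tfrac{1}{\ep}\int\div(\sigma^I w^I)=0$ using $w\cdot\bn=0$; this is exactly the skew-symmetry that motivates introducing $w$. Second, the viscous term $-\mu\overline{\Gamma}\div\cL u$ against $w^I$, together with the identity $w=u-\tfrac{\gamma-1}{\gamma}\overline{\Gamma}\kpa\beta\nabla\theta$, produces the dissipation $\mu\overline{\Gamma}(\lambda_1|\nabla u^I|^2+(\lambda_1+\lambda_2)|\div u^I|^2)$ plus terms of size $\mu\kpa\|\nabla u^I\|\|\kpa^{1/2}\nabla^2\theta^I\|$ which are absorbed using Young's inequality and the control of $\kpa^{1/2}\nabla^2\theta$ already provided by Proposition~\ref{prop-highest}. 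Third, the $\kpa$-term $\tfrac{\gamma-1}{C_v\gamma}\overline{\Gamma}\kpa\nabla(-\div w+\tfrac{\kpa\overline{\Gamma}}{\gamma}\div(\beta\nabla\theta))$ against $w^I$, after integration by parts using $w^I\cdot\bn=0$, yields the positive dissipation $\tfrac{\gamma-1}{C_v\gamma}\overline{\Gamma}\kpa\int|\div w^I|^2$ together with a cross term $\tfrac{(\gamma-1)\kpa^2\overline{\Gamma}^2}{C_v\gamma^2}\int\div(\beta\nabla\theta^I)\,\div w^I$ absorbable by the $\kpa\|\div w^I\|^2$ dissipation plus $\kpa^3\|\Delta\theta^I\|^2$ (again controlled by \eqref{EE-highest}). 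Fourth, testing the third equation against $-\tfrac{R\kpa^2}{C_v}\Delta\theta^I$ produces $\tfrac{\kpa^2}{2}\tfrac{d}{dt}\|\nabla\theta^I\|^2$ (using $\p_\bn\theta^I|_{\p\Omega}=0$), the positive term $\tfrac{\overline{\Gamma}\kpa^3 R}{C_v\gamma}\int\beta|\Delta\theta^I|^2$, and a coupling $-\kpa^2\int\div w^I\,\Delta\theta^I$ that is again absorbed via $\eta\,\kpa\|\div w^I\|^2+C_\eta\kpa^3\|\Delta\theta^I\|^2$. Summing all three identities gives exactly the left-hand side of \eqref{EE-3} (noting that $\kpa\nabla\theta^I$ in $L_t^\infty L^2$ squared is $\kpa^2\|\nabla\theta^I\|_{L_t^\infty L^2}^2$).

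The right-hand side is controlled as follows. The source terms $F_\sigma,F_w,F_\theta$ involve products like $\tfrac{\Gamma-\overline{\Gamma}}{\ep}\kpa\div(\beta\nabla\theta)$, $\ep\mu\,\cL u\cdot\Sb u$, $\kpa\nabla u\cdot\nabla\theta$, and $\kpa(\nabla\ln\beta)(\pt+u\cdot\nabla)\theta$. Using the Taylor expansion $\Gamma-\overline{\Gamma}=O(\ep\sigma)$, the smoothness bounds \eqref{Gamma-Beta-infty}, the product and commutator estimates in Proposition~\ref{prop-prdcom}, and the $L^\infty_{t,x}$ control provided by $\cA_{m,t}$, every source term is bounded in $L_t^2 L^2$ by $(T+\ep)^{1/2}\Lambda(1/c_0,\cA_{m,t})\,\cE_{m,t}$; the extra factor of $\ep$ comes from trading one $\pt$ for an $\ep\pt$ inside $\cE_{m,t}$. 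The commutators $[Z^I,1/(R\beta(\theta))]$, $[Z^I,u\cdot\nabla]$, $[Z^I,\Gamma\beta\div(\cdot\nabla)]$ etc. involve only time derivatives of coefficients, and are likewise handled by Proposition~\ref{prop-prdcom} together with the estimates \eqref{esofGamma-1}, \eqref{esofGamma-2}. Boundary contributions from the viscous integration by parts $\mu\overline{\Gamma}\int_{\p\Omega}(\nabla u^I\bn)\cdot w^I$ are treated by splitting into tangential and normal parts, using the Navier boundary condition \eqref{bdryconditionofu} and the trace inequalities \eqref{normaltraceineq}, \eqref{traceL2}; they carry a prefactor $\ep$ gained from commuting $\bn$ past $Z^I=(\ep\pt)^k$ on a vanishing trace.

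The main obstacle is the selection of the test function for the third equation. Testing by $\theta^I$ gives only $\kpa^{1/2}\nabla\theta^I$ in $L^\infty_t L^2$ (the bound already present in Proposition~\ref{prop-highest}), whereas Proposition~\ref{prop-highest-2} requires $\kpa\nabla\theta^I$. The weight $-\kpa^2\Delta\theta^I$ remedies this by producing $\kpa^2\tfrac{d}{dt}\|\nabla\theta^I\|^2$, at the cost of introducing the higher-order cross term with $\div w^I$ and requiring Young's inequality balanced against the $\kpa\|\div w^I\|^2$ dissipation. Once these couplings are tuned and the $\ep$ and $T^{1/2}$ smallness is harvested from commutators and source terms, the desired inequality \eqref{EE-3} follows.
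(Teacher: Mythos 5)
Your proposal follows the same strategy as the paper: apply $(\ep\pt)^j$ to the symmetrized system \eqref{sys-uni}, exploit that $w\cdot\bn|_{\p\Omega}=0$ so the singular contributions cancel, extract the $\mu$- and $\kpa$-dissipations, and appeal to Proposition~\ref{prop-highest} (specifically \eqref{EE-2}) to control the $\kpa^2\|\nabla^2\theta\|_{\hco^{m-1}}^2$ residual. The structural observations you list (skew-symmetry, $w^I\cdot\bn|_{\p\Omega}=0$, the viscous term producing the $\nabla u^I$ dissipation) all match the paper.

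The one genuine point of divergence is the test function for the third equation. You propose $-\tfrac{R\kpa^2}{C_v}\Delta\theta^I$; the paper uses $-\kpa^2(\overline{\Gamma}/\gamma)^2\tfrac{\gamma-1}{C_v}\div(\beta\nabla\theta^j)$, with the weight and coefficient tuned so that the two $\kpa^2\int\div w^j\,\div(\beta\nabla\theta^j)$ cross terms (one from testing equation two against $w^j$, one from equation three) combine with the two dissipations into the perfect square
\[
\f{(\gamma-1)\overline{\Gamma}}{C_v\gamma}\kpa\int\big|\div w^j - (\overline{\Gamma}/\gamma)\kpa\div(\beta\nabla\theta^j)\big|^2\ge 0.
\]
With your choice, the cross terms carry $\Delta\theta^I$ and $\div(\beta\nabla\theta^I)$ mixed, and if you compute the constants (use $C_v=R(\gamma-1)$) you will find $(c_1+c_2)^2\ge 4d_1 d_2$ with equality only in the degenerate case: the AM–GM split $\eta\,\kpa\|\div w^I\|^2+C_\eta\kpa^3\|\Delta\theta^I\|^2$ can therefore \emph{not} be absorbed into both dissipations simultaneously. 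Your argument only closes if, like the paper, you put $C_\eta\kpa^3\|\Delta\theta^I\|^2\lesssim\|\kpa\nabla^2\theta\|_{\hco^{m-1}}^2$ on the right-hand side and invoke Proposition~\ref{prop-highest-2}. You do say ``controlled by \eqref{EE-highest}'' for the first cross term, but the phrasing ``absorbed via $\eta\,\kpa\|\div w^I\|^2+C_\eta\kpa^3\|\Delta\theta^I\|^2$'' immediately after exhibiting the positive dissipation $\tfrac{\overline{\Gamma}R\kpa^3}{C_v\gamma}\int\beta|\Delta\theta^I|^2$ reads as though you intend to absorb $C_\eta\kpa^3\|\Delta\theta^I\|^2$ into that dissipation, which fails since $C_\eta\to\infty$ as $\eta\to 0$. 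Make this explicit: the $\kpa^3\|\Delta\theta^I\|^2$ piece goes to the right-hand side via the previously established bound, and the $\kpa^3$-dissipation from the third equation is simply discarded.

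Two smaller inaccuracies: (i) you claim the boundary terms ``carry a prefactor $\ep$ gained from commuting $\bn$ past $Z^I=(\ep\pt)^k$ on a vanishing trace,'' but $\bn$ is time-independent, so $[\bn,(\ep\pt)^k]=0$ and there is no such commutator. The normal component $(\nabla u^I\bn)\cdot\bn\,(w^I\cdot\bn)$ vanishes because $w^I\cdot\bn|_{\p\Omega}=0$, and the tangential part is controlled via the Navier condition and trace inequalities, producing a factor $T^{1/2}$, not $\ep$ — still consistent with the claimed $(T+\ep)^{1/2}$ bound but not for the reason you state. (ii) The viscous cross term is of size $\mu\kpa\|\nabla u^I\|\,\|\nabla^2\theta^I\|$, which you want to control by $\|\mu^{1/2}\nabla u^I\|\cdot\|\kpa\nabla^2\theta^I\|$; the relevant quantity from \eqref{EE-highest} is $\kpa\nabla^2\theta$ in $L_t^2L^2$, not $\kpa^{1/2}\nabla^2\theta$.

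With these corrections your outline recovers the paper's proof; the perfect-square structure is an optional elegance, since both approaches ultimately appeal to Proposition~\ref{prop-highest-2} to dispose of the $\kpa^3\|\Delta\theta^I\|^2$ residual.
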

\begin{proof}
Applying $(\ep\pt)^j$ to the equation \eqref{sys-uni} and denoting 
$(\sigma^j, w^j, \theta^j, u^j)=(\ep\pt)^j(\sigma, w,\theta,u),$ we find that 
\beq\label{sys-uni-j}
\left\{
\begin{array}{l}
\f{1}{\gamma} (\pt+u\cdot\nabla)\sigma^j+\f{1}{\ep}\div w^j=F_{\sigma}^j,      \\[5pt]
 \f{1}{R\beta(\theta)}(\pt+u\cdot\nabla)w^j +\f{\nabla\sigma^j}{\ep}-\mu\overline{\Gamma}\div\cL u^j+\f{(\gamma-1)}{C_v\gamma}\overline{\Gamma}
 \kpa\nabla\big(-\div w^j+\f{\kpa}{\gamma}\overline{\Gamma}\div(\ep\pt)^j(\beta\nabla\theta)\big)\\
 \qquad\qquad\qquad\qquad \qquad\qquad\qquad\qquad\qquad\qquad\qquad\qquad =F_{w}^j-\ep\f{\kpa}{C_v}\overline{\Gamma}\na (\ep\pt)^j F_{\sigma},  \\[5pt]
 \f{C_v}{R}(\pt+u\cdot\nabla)\theta^j+\div w^j-\kpa\f{\overline{\Gamma}}{\gamma}\div(\beta\nabla\theta^j)=F_{\theta}^j,
\end{array}
\right.
\eeq
where 
\begin{align}\label{sec3:eq0}
&  F_{\sigma}^j= (\ep\pt)^j F_{\sigma}+\f{1}{\gamma}[(\ep\pt)^j,u\cdot\nabla]\sigma,\notag\\
 &F_w^j=(\ep\pt)^j F_w-[(\ep\pt)^j, \f{1}{R\beta}]\pt w-[(\ep\pt)^j, \f{1}{R\beta}u\cdot\nabla]w, \\
 & F_{\theta}^j=(\ep\pt)^j F_{\theta}-\f{C_v}{R}[(\ep\pt)^j, u\cdot\nabla]\theta+ (\kpa\overline{\Gamma}/{\gamma})\div\big([(\ep\pt)^j,\beta]\nabla\theta\big).\notag
\end{align}
Multiplying the equation \eqref{sys-uni-j} by $\big(\sigma^j, w^j, -\kpa^2(\overline{\Gamma}/{\gamma})^2 \f{(\gamma-1)}{C_v}\div(\beta\nabla\theta^j)\big)$ and integrating in space and time, one gets by using the boundary condition $u^j\cdot\bn|_{\p\Omega}=\p_{\bn}\theta^j|_{\p\Omega}=0$ the following identity:
\begin{align}\label{sec3:eq1}
&\f{1}{2}\int_{\Omega}\big(\f{1}{\gamma}|\sigma^j|^2+\f{1}{R\beta}|w^j|^2+\kpa^2 
(\overline{\Gamma}/{\gamma})^2 \f{(\gamma-1)}{R}\beta|\nabla\theta^j|^2\big)(t)\d x\notag\\
&+
\mu\izto \lambda_1\overline{\Gamma}|\nabla u^j|^2+(\lambda_1+\lambda_2)\overline{\Gamma}|\div u^j|^2 +\f{(\gamma-1)\overline{\Gamma}}{C_v\gamma}\kpa \big|\div w^j-(\overline{\Gamma}/{\gamma})\kpa \div(\beta\nabla\theta^j)\big|^2 \d x\d s\\
&=\sum_{k=0}^{6}\cG_{k}, \notag
\end{align}
where
\begin{align}
&\cG_{0}=\f{1}{2}\int_{\Omega}\big(\f{1}{\gamma}|\sigma^j|^2+\f{1}{R\beta}|w^j|^2+\kpa^2 
(\overline{\Gamma}/{\gamma})^2 \f{(\gamma-1)}{R}\beta|\nabla\theta^j|^2\big)(0)\,\d x,\notag\\
&\cG_{1}=\lambda_1\mu\izt\int_{\p\Omega} \p_{\bn}u^j\cdot w^j\d S_y\d s,\notag\\
&\cG_2=\f{1}{2}\izto \f{1}{\gamma}\div u|\sigma^j|^2+\f{1}{R}\big(\pt(\f{1}{\beta})+\div(\f{1}{\beta}u)\big)|w^j|^2\,\d x\d s,\notag\\
&\cG_3=\kpa^2 
(\overline{\Gamma}/{\gamma})^2\f{\gamma-1}{R}\izto\f{1}{2}(\pt\beta+\div(\beta u))|\nabla\theta^j|^2-\beta\nabla\theta^j\cdot(\nabla u\cdot\nabla\theta^j)\,\d x\d s,\\
&\cG_4=\mu\kpa \f{(\gamma-1)\overline{\Gamma}^2}{\gamma}\izto\lambda_1\nabla u^j \cdot \nabla(\ep\pt)^j(\beta \nabla\theta)+(\lambda_1+\lambda_2)\div u^j(\ep\pt)^j\div(\beta \nabla\theta) \,\d x\d s,\notag\\
&\cG_5= \kpa \overline{\Gamma} \f{(\gamma-1)}{C_v\gamma^2}\izto\bigg(
\overline{\Gamma}\kpa\div \big([(\ep\pt)^j,\beta]\nabla\theta\big)+ 
\f{\gamma^2}{\gamma-1}\ep (\ep\pt)^j F_{\sigma}\bigg)\div w^j\,\d x\d s, \notag\\
&\cG_6=\izto \sigma^j F_{\sigma}^j+w^j \cdot F_{w}^j- \kpa^2(\overline{\Gamma}/{\gamma})^2 \f{(\gamma-1)}{C_v}\div(\beta\nabla\theta^j) F_{\theta}^j\,\d x\d s.\notag
\end{align}
Therefore, it is just a matter to control $\cG_0-\cG_6$ defined above.
By the definition of $Y_m(0)$ in \eqref{initialnorm}:
\beq\label{cg0}
\cG_1\lesssim \|(\sigma,\kpa\na\theta, u)(0)\|_{H_{co}^{m-1}}^2 \lesssim  Y_m^2(0).
\eeq
For the term $\cG_1,$ we get  with the help of the the  boundary conditions $\eqref{bdryconditionofu}$ and
$(w^j\cdot\bn)|_{\p\Omega}=0,$
the trace inequality \eqref{traceL2} that: 
\begin{align}
    &\cG_1=\lambda_1\mu\izt\int_{\p\Omega}\Pi \p_{\bn}u^j\cdot \Pi w^j\d S_y\d s \lesssim \mu |(u^j,w^j)|_{L_t^2L^2(\p\Omega)}^2\notag\\
  &\lesssim (T\mu)^{\f{1}{2}}\|(u^j, w^j)\|_{L_t^{\infty}L^2} \|\mu^{\f{1}{2}}(u^j, w^j, \nabla u^j, \nabla w^j)\|_{L_t^2L^2}\lesssim T^{\f{1}{2}}\cE_{m,t}^2.\notag
\end{align}
Next, $\cG_2+\cG_3$ can be estimated directly as: 
\beq 
\cG_2+\cG_3 \lesssim \lat \|(\sigma^j, w^j, \kappa \nabla\theta^j)\|_{L_t^2L^2}^2\lesssim T\lat\cE_{m,t}^2.
\eeq
Moreover, the term $\cG_4$ is bounded by using the identity \eqref{roughproduct-2}, the estimates \eqref{esofbeta}, \eqref{esofGamma-2} for $\beta, \Gamma,$
 and the Young's inequality: 
\begin{align}
    \cG_4&\lesssim \mu^{\f{1}{2}} \|\mu^{\f{1}{2}}\nabla u^j\|_{L_t^2L^2} 
    \| \kpa\nabla (\beta\nabla\theta)\|_{L_t^2\cH^{m-1,0}}
    \lesssim \mu^{\f{1}{2}} \|\mu^{\f{1}{2}}\nabla u^j\|_{L_t^2L^2}\cdot\notag\\
    &\qquad \qquad  \bigg(\|\kpa\nabla^2\theta\|_{\hco^{m-1}}+\big(\|(\theta,\kappa^{\f{1}{2}}\nabla\theta)\|_{
    \hco^{m-1}}+\|\kpa\na^2\theta\|_{\hco^{m-2}}\big)\lat\bigg)\notag\\
    &\lesssim  \mu^{\f{1}{2}} \|\mu^{\f{1}{2}}\nabla u^j\|_{L_t^2L^2}\|\kpa\nabla^2\theta\|_{\hco^{m-1}}+T^{\f{1}{2}}\lat\cE_{m,t}^2\\
    &\leq \f{1}{2}\lambda_1 \mu \|\nabla u^j\|_{L_t^2L^2}^2+C(\lambda_1, c_0)\mu\|\kpa \nabla^2\theta\|_{\hco^{m-1}}^2+T^{\f{1}{2}}\lat\cE_{m,t}^2.\notag
\end{align}
As for the term $\cG_5$, one first derives from  the commutator and product estimates 
\eqref{roughcom}, \eqref{roughproduct1} and Corollary \ref{cor-gb} that:
\begin{align*}
\kpa\|\div \big([(\ep\pt)^j,\beta]\nabla\theta\big)\|_{L_t^2L^2}&\lesssim 
\kpa \|\big([(\ep\pt)^j,\beta]\Delta\theta, [(\ep\pt)^j,\nabla\beta\cdot]\nabla\theta\big)\|_{{L_t^2L^2}}\notag\\
&\lesssim\big( \kappa\|\Delta\theta\|_{\hco^{m-2}}+\|(\theta,\kpa^{\f{1}{2}}\nabla\theta)\|_{\hco^{m-1}}\big)\lat\notag\lesssim T^{\f{1}{2}}\lat\cE_{m,t},\notag\\
\ep \| F_{\sigma}\|_{L_t^2\cH^{m-1,0}}
    &\lesssim \ep \|\ep^{-1}(\Gamma-\overline{\Gamma}), \mu \na u, \kpa\div (\beta\nabla\theta)\|_{\hco^{m-1}}\lat\lesssim \ep \lat\cE_{m,t}.
\end{align*}

The above two estimates, together with the Cauchy-Schwarz inequality, leads to
\begin{align}
    \cG_5\lesssim 
(T^{\f{1}{2}}+\ep)\lat\cE_{m,t}^2.
\end{align}
Finally, let us see the estimate of $\cG_6,$ which is a bit lengthy since it contains many terms. 
We first use the Cauchy-Schwarz inequality to get:
\begin{align*}
    \cG_6\lesssim T^{\f{1}{2}}(\|(\sigma^j, w^j)\|_{L_t^{\infty}L^2})\|(F_{\sigma}^j,F_{w}^j)\|_{L_t^2L^2}+\|\kpa  \div(\beta\nabla\theta^j) \|_{L_{t}^2L^2} \|\kpa  F_{\theta}^j\|_{L_{t}^2L^2}.
\end{align*}
 It can be verified that:
\beqs
\|\kpa  \div(\beta\nabla\theta^j) \|_{L_{t}^2L^2}\lesssim \|\kappa \Delta\theta\|_{\hco^{m-1}}+T^{\f{1}{2}}\lat\cE_{m,t},
\eeqs
which, combined with the estimates of $\|(F_{\sigma}^j,F_{w}^j, F_{\theta}^j)\|_{L_t^2L^2}$
in Lemma \ref{lem-source-time},
enables us to find:
\beq\label{cg6}
 \cG_6\lesssim (T^{\f{1}{2}}+\ep)\lat\cE_{m,t}^2.
\eeq

Gluing \eqref{cg0}-\eqref{cg6} into \eqref{sec3:eq1}, one finds that:
\begin{align*}
    &\|(\sigma,w,\kpa \nabla\theta)\|^2_{L_t^{\infty}\cH^{m-1,0}}+(\kpa+\mu)\|\div w\|_{L_t^2\cH^{m-1,0}}^2+\mu \|\nabla u\|_{L_t^2\cH^{m-1,0}}^2\\
    &\lesssim 
Y_m^2(0)+(T^{\f{1}{2}}+\ep)\lat\cE_{m,t}^2+\|\kpa\nabla^2\theta\|_{\hco^{m-1}}^2,
\end{align*}
which, together with the estimate \eqref{EE-2}, gives rise to \eqref{EE-3}.
\end{proof}
\begin{lem}[Estimates on $(F_{\sigma}^j,F_{w}^j, F_{\theta}^j)$]\label{lem-source-time}
Recall that $(F_{\sigma}^j,F_{w}^j, F_{\theta}^j)$ are defined in \eqref{sec3:eq0}, the following estimates hold:
\begin{align}
    &\|(F_{\sigma}^j,F_{w}^j)\|_{L_t^2L^2}\lesssim \lat\cE_{m,t},\\
    & \| F_{\theta}^j\|_{L_t^2L^2}\lesssim 
    (T^{\f{1}{2}}+\ep)\lat \cE_{m,t}.
\end{align}
\end{lem}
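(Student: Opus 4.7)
I will expand each of $F_\sigma^j$, $F_w^j$ and $F_\theta^j$ using the Leibniz/commutator formula, then estimate every piece with the product estimate \eqref{roughproduct1}, the commutator estimate \eqref{roughcom}, and the composition bounds \eqref{esofbeta}, \eqref{esofGamma-1}, \eqref{esofGamma-2} of Corollary \ref{cor-gb}. Two elementary algebraic reductions are used throughout: the Taylor factorization $\Gamma(\ep\sigma)-\overline{\Gamma} = \ep\sigma\,\tilde{\Gamma}(\ep\sigma)$ with a smooth $\tilde{\Gamma}$, so that $(\Gamma-\overline{\Gamma})/\ep$ becomes multiplication by $\sigma\tilde{\Gamma}(\ep\sigma)$; and the embedding \eqref{useful-inftyhalf}, which absorbs any low-conormal $L_{t,x}^\infty$ quantity into $\cA_{m,t}$.

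For $F_\sigma^j$ the source $(\ep\pt)^j F_\sigma$ has two pieces. The first, $\tfrac{\gamma-1}{\gamma}\kpa\,\sigma\tilde{\Gamma}(\ep\sigma)\div(\beta\na\theta)$, is controlled by pairing $\sigma$ with $\|\kpa\na^2\theta\|_{\hco^{m-1}}$ (both bounded by $\cE_{m,t}$); the second, $\tfrac{\gamma-1}{\gamma}\mu\ep\,\Gamma\,\cL u\cdot\mathbb{S}u$, carries an $\ep\mu^{\f{1}{2}}$ gain that pairs with $\|\mu^{\f{1}{2}}\na u\|_{\hco^{m-1}}$. The commutator $[(\ep\pt)^j,u\cdot\na]\sigma$ is routine via \eqref{roughcom}. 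The bound on $(\ep\pt)^j F_w$ and on the commutator $[(\ep\pt)^j,(1/R\beta)u\cdot\na]w$ in $F_w^j$ follows the same pattern, since every term of $F_w$ carries a $\kpa$ or $\ep\mu$ prefactor.

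The genuine obstacle is the commutator $[(\ep\pt)^j,1/(R\beta)]\pt w$ in $F_w^j$: by the momentum equation $\eqref{sys-uni}_2$ one has $\pt w = -R\beta\,\na\sigma/\ep + \cdots$, so $\pt w$ is only of order $\ep^{-1}$, whereas we need an $\ep$-uniform bound. The resolution exploits the fact that every nontrivial Leibniz term has the form $(\ep\pt)^k(1/(R\beta))\cdot(\ep\pt)^{j-k}\pt w$ with $k\geq 1$, so at least one $\ep\pt$ lands on $1/(R\beta)$ and produces a $\ep\pt\theta$ factor. Because the temperature equation $\eqref{sys-uni}_3$ carries no $\ep^{-1}$ term, $\pt\theta$ is bounded in $L_{t,x}^\infty$ by $\cA_{m,t}$, hence $\ep\pt\theta$ is $O(\ep)$; iterating the chain rule, $(\ep\pt)^k(1/(R\beta))$ for $1\leq k\leq [\f{m}{2}]-1$ is $O(\ep^k)$ in $L_{t,x}^\infty$. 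Rewriting $(\ep\pt)^{j-k}\pt w = \ep^{-1}(\ep\pt)^{j-k+1}w$ and performing a high--low split (the low-regularity factor in $L_{t,x}^\infty$, the high-regularity factor in $L_t^2 L^2$, with the roles swapped when $k$ is large) exactly cancels the $\ep^{-1}$ singularity and yields the claimed bound.

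Finally, the extra factor $(T^{\f{1}{2}}+\ep)$ in the $F_\theta^j$ bound comes from two mechanisms. In $F_\theta = \kpa(\Gamma-\overline{\Gamma})\div(\beta\na\theta) + \mu\ep^2\Gamma\,\cL u\cdot\mathbb{S}u$, the prefactor $\Gamma - \overline{\Gamma} = \ep\sigma\tilde{\Gamma}(\ep\sigma)$ (no $\ep^{-1}$ denominator, in contrast with $F_\sigma$) yields a clean $\ep$ gain, while the viscous piece gains $\ep^2\mu$. The two commutators $-\tfrac{C_v}{R}[(\ep\pt)^j,u\cdot\na]\theta$ and $(\kpa\overline{\Gamma}/\gamma)\div([(\ep\pt)^j,\beta]\na\theta)$ are first bounded uniformly in $L_t^\infty L^2$ using \eqref{roughcom}, \eqref{esofbeta} and $\cE_{m,t}$, and then converted to $L_t^2 L^2$ via the Hölder gain $T^{\f{1}{2}}$. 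Assembling these contributions yields the claimed $(T^{\f{1}{2}}+\ep)\lat\cE_{m,t}$ estimate.
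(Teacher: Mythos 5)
Your proof follows the same strategy as the paper's: expand the problematic commutator $[(\ep\pt)^j, 1/(R\beta)]\pt w$ via the Leibniz identity \eqref{timecom-id}, note that in every term at least one $\ep\pt$ lands on $1/(R\beta)$ while the remaining factor is $(\ep\pt)^{j-l}w$ with $j-l\geq 1$ (so a bare $\pt w$ never appears and the $\ep^{-1}$ is absorbed into $\ep\pt w$), and then close with the high--low product/commutator estimates of Proposition \ref{prop-prdcom} and Corollary \ref{cor-gb}. Your treatment of the remaining pieces of $F_\sigma^j$, $F_w^j$, $F_\theta^j$, including where the $(T^{1/2}+\ep)$ factor for $F_\theta^j$ comes from, matches what the paper delegates to the analysis of $\cG_4$--$\cG_6$.

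One factual inaccuracy worth correcting: $(\ep\pt)^k(1/(R\beta))$ is $O(\ep)$ in $L_{t,x}^\infty$ for every $k\geq 1$, but it is \emph{not} $O(\ep^k)$ for $k\geq 2$. Already $(\ep\pt)^2(1/(R\beta))$ contains the Fa\`a di Bruno term $(1/(R\beta))'(\theta)\,(\ep\pt)^2\theta$, and $(\ep\pt)^2\theta=\ep^2\pt^2\theta$ is only $O(\ep)$: $\pt^2\theta$ picks up $\pt\div u\sim\ep^{-1}$ through the momentum equation, so the singular prefactor in $\eqref{NCNS-S2}_2$ reappears at second order even though the temperature equation itself carries none. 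This does not affect your conclusion, since exactly one power of $\ep$ is needed to cancel the $\ep^{-1}$ in $\pt w$, but the iterated chain-rule claim should be weakened to a uniform $O(\ep)$ gain.
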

\begin{proof}
We will focus on the estimate of $F_{w}^j,$ since the other two can be controlled in a similar way and some of the terms appearing in $(F_{\sigma}^j,  F_{\theta}^j)$ have essentially been dealt with during the analysis of
$\cG_4-\cG_5$ in the above proposition. Let us begin with the term $(\ep\pt)^j F_{w},$ 
where 
\beqs 
F_w=\kpa \overline{\Gamma} \f{\gamma-1}{R\gamma}
\nabla u\cdot\nabla\theta-\kpa \overline{\Gamma}\f{\gamma-1}{R\gamma}(\nabla \ln \beta) (\pt+u\cdot\nabla)\theta+\ep\mu\f{\Gamma-\overline{\Gamma}}{\ep}\div\cL u.\notag
\eeqs
By the product estimate \eqref{roughproduct1}, the estimates \eqref{esofbeta}, \eqref{esofGamma-2} and the assumption $m\geq 7,$ 
\begin{align*}
    \kpa \|\nabla u\cdot\nabla\theta\|_{L_t^2\cH^{m-1,0}}&\lesssim \|\kpa^{\f{1}{2}}(\nabla u,\nabla\theta)\|_{\hco^{m-1}}\lab \il \nabla u\il_{[\f{m}{2}]-1,\infty,t}+\il\kpa^{\f{1}{2}}\nabla\theta\il_{[\f{m}{2}],\infty,t}\big)\\
    &\lesssim \lat \cE_{m,t},\\
   \ep\mu \|\ep^{-1}(\Gamma-\overline{\Gamma}) \div \cL u \|_{L_t^2\cH^{m-1,0}}&\lesssim \|(\sigma, \ep\mu\na^2 u)\|_{\hco^{m-1}}\lab \il \sigma \il_{[\f{m}{2}],\infty,t}+\il\ep\mu\na^2 u\il_{[\f{m}{2}]-1,\infty,t}\big)\\
    &\lesssim \ep\kpa \lat \cE_{m,t},\\
    \kpa \|(\nabla \ln \beta) (\pt+u\cdot\nabla)\theta\|_{L_t^2\cH^{m-1,0}}&\lesssim \|\kpa^{\f{1}{2}}(\nabla u,\pt\theta, u\cdot\nabla\theta)\|_{\hco^{m-1}}
    \lab \il (\pt\theta,\nabla\theta,u)\il_{[\f{m}{2}]-1,\infty,t}+\il(\theta,\kpa^{\f{1}{2}}\nabla\theta)\il_{[\f{m}{2}],\infty,t}\big)\\
    & \lesssim \lat \cE_{m,t}.
\end{align*}
We get from the previous three estimates that, for any $j\leq m-1,$
\beqs
\|(\ep\pt)^j F_{w}\|_{L_t^2L^2}\lesssim \lat \cE_{m,t}.
\eeqs
To finish the estimate of $\|F_{w}^j\|_{L_t^2L^2},$ it remains to control the $L_t^2L^2$ norm of 
$$[(\ep\pt)^j, \f{1}{R\beta}u\cdot\nabla]w,\quad [(\ep\pt)^j, \f{1}{R\beta}]\pt w.$$ 
Let us give the detail for the second one, the former one being easier. 
It follows from the formula
\beq\label{timecom-id}
 [(\ep\pt)^j, \f{1}{R\beta}]\pt f=\sum_{l=0}^{j-1} C_j^{l+1} (\ep\pt)^l\pt(\f{1}{R\beta})(\ep\pt)^{j-l}f
\eeq
 that:
\begin{align*}
\|[(\ep\pt)^j, \f{1}{R\beta}]\pt w\|_{L_t^2L^2}&\lesssim \|(\theta,\pt\theta,\ep\pt w)\|_{L_t^2\cH^{m-2,0}}\lab \il \pt\theta\il_{[\f{m}{2}]-1,\infty,t}+\il w \il_{[\f{m}{2}]+1,\infty,t}\big)\\
&\lesssim T^{\f{1}{2}}\lat\cE_{m,t}.
\end{align*}
Note that since $m\geq 7,$
\begin{align*}
   \il w \il_{[\f{m}{2}]+1,\infty,t}\lesssim \lab \il (u,\theta, \kpa\na\theta) \il_{m-3,\infty,t}\big)\lesssim \lat.
\end{align*}
The estimate of $\|F_{w}^j\|_{L_t^2L^2}$ is now complete.
\end{proof}

\section{Uniform estimates for $\theta$}
In this section, we aim to get various 
uniform control of energy norms of $\theta$ summarized in $\cE_{m,t}(\theta):$
\begin{align*}
   \cE_{m,t}(\theta)&=\sqrt{\kpa}\|\theta\|_{L_t^{\infty}\underline{H}_{co}^{m}}+ \|\theta\|_{L_t^{\infty}H_{co}^{m-1}}+\|\nabla\theta\|_{\infcok^{m-1}}\\
& \qquad + \sqrt{\kappa}\|\na^2\theta\|_{\infcok^{m-2}\cap L_t^{2}H_{co,\sqrt{\kappa}}^{m-1}}+\kpa \|\na^3\theta\|_{L_t^{2}H_{co,\sqrt{\kappa}}^{m-2}}.
\end{align*}
\begin{prop}\label{prop-theta}
Suppose that the assumption \eqref{preasption} holds and $m\geq 7,$ we have the following estimate for $\theta:$ there exists a constant 
$\tilde{\vartheta}_0>0,$ 
such that, for any $\ep\in(0,1], (\mu,\kpa)\in A,$
 any $0<t\leq T:$
\begin{align}\label{EE-theta}
   \cE_{m,t}^2(\theta)\lesssim \lab Y_m(0) \big) + 
   (T+\ep)^{\tilde{\vartheta}_0}
 \Lambda\big(\f{1}{c_0},\cN_{m,t}\big)+\|\kpa^{\f{1}{2}}\nabla\sigma\|_{\hco^{m-1}}^2.
\end{align}
\end{prop}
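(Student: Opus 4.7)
The plan is to combine three ingredients: (i) the weighted top-order estimate $\sqrt{\kpa}\|\theta\|_{\uinfco^m}$ together with $\kpa\|\nabla^3\theta\|_{\hco^{m-2}}$, which are already contained in \eqref{EE-highest} of Proposition \ref{prop-highest} applied to the symmetrized system \eqref{newsys}; (ii) a transport-equation estimate for the corrected temperature $\tilde{\theta} = \frac{C_v}{R}\theta - \frac{\ep}{\gamma}\sigma$, designed to bypass the poor conormal control of $\div u$ at the top order; (iii) an elliptic identity coming from $\eqref{NCNS-S2}_3$ which trades each normal derivative for tangential conormal derivatives.

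To get the uniform $L_t^\infty H_{co}^{m-1}$ bound on $\theta$, I would combine the first and third equations of \eqref{NCNS-S2} to eliminate the $\div u$ source and obtain the clean transport equation
\begin{equation*}
(\pt + u\cdot\nabla)\tilde{\theta} = \gamma^{-1}\bigl(\kpa\,\Gamma(\ep\sigma)\,\div(\beta(\theta)\nabla\theta) + \ep\,\mathfrak{N}^{\ep}\bigr),
\end{equation*}
which is free of singular prefactors. Applying $Z^I$ with $|I|\le m-1$, pairing with $Z^I\tilde{\theta}$, and integrating over $[0,t]\times\Omega$ yields a clean energy identity whose dominant source at top conormal order is $\kpa\div(\beta\nabla\theta)$, controlled in $L_t^2 H_{co}^{m-1}$ by a combination of $\kpa\|\nabla^2\theta\|_{L_t^2 H_{co,\sqrt{\kpa}}^{m-1}}$ and $\|\kpa^{1/2}\nabla\sigma\|_{\hco^{m-1}}$ after the product estimate \eqref{roughproduct1} and Corollary \ref{cor-gb} for $\beta,\Gamma$. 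The transport commutators $[Z^I, u\cdot\nabla]\tilde{\theta}$ are standard via \eqref{roughcom1} and \eqref{useful-inftyhalf}, each producing a small $(T+\ep)^{\tilde\vartheta_0}$ factor. Converting back via $\theta = \tfrac{R}{C_v}\tilde{\theta} + \tfrac{R\ep}{C_v\gamma}\sigma$ and using the uniform control $\|\sigma\|_{L_t^\infty H_{co}^{m-1}}\lesssim \cE_{m,t}$ from \eqref{defcEmsigmau} recovers $\|\theta\|_{L_t^\infty H_{co}^{m-1}}$; this same back-substitution is what leaves the explicit residual $\|\kpa^{1/2}\nabla\sigma\|_{\hco^{m-1}}^2$ on the right-hand side of \eqref{EE-theta}.

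The parabolic $L_t^2$ pieces $\sqrt{\kpa}\|\nabla^2\theta\|_{L_t^2 H_{co,\sqrt{\kpa}}^{m-1}}$ come from a conormal energy estimate on $\eqref{NCNS-S2}_3$ directly: I apply $Z^I$ with $|I|\le m-1$, multiply by $Z^I\theta$, integrate by parts, and use the tangential splitting of the Neumann condition $\p_{\bn}\theta|_{\p\Omega}=0$ exactly as in the handling of $\cI_8$ in Proposition \ref{prop-highest-2}; this supplies the dissipation $\kpa\|\beta^{1/2}\Gamma^{1/2}\nabla Z^I\theta\|_{L_t^2 L^2}^2$. The only delicate source is $\int Z^I\div u\cdot Z^I\theta$, which after integration by parts in $x$ is split into $|I|\le m-2$ (handled by the uniform bound $\|\nabla u\|_{L_t^2 H_{co}^{m-2}}\lesssim \cE_{m,t}$) and $|I|=m-1$ (handled by the $\sqrt{\mu}$-weighted bound on $\nabla u$, which by $\mu\sim\kpa$ from \eqref{assumption-mukpa} supplies a factor absorbable into the dissipation). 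The $L_t^\infty$ gradient norms $\|\nabla\theta\|_{\infcok^{m-1}}$ and $\sqrt{\kpa}\|\nabla^2\theta\|_{\infcok^{m-2}}$ are then obtained by resolving the elliptic identity
\begin{equation*}
\kpa\,\Gamma\beta\,\p_{\bn}^2\theta = \tfrac{C_v}{R}(\pt + u\cdot\nabla)\theta + \div u - \ep\,\mathfrak{N}^{\ep} - \kpa\,\Gamma\,\nabla\beta\cdot\nabla\theta - \kpa\,\Gamma\beta\,(\Delta - \p_{\bn}^2)\theta
\end{equation*}
in the local coordinates \eqref{local coordinates} for $\p_\bn^2\theta$ and iterating once for $\p_\bn^3\theta$; the residual $\div u$ on the right is again rewritten via the $\sigma$-equation to avoid the missing top-order control, producing the same explicit $\kpa^{1/2}\nabla\sigma$ remainder.

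The main obstacle is the coupling at the highest conormal order: the uniform $H_{co}^{m-1}$ bound on $\theta$ uses $\kpa\|\nabla^2\theta\|_{L_t^2 H_{co}^{m-1}}$ on its right through the source of the $\tilde{\theta}$-equation, while the dissipation estimate uses $\|\theta\|_{L_t^\infty H_{co}^{m-1}}$ through the transport commutators $[Z^I, u\cdot\nabla]\theta$. I close the loop by a short-time bootstrap using the small prefactor $(T+\ep)^{\tilde{\vartheta}_0}$, while keeping $\|\kpa^{1/2}\nabla\sigma\|_{\hco^{m-1}}^2$ as an explicit remainder on the right-hand side of \eqref{EE-theta} that is absorbed in subsequent sections via \eqref{EE-2} and the relation $\mu\sim\kpa$.
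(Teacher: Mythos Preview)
Your outline shares the correct first two ingredients with the paper (the weighted top-order estimate from Proposition~\ref{prop-highest} and the transport equation for $\tilde\theta=\tfrac{C_v}{R}\theta-\tfrac{\ep}{\gamma}\sigma$), but the third ingredient has a genuine gap.

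\textbf{The pointwise elliptic identity does not close the $L_t^\infty$ gradient estimates.} Your identity
\[
\kpa\Gamma\beta\,\p_{\bn}^2\theta=\tfrac{C_v}{R}(\pt+u\cdot\nabla)\theta+\div u-\ep\mathfrak{N}-\kpa\Gamma\nabla\beta\cdot\nabla\theta-\kpa\Gamma\beta(\Delta-\p_\bn^2)\theta
\]
is just $\eqref{NCNS-S2}_3$ rearranged. After you rewrite $\div u$ via the $\sigma$-equation, the combination $\tfrac{C_v}{R}(\pt+u\cdot\nabla)\theta+\div u-\ep\mathfrak{N}$ collapses exactly to $\kpa\Gamma\div(\beta\nabla\theta)$, so the right-hand side again contains $\kpa\nabla^2\theta$ at the same order you are trying to bound: the estimate is circular. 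In particular you never obtain the unweighted $\|\nabla\theta\|_{L_t^\infty H_{co}^{m-2}}$ or the pieces $\kpa^{1/2}\|\nabla^2\theta\|_{L_t^\infty H_{co}^{m-3}}$, $\kpa\|\nabla^2\theta\|_{L_t^\infty H_{co}^{m-2}}$ this way.

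\textbf{What the paper actually does.} These $L_t^\infty$ norms come from \emph{parabolic energy estimates}, not a pointwise identity. For $\|\nabla\theta\|_{L_t^\infty H_{co}^{m-2}}+\kpa^{1/2}\|\nabla^2\theta\|_{L_t^2H_{co}^{m-2}}$ one applies $Z^J\nabla$ (with $|J|\le m-2$) to the $\tilde\theta$-equation and tests against $Z^J\nabla\tilde\theta$; the $\pt$-term produces the $L_t^\infty$ bound on $\nabla\tilde\theta$ as a time-boundary term while the diffusion yields $\kpa\|\Delta\theta\|_{L_t^2H_{co}^{m-2}}^2$. For $\kpa^{1/2}\|\Delta\theta\|_{L_t^\infty H_{co}^{m-3}}$ and $\kpa\|\Delta\theta\|_{L_t^\infty H_{co}^{m-2}}$ one rewrites the equation as $\tfrac{C_v}{R}\pt\theta-\kpa\gamma^{-1}\Gamma\div(\beta\nabla\theta)=-u\cdot\nabla\tilde\theta+\gamma^{-1}\ep(\pt\sigma+\mathfrak{N})$, applies $\kpa^{1/2}Z^J\nabla$, and tests against $-\kpa^{1/2}\nabla\div(Z^J\nabla\theta)$; this is where the genuine remainder $\|\kpa^{1/2}\nabla\sigma\|_{L_t^2H_{co}^{m-1}}^2$ appears (from the source $\kpa^{1/2}Z^J\nabla(\ep\pt\sigma)$ at $|J|=m-2$), not from the back-substitution $\theta=\tfrac{R}{C_v}\tilde\theta+\tfrac{R\ep}{C_v\gamma}\sigma$ as you suggest.

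A secondary issue: testing $Z^I\eqref{NCNS-S2}_3$ against $Z^I\theta$ gives dissipation $\kpa\|\nabla Z^I\theta\|_{L_t^2}^2$, i.e.\ one derivative, not the two-derivative quantity $\sqrt{\kpa}\|\nabla^2\theta\|_{L_t^2H_{co,\sqrt{\kpa}}^{m-1}}$ you claim; and your proposed handling of $\int Z^I\div u\cdot Z^I\theta$ at $|I|=m-1$ via the $\sqrt{\mu}$-weighted $\nabla u$ bound leaves a factor $\kpa^{-1}\|u\|_{L_t^2H_{co}^{m-1}}^2$ after Young's inequality, which is not uniform in $\kpa$. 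This is precisely why the paper works with the $\tilde\theta$-equation (where $\div u$ is absent) even for the dissipation estimate.
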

\begin{proof}
In view of estimate \eqref{EE-highest} 
it remains for us to control the following quantities:
$$\|\theta\|_{L_t^{\infty}H_{co}^{m-1}},\quad \|\nabla\theta\|_{L_t^{\infty}H_{co}^{m-2}}, 
\quad \sqrt{\kappa}\|\na^2\theta\|_{\infcok^{m-2}\cap \hco^{m-2}}, \quad \kpa \|\na^3\theta\|_{L_t^{2}H_{co,\sqrt{\kappa}}^{m-2}},$$
which  are done in the following three propositions. 
\end{proof}
\begin{prop}
Under the same assumption as in Proposition \ref{prop-theta}, we have that for any $\ep \in (0,1], (\mu,\kpa)\in A,$ any $0<t\leq T,$ 
\begin{align}\label{EE-theta-0}
\|\theta\|_{L_t^{\infty}H_{co}^{m-1}}^2+\|\kpa^{\f{1}{2}}\nabla\theta\|_{\hco^{m-1}}^2
\lesssim Y_m^2(0)+(T^{\f{1}{2}}+\ep)\lat\cE_{m,t}^2.
\end{align}
\end{prop}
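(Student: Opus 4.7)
The heart of the obstruction to a direct conormal energy estimate on $\eqref{NCNS-S2}_3$ is the source $\div u$: testing $Z^I$ against $Z^I\theta$, integrating by parts in space, and using Young's inequality against the dissipation $\kpa\|\nabla Z^I\theta\|_{L_t^2L^2}^2$ would force the ruinous weight $\kpa^{-1}$ on $\|Z^I u\|_{L_t^2L^2}^2$, whereas only $\mu^{1/2}\|\nabla u\|_{L_t^2 H_{co}^{m-1}}$ is available from $\cE_{m,t}$. The way out, in the spirit of Section \ref{sec-timeder}, is to eliminate $\div u$ by the linear combination $\eqref{NCNS-S2}_3-\ep\cdot \eqref{NCNS-S2}_1/\gamma$. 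A straightforward computation shows that the unknown
\beq\label{plan-vrtilde}
\tilde\vr=\f{C_v}{R}\theta-\f{\ep\sigma}{\gamma}
\eeq
satisfies the genuinely parabolic equation
\beq\label{eq-vrtilde-plan}
(\pt+u\cdot\nabla)\tilde\vr-\f{\kpa}{\gamma}\Gamma(\ep\sigma)\div(\beta(\theta)\nabla\theta)=\f{\ep}{\gamma}\mathfrak{N},
\eeq
free of any singular term.

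The plan is then to apply $Z^I$ with $|I|\leq m-1$ to \eqref{eq-vrtilde-plan} and test against $Z^I\tilde\vr$. The transport piece produces the kinetic quantity $\f{1}{2}\|Z^I\tilde\vr(t)\|_{L^2}^2$ (plus the innocuous $\f{1}{2}\izto \div u\,|Z^I\tilde\vr|^2$), while integration by parts in space, combined with the identity $\nabla Z^I\tilde\vr=\f{C_v}{R}\nabla Z^I\theta-\f{\ep}{\gamma}\nabla Z^I\sigma$, yields from the diffusion
\beq\label{plan-dissip}
\f{\kpa C_v}{\gamma R}\izto \Gamma\beta|\nabla Z^I\theta|^2\d x\d s -\f{\kpa\ep}{\gamma^2}\izto \Gamma\beta\nabla Z^I\theta\cdot\nabla Z^I\sigma\,\d x\d s -\f{\kpa}{\gamma}\izt\int_{\p\Omega}\Gamma\beta(\nabla Z^I\theta\cdot\bn) Z^I\tilde\vr\,\d S_y\d s+\cdots,
\eeq
where $\cdots$ collects the commutators between $Z^I$ and $(u\cdot\nabla,\Gamma,\beta,\div,\nabla)$. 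The first term in \eqref{plan-dissip} furnishes the target dissipation $\|\kpa^{1/2}\nabla\theta\|_{\hco^{m-1}}^2$; the coupling integral is absorbed via Young's inequality at the cost of $\ep^2\kpa\|\nabla Z^I\sigma\|_{L_t^2L^2}^2$, which in turn is $\lesssim \ep^2\cE_{m,t}^2$ thanks to the factor $\mu^{1/2}\|\nabla\sigma\|_{L_t^2 H_{co}^{m-1}}\lesssim \cE_{m,t}$ in \eqref{defcEmsigmau} and the proportionality $\kpa\sim\mu$ of Remark \ref{rmkmuapproxkpa}.

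The boundary integral in \eqref{plan-dissip} is treated exactly as $\cK_3$ was in Proposition \ref{prop-highest-1}: the Neumann condition $\p_{\bn}\theta|_{\p\Omega}=0$ forces $Z^I$ to contain at least one tangential derivative, which is shifted onto $Z^I\tilde\vr$, and the trace inequality \eqref{traceL2} then produces a $T^{1/2}$-small remainder. The commutator sources $[Z^I,u\cdot\nabla]\tilde\vr$, $[Z^I,\Gamma\beta]\Delta\theta$, $[Z^I,\nabla]\div u$ and $Z^I(\Gamma\nabla\beta\cdot\nabla\theta)$ are bounded by $\lat\cE_{m,t}$ via the product and commutator estimates \eqref{roughproduct1}, \eqref{roughcom} in the same way as $\cC_{\theta}^I$ was handled in Section 2, producing remainders of the required form $(T^{1/2}+\ep)\lat\cE_{m,t}^2$. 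Finally, the algebraic inversion $\theta=\f{R}{C_v}\tilde\vr+\f{R\ep}{C_v\gamma}\sigma$, combined with the control of $\|\sigma\|_{L_t^\infty H_{co}^{m-1}}$ already present in $\cE_{m,t}$, converts the estimate on $\tilde\vr$ into the stated bound \eqref{EE-theta-0}. The main obstacle is purely one of bookkeeping: ensuring every commutator contribution carries either a $T^{1/2}$ or an $\ep$ factor, or is clearly absorbable into $\lat\cE_{m,t}^2$; no conceptual ingredient beyond those of Sections 2 and \ref{sec-timeder} is needed.
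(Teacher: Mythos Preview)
Your proposal is correct and follows essentially the same approach as the paper: both introduce the same auxiliary unknown $\tilde\theta=\frac{C_v}{R}\theta-\frac{\ep}{\gamma}\sigma$ (the paper's notation), derive the same parabolic equation \eqref{sec4:eq0} free of $\div u$, apply $Z^I$ and test against $Z^I\tilde\theta$, and extract the dissipation $\kpa\izto\Gamma\beta|Z^I\nabla\theta|^2$ while treating the boundary, commutator, and $\ep$--cross terms in the same way. The only cosmetic difference is that the paper splits $Z^I\tilde\theta=\frac{C_v}{R}Z^I\theta-\frac{\ep}{\gamma}Z^I\sigma$ \emph{before} integrating by parts and bounds the $\sigma$-cross term $\frac{\ep\kpa}{\gamma^2}\izto Z^I(\Gamma\div(\beta\nabla\theta))Z^I\sigma$ directly by Cauchy--Schwarz, whereas you integrate by parts first and then absorb $\ep^2\kpa\|\nabla Z^I\sigma\|_{L_t^2L^2}^2$ via Young; both routes work and rely on the same ingredients.
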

\begin{proof}
We could have expected to get this inequality by performing direct energy estimates on $\theta.$ However, the presence of the term 
$\div u$ in the equation of $\theta$ prevents us from doing so, since otherwise the estimate cannot be uniform in $\kpa.$ The strategy is to introduce  and work on the new unknown 
\beqs\label{deftta}
\tilde{\theta}=\colon \f{C_v}{R}\theta-\f{\ep}{\gamma}\sigma.
\eeqs
By the equation $\eqref{NCNS-S2}_1, \eqref{NCNS-S2}_3,$ we find that $\tta$ solves the equation:
\begin{equation}\label{sec4:eq0}
   ( \pt+u\cdot\nabla)\tta-
   \kpa\gamma^{-1}\Gamma\div(\beta\nabla\theta)=\f{\ep}{\gamma}\mathfrak{N},
\end{equation}
where $\mathfrak{N}$ is defined in \eqref{defgamma-beta}.
Applying $Z^{I}, (|I|\leq m-1)$ on the above equation and multiplying it by $Z^{I}\tta,$ one obtains the following estimate:
\begin{align}\label{sec4:eq1}
    &\f{1}{2}\iomega |Z^{I}\tta|^2(t)\, \d x-\f{C_v\kpa}{R\gamma}\izto Z^{I}(\Gamma\div(\beta\nabla\theta))Z^{I}{\theta}\,\d x\d s\notag\\
    &= \f{1}{2}\iomega |Z^{I}\tta|^2(0)\, \d x +\f{1}{2}\izto \div u |Z^{I}\tta|^2 \d x \d s
    +\izto \big([Z^I,u\cdot\nabla]\tta+\f{\ep}{\gamma}Z^I \mathfrak{N}\big) Z^I \tta\,\d x\d s\\
    &\qquad\qquad\qquad\qquad\qquad\qquad-\f{\ep\kpa}{\gamma^2}\izto Z^I(\Gamma\div(\beta\nabla\theta))Z^I\sigma\,\d x\d s\notag\\
    &\lesssim Y_m^2(0)+(T+\ep)\lat\cE_{m,t}^2.\notag
\end{align}
Note that in the derivation of the last inequality, we have used the Cauchy-Schwarz inequality and the facts:
\begin{align*}
   & \|Z^I \tta\|_{L_t^{2}L^2}\lesssim T^{\f{1}{2}}\|Z^I \tta\|_{L_t^{\infty}L^2}\lesssim T^{\f{1}{2}}\cE_{m,t},\\
    & \|\big([Z^I,u\cdot\nabla]\tta, Z^I(\Gamma\div(\beta\nabla\theta)), Z^I\sigma , Z^I \mathfrak{N} \big)\|_{L_t^{2}L^2}\lesssim \lat\cE_{m,t},
\end{align*}
which result from the product and commutator estimates in Proposition \ref{prop-prdcom}.
It suffices to see the second term in the left hand side of \eqref{sec4:eq1}. 
Let us write:
\begin{align*}
   & -\f{C_v\kpa}{R\gamma}
  \izto Z^{I}(\Gamma\div(\beta\nabla\theta))Z^{I}{\theta}\,\d x\d s\\
   &= \f{C_v\kpa}{R\gamma} \izto \Gamma \beta |Z^I\nabla\theta|^2\,\d x\d s-
    \f{C_v\kpa}{R\gamma}\izt\int_{\p\Omega}  Z^I(\Gamma \beta\nabla\theta)\cdot\bn Z^I\theta \,\d S_y\d s \\
   &\quad +\f{C_v\kpa}{R\gamma} \izto \big([Z^I, \Gamma \beta ]\nabla\theta \cdot\nabla Z^I\theta+ \Gamma \beta Z^I(\nabla\theta)\cdot [\nabla, Z^I]\theta\big)-\big([Z^I, \div](\Gamma \beta \nabla\theta )-Z^I(\beta\nabla \Gamma\cdot\nabla\theta) \big)Z^I\theta\,\d x\d s.
\end{align*}
One can verify that the last line in the above identity can be bounded as:
$$T^{\f{1}{2}}\lat\cE_{m,t}^2.$$
Such an estimate is achieved by the direct (although a bit lengthy) application of Proposition \ref{prop-prdcom} and Corollary \ref{cor-gb}, we thus omit the detail.
Let us now control the boundary term appearing in the above identity by taking the benefits of the Neumann boundary condition for $\theta$ and the trace inequality \eqref{traceL2}:
\begin{align*}
& \f{C_v\kpa}{R\gamma}\bigg|\izt\int_{\p\Omega}  Z^I(\Gamma \beta\nabla\theta)\cdot\bn Z^I\theta \,\d S_y\d s \bigg|& \\
 &= \f{C_v\kpa}{R\gamma}\bigg|\izt\int_{\p\Omega}  [Z^I,\bn](\Gamma \beta\nabla\theta) Z^I\theta \,\d S_y\d s\bigg|\lesssim |\kpa^{\f{3}{4}}\Gamma\beta\nabla\theta|_{\hcob^{m-2}}|\kpa^{\f{1}{4}}\theta|_{\hcob^{m-1}}\\
 &\lesssim \|\big(\kpa \nabla^2\theta, \kpa^{\f{1}{2}}\nabla\theta, (\Id,\na)(\theta,\ep\sigma)\big)\|_{\hco^{m-2}}\lat \|(\theta,\kpa^{\f{1}{2}}\nabla\theta)\|_{\hco^{m-1}}\lesssim T\lat\cE_{m,t}^2.
\end{align*}
To summarize, we have found that:
\begin{align*}
    -\f{C_v\kpa}{R\gamma}
  \izto Z^{I}(\Gamma\div(\beta\nabla\theta))Z^{I}{\theta}\,\d x\d t\geq 
\f{C_v\kpa}{R\gamma} \izto \Gamma \beta |Z^I\nabla\theta|^2\,\d x\d t-T^{\f{1}{2}}\lat\cE_{m,t}^2.
 \end{align*}
Substituting this estimate into \eqref{sec4:eq1}, we obtain \eqref{EE-theta-0} 
by using \eqref{Gamma-Beta-infty}. 
\end{proof}

\begin{prop}
Under the same assumption as in Proposition \ref{prop-theta}, for any $\ep \in (0,1], (\mu,\kpa)\in A,$ any $0<t\leq T,$ it holds that:
\begin{align}\label{EE-theta-1}
 \|\nabla\theta\|_{L_t^{\infty}H_{co}^{m-2}}^2+\kpa\|\nabla^2\theta\|_{\hco^{m-2}}^2 \lesssim Y_m^2(0)+(T^{\f{1}{2}}+\ep)\lae.
\end{align}
\end{prop}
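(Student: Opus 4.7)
The plan is to perform an $H^1$-type conormal energy estimate on the convection-diffusion equation for $\theta$, thereby gaining one extra space derivative compared to the plain $L^2$-type estimate of \eqref{EE-theta-0}. Rewriting $\eqref{NCNS-S2}_3$ as
\[(\pt+u\cdot\na)\theta-\kpa(R/C_v)\Gamma(\ep\sigma)\div(\beta(\theta)\na\theta)=(R/C_v)(\ep\mathfrak{N}-\div u),\]
I apply $Z^I$ with $|I|\leq m-2$ and test the resulting equation against $-\Delta(Z^I\theta)$ integrated on $\Omega\times[0,t]$. After integration by parts and extraction of leading terms, this should yield an energy identity of the form
\[\f{1}{2}\|\na Z^I\theta(t)\|_{L^2}^2 +\f{R\kpa}{C_v}\izto\Gamma\beta\,|\Delta Z^I\theta|^2\,\d x\,\d s = \f12\|\na Z^I\theta(0)\|_{L^2}^2+\cJ_{\textnormal{bdy}}+\cJ_{\textnormal{com}},\]
where $\cJ_{\textnormal{bdy}}$ is the boundary contribution from the integration-by-parts in the time term and $\cJ_{\textnormal{com}}$ gathers the convective, commutator and source contributions.

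The boundary contribution $\cJ_{\textnormal{bdy}}=\izt\int_{\p\Omega}\pt Z^I\theta\cdot\p_{\bn}Z^I\theta\,\d S_y\d s$ is treated in the spirit of the estimate of $\cI_8$: if $Z^I=(\ep\pt)^j$ consists only of time derivatives, then $\p_{\bn}Z^I\theta|_{\p\Omega}=0$ by the Neumann condition $\p_{\bn}\theta|_{\p\Omega}=0$ and this term vanishes; otherwise we write $\p_{\bn}Z^I\theta|_{\p\Omega}=[\bn\cdot,Z^I]\na\theta|_{\p\Omega}$ as a lower-order tangential commutator and estimate it via the boundary product rule \eqref{product-bd} and the trace inequality \eqref{traceL2}, gaining a factor $(T+\ep)^{1/2}$. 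For $\cJ_{\textnormal{com}}$, the convective part $\int u\cdot\na Z^I\theta\cdot(-\Delta Z^I\theta)$ integrates by parts (using $u\cdot\bn|_{\p\Omega}=0$) into $\f12\int\div u\,|\na Z^I\theta|^2$ plus harmless cross terms, while the commutators $[Z^I,u\cdot\na]\theta$, $\kpa[Z^I,\Gamma\beta]\Delta\theta$, $Z^I(\kpa\Gamma\na\beta\cdot\na\theta)$ and the source $Z^I((R/C_v)(\ep\mathfrak{N}-\div u))$ are bounded using Proposition \ref{prop-prdcom}, Corollary \ref{cor-gb}, the already established estimates \eqref{EE-theta-0} and \eqref{EE-2}, together with the $L^\infty$-type norms packaged into $\cA_{m,t}$, and absorbed in $\lae$ modulo a small prefactor $(T^{1/2}+\ep)^{\vartheta}$.

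Once $\|\na Z^I\theta\|_{L^\infty_tL^2}$ and $\kpa^{1/2}\|\Delta Z^I\theta\|_{L^2_tL^2}$ are controlled for every $|I|\leq m-2$, standard elliptic regularity for the Laplacian on $\Omega$ with the boundary datum $\p_{\bn}(Z^I\theta)$ quantified as above upgrades $\Delta Z^I\theta$ to the full Hessian $\na^2 Z^I\theta$ with the same $\kpa^{1/2}$ weight, giving $\kpa\|\na^2\theta\|_{\hco^{m-2}}^2$. Summing over $I$ then yields \eqref{EE-theta-1}.

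The main obstacle is the quasi-linear diffusion coefficient $\Gamma(\ep\sigma)\beta(\theta)$: the commutator $\kpa[Z^I,\Gamma\beta]\Delta\theta$ at the top level $|I|=m-2$ cannot be estimated naively, since the derivative landing on $\na^2\theta$ would require exactly the quantity we seek to control. The way around this is a careful Leibniz split which isolates the leading piece and estimates it using \eqref{EE-2} (which supplies $\kpa\|\na^2\theta\|_{\hco^{m-1}}$) together with an $L^\infty$ bound on $Z(\Gamma\beta)$ taken from $\cA_{m,t}$. The companion boundary commutator $[\bn\cdot,Z^I]\na\theta$, which is non-zero on $\p\Omega$ despite $\p_{\bn}\theta$ vanishing there, similarly requires the sharp trace bookkeeping of the above form in order to remain uniform in $(\ep,\mu,\kpa)$.
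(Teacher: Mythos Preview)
Your overall scheme (apply $Z^I$ with $|I|\le m-2$ and test against $-\Delta Z^I\theta$) is reasonable, but there is a genuine gap: working directly with the equation for $\theta$, the right-hand side contains the term $-(R/C_v)\,\div u$, and its contribution
\[
\int_0^t\!\!\int_\Omega Z^I\div u\cdot \Delta Z^I\theta\,\d x\,\d s
\]
cannot be controlled uniformly in $\kappa$. By Young's inequality you can absorb $\delta\kappa\|\Delta Z^I\theta\|_{L_t^2L^2}^2$ into the dissipation, but the remainder is $C_\delta\,\kappa^{-1}\|\div u\|_{L_t^2H_{co}^{m-2}}^2\lesssim \kappa^{-1}T\,\cE_{m,t}^2$, which blows up as $\kappa\to 0$. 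Integrating by parts first does not help either: it produces $\|\nabla\div u\|_{L_t^2H_{co}^{m-2}}\|\nabla\theta\|_{L_t^2H_{co}^{m-2}}$, and only $\kappa^{1/2}\|\nabla\div u\|_{L_t^2H_{co}^{m-2}}$ is available, so a $\kappa^{-1/2}$ factor remains. None of the ingredients you list (\eqref{EE-theta-0}, \eqref{EE-2}, Proposition~\ref{prop-prdcom}) fixes this, because the obstruction is structural: $\div u$ has no smallness in $\kappa$.

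The paper avoids this by performing the $H^1$-type estimate on the corrected unknown $\tilde\theta=\tfrac{C_v}{R}\theta-\tfrac{\ep}{\gamma}\sigma$, which satisfies $(\partial_t+u\cdot\nabla)\tilde\theta-\kappa\gamma^{-1}\Gamma\div(\beta\nabla\theta)=\tfrac{\ep}{\gamma}\mathfrak{N}$ with \emph{no} $\div u$ on the right. One applies $Z^J\nabla$ and tests against $Z^J\nabla\tilde\theta$. The price is a new cross term $\ep\kappa\!\int\nabla Z^J(\Gamma\div(\beta\nabla\theta))\cdot Z^J\nabla\sigma$, coming from the $\sigma$-part of $\tilde\theta$; this is handled by a further integration by parts together with the elliptic problem for $\tilde\sigma=\sigma-\ep\mu(2\lambda_1+\lambda_2)\Gamma\div u$ (Lemma~\ref{lemtsigma}), which makes the boundary contribution and the interior Laplacian of $\tilde\sigma$ gain an $\ep$ factor. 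In short, the key missing idea in your proposal is precisely the use of $\tilde\theta$ to kill the $\div u$ source.
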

\begin{proof}
Taking a vector field $Z^J, |J|\leq m-2,$ applying $Z^J\nabla$ on the equation 
\eqref{sec4:eq0}, we get that:
\begin{align}\label{sec4:eq2}
    (\pt+u\cdot\nabla)Z^J\nabla\tta-\kpa\gamma^{-1}\nabla Z^J(\Gamma\div(\beta\nabla\theta))=
   \eqref{sec4:eq2}_1+\eqref{sec4:eq2}_2+\eqref{sec4:eq2}_3 ,
\end{align}
where
$$ \eqref{sec4:eq2}_1=\kpa\gamma^{-1}[Z^J,\nabla](\Gamma\div(\beta\nabla\theta)),\quad 
  \eqref{sec4:eq2}_2=Z^J\big(\f{\ep}{\gamma}\mathfrak{N} -\nabla u\cdot\nabla\tta\big),\quad \eqref{sec4:eq2}_3=-[Z^J,u\cdot\nabla]\nabla\tta.$$
  For the first term, we get in light of the identity \eqref{comu} that:
\begin{align*}
     \|\eqref{sec4:eq2}_3\|_{L_t^2L^2}\lesssim \kpa \|\nabla(\Gamma\div(\beta\nabla\theta))\|_{\hco^{m-3}}.
\end{align*}
By using the equation \eqref{sec4:eq0}, the product estimate \eqref{roughproduct1} and the estimates \eqref{esofGamma-2}, \eqref{esofbeta} for $\Gamma,\beta$.  
we control the right hand further as follows:
\begin{align*}
& \kpa \|\nabla(\Gamma\div(\beta\nabla\theta))\|_{\hco^{m-3}}\\
 &\leq \kpa \overline{\Gamma}\|\nabla\div(\beta\nabla\theta)\|_{\hco^{m-3}}+\kpa\big\|\nabla\big(\f{\Gamma-\overline{\Gamma}}{\ep}\Gamma^{-1}(\ep\pt+\ep u\cdot\nabla)\tta\big)\big\|_{\hco^{m-3}}\\
  &\lesssim \kpa \|\beta \nabla\Delta\theta\|_{\hco^{m-3}}+\lat\cE_{m,t}\\
  &\lesssim \lab\il\theta\il_{m-3,\infty,t}\big)\|\kpa\nabla\Delta\theta\|_{\hco^{m-3}}+\lat\cE_{m,t}\lesssim \lae. 
\end{align*}
Next, thanks to the product estimate \eqref{roughproduct2},
  \begin{align*}
         \|\eqref{sec4:eq2}_2\|_{L_t^2L^2}&\lesssim \|\sigma, \nabla(\sigma, u,\tta, \ep\mu\na u)\|_{\hco^{m-2}}\il\sigma, \nabla( u, \tta, \ep\mu\na u)\il_{[\f{m}{2}]-1,\infty,t}\lesssim \lat\cE_{m,t}.
  \end{align*}
  Moreover, it results from the commutator estimate \eqref{roughcom} and the identity \eqref{id-convetion} that:
  \begin{align*}
        \|\eqref{sec4:eq2}_3\|_{L_t^2L^2}&\lesssim \|(u, \nabla u,\nabla\tta)\|_{\infco^{m-2}}\big(\il\nabla\tta\il_{[\f{m}{2}]-1,\infty,t}+\il u \il_{[\f{m+1}{2}],\infty,t}+\|\div u\|_{L_t^2W_{co}^{[\f{m-1}{2}],\infty}}\big)\\
        &\lesssim \lae.
  \end{align*}
 Note that by the Sobolev embedding \eqref{sobebd} and the assumption $m\geq 7,$
 \begin{align*}
     \|\div u\|_{L_t^2W_{co}^{[\f{m-1}{2}],\infty}}
     \lesssim \|\na\div u\|_{\hco^{m-3}}+\|\div u\|_{\hco^{m-2}}\lesssim \lae.
 \end{align*}
To summarize, we have obtained so far 
\beq\label{sec4:eq2.5}
\|(\eqref{sec4:eq2}_1, \eqref{sec4:eq2}_2, \eqref{sec4:eq2}_3)\|_{L_t^2L^2}\lesssim \lae.
\eeq
Now, we take the inner product of the equation \eqref{sec4:eq2} by $Z^J\na\tta$
and integrate in space and time to obtain that:
\begin{align}\label{sec4:eq3}
&\f{1}{2}\int_{\Omega} |Z^{J}\nabla\tta|^2(t)\,\d x+\kpa\gamma^{-1}\izto 
\Gamma\beta|Z^J\Delta\theta|^2
\, \d x\d s \notag\\
&=\f{1}{2}\int_{\Omega} |Z^{J}\nabla\tta|^2(0)\,\d x+\underbrace{\izto\f{1}{2}\div u|Z^J\nabla\tta|^2+ 
Z^J\na\tta\cdot \big(\eqref{sec4:eq2}_1+\eqref{sec4:eq2}_2+\eqref{sec4:eq2}_3\big) \d x\d s}_{\eqref{sec4:eq3}_1}\notag\\
&\quad\underbrace{-\kpa\gamma^{-1}\izto Z^J(\Gamma\div(\beta\nabla\theta))[\div, Z^J] \nabla\theta+\big(Z^J(\Gamma\na\beta\cdot\na\theta)+[Z^J,\Gamma\beta]\Delta\theta\big)Z^{J}\Delta\theta\,\d x\d s}_{\eqref{sec4:eq3}_2}\\
&\quad\underbrace{-\ep\kpa\gamma^{-2} \izto \nabla Z^J(\Gamma\div(\beta\nabla\theta))\cdot Z^J \nabla\sigma \,\d x\d s}_{\eqref{sec4:eq3}_3}.\notag
\end{align}
Thanks to \eqref{sec4:eq2.5}, the term $\eqref{sec4:eq3}_1$ can be controlled by using the Cauchy-Schwarz inequality: 
\beq\label{sec4:eq10}
\eqref{sec4:eq3}_1\lesssim T^{\f{1}{2}}\|Z^J\na\tta\|_{L_t^{\infty}L^2}
\lae \lesssim T^{\f{1}{2}}\lae.
\eeq
Next, one can show by using repeatedly the identity \eqref{comu}, the Proposition \ref{prop-prdcom} and Corollary \ref{cor-gb} that for any $|J|\leq m-2,$
\begin{align*}
 &  \| \kpa^{\f{1}{2}}[\div, Z^J] \nabla\theta \|_{L_t^{\infty}L^2}\lesssim  \|\kpa^{\f{1}{2}}\nabla^2\theta\|_{\infco^{m-3}},              \\
 & \|\kpa^{\f{1}{2}}[Z^J,\Gamma\beta]\nabla\theta\|_{L_t^{\infty}L^2}\lesssim (\|\kpa^{\f{1}{2}}\Delta\theta\|_{\infco^{m-3}}+\|(\ep\sigma,\theta)\|_{\infco^{m-2}})\lat\lesssim \lat\cE_{m,t},\\
   & \|\kpa^{\f{1}{2}} \Gamma\div(\beta\nabla\theta)\|_{\hco^{m-2}}+\|\Gamma \na\beta\na\theta\|_{\hco^{m-2}}\lesssim \lat\cE_{m,t}. 
\end{align*}
The term $\eqref{sec4:eq3}_2$ can thus be controlled as:
\begin{align}\label{sec4:eq11}
 |\eqref{sec4:eq3}_2|&\lesssim T^{\f{1}{2}} \big( \kpa^{\f{1}{2}}\|\big([\div, Z^J] \nabla\theta , [Z^J,\Gamma\beta]\nabla\theta\big)\|_{L_t^{\infty}L^2}+\|\kpa \Delta\theta\|_{\infco^{m-2}}\big)\lat\cE_{m,t}\notag\\
 &\lesssim T^{\f{1}{2}} \lae.
\end{align}

It now remains to handle the term $\eqref{sec4:eq3}_3,$ which cannot be done directly by the Cauchy-Schwarz inequality, since we are not able to control 
$\ep\kpa \|\nabla\Delta\theta\|_{\hco^{m-2}}$ uniformly in $\kpa.$ Therefore, integration by parts is needed. Before doing so, 
we introduce $\tilde{\sigma}=\sigma-\ep\mu(2\lambda_1+\lambda_2)\Gamma\div u$ and write:
\begin{align*}
    \eqref{sec4:eq3}_3=\ep\kpa\gamma^{-2}\izto \nabla Z^J(\Gamma\div(\beta\nabla\theta))\cdot Z^J \nabla(\tilde{\sigma}+\ep\mu (2\lambda_1+\lambda_2)\Gamma\div u) \,\d x\d s.
\end{align*}
By Remark \ref{rmkmuapproxkpa}, for any $(\mu,\kpa)\in A,$ it holds that  $\mu\sim \kappa,$ we thus can have that
\begin{align}\label{sec4:eq8}
&\ep^2\kpa\mu\gamma^{-2}(2\lambda_1+\lambda_2)\izto \nabla Z^J(\Gamma\div(\beta\nabla\theta))\cdot Z^J \nabla(\Gamma\div u) \,\d x\d s\notag\\
 &\lesssim \ep^2 \|\kpa^{\f{3}{2}}\nabla(\Gamma\div(\beta\nabla\theta)) \|_{\hco^{m-2}} \|\kappa^{\f{1}{2}}\na\div u\|_{\hco^{m-2}}\lesssim \ep^2 \lat \cE_{m,t}^2.
\end{align}
Moreover, integration by parts in space gives rise to:
\begin{align}\label{sec4:eq4}
  & \ep\kpa\gamma^{-2}\izto \nabla Z^J(\Gamma\div(\beta\nabla\theta))\cdot Z^J \nabla\tilde{\sigma} \,\d x\d s\notag\\
   &=-\ep\kpa\gamma^{-2}\izto Z^J\big(\Gamma\div(\beta\nabla\theta)\big) \big(Z^J\Delta\tilde{\sigma}+[\div, Z^J]\na \tilde{\sigma}\big) \,\d x\d s\\
   &\quad +\ep\kpa\gamma^{-2}\izt\int_{\p\Omega} Z^J\big(\Gamma\div(\beta\nabla\theta)\big) Z^J\na\tilde{\sigma}\cdot\bn\,\d S_y\d s\notag \\
  & =\colon \eqref{sec4:eq4}_1+ \eqref{sec4:eq4}_2.\notag
\end{align}
One can bound the term $\eqref{sec4:eq4}_1$ by the Cauchy-Schwarz inequality:
\begin{align}\label{sec4:eq5}
\eqref{sec4:eq4}_1&\lesssim \ep \|\kpa^{\f{1}{2}}\Gamma\div(\beta\nabla\theta)\|_{\hco^{m-2}}\big(\|\kpa^{\f{1}{2}}\Delta\tilde{\sigma}\|_{\hco^{m-2}}+\|\kpa^{\f{1}{2}}\nabla^2\tilde{\sigma}\|_{\hco^{m-3}}\big),
\end{align}
which, combined with the estimate \eqref{Es-tsigma1} for $\tsigma$ proved in the next lemma, yields:
\begin{align}\label{sec4:eq7}
\eqref{sec4:eq4}_1&\lesssim\ep \lat\cE_{m,t}^2.
\end{align}
For the boundary term $\eqref{sec4:eq4}_2,$ we control it as:
\beq\label{sec4:eq7.5}
\eqref{sec4:eq4}_2\lesssim \ep |\kpa\Gamma\div(\beta\nabla\theta)|_{\hcob^{m-2}(\p\Omega)} |Z^J\na \tilde{\sigma}\cdot\bn|_{L_t^2L^2(\p\Omega)}.
\eeq
Thanks to the trace inequality \eqref{traceL2} and the estimates \eqref{Es-tsigma2}, \eqref{Es-tsigma3}, we can have that:
\begin{align}\label{sec4:eq6.5}
    |Z^J\na \tilde{\sigma}\cdot\bn|_{L_t^2L^2(\p\Omega)}&\lesssim |\p_{\bn}\tilde{\sigma}|_{\hcob^{m-2}}+|\nabla\tilde{\sigma}|_{\hcob^{m-3}}\notag\\
    &\lesssim  |\p_{\bn}\tilde{\sigma}|_{\hcob^{m-2}}+ \|(\nabla \tilde{\sigma},\nabla^2\tilde{\sigma})\|_{\hco^{m-3}}\\
    &\lesssim T^{\f{1}{2}} \lat\cE_{m,t}.\notag
\end{align}
Moreover, by the product estimate \eqref{product-bd} on the boundary,
\begin{align*}
   \kpa |\Gamma\div(\beta\nabla\theta)|_{\hcob^{m-2}}&\lesssim 
  \big|\big(\sigma, \theta,\kappa^{\f{1}{2}}\nabla\theta,\kpa \Delta\theta\big)\big|_{\hcob^{m-2}}\lat.
\end{align*}
It results from the trace inequality \eqref{traceL2} that:
\beqs
\big|\big(\sigma, \theta,\kappa^{\f{1}{2}}\nabla\theta,\kpa \Delta\theta\big)\big|_{\hcob^{m-2}}\lesssim \cE_{m,t}.
\eeqs
For instance,
\begin{align*}
   \big| \kpa \Delta\theta\big|_{\hcob^{m-2}}\lesssim \|(\kpa^{\f{3}{2}}\nabla\Delta\theta, \kpa^{\f{1}{2}}\Delta\theta)\|_{\hco^{m-2}}\lesssim \cE_{m,t}.
\end{align*}
We thus obtain that:
\beqs
 \kpa |\Gamma\div(\beta\nabla\theta)|_{\hcob^{m-2}} \lesssim\lat\cE_{m,t},
\eeqs
which, together with  \eqref{sec4:eq7.5}, \eqref{sec4:eq6.5}, leads to that 
\beqs
\eqref{sec4:eq4}_2\lesssim \ep \lat\cE_{m,t}^2.
\eeqs
Combined with \eqref{sec4:eq7}, one derives:
\beqs
 \ep\kpa\gamma^{-2}\izto \nabla Z^J(\Gamma\div(\beta\nabla\theta))\cdot Z^J \nabla\tilde{\sigma} \,\d x\d s\lesssim \ep \lat\cE_{m,t}^2.
\eeqs
This inequality and \eqref{sec4:eq8} then enable us to conclude:
\beq\label{sec4:eq9}
\eqref{sec4:eq3}_3\lesssim \ep \lat\cE_{m,t}^2.
\eeq
Inserting \eqref{sec4:eq10}, \eqref{sec4:eq11}, \eqref{sec4:eq9} into \eqref{sec4:eq3}, we eventually arrive at \eqref{EE-theta-1}.
\end{proof}
\begin{rmk}\label{rmkptna2theta}
Following the similar arguments as in the proof of \eqref{EE-theta-1}, we can show that:
\begin{align*}
    \|\ep^{\f{1}{2}}\pt\na\theta\|_{\infco^{1}}^2+\|(\ep\kpa)^{\f{1}{2}}\pt\na^2\theta\|_{\infco^{1}}^2\lesssim  \|\ep^{\f{1}{2}}\pt\na\theta(0)\|_{\infco^{1}}^2+ \lat\cE_{m,t}^2.
\end{align*}
Such an estimate  plays no role in the uniform estimates but will be  useful in the proof of convergence result stated in Theorem \ref{thm-conv1}.
\end{rmk}

\begin{lem}\label{lemtsigma}
Let $\tsigma=\sigma-\ep\mu(2\lambda_1+\lambda_2)\Gamma\div u.$ Under the assumption \eqref{preasption},  the following estimates hold:
\begin{align}
& \kpa^{\f{1}{2}}(\|\na^2\tsigma\|_{\hco^{m-2}}+\|\na\tsigma\|_{\hco^{m-1}})\lesssim   \lat\cE_{m,t},\label{Es-tsigma1}\\
 &  \|\na^2\tsigma\|_{\infco^{m-3}}+\|\na\tsigma\|_{\infco^{m-2}} \lesssim   \lat\cE_{m,t},     \label{Es-tsigma2}\\
 &\kpa^{\f{1}{2}} |\p_{\bn}\tsigma|_{L_t^2\tilde{H}^{m-\f{3}{2}}}+\ep^{-\f{1}{2}}|\p_{\bn}\tsigma|_{L_t^{\infty}\tilde{H}^{m-2}}\lesssim   \lat\cE_{m,t}, \label{Es-tsigma3}\\
&\il(\na\tsigma, (\ep\mu)^{\f{1}{2}}\na^2\tsigma )\il_{m-5,\infty,t}+ \il \na^2\tsigma\il_{m-6,\infty,t}\lesssim \lat.  \label{Es-tsigma4}
\end{align}
\end{lem}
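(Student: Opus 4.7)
\medskip
\noindent\textbf{Proof proposal.} The starting point will be to rewrite $\na\tsigma$ using the momentum equation $\eqref{NCNS-S2}_2$. Combined with the identity $\div\cL u=(2\lambda_1+\lambda_2)\na\div u-\lambda_1\curl\curl u$, the equation yields $\na\sigma=-\f{\ep}{R\beta(\theta)}(\pt+u\cdot\na)u+\ep\mu\Gamma[(2\lambda_1+\lambda_2)\na\div u-\lambda_1\curl\curl u]$, so subtracting $\ep\mu(2\lambda_1+\lambda_2)\na(\Gamma\div u)$ gives
\begin{equation*}
\na\tsigma=-\f{\ep}{R\beta(\theta)}(\pt+u\cdot\na)u-\ep\mu\lambda_1\Gamma\curl\curl u-\ep\mu(2\lambda_1+\lambda_2)(\div u)\na\Gamma.
\end{equation*}
The whole virtue of the weight $\tsigma$ is that this identity cancels the $\na\div u$ term (which one does not control uniformly), leaving only quantities already handled by the estimates of the preceding sections.

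For \eqref{Es-tsigma1}, the plan is to apply $Z^I$ with $|I|\leq m-1$ to the identity above and to distribute derivatives via Proposition \ref{prop-prdcom} and Corollary \ref{cor-gb}. Each piece on the right-hand side carries an $\ep$ (or $\ep\mu$) prefactor, and against the $\kpa^{\f12}$ weight and using $\mu\sim\kpa$ on $A$ (Remark \ref{rmkmuapproxkpa}), the viscous contribution $\ep\mu\curl\curl u$ is absorbed by $\ep\mu\|\na^2 u\|_{\hco^{m-1}}\lesssim\cE_{m,t}$ coming from the second line of \eqref{defcEmsigmau}, while $(\div u)\na\Gamma=\Gamma'(\ep\sigma)\ep(\div u)\na\sigma$ is controlled by $\mu^{\f12}\|\na\sigma\|_{\hco^{m-1}}\lesssim\cE_{m,t}$ together with $L_{t,x}^\infty$ bounds on $\div u$ in $\cA_{m,t}$. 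The obstruction — and the only delicate point — will be the transport term, since $Z^I=(\ep\pt)^{m-1}$ applied to $\ep\pt u$ produces $(\ep\pt)^m u$, a quantity not directly present in $\cE_{m,t}$. To handle it I will iterate the momentum equation once, substituting $\ep\pt u=-\ep u\cdot\na u-R\beta\na\sigma+R\beta\ep\mu\Gamma\div\cL u$ before taking the extra $(\ep\pt)^{m-1}$, so that the top-order time derivative is traded for $(\ep\pt)^{m-1}\na\sigma$ and $(\ep\pt)^{m-1}(\ep\mu\na^2 u)$, both controlled by $\mu^{\f12}\|\na\sigma\|_{\hco^{m-1}}$ and $\ep\mu\|\na^2 u\|_{\hco^{m-1}}$ respectively; the $\ep u\cdot\na u$ term is lower order after Cauchy–Schwarz in time against $\cA_{m,t}$. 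The a priori knowledge $\|u\|_{L_t^\infty\cH^{m-1,0}}\lesssim \cE_{m,t}$ proved in Proposition 3.1 is what makes this substitution affordable.

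For \eqref{Es-tsigma2}, the same identity is differentiated with at most $m-3$ (resp.\ $m-2$) conormal derivatives and the $L_t^\infty H_{co}$ norms of the resulting products are bounded by the $L_{t,x}^\infty$ quantities $\il(u,\na u,\ep\mu^{\f12}\na^2u,\na\sigma)\il_{m-5,\infty,t}$ collected in $\cA_{m,t}$, together with the $\cE_{m,t}$-controlled $L_t^\infty$ norms; the missing top-order time derivative issue does not appear because one is below the maximal regularity. For \eqref{Es-tsigma3}, restricting the identity to $\p\Omega$ (where $u\cdot\bn=0$) gives $\p_\bn\tsigma|_{\p\Omega}=-\bn\cdot[\f{1}{R\beta}(\ep\pt+\ep u\cdot\na)u+\ep\mu\lambda_1\Gamma\curl\curl u+\ep\mu(2\lambda_1+\lambda_2)(\div u)\na\Gamma]$; applying trace inequalities \eqref{normaltraceineq}, \eqref{traceL2} and the boundary product estimate \eqref{product-bd}, and noting that each summand carries an $\ep$ (or $\ep\mu$) prefactor, yields both the $\ep^{-\f12}$ estimate in $L_t^\infty\tilde H^{m-2}$ (the $\ep$ prefactor overcomes the $\ep^{-1/2}$) and the $\kpa^{\f12}$ estimate in $L_t^2\tilde H^{m-\f32}$. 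Finally, \eqref{Es-tsigma4} is an immediate consequence of the same identity together with the Sobolev embedding \eqref{sobebd} applied to the $L_{t,x}^\infty$ bounds on $\pt u$, $u\cdot\na u$, $\ep\mu^{\f12}\na^2 u$ and $\na\sigma$ contained in $\cA_{m,t}$. The main obstacle throughout will be only the top-order time derivative step in \eqref{Es-tsigma1}; the rest is bookkeeping with the product/commutator machinery already set up.
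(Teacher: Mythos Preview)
Your starting identity $\nabla\tsigma=f$ with
\[
f=-\tfrac{1}{R\beta}(\ep\pt+\ep u\cdot\nabla)u-\ep\mu\lambda_1\Gamma\curl\curl u-\ep\mu(2\lambda_1+\lambda_2)(\div u)\nabla\Gamma
\]
is exactly right, and for the \emph{first-order} pieces ($\|\nabla\tsigma\|$ in \eqref{Es-tsigma1}, \eqref{Es-tsigma2}, \eqref{Es-tsigma4}) and for the boundary estimate \eqref{Es-tsigma3} your plan works and coincides with the paper's. Your identification of the $(\ep\pt)^m u$ issue in $\|f\|_{\hco^{m-1}}$ is also valid, and the fix you propose (one substitution via $\eqref{NCNS-S2}_2$) is correct; the paper glosses over this in Lemma~\ref{lem-f}.

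The genuine gap is in the \emph{second-order} pieces $\|\nabla^2\tsigma\|$ appearing in \eqref{Es-tsigma1}, \eqref{Es-tsigma2}, \eqref{Es-tsigma4}. Your proposal is to differentiate the identity and estimate $\nabla f$ directly. But $\nabla f$ contains $\ep\mu\Gamma\,\p_{\bn}(\curl\curl u)$, which forces three normal derivatives of $u$. That quantity is not part of $\cE_{m,t}$ at the needed order (for instance $\kpa^{1/2}L_t^2H_{co}^{m-2}$ or $L_t^{\infty}H_{co}^{m-3}$), and if you try to express $\ep\mu\p_{\bn}^2 u$ through the velocity equation you are led to $\nabla^2\sigma\approx\nabla^2\tsigma$ with an $O(1)$ coefficient --- the loop does not close. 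So the claim that ``the rest is bookkeeping'' is not accurate; this is where the lemma has content.

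The paper sidesteps this by reading your identity as the Neumann problem $\Delta\tsigma=\div f$, $\p_{\bn}\tsigma=f\cdot\bn$ and applying the elliptic estimates \eqref{highconormal}--\eqref{secderelliptic}, which bound $\|\nabla^2\tsigma\|$ by $\|\div f\|$ together with $\|f\|$ at one higher \emph{conormal} order --- never the full $\nabla f$. Since $\div(\Gamma\curl\curl u)=\nabla\Gamma\cdot\curl\curl u$, the third normal derivative of $u$ never appears. Within your own framework you could recover this by using that $f=\nabla\tsigma$ is curl-free: then $\p_{\bn}f_{\tau}=\p_{\tau}f_{\bn}$ and $\p_{\bn}f_{\bn}=\div f-(\Pi\p_{y^1}f)^1-(\Pi\p_{y^2}f)^2$, so all components of $\nabla^2\tsigma$ reduce to $\div f$ and tangential derivatives of $f$. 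That structural step --- equivalent to the paper's elliptic estimate --- is what is missing from your outline.
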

\begin{proof}
Taking the divergence of the equations $\eqref{NCNS-S2}_2$ for the velocity,  we find that $\tilde{\sigma}$ solves the elliptic problem 
\beq\label{eq-tiltasigma}
\left\{
\begin{array}{l}
      \Delta\tilde{\sigma}=\div f  \text{ in } \Omega, \\[2pt]
      \p_{\bn} \tilde{\sigma}=f\cdot\bn, \text{ on } \p\Omega, 
\end{array}
   \right.
\eeq 
where 
\beq\label{def-f}
  f=-\f{1}{R\beta}(\ep\pt+\ep u\cdot\na)u-\ep\mu\lambda_1\Gamma\curl\curl u-\ep\mu(2\lambda_1+\lambda_2)(\div u)\na\Gamma.
\eeq
Applying the elliptic estimates \eqref{highconormal}, \eqref{secderelliptic}, we obtain that 
\begin{align*}
 \kpa^{\f{1}{2}}(\|\na^2\tilde{\sigma}\|_{\hco^{m-2}}+\|\na\tilde{\sigma}\|_{\hco^{m-1}})\lesssim   \kpa^{\f{1}{2}}(\|f\|_{\hco^{m-1}}+\|\div f\|_{\hco^{m-2}}),\\
  \|\na^2\tsigma\|_{\infco^{m-3}}+\|\na\tsigma\|_{\infco^{m-2}}\lesssim\|f\|_{\infco^{m-2}}+\|\div f\|_{\infco^{m-3}}.
\end{align*}
The proof of \eqref{Es-tsigma1}, \eqref{Es-tsigma2} is finished once we have controlled the right hand sides of the above two inequalities by $\lat\cE_{m,t},$ which are shown in the next lemma. 

 We now prove the estimate \eqref{Es-tsigma3}. The first quantity can be bounded by the trace inequality \eqref{traceL2}:
 \beqs
 \kpa^{\f{1}{2}}|\p_{\bn}\tsigma|_{L_t^2\tilde{H}^{m-\f{3}{2}}}\lesssim  \kpa^{\f{1}{2}} \|(\na^2\tsigma, \na\tsigma)\|_{\hco^{m-2}}\lesssim \lat\cE_{m,t}.
 \eeqs
 The second quantity can be controlled by using the explicit expression of $\p_{\bn}\tsigma$ in $\eqref{eq-tiltasigma}_2.$ Thanks to the boundary condition $u\cdot\bn|_{\p\Omega}=0$ as well as the fact
 $u\cdot \na|_{\p\Omega}=(u_1 \p_{y^1}+u_2\p_{y^2})|_{\p\Omega},$ one derives that:
\begin{align}\label{sec4:eq6}
    |\p_{\bn}\tilde{\sigma}|_{L_t^{\infty}\tilde{H}^{m-2}}&=|f\cdot\bn|_{L_t^{\infty}\tilde{H}^{m-2}}\notag\\
    &\lesssim \ep\big|\big((u\cdot\nabla)\bn\cdot u, \mu \p_{\bn}\Gamma\div u,\mu \Gamma \curl\curl u\cdot\bn\big)\big|_{L_t^{\infty}\tilde{H}^{m-2}}.
\end{align}
It follows from 
the product estimate \eqref{product-bd}, the trace inequalities \eqref{traceL2},
\eqref{traceLinfty} that: 
\begin{align*}
    |(u\cdot\nabla)\bn\cdot u|_{L_t^{\infty}\tilde{H}^{m-2}}\lesssim |u|_{L_t^{\infty}\tilde{H}^{m-2}}\lab \il u\il_{[\f{m}{2}]-1,\infty,t}\big) \lesssim \|(u,\na u)\|_{\infco^{m-2}}\lat,
\end{align*}
\begin{align*}
    |\mu \p_{\bn}\Gamma\div u|_{L_t^{\infty}\tilde{H}^{m-2}}&\lesssim |(\ep^{-1}\mu)^{\f{1}{2}}\p_{\bn}\Gamma, (\ep\mu)^{\f{1}{2}} \div u|_{L_t^{\infty}\tilde{H}^{m-2}}\lab \il (\sigma,\na\sigma,\div u)\il_{[\f{m}{2}]-1,\infty,t}\big)\\
    &\lesssim \|(\Id, \ep\mu \na) (\na\sigma, \div u)\|_{\infco^{m-2}}\lat,
\end{align*}
from which we see that:
\beqs 
|\big((u\cdot\nabla)\bn\cdot u, \mu \p_{\bn}\Gamma\div u\big)|_{L_t^{\infty}\tilde{H}^{m-2}}\lesssim \lat\cE_{m,t}.
\eeqs
This, combined with the estimate \eqref{sec4:eq12} proved later in Lemma \ref{lemcurlcurl}, yields that:
\beqs 
|\p_{\bn}\tilde{\sigma}|_{L_t^{\infty}\tilde{H}^{m-2}}\lesssim \ep^{\f{1}{2}} \lat\cE_{m,t}.
\eeqs
We thus finish the proof of \eqref{Es-tsigma3}.

Finally, let us show the $L_{t,x}^{\infty}$ estimate \eqref{Es-tsigma4}. First, by using the definition of 
$\tsigma,$ we get  that:
\begin{align}\label{tsigmaLinfty}
\il\na\tsigma\il_{m-5,\infty,t}&\lesssim \il\na\sigma\il_{m-5,\infty,t}+\lab \il (\div u,\ep\mu \na\div u, \ep\sigma, \ep\na\sigma)  \il_{m-5,\infty,t} \big)\notag\\
&\lesssim \lat.
\end{align}
 Moreover, in the local coordinate, there are some coefficients $a_{ij}$ that depends smoothly  on $\bn,$ such that (we use the convention $\p_{y^3}=\p_{\bn}$):
 \beq\label{Laplace-local}
 \Delta=\p_{\bn}^2+\sum_{0\leq i,j\leq 3, 
(i,j)\neq (3,3).
} \p_{y^i}(a_{ij}\p_{y^j}\cdot).
\eeq
Consequently, 
 \beq\label{tsigmaLinfty-sec}
 \il\nabla^2 \tsigma\il_{k,\infty,t}\lesssim \il \Delta\tsigma\il_{k,\infty,t}+\il\nabla\tsigma\il_{k+1,\infty,t}, \,\forall k\geq 0.
 \eeq
However, in view of the equation $\eqref{eq-tiltasigma}$ and 
the definition \eqref{def-f}, we find that
\begin{align*}
&\il\Delta\tsigma\il_{m-6,\infty,t}+(\ep\mu)^{\f{1}{2}} \il\Delta\tsigma\il_{m-5,\infty,t}=\il\div f\il_{m-6,\infty,t}+(\ep\mu)^{\f{1}{2}}\il\div f\il_{m-5,\infty,t} \\
&\lesssim \lab \il\na(\sigma, u, \theta), \ep\mu\na^2(\sigma, u)\il_{m-5,\infty,t}+\il(\ep\mu)^{\f{1}{2}}\na u \il_{m-4,\infty,t}\big) \lesssim \lat,
\end{align*}
which, together with \eqref{tsigmaLinfty}, \eqref{tsigmaLinfty-sec}, yields \eqref{Es-tsigma4}.
\end{proof}
In the following two lemmas, we give some complementary estimates on $f$ and $(\curl\curl u\cdot\bn)|_{\p\Omega},$ which are used in the proof of Lemma \ref{lemtsigma}.
\begin{lem}\label{lem-f}
Suppose that \eqref{preasption} is satisfied and let $f$ be defined in  \eqref{def-f}. For any $\ep\in(0,1], (\mu,\kpa)\in A,$ any $0<t\leq T,$ it holds that:
\beqs
\kpa^{\f{1}{2}}(\|\div f\|_{\hco^{m-2}}+\|f\|_{\hco^{m-1}})+\|\div f\|_{\infco^{m-3}}+\|f\|_{\infco^{m-2}}\lesssim \lat\cE_{m,t}.
\eeqs
\end{lem}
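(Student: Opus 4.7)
The plan is to substitute from the momentum equation $\eqref{NCNS-S2}_2$ so as to eliminate the problematic $\ep\pt u$ contribution to $f$. Multiplying $\eqref{NCNS-S2}_2$ by $\ep$ gives $\tfrac{\ep}{R\beta(\theta)}(\pt+u\cdot\nabla)u = -\nabla\sigma + \ep\mu\Gamma(\ep\sigma)\div\mathcal{L}u$, so that the first term of \eqref{def-f} may be rewritten as $-\tfrac{1}{R\beta}(\ep\pt+\ep u\cdot\nabla)u = \nabla\sigma - \ep\mu\Gamma\div\mathcal{L}u$. Combining this with the remaining terms of $f$ and using the Lamé identity $\div\mathcal{L}u = (2\lambda_1+\lambda_2)\nabla\div u - \lambda_1\curl\curl u$, a direct computation yields the pointwise identities
\begin{equation*}
f = \nabla\sigma - \ep\mu(2\lambda_1+\lambda_2)\nabla(\Gamma\div u),\qquad \div f = \Delta\sigma - \ep\mu(2\lambda_1+\lambda_2)\Delta(\Gamma\div u).
\end{equation*}
These agree with $f=\nabla\tsigma$ from the definition of $\tsigma$, but no bound on $\tsigma$ itself will be invoked, so there is no circularity with the elliptic step in Lemma~\ref{lemtsigma}.

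With this rewriting, the required bounds will follow from the product/commutator estimates \eqref{roughproduct1}--\eqref{roughcom}, the $\Gamma,\beta$-estimates \eqref{esofGamma-2}--\eqref{esofbeta}, the Sobolev embedding \eqref{sobebd}, and the equivalence $\kpa\sim\mu$ on $A$ from Remark~\ref{rmkmuapproxkpa}. For $\kpa^{1/2}\|f\|_{\hco^{m-1}}$, the piece $\kpa^{1/2}\|\nabla\sigma\|_{\hco^{m-1}}\lesssim\mu^{1/2}\|\nabla\sigma\|_{\hco^{m-1}}\lesssim\cE_{m,t}$ comes from the dissipation term in \eqref{defcEmsigmau}; the viscous piece, after expanding $\nabla(\Gamma\div u) = \Gamma\nabla\div u+\div u\,\nabla\Gamma$, is dominated by $\ep\mu\kpa^{1/2}\|\nabla^2 u\|_{\hco^{m-1}}\lesssim\ep\mu\|\nabla^2 u\|_{\hco^{m-1}}\lesssim\cE_{m,t}$ using the $\ep^{1/2}\mu\|\nabla^2 u\|_{L_t^2 H_{co,\sqrt\ep}^{m-1}}$ entry of $\cE_{m,t}$, plus lower-order products absorbed by the $\cA_{m,t}$-controls on $\sigma,\nabla\sigma,\div u$. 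The $\infco^{m-2}$-bound is parallel, drawing on $\|\nabla\sigma\|_{\infco^{m-2}}$ and $\ep\mu\|\nabla^2(\sigma,u)\|_{\infco^{m-2}}$ from $\cE_{m,t}$.

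For the divergence, $\kpa^{1/2}\|\Delta\sigma\|_{\hco^{m-2}}\lesssim\mu^{1/2}\|\nabla\sigma\|_{\hco^{m-1}}\lesssim\cE_{m,t}$, while $\kpa^{1/2}\ep\mu\|\Delta(\Gamma\div u)\|_{\hco^{m-2}}$ reduces to $\ep\mu\|\nabla^3 u\|_{\hco^{m-2}}\le\ep\mu\|\nabla^2 u\|_{\hco^{m-1}}\lesssim\cE_{m,t}$ plus lower-order products. Similarly, $\|\div f\|_{\infco^{m-3}}$ combines $\|\nabla^2\sigma\|_{\infco^{m-3}}\lesssim\cE_{m,t}$ with $\ep\mu\|\nabla^2 u\|_{\infco^{m-2}}\lesssim\cE_{m,t}$ and commutator cross-terms bounded via $\cA_{m,t}$.

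The main obstacle, and the reason the momentum-equation substitution is essential, is that a naive term-by-term bound of $f_1=-\tfrac{1}{R\beta}(\ep\pt+\ep u\cdot\nabla)u$ at the highest conormal order produces $(\ep\pt)^{m-1}(\ep\pt u)=(\ep\pt)^m u$ in $L_t^2 L^2$; since the $\uinfco^m$ norm in \eqref{defcEmsigmau} precludes the top time derivative, neither the highest-order estimate \eqref{EE-highest1} nor the time-derivative estimate \eqref{EE-3} (which at best supplies $(\ep\pt)^{m-1}u$ in $L_t^\infty L^2$) provides a uniform-in-$(\ep,\mu,\kpa)$ control of this quantity. The substitution trades it for the controlled bounds $\mu^{1/2}\|(\ep\pt)^{m-1}\nabla\sigma\|_{L_t^2 L^2}$ and $\ep\mu\|(\ep\pt)^{m-1}\div\mathcal{L}u\|_{L_t^2 L^2}$, both absorbed by $\cE_{m,t}$.
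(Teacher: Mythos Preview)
Your identity $f=\nabla\tsigma$ is correct and clean, and for the two $\|f\|$-norms your argument is fine: the pieces $\kpa^{1/2}\|\nabla\sigma\|_{\hco^{m-1}}$, $\|\nabla\sigma\|_{\infco^{m-2}}$, $\ep\mu\|\nabla^2 u\|_{\hco^{m-1}}$, $\ep\mu\|\nabla^2 u\|_{\infco^{m-2}}$ all sit in $\cE_{m,t}$, and the $(\Gamma,\div u)$ cross-terms are lower order. You are also right that a naive term-by-term bound on $\tfrac{1}{R\beta}\ep\pt u$ at order $m-1$ would hit $(\ep\pt)^m u$, so some use of the equation is unavoidable there.

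The gap is in the $\div f$ part. After your substitution, $\div f=\Delta\tsigma=\Delta\sigma-\ep\mu(2\lambda_1+\lambda_2)\Delta(\Gamma\div u)$, and you invoke
\[
\|\Delta\sigma\|_{\hco^{m-2}}\lesssim\|\nabla\sigma\|_{\hco^{m-1}},\qquad \|\nabla^3 u\|_{\hco^{m-2}}\le\|\nabla^2 u\|_{\hco^{m-1}}.
\]
Both are false in conormal spaces: near the boundary $\partial_{\bn}$ is \emph{not} one of the $Z_j$, so one extra normal derivative cannot be traded for one conormal regularity level. Concretely, $\Delta\sigma$ contains $\partial_{\bn}^2\sigma$, and $\|\partial_{\bn}^2\sigma\|_{\hco^{m-2}}$ is not controlled by $\|\nabla\sigma\|_{\hco^{m-1}}$. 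The same issue breaks your $\|\div f\|_{\infco^{m-3}}$ argument: $\Delta(\Gamma\div u)$ contains $\partial_{\bn}^3 u$ at order $m-3$, and this is not dominated by $\ep\mu\|\nabla^2 u\|_{\infco^{m-2}}$. Neither $\kpa^{1/2}\|\nabla^2\sigma\|_{\hco^{m-2}}$ nor $\ep\mu\|\nabla^3 u\|_{\hco^{m-2}}$ is part of the definition of $\cE_{m,t}$ in \eqref{defcEmsigmau}.

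What rescues the estimate --- and this is exactly the paper's route --- is to keep the \emph{original} form of $f$ when taking the divergence. The structure matters: $\div\big(\tfrac{1}{R\beta}(\ep\pt+\ep u\cdot\nabla)u\big)$ produces $\tfrac{1}{R\beta}(\ep\pt+\ep u\cdot\nabla)\div u$ at top order, and since $\ep\pt$ and $u\cdot\nabla$ are conormal/tangential, this lands in $\kpa^{1/2}\|\div u\|_{\hco^{m-1}}\lesssim\mu^{1/2}\|\nabla u\|_{\hco^{m-1}}\in\cE_{m,t}$. Likewise $\div(\Gamma\curl\curl u)=\nabla\Gamma\cdot\curl\curl u$ loses the third normal derivative via $\div\curl=0$. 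After your simplification $f=\nabla\tsigma$, these cancellations have been absorbed, and the two individual pieces $\Delta\sigma$ and $\ep\mu\Delta(\Gamma\div u)$ each carry an uncontrolled second normal derivative that only cancel when recombined --- which amounts to returning to the original expression of $\div f$.
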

\begin{proof}
These estimates can be derived by using Proposition \ref{prop-prdcom}, Corollary \ref{cor-gb} as well as the definitions of $\cE_{m,t},\cA_{m,t}.$ 
To prove this property,
it requires the estimates for four norms of four terms in the definition of $f.$ 
We will only detail two  of these sixteen estimates, the others can be treated similarly. Let us focus on the control of $\kpa^{\f{1}{2}} \|\div\big(\f{1}{R\beta}\ep\pt u\big)\|_{\hco^{m-2}}, $ $\kpa^{\f{1}{2}}\|\div (\ep\mu\Gamma\curl\curl u)\|_{\hco^{m-2}}.$ It follows from the  product estimate \eqref{roughproduct1} and  the estimate \eqref{esofbeta} for $\beta$ that:
\begin{align*}
 & \kpa^{\f{1}{2}}\|\div\big(\f{1}{R\beta}\big)\|_{\hco^{m-2}}\lesssim \kpa^{\f{1}{2}}\|(\f{1}{R\beta}\ep\pt\div u, \ep\pt u\cdot\na (\f{1}{R\beta}\ep\pt u)) \|_{\hco^{m-2}}\\
&\lesssim \big(\kpa^{\f{1}{2}}\|(u, \div u)\|_{\hco^{m-1}}+\|(\theta,\na\theta)\|_{\hco^{m-2}}\big)\Lambda\big(\f{1}{c_0}, \il\na(u,\theta)\il_{[\f{m}{2}]-1,\infty,t}+\il(u,\theta)\il_{[\f{m+1}{2}],\infty,t}\big)\\
&\lesssim \lat\cE_{m,t}.
\end{align*}
Moreover, using 
\eqref{esofGamma-2}, one can find that:
\begin{align*}
   &\kpa^{\f{1}{2}}\|\div (\ep\mu\Gamma\curl\curl u)\|_{\hco^{m-2}}=
   \kpa^{\f{1}{2}}\|\na\Gamma\cdot \ep\mu\curl\curl u\|_{\hco^{m-2}}\\
   &\lesssim  \kpa^{\f{1}{2}}\ep \|(\na \sigma, \ep\mu \curl\curl u)\|_{\hco^{m-2}}\Lambda\big(\f{1}{c_0}, \il(\na\sigma ,\ep\mu \curl\curl u)\il_{[\f{m}{2}]-1,\infty,t}\big)\\
   &\lesssim \kpa^{\f{1}{2}}\ep \lat\cE_{m,t}.
\end{align*}
\end{proof}
\begin{lem}\label{lemcurlcurl}
The following estimates hold:
\begin{align}
& |\curl\curl u\cdot\bn|_{[\f{m}{2}]-1,\infty,t}\lesssim \lat,\label{curlcurl-infty0} \\
&|\mu^{\f{1}{2}} \curl\curl u\cdot\bn|_{\hcob^{m-\f{3}{2}}}+|(\ep\mu)^{\f{1}{2}} \curl\curl u\cdot\bn|_{L_t^{\infty}\tilde{H}^{m-2}}\lesssim \cE_{m,t}, \label{sec4:eq12}\\
&\ep \sum_{j=0}^{m-2}|(\ep\pt)^j \curl\curl u\cdot\bn|_{L_t^{\infty}{H}^{m-j-\f{5}{2}}}\lesssim \ep \|\na\div u\|_{\infco^{m-2}}+\ep \cE_{m,t}. \label{curlcurlu-infty}
\end{align}
\end{lem}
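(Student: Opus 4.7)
The plan is to reduce $(\curl\curl u)\cdot\bn$ on $\partial\Omega$ to a first-order tangential differential operator applied to $u|_{\p\Omega}$, using the Navier-slip condition, and then match the resulting boundary norms to the components of $\cA_{m,t}$ and $\cE_{m,t}$ via standard conormal trace inequalities.

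The starting point is the key boundary identity. In a local chart where $\p\Omega=\{z=0\}$ and $\bn=e_3$, expanding the third component of $\curl\curl u$ in Cartesian coordinates gives
\[
(\curl\curl u)\cdot\bn=\p_{y^1}(\p_3 u_1)+\p_{y^2}(\p_3 u_2)-\Delta_y u_3.
\]
The condition $u\cdot\bn=0$ on $\p\Omega$ forces $u_3|_{z=0}=0$, hence $\Delta_y u_3|_{z=0}=0$; moreover the slip condition \eqref{bdryconditionofu} identifies $\p_3 u_\alpha|_{z=0}$ $(\alpha=1,2)$ as algebraic expressions in $u|_{\p\Omega}$. One therefore obtains (with curvature remainders involving only $u|_{\p\Omega}$ in a curved chart) the invariant identity
\[
(\curl\curl u)\cdot\bn\big|_{\p\Omega}=\mathrm{div}_{\p\Omega}\bigl[\Pi(-2au+(D\bn)u)\bigr]+R[u]\big|_{\p\Omega},
\]
so that $(\curl\curl u)\cdot\bn|_{\p\Omega}$ is at most a first-order tangential operator acting on $u|_{\p\Omega}$.

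For \eqref{curlcurl-infty0}, I would commute $Z^I$ $(|I|\leq[m/2]-1)$ with $\mathrm{div}_{\p\Omega}$ and restrict to $\p\Omega$, producing $|\curl\curl u\cdot\bn|_{[m/2]-1,\infty,t}\lesssim|u|_{[m/2],\infty,t,\p\Omega}$; since $m\geq 7$ gives $[m/2]\leq m-3$, the trivial $L^\infty$ trace yields $\lesssim\|u\|_{m-3,\infty,t}\leq\cA_{m,t}\leq\lat$. For \eqref{sec4:eq12} the identity gives $|\curl\curl u\cdot\bn|_{\tilde{H}^s}\lesssim|u|_{\tilde{H}^{s+1}}$; combining the standard conormal trace
\[
|f|_{\tilde{H}^k}^2\lesssim\|f\|_{\hco^k}\|\na f\|_{\hco^k}+\|f\|_{\hco^k}^2
\]
with Cauchy--Schwarz in time, both pieces follow from $\mu^{1/2}\|\na u\|_{\hco^{m-1}}$, $\|u\|_{\infco^{m-1}}$ and $\ep\mu^{1/2}\|\na u\|_{\infco^{m-1}}$ all being $\lesssim\cE_{m,t}$ (using $\ep\mu\leq\ep\mu^{1/2}$ since $\mu\leq 1$).

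For \eqref{curlcurlu-infty} I would instead use the decomposition $\curl\curl u\cdot\bn=\p_\bn\div u-\Delta u\cdot\bn$. The $\p_\bn\div u$ piece, handled by the plain boundary trace in each time slot, yields precisely the $\ep\|\na\div u\|_{\infco^{m-2}}$ contribution on the right-hand side. For $\Delta u\cdot\bn$, the identity of the first step shows that on $\p\Omega$ it coincides with $\p_\bn\div u$ modulo a first-order tangential operator in $u|_{\p\Omega}$; its contribution is $\lesssim\ep|u|_{L_t^\infty\tilde{H}^{m-3/2}}\lesssim\ep\cE_{m,t}$ through the same conormal trace. The main obstacle throughout is verifying the key boundary identity rigorously in a curved chart: tracking the curvature remainders and the commutators between $\Pi$, $D\bn$ and the tangential fields $Z_j$ along the boundary; once that algebraic reduction is in place, the three bounds reduce to routine trace estimates combined with the defining components of $\cE_{m,t}$ and $\cA_{m,t}$.
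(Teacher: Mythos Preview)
Your reduction of $(\curl\curl u)\cdot\bn|_{\p\Omega}$ to a first-order tangential operator in $u|_{\p\Omega}$ is correct and is essentially the same mechanism the paper uses: it starts from the vector identity $-\curl\curl u\cdot\bn=\div(\omega\times\bn)+\omega\cdot\curl\bn$, then uses the tangential/normal decomposition \eqref{normalofnormalder} together with the slip condition $\omega\times\bn|_{\p\Omega}=2\Pi(-au+D\bn\cdot u)$ to arrive at a boundary bound $|\curl\curl u\cdot\bn|\lesssim |(\div u,\p_y u,u)|$. Your chart computation is actually a bit sharper: a careful treatment of the $\omega\cdot\curl\bn$ term (noting that $\Pi\omega$ is given by the slip and that $\omega\cdot\bn$ involves only tangential derivatives of $u$) shows the $\div u$ contribution is not needed. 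The paper simply estimates $|\omega|\le|\nabla u|$ there and then invokes \eqref{normalofnormalder}, which is where the spurious $\div u$ enters. For \eqref{curlcurl-infty0} and \eqref{sec4:eq12} both routes lead to the same trace estimates and close identically.

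There is, however, a genuine slip in your treatment of \eqref{curlcurlu-infty}. You switch to the decomposition $\curl\curl u\cdot\bn=\p_{\bn}\div u-\Delta u\cdot\bn$ and claim that the $\p_{\bn}\div u$ piece alone, by ``plain boundary trace,'' produces $\ep\|\na\div u\|_{\infco^{m-2}}$. This fails for $j\le m-3$: the trace of $\p_{\bn}\div u$ in $H^{m-j-5/2}=H^{(m-j-3)+1/2}$ requires, via \eqref{normaltraceineq}, control of $\na(\p_{\bn}\div u)\sim\na^2\div u$ in $H_{co}^{m-j-3}$, i.e.\ a third-order quantity that is not in $\cE_{m,t}$ nor in the stated right-hand side. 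Moreover, your subsequent observation that $\Delta u\cdot\bn$ coincides with $\p_{\bn}\div u$ modulo a first-order tangential remainder just says that the two singular pieces cancel: the decomposition is circular. The fix is immediate---drop the detour and apply your own first-step identity directly, which gives $\ep\sum_{j}|(\ep\pt)^j\curl\curl u\cdot\bn|_{L_t^\infty H^{m-j-5/2}}\lesssim \ep|u|_{L_t^\infty\tilde{H}^{m-3/2}}\lesssim \ep\|(u,\na u)\|_{\infco^{m-2}}\lesssim \ep\cE_{m,t}$, a bound in fact stronger than \eqref{curlcurlu-infty}. The paper obtains the weaker stated form precisely because its cruder boundary reduction carries the extra $\div u$, whose trace at $\tilde{H}^{m-5/2}$ then forces the $\|\na\div u\|_{\infco^{m-2}}$ term.
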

\begin{proof}
Denote $\omega=\curl u.$
We begin with the identity
\beq\label{curlcurludotn}
-\curl\curl u \cdot \bn=\div(\omega\times \bn)+\omega\cdot \curl \bn.
\eeq
Near the boundary, it follows from \eqref{normalofnormalder} that:
\begin{equation}\label{omegatimesn}
\begin{aligned}
\div (\omega\times \bn)&=\partial_{\bn}(\omega\times \bn)\cdot \bn+(\Pi\partial_{y^1}(\omega\times \bn))^1+(\Pi\partial_{y^2}(\omega\times \bn))^2\\
&=-(\omega\times \bn)\cdot\partial_{\bn} \bn+(\Pi\partial_{y^1}(\omega\times \bn))^1+(\Pi\partial_{y^2}(\omega\times \bn))^2.
\end{aligned}   
\end{equation}
Therefore,  by noticing the  boundary conditions \eqref{bd-curlun}, \eqref{bdryconditionofu} and the identity
\eqref{normalofnormalder}, one finds after the use of the trace inequalities \eqref{normaltraceineq}, \eqref{traceLinfty} that:
\begin{align}
|\curl\curl u\cdot\bn|_{[\f{m}{2}]-1,\infty,t}&\lesssim |(\div u, \p_y u, u)|_{[\f{m}{2}]-1,\infty,t}\lesssim \lat,\notag\\
   \mu^{\f{1}{2}} | \curl\curl u\cdot\bn|_{\hcob^{m-\f{3}{2}}}&\lesssim \mu^{\f{1}{2}}|(\div u,\p_y u, u)|_{\hcob^{m-\f{3}{2}}}\notag\\
    &\lesssim  \mu^{\f{1}{2}} \big(\|\nabla\div u\|_{\hco^{m-2}}+\|\nabla u\|_{\hco^{m-1}}\big)\lesssim \cE_{m,t},\notag
\end{align}
\begin{align}
   &(\ep\mu)^{\f{1}{2}} | \curl\curl u\cdot\bn|_{L_t^{\infty}\tilde{H}^{m-2}}\lesssim (\ep\mu)^{\f{1}{2}} | (\div u,\p_y u, u)|_{L_t^{\infty}\tilde{H}^{m-2}}\notag\\
    &\qquad\qquad\qquad\qquad\qquad\lesssim  \|(\ep \mu\nabla\div u, \div u)\|_{\infco^{m-2}}+\|(\ep\mu \nabla u, u)\|_{\infco^{m-1}}\lesssim \cE_{m,t}. \label{sec4:eq12.5}
\end{align}
We thus finish the proof of \eqref{sec4:eq12}. Moreover, using the fact
$$\ep  | u|_{L_t^{\infty}\tilde{H}^{m-\f{3}{2}}}\lesssim \ep \|(u,\na u)\|_{\infco^{m-2}}\lesssim \ep \cE_{m,t},$$ 
one can show \eqref{curlcurlu-infty} by adapting the proof made in \eqref{sec4:eq12.5}.
\end{proof}

\begin{prop}
Under the same assumption as in Proposition \ref{prop-theta}, we can find some $\tilde{\vartheta}_0>0$ 
such that, for any $\ep\in(0,1], (\mu,\kpa)\in A,$
 any $0<t\leq T:$ 
\begin{align}
 &\|\kpa^{\f{1}{2}}\Delta\theta\|_{L_t^{\infty}H_{co}^{m-3}}^2+ \|\kpa \nabla\Delta\theta\|_{\hco^{m-3}}^2\lesssim\Lambda\big(\f{1}{c_0}, Y_m(0)\big)+(T+\ep)^{\tilde{\vartheta}_0} \Lambda\big(\f{1}{c_0},\cN_{m,t}\big),\label{EE-theta-2-1}\\
 &\|\kpa\Delta\theta\|_{L_t^{\infty}H_{co}^{m-2}}^2+\|\kpa^{\f{3}{2}}\nabla\Delta\theta\|_{\hco^{m-2}}^2 \notag\\
 &\lesssim \Lambda\big(\f{1}{c_0}, Y_m(0)\big)+(T+\ep)^{\tilde{\vartheta}_0} \Lambda\big(\f{1}{c_0},\cN_{m,t}\big)+\|\kpa^{\f{1}{2}}\nabla\sigma\|_{\hco^{m-1}}^2.\label{EE-theta-2-2}
\end{align}
\end{prop}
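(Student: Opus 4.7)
The approach is a higher-order analog of the proof of Proposition \ref{prop-theta} (or rather of the energy estimate \eqref{EE-theta-1}), applied to the equation \eqref{sec4:eq0} for $\tta=\f{C_v}{R}\theta-\f{\ep}{\gamma}\sigma$. Whereas that earlier argument applied $Z^J\nabla$ to \eqref{sec4:eq0} with $|J|\leq m-2$, here I would apply $\nabla Z^J\nabla$, with $|J|\leq m-3$ for \eqref{EE-theta-2-1} and $|J|\leq m-2$ for \eqref{EE-theta-2-2}, then run the same parabolic energy identity as in \eqref{sec4:eq3}. After substituting $\nabla\tta=\f{C_v}{R}\nabla\theta-\f{\ep}{\gamma}\nabla\sigma$ in the test function $\nabla Z^J\nabla\tta$ and integrating by parts as in that identity, the diffusion contribution produces the positive dissipative quantity $\kpa\izto \Gamma\beta|\nabla Z^J\Delta\theta|^2\,\d x\,\d s$, together with the bad $\sigma$-coupling term
\[
\f{\ep\kpa}{\gamma^2}\izto\nabla^2 Z^J(\Gamma\div(\beta\nabla\theta))\cdot\nabla Z^J\nabla\sigma\,\d x\,\d s,
\]
which is the exact analog of $\eqref{sec4:eq3}_3$ at one more level of derivatives.

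For \eqref{EE-theta-2-1}, at $|J|\leq m-3$, the commutator, transport and boundary terms are controlled exactly as in the proof of \eqref{EE-theta-1}, through Proposition \ref{prop-prdcom}, Corollary \ref{cor-gb}, and the a priori bounds gathered in \eqref{EE-3}, \eqref{EE-theta-0}, \eqref{EE-theta-1}. The $\sigma$-coupling term is handled in the same manner as $\eqref{sec4:eq3}_3$ in Section 4: split $\sigma=\tsigma+\ep\mu(2\lambda_1+\lambda_2)\Gamma\div u$, absorb the $\ep\mu$-piece via $\ep\mu\sim\ep\kpa$, and treat the $\tsigma$-piece by a single integration by parts together with \eqref{Es-tsigma1}--\eqref{Es-tsigma3} from Lemma \ref{lemtsigma}; at this level the bound $\kpa^{1/2}\|\nabla^2\tsigma\|_{\hco^{m-2}}\lesssim\lat\cE_{m,t}$ is ample. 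The resulting energy inequality takes the form
\[
\|\nabla^2\tta\|_{L_t^{\infty}H_{co}^{m-3}}^2+\kpa\|\nabla^3\theta\|_{\hco^{m-3}}^2\lesssim\Lambda\big(\tfrac{1}{c_0},Y_m(0)\big)+(T+\ep)^{\tilde{\vartheta}_0}\Lambda\big(\tfrac{1}{c_0},\cN_{m,t}\big),
\]
from which \eqref{EE-theta-2-1} follows via $\kpa^{1/2}\Delta\theta=\f{R\kpa^{1/2}}{C_v}\Delta\tta+\f{R\ep\kpa^{1/2}}{C_v\gamma}\Delta\sigma$ and the fact that $\|\nabla^2\sigma\|_{L_t^{\infty}H_{co}^{m-3}}\lesssim\cE_{m,t}$ is already contained in $\cN_{m,t}$ (see \eqref{defcEmsigmau}).

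For \eqref{EE-theta-2-2}, the same scheme at $|J|\leq m-2$ runs into the genuine difficulty at the top order, and this is the main obstacle. The $\sigma$-coupling term now involves $\nabla^2\sigma$ one conormal derivative higher; after using $\nabla\tsigma=f$ from \eqref{eq-tiltasigma} and a single integration by parts, the $\tsigma$-piece is bounded via \eqref{Es-tsigma1} combined with a Cauchy--Schwarz at weights $\kpa^{1/2}$ and $\kpa^{-1/2}$, which produces a remainder of the form $\|\kpa^{1/2}\nabla\sigma\|_{\hco^{m-1}}^2\cdot\Lambda\big(\tfrac{1}{c_0},\cN_{m,t}\big)$. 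Because the $L_t^2$ bound on $\kpa^{1/2}\nabla\sigma$ at the top conormal level $m-1$ has not yet been established at this stage (it is the content of Step 5 of the outline, the compressible-part recovery), this remainder cannot be absorbed and is left on the right-hand side, which is precisely the extra term appearing in \eqref{EE-theta-2-2}. Once the analogous energy inequality at level $m-2$ is secured, the final bounds follow by multiplying by $\kpa$ and using $\kpa\leq 1$, combined with $\|\ep\mu\nabla^2\sigma\|_{L_t^{\infty}H_{co}^{m-2}}\lesssim\cN_{m,t}$ to handle the $\f{\ep}{\gamma}\nabla^2\sigma$ contribution in $\nabla^2\tta$.
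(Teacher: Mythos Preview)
Your approach has a genuine gap. The intermediate estimate you aim for,
\[
\|\nabla^2\tta\|_{L_t^{\infty}H_{co}^{m-3}}^2+\kpa\|\nabla^3\theta\|_{\hco^{m-3}}^2\lesssim\Lambda\big(\tfrac{1}{c_0},Y_m(0)\big)+(T+\ep)^{\tilde{\vartheta}_0}\Lambda\big(\tfrac{1}{c_0},\cN_{m,t}\big),
\]
is too strong to hold uniformly in $\kpa$: since $\nabla^2\tta=\f{C_v}{R}\nabla^2\theta-\f{\ep}{\gamma}\nabla^2\sigma$ and only $\kpa^{\f12}\|\nabla^2\theta\|_{L_t^\infty H_{co}^{m-3}}$ (not the unweighted norm) is contained in $\cE_{m,t}(\theta)$, the left-hand side is expected to blow up as $\kpa\to 0$ due to the thermal boundary layer. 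This shows up concretely in your scheme through the commutator $[\nabla Z^J\nabla,u\cdot\nabla]\tta$, which contains a term of the form $(\nabla^2 u)\cdot Z^J\nabla\tta$; but $\|\nabla^2 u\|_{\hco^{m-3}}$ is only controlled with weight $\ep^{\f23}\mu^{\f12}$ (cf.\ \eqref{defcEmsigmau}), so this source cannot be bounded uniformly, and an integration by parts merely trades $\nabla^2 u$ for a $\kpa^{-\f12}$ coming from the dissipation.

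The paper proceeds differently. It rewrites the equation for $\theta$ (not $\tta$) with $\pt\theta$ alone on the left and $-u\cdot\nabla\tta+\gamma^{-1}\ep(\pt\sigma+\mathfrak{N})$ as a \emph{source}, applies only $\kpa^{\f12}Z^J\nabla$ (a single normal derivative), and tests against $-\kpa^{\f12}\nabla\div(Z^J\nabla\theta)$. This gives the energy $\kpa\|Z^J\Delta\theta\|_{L^2}^2$ and the dissipation $\kpa^2\|Z^J\nabla\Delta\theta\|_{L_t^2L^2}^2$ with the correct $\kpa$-scaling, and the convection enters only through $\|Z^J\nabla(u\cdot\nabla\tta)\|_{L_t^2L^2}$, which involves $\nabla u$ (not $\nabla^2 u$) and closes via $\|\nabla u\|_{L_t^\infty H_{co}^{m-2}}\lesssim\cE_{m,t}$. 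The commutator $[Z^J,\Gamma\beta]\nabla\Delta\theta$ is then handled by induction on the conormal order $k$, feeding in $\kpa\|\nabla\Delta\theta\|_{\hco^{k-1}}$ from the previous step; and the extra $\|\kpa^{\f12}\nabla\sigma\|_{\hco^{m-1}}^2$ in \eqref{EE-theta-2-2} appears directly from $\kpa^{\f12}Z^J\nabla(\ep\pt\sigma)$ in the source at $|J|=m-2$, not from a $\tsigma$-splitting of a diffusion--$\sigma$ cross term.
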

\begin{proof}
By the equation \eqref{sec4:eq0} satisfied by $\tilde{\theta}=\colon \f{C_v}{R}\theta-\f{\ep}{\gamma}\sigma,$ we can rewrite the equation for $\theta$ as:
\beqs
\f{C_v}{R}\pt\theta-\kpa\gamma^{-1}\Gamma\div(\beta\nabla\theta)=-u\cdot\nabla\tta+\gamma^{-1}\ep(\pt\sigma+\mathfrak{N}).
\eeqs
Taking a vector field $Z^J,$ $|J|\leq m-2$ and 
applying $\kpa^{\f{1}{2}}Z^J\nabla$ on the above equation, 
we get that:
\beq\label{rew-theta}
\f{C_v}{R}\kpa^{\f{1}{2}}\pt Z^J\na\theta-\gamma^{-1} \kpa^{\f{3}{2}}Z^J(\Gamma\beta\nabla\Delta\theta)=
\eqref{rew-theta}_1+\eqref{rew-theta}_2, 
\eeq
where 
\begin{align*}
\eqref{rew-theta}_1 = \kpa^{\f{1}{2}}Z^J\nabla\big(-u\cdot\nabla\tta+\gamma^{-1}\ep(\pt\sigma+\mathfrak{N})\big), \quad 
\eqref{rew-theta}_2=\gamma^{-1} \kpa^{\f{3}{2}}Z^J\big(\na(\Gamma\na\beta\cdot\na\theta)+\na(\Gamma\beta)\Delta\theta\big).
\end{align*}
Taking the inner product of the equation \eqref{rew-theta} with 
$-\kpa^{\f{1}{2}}\nabla\div(Z^J\na\theta),$ we get after integration in space and time the following identity:
\begin{align}\label{sec4:eq13}
   &\f{C_v}{2 R} \kpa\iomega |\div Z^J\na\theta|^2(t)\,\d x+\gamma^{-1}\kpa^2\izto \Gamma\beta |Z^J\nabla\Delta\theta|^2\d x \d s\notag\\
   &=\f{C_v}{2 R} \kpa \iomega |\div Z^J\na\theta|^2(0)\,\d x\underbrace{-\izto \kpa^{-\f{1}{2}}(\eqref{rew-theta}_1+\eqref{rew-theta}_2)
   \kpa\nabla\div(Z^J\na\theta)\,\d x\d s}_{\eqref{sec4:eq13}_1}\\
   &\quad-\underbrace{\gamma^{-1} \kpa^{2}\izto [Z^J,\Gamma\beta]\na\Delta\theta\cdot \na\div Z^J\na\theta +\Gamma\beta Z^J\nabla\Delta\theta [\nabla\div, Z^J]\nabla\theta  \,\d x\d s}_{\eqref{sec4:eq13}_2}. \notag
\end{align}
We first prove \eqref{EE-theta-2-1} and thus assume $|J|=k\leq m-3.$ Let us begin with the the estimate of  $\eqref{sec4:eq13}_2.$
By using the identity \eqref{comu} twice, 
one can get that:
\beqs
\kpa\|[\nabla\div, Z^J]\nabla\theta\|_{L_t^2L^2}\lesssim \kpa\|\nabla^3 \theta\|_{\hco^{k-1}}\lesssim\kpa \|\na \Delta \theta\|_{\hco^{k-1}}
\eeqs
and thus 
\beq\label{sec4:eq14}
\kpa\| \na\div Z^J\na\theta\|_{L_t^2L^2}\lesssim \kpa \|\na \Delta \theta\|_{\hco^{k}}.
\eeq
Moreover, since $k\leq m-3,$ we bound the commutator term $[Z^J,\Gamma\beta]\na\Delta\theta$ simply by 
\begin{align*}
\kpa\|[Z^J,\Gamma\beta]\na\Delta\theta\|_{L_t^2L^2}&\lesssim \il Z(\Gamma\beta)\il_{k-1,\infty,t}\|\kpa \na \Delta \theta\|_{\hco^{k-1}}\\
&\lesssim \lab \il\theta\il_{m-3,\infty,t}\big)\|\kpa \na \Delta \theta\|_{\hco^{k-1}}+\ep\lat\cE_{m,t}.
\end{align*}
In view of the previous three estimates, we can now control 
  $\eqref{sec4:eq13}_2$ as:
\begin{align}\label{sec4:eq15}
    \eqref{sec4:eq13}_2& \lesssim \kpa \|\na \Delta \theta\|_{\hco^{k}} \lab \il\theta\il_{m-3,\infty,t}\big) \|\kpa \na \Delta \theta\|_{\hco^{k-1}}+\ep\lat\cE_{m,t}^2\notag\\
    &\leq \delta \|\kpa \na \Delta \theta\|_{\hco^{k}}^2+C_{\delta}  \lab \il\theta\il_{m-3,\infty,t}\big)\|\kpa \na \Delta \theta\|_{\hco^{k-1}}^2+\ep\lat\cE_{m,t}^2.
\end{align}
Let us now see the term $\eqref{sec4:eq13}_1.$ It can be verified without much trouble that for $|J|\leq m-3,$
\begin{align*}
    \kpa^{-\f{1}{2}}\|\eqref{rew-theta}_1\|_{L_t^2L^2}&\lesssim \|(\sigma, u, \nabla( \sigma, u, \theta, \ep\mu\na u))\|_{\hco^{m-2}}\lat \lesssim T^{\f{1}{2}}\lat\cE_{m,t},\\
    \kpa^{-\f{1}{2}}\|\eqref{rew-theta}_2\|_{L_t^2L^2}&\lesssim \|(\ep\sigma, \theta,  \nabla(\ep \sigma, \theta), \kpa^{\f{1}{2}}\na^2\theta)\|_{\hco^{m-3}}\lat \lesssim T^{\f{1}{2}}\lat\cE_{m,t}.
\end{align*}
We thus derive from the above two estimates and \eqref{sec4:eq14} that:
\beqs
\eqref{sec4:eq13}_1\lesssim T^{\f{1}{2}}\lat \cE_{m,t},
\eeqs
 which, together with \eqref{sec4:eq15}, allow us to find, by choosing $\delta$ small enough that for any $0\leq k\leq m-3,$
\begin{align}
   \| \kpa^{\f{1}{2}}\Delta\theta\|_{\infco^{k}}^2+\|\kpa \na\Delta\theta\|_{\hco^k}^2 &\lesssim Y_m^2(0)+(T^{\f{1}{2}}+\ep)\lat\cE_{m,t}^2\notag\\
  & +\lab \il\theta\il_{m-3,\infty,t}^2\big)\|\kpa \na \Delta \theta\|_{\hco^{k-1}}^2+\| \kpa^{\f{1}{2}}\Delta\theta\|_{\infco^{k-1}}^2.
\end{align}
Note that in the above,  we have used the convention  $\|\cdot\|_{\infco^l}=\|\cdot\|_{\hco^l}=0,$ if $l=-1.$
It thus then follows from  induction that:
\begin{align}\label{sec4:eq16}
     \| \kpa^{\f{1}{2}}\Delta\theta\|_{\infco^{m-3}}^2+\|\kpa \na\Delta\theta\|_{\hco^{m-3}}^2 &\lesssim\lab \il\theta\il_{m-3,\infty,t}^2\big) Y_m^2(0)+(T^{\f{1}{2}}+\ep)\Lambda\big(\f{1}{c_0},\cN_{m,t}\big).
\end{align}
Note that we denote $\Lambda$ a polynomial that may differ from line to line. 
By the Sobolev embedding \eqref{sobebd} and the estimates \eqref{EE-theta-0}, \eqref{EE-theta-1},
\beq\label{thetainfty}
\il\theta\il_{m-3,\infty,t}^2\lesssim \|\na\theta\|_{L_t^{\infty}H_{co}^{m-2}}^2+
\|\theta\|_{L_t^{\infty}H_{co}^{m-1}}^2\lesssim Y_m^2(0)+(T^{\f{1}{2}}+\ep)\lae,
\eeq
plugging which into \eqref{sec4:eq16}, we find a constant $\tilde{\vartheta}_0>0$ such that 
\eqref{EE-theta-2-1} holds.

We now comment on the proof of \eqref{EE-theta-2-2}. Assume $|J|=m-2$ in \eqref{sec4:eq13} and multiply it  by $\kappa.$ Following the same path as what we just did for $|J|\leq m-3,$ we find the following estimate:
\begin{align}\label{sec4:eq17}
 \| \kpa\Delta\theta\|_{\infco^{m-2}}^2+\|\kpa^{\f{3}{2}}\na\Delta\theta\|_{\hco^{m-2}}^2 &\lesssim Y_m^2(0)+\| \kpa\Delta\theta\|_{\infco^{m-3}}^2
 +\| \kpa^{\f{3}{2}}\nabla\Delta\theta\|_{\hco^{m-3}}^2\\
&+\|(\eqref{rew-theta}_1,\eqref{rew-theta}_2)\|_{L_t^2L^2}^2+\| \kpa^{\f{3}{2}}[Z^J,\Gamma\beta]\na\Delta\theta\|_{L_t^2L^2}^2.\notag
\end{align}
One can then verify that for $|J|=m-2,$
\begin{align}
     &\|\eqref{rew-theta}_1\|_{L_t^2L^2}\lesssim \big(\|\nabla (u, \theta,\ep\sigma)\|_{\hco^{m-2}}+\|\kpa^{\f{1}{2}}\na(\theta,\ep\sigma)\|_{\hco^{m-1}}\big)\lat+\|\kpa^{\f{1}{2}}\nabla\sigma\|_{\hco^{m-1}}\notag\\
     &\qquad\qquad\qquad \lesssim T^{\f{1}{2}}\lat\cE_{m,t}+\|\kpa^{\f{1}{2}}\nabla\sigma\|_{\hco^{m-1}},\notag\\
     &\|\eqref{rew-theta}_2\|_{L_t^2L^2}\lesssim \|(\ep\sigma, \theta,  \nabla(\ep \sigma, \theta), \kpa\na^2\theta)\|_{\hco^{m-2}}\lat\lesssim T^{\f{1}{2}}\lat\cE_{m,t},\notag\\
 &\| \kpa^{\f{3}{2}}[Z^J,\Gamma\beta]\na\Delta\theta\|_{L_t^2L^2}\lesssim 
  \lab  \kpa^{\f{1}{2}}\il(\theta, \ep\sigma)\il_{m-2,\infty,t}\big) \|\kpa\na\Delta\theta\|_{\hco^{m-3}}+\ep\lat\cE_{m,t}.\notag
\end{align}
Moreover, by the Sobolev embedding \eqref{sobebd} and estimates \eqref{EE-highest}, \eqref{EE-2},
\beq\label{thetasigmainfty-m-2}
 \begin{aligned}
 \kpa\il(\theta,\ep\sigma)\il_{m-2,\infty,t}^2&\lesssim \kpa\|(\theta,\ep\sigma)\|_{L_t^{\infty}\underline{H}_{co}^{m}}^2+\kpa\|\na(\theta,\ep\sigma)\|_{L_t^{\infty}{H}_{co}^{m-1}}^2\\
 &\lesssim Y_m^2(0)+(T+\ep)^{\f{1}{2}}\lae.
 \end{aligned}
 \eeq
Plugging the previous four estimates into \eqref{sec4:eq17}, we obtain \eqref{EE-theta-2-2} by using  \eqref{EE-theta-2-1} and by choosing $\tilde{\vartheta}_0$ smaller if necessary.
\end{proof}
\section{Estimates for the second normal derivatives} 
In this section, we prove some (non-uniform)  estimates for the second normal derivatives of the velocity $u$
which are included  in the energy norm $\cE_{m,T}(\sigma, u)$  defined in 
\eqref{defcEmsigmau}.
\begin{prop}
Under the assumption \eqref{preasption}, there is a constant $\vartheta_1>0,$ such that for any $\ep \in (0,1], (\mu,\kpa)\in A,$  any $0<t\leq T,$ 
\begin{align}
\ep\mu^2\|\na^2 u\|_{\hco^{m-2}}^2+\ep\mu \|\na^2 u\|_{\hco^{m-3}}^2\lesssim \lab Y_m(0)\big)+(T+\ep)^{\vartheta_1}\lae.
\end{align}
Moreover, the following estimate also holds:
\begin{align}\label{na2uhigh}
\ep^2\mu \big(\|\na u\|_{L_t^{\infty}H_{co}^{m-1}}^2
+\mu\|\na^2 u\|_{L_t^{2}H_{co}^{m-1}}^2\big)
+\ep^{\f{4}{3}}\mu\|\na^2 u\|_{L_t^{2}H_{co}^{m-2}}^2
\lesssim Y_m^2(0) +(T+\ep)^{\f{1}{6}}\lae. 
\end{align}
\end{prop}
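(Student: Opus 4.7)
The plan is to recover the second normal derivatives of $u$ by estimating a modified tangential vorticity that vanishes on the boundary. Set
\begin{equation*}
\omega_{\bn} = \curl u \times \bn - 2\Pi(-au + D\bn\cdot u),
\end{equation*}
which satisfies $\omega_{\bn}|_{\p\Omega}=0$ by the Navier-slip reformulation \eqref{bd-curlun}. After rewriting $\eqref{NCNS-S2}_2$ as $(\pt+u\cdot\na)u + R\beta(\theta)(\na\sigma/\ep - \mu\Gamma\div\cL u)=0$, taking the curl, crossing with $\bn$, and adding the lower-order correction, a direct computation yields a convection-diffusion equation of the schematic form
\begin{equation*}
(\pt + u\cdot\na)\omega_{\bn} - \mu\lambda_1 R\beta\,\overline{\Gamma}\,\Delta\omega_{\bn} = \cF, \qquad \omega_{\bn}|_{\p\Omega}=0,
\end{equation*}
whose source $\cF$ gathers the singular term $\na(R\beta)\times\na\sigma/\ep$, commutators of $\curl$ with the variable coefficients $R\beta$ and $\Gamma$, and curvature contributions from $\p_{\bn}$ acting on $\bn$ and on the correction.

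Next I would run weighted $L^2$-energy estimates on $Z^I\omega_{\bn}$ at conormal orders $|I|\leq m-3$ and $|I|\leq m-2$. The homogeneous Dirichlet condition kills boundary integrals, and the parabolic structure supplies dissipation of the form $\mu\|\na\omega_{\bn}\|_{\hco^k}^2$. The weights $\ep\mu$ and $\ep\mu^2$ in the first estimate are chosen so that, on one hand, the factor $\ep$ absorbs the $\ep^{-1}$ scaling of the most singular contribution in $\cF$ (the term $\na(R\beta)\times\na\sigma/\ep$, bounded via the previous sections' estimates of $\|\na\sigma\|_{\hco^{m-1}}$), and on the other hand the powers of $\mu$ match the dissipation and the size of the higher-order commutators. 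Once the corresponding bounds for $Z^I\omega_{\bn}$ are in hand, I would convert them to $\na^2 u$ bounds using the pointwise identities
\begin{equation*}
\Pi(\p_{\bn}u) = \Pi(\curl u\times\bn) - \Pi((D\bn)u),\quad \p_{\bn}(u\cdot\bn) = \div u - (\Pi\p_{y^1}u)^1 - (\Pi\p_{y^2}u)^2,
\end{equation*}
which express the normal derivative of $u$ in terms of $\omega_{\bn}$, $\div u$ and tangential derivatives. Differentiating once more in the normal direction and invoking the momentum equation itself (which expresses $\mu\overline{\Gamma}\Delta u$ via $\pt u$, $\na\sigma/\ep$ and nonlinear terms) recovers the missing $\p_{\bn}^2 u$; the remaining entries of $\na^2 u$ involve at least one tangential derivative and are directly absorbed by $\|\na u\|_{\hco^{m-1}}$, already part of $\cE_{m,T}$.

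The sharper estimate \eqref{na2uhigh} with the intermediate weight $\ep^{2/3}\mu^{1/2}$ and the fractional exponent $(T+\ep)^{1/6}$ is then obtained by interpolation: the first estimate at conormal order $m-3$, combined with a companion energy estimate for $Z^I\omega_{\bn}$ at the higher order $m-1$ (providing $\ep\mu^{1/2}\|\na u\|_{\infco^{m-1}}$ and $\ep\mu\|\na^2 u\|_{\hco^{m-1}}$), can be spliced via a log-convex conormal Gagliardo--Nirenberg inequality at the intermediate regularity $m-2$ to yield the stated $\ep^{4/3}\mu$ weight and the $(T+\ep)^{1/6}$ factor. The principal technical obstacle is the systematic bookkeeping of the $\ep,\mu,\kpa$ weights in the commutator calculus, especially where $\ep\pt$ hits $R\beta(\theta)$ or $\Gamma(\ep\sigma)$ and produces $\pt\theta,\pt\sigma$ factors only controlled by the previous sections, and where $\cF$ involves $\na^2\theta$, whose uniform control requires the coupling assumption \eqref{assumption-mukpa} together with the bounds \eqref{EE-theta-1}--\eqref{EE-theta-2-2}. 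The curvature contributions from repeated normal differentiations of $\bn$ are absorbed into the $\Lambda(1/c_0,\cdot)$ prefactor via the covering \eqref{covering1}.
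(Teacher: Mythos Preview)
Your overall strategy---working with the modified vorticity $\omega_{\bn}$ that vanishes on the boundary, running $L^2$ energy estimates on $Z^J\omega_{\bn}$, and then converting $\na\omega_{\bn}$ bounds into $\na^2 u$ bounds via the tangential/normal decomposition identities---is correct and matches the paper exactly.

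However, your proposed mechanism for the $\ep^{4/3}\mu$ weight and the $(T+\ep)^{1/6}$ exponent at order $m-2$ does not work. A conormal Gagliardo--Nirenberg interpolation between the order $m-3$ estimate (weight $\ep\mu$) and the order $m-1$ estimate (weight $\ep^2\mu^2$) would yield $\ep^{3/2}\mu^{3/2}$, not $\ep^{4/3}\mu$, and the right-hand side exponents do not combine to $1/6$ either. In the paper, both order-$(m-2)$ bounds are obtained by \emph{direct} energy estimates: one multiplies the identity for $Z^J\omega_{\bn}$ by $\mu$ for the $\ep\mu^2$ version and by $\ep^{1/3}$ for the $\ep^{4/3}\mu$ version. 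The obstruction in each case is the top-order commutator $\mu[Z^J,\Gamma\beta]\na\omega_{\bn}$, $|J|=m-2$: neither all derivatives on $\Gamma\beta$ in $L^\infty$ nor all on $\na\omega_{\bn}$ in $L^2$ is affordable. At weight $\ep^{4/3}$ the paper splits this term by H\"older as $\|Z(\Gamma\beta)\|_{L_t^\infty W_{co}^{m-3,6}}\,\|\ep^{2/3}\mu^{1/2}\na\omega_{\bn}\|_{L_t^3L_x^3}$ and then interpolates $\|f\|_{L_t^3L_x^3}\lesssim \|f\|_{L_t^\infty L_x^\infty}^{1/3}\|f\|_{L_t^2L_x^2}^{2/3}$ with $f=\ep^{2/3}\mu^{1/2}\na\omega_{\bn}$; this is precisely what generates the factor $T^{1/6}$ and balances the $\ep$-powers against $\il\ep\mu^{1/2}\na\omega_{\bn}\il_{0,\infty,t}\lesssim\cA_{m,t}$. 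So the interpolation is in Lebesgue exponent on a single commutator term inside the energy estimate, not in conormal regularity between two already-established a priori bounds.

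A minor point: you need not invoke assumption \eqref{assumption-mukpa} here. The source $\cF$ for $\omega_{\bn}$ involves only $\na\theta$ (through $\na\beta$), not $\na^2\theta$; the bound on $F^\omega+L$ holds for any $(\mu,\kpa)$ with $\mu\sim\kpa$. The sharper relation \eqref{assumption-mukpa} enters only later, for the modified vorticity $\curl(r_0 u)\times\bn$.
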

\begin{proof}
By the virtue of the equivalence of the 
conormal spaces and the usual Sobolev spaces in  $\Omega_0$ (interior of the domain), the identities
\eqref{normalofnormalder} and 
\beqs
\Pi(\partial_{\bn} u)=\Pi(\curl u \times \bn)+\Pi \na (u\cdot\bn) +\Pi(-(D \bn)u)
\eeqs
as well as the established estimate \eqref{EE-highest}, 
it suffices to prove the corresponding estimates on each chart $\Omega_i (i=1,\cdots N)$ near the boundary for $\curl u\times(\chi_i\bn)$ where $ \chi_i$ is the cut-off function associated to $\Omega_i.$ 
Let us define 
\beqs
\omega_{\bn i}=\colon \curl u\times\chi_i \bn -2 \Pi (\chi_i(-a u+D\bn\cdot u)),
\eeqs
then again by \eqref{EE-highest}, the matter is reduced to proving the following estimate: for any $i=1,\cdots N,$
\begin{align}
 & \ep\mu^2\|\na \omega_{\bn i}\|_{\hco^{m-2}}^2+\ep\mu \|\na\omega_{\bn i}\|_{\hco^{m-3}}^2\lesssim \lab Y_m(0)\big)+(T+\ep)^{\vartheta_1}\lae,\label{es-omegan-m-3}\\
 & \ep^2 \mu \big(\|\omega_{\bn i}\|_{\infco^{m-1}}^2
   +\mu\|\na\omega_{\bn i} \|_{L_t^{2}H_{co}^{m-1}}^2\big)+\ep^{\f{4}{3}}\mu
   \|\na\omega_{\bn i} \|_{\hco^{m-2}}^2\notag\\
  &\quad \lesssim 
   Y_m^2(0) +(T+\ep)^{\f{1}{6}}\lae.\label{es-omegan-m-1}
\end{align}

The reason to work on $\omega_{\bn i}$ instead of $ \curl u\times\chi_i \bn$ is that by the boundary condition \eqref{bd-curlun}, $\omega_{\bn i}$ vanishes on the boundary:
\beq\label{bdryomegan}
\omega_{\bn i}|_{\p\Omega}=0,
\eeq
which is more favorable for energy estimates. 
By lengthy but direct computations, we find that $\omega_{\bn i}$ is governed by the following transport-diffusion equation:
\begin{align}\label{omegan}
    (\pt +u\cdot\na)\omega_{\bn i}-\lambda_1 R\mu (\beta\Gamma)\Delta \omega_{\bn i}=F_i^{\omega}+L_i
\end{align}
where $F_i^{\omega}$ and $L$ are the source terms respectively of the equations for $\omega\times(\chi_i\bn), (\omega=\colon \curl u),$ and 
$-2 \Pi (\chi_i(-a u+D\bn\cdot u)):$
\beq
\begin{aligned}\label{defFomegaL}
  &  F_i^{\omega}=F_{1,i}^{\omega}\times (\chi_i\bn)+\omega\times(u\cdot \na(\chi_i\bn))-\lambda_1\mu R (\beta\Gamma) \big(\omega\times \Delta(\chi_i\bn)+2\na\omega\times\na(\chi_i\bn)\big),\\
 & L_i=2R\beta\Pi L_{1,i}+[\Pi, u\cdot\na]\big( 2\chi_i (-a u+D\bn \cdot u)\big)-\lambda_1\mu R (\beta\Gamma)[\Pi,\Delta]\big(2\chi_i(-a u+D\bn \cdot u)\big),
\end{aligned}
\eeq
with 
\begin{align*}
  & F_{1,i}^{\omega}= \omega\cdot\na u-\omega\div u-\na(R\beta)\times\f{\na\sigma}{\ep}+\na (R\beta\Gamma)\times\mu\div\cL u,\\
     &L_{1,i}=\f{1}{ R\beta}\big(-u\cdot \na (\chi_i D\bn\cdot)u + u\cdot\na(a\chi_i)u\big)
+\chi_i(D\bn\cdot{\na\sigma}+a\na\sigma)/\ep\\ &+\mu\lambda_1\Gamma \big(2\na(\chi_i D\bn)\cdot\na u+\Delta(\chi_i D\bn)\cdot u-2\na(a\chi_i)\nabla u+\Delta(a\chi_i)u\big)+\mu\Gamma\chi_i (\lambda_1+\lambda_2)
(a -D\bn\cdot)\na\div u.
\end{align*}

 In view of the above
expression of $F_i^{\omega}, L_i,$ the following fact can be verified by using the Proposition \ref{prop-prdcom} and the definition of $\cE_{m,t}:$
\begin{lem}
For any $1\leq i\leq N,$
\begin{align}\label{Property-FL}
 \ep\mu^{\f{1}{2}}\|F_i^{\omega}+L_i\|_{\hco^{m-1}}+\ep\|F_i^{\omega}+L_i\|_{\hco^{m-2}}\lesssim \lat\cE_{m,t}.
\end{align}
\end{lem}
We will skip the proof of the above lemma and proceed to 
prove \eqref{es-omegan-m-1}, \eqref{es-omegan-m-3}. 
Performing standard energy estimates for the system \eqref{omegan} and \eqref{bdryomegan}, one gets that (we omit the subscript $i$ for simplicity)
\begin{align*}
&\f{\ep}{2}\iomega |\omega_{\bn}|^2(t)\,\d x+\ep \mu R\lambda_1  \izto  \beta\Gamma |\na \omega_{\bn}|^2 \,\d x\d s-\f{\ep}{2}\izto \div u |\omega_{\bn}|^2 \d x \d s\\
&=\f{\ep}{2}\iomega |\omega_{\bn}|^2(0)\,\d x
+\ep\izto (F^{\omega}+L) \cdot \omega_{\bn}\d x\d s
-\ep\mu R\lambda_1 \izto (\na(\Gamma \beta)\cdot \na \omega_{\bn}) \cdot \omega_{\bn}\d x\d s.
\end{align*}
Therefore, it follows from  the estimate \eqref{Property-FL}, Young's inequality and the 
fact $\|\omega_{\bn}\|_{L_t^{2}L^2}\lesssim T^{\f{1}{2}}\cE_{m,t}$ that:
\begin{align}\label{omegan-0}
\ep\|\omega_{\bn}\|_{L_t^{\infty}L^2}^2+\ep\mu\|\na\omega_{\bn}\|_{L_t^2L^2}^2\lesssim Y_m^2(0)+ T^{\f{1}{2}}\lat\cE_{m,t}^2.
\end{align}
Let us now focus on the higher regularity estimates. 
For any vector field $Z^J, |J|=k\leq m-1,$ we have the following energy identity:
\begin{align}\label{sec9:eq0}
\f{\ep}{2}\iomega |Z^J\omega_{\bn}|^2(t)\,\d x+ \ep\mu R\lambda_1  \izto  \beta\Gamma |Z^J\na \omega_{\bn}|^2 \,\d x\d s=\f{\ep}{2}\iomega |Z^J\omega_{\bn}|^2(0)\,\d x +\ep\sum_{l=1}^4\mathfrak{F}^J_l, 
\end{align}
where
\beq
\begin{aligned}
\mathfrak{F}^J_1&= \f{1}{2}\izto |Z^J \omega_{\bn}|^2\div u \, \d x\d s,\,
\mathfrak{F}^J_2=\izto \big(Z^J (F^{\omega}+L) -[Z^J, u\cdot\na]\omega_{\bn}\big) \cdot Z^J \omega_{\bn}\d x\d s,\\
\mathfrak{F}^J_3&= -R\lambda_1\mu \izto Z^J(\na(\Gamma \beta)\cdot \na \omega_{\bn}) \cdot Z^J \omega_{\bn}\d x\d s,\\
\mathfrak{F}^J_4&=-R\lambda_1\mu \izto  [Z^J, \Gamma \beta]\na\omega_{\bn}\cdot\na Z^J \omega_{\bn}
+\Gamma \beta  Z^J \na \omega_{\bn}\cdot [\na, Z^J]\omega_{\bn}\\
&\qquad\qquad\qquad\qquad\qquad \qquad\qquad \qquad\qquad -[Z^J, \div](\Gamma\beta \na \omega_{\bn})\cdot Z^J \omega_{\bn}\d x\d s.
\end{aligned}
\eeq
Let us first consider the case when $|J|=k\leq m-3.$ The term $\mathfrak{F}^J_1$ is controlled easily 
as:
\begin{align}\label{sec9:eq10}
    \ep \mathfrak{F}^J_1\lesssim \ep \| \omega_{\bn}\|_{\hco^{m-3}}\il \div u\il_{0,\infty,t}\lesssim \ep\lat\cE_{m,t}^2.
\end{align}
Next, by the Cauchy-Schwarz inequality, 
\begin{align*}
    \ep \mathfrak{F}^J_2 \lesssim T^{\f{1}{2}}\|\omega_{\bn}\|_{\infco^{m-3}}\|\ep\big(Z^J(F^{\omega}+L), [Z^J, u\cdot\na]\omega_{\bn}\big)\|_{L_t^2L^2}. 
\end{align*}
Thanks to the identity \eqref{id-convetion}, the commutator estimate \eqref{roughcom} and the estimate \eqref{hardy-calculus}, we get
for any $k\leq m-2$ that
\begin{align*}
   \| [Z^J, u\cdot\na] \omega_{\bn}\|_{L_t^2L^2}&\lesssim
   \big(\|  \omega_{\bn} \|_{L_t^2H_{co}^k}+\|(u,\na u)\|_{\hco^k}\big)\Lambda\big( \f{1}{c_0}, \il (u,\na u)\il_{[\f{k}{2}],\infty,t} \big)\\
   &\lesssim \lat\cE_{m,t},
\end{align*}
which, combined with the property \eqref{Property-FL}, 
yields that:
\begin{align}\label{sec9:eq11}
     \ep \mathfrak{F}^J_2 \lesssim T^{\f{1}{2}}\lat\cE_{m,t}^2.
\end{align}
Moreover, applying the product estimate \eqref{roughproduct1},
we can proceed to control $ \ep \mathfrak{F}^J_3$ as:
\beq\label{sec09:eq11}
\begin{aligned}
    \ep \mathfrak{F}^J_3 &\lesssim T^{\f{1}{2}}\|\omega_{\bn}\|_{\infco^{m-3}} 
    \big(\|\na(\Gamma\beta)\|_{\hco^{m-3}}+ \|\ep \mu\na \omega_{\bn}\|_{\hco^{m-3}}\big)\lat\\
    &\lesssim  T^{\f{1}{2}}\lat\cE_{m,t}^2.
\end{aligned}
\eeq
Finally, it follows from the identity \eqref{comu}, the integration by parts and Young's inequality that:
\begin{align}\label{sec9:eq12}
  \ep \mathfrak{F}^J_4\leq  \delta \ep\mu \|Z^J \na\omega_{\bn} \|_{L^2}^2+C\big({\delta}, \f{1}{c_0}\big)\ep\mu\|\na\omega_{\bn} \|_{\hco^{k-1}}^2\big(1+\il\Gamma\beta\il_{m-3,\infty,t}^2\big). %
\end{align}
Inserting \eqref{sec09:eq11}-\eqref{sec9:eq12} into \eqref{sec9:eq0}, we find by choosing $\delta$ sufficiently small 
that for any $0\leq k\leq m-3,$
\begin{align*}
    \ep \|\omega_{\bn}\|_{\infco^{k}}^2+ \ep \mu
    \|\omega_{\bn}\|_{\hco^{k}}^2\lesssim Y_m^2(0)
    + \ep \mu
    \|\omega_{\bn}\|_{\hco^{k-1}}^2\big(1+\il\Gamma\beta\il_{m-3,\infty,t}^2\big)+(T+\ep)^{\f{1}{2}}\lae,
\end{align*}
which, combined with \eqref{omegan-0} and the induction arguments, leads to that
\begin{align*}
    \ep \|\omega_{\bn}\|_{\infco^{m-3}}^2+ \ep \mu
    \|\omega_{\bn}\|_{\infco^{m-3}}^2\lesssim 
    Y_m^2(0)\lab \il(\theta, \ep\sigma)\il_{m-3,\infty,t} \big) + (T+\ep)^{\f{1}{2}}\lae.
\end{align*}
Moreover, by using the estimate \eqref{thetainfty} and the fact $\il\ep\sigma\il_{m-3,\infty,t}\lesssim \ep\lat,
$ we can find a constant $\vartheta_1>0,$ such that:
\begin{align}\label{omeganm-3}
    \ep \|\omega_{\bn}\|_{\infco^{m-3}}^2+ \ep \mu
    \|\omega_{\bn}\|_{\infco^{m-3}}^2\lesssim \lab Y_m(0)\big)+ (T+\ep)^{\vartheta_1}\lae.
\end{align}

Let us now focus on the case $|J|=m-2.$
Due to the difficulty arising from the control of  $\mathfrak{F}^J_4,$ we need to multiply \eqref{sec9:eq0} by $\ep^{\f{1}{3}}$ or $\mu$ when $|J|= m-2.$ The following paragraph is thus devoted to the control of $\ep^{\f{4}{3}}\sum_{l=1}^4 \mathfrak{F}^J_l $ and $\ep \mu \sum_{l=1}^4 \mathfrak{F}^J_l$  when $|J|= m-2.$ 

Analogues to \eqref{sec9:eq10}-\eqref{sec09:eq11}, 
one can show that if $|J|=m-2,$
\begin{align}\label{sec9:eq1-m-2}
 \ep (\mathfrak{F}^J_1+\mathfrak{F}^J_2+\mathfrak{F}^J_3)  \lesssim (T^{\f{1}{2}}+\ep)\lae. 
\end{align}
It remains to estimate $\ep^{\f{4}{3}}\mathfrak{F}^J_4 $ and $\ep\mu\mathfrak{F}^J_4.$ 
 We have by the property \eqref{comu}, the integration by parts and the Young's inequality that:
\begin{align*}
\ep\nu\mathfrak{F}^J_4\lesssim \ep \mu\nu\|\na\omega_{\bn}\|_{\hco^{m-2}}  \big(\|\na\omega_{\bn} \|_{\hco^{m-3}}+\|[Z^J, \Gamma\beta]\na\omega_{\bn}\|_{L_t^2L^2}+\|\Gamma\beta \na\omega_{\bn}\|_{\hco^{m-3}}\big)
\end{align*}
where $\nu$ is a placeholder of $\ep^{\f{1}{3}}$ or $\mu.$ If $\nu=\ep^{\f{1}{3}},$
the term $[Z^J, \Gamma\beta]\na\omega_{\bn}, \ep\mu^{\f{1}{2}} \Gamma\beta \na\omega_{\bn}$ 
can be bounded as follows:
\begin{align*}
   & \ep^{\f{2}{3}}\mu^{\f{1}{2}}\big(\big\| [Z^J, \Gamma\beta]\na\omega_{\bn}\big\|_{L_t^2L^2}+\| \Gamma\beta \na\omega_{\bn}\|_{\hco^{m-3}}\big)\\
   & \lesssim \ep^{\f{1}{6}} \|(\ep\mu)^{\f{1}{2}}\na\omega_{\bn}\|_{\hco^{m-3}} \il \Gamma\beta\il_{m-3,\infty,t}+T^{\f{1}{6}}
    \|Z(\Gamma\beta)\|_{L_t^{\infty}W_{co}^{m-3,6}}\| \ep^{\f{2}{3}}\mu^{\f{1}{2}}\na\omega_{\bn}\|_{L_t^3L^3}.
 \end{align*}
 By the usual Sobolev embedding and interpolation as well as the estimates \eqref{esofGamma-2} and \eqref{esofbeta} for $\Gamma$ and $\beta,$
 \begin{align*}
 & \|Z(\Gamma\beta)\|_{L_t^{\infty}W_{co}^{m-3,6}}\lesssim \|(\Id, \na) (\Gamma,\beta)\|_{\infco^{m-2}} \lesssim \|(\Id, \na)(\sigma,\theta)\|_{\infco^{m-2}}  \lat, \\
 & \| \ep^{\f{2}{3}}\mu^{\f{1}{2}}\na\omega_{\bn}\|_{L_t^3L^3}\lesssim \il\ep\mu^{\f{1}{2}}\na\omega_{\bn} \il_{0,\infty,t}^{\f{1}{3}} \|(\ep\mu)^{\f{1}{2}}\na\omega_{\bn}\|_{L_t^2L^2}^{\f{2}{3}}\lesssim \lae,
 \end{align*}
 which, combined with the previous estimate, leads to that:
\begin{align*}
  \ep^{\f{2}{3}}\mu^{\f{1}{2}}\big(\big\| [Z^J, \Gamma\beta]\na\omega_{\bn}\big\|_{L_t^2L^2}+\| \Gamma\beta \na\omega_{\bn}\|_{\hco^{m-3}}\big)\lesssim     (T+\ep)^{\f{1}{6}}\lae. 
\end{align*}
Consequently, we obtain that, for $|J|= m-2,$ 
\beqs
\begin{aligned}
 \ep^{\f{4}{3}}\mathfrak{F}^J_4\leq \delta \ep^{\f{4}{3}} \mu\|\na\omega_{\bn}\|_{\hco^{m-2}}^2+(T+\ep)^{\f{1}{6}}\lae, 
\end{aligned}
\eeqs
which, together with \eqref{sec9:eq1-m-2}, yields that
\beq\label{es-omeganm-2}
\ep^{\f{4}{3}} \| \omega_{\bn}\|_{\infco^{m-2}}^2+\ep^{\f{4}{3}}\mu \|\na \omega_{\bn}\|_{\hco^{m-2}}^2\lesssim Y_m^2(0)+(T+\ep)^{\f{1}{6}}\lae.
\eeq
Moreover, if $\nu=\mu,$
\begin{align*}
    &\ep^{\f{1}{2}}\mu\| [Z^J, \Gamma\beta]\na\omega_{\bn}\|_{L_t^2L^2}+\ep^{\f{1}{2}}\mu\| \Gamma\beta \na\omega_{\bn}\|_{\hco^{m-3}} \lesssim (\ep\mu)^{\f{1}{2}} \|\na\omega_{\bn}\|_{\hco^{m-3}}\il\mu^{\f{1}{2}}\Gamma\beta\il_{m-2,\infty,t},
\end{align*}
and thus:
\begin{align*}
   \ep\mu \mathfrak{F}^J_4\leq \delta \ep \mu^2\|\na\omega_{\bn}\|_{\hco^{m-2}}^2 +  \|(\ep\mu)^{\f{1}{2}}\na\omega_{\bn}\|_{\hco^{m-3}}^2 \il\mu^{\f{1}{2}}\Gamma\beta\il_{m-2,\infty,t},
\end{align*}
which, combined with \eqref{sec9:eq1-m-2}, yields that, by choosing $\delta$ small enough,
\beqs
\begin{aligned}
&\ep\mu\| \omega_{\bn}\|_{\infco^{m-2}}^2+\ep\mu^2 \|\na \omega_{\bn}\|_{\hco^{m-2}}^2\\
&\lesssim Y_m^2(0)+(T^{\f{1}{2}}+\ep)\lae+\ep\mu\|\na\omega_{\bn}\|_{\hco^{m-3}}^2\lab \il\mu^{\f{1}{2}}(\theta,\ep\sigma)\il_{m-2,\infty,t}\big).
\end{aligned}
\eeqs
Therefore, by noticing \eqref{thetasigmainfty-m-2},
and \eqref{omeganm-3}, one obtains by choosing $\vartheta_1$ smaller if necessary that:
\begin{align*}
    \ep\mu \|\omega_{\bn}\|_{\infco^{m-2}}^2+\ep\mu^2\|\na\omega_{\bn}\|_{\hco^{m-2}}^2\lesssim \lab Y_m(0)\big) + (T+\ep)^{\vartheta_1}\lae. 
\end{align*}
The desired estimate \eqref{es-omegan-m-3} is now complete in light of the above estimate and \eqref{omeganm-3}.

In view of \eqref{es-omeganm-2}, to finish the proof of \eqref{es-omegan-m-1}, it remains to control 
$\ep\mu^{\f{1}{2}}(\|\omega_{\bn}\|_{\infco^{m-1}}+\mu^{\f{1}{2}} \|\na \omega_{\bn}\|_{\hco^{m-1}}),$ we thus  multiply the both sides of \eqref{sec9:eq0}
by $\ep\mu$ and control the terms $\ep^2\mu\sum_{l=1}^4 \mathfrak{F}^J_l.$ Similar to \eqref{sec9:eq10}-\eqref{sec9:eq11}, one can verify that,
when $|J|=m-1,$
\begin{align}\label{sec9:eq1-m-1}
 \ep^2\mu (\mathfrak{F}^J_1+\mathfrak{F}^J_2+\mathfrak{F}^J_3)  \lesssim (T^{\f{1}{2}}+\ep)\lae.
\end{align}
For instance, thanks to the estimate \eqref{Property-FL} and the identity \eqref{id-convetion}, 
we can bound $\mathfrak{F}^J_2, \mathfrak{F}^J_3 $ 
in the following way: 
\begin{align*}
   \ep^2 \mu \mathfrak{F}^J_2&\lesssim \ep\|\mu^{\f{1}{2}}\omega_{\bn}\|_{\hco^{m-1}} \|\ep\mu^{\f{1}{2}}\big(Z^J(F^{\omega}+L), [Z^J, u\cdot\na]\omega_{\bn}\big)\|_{L_t^2L^2} \lesssim \ep\lae, \\
   \ep^2\mu \mathfrak{F}^J_3 &\lesssim \ep \|\mu^{\f{1}{2}}\omega_{\bn}\|_{\hco^{m-1}} \|\mu^{\f{1}{2}}\na(\Gamma\beta) \cdot \na (\ep\mu \omega_{\bn})\|_{\hco^{m-1}}\lesssim \ep \lae.
\end{align*}
Moreover, it results again from the property \eqref{comu}, the integration by parts and the Young's inequality that, 
\begin{align*}
\ep^2\mu\mathfrak{F}^J_4\lesssim 
\ep\mu \|\na\omega_{\bn}\|_{\hco^{m-1}} \ep\mu
\big(\|\na\omega_{\bn}\|_{\hco^{m-2}}+\|[Z^J, \Gamma\beta]\na\omega_{\bn}\|_{L_t^2L^2}+\|\Gamma\beta \na\omega_{\bn}\|_{\hco^{m-2}}\big).
\end{align*}
We thus find by using the following estimate
\begin{align*}
   &  \ep\mu\big(\| [Z^J, \Gamma\beta]\na\omega_{\bn}\|_{L_t^2L^2}+ \|\Gamma\beta \na\omega_{\bn}\|_{\hco^{m-2}}\big)\\
    &\lesssim \ep\mu \|\na\omega_{\bn}\|_{\hco^{m-2}} \il\Gamma\beta\il_{2,\infty,t}+
    \il Z(\Gamma\beta)\il_{\hco^{m-2}}\il\ep\mu\na\omega_{\bn}\il_{m-4,\infty,t}\\
    &\lesssim (T+\ep)^{\f{1}{2}}\lae
\end{align*}
 that $$\ep^2\mu\mathfrak{F}^J_4\lesssim  (T+\ep)^{\f{1}{2}}\lat\cE_{m,t}^2. $$
This, together with \eqref{sec9:eq1-m-1}, leads to the estimate:
\begin{align*}
   \ep^2\mu\|\omega_{\bn}\|_{\infco^{m-1}}^2+ \ep^2\mu^2\|\na \omega_{\bn}\|_{\hco^{m-1}}^2\lesssim Y_m^2(0)+(T+\ep)^{\f{1}{2}}\lae.
\end{align*}
The proof of \eqref{es-omegan-m-1} is thus finished.
\end{proof}
\begin{prop}
 Under the assumption \eqref{preasption}, it holds that, for any $\ep \in (0,1], (\mu,\kpa)\in A,$  any $0<t\leq T,$ 
 \begin{align}\label{secnoru-LinftyL2}
     \ep\mu \|\na^2 u \|_{\infco^{m-2}}\lesssim \| \mu \na(\ep u, \theta) \|_{\infco^{m-1}}+\big\|\big(\f{1}{R\beta}\ep\pt u, \na\sigma\big)\big\|_{\infco^{m-2}}+\ep\lae.
 \end{align}
\end{prop}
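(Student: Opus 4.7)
The inequality extracts second-order information about $u$ from the momentum equation $\eqref{NCNS-S2}_2$. Multiplying that equation by $\ep\,\Gamma(\ep\sigma)^{-1}$ yields the pointwise identity
\begin{equation*}
\ep\mu\,\div\cL u \;=\; \Gamma(\ep\sigma)^{-1}\!\left(\tfrac{1}{R\beta(\theta)}\,\ep\pt u\;+\;\tfrac{\ep}{R\beta(\theta)}\,u\cdot\na u\;+\;\na\sigma\right),
\end{equation*}
every factor on the right either appearing in the RHS of the target inequality or carrying an extra $\ep$. The plan is to invert the Lam\'e operator $\div\cL$ to recover $\na^2 u$ from its $\div\cL u$ counterpart.

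In the interior chart $\Omega_0$ the conormal norm coincides with the standard Sobolev norm and $\div\cL$ is strongly elliptic under the physical conditions $\lambda_1>0,\,2\lambda_1+\lambda_2>0$, so classical interior elliptic regularity combined with the identity above controls $\ep\mu\|\na^2 u\|_{L^\infty_t H^{m-2}(\Omega_0')}$ by $\|\ep\mu\,\div\cL u\|_{L^\infty_t H^{m-2}(\Omega_0)}+\ep\mu\|\na u\|_{L^\infty_t H^{m-2}(\Omega_0)}$, and both terms are bounded by the stated RHS. Near the boundary, working in the local coordinates \eqref{local coordinates}, any component of $\na^2 u$ containing at least one tangential or scaled-time derivative is itself a conormal derivative of $\na u$ at order $m-1$, so its $\ep\mu$-weighted $\infco^{m-2}$ norm is controlled directly by $\|\mu\na(\ep u)\|_{\infco^{m-1}}$. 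The only genuinely new quantity is therefore $\ep\mu\,\p_{\bn}^2 u$.

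For this quantity I isolate the normal principal part of $\div\cL$. A short calculation in local coordinates (using the expansion \eqref{Laplace-local} for $\Delta$ and the analogous expansion for $\na\div$) gives
\begin{equation*}
\div\cL u \;=\; (2\lambda_1+\lambda_2)(\p_{\bn}^2 u\cdot\bn)\,\bn \;+\;\lambda_1\,\Pi(\p_{\bn}^2 u)\;+\;\mathcal{R}_u,
\end{equation*}
where $\mathcal{R}_u$ contains only derivatives of $u$ with at most one $\p_{\bn}$, with coefficients built from the geometry of $\p\Omega$. Since $\lambda_1>0$ and $2\lambda_1+\lambda_2>0$, we can solve for $\p_{\bn}^2 u$ and substitute the first display to express $\ep\mu\,\p_{\bn}^2 u$ as a linear combination of $\Gamma^{-1}(\tfrac{1}{R\beta}\ep\pt u+\na\sigma)$, the convective piece $\Gamma^{-1}\tfrac{\ep}{R\beta}u\cdot\na u$, and $\ep\mu\,\mathcal{R}_u$.

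Taking the $\infco^{m-2}$ norm and applying Proposition \ref{prop-prdcom}, Corollary \ref{cor-gb}, together with the Moser-type product estimates, the scalar multiplications by $\Gamma(\ep\sigma)^{-1}$ and $1/(R\beta(\theta))$ produce the main contribution $\|(\tfrac{1}{R\beta}\ep\pt u,\na\sigma)\|_{\infco^{m-2}}$ plus commutator terms involving $\na\Gamma^{-1}\sim\Gamma'\ep\na\sigma$ and $\na(1/\beta)\sim-\beta^{-2}\beta'\na\theta$; once combined with the $\ep\mu$ weight these contribute quantities of the form $\|\mu\na(\ep u,\theta)\|_{\infco^{m-1}}$ times bounded conormal norms (themselves already part of $\cA_{m,t}$ or $\cE_{m,t}$), while the convective piece is $O(\ep\lat\cE_{m,t})\subseteq\ep\lae$ and $\ep\mu\,\mathcal{R}_u$ is absorbed into $\|\mu\na(\ep u)\|_{\infco^{m-1}}$ as in the previous paragraph. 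The only delicate point is the bookkeeping: one must verify that each variable-coefficient commutator generated by $\Gamma(\ep\sigma),\beta(\theta)$ lands in the correct bucket (\,$\|\mu\na(\ep u,\theta)\|_{\infco^{m-1}}$, $\|(\tfrac{1}{R\beta}\ep\pt u,\na\sigma)\|_{\infco^{m-2}}$, or $\ep\lae$\,), which follows routinely from \eqref{esofGamma-2}, \eqref{esofbeta} and the definitions of $\cE_{m,t}, \cA_{m,t}$.
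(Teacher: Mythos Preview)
Your approach is correct and reaches the same conclusion, but by a somewhat different route than the paper. The paper first splits $\div\cL u=\lambda_1\Delta u+(\lambda_1+\lambda_2)\na\div u$ and treats the two pieces separately: $\ep\mu\,\na\div u$ is estimated through the equation $\eqref{newsys}_1$ for $\vr$, which is precisely where the contribution $\|\mu\na\theta\|_{\infco^{m-1}}$ enters; then $\ep\mu\lambda_1\Gamma\Delta u$ is expressed from the rewritten momentum equation \eqref{rewrite-equ} and converted to $\p_{\bn}^2 u$ via the local Laplacian formula \eqref{Laplace-local}. You instead decompose the full Lam\'e operator at once into its normal principal part $(2\lambda_1+\lambda_2)(\p_{\bn}^2u\cdot\bn)\bn+\lambda_1\,\Pi(\p_{\bn}^2u)$ plus a remainder $\mathcal{R}_u$ carrying at most one normal derivative; this is equally valid and has the pleasant feature that the continuity equation is never invoked explicitly.

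One point of bookkeeping is inaccurate, though harmless. You attribute the term $\|\mu\na\theta\|_{\infco^{m-1}}$ to commutators with $1/(R\beta)$, but in your setup the quantity $\tfrac{1}{R\beta}\ep\pt u$ already sits verbatim on the right-hand side of the target inequality, so you only need to peel off the factor $\Gamma^{-1}$; since $\Gamma^{-1}-\overline{\Gamma}^{-1}=O(\ep\sigma)$, those commutators land in $\ep\lae$ rather than generating any $\theta$-dependent term. In your route the contribution $\|\mu\na\theta\|_{\infco^{m-1}}$ is therefore not actually needed, though of course the inequality holds with it present.
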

\begin{proof}
It stems from the equation $\eqref{newsys}_1$ that:
\beq\label{nadivuLinftyL2}
\begin{aligned}
    \ep\mu^{\f{1}{2}}\|\na\div u \|_{\infco^{m-2}}&\lesssim  \ep\mu^{\f{1}{2}} \|\na(\pt+u\cdot\na)(\theta,\ep\sigma)\|_{\infco^{m-2}}\\
    &\lesssim \mu^{\f{1}{2}} \|\na\theta\|_{\infco^{m-1}}+\ep\lae.
\end{aligned}
\eeq
Moreover, we use $\eqref{NCNS-S2}_2$ to write:
 \begin{align}\label{rewrite-equ}
    \ep \mu \lambda_1 \Gamma \Delta u= -\ep\mu(\lambda_1+\lambda_2)\Gamma\na\div u+\f{\ep}{R\beta}(\pt+u\cdot\na)u+\na\sigma,
 \end{align}
 which, combined with the local expression of the Laplacian \eqref{Laplace-local}, leads to that:
 \begin{align*}
     \ep \mu\| \na^2 u\|_{\infco^{m-2}}\lesssim \|\ep\mu\na u\|_{\infco^{m-1}}+
     \|( \ep\mu \na\div u, \f{1}{R\beta}\ep\pt u, \na\sigma)\|_{\infco^{m-2}}+\ep\lae.
 \end{align*}
 The desired estimate \eqref{secnoru-LinftyL2} then follows from the above inequality and \eqref{nadivuLinftyL2}.
\end{proof}
\begin{rmk}\label{rmk-thirdordervelocity}
It follows from the similar arguments as above that:
\begin{align*}
    \ep\mu \|\na^3 u\|_{\hco^{m-3}}\lesssim \lae.
\end{align*}
\end{rmk}
\section{Uniform (in Mach number) estimates for the compressible part-I}
In this section, we aim to recover the higher order spatial derivatives for the compressible part $(\na\sigma,\div u),$ more precisely, we will control $\kpa^{\f{1}{2}}\|(\na\sigma,\div u)\|_{\hco^{m-1}}, \kpa^{\f{1}{2}}\|\na(\na\sigma, \div u)\|_{\hco^{m-2}}.$
 Before stating and prove the main results in this section, we recall the definition of the Leray projection.
 
\textbf{Definition of the Leary projection.}
We define the projection operator
$\mathbb{Q}$:
\begin{equation}\label{proj-Q}
   \begin{aligned}
   \mathbb{Q}: \quad &
L^2(\Omega)^3\rightarrow L^2(\Omega)^3\\
&\qquad f \rightarrow \mathbb{Q} f=\nabla\Psi
   \end{aligned} 
\end{equation}
where $\Psi$ is defined as the unique
solution of
\begin{equation}\label{Neumann}
\left\{
  \begin{array}{l}
  \displaystyle \Delta\Psi=\div f  \quad \text{in} \quad \Omega, \\
      \displaystyle\partial_{\bn} \Psi
      =f\cdot {\bn} \quad \text{on} \quad \partial\Omega,\\
 \int_{\Omega} \Psi \,\d x=0 \text{ if } \Omega \text{ is bounded}, \Psi(x)\xlongrightarrow{|x|\rightarrow\infty} 0 \text{ if } \Omega \text{ is an  exterior domain.}
  \end{array}
  \right.
\end{equation}
 Note that the solvability of the Neumann problem \eqref{Neumann} in $H^{1}(\Omega)$ is well-known as an application of the Lax-Milgram theorem.
 The Leary projection is then defined as:
\begin{align}\label{proj-P}
 \mathbb{P} =\Id -\mathbb{Q}.
\end{align}
We are now ready to prove the following result:
\begin{prop}\label{lem-com1}
Under the assumptions \eqref{preasption}, for any $\ep \in (0,1], (\mu,\kpa)\in A,$ any $0<t\leq T,$  we have the following estimate: 
\begin{align}\label{com-uniL2}
    \kpa\|(\na\sigma,\div u)\|_{\hco^{m-1}}^2\lesssim Y_m^2(0)+ (T+\ep)^{\f{1}{6}}\lae.
\end{align}
\end{prop}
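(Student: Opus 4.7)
The plan is to read off $\div u$ and $\na\sigma$ from the continuity and momentum equations \eqref{NCNS-S2}$_{1,2}$,
\beqs
\div u = -\tfrac{\ep}{\gamma}(\p_t+u\cdot\na)\sigma + \tfrac{\gamma-1}{\gamma}\kpa\,\Gamma(\ep\sigma)\div(\beta(\theta)\na\theta) + \tfrac{\gamma-1}{\gamma}\ep\mathfrak{N},
\eeqs
\beqs
\na\sigma = -\tfrac{\ep}{R\beta(\theta)}(\p_t+u\cdot\na)u + \ep\mu\,\Gamma(\ep\sigma)\div\cL u,
\eeqs
apply $Z^I$ with $|I|\leq m-1$ to each identity, and take $\kpa^{1/2}\|\cdot\|_{L^2_tL^2}$ on both sides. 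Every term on the right has been controlled in Sections 2--5 with an explicit small prefactor, so assembling the pieces will yield the form \eqref{com-uniL2}.

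The ``easy'' contributions carry intrinsic smallness. The heat term $\kpa^{3/2}\|Z^I(\Gamma\div(\beta\na\theta))\|_{L^2_tL^2}$ is bounded through \eqref{EE-theta-2-2} combined with the product and commutator estimates of Proposition \ref{prop-prdcom}; the viscous term $\kpa^{1/2}\ep\mu\|Z^I(\Gamma\div\cL u)\|_{L^2_tL^2}$ is controlled by $\ep\mu\|\na^2u\|_{\hco^{m-1}}\lat$, which carries the $(T+\ep)^{1/12}$ factor via \eqref{na2uhigh}; and the quadratic transport pieces $\ep u\cdot\na\sigma$, $\ep u\cdot\na u$, $\ep\mathfrak{N}$ supply the explicit $\ep$ together with the $\cA_{m,t}$-type $L^\infty$ bounds.

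The nontrivial contributions are $\kpa^{1/2}\ep\|Z^I\p_t\sigma\|_{L^2_tL^2}$ and $\kpa^{1/2}\ep\|Z^I\p_tu\|_{L^2_tL^2}$, which I treat separately depending on whether $Z^I$ is purely time-derivatives or contains at least one spatial conormal field. In the pure-time case, $\kpa^{1/2}\|\div u\|_{L^2_t\cH^{m-1,0}}$ is immediate from Proposition \ref{prop-highest-1} via $\mu\sim\kpa$ and $\kpa\|\na u\|^2_{L^2_t\cH^{m-1,0}}\lesssim Y_m^2(0)+(T+\ep)^{1/2}\lae$; for $\kpa^{1/2}\|\na\sigma\|_{L^2_t\cH^{m-1,0}}$, I substitute $\ep\p_tu$ via the momentum equation itself, iterate the continuity equation once to replace $(\ep\p_t)^m\sigma$ by $(\ep\p_t)^{m-1}\div u$ modulo controlled remainders, and absorb the $R\beta\na\sigma$ reappearing on the right by choosing $\ep$ small. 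In the mixed case, $\ep Z^I\p_t\sigma = Z_0 Z^I\sigma$ has at most $m-1$ time derivatives and at least one spatial conormal field, placing it within the scope of Section 2's estimate \eqref{EE-highest} which furnishes the required small prefactor; the analogue for $\ep Z^I\p_tu$ can alternatively be obtained through the elliptic splitting $\na\sigma=\na\tsigma+\ep\mu(2\lambda_1+\lambda_2)\na(\Gamma\div u)$ of Lemma \ref{lemtsigma}, where the $\ep\mu$-weight in the second summand combines with \eqref{na2uhigh} to produce the smallness.

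The main obstacle is the commutator bookkeeping: each application of $Z^I$ to the coefficients $1/(R\beta)$, $\Gamma(\ep\sigma)$, or to the transport operator $u\cdot\na$ generates remainders that must be tracked through Proposition \ref{prop-prdcom} together with Corollary \ref{cor-gb}. One must verify that every such remainder either has strictly lower conormal order (and is therefore dominated by pieces of $\cN_{m,t}$ already multiplied by a small factor arising from Sections 2--5) or carries an explicit small prefactor $\ep$, $T^{1/2}$, or $(T+\ep)^{1/12}$. Once the accumulated exponents combine to at least $(T+\ep)^{1/6}$ and the $\na\sigma$-on-the-right contribution is absorbed on the left via the smallness of $\ep$, the estimate \eqref{com-uniL2} follows.
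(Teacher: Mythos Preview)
Your direct-substitution scheme breaks down at the top order $|I|=m-1$, and the two places where you claim to close are precisely where the paper has to do something structural.

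\textbf{Mixed case.} You invoke \eqref{EE-highest} to bound $\kpa^{1/2}\|Z_0 Z^I\sigma\|_{L^2_tL^2}$ and $\kpa^{1/2}\|Z_0 Z^I u\|_{L^2_tL^2}$ for $|I|=m-1$ with a spatial field in $Z^I$. But \eqref{EE-highest} only controls $\kpa^{1/2}\|(\ep\sigma,\ep u)\|_{L^\infty_t\underline{H}_{co}^m}$, i.e.\ with an extra $\ep$-weight; removing it costs you a factor $1/\ep$, which is fatal. Your alternative via Lemma~\ref{lemtsigma} fares no better: estimate \eqref{Es-tsigma1} reads $\kpa^{1/2}\|\na\tsigma\|_{\hco^{m-1}}\lesssim\lat\cE_{m,t}$ with \emph{no} small prefactor, and $\cE_{m,t}$ already contains $\mu^{1/2}\|\na\sigma\|_{\hco^{m-1}}\approx\kpa^{1/2}\|\na\sigma\|_{\hco^{m-1}}$, so this is circular for the purpose of \eqref{com-uniL2}.

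\textbf{Pure-time $\na\sigma$.} You write ``substitute $\ep\p_t u$ via the momentum equation \dots\ absorb the $R\beta\na\sigma$ reappearing on the right by choosing $\ep$ small.'' But from the momentum equation one gets $(\ep\p_t)^m u=-R\beta(\ep\p_t)^{m-1}\na\sigma+\cdots$ with \emph{no} additional $\ep$: the scaling $\ep$ has already been consumed by $Z_0=\ep\p_t$. There is nothing small to absorb, and the argument is circular.

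What the paper actually does is different in both places. For pure time derivatives, it obtains $\kpa\|\na\sigma\|_{L^2_t\cH^{m-1,0}}^2$ directly from a cross-term energy identity: apply $(\ep\p_t)^j$ to the first two equations, test against $\ep\kpa\big(\tfrac{\gamma}{R\beta}(\ep\p_t)^j\div u,\,-(\ep\p_t)^j\na\sigma\big)$, and integrate. The singular $\div u/\ep$ and $\na\sigma/\ep$ contributions combine to produce $\kpa\|\na\sigma^j\|_{L^2_tL^2}^2$ on the left. For the recovery of spatial derivatives the paper proves the induction step \eqref{sec5:eq9}: one spatial conormal derivative is traded for one time derivative. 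The mechanism is the Leray decomposition $\tfrac{1}{R\beta}u=\bp(\tfrac{1}{R\beta}u)+\bq(\tfrac{1}{R\beta}u)$ inside the elliptic problem \eqref{eq-tiltasigma-1} for $\tsigma$: since $\div\bp(\cdot)=0$, the dangerous $\ep\p_t u$ contribution to $\Delta\tsigma=\div(\cdots)$ reduces to $\ep\p_t\bq(\tfrac{1}{R\beta}u)$, and $\bq(\tfrac{1}{R\beta}u)$ is controlled by $\div(\tfrac{1}{R\beta}u)\approx\tfrac{1}{R\beta}\div u$ at one lower spatial order. This structural use of the projector is the missing idea in your proposal.
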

\begin{proof}
Let us first prove that 
\beq\label{sec5:eq2}
 \kpa\|(\na\sigma,\div u)\|_{\hcoch^{m-1,0}}^2 \lesssim Y_m^2(0)+ (T+\ep)^{\f{1}{6}}\lae.
\eeq
However, as it has been shown in \eqref{EE-3} that:
\beq\label{sec5:eq3}
 \kpa\|\div u\|_{\hcoch^{m-1,0}}^2 \lesssim Y_m^2(0)+ (T+\ep)^{\f{1}{2}}\lae,
\eeq
we only need to control  $\kpa\|\na\sigma\|_{\hcoch^{m-1,0}}^2.$ Applying $(\ep\pt)^{j}\, (0\leq j\leq m-1)\,$ on $(\eqref{NCNS-S2}_1, \eqref{NCNS-S2}_2)$ and multiplying them by $\ep\kpa\big(\f{\gamma }{R\beta}(\ep\pt)^{j}\div u, - (\ep\pt)^{j}\na\sigma\big),$ one finds by denoting $f^j=(\ep\pt)^j f$ that:
\begin{align*}
     \kpa\|(\nabla\sigma)^j\|_{L_t^2L^2}^2=\sum_{l=1}^4\cB_l^j,
\end{align*}
where
\begin{align*}
   &\cB_1^j=-\ep\kpa \izt\pt\iomega  \f{1 }{R\beta} 
   (\na\sigma)^j\cdot  u^j\, \d x\d s +\kappa\gamma \izto \f{1}{R\beta}|(\div u)^j|^2\d x \d s,\notag\\
  &  \cB_2^j=
  -\kpa \izto \ep\pt\sigma^j\na(\f{1}{R\beta})\cdot u^j \d x\d s +\ep\kpa \izto \pt (\f{1 }{R\beta}) (\na\sigma)^j\cdot u^j\d x\d s,\notag\\
   &  \cB_3^j=\ep\kpa\izto\f{1 }{R\beta} \big( u\cdot\na\sigma -\ep\mu(\gamma-1)\Gamma \cL u\cdot \mathbb{S}u\big)^j(\div u)^j\d x\d s,\\
  &  \cB_4^j=\ep\kpa\izto\big( \big(-\f{u}{R\beta}\cdot\na u+\mu\Gamma\div\cL u\big)^j-[\ep\pt)^{j},\f{1}{R\beta}]\pt u\big) \cdot(\na\sigma)^j\d x\d s.\notag
\end{align*}
In view of the estimate \eqref{Gamma-Beta-infty}, the first term $\cB_1^j$ can be controlled directly as:
\begin{align*}
   \cB_1^j\lesssim  Y_m^2(0)+\|(\ep\kpa \nabla\sigma, u)\|_{L_t^{\infty}\cH^{m-1,0}}^2+\kpa\|\div u\|_{\hcoch^{m-1,0}}^2.
\end{align*}
Next, thanks to the Cauchy-Schwarz inequality, 
\begin{align*}
     \cB_2^j \lesssim T^{\f{1}{2}}\|u\|_{L_t^{\infty}\cH^{m-1,0}} \|\kpa^{\f{1}{2}}(\ep\pt,\na)\sigma^j\|_{L_t^2L^2}\lat\lesssim T^{\f{1}{2}}\lae, \\
     \cB_3^j \lesssim \ep \|\kpa^{\f{1}{2}} \div u^j\|_{L_t^2L^2}\|\kpa^{\f{1}{2}}(u, \na\sigma, \na u)\|_{\hco^{m-1}}\lat\lesssim \ep\lae.
\end{align*}
Note that by using the equation $\eqref{NCNS-S2}_1,$
\begin{align*}
   \kpa^{\f{1}{2}} \|\ep\pt \sigma^j\|_{L_t^2L^2}\lesssim  \|(\kpa^{\f{1}{2}} \div u, \kpa^{\f{3}{2}}\Gamma\div(\beta\na\theta))\|_{\hco^{m-1}}+\ep\lae\lesssim \lae.
\end{align*}
Finally, by the Young's inequality, the term $ \cB_4^j $ can be controlled as:
\begin{align*}
     \cB_4^j&\lesssim \|\kpa^{\f{1}{2}} \na \sigma^j\|_{L_t^2L^2}\|\ep\mu \na^2 u\|_{\hco^{m-1}}+\ep\lae \\
     &\leq \f{\kpa}{2} \|\na \sigma^j\|_{L_t^2L^2}^2+ C\big(\f{1}{c_0}\big)\|\ep\mu \na^2 u\|_{\hco^{m-1}}^2+\ep\lae.
\end{align*}
Collecting the previous estimates for $ \cB_1^j- \cB_4^j ,$
we find that:
\begin{align*}
    \kpa  \|\na \sigma\|_{L_t^2\cH^{m-1,0}}^2&\lesssim Y_m^2(0)+\|(\ep\kpa \nabla\sigma, u)\|_{L_t^{\infty}\cH^{m-1,0}}^2+\kpa\|\div u\|_{\hcoch^{m-1,0}}^2\\
    &\qquad+\|\ep\mu \na^2 u\|_{\hco^{m-1}}^2+(T^{\f{1}{2}}+\ep)\lae.
\end{align*}
 By using the estimates \eqref{EE-highest} for $\|\ep\kpa^{\f{1}{2}}\na\sigma\|_{\infco^{m-1}},$ \eqref{EE-3} for $\|u\|_{\infcoch{}^{m-1,0}}$, \eqref{sec5:eq3} for $\kpa^{\f{1}{2}}\|\div u\|_{\hcoch^{m-1,0}},$ \eqref{na2uhigh} for $\|\ep\mu \na^2 u\|_{\hco^{m-1}},$
 we see that $\|\nabla\sigma\|_{\hcoch^{m-1,0}}$ can be controlled by the right hand side of \eqref{sec5:eq2}.

We will show in below that for any $j,l$ with $0\leq j+l\leq m-1, l\geq 1,$ the following estimate holds:
\beq\label{sec5:eq9}
\begin{aligned}
\kpa^{\f{1}{2}}\|(\na{\sigma},\div u)\|_{\hcoch^{j,l}}&\lesssim \kpa \|(\na{\sigma}, \div u)\|_{\hcoch^{j+1,l-1}}\\
&\quad+\|(\kpa\Delta\theta, 
    \ep\mu\na\div u)\|_{\hco^{m-1}}+(T+\ep)^{\f{1}{2}}\lae.
   \end{aligned}
\eeq
This estimate in hand, one can show 
by induction that:
\begin{align*}
    \kpa^{\f{1}{2}}\|(\na\sigma, \div u)\|_{\hco^{m-1}}&\lesssim  \kpa^{\f{1}{2}}\|(\na\sigma, \div u)\|_{L_t^2\cH^{m-1,0}}\\
    &\quad +\|(\kpa\Delta\theta, 
    \ep\mu\na\div u)\|_{\hco^{m-1}}+(T+\ep)^{\f{1}{2}}\lae,
\end{align*}
which, together with \eqref{sec5:eq2}, \eqref{EE-highest}, \eqref{na2uhigh}, yields \eqref{com-uniL2}.

Let us now prove \eqref{sec5:eq9}.
First,  the equation $\eqref{NCNS-S2}_1$ can be rewritten as: 
\beq \label{rew-sigma}
\div u= -\f{1}{\gamma}(\ep\pt+\ep u\cdot\nabla)\sigma+\kpa\f{\gamma-1}{\gamma} \Gamma \div(\beta\nabla\theta)+\f{\gamma-1}{\gamma}\mathfrak{N},
\eeq
from which we can deduce that:
\begin{align*}
  \kpa^{\f{1}{2}}\|\div u\|_{\hcoch^{j,l}}&\lesssim    \kpa^{\f{1}{2}}\|\na\sigma\|_{ \hcoch^{j+1,l-1}}+ \kpa^{\f{1}{2}}\|\sigma\|_{L_t^2\cH^{j+1,0}}\notag\\
 &\quad +\ep\kpa^{\f{1}{2}} \|(u\cdot\na\sigma, \mu \Gamma \cL u\cdot \mathbb{S}u)\|_{\hco^{m-1}}+\kpa^{\f{3}{2}}\|\Gamma \div(\beta\nabla\theta)\|_{\hco^{m-1}}.
\end{align*}
Moreover, it follows from the product estimate \eqref{roughproduct1} and the commutator estimate \eqref{roughcom} that the 
last line in the above inequality can be bounded by:
\begin{align*}
    \kpa\|\Delta\theta\|_{\hco^{m-1}}+(T+\ep)^{\f{1}{2}}\lae.
\end{align*}
Consequently, we obtain that:
\begin{align}\label{sec5:eq5}
    \kpa^{\f{1}{2}} \|\div u\|_{\hcoch^{j,l}}\lesssim \kpa^{\f{1}{2}}\|\na\sigma\|_{\hcoch^{j+1,l-1}}+\kpa\|\Delta\theta\|_{\hco^{m-1}}+(T+\ep)^{\f{1}{2}}\lae.
\end{align}

We now switch to control $\kpa^{\f{1}{2}} \|\na\sigma\|_{\hcoch^{j,l}}.$ Denote $\bq (\f{1}{R\beta} u)$ and $ \bp(\f{1}{R\beta} u)$  as the compressible part and incompressible part of $\f{1}{R\beta} u.$ By the definition of $\bq, \bp$ in \eqref{proj-Q}, \eqref{proj-P} as well as the boundary condition $\f{1}{R\beta} u\cdot\bn|_{\p\Omega}=0,$ 
we have that 
$$\div\big( \f{1}{R\beta} u\big)=\div \bq \big(\f{1}{R\beta} u\big),\quad  \bp\big(\f{1}{R\beta} u\big)\cdot\bn|_{\p\Omega}=\bq \big(\f{1}{R\beta} u\big)\cdot\bn|_{\p\Omega}=0. $$
Therefore, the equation \eqref{eq-tiltasigma} for $\tilde{\sigma}=\sigma-\ep\mu(2\lambda_1+\lambda_2){\Gamma}\div u$  
can be   rewritten as:
\beq\label{eq-tiltasigma-1}
\left\{
\begin{array}{l}
      \Delta\tilde{\sigma}=\div \big(f+ \ep\pt\bp \big(\f{1}{R\beta} u\big)+\mu\ep\lambda_1\overline{\Gamma}\curl\curl u\big) \text{ in } \Omega,\\[4pt]
      \p_{\bn} \tilde{\sigma}=\big(f+ \ep\pt\bp \big(\f{1}{R\beta} u\big)\big)\cdot\bn \text{ on } \p\Omega,
\end{array}
   \right.
\eeq 
where $f$ is defined in \eqref{def-f}. 
Thanks to the elliptic estimate \eqref{highconormal}:
\beq\label{sec5:eq6}
\begin{aligned}
    \kpa^{\f{1}{2}}\|\na\tilde{\sigma}\|_{\hcoch^{j,l}}&\lesssim 
   \kpa^{\f{1}{2}} \|f+ \ep\pt\bp \big(\f{1}{R\beta} u\big)+\mu\ep\lambda_1\overline{\Gamma}\curl\curl u\|_{\hcoch^{j,l}}\\
   &\qquad \qquad\quad+\ep\mu\lambda_1\overline{\Gamma} |\curl\curl u\cdot\bn|_{\hcob^{m-\f{3}{2}}}.
\end{aligned}
\eeq
In view of \eqref{def-f}, it holds that:
\begin{align*}
&f+ \ep\pt\bp \big(\f{1}{R\beta} u\big)+\mu\ep\lambda_1\overline{\Gamma}\curl\curl u\\
&=-\ep\pt  \bq \big(\f{1}{R\beta} u\big)+\ep\pt (\f{1}{R\beta} )u - \f{\ep}{R\beta} u\cdot\na u
-\mu\ep\lambda_1 (\Gamma-\overline{\Gamma})\curl\curl u-\ep\mu(2\lambda_1+\lambda_2)(\div u)\na\Gamma,
\end{align*}
from which we can conclude that:
\begin{align*}
     \kpa^{\f{1}{2}}  \|f+ \ep\pt\bp \big(\f{1}{R\beta} u\big)+\mu\ep\lambda_1\overline{\Gamma}\curl\curl u\|_{\hcoch^{j,l}}
      \lesssim  \kpa^{\f{1}{2}}\| \bq \big(\f{1}{R\beta} u\big)\|_{\hcoch^{j+1,l}}+\ep \lat\cE_{m,t}.
\end{align*}
However, by the elliptic estimates \eqref{highconormal}, \eqref{secderelliptic-1}:
\begin{align*}
    \kpa^{\f{1}{2}} \| \bq \big(\f{1}{R\beta} u\big)\|_{\hcoch^{j+1,l}}&\lesssim \kpa^{\f{1}{2}}\| \bq \big(\f{1}{R\beta} u\big)\|_{L_t^2\cH^{j+1}} + \|\na \bq \big(\f{1}{R\beta} u\big)\|_{\hcoch^{j+1,l-1}}\\
    &\lesssim  \kpa^{\f{1}{2}}\|\f{1}{R\beta} u\|_{L_t^2\cH^{j+1}}+ \kpa^{\f{1}{2}}\|\div(\f{1}{R\beta} u)\|_{\hcoch^{j+1,l-1}}   \\
   & \lesssim \kpa^{\f{1}{2}}\|\f{1}{R\beta}\div u\|_{\hcoch^{j+1,l-1}}+T^{\f{1}{2}}
   \|\kpa^{\f{1}{2}} \big(\f{1}{R\beta} u, \na(\f{1}{R\beta})\cdot u \big) \|_{L_t^{\infty}H_{co}^{m-1}}.
\end{align*}
With the help of product estimates \eqref{roughproduct-2} and  \eqref{roughproduct1},  one can conclude that: 
\begin{align*}
& \kpa^{\f{1}{2}}\|\f{1}{R\beta}\div u\|_{\hcoch^{j+1,l-1}}\lesssim 
  \kpa^{\f{1}{2}}\|\div u\|_{\hcoch^{j+1,l-1}}+ T^{\f{1}{2}}(\|\div u\|_{\infco^{m-2}}+\|\theta\|_{\infco^{m-1}})\lat\\
  &\qquad\qquad\qquad\qquad\qquad\lesssim  \kpa^{\f{1}{2}}\|\div u\|_{\hcoch^{j+1,l-1}}+T^{\f{1}{2}}\lat\cE_{m,t},\\
   &\|\kpa^{\f{1}{2}} \big(\f{1}{R\beta} u, \na(\f{1}{R\beta})\cdot u \big)\|_{L_t^{\infty}H_{co}^{m-1}}\lesssim \|\kpa^{\f{1}{2}}(\theta, u,\na\theta)\|_{\infco^{m-1}}\lat\lesssim \lat\cE_{m,t}.
\end{align*}
Consequently, it is found that:
\beqs
\kpa^{\f{1}{2}}\|f+ \ep\pt\bp \big(\f{1}{R\beta} u\big)-\mu\ep\lambda_1\overline{\Gamma}\curl\curl u\|_{\hcoch^{j,l}}
\lesssim \kpa^{\f{1}{2}}\|\div u\|_{\hcoch^{j+1,l-1}}+(T^{\f{1}{2}}+\ep)\lat\cE_{m,t},
\eeqs
which, together with  \eqref{sec4:eq12}, \eqref{sec5:eq6}, yields: 
\begin{align}\label{sec5:eq8}
    \kpa^{\f{1}{2}}\|\na\tilde{\sigma}\|_{\hcoch^{j,l}}\lesssim \kpa^{\f{1}{2}} \|\div u\|_{\hcoch^{j+1,l-1}}+(T+\ep)^{\f{1}{2}}\lae.
\end{align}
  
     We can then obtain \eqref{sec5:eq9} from the estimates \eqref{sec5:eq5} and \eqref{sec5:eq8},
  and thus finish the proof. 
\end{proof}
\begin{rmk}
By \eqref{com-uniL2} and \eqref{EE-theta},  it holds  that by choosing $\tilde{\vartheta}_0$ (found in Proposition \ref{prop-theta}) smaller if necessary, 
\begin{align}\label{EE-theta-final}
\cE_{m,T}^2(\theta)\lesssim \Lambda\big(\f{1}{c_0}, Y_m(0)\big)+
(T+\ep)^{\tilde{\vartheta}_0}\Lambda\big(\f{1}{c_0},\cN_{m,t}\big).
\end{align}
\end{rmk}

\begin{rmk}
 One can find some $\vartheta_2>0$ such that:
 \begin{align}\label{divbetanatta}
\| \kpa\div(\beta\na\theta)\|_{\infco^{m-2}}\lesssim \Lambda\big(\f{1}{c_0}, Y_m(0)\big)+ (T+\ep)^{\vartheta_2}\lae.
 \end{align}
Such a fact can be shown in the following way.
First, by using the product estimate \eqref{roughproduct1} and the estimate \eqref{esofbeta} for $\na\beta,$
\begin{align*}
  &\kpa  \|\na\beta\cdot\na\theta\|_{\infco^{m-2}} \lesssim \|(\theta,\na\theta)\|_{\infco^{m-2}}\lab \il\theta\il_{[\f{m}{2}],\infty,t}+\kpa^{\f{1}{2}}\il\na\theta\il_{[\f{m}{2}]-1,\infty,t} \big).
 \end{align*}
 By the Sobolev embedding \eqref{sobebd}, 
 \begin{align}
\kpa^{\f{1}{2}}\il\na\theta\il_{m-4,\infty,t}\lesssim \label{nathetainfty-2} \|\na\theta\|_{\infco^{m-2}}+\kpa\| \na^2\theta\|_{\infco^{m-3}}.
 \end{align}
 As $m\geq 7,$ it holds that $[\f{m}{2}]\leq m-4,$ we thus conclude from the above estimate and \eqref{thetainfty} that:
 \beq\label{sec4:eq30}
 \kpa  \|\na\beta\cdot\na\theta\|_{\infco^{m-2}}\lesssim \lab  \|\theta\|_{\infco^{m-1}}+
 \|(\na\theta,\kpa\na^2\theta)\|_{\infco^{m-2}} \big).
 \eeq
Next, we control $\kpa\beta\Delta\theta$ as follows:
\begin{align*}
   \kpa \|\beta\Delta\theta\|_{\infco^{m-2}} &\lesssim \|\kpa\Delta\theta\|_{\infco^{m-2}}\il\beta\il_{m-3,\infty,t}+\|\kpa^{\f{1}{2}}\Delta\theta\|_{L_t^{\infty}L^2} \|\kpa^{\f{1}{2}}Z\beta\|_{m-3,\infty,t}\\
   &\lesssim( \|\kpa\Delta\theta\|_{\infco^{m-2}}+ \kpa^{\f{1}{2}}\|\Delta\theta\|_{L_t^{\infty}L^2})\Lambda\big(\f{1}{c_0}, \|\kpa^{\f{1}{2}}\theta\|_{m-2,\infty,t}+\il\theta\il_{m-3,\infty,t} \big).
\end{align*}
It follows again from the Sobolev embedding \eqref{sobebd} that:
\begin{align*}\|\kpa^{\f{1}{2}}\theta\|_{m-2,\infty,t}\lesssim \kpa^{\f{1}{2}}(\|\theta\|_{L_t^{\infty}\underline{H}_{co}^m}+\|\na\theta\|_{\infco^{m-1}}),\end{align*}
which, combined with \eqref{thetainfty}, yields:
\beq\label{sec4:eq31}
 \kpa \|\beta\Delta\theta\|_{\infco^{m-2}}\lesssim \lab \kpa^{\f{1}{2}}\big(\|\theta\|_{L_t^{\infty}\underline{H}_{co}^m}+ \|\na\theta\|_{\infco^{m-1}}+\|\Delta\theta\|_{L_t^{\infty}L^2}\big)+\|\kpa\Delta\theta\|_{\infco^{m-2}}\big).
\eeq
In view of estimates \eqref{sec4:eq30} and \eqref{sec4:eq31},
we have found that:
\begin{align*}
    \|\kpa\div(\beta\na\theta)\|_{\infco^{m-2}} \lesssim \lab \cE_{m,t}(\theta) \big),
\end{align*}
which, combined with \eqref{EE-theta-final}, allows us to 
 find some $\vartheta_2>0,$ such that \eqref{divbetanatta} holds.
\end{rmk}
The next proposition is devoted to the control of the second normal derivatives for $\sigma, \mathbb{Q}u:$
\begin{prop}
Under the assumption \eqref{preasption},  the following estimate holds: for any $\ep \in (0,1], (\mu,\kpa)\in A,$ any $0<t\leq T,$ 
\begin{align}  
&\kpa \|\na\div u\|_{\hco^{m-2}}^2\lesssim \lab Y_m(0)\big) +(T+\ep)^{\tilde{\vartheta}_0}\lae,\label{nadivuL2} \\
&\ep^2\mu\kappa\|\na^2\sigma\|_{\infco^{m-2}}^2+
 \kpa \|\na^2\sigma\|_{\hco^{m-2}}^2
 \lesssim 
 Y_m^2(0)+ (T+\ep)^{\f{1}{6}}\lae.\label{na2sigma}
\end{align}
\end{prop}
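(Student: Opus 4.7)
The plan is as follows. For \eqref{nadivuL2}, I would differentiate the rewritten mass equation \eqref{rew-sigma},
\[
\div u=-\tfrac{\ep}{\gamma}(\pt+u\cdot\na)\sigma+\tfrac{\gamma-1}{\gamma}\kpa\,\Gamma\div(\beta\na\theta)+\tfrac{\gamma-1}{\gamma}\mathfrak{N},
\]
apply one extra $\na$, and then test with conormal fields $Z^I$ for $|I|\leq m-2$. Measuring the resulting identity in the $\kpa^{1/2}\|\cdot\|_{\hco^{m-2}}$ norm, the time-derivative piece $\tfrac{\ep}{\gamma}Z^I\pt\na\sigma$ is controlled directly by $\mu^{1/2}\|\na\sigma\|_{L_T^2H_{co}^{m-1}}\leq\cE_{m,t}$ (line 3 of \eqref{defcEmsigmau}) after invoking $\mu\sim\kpa$ from Remark \ref{rmkmuapproxkpa}. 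The transport piece $\ep Z^I\na(u\cdot\na\sigma)$ and the quadratic dissipation $Z^I\na\mathfrak{N}$ are handled by the product/commutator estimates of Proposition \ref{prop-prdcom} together with the $L^\infty$ bounds collected in $\cA_{m,t}$. The heat-flux piece, which at worst carries $\kpa^{3/2}\na\Delta\theta$, is absorbed by the $\kpa\|\na^3\theta\|_{L_T^2H_{co,\sqrt\kpa}^{m-2}}$ contribution already built into $\cE_{m,t}(\theta)$; the refined estimate \eqref{EE-theta-final} then trades $\cE_{m,t}(\theta)^2$ for $\Lambda(\tfrac{1}{c_0},Y_m(0))+(T+\ep)^{\tilde\vartheta_0}\Lambda(\tfrac{1}{c_0},\cN_{m,t})$, which yields \eqref{nadivuL2}.

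For \eqref{na2sigma}, I would split
\[
\na^2\sigma=\na^2\tsigma+\ep\mu(2\lambda_1+\lambda_2)\,\na^2(\Gamma\div u),
\]
with $\tsigma=\sigma-\ep\mu(2\lambda_1+\lambda_2)\Gamma\div u$ solving the elliptic Neumann problem \eqref{eq-tiltasigma}. The $L_t^2$ bound on $\kpa^{1/2}\|\na^2\tsigma\|_{\hco^{m-2}}$ is already furnished by \eqref{Es-tsigma1} in Lemma \ref{lemtsigma}. The correction $\ep\mu\na^2(\Gamma\div u)$, expanded by Leibniz, splits into a leading $\ep\mu\Gamma\na^2\div u$ and two subleading pieces with factors $\na\Gamma$ or $\na^2\Gamma$ harmless by Corollary \ref{cor-gb} combined with \eqref{nadivuL2}. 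The leading piece is then handled by differentiating \eqref{rew-sigma} once more, which trades $\na^2\div u$ for $\ep\pt\na^2\sigma$ plus heat-flux and quadratic pieces; the $\ep\pt$ is absorbed using the $(\ep\pt)$-component of $\cE_{m,t}$ together with $\mu\sim\kpa$, while the heat-flux terms are again controlled through $\cE_{m,t}(\theta)$. For the $L_t^\infty$ quantity $\ep^2\mu\kpa\|\na^2\sigma\|_{\infco^{m-2}}^2\sim\ep^2\mu^2\|\na^2\sigma\|_{\infco^{m-2}}^2$, I would apply the higher-conormal elliptic estimate from the appendix to $\tsigma$ (same source $f$ of \eqref{def-f}), inspect each summand of $f$ in an $\infco^{m-2}$-norm in the style of Lemma \ref{lem-f}, and use the Sobolev-in-time interpolation $\|g\|_{L_t^\infty}\lesssim\|g(0)\|+T^{1/2}\|\pt g\|_{L_t^2}$ applied to the relevant time derivatives to produce the $(T+\ep)^{1/6}$ factor; the correction is controlled using \eqref{secnoru-LinftyL2} and the $\ep\mu\|\na^2(\sigma,u)\|_{\infco^{m-2}}$ bound sitting inside $\cE_{m,t}$.

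The main obstacle will be a clean handling of $\na^2\div u$ (equivalently a third full derivative of $u$) at the conormal order $m-2$ with the $\ep\mu\kpa^{1/2}$ weight: Remark \ref{rmk-thirdordervelocity} delivers only the $\ep\mu\|\na^3u\|_{\hco^{m-3}}$ bound, so one conormal order must be recovered by iterating \eqref{rew-sigma} to trade a normal derivative of $\div u$ for a time derivative of $\na\sigma$ and a $\kpa$-weighted heat-flux term. The first is then absorbed by the $\mu^{1/2}\|\na\sigma\|_{L_T^2H_{co}^{m-1}}$ contribution to $\cE_{m,t}$, the second by $\cE_{m,t}(\theta)$, and each closure step must be carried out with tight bookkeeping of the weights $\ep,\mu,\kpa$ and systematic use of $\mu\sim\kpa$ from \eqref{assumption-mukpa}; an inadvertent $\ep^{-1}$ or $\mu^{-1}$ at this stage would destroy the final $(T+\ep)^{1/6}$ time factor on the right-hand side of \eqref{na2sigma}.
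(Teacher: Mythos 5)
There is a genuine gap in the second half of the proposal. For \eqref{nadivuL2}, your plan (read $\na\div u$ off \eqref{rew-sigma} and bound each piece) is the right one, but the $\sigma$-piece must be closed through the already-established Proposition \ref{lem-com1}, i.e.\ \eqref{com-uniL2}, not simply by the observation $\mu^{1/2}\|\na\sigma\|_{\hco^{m-1}}\leq \cE_{m,t}$: bounding the left-hand side by $\cE_{m,t}^2$ does not produce the right-hand side of \eqref{nadivuL2}, since no $(T+\ep)^{\tilde\vartheta_0}$ factor appears. This is a minor bookkeeping omission since \eqref{com-uniL2} is proved earlier, and once you invoke it the argument goes through exactly as in the paper.

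For \eqref{na2sigma}, however, the proposed route breaks. You plan to use the split $\na^2\sigma=\na^2\tsigma+\ep\mu(2\lambda_1+\lambda_2)\na^2(\Gamma\div u)$ and control $\kpa^{1/2}\|\na^2\tsigma\|_{\hco^{m-2}}$ by \eqref{Es-tsigma1}. But \eqref{Es-tsigma1} only gives
\[
\kpa^{1/2}\big(\|\na^2\tsigma\|_{\hco^{m-2}}+\|\na\tsigma\|_{\hco^{m-1}}\big)\lesssim \Lambda\big(\tfrac1{c_0},\cA_{m,t}\big)\cE_{m,t},
\]
with no small factor $T^{\alpha}$ or $\ep^{\alpha}$ on the right. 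Squaring and feeding it into \eqref{na2sigma} would yield $\kpa\|\na^2\sigma\|_{\hco^{m-2}}^2\lesssim \Lambda(\tfrac1{c_0},\cN_{m,t})$ rather than $Y_m^2(0)+(T+\ep)^{1/6}\Lambda(\tfrac1{c_0},\cN_{m,t})$, which is not good enough to close the bootstrap in Proposition \ref{prop-uniform es}. The paper does not go through the elliptic problem for $\tsigma$ here; instead it derives a \emph{damped transport-advection equation} for $\Delta\sigma$,
\[
\ep^2\mu(2\lambda_1+\lambda_2)\Gamma\,(\pt+u\cdot\na)\Delta\sigma+\Delta\sigma=\div\mathfrak{H}_2+\cdots
\]
(see \eqref{eqDeltasigma}), by applying $\ep\mu\gamma(2\lambda_1+\lambda_2)\Gamma\na$ to $\eqref{newsys}_1$, substituting $\ep\mu\Gamma\na\div u$ via $\eqref{NCNS-S2}_2$, and then taking the divergence. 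The zero-order damping term $\Delta\sigma$ on the left is precisely what delivers both $\ep^2\mu\kpa\|\Delta\sigma\|_{\infco^{m-2}}^2$ and $\kpa\|\Delta\sigma\|_{\hco^{m-2}}^2$ in one energy estimate with Young's inequality, while Lemma \ref{lem9.4} bounds the right-hand side by $\kpa\|(\div u,\mu\Delta\theta)\|_{\hco^{m-1}}^2+(T+\ep)\Lambda\cE_{m,t}^2$, and these are closed by \eqref{com-uniL2} and \eqref{EE-2}. Your proposal flags the difficulty of handling $\na^2\div u$ at order $m-2$ without introducing a non-closable $\ep\pt\na^2\sigma$, but does not resolve it; the damped-transport structure for $\Delta\sigma$ is exactly the missing device that avoids this circularity, and without it the proof of \eqref{na2sigma} does not close.
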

\begin{proof}
Let us first control $\kpa \|\na\div u\|_{\hco^{m-2}}^2.$ It follows from \eqref{rew-sigma} that:
\begin{align*}
    \kpa\|\na\div u\|_{\hco^{m-2}}^2&\lesssim  \kpa\|\na\big(\ep\pt+\ep u\cdot\na)\sigma, \kappa \Gamma\div(\beta\na\theta), \mathfrak{N} \big) \|_{\hco^{m-2}}^2 \\
 &\lesssim  \kpa \|\na\sigma\|_{\hco^{m-1}}^2+ \|\kpa^{\f{3}{2}}\na\Delta\theta\|_{\hco^{m-2}}^2+(T+\ep)\lae,
\end{align*}
which, combined with \eqref{EE-theta-2-2},  \eqref{com-uniL2}, leads to \eqref{nadivuL2}. 

We now switch to the proof of \eqref{na2sigma}. 
Thanks to the estimates \eqref{EE-2}, \eqref{com-uniL2} and the fact \eqref{Laplace-local}, it  suffices for us to 
control $\ep^2\mu\kappa\|\Delta\sigma\|_{\infco^{m-2}}^2+\kpa \|\Delta\sigma\|_{\hco^{m-2}}^2$ by the right hand side of \eqref{na2sigma}. Let us begin with the derivation of the equation satisfied by $\Delta\sigma.$ 
Applying $\ep\mu\gamma(2\lambda_1+\lambda_2)\Gamma \nabla$ on the equation $\eqref{newsys}_1$ solved by $\vr=\f{\ep}{\gamma}\sigma-\f{C_v}{R}\f{\gamma-1}{\gamma}\theta$, we find that $\na\sigma$ is governed by: 
\begin{align*}
\ep\mu(2\lambda_1+\lambda_2)\Gamma\bigg((\ep\pt+\ep u\cdot\na)\na\sigma+ \na\div u\bigg)=\mathfrak{H}_1%
\end{align*}
with 
\beq\label{deffrakh1}
\mathfrak{H}_1=\ep\mu(2\lambda_1+\lambda_2)\Gamma \bigg(\f{C_v(\gamma-1)}{R}(\pt+u\cdot\na)\na\theta -\gamma\na u\cdot\na\vr\bigg).
\eeq
By using  $\eqref{NCNS-S2}_2,$ we can
substitute the quantity $\ep\mu(2\lambda_1+\lambda_2)\Gamma \na\div u$ in the above equation by 
$$\f{1}{R\beta}(\ep\pt+\ep u\cdot\na)u-\ep\lambda_1\mu\Gamma \curl\curl u+\na\sigma$$
 to find that:
 \begin{align}\label{eq-nasigma}
     \ep^2\mu(2\lambda_1+\lambda_2)\Gamma(\pt+u\cdot\na)\na\sigma+\na\sigma=\mathfrak{H}_2,
 \end{align}
 where 
 \begin{align}\label{deffrakh12}
     \mathfrak{H}_2=-\f{1}{R\beta}(\ep\pt+\ep u\cdot\na)u-\ep\lambda_1\mu\Gamma \curl\curl u+  \mathfrak{H}_1. 
 \end{align}
 Taking the divergence of the equation \eqref{eq-nasigma}, one derives the equation satisfied by $\Delta\sigma:$
 \begin{align}\label{eqDeltasigma}
      \ep^2\mu(2\lambda_1+\lambda_2)\Gamma(\pt+u\cdot\na)\Delta\sigma+\Delta\sigma=\div \mathfrak{H}_2-\ep^2\mu(2\lambda_1+\lambda_2) \big(\na\Gamma\cdot\pt\na\sigma+\na(\Gamma u)\cdot\na\sigma\big).
 \end{align}
It then follows from the direct energy estimates on \eqref{eqDeltasigma} and the Young's inequality that
 \beqs
 \begin{aligned}
   &\ep^2\mu\kpa \|\Delta\sigma\|_{\infco^{m-2}}^2+\kpa \|\Delta\sigma\|_{\hco^{m-2}}^2\\
   &\lesssim Y_m^2(0)+ \ep^2\mu\kpa \il\div u\il_{0,\infty,t}\|\Delta\sigma\|_{\hco^{m-2}}^2+ \|\text{ R.H.S of } \eqref{eqDeltasigma}\|_{\hco^{m-2}}^2,
 \end{aligned}
 \eeqs
 which, combined with the estimates \eqref{sec9:eq2}, \eqref{sec9:eq3} in Lemma \ref{lem9.4}, yields that:
 \beqs
 \begin{aligned}
     &\ep^2\mu\kpa \|\Delta\sigma\|_{\infco^{m-2}}^2+\kpa \|\Delta\sigma\|_{\hco^{m-2}}^2\\
   &\lesssim Y_m^2(0)+(T+\ep)\lat\cE_{m,t}^2+ \kpa\|(\div u, \mu \Delta\theta )\|_{\hco^{m-1}}^2.
 \end{aligned}
 \eeqs
 We thus finish the proof of \eqref{na2sigma}
 by noticing \eqref{com-uniL2} and \eqref{EE-2}.
 \end{proof}
  We list in the next lemma some estimates used in the previous proposition for the right hand side of \eqref{eqDeltasigma} .
\begin{lem}\label{lem9.4}
The following estimates hold true:
\begin{align}
    \ep\mu \|\big(\na\Gamma\cdot\pt\na\sigma+\na(\Gamma u)\cdot\na\sigma\big)\|_{\hco^{m-2}\cap \infco^{m-3}}\lesssim \lat\cE_{m,t},\label{sec9:eq2}\\
     \kpa^{\f{1}{2}} \|\div \mathfrak{H}_2\|_{\hco^{m-2}}\lesssim \kpa^{\f{1}{2}}\|(\div u, \mu \Delta\theta )\|_{\hco^{m-1}}+(T^{\f{1}{2}}+\ep)\lat\cE_{m,t},\label{sec9:eq3}\\
      \|\div \mathfrak{H}_2\|_{\infco^{m-3}}\lesssim \|(\div u, \mu \Delta\theta )\|_{\infco^{m-2}}+\ep\lat\cE_{m,t}. \label{sec9:eq4}
\end{align}
\end{lem}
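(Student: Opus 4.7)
My plan is to prove all three estimates by systematically expanding $\div\mathfrak{H}_2$ (and the single product $\na\Gamma\cdot\pt\na\sigma$ in \eqref{sec9:eq2}) and then invoking Proposition~\ref{prop-prdcom} and Corollary~\ref{cor-gb}. In each estimate I would isolate the (at most two) leading contributions that must be kept on the right-hand side, namely terms of the form $\ep\pt\div u$ and $\ep\mu\,\pt\Delta\theta$, and verify that every remaining summand carries an extra factor of $\ep$ or $\ep\mu$ turning it into an $(T^{1/2}+\ep)\lat\cE_{m,t}$ error.

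For \eqref{sec9:eq2}, the key observation is that $\na\Gamma(\ep\sigma)=\Gamma'(\ep\sigma)\,\ep\na\sigma$ supplies an extra $\ep$, while $\pt=\ep^{-1}(\ep\pt)$ converts an unscaled time derivative into the conormal generator $\ep\pt$ at the cost of one $\ep^{-1}$. Hence
\[
\ep\mu\,\na\Gamma\cdot\pt\na\sigma \;=\; \mu\,\Gamma'(\ep\sigma)\,(\ep\na\sigma)\cdot(\ep\pt\na\sigma),
\]
and $\ep\mu\,\na(\Gamma u)\cdot\na\sigma$ admits the analogous rewriting as $\mu$ times a product of quantities already controlled by $\cE_{m,t}$ and $\cA_{m,t}$. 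The $\hco^{m-2}$ and $\infco^{m-3}$ bounds then follow from \eqref{roughproduct1}--\eqref{roughproduct-2}, combined with $\|\na(\sigma,u)\|_{\infco^{m-2}}\lesssim \cE_{m,t}$ and the $\il\cdot\il$ bounds inside $\cA_{m,t}$.

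For \eqref{sec9:eq3} and \eqref{sec9:eq4} I would expand $\div\mathfrak{H}_2$ explicitly. Since $\div\curl\curl u=0$, the curl-curl piece collapses to $\ep\lambda_1\mu\,\na\Gamma\cdot\curl\curl u=\ep^{2}\lambda_1\mu\,\Gamma'\na\sigma\cdot\curl\curl u$, an $\ep$-small error. Expanding $\div\bigl(-\f{1}{R\beta}(\ep\pt+\ep u\cdot\na)u\bigr)$ produces the first leading term $-\f{1}{R\beta}\ep\pt\div u$, together with $\na(\f{1}{R\beta})$-type factors and the convective piece $\ep u\cdot\na\div u$, each of which carries an extra $\ep$. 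For $\div\mathfrak{H}_1$, the identity $\div\bigl((\pt+u\cdot\na)\na\theta\bigr)=\pt\Delta\theta+\na u{:}\na^2\theta+u\cdot\na\Delta\theta$ isolates the second leading term $\ep\mu(2\lambda_1+\lambda_2)\f{C_v(\gamma-1)}{R}\,\Gamma\,\pt\Delta\theta$, while all other pieces of $\div\mathfrak{H}_1$ retain the $\ep\mu$ prefactor and pick up additional smallness from $\na\Gamma$, $\na u$ or $\na\vr$. Rewriting the two leading contributions as $\f{1}{R\beta}\,\ep\pt\div u$ and $\mu\Gamma\,\ep\pt\Delta\theta$, and using that $\ep\pt$ is a conormal vector field, yields the bound $\kpa^{1/2}\|(\div u,\mu\Delta\theta)\|_{\hco^{m-1}}$ in \eqref{sec9:eq3} and $\|(\div u,\mu\Delta\theta)\|_{\infco^{m-2}}$ in \eqref{sec9:eq4}; all remaining summands are absorbed into $(T^{1/2}+\ep)\lat\cE_{m,t}$ or $\ep\,\lat\cE_{m,t}$ via Proposition~\ref{prop-prdcom}, Corollary~\ref{cor-gb} and the definitions \eqref{defcEmsigmau}--\eqref{defcEmtheta}.

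The main obstacle is purely bookkeeping: for each non-leading summand one has to distribute the $\ep$, $\ep\mu$ or $\ep^2\mu$ factors correctly and then pick the right splitting in the product/commutator estimate — typically the high Sobolev norm factor in $\cE_{m,t}$ paired against the low-order $L^\infty$ factor in $\cA_{m,t}$, or alternatively a $T^{1/2}$ time-integration gain when only $L^\infty_t$-type information is available for the high-norm factor. No genuinely new analytic idea is required; the global $\ep$-prefactor of $\mathfrak{H}_2$ and the $\ep$ built into $\na\Gamma$ are precisely what convert every error term into an $(T^{1/2}+\ep)\lat\cE_{m,t}$ contribution.
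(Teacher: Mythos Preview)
Your approach is correct and matches the paper's intent: the paper actually omits the proof entirely, stating only that ``these estimates stem from the very expression of each target term, the product estimates in Proposition~\ref{prop-prdcom} as well as the definitions of $\cA_{m,t}, \cE_{m,t}$.'' Your plan---expanding $\div\mathfrak{H}_2$, isolating the two leading contributions $\tfrac{1}{R\beta}(\ep\pt)\div u$ and $\mu\Gamma(\ep\pt)\Delta\theta$, and absorbing every remaining summand via the $\ep$ (or $\ep\mu$) prefactors together with Proposition~\ref{prop-prdcom} and Corollary~\ref{cor-gb}---is exactly the routine computation the paper is alluding to.
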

 These estimates stem from the very expression of each target term, the product estimates in the Proposition
\ref{prop-prdcom} as well as the definitions of $\cA_{m,t}, \cE_{m,t},$ we thus skip the details. 
 \begin{rmk}
 By the equation \eqref{eqDeltasigma}, the estimates \eqref{na2sigma}, \eqref{sec9:eq2}, \eqref{sec9:eq4} and the identity \eqref{Laplace-local}, we have also that, for any $\ep \in (0,1], (\mu,\kpa)\in A,$ any $0<t\leq T,$ 
 \begin{align}\label{na2sigmauni-0} 
     \|\na^2\sigma\|_{\infco^{m-3}}^2\lesssim Y_m^2(0)+ \|(\na\sigma,\div u, \mu\Delta\theta)\|_{\infco^{m-2}}^2+(T+\ep)^{\f{1}{6}}\lae.
 \end{align}
\end{rmk}
\section{Uniform estimates for the incompressible part}
In this section, we introduce the  incompressible part for the modified velocity-- $r_0 u,$ where 
\beq\label{defr0}
r_0=\f{1}{R\beta}\exp (\ep R\tilde{\sigma}/C_v\gamma)=\f{1}{R\beta(0)}\exp(-R\tilde{\theta}/{C_v})
\exp(\mu(2\lambda_1+\lambda_2)R\ep^2 \Gamma\div u /C_v\gamma),
\eeq
with
$$ \tilde{\sigma}=\sigma-\ep\mu(2\lambda_1+\lambda_2)\Gamma\div u, \quad \tilde{\theta}=\f{C_v}{R}\theta-\f{\ep}{\gamma}\sigma.$$
Denote $r_1= \ep^{-1}(\exp (\ep R\tilde{\sigma}/C_v\gamma)-1).$ 
The system $\eqref{NCNS-S2}_2$ can be rewritten as: 
\beq\label{sec7:eq1}
\begin{aligned}
&(\pt+u\cdot\na)(r_0 u)+\f{\na\tilde{\sigma}}{\ep}+\mu\lambda_1\Gamma\curl\curl u\\ 
&=u(\pt+u\cdot\na)r_0- r_1(\na\tilde{\sigma}+\ep\mu\lambda_1\Gamma\curl\curl u)-\mu(2\lambda_1+\lambda_2)(1+\ep r_1)\div u \na\Gamma.
\end{aligned}
\eeq
By writing further:
$$\mu \lambda_1\Gamma \curl\curl u=\mu\lambda_1(\Gamma-\overline{\Gamma})\curl\curl u-\mu\lambda_1\overline{\Gamma}r_0^{-1}[\curl\curl, r_0]u+\mu\lambda_1\overline{\Gamma}r_0^{-1}\curl\curl(r_0 u),$$
the system \eqref{sec7:eq1} can thus be reformulated as:
\beq\label{eqr0u}
(\pt+u\cdot\na)(r_0 u)+\f{\na\tilde{\sigma}}{\ep}
+\mu\lambda_1 \overline{\Gamma}r_0^{-1}\curl\curl(r_0 u)
=-r_1\na\tilde{\sigma}+G,
\eeq 
where 
\begin{align}\label{def-G}
G&=u(\pt+u\cdot\na)r_0+\mu\lambda_1\overline{\Gamma}r_0^{-1}[\curl\curl, r_0]u\notag\\
&\quad-\ep\mu \lambda_1 r_1\Gamma\curl\curl u-\mu\lambda_1(\Gamma-\overline{\Gamma})\curl\curl u-\mu(2\lambda_1+\lambda_2)(1+\ep r_1)\div u \na\Gamma.
\end{align}
Let us now derive the equations for the incompressible part $v=\bp (r_0 u)$ of the quantity $r_0 u.$
 Taking the Leray projection $\bp$ (defined in \eqref{proj-P}) on the equations \eqref{eqr0u} and
noticing that $r_1 \na \tsigma$ can be written as a gradient 
$$r_1\na\tilde{\sigma} =\ep^{-1}\na \bigg( {C_v\gamma}\big(\exp(\ep R\tsigma/C_v\gamma)-1\big)/{\ep R} -\tsigma\bigg)$$
and also the identity $\curl\curl(r_0u)=-\Delta v,$ 
we find that $v$ solves the equation: 
\begin{equation*}
    (\pt+u\cdot\na)v-\mu\lambda_1\overline{\Gamma}\bp (r_0^{-1}\Delta v)=-[\bp,u\cdot\na](r_0 u)+\bp G.
\end{equation*}
Let us define
$\na q=\bq (G+\mu\lambda_1\overline{\Gamma}r_0^{-1}\Delta v),$ the above equation can thus be rewritten as:
\beq\label{eq-v}
(\pt+u\cdot\na)v-\mu\lambda_1\overline{\Gamma}r_0^{-1}\Delta v+\na q=-[\bp,u\cdot\na](r_0 u)+G.
\eeq
Moreover, by the boundary condition \eqref{bdryconditionofu}, the definition  
\eqref{defr0},
we find that, 
\beq\label{bc-v-1}
v\cdot\bn=0 \text{ on } \p\Omega,
\eeq
\begin{align}\label{bc-v-2}
     \Pi(\p_{\bn}v)=&\Pi\p_{\bn}(r_0 u)-\Pi(\p_{\bn}\bq(r_0 u) )=\p_{\bn}r_0\Pi u
     + r_0\Pi \p_{\bn}u+\Pi ((D\bn)\bq (r_0 u))\notag\\
     &= \f{R\ep}{C_v\gamma}r_0\p_{\bn}\tsigma \Pi u+r_0\Pi\big(-2a u+(D\bn)u\big)+ \Pi \big((D\bn)\bq (r_0 u)\big) \text{ on } \p\Omega.
\end{align}
In the next several lemmas, we give some preliminary estimates for $r_0, r_1,$ $G, \na q$  and $[\bp,u\cdot\na](r_0 u),$ 
when $m\geq 7,$ $\ep \in (0,1], (\mu,\kpa)\in A,$  which will be quite useful for the estimate of $v.$ 
Let us begin with the estimates for $r_0:$
\begin{lem}
Suppose that the assumption \eqref{preasption} holds for some $T>0,$ then
for any $0<t\leq T,$ we have the following estimates: 
\begin{align}
\|(\pt,\na) r_0)\|_{\infco^{m-2}}&\lesssim \lat\cE_{m,t}, \label{esr0-2}\\
   \mu^{\f{1}{2}}\|\na (r_0, r_0^{-1})\|_{\hco^{m-1}}
   &\lesssim (T^{\f{1}{2}}+\ep) \lat\cE_{m,t}. \label{esr0-3}
\end{align}
\end{lem}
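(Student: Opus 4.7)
The plan is to view $r_0$ and $r_0^{-1}$ as smooth composites in the three scalar arguments $\theta$, $\ep\sigma$, and $\ep^2\mu\,\Gamma(\ep\sigma)\div u$. From the very definition \eqref{defr0} one has
\[
r_0=\mathfrak F\big(\theta,\ep\sigma,\ep^2\mu\Gamma(\ep\sigma)\div u\big),\qquad r_0^{-1}=\widetilde{\mathfrak F}\big(\theta,\ep\sigma,\ep^2\mu\Gamma(\ep\sigma)\div u\big),
\]
for explicit analytic $\mathfrak F,\widetilde{\mathfrak F}$. The bounds \eqref{preasption}--\eqref{preasption1}, together with $\ep^2\mu\|\Gamma\div u\|_{L^\infty_{t,x}}\lesssim \lat$ coming from $\cA_{m,t}$, keep the three arguments in a compact set on which $\mathfrak F,\widetilde{\mathfrak F}$ and all of their derivatives are controlled by $\Lambda(1/c_0,\cdot)$. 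Corollary \ref{cor-gb} (Moser/composition in conormal Sobolev spaces) then reduces the lemma to bounding the conormal norms of the three arguments and of $(\pt,\na)$ applied to them, and the only real work is weight bookkeeping.

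For \eqref{esr0-2} the chain rule gives
\[
\na r_0=(\p_1\mathfrak F)\na\theta+\ep(\p_2\mathfrak F)\na\sigma+(\p_3\mathfrak F)\,\ep^2\mu\na(\Gamma\div u),
\]
and an analogous expression for $\pt r_0$. The spatial piece is bounded in $\infco^{m-2}$ using $\|\na(\theta,\sigma)\|_{\infco^{m-2}}\lesssim\cE_{m,t}$ (from \eqref{defcEmsigmau}--\eqref{defcEmtheta}) together with $\ep\mu\|\na\div u\|_{\infco^{m-2}}\lesssim\cE_{m,t}$ and the Moser estimate \eqref{esofGamma-2} for $\Gamma$. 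For the time derivative I first substitute the equations \eqref{NCNS-S2} to replace $\pt\theta$, $\ep\pt\sigma$ and $\pt(\ep\mu\Gamma\div u)$ by spatial expressions; e.g.\ $\pt\theta=-u\cdot\na\theta-(R/C_v)\div u+(R/C_v)\kpa\Gamma\div(\beta\na\theta)+O(\ep)$, and every resulting quantity is bounded in $\infco^{m-2}$ by $\cE_{m,t}$---the $\kpa\div(\beta\na\theta)$ contribution is exactly what is controlled by \eqref{divbetanatta}. The pointwise control of the low-order derivatives needed in the nonlinear Moser step is supplied by \eqref{useful-inftyhalf} and is absorbed into $\Lambda(1/c_0,\cA_{m,t})$.

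For \eqref{esr0-3} the same chain-rule decomposition, measured now in $\hco^{m-1}$, yields
\[
\mu^{1/2}\|\na r_0\|_{\hco^{m-1}}\lesssim \lat\,\mu^{1/2}\Big(\|\na\theta\|_{\hco^{m-1}}+\ep\|\na\sigma\|_{\hco^{m-1}}+\ep^2\mu\|\na(\Gamma\div u)\|_{\hco^{m-1}}\Big).
\]
The first summand is absorbed into $\kpa^{1/2}\|\na\theta\|_{L_t^2 H_{co}^{m-1}}\lesssim\cE_{m,t}(\theta)$ after invoking $\mu\sim\kpa$ (Remark \ref{rmkmuapproxkpa}). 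The second produces a free $\ep$ that combines with the $\mu^{1/2}\|\na(\sigma,u)\|_{L_T^2 H_{co}^{m-1}}$ piece of $\cE_{m,t}(\sigma,u)$ to give an $\ep\,\Lambda(1/c_0,\cE_{m,t})$ bound. The third already carries $\ep^2\mu$; after expanding $\na(\Gamma\div u)=\na\Gamma\,\div u+\Gamma\,\na\div u$, one combines with the $\ep\mu\|\na^2 u\|_{L_t^2 H_{co}^{m-1}}$ portion of $\cE_{m,t}$ to obtain an $\ep\,\Lambda(1/c_0,\cE_{m,t})$ contribution. Whenever, in the Moser estimate, a low-order $L^\infty$ slot carries no dissipative weight, I convert an $L_t^\infty$ bound from $\cE_{m,t}$ into the required $L_t^2$ bound using $\|f\|_{L_t^2 L^2}\leq T^{1/2}\|f\|_{L_t^\infty L^2}$; this produces the $T^{1/2}$ factor. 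The bound for $\na r_0^{-1}$ is identical after swapping $\mathfrak F$ for $\widetilde{\mathfrak F}$.

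The main (and essentially only) delicate point is weight bookkeeping: making sure that the factor $\ep^2\mu$ inherent in the third argument is always paired with the correct dissipative component of $\cE_{m,t}$---e.g.\ with the $\ep\mu\|\na^2 u\|_{L_t^2 H_{co}^{m-1}}$ piece, not with a bare $\|\na\div u\|_{L_t^2 H_{co}^{m-1}}$---and similarly that the $\ep$ factor sitting in front of $\na\sigma$ is combined with the correct $\mu^{1/2}$-weighted dissipative estimate. No new analytic idea beyond Proposition \ref{prop-prdcom} and Corollary \ref{cor-gb} is required.
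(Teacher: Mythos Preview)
Your approach coincides with the paper's: treat $r_0,r_0^{-1}$ as smooth compositions in $\Theta=(\theta,\ep\sigma,\ep^2\mu\Gamma\div u)$ and invoke the composition/product estimates of Corollary~\ref{cor-gb}. Your handling of \eqref{esr0-2} is fine and in fact more explicit than the paper's terse assertion (a minor remark: rather than citing \eqref{divbetanatta}, which is an a~priori estimate of the wrong shape, bound $\kpa\|\Gamma\div(\beta\na\theta)\|_{\infco^{m-2}}$ directly via product estimates and $\kpa\|\na^2\theta\|_{\infco^{m-2}}\in\cE_{m,t}(\theta)$).

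There is one genuine bookkeeping slip in your argument for \eqref{esr0-3}. You write that the $\theta$-contribution $\mu^{1/2}\|\na\theta\|_{\hco^{m-1}}$ is ``absorbed into $\kpa^{1/2}\|\na\theta\|_{L_t^2 H_{co}^{m-1}}\lesssim\cE_{m,t}(\theta)$'', but inspect \eqref{defcEmtheta}: no $L_t^2$ norm of $\na\theta$ appears there---only $L_t^{\infty}$ norms of $\na\theta$ (namely $\kpa^{1/2}\|\na\theta\|_{\infco^{m-1}}+\|\na\theta\|_{\infco^{m-2}}$). If this term really gave a bare $\lat\cE_{m,t}$ with no small prefactor, \eqref{esr0-3} would fail. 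The $T^{1/2}$ must come from converting the \emph{high-order} $\theta$ term itself,
\[
\mu^{1/2}\|(\theta,\na\theta)\|_{\hco^{m-1}}\le T^{1/2}\mu^{1/2}\|(\theta,\na\theta)\|_{\infco^{m-1}}\lesssim T^{1/2}\kpa^{1/2}\|(\theta,\na\theta)\|_{\infco^{m-1}}\le T^{1/2}\cE_{m,t},
\]
using $\mu\sim\kpa$; this is exactly what the paper does. Your proposed source of $T^{1/2}$---the low-order $L^\infty$ Moser slot---is already an $L_{t,x}^\infty$ quantity absorbed into $\Lambda(1/c_0,\cA_{m,t})$ and contributes no smallness. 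Once this is corrected, your proof is complete.
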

\begin{proof}
Since $(\pt, \na) r_0=-r_0(\pt, \na) (\theta-\ep R C_v\tsigma/\gamma), $ we can apply the commutator estimate \eqref{roughcom1} and the estimate \eqref{esr0} to obtain that, by denoting 
 $\Theta=(\theta, \ep\sigma, \ep^2\mu\Gamma\div u),$
\begin{align*}
   \|(\pt,\na) r_0)\|_{\infco^{m-2}}
& \lesssim \|(\Id ,\pt, \na)\Theta\|_{\infco^{m-2}}\lab \il(\Id ,\pt, \na)\Theta\il_{[\f{m}{2}]-1,\infty,t}\big)\\
    &\lesssim \lat\cE_{m,t}.
\end{align*}
We thus finish the proof of \eqref{esr0-2}. 
To prove \eqref{esr0-3}, we use commutator estimate \eqref{roughcom} and the estimate \eqref{esr0} to get:
\begin{align*}
  \mu^{\f{1}{2}} \|\na(r_0, r_0^{-1})\|_{\hco^{m-1}}
& \lesssim  \mu^{\f{1}{2}} \|(\Id, \na)\Theta\|_{\hco^{m-1}}\lab \il \na \Theta\il_{[\f{m}{2}]-1,\infty,t}+\il \Theta\il_{[\f{m}{2}],\infty,t}\big)\\
    &\lesssim \big(T^{\f{1}{2}} \|\mu^{\f{1}{2}} (\theta,\na\theta)\|_{\infco^{m-1}}+\ep \|\mu^{\f{1}{2}}(\Id,\na)(\sigma, \ep\mu \Gamma\div u)\|_{\hco^{m-1}}\big)
    \lat\\
   &\lesssim (T^{\f{1}{2}}+\ep)\lat \cE_{m,t}.
\end{align*}


\end{proof}

In the following lemma, we present some estimates for the source term $G:$
\begin{lem}
Let 
\beq\label{defG1}
G_1=\mu\lambda_1\overline{\Gamma}(R\beta r_0)^{-1}[\curl\curl, R\beta r_0]u, 
\eeq
Under the assumption \eqref{preasption},
the following estimates hold: for any $0<t\leq T,$ 
\begin{align}
\|G_1\|_{\hco^{m-2}}\lesssim \ep\mu^{\f{1}{2}}\lat\cE_{m,t},\label{es-G-1}\\
\quad \|G-G_1\|_{\uhco^{m-1}}\lesssim \lat\cE_{m,t}. \label{es-G-2} 
\end{align}
\end{lem}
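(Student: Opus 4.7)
\emph{Plan.} The two bounds reflect distinct structural features: $G_1$ is a commutator whose coefficient $R\beta r_0=\exp(\ep R\tsigma/C_v\gamma)$ has every gradient of size $\ep$, while the remainder $G-G_1$ is made uniformly bounded by a crucial cancellation coming from the polytropic law $\beta(\theta)=\overline{\cT}e^\theta$ (equivalently $\beta'/\beta\equiv 1$).

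\emph{Estimate \eqref{es-G-1}.} Since $\nabla(R\beta r_0)=\tfrac{\ep R}{C_v\gamma}(R\beta r_0)\nabla\tsigma$, expanding $[\curl\curl,R\beta r_0]u$ produces only terms of type $\ep\mu\nabla\tsigma\cdot\nabla u$ and $\bigl(\ep\mu\nabla^2\tsigma+\ep^2\mu|\nabla\tsigma|^2\bigr)u$ (up to bounded prefactors in $\beta,\Gamma$). Applying the product estimate \eqref{roughproduct1} in $\hco^{m-2}$, I would pair $\mu^{1/2}\|\nabla u\|_{\hco^{m-1}}\lesssim\cE_{m,t}$ from \eqref{defcEmsigmau} and the $\sqrt{\kpa}$-weighted Sobolev controls on $\nabla\tsigma,\nabla^2\tsigma$ from Lemma \ref{lemtsigma} (using $\mu\sim\kpa$ on $A$) against $L_{t,x}^{\infty}$ norms at conormal order $\leq m-5$ provided by $\cA_{m,t}$ and by \eqref{Es-tsigma4}. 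Since $m\geq 7$ implies $[m/2]-1\leq m-5$, the Sobolev thresholds match, and every product lands within the target $\ep\mu^{1/2}\lat\cE_{m,t}$.

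\emph{Estimate \eqref{es-G-2}.} I split
\begin{align*}
G-G_1&=u(\pt+u\cdot\na)r_0+\mu\lambda_1\overline{\Gamma}\bigl\{r_0^{-1}[\curl\curl,r_0]-(R\beta r_0)^{-1}[\curl\curl,R\beta r_0]\bigr\}u\\
&\quad-\ep\mu\lambda_1 r_1\Gamma\curl\curl u-\mu\lambda_1(\Gamma-\overline{\Gamma})\curl\curl u-\mu(2\lambda_1+\lambda_2)(1+\ep r_1)\div u\,\nabla\Gamma.
\end{align*}
For the first term, the identity $\beta'/\beta\equiv 1$ combined with $\eqref{NCNS-S2}_1$ and $\eqref{NCNS-S2}_3$ yields, after a direct computation, the key cancellation
\[
(\pt+u\cdot\na)r_0=-\tfrac{R\kpa}{C_v\gamma}r_0\Gamma\div(\beta\nabla\theta)-\tfrac{R\ep}{C_v\gamma}r_0\mathfrak{N}-\tfrac{\ep^2 R\mu(2\lambda_1+\lambda_2)}{C_v\gamma}r_0(\pt+u\cdot\na)(\Gamma\div u),
\]
so that the potentially dangerous $u\div u$ contribution disappears. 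The leading $\kpa u\nabla^2\theta$ piece is then bounded in $\uhco^{m-1}$ via $\kpa\|\nabla^2\theta\|_{\hco^{m-1}}\lesssim\cE_{m,t}$ from \eqref{defcEmtheta} and the $L^{\infty}$ control of $u$ by $\cA_{m,t}$; the $\ep$-small remainders are easier. For the commutator difference, one computes $r_0^{-1}\nabla r_0-(R\beta r_0)^{-1}\nabla(R\beta r_0)=-\nabla\theta$, so this piece reduces to terms of the form $\mu\nabla\theta\cdot\nabla u$ and $\mu(\nabla^2\theta,|\nabla\theta|^2)u$, handled using $\mu\sim\kpa$, $\mu^{1/2}\|\nabla u\|_{\hco^{m-1}}\lesssim\cE_{m,t}$, $\mu\|\nabla^2\theta\|_{\hco^{m-1}}\lesssim\cE_{m,t}$, and $L^{\infty}$ bounds on $\nabla\theta$ drawn from $\cE_{m,t}$ and $\cA_{m,t}$. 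The last three terms all carry an explicit $\ep$---via the $\ep\mu r_1$ prefactor, $\Gamma-\overline{\Gamma}=O(\ep\sigma)$, or $\nabla\Gamma=O(\ep\nabla\sigma)$---and close using $\ep\mu\|\nabla^2 u\|_{\hco^{m-1}}\lesssim\cE_{m,t}$ from \eqref{defcEmsigmau}.

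\emph{Main obstacle.} The delicate step is the $\div u$ cancellation in $(\pt+u\cdot\na)r_0$: it depends crucially both on the polytropic law $\beta(\theta)=\overline{\cT}e^\theta$ and on the specific choice of the exponent $\ep R\tsigma/C_v\gamma$ in the definition of $r_0$. Without it, a surviving $u\div u$ term would have to be bounded in $\uhco^{m-1}$, whereas $\div u$ is only controlled in $\hco^{m-1}$ with a $\sqrt{\kpa}$ weight---strictly incompatible with the target $\lat\cE_{m,t}$. Once the cancellation is observed, the remaining work is essentially bookkeeping: product and commutator estimates from Proposition \ref{prop-prdcom}, Corollary \ref{cor-gb}, and repeated use of the $\tsigma,\theta,u$ controls from the earlier sections.
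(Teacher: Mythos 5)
Your proposal is correct and follows essentially the same route as the paper: same splitting of $G-G_1$, same identity $[\curl\curl,f]u=\curl(\nabla f\times u)+\nabla f\times\curl u$, and the same key cancellation in $(\pt+u\cdot\na)r_0$ derived from the transport equation for $\tilde\theta=\tfrac{C_v}{R}\theta-\tfrac{\ep}{\gamma}\sigma$ (equation \eqref{sec4:eq0}), which removes the $\div u$ contribution that would otherwise be incompatible with a uniform $\hco^{m-1}$ bound. Your reformulation of the commutator difference via $r_0^{-1}\nabla r_0-(R\beta r_0)^{-1}\nabla(R\beta r_0)=-\nabla\theta$ (using $\beta'/\beta\equiv 1$) is a slightly cleaner way to see why that piece is uniformly controllable; the paper writes it instead as $-\mu\lambda_1\overline{\Gamma}(R\beta r_0)^{-1}[\curl\curl,R\beta](r_0u)$, but the content is the same.
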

\begin{rmk}
We can only control the derivatives of $(R\beta r_0)^{-1}[\curl\curl, R\beta r_0]u$ to the order $m-2,$ as it may involve two derivatives of $\tsigma.$ 
But due to the smoothing effect, $\kpa \na^2 \theta$ can be bounded in the regularity norm $L_t^2H_{co}^{m-1}.$
\end{rmk}
\begin{proof}
 The above estimates rely on the use the product and commutator estimates in Proposition  \ref{prop-prdcom}, the estimates for $\Gamma$ in Corollary \ref{cor-gb}
 as well as the facts: 
\beqs
\|\na (u,\sigma, \mu^{\f{1}{2}}\na\tsigma)\|_{\hco^{m-2}}\lesssim \lat\cE_{m,t},
\eeqs
\beqs
\|(u,\theta,\tsigma, \mu^{\f{1}{2}}(\na u, \na\theta, \na\tsigma), \ep\mu \na^2 u, \kpa \na^2 \theta)\|_{\hco^{m-1}}\lesssim \cE_{m,t}.
\eeqs
Let us first prove  \eqref{es-G-1}. 
By the definition \eqref{defr0} for $r_0,$ $$ R\beta r_0=\exp(\ep R\tsigma/{C_v\gamma})=1+\ep r_1. $$ 
Moreover, direct calculation yields:
\begin{align}\label{sec7:eq-1}
    [\curl\curl, f] u=\curl (\na f\times u)+\na f\times \curl u. 
\end{align}
We can use successively the product estimate \eqref{roughproduct1},
the estimate \eqref{esr1} 
 for $r_1,$ the estimate \eqref{Es-tsigma1} for $\tsigma$
 to obtain that:
\begin{align*}
    \|\mu\lambda_1\overline{\Gamma}(R\beta r_0)^{-1}[\curl\curl, R\beta r_0]u\|_{\hco^{m-2}}&\lesssim \ep\mu^{\f{1}{2}} \|(u, \tsigma, \na (u,\tsigma,  \mu^{\f{1}{2}}\na\tsigma)\|_{\hco^{m-2}}\lat\\
    &\lesssim \ep\mu^{\f{1}{2}}\lat\cE_{m,t}.
\end{align*}
We now show the proof of \eqref{es-G-2}. First, as 
$$\mu\lambda_1\overline{\Gamma}\big(r_0^{-1}[\curl\curl, r_0]u-(R\beta r_0)^{-1}[\curl\curl, R\beta r_0]u\big)=-\mu\lambda_1\overline{\Gamma} (R\beta r_0)^{-1} [\curl\curl, R\beta](r_0 u),$$
we can control this term by using the identity \eqref{sec7:eq-1}, the product estimate \eqref{roughproduct1}, the estimate \eqref{esofbeta}, \eqref{esbeta-sec} for $\beta$
the estimate \eqref{esr0} for $r_0$ that: 
\begin{align}\label{sec7:eq15}
   & \mu
   \big\| (R\beta r_0)^{-1} [\curl\curl, R\beta]( r_0 u)\big\|_{\hco^{m-1}}\notag\\
    &\lesssim \|( u, \theta, \ep\tsigma,  \mu^{\f{1}{2}}\na (u, \theta, \ep\tsigma),\mu\na^2\theta)\|_{\hco^{m-1}}\lat\\
    &\lesssim \lat\cE_{m,t}. \notag
\end{align}
To finish the proof of \eqref{es-G-2}, it still remains for us to control several terms including 
$$u(\pt+u\cdot\na)r_0, \quad -(\ep\mu\lambda_1 r_1\Gamma\curl\curl u+\mu\lambda_1(\Gamma-\overline{\Gamma}) \curl\curl u),\quad -\mu(2\lambda_1+\lambda_2)(1+\ep r_1)\div u \na\Gamma. $$ 
Let us begin with the estimate of the first term.  
It follows from the definition \eqref{defr0} for $r_0$ and the equation \eqref{sec4:eq0} for $\tilde{\theta}$ that: 
\beq\label{timeder-r0}
\begin{aligned}
(\pt+u\cdot\na)r_0&=-\f{R}{C_v}r_0 (\pt+u\cdot\na)(\tilde{\theta}-\mu(2\lambda_1+\lambda_2)\ep^2 \Gamma\div u/\gamma)\\
&=-\f{R r_0}{C_v\gamma}\big( \kpa\Gamma \div(\beta\na\theta)+\ep\mathfrak{N} \big)
-\f{R}{C_v\gamma}(2\lambda_1+\lambda_2) r_0 (\pt+u\cdot\na)(\mu\ep^2 \Gamma\div u).
\end{aligned}
\eeq
We can thus derive from the product estimate \eqref{roughproduct1} and the estimate \eqref{esr0} for $r_0$
that: 
\begin{align}
   \| u(\pt+u\cdot\na)r_0\|_{\uhco^{m-1}}&\lesssim \|(u,\theta, \ep\sigma, \kpa^{\f{1}{2}}\na\theta, \mu\ep(\ep\pt,\nabla)\div u, \mu\ep\na u, \kpa \na^2\theta)\|_{\uhco^{m-1}}\lat\notag\\
   &\lesssim \lat\cE_{m,t}.
\end{align}
Next, using the product estimate \eqref{roughproduct1} and the estimate \eqref{esr1} for $r_1,$
the last two terms can be estimated in a similar way: 
\begin{align}
 &\| \ep\mu\lambda_1 r_1\Gamma\curl\curl u+\mu\lambda_1(\Gamma-\overline{\Gamma}) \curl\curl u\|_{\hco^{m-1}}\notag\\
 &\lesssim \|(\sigma, \tsigma, \ep\mu \curl\curl u)\|_{\hco^{m-1}}\lat \lesssim \lat\cE_{m,t},
\end{align}
\begin{align}\label{sec7:eq16}
    \|\mu(1+\ep r_1)\div u \na\Gamma\|_{\hco^{m-1}}&\lesssim \ep\|\mu^{\f{1}{2}}
    (\sigma, \div u, \na\sigma)
    \|_{\hco^{m-1}}\lat\notag\\
    &\lesssim \ep\lat\cE_{m,t}.
\end{align}
We thus finish the proof of \eqref{es-G-2} by collecting \eqref{sec7:eq15}-\eqref{sec7:eq16}.
\end{proof}
In the next lemma, we give the estimate for $\na q:$
\begin{lem}\label{prop-naq}
 There exist $\na q_1, \na q_2\in L_t^2L^2,$ such that 
 $\na q= \na q_1+\na q_2$ and 
 \begin{align}\label{naq12}
    \|\na q_1\|_{\hco^{m-2}}\lesssim \ep \mu^{\f{1}{2}}\lat\cE_{m,t}, \quad \|\na q_2\|_{\uhco^{m-1}}\lesssim \lat\cE_{m,t}.
 \end{align}
\end{lem}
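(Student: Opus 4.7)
The plan is to decompose the pressure along the splitting $G = G_1 + (G - G_1)$ from the preceding lemma, setting
\begin{equation*}
\nabla q_1 = \bq(G_1), \qquad \nabla q_2 = \bq\bigl(G - G_1 + \mu\lambda_1\overline{\Gamma}\,r_0^{-1}\Delta v\bigr),
\end{equation*}
so that $\nabla q = \nabla q_1 + \nabla q_2$. The $\hco^{m-2}$ bound on $\nabla q_1$ is the direct part: the boundedness of $\bq$ on $L^2$, together with localization via the cut-offs $\chi_i$ and the fact that the conormal vector fields commute with $\bq$ up to harmless lower-order commutators, reduces the estimate to \eqref{es-G-1} for $G_1$, yielding the required $\ep\mu^{1/2}$ gain.

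The core of the proof is the $\uhco^{m-1}$ estimate on $\nabla q_2$. Since $v$ is divergence-free with $v\cdot\bn|_{\p\Omega}=0$, one has $\div(\Delta v) = 0$, so that $q_2$ is the solution (up to an additive constant) of the Neumann problem
\begin{equation*}
\Delta q_2 = \div(G - G_1) + \mu\lambda_1\overline{\Gamma}\,\na r_0^{-1}\cdot \Delta v \text{ in } \Omega, \quad \p_\bn q_2 = \bigl(G - G_1 + \mu\lambda_1\overline{\Gamma}\,r_0^{-1}\Delta v\bigr)\cdot\bn \text{ on } \p\Omega.
\end{equation*}
Conormal elliptic regularity (in the form of \eqref{highconormal}) reduces matters to controlling the source and the boundary trace. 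The $G - G_1$ contribution is handled directly by \eqref{es-G-2}. For the interior source $\mu\lambda_1\overline{\Gamma}\,\na r_0^{-1}\cdot\Delta v$, the div-free identity $\Delta v = -\curl\curl v$ trades one derivative of $v$ for a derivative of $r_0^{-1}$, and the resulting expression is controlled by pairing $\mu^{1/2}\|\na v\|_{L_t^2 H_{co}^{m-1}}$ (available from the Step~6 estimate on $v$ carried out in the sequel) with the bounds \eqref{esr0-2}--\eqref{esr0-3} for $\na r_0^{-1}$.

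The main obstacle is the boundary trace $\mu\,r_0^{-1}\Delta v\cdot\bn$. Using \eqref{curlcurludotn}--\eqref{omegatimesn}, one rewrites
\begin{equation*}
-\curl\curl v\cdot\bn = \div(\curl v\times\bn) + \curl v\cdot\curl\bn,
\end{equation*}
which expresses the trace as a tangential divergence of $\curl v\times\bn$ plus a zeroth-order piece. The boundary value of $\curl v\times\bn$ is then read from the non-homogeneous Navier condition \eqref{bc-v-2} for $v$: each piece is under control, namely $r_0\,\Pi(-2au+(D\bn)u)$ by the energy norms of $u$, $\Pi\bigl((D\bn)\bq(r_0 u)\bigr)$ by the control of the compressible part via Proposition~\ref{lem-com1}, and $\f{R\ep}{C_v\gamma}r_0\,\p_\bn\tsigma\,\Pi u$ by \eqref{Es-tsigma3} on $\p_\bn\tsigma$. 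Combining these ingredients with the trace inequalities of the appendix closes the $\uhco^{m-1}$ bound on $\nabla q_2$ without losing powers of $\mu$ or $\kpa$.
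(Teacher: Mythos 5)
The decomposition $\nabla q_1 = \bq(G_1)$, $\nabla q_2 = \bq\bigl(G - G_1 + \mu\lambda_1\overline{\Gamma}\,r_0^{-1}\Delta v\bigr)$ differs from the paper's by a boundary corrector $g$, and this is not a cosmetic difference: without the corrector your $\nabla q_2$ estimate cannot close. The critical piece of the Neumann data for $q_2$ is $\mu\lambda_1\overline{\Gamma}\,r_0^{-1}\Delta v\cdot\bn = -\mu\lambda_1\overline{\Gamma}\,r_0^{-1}\curl\curl v\cdot\bn$. Expanding via \eqref{curlcurludotn}--\eqref{omegatimesn} yields a tangential divergence $(\Pi\p_{y^1}(\omega_{r_0}\times\bn))^1+(\Pi\p_{y^2}(\omega_{r_0}\times\bn))^2$ with $\omega_{r_0}=\curl(r_0 u)$, and reading $\omega_{r_0}\times\bn$ off \eqref{bcomernj}, the dangerous contribution is the tangential derivative of $\f{R\ep}{C_v\gamma}r_0\,\p_{\bn}\tsigma\,\Pi u$. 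Applying \eqref{highconormal} for $\|\na q_2\|_{\uhco^{m-1}}$ requires this boundary datum in $L_t^2\tilde{H}^{m-\frac{3}{2}}(\p\Omega)$, hence $\ep\mu\,|\p_{\bn}\tsigma\,\Pi u|_{L_t^2\tilde{H}^{m-\frac{1}{2}}}$, which by the trace inequalities is essentially $\ep\mu\,\|\na^2\tsigma\|_{\hco^{m-1}}$. But Lemma \ref{lemtsigma} only gives $\kpa^{\f{1}{2}}\|\na^2\tsigma\|_{\hco^{m-2}}\lesssim\lat\cE_{m,t}$ and $\kpa^{\f{1}{2}}|\p_{\bn}\tsigma|_{L_t^2\tilde{H}^{m-\f{3}{2}}}\lesssim\lat\cE_{m,t}$ — one conormal order short — and no combination of the available $\ep$- or $\mu$-weights repairs this loss.

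This is precisely why the paper introduces the corrector $g$ (the tangential divergence of $r_0^{-1}(\na r_0\times u)\times\bn$, up to the factor $\mu\lambda_1\overline{\Gamma}$), subtracts it from the Neumann data for $q_1$, and adds it to the Neumann data for $q_2$. After that shift, $\curl(r_0^{-1}\curl(r_0 u))\cdot\bn - g$ is a genuinely zeroth-order expression on the boundary and lands in $\tilde{H}^{m-\f{3}{2}}$ uniformly (estimate \eqref{r0curlcurlu}); meanwhile $g$ itself only needs to be bounded in the \emph{lower} boundary norm $\tilde{H}^{m-\f{5}{2}}$ with the $\ep\mu^{\f{1}{2}}$ gain (estimate \eqref{es-g}), which is exactly what the weaker target $\|\na q_1\|_{\hco^{m-2}}$ allows. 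So the split of the Neumann data must be finer than the clean $\bq(G_1)+\bq\bigl(G-G_1+\mu\lambda_1\overline{\Gamma}r_0^{-1}\Delta v\bigr)$ you propose. A secondary remark: to apply \eqref{highconormal} one also needs the interior source in conservative form, which comes from the precise identity $r_0^{-1}\Delta v = -\curl\bigl(r_0^{-1}\curl(r_0 u)\bigr)+\na(r_0^{-1})\times\curl(r_0 u)$ (the first term is divergence-free); your remark on ``trading derivatives'' points toward this, but it is exactly this identity that simultaneously produces the boundary term one must correct with $g$.
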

\begin{proof}
By the definition: $\na q=\bq  (G+\mu\lambda_1\overline{\Gamma}r_0^{-1}\Delta v )$ and the identity 
\beqs
r_0^{-1}\Delta v=-r_0^{-1}\curl\curl(r_0 u)=-\curl(r_0^{-1}\curl(r_0 u))+\na(r_0^{-1})\times \curl(r_0 u),
\eeqs
we find that $\na q$ solves the following elliptic equation (assume might as well that $\Omega$ is bounded)
\beq
\left\{
\begin{array}{l}
    \displaystyle \Delta q=\div \big(G +\mu\lambda_1\overline{\Gamma}\na(r_0^{-1})\times \curl(r_0 u)\big) \qquad\qquad\qquad\qquad\qquad\qquad\text{ in } \Omega, \\[4pt]
     \displaystyle \p_{\bn} q=\big(G +\mu\lambda_1\overline{\Gamma}\na(r_0^{-1})\times \curl(r_0 u)\big)\cdot\bn - \mu\lambda_1\overline{\Gamma}\curl (r_0^{-1}\curl(r_0 u))\cdot\bn \text{ on } \p \Omega,\\[4pt]
     \iomega q \,\d x =0 .
\end{array}
\right.
\eeq
Note that when $\Omega$ is an exterior domain, the zero mean condition should be replaced as $q(\cdot,x)\rightarrow 0 $ as $|x|\rightarrow+\infty.$
 The same changes should be made in the following. 
We define $\na q_1, \na q_2$ as the solutions to the following elliptic equations:
\beqs
\left\{
\begin{array}{l}
    \displaystyle \Delta q=\div G_1 \qquad\quad\text{ in } \Omega, \\[4pt]
     \displaystyle \p_{\bn} q_2=G_1\cdot\bn-g \quad \text{ on } \p \Omega, \\[4pt]
     \iomega q_1 \,\d x =0 ,
\end{array}
\right. \quad
\left\{
\begin{array}{l}
    \displaystyle \Delta q_2=\div G_2 \qquad\qquad\qquad\qquad\qquad\qquad\text{ in } \Omega, \\[4pt]
     \displaystyle \p_{\bn} q_2=G_2\cdot\bn - \mu\lambda_1\overline{\Gamma}\curl (r_0^{-1}\curl(r_0 u))\cdot\bn +g\text{ on } \p \Omega,\\[4pt]
     \iomega q_2 \,\d x =0 .
\end{array}
\right.
\eeqs
where $G_1$ is defined in \eqref{defG1} and
\beqs
G_2=G-G_1 +\mu\lambda_1\overline{\Gamma}\na(r_0^{-1})\times \curl(r_0 u),
\eeqs
\beq
   g=\mu\lambda_1\overline{\Gamma} \big(  (\Pi\partial_{y^1}(r_0^{-1}(\na r_0\times u)\times \bn))^1+(\Pi\partial_{y^2}(r_0^{-1}(\na r_0\times u)\times \bn))^2\big).
\eeq
It follows from the elliptic estimate \eqref{highconormal}, the estimates \eqref{es-G-1} for $G_1,$ \eqref{es-G-2} for $G-G_1,$ as well as estimates \eqref{es-g}, \eqref{r0curlcurlu}  for the boundary term that:
\begin{align*}
    \|\na q_1\|_{\hco^{m-2}}\lesssim \|G_1\|_{\hco^{m-2}}+ \sum_{j=0}^{m-2}|(\ep\pt)^j g|_{L_t^2{H}^{m-j-\f{5}{2}}}
    \lesssim  \ep \mu^{\f{1}{2}}\lat\cE_{m,t},
\end{align*}
\begin{align*}
    \|\na q_2\|_{\uhco^{m-1}}&\lesssim \big\|(G-G_1, \mu\na(r_0^{-1})\times \curl(r_0 u)\big\|_{\uhco^{m-1}}+ \mu |\curl (r_0^{-1}\curl(r_0 u))\cdot\bn-g|_{L_t^2\tilde{H}^{m-\f{3}{2}}}\\
    &\lesssim \lat\cE_{m,t}.
\end{align*}
\end{proof}
\begin{lem}
Similar to \eqref{sec4:eq12}, it can be verified that: 
\begin{equation}\label{es-g}
   \sum_{j=0}^{m-2}|(\ep\pt)^j g|_{L_t^2\tilde{H}^{m-j-\f{5}{2}}}
    \lesssim  \ep \mu^{\f{1}{2}}\lat\cE_{m,t},
\end{equation}
 \begin{equation}\label{r0curlcurlu}
     \mu |\curl (r_0^{-1}\curl(r_0 u))\cdot\bn-g|_{L_t^2\tilde{H}^{m-\f{3}{2}}}\lesssim\mu^{\f{1}{2}} \lat\cE_{m,t}. 
 \end{equation}
\end{lem}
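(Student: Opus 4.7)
The plan is to treat both estimates as refined boundary-term computations in the spirit of \eqref{sec4:eq12}, exploiting the algebraic structure of $r_0^{-1}(\na r_0\times u)$ on $\p\Omega$. The key observation I would isolate first is that the boundary conditions $u\cdot\bn|_{\p\Omega}=0$ and $\p_{\bn}\theta|_{\p\Omega}=0$, together with the definition \eqref{defr0}, imply that on $\p\Omega$ the gradient $\na r_0$ has tangential part equal to $-r_0\,\Pi(\na\theta)+\frac{R\ep}{C_v\gamma}r_0\,\Pi(\na\tsigma)$ and normal part equal to $\frac{R\ep}{C_v\gamma}r_0\,\p_{\bn}\tsigma\,\bn$. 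Splitting $\na r_0\times u$ accordingly and using the fact that the cross product of two tangential vectors is normal (hence annihilated by the subsequent $\times\bn$), together with $(\bn\times u)\times\bn=u$ when $u\cdot\bn=0$, I would derive the crucial identity
\[
\big(r_0^{-1}(\na r_0\times u)\times\bn\big)\big|_{\p\Omega}=\tfrac{R\ep}{C_v\gamma}\,\p_{\bn}\tsigma\,u\big|_{\p\Omega}.
\]
This collapse is the source of the $\ep$-factor that is decisive for \eqref{es-g} and confines the contribution of $\na r_0\times u$ in \eqref{r0curlcurlu} to lower-order normal derivatives.

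For \eqref{es-g}, I would restrict $g$ to $\p\Omega$ via the identity above to obtain
\[
g\big|_{\p\Omega}=\mu\lambda_1\overline{\Gamma}\,\tfrac{R\ep}{C_v\gamma}\sum_{i=1,2}\bigl(\Pi\p_{y^i}(\p_{\bn}\tsigma\,u)\bigr)^i\big|_{\p\Omega},
\]
then apply $(\ep\pt)^j$, invoke the boundary product estimate \eqref{product-bd}, and control the two factors by \eqref{Es-tsigma3} for $\kpa^{\f12}|\p_{\bn}\tsigma|_{\hcob^{m-\f32}}$ and by the trace inequality \eqref{traceL2} for $|u|$ in the corresponding tangential norms (which is dominated by $\|(\na u,u)\|_{\hco^{m-1}}$). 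Finally, converting $\mu\,\ep$ into $\ep\mu^{\f12}$ via $\mu\lesssim \mu^{\f12}\kpa^{\f12}$ (a direct consequence of \eqref{assumption-mukpa} and Remark \ref{rmkmuapproxkpa}) yields the target bound. The derivative budget is tight but fits, because $g$ already carries one tangential spatial derivative, so that $g\in L_t^2\tilde H^{m-j-\f52}$ corresponds to $\p_{\bn}\tsigma\,u\in L_t^2\tilde H^{m-j-\f32}$.

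For \eqref{r0curlcurlu}, my plan is to decompose $r_0^{-1}\curl(r_0 u)=\curl u+r_0^{-1}\na r_0\times u$, apply the identity $\curl X\cdot\bn=\div(X\times\bn)+X\cdot\curl\bn$ to $X=r_0^{-1}\na r_0\times u$, and then invoke \eqref{omegatimesn} to split $\div((r_0^{-1}\na r_0\times u)\times\bn)$ into its tangential-divergence part (which, by the definition of $g$, is precisely $g/(\mu\lambda_1\overline{\Gamma})$) plus the lower-order remainder $-\big((r_0^{-1}\na r_0\times u)\times\bn\big)\cdot\p_{\bn}\bn$. After cancellation with $g$, only three quantities remain to be estimated in the norm $\mu\,\hcob^{m-\f32}$: namely $\mu\,\curl\curl u\cdot\bn$, and $\mu$ times two zero-order-in-normal-derivative expressions built from $r_0^{-1}\na r_0\times u$. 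The first is bounded by $\mu^{\f12}\cE_{m,t}$ thanks to \eqref{sec4:eq12}; for the other two I would apply the trace inequality \eqref{traceL2} and the product estimate \eqref{roughproduct1}, together with \eqref{esr0-3}, which gives $\mu^{\f12}\|\na r_0\|_{\hco^{m-1}}\lesssim \lat\cE_{m,t}$, to conclude.

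The main obstacle I anticipate is the algebraic book-keeping of the first paragraph: correctly identifying which contributions of $\na r_0\times u$ on $\p\Omega$ vanish by the tangentiality of $u$ and which survive as $\ep\,\p_{\bn}\tsigma\,u$, and then ensuring in the proof of \eqref{r0curlcurlu} that the sum $\sum_i(\Pi\p_{y^i}(\,\cdot\,))^i$ produced by the local form of the divergence \eqref{omegatimesn} matches $g/(\mu\lambda_1\overline{\Gamma})$ exactly. Once these identities are in place, both bounds reduce to routine applications of the product, commutator and trace inequalities catalogued in Proposition \ref{prop-prdcom} and the appendix.
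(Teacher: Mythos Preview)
Your plan is correct and follows essentially the same route as the paper: both hinge on the boundary identity $r_0^{-1}(\na r_0\times u)\times\bn\big|_{\p\Omega}=\tfrac{R\ep}{C_v\gamma}\,\p_{\bn}\tsigma\,u\big|_{\p\Omega}$, then for \eqref{r0curlcurlu} on the decomposition $r_0^{-1}\curl(r_0 u)=\curl u+r_0^{-1}\na r_0\times u$, the identity $\curl X\cdot\bn=\div(X\times\bn)+X\cdot\curl\bn$, and \eqref{omegatimesn} to cancel $g$ exactly.

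One small gap in your plan for \eqref{r0curlcurlu}: after the cancellation you are left with bounding $\mu\,\bigl|r_0^{-1}\na r_0\times u\bigr|_{L_t^2\tilde H^{m-\f32}}$, and the trace inequalities \eqref{traceL2}/\eqref{normaltraceineq} at this level require one \emph{normal} derivative of $r_0^{-1}\na r_0\times u$ in $L_t^2H_{co}^{m-2}$, hence control of $\na^2 r_0$. Your cited estimate \eqref{esr0-3} only bounds $\mu^{\f12}\|\na r_0\|_{\hco^{m-1}}$ and does not supply this. The paper closes the step by also invoking \eqref{esr0} and \eqref{esr0-sec} (equivalently, the bounds on $\mu^{\f12}\na^2\theta$ and $\mu^{\f12}\na^2\tsigma$ already contained in $\cE_{m,t}$), writing the final bound as $\mu^{\f12}\|(\Id,\na)\bigl(u,(\Id,\mu^{\f12}\na)(\theta,\ep\tsigma)\bigr)\|_{\hco^{m-2}}\lat$. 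With that additional citation your argument is complete.
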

\begin{proof}
Taking the benefits of the boundary conditions  $u\cdot \bn|_{\p\Omega}=\p_{\bn}\theta|_{\p\Omega}=0$, we find that:
\begin{align}
r_0^{-1}(\na r_0\times u)\times \bn=r_0^{-1}u\p_{\bn}r_0 = \f{R\ep}{C_v\gamma}u \p_{\bn}\tilde{\sigma} \, \text{ on } \p\Omega.
\end{align}
Therefore, by the product estimate \eqref{product-bd}, the trace inequality 
\eqref{traceL2} and the estimate \eqref{Es-tsigma1}:
\begin{align*}
    \sum_{j=0}^{m-2}|(\ep\pt)^j g|_{L_t^2{H}^{m-j-\f{5}{2}}(\p\Omega)}
   & \lesssim \ep\mu |(u,\p_{\bn}\tsigma)|_{L_t^2\tilde{H}^{m-\f{3}{2}}}\lat\\
   &\lesssim \ep\mu^{\f{1}{2}} \|(u,\na (u, \tsigma, \mu^{\f{1}{2}} \na \tsigma))\|_{\hco^{m-2}}\lat \\
   & \lesssim  \ep\mu^{\f{1}{2}} \lat\cE_{m,t}.
\end{align*}
We now proceed to prove \eqref{r0curlcurlu}. Let us write 
\beqs
\curl (r_0^{-1}\curl(r_0 u))\cdot \bn-g=\curl\curl u\cdot\bn+\big(\curl(r_0^{-1}\na r_0\times u)\cdot\bn-g\big).
\eeqs
The first term $\curl\curl u\cdot\bn$ has been controlled in \eqref{sec4:eq12}, 
we thus focus on the second one. By using the identity 
$$(\curl  a)\cdot b=\div (a\times b)+ a\cdot \curl b, $$ 
one finds that:
\beqs 
\curl (r_0^{-1}\na r_0\times u)\cdot\bn=\div(r_0^{-1}(\na r_0\times u)\times \bn)+ r_0^{-1}(\na r_0\times u)\cdot\curl \bn.
\eeqs
 It follows from \eqref{normalofnormalder} that: 
\begin{align*}
\div(r_0^{-1}(\na r_0\times u)\times \bn)&=(r_0^{-1}(\na r_0\times u)\times\p_{\bn}\bn) \cdot\bn+g.
\end{align*}
The above two equalities, together with the trace inequality \eqref{traceL2} and estimates  \eqref{esr0}, \eqref{esr0-sec}  enable us to find that: 
\begin{align*}
   &\mu |\curl(r_0^{-1}\na r_0\times u)\cdot\bn-g|_{L_t^2\tilde{H}^{m-\f{3}{2}}}\lesssim \mu |r_0^{-1}\na r_0\times u|_{L_t^2\tilde{H}^{m-\f{3}{2}}}\\
   &\lesssim\mu^{\f{1}{2}} \|(\text{Id},\na) \big(u, (\Id, \mu^{\f{1}{2}}\na)(\theta,\ep\tsigma) \big)\|_{\hco^{m-2}}\lat\\
   &\lesssim \mu^{\f{1}{2}}\lat\cE_{m,t}.
\end{align*}
\end{proof}
In the next lemma, we present the estimate of  $[\bp, u\cdot\na](r_0 u),$ which is needed later for the estimate of $v.$ 
\begin{lem}
It holds that, 
for any $0<t\leq T,$ 
\begin{align}\label{es-bpcom}
   \big\|[\bp, u\cdot\na](r_0 u)\big\|_{\uhco^{m-1}}\lesssim T^{\f{1}{2}} \lat\cE_{m,t}.
\end{align}
\end{lem}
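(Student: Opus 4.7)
The plan is to exploit the Leray decomposition of $r_0 u$ in order to re-express the commutator as two pieces in which the top-order derivative of $u$ never acts alone on the full velocity $r_0 u$. Write $v=\bp(r_0 u)$ and $\na q=\bq(r_0 u)$, so that $r_0 u=v+\na q$, $\div v=0$, $v\cdot\bn|_{\p\Omega}=0$ and $\p_{\bn}q|_{\p\Omega}=0$. Using $\bp=\Id-\bq$, the pointwise identity $u\cdot\na\na q=\na(u\cdot\na q)-(\na u)^{\mathsf T}\na q$ (with $[(\na u)^{\mathsf T}\na q]_i=(\p_i u_j)\p_j q$), and the fact that $\bp$ annihilates the gradient of any scalar (which follows at once from the defining Neumann problem for $\bq$), a direct computation yields
\[
 [\bp,u\cdot\na](r_0 u)=-\bq(u\cdot\na v)-\bp\bigl((\na u)^{\mathsf T}\na q\bigr).
\]
The algebraic identity is the heart of the argument: it replaces $u\cdot\na(r_0 u)$, whose top-order factor $\na u$ is only uniformly bounded in $L_T^2H_{co}^{m-1}$ with weight $\mu^{1/2}$, by two contributions where the $\na u$ factor is always paired with a quantity enjoying extra elliptic regularity.

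For the first term, since $\div v=0$ and $v\cdot\bn|_{\p\Omega}=0$, the Leray projection gives $\bq(u\cdot\na v)=\na\Psi'$ with $\Delta\Psi'=\na u:\na v$ in $\Omega$ and $\p_{\bn}\Psi'=-(u\cdot\na\bn)\cdot v$ on $\p\Omega$. The conormal Neumann estimate \eqref{highconormal} combined with the trace inequality \eqref{normaltraceineq} reduces the problem to controlling $\|\na u:\na v\|_{L_t^{2}H_{co}^{m-2}}$ and the boundary term $|(u\cdot\na\bn)\cdot v|_{L_t^{2}\tilde H^{m-3/2}}$. Since $v=r_0 u-\na q$, and both $\|\na u\|_{L_T^\infty H_{co}^{m-2}}$ and $\|v\|_{L_T^\infty H_{co}^{m-2}}$ are uniformly controlled by $\Lambda\cE_{m,t}$ (via \eqref{EE-highest}–\eqref{EE-3}, \eqref{esr0-2}), the product estimates in Proposition~\ref{prop-prdcom} finish the bound, with a $T^{1/2}$ gain extracted by passing from $L_T^\infty$ to $L_T^{2}$.

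For the second term, the key point is that $\na q=\bq(r_0 u)$ admits an extra order of regularity over $r_0 u$: applying elliptic regularity to $\Delta q=\div(r_0 u)=r_0\div u+u\cdot\na r_0$ with $\p_{\bn}q=0$, the uniform bounds on $\|\div u\|_{L_T^\infty H_{co}^{m-2}}$ (obtainable from \eqref{EE-2}, \eqref{com-uniL2} via the relation \eqref{rew-sigma}) and on $\|\na r_0\|_{L_T^\infty H_{co}^{m-2}}$ from \eqref{esr0-2} deliver the uniform estimate $\|\na q\|_{L_T^\infty H_{co}^{m-1}}\lesssim \Lambda\cE_{m,t}$, together with an $L^\infty$ bound by Sobolev embedding. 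Writing $\bp=\Id-\bq$ and treating $\bq((\na u)^{\mathsf T}\na q)$ again by the elliptic Neumann estimate (where the awkward $\Delta u$ term in the source is resolved by integrating the troublesome top derivative onto $\na q$ via the identity $(\na u)^{\mathsf T}\na q=\na(u\cdot\na q)-u\cdot\na\na q$), the product estimate for $(\na u)^{\mathsf T}\na q$ systematically pairs the top-order spatial conormal derivative of $u$ against the $L^\infty$ bound on $\na q$, while any top-order derivative of $\na q$ is paired with $\|\na u\|_{L_{t,x}^\infty}$ from $\cA_{m,t}$. A $T^{1/2}$ factor is then extracted from one $L_T^\infty$–to–$L_T^2$ transition.

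The main obstacle is precisely that $\|\na u\|_{L_T^{2}H_{co}^{m-1}}$ carries only the weight $\mu^{1/2}$, so any naive product estimate loses uniformity in the Reynolds/Péclet parameters. The cancellation encoded in the identity above—wherein $\na u$ only ever appears paired with $\na q$ or $\na v$, objects that inherit extra regularity through their defining elliptic problems—is what makes the commutator uniformly bounded in $(\ep,\mu,\kpa)$, even though neither $\bp(u\cdot\na(r_0 u))$ nor $u\cdot\na\bp(r_0 u)$ individually is. The $\uhco^{m-1}$ structure (which excludes the highest pure time derivative) is also used to avoid the need to differentiate the equation $m-1$ times in time past the available uniform control.
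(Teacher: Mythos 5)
Your algebraic identity
\[
[\bp,u\cdot\na](r_0 u)=-\bq(u\cdot\na v)-\bp\bigl((\na u)^{\mathsf T}\na q\bigr)
\]
is correct, and it is in spirit the ``integrated'' version of what the paper computes pointwise: the paper applies $\div$, $\curl$, and the normal trace directly to the commutator and shows that $\div[\bp,u\cdot\na](r_0u)=\na u:\na v$ and $\curl[\bp,u\cdot\na](r_0u)$ is essentially $\na u\times\na^2 q$, then invokes the div-curl inequality to bound $\|\na[\bp,u\cdot\na](r_0u)\|_{\hco^{m-2}}$ and combines with the straightforward $\hco^{m-2}$ bound; your decomposition produces exactly these two pieces. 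Your treatment of $\bq(u\cdot\na v)$ via the Neumann problem is sound, though to get from the Neumann problem to $\|\na^2\Psi'\|_{\hco^{m-2}}\lesssim\|\na u:\na v\|_{\hco^{m-2}}+\cdots$ one should invoke \eqref{secderelliptic-1} (or \eqref{secderelliptic}) rather than \eqref{highconormal} alone, since the latter only bounds $\na\Psi'$ in terms of the same number of derivatives of $u\cdot\na v$ and hence does not by itself gain the extra order.

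The genuine gap is in the treatment of the piece $\bp\bigl((\na u)^{\mathsf T}\na q\bigr)$. You propose to bound $(\na u)^{\mathsf T}\na q$ in $\uhco^{m-1}$ by a product estimate that ``pairs the top-order spatial conormal derivative of $u$ against the $L^\infty$ bound on $\na q$.'' When $Z^I$ with $|I|=m-1$ (not pure time) falls on $\na u$ this requires a uniform control of $\|\na u\|_{L_t^2\underline H_{co}^{m-1}}$, but inspecting \eqref{defcEmsigmau} shows that the only bound available at that level is $\mu^{1/2}\|\na(\sigma,u)\|_{L_T^2 H_{co}^{m-1}}\lesssim\cE_{m,t}$, i.e.\ with a $\mu^{1/2}$ weight; without the weight that norm is \emph{not} controlled by $\cE_{m,t}$, so the product estimate as stated does not close uniformly in the Reynolds/P\'eclet parameters. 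The paper avoids this precisely by never putting $m-1$ conormal derivatives on $\na u$: the gradient of the commutator is estimated in $\hco^{m-2}$ through the div-curl decomposition, so at most $m-2$ conormal derivatives ever land on $\na u$, which \emph{is} uniformly controlled via $\|\na u\|_{L_T^\infty H_{co}^{m-2}}\lesssim\cE_{m,t}$ (giving the $T^{1/2}$ upon passage to $L_t^2$). Your decomposition could be rescued by applying the same device to $\bp((\na u)^{\mathsf T}\na q)$ — note it is divergence-free and satisfies $\bp(\cdot)\cdot\bn|_{\p\Omega}=0$, so the div-curl lemma bounds $\|\na\bp((\na u)^{\mathsf T}\na q)\|_{\hco^{m-2}}$ by $\|\curl((\na u)^{\mathsf T}\na q)\|_{\hco^{m-2}}=\|\na u\times\na^2 q\|_{\hco^{m-2}}$, which only needs $m-2$ derivatives of each factor — but as written your argument skips this step and therefore does not establish the claimed uniform estimate.
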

\begin{proof}
First, it follows from the product estimate \eqref{roughproduct1}, the elliptic estimate \eqref{highconormal} that:
\begin{align}\label{bpcomlow}
\big\|[\bp, u\cdot\na](r_0 u)\big\|_{\hco^{m-2}}&\lesssim \| u\cdot \na (r_0 u, v)\|_{\hco^{m-2}}\notag\\
&\lesssim \|(\text{Id},\na)(u,\theta, \ep\tsigma)\|_{\hco^{m-2}}\lat\lesssim  T^{\f{1}{2}} \lat\cE_{m,t}.
\end{align}
Next, one gets from the fact $\div \bp=0$ that:
\begin{align}\label{divbpcom}
 \big\|\div \big([\bp, u\cdot\na](r_0 u)\big)\big\|_{\hco^{m-2}}=\|\na u\cdot\na v
 \|_{\hco^{m-2}}\lesssim T^{\f{1}{2}} \lat\cE_{m,t}.
\end{align}
By writing $[\bp, u\cdot\na](r_0 u)=u\cdot\na \bq(r_0 u)-\bq (u\cdot\na(r_0 u)) $ and
using the fact $\curl \na=\curl \bq =0,$ we have that:
\begin{align}\label{curlbpcom}
 \big\|\curl \big([\bp, u\cdot\na](r_0 u)\big)\big\|_{\hco^{m-2}}=\|\na u\times \na\bq (r_0 u)\|_{\hco^{m-2}}\lesssim T^{\f{1}{2}} \lat\cE_{m,t}.
\end{align}
Moreover, it can be derived from
$$(u\cdot\na)|_{\p\Omega}=(u_{y^1}\p_{y^1}+u_{y^2}\p_{y^2})|_{\p\Omega},\quad (v\cdot\bn)|_{\p\Omega}=0, $$
that
$$[\bp, u\cdot\na](r_0 u)\cdot\bn|_{\p\Omega}=- (u\cdot\na)v\cdot\bn|_{\p\Omega}=(u\cdot\na )\bn \cdot v|_{\p\Omega},$$ which gives rise to
\beqs
|[\bp, u\cdot\na](r_0 u)\cdot\bn|_{L_t^2\tilde{H}^{m-\f{3}{2}}}\lesssim \|(u,\na u)\|_{\hco^{m-2}}\lat\lesssim T^{\f{1}{2}}\lat \cE_{m,t}.
\eeqs
This, together with \eqref{divbpcom} and \eqref{curlbpcom}, yields that:
\beq\label{nabpcom}
\big\|\na  \big([\bp, u\cdot\na](r_0 u)\big)\big\|_{\hco^{m-2}}\lesssim T^{\f{1}{2}} \lat\cE_{m,t}.
\eeq
In light of \eqref{bpcomlow} and \eqref{nabpcom}, 
we finish the proof of \eqref{es-bpcom}.
\end{proof}
The above lemmas in hand, we are now able to prove the uniform estimates for $v=\bp(r_0 u):$
\begin{prop}\label{lem-v-l2}
Assume that assumption \eqref{preasption} holds and 
$m\geq 7,$ then for any $\ep \in (0,1], (\mu,\kpa)\in A,$ for any $0<t\leq T,$  we have that 
\begin{align}\label{EE-v}
  \|v\|_{\infco^{m-1}}^2+\mu\|\na v\|_{\hco^{m-1}}^2\lesssim 
  Y_m^2(0)+(T+\ep)^{\f{1}{2}} \lae. 
\end{align}
\end{prop}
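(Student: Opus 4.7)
The plan is to perform a direct energy estimate on equation \eqref{eq-v} after applying a conormal vector field $Z^I$, $|I|\leq m-1$. To take advantage of the pointwise positivity $r_0\gtrsim c_0$ of the density-like weight, I would multiply the $Z^I$-differentiated equation by $r_0 Z^I v$ rather than by $Z^I v$ itself, which turns the diffusion term $-\mu\lambda_1\overline{\Gamma}r_0^{-1}\Delta Z^Iv$ into $-\mu\lambda_1\overline{\Gamma}\Delta Z^I v$ on which integration by parts yields the coercive dissipation $\mu\lambda_1\overline{\Gamma}\int_0^t\!\!\int_\Omega |\nabla Z^I v|^2\,dxds$. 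The transport term, after integrating in time and space, produces $\tfrac12\int_\Omega r_0|Z^Iv|^2(t)\,dx$ up to cubic terms controlled by $\interleave(\partial_t r_0,\nabla r_0,\div u)\interleave_{0,\infty,t}$ (handled via \eqref{esr0-2}) plus the standard commutator $[Z^I,u\cdot\nabla]v$ which is bounded via Proposition \ref{prop-prdcom}, \eqref{hardy-calculus}, and the a priori control of $(u,\nabla u)$ in $\cA_{m,t}$.

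The boundary term arising from integrating by parts the Laplacian, namely $\mu\lambda_1\overline{\Gamma}\int_0^t\!\!\int_{\p\Omega} Z^I\partial_\bn v\cdot Z^Iv\,dS_y\,ds$, does not vanish since $v$ only satisfies $v\cdot\bn|_{\p\Omega}=0$. I would split $Z^I\partial_\bn v$ into its tangential and normal parts: the normal part $(Z^I\partial_\bn v)\cdot\bn$ is rewritten via \eqref{normalofnormalder} as $Z^I\div v$ minus tangential derivatives of $v$, with $\div v=\div(r_0u)-\Delta q$ then controlled by the compressible-part estimate \eqref{com-uniL2} combined with Lemma \ref{prop-naq}; the tangential part $\Pi(Z^I\partial_\bn v)$ is expanded via the explicit formula \eqref{bc-v-2}, and the problematic contribution $\tfrac{R\ep}{C_v\gamma}r_0\p_\bn\tsigma\,\Pi u$ is controlled thanks to \eqref{Es-tsigma3}. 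Trace inequalities \eqref{traceL2}, \eqref{normaltraceineq} together with a small weight provided by Young's inequality (absorbing part of the dissipation) close these contributions with favorable powers of $\ep$. The commutator terms $[Z^I,r_0^{-1}]\Delta v$ and $[\nabla,Z^I]v$ from transferring $Z^I$ across the diffusion are handled using \eqref{esr0-3} and the fact that $\mu^{1/2}\|\nabla v\|_{L_t^2H_{co}^{m-1}}$ itself appears on the energy side.

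For the right-hand side, the projection commutator $[\bp,u\cdot\nabla](r_0u)$ is paired with $Z^I v$ via Cauchy--Schwarz and controlled directly by \eqref{es-bpcom}, yielding a prefactor $T^{1/2}\Lambda\cE_{m,t}^2$. For the source $G$, I use the splitting $G=G_1+(G-G_1)$ of Lemma~\ref{es-G-2}: the piece $G-G_1\in L_t^2\underline{H}_{co}^{m-1}$ is estimated by a direct Cauchy--Schwarz, while the residual $G_1$ only lies in $L_t^2H_{co}^{m-2}$ but carries the favorable weight $\ep\mu^{1/2}$. When $|I|=m-1$, I would write $Z^I=Z_j Z^{I'}$ with $|I'|=m-2$ and $Z_j$ either a spatial conormal field or $\ep\p_t$, and perform a tangential integration by parts to transfer $Z_j$ from $v$ onto $G_1$, estimating the resulting expression by $\|G_1\|_{L_t^2H_{co}^{m-2}}\|v\|_{L_t^2H_{co}^{m-2}}\lesssim \ep T^{1/2}\Lambda \cE_{m,t}^2$. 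The pressure is split analogously via Lemma \ref{prop-naq} into $\nabla q=\nabla q_1+\nabla q_2$, with $\nabla q_2$ treated like $G-G_1$ (noting $\int\nabla q_2\cdot Z^Iv$ generates only commutator terms since $\div Z^I v$ is of low order) and $\nabla q_1$ treated like $G_1$ via the same tangential integration-by-parts trick. Summing over $|I|\leq m-1$, absorbing the dissipative terms into the left-hand side, and applying Gr\"onwall to the residual $\Lambda(\cA_{m,t})\|v\|_{L_t^\infty H_{co}^{m-1}}^2$ factor yields \eqref{EE-v}. The main obstacle I anticipate is the simultaneous handling of the non-homogeneous boundary condition for $v$ and the low-regularity residuals $G_1,\nabla q_1$, since the natural pairing requires the $H_{co}^{m-1}$ norm that these objects do not possess; both are resolved only by the tangential integration-by-parts trick combined with the favorable prefactor $\ep\mu^{1/2}$, which is why the final estimate carries the extra $(T+\ep)^{1/2}$ factor rather than a power of $T$ alone.
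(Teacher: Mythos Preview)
Your overall strategy matches the paper's, but there is a genuine gap in the treatment of the highest pure time derivative $Z^I=(\ep\p_t)^{m-1}$. The integration-by-parts trick you propose for the low-regularity residuals $G_1$ and $\nabla q_1$ requires peeling off one derivative $Z_j$ and transferring it across the pairing; this works only when $Z_j$ is a \emph{spatial} conormal field, because the resulting extra derivative on $v$ (of the form $\p_y Z^I v$ or $Z_3 Z^I v$) is then controlled by the dissipation $\mu^{1/2}\|\nabla v\|_{L_t^2 H_{co}^{m-1}}$. If $Z_j=\ep\p_t$, integrating by parts in time produces $(\ep\p_t)^m v$, and no norm in $\cN_{m,t}$ controls this quantity (recall that $\underline{H}_{co}^m$ explicitly excludes the top time derivative). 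The paper handles $Z^I=(\ep\p_t)^j$, $j\le m-1$, \emph{separately and first}, without any energy estimate on \eqref{eq-v}: since $(\ep\p_t)^j$ commutes with $\bp$, one writes $\|v\|_{L_t^\infty\cH^{m-1,0}}+\mu^{1/2}\|\nabla v\|_{L_t^2\cH^{m-1,0}}$ directly in terms of $\|r_0 u\|_{L_t^\infty\cH^{m-1,0}}+\mu^{1/2}\|\nabla(r_0 u)\|_{L_t^2\cH^{m-1,0}}$ via elliptic regularity for $\bp$, and the latter is reduced to the already-established estimate \eqref{EE-3} for $u$ plus commutators $[(\ep\p_t)^j,r_0]u$ carrying an extra factor $\ep$. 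The energy argument on \eqref{eq-v} is then performed only for $Z^I$ containing at least one spatial conormal derivative, where your trick is valid.

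Two smaller corrections. First, after the (correct direction of) integration by parts, the bound is $\mu^{-1/2}\|G_1\|_{L_t^2H_{co}^{m-2}}\cdot\mu^{1/2}\|v\|_{L_t^2\underline H_{co}^{m}}$, not $\|G_1\|_{m-2}\|v\|_{m-2}$ as you wrote; the whole point is that the extra derivative landing on $v$ is absorbed by the dissipation, and the $\mu^{1/2}$ weight there matches the $\ep\mu^{1/2}$ in \eqref{es-G-1}. Second, $\div v=0$ identically since $v=\bp(r_0 u)$, so your proposed identity ``$\div v=\div(r_0u)-\Delta q$'' confuses the pressure $q$ in \eqref{eq-v} with the potential in the Helmholtz decomposition; the correct reduction of the normal part of the boundary term is simply $\p_\bn v\cdot\bn=-(\Pi\p_{y^1}v)^1-(\Pi\p_{y^2}v)^2$, which involves only tangential derivatives of $v$.
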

\begin{proof}
We first show that: 
\beq\label{timederv}
  \|v\|_{\infcoch^{m-1,0}}^2+\mu\|\na v\|_{\hcoch^{m-1,0}}^2\lesssim 
  Y_m^2(0)+(T+\ep)^{\f{1}{2}}\lae.
\eeq
Indeed, for any $j\leq m-1,$ it follows from \eqref{esr0-2}, \eqref{esr0-3} that
\begin{align*}
 & \big\|[(\ep\pt)^j, r_0] u\big\|_{L_t^{\infty}L^2}\lesssim \ep \|(\pt r_0, u)\|_{\infcoch^{m-2,0}}\lat\lesssim \ep \lat \cE_{m,t},\\
& \big\|[(\ep\pt)^j, r_0] \na u\big\|_{L_t^{2}L^2}\lesssim  \ep \|(\pt r_0, \na u)\|_{\hcoch^{m-2,0}}\lat\lesssim 
  \ep\lat\cE_{m,t}, \\
 &\mu^{\f{1}{2}} \|\na r_0 \otimes u\|_{\hcoch^{m-1,0}}\lesssim \mu^{\f{1}{2}}  \|(\na r_0, u)\|_{\hcoch^{m-1,0}}\lat
 \lesssim (T^{\f{1}{2}}+\ep)\lat\cE_{m,t}.
\end{align*}
Consequently, we obtain the following estimate
\begin{align*}
  \|r_0 u\|_{\infcoch^{m-1,0}}^2+ \mu\|\na (r_0 u)\|_{\hcoch^{m-1,0}}^2\lesssim \| u \|_{\infcoch^{m-1,0}}^2+ \mu\|\na u\|_{\hcoch^{m-1,0}}^2+ (T+\ep^2)\lat\cE_{m,t}^2,
\end{align*}
which, combined with estimate \eqref{EE-3},
elliptic estimates \eqref{secderelliptic}, \eqref{highconormal} and also the definition  $v=r_0 u-\bq (r_0 u),$ yields \eqref{timederv}.

 We now focus on the control of $\|v\|_{\uinfco^{m-1}}$ and $\mu^{\f{1}{2}}\|\na v\|_{\uhco^{m-1}}.$  Let $Z^I$ be a vector field with $|I|\leq m-1, I_0\leq m-2.$ Applying $Z^I$ on the equation 
\eqref{eq-v}, we find that: 
\begin{align}
  (\pt+u\cdot\na)Z^I v + Z^I \na q-\mu\lambda_1 \overline{\Gamma}\div Z^I(r_0^{-1}\na v)=\mu\lambda_1 \overline{\Gamma}[Z^I, \div](r_0^{-1}\na v)+H^{I}
\end{align}
where 
$$H^{I}=-[Z^I, u\cdot\na]v-\mu\lambda_1\overline{\Gamma}Z^I(\na r_0^{-1}\cdot\na v)+Z^I(G-[\bp,u\cdot\na](r_0 u)).$$

Taking the vector product of the resulting equation with $Z^I v,$ we find that:
\begin{align}\label{sec7:eq0}
&\f{1}{2} \iomega |Z^I v|^2(t)\,\d x+\mu\lambda_1 \overline{\Gamma}\izto r_0^{-1}|Z^I \na v|^2 \, \d x\d s\notag\\
& =\f{1}{2}\iomega |Z^I v|^2(0)\,\d x+\f{1}{2}\izto \div u |Z^I v|^2 \,\d x\d s\notag\\
&\quad-\mu\lambda_1 \overline{\Gamma}\izto Z^I( r_0^{-1}\na v)\cdot [\na,Z^I]v+[Z^I, r_0^{-1}]\na v\cdot Z^I\na v \,\d x \d s\\
&\quad+\izto  Z^I \na q \cdot Z^I v\,\d x\d s+\mu\lambda_1 \overline{\Gamma}\izt\int_{\p\Omega} \bn \cdot Z^I(r_0^{-1}\na v)\cdot Z^I v \, \d S_y \d s\notag\\ 
&\quad+ \mu\lambda_1\overline{\Gamma} \izto [ Z^I, \div](r_0^{-1}\na v)\cdot Z^I v\,\d x\d s+\izto Z^I v\cdot H^I \, \d x \d s\notag\\
& =\colon \cH_0+\cdots \cH_6. \notag
\end{align}
We now control $\cH_0-\cH_6.$ Firstly, let us bound $\cH_0-\cH_1$ as:
\begin{align}\label{ch01}
    \cH_0+\cH_1&\lesssim Y_m^2(0)+T\|Z^I v\|_{L_t^{\infty}L^2}^2\il\div u\il_{0,\infty,t}\lesssim Y_m^2(0)+T^{\f{1}{2}}\lat\cE_{m,t}^2.
\end{align}
Next, by using the identity  \eqref{comu} and the Cauchy-Schwarz inequality, we control $\cH_2$ as follows:
\beq\label{ch2}
\begin{aligned}
    \cH_2&= \mu\lambda_1 \overline{\Gamma}\izto Z^I( r_0^{-1}\na v)\cdot [\na,Z^I]v+[Z^I, r_0^{-1}]\na v\cdot Z^I\na v \,\d x \d s\notag\\
&\lesssim  \mu \|r_0^{-1}\na v\|_{\hco^{m-1}}\|\na v\|_{\hco^{m-2}}+\mu\|\na v\|_{\hco^{m-1}} \|[Z^I, r_0^{-1}]\na v\|_{L_t^2L^2}\notag\\
&\lesssim \|\mu^{\f{1}{2}}\na v\|_{\hco^{m-1}} \|(\mu^{\f{1}{2}}\na v, Z r_0)\|_{\hco^{m-2}}\lat 
\lesssim  T^{\f{1}{2}}\lae.
\end{aligned}
\eeq
Moreover, for the term $\cH_3=\izto Z^I \na q \cdot Z^I v \,\d x\d s,$ we apply Proposition \ref{prop-naq} to write: 
\beqs
\cH_3 = \izto Z^I\cdot  \na q_1 Z^I v \,\d x\d s+ \izto Z^I \na q_2\cdot  Z^I v \,\d x\d s.
\eeqs
By assumption, $Z^I$ contains at least one spatial tangential derivative, 
we thus estimate the first term by integration by parts in space and using the  estimate for $\na q_1$ in \eqref{naq12}:
\beqs
\izto Z^I \na q_1\cdot  Z^I v \,\d x\d s\lesssim \mu^{-\f{1}{2}}\|\na q_1\|_{\hco^{m-2}}
\|\mu^{\f{1}{2}} v\|_{\uhco^{m}}\lesssim \ep \lat\cE_{m,t}^2.
\eeqs
Thanks to the estimate for $\na q_2$ in \eqref{naq12}, we can control the second one directly by the Cauchy-Schwarz inequality: 
\begin{align*}
    \izto Z^I \na q_1 \cdot Z^I v \,\d x\d s\lesssim T^{\f{1}{2}} \|v\|_{\infco^{m-1}}\|\na q_2\|_{\uhco^{m-1}}\lesssim T^{\f{1}{2}} \lat\cE_{m,t}^2.
\end{align*}
The above two estimates then yield that:
\beq
\cH_3\lesssim (T^{\f{1}{2}}+\ep) \lat\cE_{m,t}^2.
\eeq
We now deal with the boundary term $\cH_4=\mu\lambda_1 \overline{\Gamma}\izt\int_{\p\Omega} \bn \cdot Z^I(r_0^{-1}\na v)\cdot Z^I v \, \d S_y \d s,$ which can be split further into three terms $\cH_4=\cH_{41}+\cH_{42}+\cH_{43}$ with 
\begin{align*}
    \cH_{41}&= \mu\lambda_1 \overline{\Gamma}\izt\int_{\p\Omega}Z^I(r_0^{-1}\p_{\bn} v\cdot\bn)(Z^I v\cdot\bn)\d S_y \d s,\\
     \cH_{42}&=\mu\lambda_1 \overline{\Gamma}\izt\int_{\p\Omega}Z^I(\Pi(r_0^{-1}\p_{\bn} v))\cdot \Pi Z^I v\, \d S_y \d s,\\
     \cH_{43}&=-\mu\lambda_1 \overline{\Gamma} \izt\int_{\p\Omega}\big([Z^I,\bn](r_0^{-1}\p_{\bn} v)\cdot (Z^I v\cdot \bn)+ 
    [Z^I,\Pi](r_0^{-1}\p_{\bn}v)\cdot \Pi Z^I v,\\
   & \qquad\qquad \qquad\qquad \qquad\qquad\qquad\qquad\qquad+[Z^I, \bn](r_0^{-1}\na v)\cdot Z^I v\big) \, \d S_y \d s.
\end{align*}
Let us begin with the treatment of $\cH_{41}.$ One may assume that $Z^I$ contains at least one spatial tangential derivative, that is $Z^I=\p_{y^j}Z^{\tilde{I}}, j=1,2.$ 
By the identity \eqref{normalofnormalder}, 
\beq\label{normalofnormal-v}
\partial_{\bn} v\cdot \bn=-(\Pi\partial_{y^1}v)^1-(\Pi\partial_{y^2}v)^2.
\eeq
Moreover, by the boundary condition \eqref{bc-v-1},  $$(Z^I v\cdot\bn)|_{\p\Omega}=-([Z^I,\bn]v)|_{\p\Omega},$$
we thus can use the duality between $H^{-1}(\p\Omega)$ and $H^{1}(\p\Omega)$, the trace inequality \eqref{traceL2} to get that:
\begin{align}\label{ch31}
    \cH_{41}&\lesssim \mu |r_0^{-1}\p_{\bn}v\cdot\bn|_{L_t^2\tilde{H}^{m-2}} \big|[Z^I ,\bn] v\big|_{L_t^2H^1}\notag\\
    &\lesssim \mu |v|_{L_t^2\tilde{H}^{m-1}}(|v|_{L_t^2\tilde{H}^{m-1}}+|(\theta,\ep\tsigma)|_{L_t^2\tilde{H}^{m-2}})\lat \notag\\
    &\lesssim  \mu^{\f{1}{2}}\|\na v\|_{\hco^{m-1}}\|v\|_{\hco^{m-1}}+\|v\|_{\hco^{m-1}}^2+ \|(\Id, \na)(\theta,\ep\tsigma)\|_{\hco^{m-2}}^2\lat\\
    &\lesssim (T^{\f{1}{2}}+\ep)\lat\cE_{m,t}^2. \notag
\end{align}
Note that in the derivation of the last inequality,  we have used the facts:
\begin{align}\label{sec7:eq5}
&\|v\|_{\hco^{m-1}}\lesssim T^{\f{1}{2}}\|v\|_{\infco^{m-1}}
\lesssim T^{\f{1}{2}}\|u\|_{\infco^{m-1}}\lesssim T^{\f{1}{2}}\cE_{m,t},\\ &\|(\theta,\na\theta)\|_{\infco^{m-2}}+\|(\Id,\na)(\sigma,\mu\div u)\|_{\hco^{m-2}}\lesssim \cE_{m,t}.\notag
\end{align}
Next, for $\cH_{42},$ we use the boundary condition \eqref{bc-v-2} to obtain that:
\begin{align*}
\cH_{42}&\lesssim\mu\ep |\p_{\bn}\tsigma \Pi u|_{L_t^2\tilde{H}^{m-\f{3}{2}}}|v|_{L_t^2\tilde{H}^{m-\f{1}{2}}}+ \mu\big|r_0^{-1}\Pi(\p_{\bn} v)-\f{R\ep}{C_v\gamma}\p_{\bn}\tsigma\Pi u\big|_{L_t^2\tilde{H}^{m-1}} |v|_{L_t^2\tilde{H}^{m-1}}\\
&\lesssim \big(\mu\ep |(\p_{\bn}\tsigma, \Pi u)|_{L_t^2\tilde{H}^{m-\f{3}{2}}}|v|_{L_t^2\tilde{H}^{m-\f{1}{2}}}+\mu |(u, v, r_0^{-1})|_{L_t^2\tilde{H}^{m-1}} |v|_{L_t^2\tilde{H}^{m-1}}\big)\lat.
\end{align*}
Applying the trace inequality \eqref{traceL2} and the property \eqref{sec7:eq5} as well as the estimate \eqref{esr0} for $r_0, r_0^{-1},$
one finds that: 
\begin{align}\label{ch32}
    &\cH_{42}\lesssim\ep 
    \|\mu^{\f{1}{2}}(u,\na u)\|_{\hco^{m-1}}\|\mu^{\f{1}{2}}(u,\na u,\na\tsigma, \na^2\tsigma)\|_{\hco^{m-2}}\lat\notag\\
    &\quad+\big(\|\mu^{\f{1}{2}}\na u\|_{\hco^{m-1}}\|u\|_{\hco^{m-1}}+ \|u\|_{\hco^{m-1}}^2+\|\mu^{\f{1}{2}}(\Id,\na)(\theta, \ep\tsigma)\|_{\hco^{m-1}}^2\big)\lat \\
    &\quad\lesssim (T^{\f{1}{2}}+\ep)\lae.\notag
\end{align}
Finally, the term $\cH_{43}$ can be controlled roughly by 
\beq\label{sec7:eq2}
\cH_{43}\lesssim  \mu |r_0^{-1}\na v|_{L_t^2\tilde{H}^{m-2}}|v|_{L_t^2\tilde{H}^{m-1}}\lesssim \mu |(\theta,\ep\tsigma, \na v)|_{L_t^2\tilde{H}^{m-2}}|v|_{L_t^2\tilde{H}^{m-1}}\lat.
\eeq 
By the identity \eqref{normalofnormal-v}, the boundary condition \eqref{bc-v-2}, we find that:
\begin{align*}
   \mu^{\f{1}{2}} |\p_{\bn} v|_{L_t^2\tilde{H}^{m-2}}&\lesssim  \mu^{\f{1}{2}}|v|_{L_t^2\tilde{H}^{m-1}}+ \mu^{\f{1}{2}} |\Pi (\p_{\bn}v)|_{L_t^2\tilde{H}^{m-2}}\\
    &\lesssim   \mu^{\f{1}{2}}|v|_{L_t^2\tilde{H}^{m-1}}+ \mu^{\f{1}{2}}|(u,v, \theta, \ep\tsigma, \ep\p_{\bn}\tsigma)|_{L_t^2\tilde{H}^{m-2}}\lat
\end{align*}
The trace inequality \eqref{traceL2} and the estimates \eqref{Es-tsigma3}
thus yield that
\begin{align}\label{sec7:eq8}
     \mu^{\f{1}{2}} |\na v|_{L_t^2\tilde{H}^{m-2}} &\lesssim  \mu^{\f{1}{2}}|v|_{L_t^2\tilde{H}^{m-1}} 
    + \mu^{\f{1}{2}} 
    \|(\theta,\na\theta,\na(\ep\tsigma,\ep\na\tsigma))\|_{\hco^{m-2}}\big)\lat\notag\\
    &\lesssim \mu^{\f{1}{2}}|v|_{L_t^2\tilde{H}^{m-1}}+ (T^{\f{1}{2}}+\ep) \lae.
\end{align}
Moreover, by the fact \eqref{sec7:eq5} and the trace inequality \eqref{traceL2},
we can get more precisely that:
\begin{align}\label{sec7:eq9}
    \mu^{\f{1}{2}}|v|_{L_t^2\tilde{H}^{m-1}}\lesssim \mu^{\f{1}{2}}(\|v\|_{\hco^{m-1}}+\|\na v\|_{\hco^{m-1}}^{\f{1}{2}}\|v\|_{\hco^{m-1}}^{\f{1}{2}})\lesssim T^{\f{1}{4}}\lat\cE_{m,t}.
\end{align}
Plugging the above two inequalities into \eqref{sec7:eq2}, we find:
\begin{align}
\cH_{43}\lesssim ( T^{\f{1}{2}}+\ep)\lat\cE_{m,t}^2,
\end{align}
which, together with \eqref{ch31} and \eqref{ch32}, yields:
\begin{align}\label{ch3}
    \cH_{4}\lesssim  (T^{\f{1}{2}}+\ep)\lae.
\end{align}
We now proceed to control the term $\cH_5=\mu\lambda_1\overline{\Gamma} \izto [Z^I, \div ](r_0^{-1}\na v)\cdot Z^I v\,\d x\d s.$ By using the identity \eqref{comu}, we integrate by parts to get that:
\begin{align*}
    \cH_5 \lesssim \|\mu^{\f{1}{2}}\na v\|_{\hco^{m-1}}\|\na v\|_{\hco^{m-2}}+\mu|r_0^{-1}\na v|_{L_t^2\tilde{H}^{m-2}}|v|_{L_t^2\tilde{H}^{m-1}},
\end{align*}
which, combined with \eqref{sec7:eq8}, \eqref{sec7:eq9} and the fact
\begin{align*}
 \|\mu^{\f{1}{2}}\na v\|_{\hco^{m-1}}+\|\na v\|_{\infco^{m-2}}&\lesssim \mu^{\f{1}{2}}\|\na (r_0u)\|_{\hco^{m-1}} + \|\na(r_0 u)\|_{\infco^{m-2}}\\
 &\lesssim \lat\cE_{m,t},
\end{align*}
leads to the estimate:
\begin{align}\label{ch4}
  \cH_5 \lesssim (T^{\f{1}{2}}+\ep) \lat\cE_{m,t}^2.  
\end{align}
It remains the control of $\cH_6=\izto Z^I v\cdot H^I\, \d x \d s$ 
 which can be split into two terms:
\beqs
\cH_6 =\izto Z^I  v\cdot Z^I G_1\, \d x \d s+ \izto Z^I  v\cdot Z^I  (H^I-Z^I G_1)\, \d x \d s
\eeqs
where $G_1$ is defined in \eqref{defG1}. Similar to the estimate of $\cH_2,$ we integrate by parts in space for the first term 
use the Cauchy-Schwarz inequality for both of these two terms:
\begin{align*}
\cH_6 &=\izto Z^I  v\cdot Z^I G_1\, \d x \d s+ \izto Z^I  v\cdot Z^I  (H^I-Z^I G_1)\, \d x \d s\\
&\lesssim  \|\mu^{-\f{1}{2}} G_1\|_{\hco^{m-2}}\|\mu^{\f{1}{2}} v\|_{\uhco^m}+ T^{\f{1}{2}}\|v\|_{\infco^{m-1}}\|H^I-Z^I G\|_{\hco^{m-1}}.
\end{align*}
Using the product estimate \eqref{roughproduct1} and the commutator estimate \eqref{roughcom}, one finds that:
\begin{align*}
   & \|\mu \na r_0^{-1}\cdot \na v\|_{\hco^{m-1}}\lesssim \|\mu^{\f{1}{2}}( \theta,\ep\tsigma, \na u, \na\theta,\ep\na\tsigma)\|_{\hco^{m-1}}\lat\lesssim \lae,\\
  & \| [Z^I, u\cdot\na ]v\|_{L_t^2L^2}\lesssim (\|\na v\|_{\hco^{m-2}}+\|u\|_{\hco^{m-1}})\lat\lesssim T^{\f{1}{2}}\lae,
\end{align*}
which, together with \eqref{es-G-2} and \eqref{es-bpcom}, leads to the estimate 
\beqs
\|\cH^I-Z^I G_1\|_{\hco^{m-1}}\lesssim \lae.
\eeqs
Therefore, we can estimate $\cH_6$ by using \eqref{es-G-1} for $G_1$ and the above inequality:
\beq\label{ch5}
\cH_6\lesssim (T^{\f{1}{2}}+\ep)\lae.
\eeq

Inserting \eqref{ch01}-\eqref{ch2}, \eqref{ch3}-\eqref{ch5} into \eqref{sec7:eq0}, 
we find by summing up for $|I|\leq m-1, I_0\leq m-2$ that
\begin{align}\label{sec7:eq10}
\|v\|_{\uinfco^{m-1}}^2+\mu\|\na v\|_{\uhco^{m-1}}^2\lesssim Y_m^2(0)+(T^{\f{1}{2}}+\ep)\lae.
\end{align}
In view of the  \eqref{timederv} and  \eqref{sec7:eq10},
we  finish the proof of \eqref{EE-v}.
\end{proof}
\begin{rmk}
We can now conclude that for any $\ep\in (0,1], (\mu,\kpa)\in A,$ any $0<t\leq T,$
\begin{align}\label{nauL2-uniform}
    \mu\|\na u\|_{\hco^{m-1}}^2\lesssim 
    Y_m^2(0)+(T+\ep)^{\f{1}{6}}\lae.
\end{align}
Indeed, in view of \eqref{com-uniL2}, \eqref{EE-v}, it stems from the estimate:
\beq\label{sec6:eq40}
\mu^{\f{1}{2}}\|\na\bp u\|_{\hco^{m-1}}\lesssim \mu^{\f{1}{2}} \|\na v\|_{\hco^{m-1}}+ (T^{\f{1}{2}}+\ep)\lae,
\eeq
which can be shown in the following way.  On the one hand, for any conormal vector fields
$Z^I, |I|\leq m-1,$ we write 
$$r_0 Z^I \na\bp u=Z^I(r_0 \na\bp u)-Z^I r_0\na\bp u-[Z^I, r_0, \na\bp u].$$
This gives rise to the estimate:
\begin{align}\label{sec6:eq41}
&\mu^{\f{1}{2}}\|\na\bp u\|_{\hco^{m-1}}\lesssim   \mu^{\f{1}{2}}\|r_0 \na\bp u\|_{\hco^{m-1}}\notag\\
  &\qquad+ T^{\f{1}{2}} \|\mu^{\f{1}{2}}Z r_0, \na\bp u\|_{\infco^{m-2}}
  \lab \mu^{\f{1}{2}}\il Z r_0\il_{m-4,\infty,t}+\il \na\bp u\il_{1,\infty,t}\big)\\
  &\lesssim   \mu^{\f{1}{2}}\|r_0 \na\bp u\|_{\hco^{m-1}}+T^{\f{1}{2}}\lae.\notag
\end{align}
On the other hand, 
by the identity $$\na v-r_0\na\bp u= \na\bp(r_0 u)-r_0\na\bp u=\na [\bp, r_0]u+\na r_0\cdot\bp u,$$
 the estimate 
\begin{align*}
\|\na [\bp, r_0]u\|_{\hco^{m-1}}&\lesssim \|\div ([\bp, r_0]u), \curl ([\bq, r_0]u)\|_{\hco^{m-1}}\\
&\lesssim \|(\na r_0\cdot \bp u, \na r_0\times \bq u)\|_{\hco^{m-1}},
\end{align*}
as well as \eqref{esr0-3}, one can find that:
\begin{align*}
  & \mu^{\f{1}{2}}\| \na v-r_0\na\bp u\|_{\hco^{m-1}}\notag\\
   &\lesssim 
   \|(\mu^{\f{1}{2}}\na r_0, u)\|_{\hco^{m-1}}\lab \il\mu^{\f{1}{2}}\na r_0\il_{[\f{m}{2}]-1,\infty,t}+ \il(u, \bq u)\il_{[\f{m}{2}],\infty,t}\big)\\
   &\lesssim (T^{\f{1}{2}}+\ep)\lae.\notag
\end{align*}
This, together with \eqref{sec6:eq41}, yields \eqref{sec6:eq40}.
\end{rmk}
\begin{lem}
Denote $u^j=(\ep\pt)^j u,$ and $v^j=\bp (r_0 u^j),$ we have the following estimate for 
$v^j,$ 
\begin{align}\label{EE-vj}
     \|v^j\|_{\infcoch^{m-1-j}}^2\lesssim Y_m^2(0)+(T+\ep)^{\f{1}{2}}\lae, \,\, \forall j=0,1\cdots m-1,
\end{align}
where we use $\cH^{k}=\cH^{0,k}$ to denote the norm that involves only spatial conormal derivatives.
\begin{proof}
In fact, \eqref{EE-vj} can be derived from the estimate \eqref{EE-v} and the following property:
\begin{align*}
    \|v^j\|_{\infcoch^{m-1-j}}\lesssim \|v\|_{\infco^{m-1}}+  \ep\lat\cE_{m,t},
\end{align*}
which, in light of the definition $$v^j=(\ep\pt)^j v+ \bp([r_0,(\ep\pt)^j]u),$$  
is the consequence of the following estimate:
\begin{align*}
   & \| \bp([r_0,(\ep\pt)^j]u)\|_{\infcoch^{m-1-j}}\lesssim \|[r_0,(\ep\pt)^j]u\|_{_{\infcoch^{m-1-j}}}\\
    &\qquad\lesssim \ep \|(\pt r_0, u)\|_{\infco^{m-2}}\lab \il (\pt r_0, u)\il_{[\f{m}{2}]-1,\infty,t}
    \big)\lesssim \ep\lae.
\end{align*}
\end{proof}
\end{lem}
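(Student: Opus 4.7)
The plan is to reduce the bound on $v^j=\bp(r_0 u^j)$ to the already-established bound on $v=\bp(r_0 u)$ from Proposition \ref{lem-v-l2} by peeling off the time derivatives. Since the Leray projection $\bp$ is a spatial operator (hence commutes with $\ep\pt$ in the sense of distributions, once we have enough regularity), I would write
\beqs
v^j \;=\; \bp\!\bigl((\ep\pt)^j(r_0 u)\bigr) - \bp\!\bigl([(\ep\pt)^j, r_0]\,u\bigr) \;=\; (\ep\pt)^j v \;-\; \bp\!\bigl([(\ep\pt)^j, r_0]\,u\bigr),
\eeqs
and then control the two pieces separately in $L_t^{\infty}\cH^{m-1-j}$.

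For the first piece, since $(\ep\pt)^j$ only uses $j$ of the $m-1$ available conormal derivatives, one trivially has
$\|(\ep\pt)^j v\|_{L_t^{\infty}\cH^{m-1-j}} \lesssim \|v\|_{L_t^{\infty}H_{co}^{m-1}}$, and the right-hand side is already estimated by \eqref{EE-v}. Note the remaining budget $m-1-j$ refers only to spatial conormal derivatives (which is consistent with the definition $\infcoch^{m-1-j}=L_t^{\infty}\cH^{0,m-1-j}$ used in the statement), so no additional time derivatives enter and we genuinely inherit \eqref{EE-v}.

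For the second piece, I would use that $\bp$ is bounded on $L^2$ and, more generally, on the conormal spaces $\cH^{0,l}$ used here (via elliptic estimates of the type already invoked throughout the paper), so it is enough to bound the commutator itself. Using the Leibniz-type identity
\beqs
[(\ep\pt)^j, r_0]\,u \;=\; \sum_{l=1}^{j}\binom{j}{l}\,(\ep\pt)^l r_0 \cdot (\ep\pt)^{j-l}u,
\eeqs
each term carries at least one factor $\ep\pt r_0$, so a full $\ep$ can be extracted. Combining the product estimate \eqref{roughproduct1} with the bound \eqref{esr0-2} on $(\pt,\na)r_0$ then yields
\beqs
\|[(\ep\pt)^j, r_0]\,u\|_{L_t^{\infty}\cH^{m-1-j}} \;\lesssim\; \ep\,\|(\pt r_0, u)\|_{L_t^{\infty}H_{co}^{m-2}}\,\Lambda\bigl(\tfrac{1}{c_0}, \cA_{m,t}\bigr) \;\lesssim\; \ep\,\lae.
\eeqs

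Adding the two bounds, squaring, and substituting \eqref{EE-v} gives exactly \eqref{EE-vj}. There is no serious obstacle here: the decomposition is essentially algebraic, and both ingredients (Proposition \ref{lem-v-l2} and the $C^1$-control of $r_0$ from \eqref{esr0-2}) are already in hand. The only mild care needed is verifying that $\bp$ acts boundedly on the relevant conormal space, which follows from the same elliptic machinery used to define and control $v$ itself earlier in Section 6.
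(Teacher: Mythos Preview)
Your proposal is correct and follows essentially the same approach as the paper: the identical decomposition $v^j=(\ep\pt)^j v-\bp\bigl([(\ep\pt)^j,r_0]u\bigr)$, the same reduction of the first piece to \eqref{EE-v}, and the same commutator estimate extracting a factor of $\ep$ via the Leibniz expansion together with \eqref{esr0-2}. The only cosmetic difference is the sign convention on the commutator, and your remark about the boundedness of $\bp$ on $\cH^{0,l}$ is exactly what the paper uses implicitly via the elliptic estimate \eqref{highconormal}.
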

\section{Uniform estimates for the gradient of the velocity}
In the previous section, we have controlled uniformly the incompressible part $v^j=\bp(r_0 u^j)$ of the unknowns $r_0 u^j=r_0 (\ep\pt)^j u \,(j=0,\cdots m-1) $
 in the space $L_t^{\infty}\cH^{m-1-j}.$   
In this section, we focus on the uniform estimate of $\na v^j$ in $L_t^{\infty}\cH^{m-2-j}$ which is crucial in order to control the $\infco^{m-2}$ norm of the gradient of the velocity.
\begin{prop}\label{prop-nauLinftyL2}
Suppose that \eqref{preasption}
 holds and $m\geq 7$, then there exists a constant $\vartheta_3>0$ such that for any $\ep \in (0,1], (\mu,\kpa)\in A,$ any $0<t\leq T,$  the following estimate holds:
 \begin{equation}\label{nablavLinfty}
    \begin{aligned}
  \|\nabla v^j\|_{L_t^{\infty}\cH^{m-2-j}}^2&\lesssim \lab Y_m(0)\big)
  +(T+\ep)^{\vartheta_3}\Lambda\big(\f{1}{c_0},\cN_{m,t}\big), \quad \forall\, j=0,\cdots m-2.
\end{aligned}
\end{equation}
\end{prop}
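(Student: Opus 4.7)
The plan is to reduce the estimate of $\nabla v^j$ near the boundary to that of the modified tangential vorticity $\omega_{r_0}^{\bn} = \curl(r_0 u) \times \bn$, and then to split $\omega_{r_0}^{\bn}$ into pieces that can each be controlled. Away from the boundary, conormal norms coincide with usual Sobolev norms, and the bound on $\|\nabla v^j\|_{\infcoch^{m-2-j}}$ follows from the interior elliptic regularity for $\bp(r_0 u^j)$ together with \eqref{EE-vj}. On a boundary chart $\Omega_i$, the identity \eqref{normalofnormalder} applied to $r_0 u^j$ and the decomposition $\Pi(\p_{\bn}(r_0 u^j)) = \Pi(\curl(r_0 u^j)\times \bn) - \Pi((D\bn)(r_0 u^j))$, combined with $\div(r_0 u^j) = r_0 \div u^j + \na r_0 \cdot u^j$ and the bounds \eqref{esr0-2}--\eqref{esr0-3} on $r_0$, reduce the matter to controlling $\omega_{r_0}^{\bn, j} \colon= \curl(r_0 u^j)\times \bn$ in $\infcoch^{m-2-j}$, to which I will reduce by a recursion on $j$ using \eqref{EE-vj} and the equation \eqref{eqr0u}.

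Next, I would derive from \eqref{eqr0u} the transport–diffusion equation solved by $\omega_{r_0}^{\bn}$, namely
\beqs
(\pt + u\cdot\na - \mu\lambda_1 \overline{\Gamma} r_0^{-1}\Delta)\omega_{r_0}^{\bn} = \curl G_{r_0} \times \bn + \cdots, \quad \omega_{r_0}^{\bn}\big|_{\p\Omega} = \f{R\ep}{C_v\gamma} r_0 \p_{\bn}\tsigma\, \Pi u + 2 r_0 \Pi(-au + (D\bn)u),
\eeqs
with $G_{r_0}$ defined in \eqref{defGr0}. Straightening the boundary by the normal geodesic change of variables $\Psi$, I then split $\omega_{r_0}^{\bn}\circ \Psi = \zeta_3 + \zeta_4 + \zeta_5$, where $\zeta_3$ carries the delicate part of the boundary trace involving $\p_{\bn}\tsigma$ (via a frozen-coefficient half-line heat equation solvable by an explicit Green kernel), $\zeta_4$ carries the remainder of the Dirichlet datum, and $\zeta_5$ solves \eqref{eqzeta5-intro} with homogeneous Dirichlet data and the genuine source $(\curl G_{r_0}\times \bn)^\Psi$ plus transport/diffusion remainders from $\zeta_3, \zeta_4$. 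The estimates on $\zeta_3, \zeta_4$ will follow from the parabolic Green-function bounds recorded in the appendix together with the uniform trace bound \eqref{Es-tsigma3} on $\p_{\bn}\tsigma$; it is crucial here that the replacement $\sigma \mapsto \tsigma$ inside the weight $r_0$ is precisely what makes the boundary datum uniformly controllable in $\kpa$.

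For $\zeta_5$ I would perform a conormal energy estimate. Two terms are problematic: the source $\curl G_{r_0}\times \bn$, which at worst contains $\kpa\na^3\theta$ (uniformly controlled only with weight $\kpa$ in $\hco^{m-2}$), and the commutator $\mu \int Z^I(r_0^{-1}) \p_z\zeta_5 \cdot \p_z Z^I\zeta_5$ produced by the variable diffusion coefficient $r_0^{-1}$, whose highest conormal derivative is not uniformly bounded in $\kpa$. For the first I would expand, using \eqref{timeder-r0}, $G_{r_0} = \lambda_1\overline{\Gamma}(\mu - \kpa/(C_v\gamma\lambda_1)) u \Delta\theta + \cdots$ so that the assumption \eqref{assumption-mukpa} absorbs the mismatch between the two diffusion rates against $\kpa^{1/2}\|\Delta\theta\|_{L_t^2L^2}$. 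For the second I would further split $\zeta_5 = \zeta_{51} + \zeta_{52}$, with $\zeta_{51}$ designed to absorb the source (via a simpler half-line heat equation frozen on the boundary) so that $\|\p_z\zeta_{51}\|_{L_t^2L^2} \lesssim \lab Y_m(0)\big) + \lae$, while $\mu^{1/2}\p_z\zeta_{52}$ is pointwise bounded through a maximum-principle/Green-kernel argument; the latter requires the compatibility $u_0^\ep|_{\p\Omega}=0$ and justifies the corresponding hypothesis in Theorem \ref{thm1}.

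The main obstacle is the $\zeta_5$ estimate: the interaction between the viscous boundary layer of width $(\ep\mu)^{1/2}$ and the thermal boundary layer of width $\kpa^{1/2}$ forces both the proportionality $\mu\sim\kpa$ and the finer relation \eqref{assumption-mukpa}, and the splitting $\zeta_5=\zeta_{51}+\zeta_{52}$ must be done so that the bad commutator can be bounded either by $\il r_0^{-1}\il_{m-2,\infty,t}\|\p_z\zeta_{51}\|_{L_t^2L^2}$ or by $\|Z r_0^{-1}\|_{\hco^{m-3}}\il\mu^{1/2}\p_z\zeta_{52}\il_{0,\infty,t}$. Once $\zeta_3, \zeta_4, \zeta_5$ are all controlled, assembling the pieces and inverting $\Psi$ yields $\|\omega_{r_0}^{\bn, j}\|_{\infcoch^{m-2-j}}$, hence \eqref{nablavLinfty}, where the loss of a power $(T+\ep)^{\vartheta_3}$ comes from time integration in the energy estimate of $\zeta_5$ and the absorption of the commutator term using the $L_{t,x}^\infty$ control of $\theta, \ep\tsigma$ already secured in Sections 4 and 5.
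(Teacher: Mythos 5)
Your proposal accurately reproduces the paper's strategy for the hardest case $j=0$: reduce $\nabla v$ near the boundary to the modified tangential vorticity $\curl(r_0 u)\times\bn$, straighten the boundary via normal geodesic coordinates, then split as $\zeta_3+\zeta_4+\zeta_5$, with the further split $\zeta_5=\zeta_{51}+\zeta_{52}$ and the assumption \eqref{assumption-mukpa} absorbing the worst interaction between the thermal and viscous boundary layers.

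Two points deserve attention. First, the proposal does not actually give an argument for $1\leq j\leq m-2$: the ``recursion on $j$'' you invoke is not what the paper does, and it is not clear what it would mean. The paper handles $j\geq 1$ by a \emph{different}, simpler two-way split $\zeta^j=\zeta_1^j+\zeta_2^j$ (Subsection 8.1, Propositions \ref{lemze1}--\ref{lemze2}). The cases must be distinguished because the $\zeta_1^j$ Green-function estimate only needs at most $m-1-j$ conormal derivatives on the boundary datum, so $|b_0|_{L_t^\infty H^{m-2}(\mathbb{R}^2)}$ suffices when $j\geq 1$; but for $j=0$ one would need $|b_0|_{L_t^\infty H^{m-1}(\mathbb{R}^2)}$, which is available only with a $\kappa^{1/2}$ weight, and the three-way split is engineered precisely to avoid asking for it. If you want to use the three-way split uniformly in $j$, you would have to verify it carries through with the boundary datum replaced by its $j$-th time-derivative analogue $\f{R\ep}{C_v\gamma}r_0\p_{\bn}\tsigma\,\Pi u^j+\cdots$ from \eqref{bcomernj}, which is plausible but is not done.

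Second, the roles of $\zeta_3$ and $\zeta_4$ are swapped in your description. In the paper's \eqref{eqzeta3}, $\zeta_3$ carries the $O(1)$ part of the boundary datum (the Navier-slip and convective pieces) and solves a heat equation with the boundary-frozen \emph{variable} coefficient $b_2$; in \eqref{eqzeta4} it is $\zeta_4$ that carries the $O(\ep\mu)$ remnant of $\p_{\bn}\tsigma$ extracted via the boundary trace of \eqref{NCNS-S2}, and $\zeta_4$ solves a standard \emph{constant}-coefficient heat equation with diffusion $\mu$. The swap does not change the overall strategy, but it matters when carrying out the Green-kernel estimates: the $\zeta_3$ kernel must be handled carefully because $b_2$ has limited conormal regularity, whereas the $\zeta_4$ analysis gains tangential smoothing from the explicit Gaussian with constant coefficient.
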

\begin{proof}
Since in the interior domain, the conormal spatial derivatives are equivalent to the standard spatial derivatives, we only have to estimate $\nabla v$ near the boundary, say
$\|\chi_{i} \nabla v^j\|_{L_t^{\infty}\cH^{m-2-j}}$  where
 $\chi_i, (i=1\cdots N)$ are smooth functions associated to the covering \eqref{covering1} and are compactly supported in $\Omega_i.$ Close to the boundary, it follows from the  identities \eqref{normalofnormal-v} and 
\beq\label{tan-nor}
\Pi(\partial_{\bn} v^j)=\Pi ((\nabla  v^j-D v^j)\bn)+\Pi((D v^j)\bn)=\Pi(\curl v^j \times \bn)+\Pi \na (v^j\cdot\bn) +\Pi(-(D \bn) v^j)
\eeq
that (since $\Pi \na$ only involves tangential derivative):
\beqs
\begin{aligned}
\|\chi_{i}\nabla v^j\|_{L_t^{\infty}\cH^{m-2-j}}&\lesssim
\|\chi_{i}\Pi(\partial_{\bn} v^j)\|_{L_t^{\infty}\cH^{m-2-j}}+\|v^j\|_{L_t^{\infty}\cH^{m-1-j}}\\
&\lesssim \|\chi_i(\curl v^j\times\bn)\|_{L_t^{\infty}\cH^{m-2-j}}+\|v^j\|_{L_t^{\infty}\cH^{m-1-j}}.
\end{aligned}
\eeqs
Consequently, this proposition can be proved by combining the estimate \eqref{EE-vj} for
$v^j$ and the next lemma for $\curl v^j \times\bn.$
\end{proof}

\begin{prop}\label{omeganLinfty}
Under the same assumption as in Proposition \ref{prop-nauLinftyL2}, there exists $\vartheta_3>0,$ such that
 for any $\ep \in (0,1], (\mu,\kpa)\in A,$  any $0<t\leq T, $ any $j=0,\cdots m-2$, it holds that:
\begin{equation}\label{ineqofomegatimesn}
    \begin{aligned}
  \sup_{i\in \{1,\cdots N\}}\|\chi_i(\omega_{r_0}^j\times \bn)\|_{L_t^{\infty}\cH^{m-2-j}}^2&\lesssim \lab Y_m(0)\big)
  +(T+\ep)^{\vartheta_3}\Lambda\big(\f{1}{c_0},\cN_{m,t}\big), 
    \end{aligned}
\end{equation}
where $\omega_{r_0}^j=\curl v^j= \curl(r_0 (\ep\pt)^j u),$ and $\chi_{i}$ is a smooth function compactly supported in $\Omega_i.$
\end{prop}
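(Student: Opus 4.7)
The plan is to study $\omega_{r_0}^{\bn,j} \colonequals \curl(r_0 (\ep\pt)^j u) \times \bn$ directly on each chart $\Omega_i$, viewing it as the solution of a transport-diffusion equation with \emph{non-homogeneous} Dirichlet boundary data, and then to follow the splitting scheme sketched at the end of Subsection 1.3.3. Applying $(\ep\pt)^j$ to \eqref{eqr0u} and taking curl, one finds after straightforward manipulations an equation of the form
\[
(\pt + u\cdot\nabla)\omega_{r_0}^j - \mu\lambda_1 \overline{\Gamma}\, r_0^{-1}\Delta \omega_{r_0}^j = \curl G^j_{r_0} + R^j,
\]
where $G^j_{r_0}$ is the natural generalization of $G_{r_0}$ in \eqref{defGr0} to the $j$-th time derivative and $R^j$ gathers commutator terms that, by Proposition~\ref{prop-prdcom} together with the already-established bounds on $\theta$, $\na^2 \sigma$ and $v^k$ ($k\le j$), satisfy $\|R^j\|_{L_t^2 \cH^{m-2-j}}\lesssim \Lambda(1/c_0,\cN_{m,t})$. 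Crossing with $\bn$ and using \eqref{defbdomeganr0}, the vector field $\omega_{r_0}^{\bn,j}$ then carries the Dirichlet data
\[
\omega_{r_0}^{\bn,j}\big|_{\partial\Omega} = (\ep\pt)^j\Big(\tfrac{R\ep}{C_v\gamma}\, r_0\,\p_{\bn}\tsigma\, \Pi u + 2 r_0 \Pi(-a u + (D\bn)u)\Big)\Big|_{\partial\Omega} + (\text{l.o.t.}),
\]
whose time-differentiated trace we can control in $L_t^2\tilde{H}^{m-j-3/2}(\partial\Omega)$ uniformly in $(\ep,\mu,\kpa)$ thanks to the trace inequality \eqref{normaltraceineq}, the estimate \eqref{Es-tsigma3} for $\p_{\bn}\tsigma$, and the bounds \eqref{sec7:eq5} on $u$.

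In the normal geodesic coordinates $\Psi$ associated with $\Omega_i$, I would then write $\omega_{r_0}^{\bn,j}\circ\Psi = \zeta_3^j + \zeta_4^j + \zeta_5^j$ with boundary conditions carefully split: $\zeta_3^j|_{z=0}$ carries the singular $\p_{\bn}\tsigma\,\Pi u$ contribution, $\zeta_4^j|_{z=0}$ carries the remaining regular boundary data, and $\zeta_5^j|_{z=0}=0$. The first two solve constant-in-$z$ coefficient heat equations of the type \eqref{defzeta1-intro} (with $r_0$ frozen at $z=0$) and with vanishing source, so their $L_t^2\cH^{m-2-j+1}$ norms can be read off from the Green-function bound \eqref{appen-5} applied to boundary data in the appropriate $L_t^2\tilde{H}^{s}$ spaces. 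This is exactly where the correction from $\sigma$ to $\tsigma$ in the weight $r_0$ pays off: \eqref{Es-tsigma3} rather than a naive bound on $\p_{\bn}\sigma$ lets $\zeta_3^j$ be controlled uniformly in $\kpa$. The function $\zeta_5^j$ then satisfies a convection-diffusion problem of the form \eqref{eqzeta5-intro} with homogeneous Dirichlet data and source $\cS^j_5$ involving $(\curl G^j_{r_0}\times\bn)^{\Psi}$ together with $\tilde u\cdot\nabla \zeta_{3,4}^j$ and commutators with $\Delta_g$; it is amenable to direct energy estimates in $L_t^{\infty}H_{co}^{m-2-j}$.

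The main obstacle is the energy estimate for $\zeta_5^j$. When we test the equation by $Z^I \zeta_5^j$ with $|I|=m-2-j$, two problematic terms appear, exactly as in \eqref{zeta5-badterm}: the contribution from $\curl G^j_{r_0}\times\bn$ and the term $\mu\int Z^I((1/r_0)^{\Psi})\p_z\zeta_5^j \cdot \p_z Z^I\zeta_5^j$. For the first, I would integrate by parts in space and exploit the identity
\[
G^j_{r_0} = \lambda_1 \overline{\Gamma}\Big(\mu - \tfrac{\kpa}{C_v\gamma\lambda_1}\Big) u\,\Delta\theta + (\text{l.o.t.})
\]
(plus time-derivative commutators that do not increase the normal-derivative count on $\theta$), so that the relation \eqref{assumption-mukpa} together with the bound $\kpa^{1/2}\|\na^2\theta\|_{\hco^{m-1}}\lesssim \cE_{m,t}(\theta)$ absorbs the loss. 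For the second, the plan is to further split $\zeta_5^j = \zeta_{51}^j + \zeta_{52}^j$ as in the introduction so that $\|\p_z \zeta_{51}^j\|_{L_t^2 L^2}$ is controlled via the model heat problem with forcing $\sim \mu^{-1} G^j_{r_0}$ (again using \eqref{assumption-mukpa} to gain one power of $\mu$), while $\mu^{1/2}\il\p_z\zeta_{52}^j\il_{0,\infty,t}$ is controlled via the compatibility condition $u_0|_{\partial\Omega}=0$ which, after iterating the equations, implies that the Dirichlet-type data driving $\zeta_{52}^j$ vanishes to the required order at $t=0$. Collecting these estimates with the previously proved bounds on $v^k$, $\theta$, $\na\sigma$ and $\na^2\sigma$, and absorbing the small $T,\ep$ factors produced by Young's inequality at each step, yields \eqref{ineqofomegatimesn} for a suitable $\vartheta_3>0$.
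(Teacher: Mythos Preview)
Your high-level architecture---split off boundary-layer pieces solving frozen-in-$z$ heat equations, leave a homogeneous-Dirichlet remainder for energy estimates, and further decompose the remainder as $\zeta_{51}+\zeta_{52}$ to handle the dangerous commutator $\mu^{1/2}\cZ^{\gamma}((1/r_0)^{\Psi})\p_z\zeta_5$---is the right one and matches the paper. But there is a genuine gap in the $j=0$ case that your uniform-in-$j$ scheme does not close.

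The problem is exactly the one flagged in Subsection~1.3.2. If both $\zeta_3^0,\zeta_4^0$ solve the heat equation with coefficient $b_0=(1/r_0)^{\Psi}|_{z=0}$, then to control the convection term $\tilde u\cdot\nabla\zeta_{3,4}^0$ in the source of $\zeta_5^0$ you need $\|\zeta_{3,4}^0\|_{m-1,t}$, and the Green-function estimate \eqref{appen-5} at order $m-1$ forces control of $|\p_y b_0|_{L_t^{\infty}H^{m-2}(\mR^2)}$. Since $r_0$ depends on $\Gamma\div u$, this is not uniformly available at that order. The paper therefore treats $j=0$ and $j\ge 1$ separately. For $j\ge 1$ only the simpler two-piece splitting $\zeta_1^j+\zeta_2^j$ of \eqref{eqzeta1}--\eqref{eqzeta2} is used (and no $\zeta_{51}/\zeta_{52}$ is needed, because the commutator index stays $\le m-3$). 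For $j=0$ the paper (i) replaces $b_0$ by $b_2=(1/r_2)^{\Psi}|_{z=0}$ with $r_2=r_2(\theta,\ep\sigma)$ in the equation for $\zeta_3$, so that $b_2$ has the needed $H^{m-1}$-trace regularity; (ii) lets $\zeta_4$ solve the \emph{constant}-coefficient equation $\pt\zeta_4+\mu(1-\Delta)\zeta_4=0$, so that Fourier analysis yields the horizontal smoothing; and (iii) arranges the boundary data so that $\zeta_4|_{z=0}=O(\ep\mu)$ via the identity $\p_{\bn}\tsigma-\tfrac{1}{R\beta}(\ep u\cdot\nabla)\bn\cdot u=-\ep\mu(\cdots)$ on $\partial\Omega$ (so your description of which piece carries the ``singular'' data is inverted). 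The mismatches $b_2-b_0$ and the $\zeta_4$-correction are then absorbed into $\cS_3,\cS_4$ in the source of $\zeta_5$, controlled by \eqref{s3L2}, \eqref{es-cS4}; only after this does the $\zeta_{51}/\zeta_{52}$ split and the relation \eqref{assumption-mukpa} enter, exactly as you describe.
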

 \begin{proof}
Let us first  derive the equation satisfied by  $\omer^j=\colon \curl(r_0 (\ep\pt)^j u).$ 
Applying time derivatives $(\ep\pt)^j $ on the equations $\eqref{NCNS-S2}_2,$ we find that $u^j=\colon (\ep\pt)^j u$ solves the equation:
\beq 
\f{1}{R\beta}(\pt+ u\cdot\na ) u^j+ \f{\na\tsigma^j}{\ep}+\mu\lambda_1\Gamma \curl\curl u^j=\cC_{u}^j,
\eeq
where 
\beqs 
\tilde{\sigma}^j= (\ep\pt)^j \tilde{\sigma}= (\ep\pt)^j (\sigma-\ep\mu(2\lambda_1+\lambda_2)\Gamma\div u),
\eeqs
\beq\label{defCuj}
\begin{aligned}
\cC_{u}^j=\colon&-[(\ep\pt)^j, \f{1}{R\beta}]\pt u- [(\ep\pt)^j, \f{1}{R\beta}u\cdot\na ]u\\
&-\mu\lambda_1 [(\ep\pt)^j,\Gamma]\curl\curl u -\mu(2\lambda_1+\lambda_2)(\ep\pt)^j(\div u\na\Gamma).
\end{aligned}
\eeq
Similar to the derivation of \eqref{eqr0u}, we rewrite  this equation as:
\beq\label{eqr0uj}
(\pt+u\cdot\na)(r_0 u^j)+\ep^{-1}(1+\ep r_1)\na{\tsigma^j}+\mu\lambda_1 \overline{\Gamma}r_0^{-1}\curl\curl(r_0 u^j)=G^j
\eeq
where 
\begin{align}\label{def-Gj}
G^j&=u^j(\pt+u\cdot\na)r_0+\mu\lambda_1\overline{\Gamma}r_0^{-1}[\curl\curl, r_0]u^j\notag\\
&\qquad- \ep\mu \lambda_1r_1\Gamma\curl\curl u^j-\mu\lambda_1(\Gamma-\overline{\Gamma})\curl\curl u^j+(1+\ep r_1)\cC_u^j. 
\end{align}
Applying the curl of the equations \eqref{eqr0uj}, we find that $\omega_{r_0}^j$ solves the equation:
 \beq\label{eqomegar0}
 (\pt+ u\cdot\na)\omer^j -\mu \lambda_1\overline{\Gamma} r_0^{-1} \Delta\omer^j =\curl G^j-\sum_{l=1}^3\na u_{l}\times \p_{l}(r_0 u^j) +\mu\lambda_1 \overline{\Gamma}  \na (r_0^{-1} )\times \curl \omer^j. 
 \eeq
Taking the cross product of the above equations with $\chi_i\bn,$ we can  derive further the equation for $\chi_i \omer^j \times \bn:$ 
 \beq\label{eqomern}
 (\pt+ u\cdot\na)(\chi_i \omer^j \times \bn) -\mu \lambda_1\overline{\Gamma} r_0^{-1} \Delta(\chi_i \omer^j \times \bn)=H_i^j,
 \eeq
where 
$H_i^j=H_{i1}^j+H_{i2}^j$ 
with 
\beq\label{defH}
\begin{aligned}
H_{i1}^j&=\colon \big(\curl G^j+\mu\lambda_1 \overline{\Gamma}  \na (r_0^{-1} )\times \curl \omer^j -\sum_{l=1}^3\na u_{l}\times \p_{l}(r_0 u^j)\big) \times \chi_i \bn,\\
H_{i2}^j&=\omer^j\times (u\cdot \na (\chi_i\bn))-\mu\lambda_1\overline{\Gamma} r_0^{-1}
\big( 2\div(\omer^j\times\na (\chi_i\bn))-\omer^j \times\Delta(\chi_i\bn)\big).
\end{aligned}
\eeq 
Moreover, it follows from the identity \eqref{tan-nor} and the boundary condition 
 \eqref{bdryconditionofu} that: 
\ben\label{bcomernj}
 \omer^j \times \bn&=& \Pi(\p_{\bn}v^j)+\Pi((D \bn) v^j)\notag\\
 &=&\p_{\bn} r_0 \Pi u^j+ r_0\Pi(\p_{\bn} u^j)-\Pi(\p_{\bn}\bq (r_0 u^j))+\Pi((D \bn) v^j)\\ 
    &=&\f{R\ep}{C_v\gamma}r_0\p_{\bn}\tsigma \Pi u^j+2r_0\Pi\big(-a u^j+(D\bn)u^j\big) 
\qquad\quad \text{ on } \p\Omega. \notag
\een
The important feature here is that $\chi_i \omer^j \times \bn$ is governed by a transport-diffusion equation \eqref{eqomegar0} without any singular terms. Nevertheless, due to the non-homogeneous Dirichlet boundary condition \eqref{bcomernj}, 
the uniform estimates can not be obtained by simply performing energy estimates. Our general strategy is to split the system for  $\chi_i(\omer^j\times \bn)$ into two parts, one of which solves a heat equation (with variable coefficients) with the  nontrivial Dirichlet boundary condition and non-homogeneous source term.
The other satisfies a convection-diffusion equation with homogeneous Dirichlet boundary condition which is amenable to energy estimates. 
 To deal with the first  system,
the explicit formula for heat equation with variable coefficient will play an important role. It is thus helpful  to transform the problem to the half-space. 
Instead of coordinates \eqref{local coordinates}, it is convenient here for us to use another parameterization, which is the so-called
 normal geodesic coordinates: (by shrinking $\Omega_i$ and taking smaller $\delta_i,\epsilon_i$ if necessary)
\begin{equation}\label{normal geodesic coordinates}
 \begin{aligned}
 {\Psi}_{i}:\quad & (-\delta_i,\delta_i)^2\times (0,\epsilon_i)
 \rightarrow \Omega_{i}\cap{\Omega}\\
 &\qquad\quad(y,z)^{t}\rightarrow {\Psi}_{i}(y,z)=(y,\varphi_i(y))^{t}-z \bn^i(y)
 \end{aligned}
 \end{equation}
where $\bn^i(y)=\textbf{N}^i/|\textbf{N}^i|$,
$\textbf{N}^i(y)=(\partial_{y^1}\varphi_i(y),\partial_{y^2}\varphi_i(y),-1)^{t}.$ Note that since we are considering at most $m-2$ order of regularities, 
the following estimates hold for a domain with $C^{m+2}$ smooth boundaries. As we work in the local coordinates \eqref{normal geodesic coordinates}, it is useful to know the expressions of gradient and Laplacian in the new coordinates. 
By straightforward calculations, one can find that: 
\beq
 \begin{aligned}\label{change-variable}
(\nabla f)\circ \Psi_i=& P_i{\nabla} (f\circ{\Psi_i}), \quad (\div F)\circ \Psi_i=\f{1}{d_i}{\div}\big(d_i P_i^{*}(F\circ\Psi_i)\big),\quad\\
&(\Delta f)\circ \Psi_i=\f{1}{d_i}{\div}\big(d_i P_i^{*}P_i\na \big(f\circ\Psi_i)\big),
 \end{aligned}
 \eeq
where $ P_i= (D\Psi_i)^{-1}$, $d_i=\det (D\Psi_i)$ and the matrix 
\beq\label{defdpsi}
D\Psi_i=
\left(\begin{array}{ccc}
    1 &0&\p_{y^1}\varphi_i  \\
     0& 1& \p_{y^2}\varphi_i\\
    n_1^i & n_2^i & n_3^i
\end{array}
\right)-z
\left(
\begin{array}{ccc}
      \p_{y^1} \bn_1^i&  \p_{y^1} \bn_2^i& \p_{y^1} \bn_3^i\\
     \p_{y^2}\bn_1^i&\p_{y^2}\bn_2^i &  \p_{y^2} \bn_3^i\\
     0&0&0
\end{array}
\right).
\eeq
Note that since $|\det ((D\Psi_i)|_{z=0})|=|\bN^i|\geq 1,$  the matrix is invertible
as long as $z$ is sufficiently small.
By further computation, one could 
deduce that the Laplacian in this new coordinates  has the form 
\begin{equation}\label{laplaceg}
    \f{1}{d_i}{\div}\big(d_i P_i^{*}P_i\na \cdot\big)=\partial_z^2 +\frac{1}{2}\partial_z(\ln |g_i|)\partial_z +\Delta_{{g_i}}
\end{equation}
where $\Delta_{{g_i}}$ involves only the tangential derivatives:
\beq\label{defdeltag}
\Delta_{{g_i}}=|{g_i}|^{-\frac{1}{2}}\sum_{l,k=1}^2 \partial_{y^l}(g_{i}^{lk}|{g_i}|^{\frac{1}{2}}\partial_{y^k}),
\eeq
and $|g_i|$ denotes the determinate of the symmetric invertible matrix
\begin{equation}\label{defmatrixg}  g_i(y,z)= \left(   \begin{array}{cc}    g_{i}^{11} & g_{i}^{12} \\ g_{i}^{21} &  g_{i}^{22} \end{array} \right)(y,z)
\end{equation}
whose entries  depends smoothly on $D\Psi_i.$ Note that $|g_i|$ has uniform lower bound and the matrix $g_i$ is positive definite
as long as $z$ is small enough.

For a function $f: \Omega_{i}\cap{\Omega}\rightarrow \mR,$ we 
use the notation $f^{\Psi_i}=f\circ \Psi_i.$ We will also denote 
$\tilde{\zeta}_i^j=(\chi_i\omer^j\times\bn)^{\Psi_i}.$ Since $\Supp \chi_i|_{\bar{\Omega}}\Subset \Omega_i\cap\bar\Omega,$
we can extend the definition of  $\tilde{\zeta}_i^j$ and $(H_{i}^{j})^{\Psi_i}$ from ${\Psi}_{i}^{-1}(\Omega_{i}\cap\bar{\Omega})$ to $\overline{\mathbb{R}_{+}^3}$ by zero extension,
which are still denoted as $\tilde{\zeta}_i^j$ and $(H_{i}^{j})^{\Psi_i}.$
By \eqref{eqomern}, $\tilde{\zeta}_i^j$ solves the equation: 
\begin{equation}\label{eqtildezeta}
\left\{
    \begin{array}{l}
   (\pt+ \tilde{u}_i \cdot \na)\tilde{\zeta}_i^j-\mu \lambda_1\overline{\Gamma} ( \f{1}{r_0})^{\Psi_i}(\partial_z^2 +\frac{1}{2}\partial_z(\ln |g_i|)\partial_z +\Delta_{{g_i}})\tilde{\zeta}_i^j=(H_{i}^{j})^{\Psi_i}
   ,  \,\, (t,x) \in \mR_{+}\times \mR^3,  \\[2.5pt]
\tilde{\zeta}_i^j|_{z=0}= \big[\f{R\ep}{C_v\gamma}r_0\chi_i\p_{\bn}\tsigma \Pi u^j+2r_0\chi_i\Pi\big((D\bn)u^j-a u^j\big)\big]^{\Psi_i}|_{z=0}, \\[2.5pt]
\tilde{\zeta}_i^j|_{t=0}= (\chi_i\omer^j|_{t=0}\times\bn)^{\Psi_i},
    \end{array}
\right.
\end{equation}
where $\na=(\p_{y^1}, \p_{y^2}, \p_{y^3})$ and $\div=(\na)^{*}$ 
stand for the gradient and the divergence in the new coordinates and $\tilde{u}_i$ is defined as:
\beq\label{deftildeu}
\tilde{u}_i=(\tilde{u}_{i,1},\tilde{u}_{i,2},\tilde{u}_{i,3})=P_i^{*}(\chi_i u)^{\Psi_i}.
\eeq
Note that on the boundary $z=0,$ 
\beq\label{tlu3-bd}
\tilde{u}_{i,3}|_{z=0}=\big[( D\Psi_i|_{z=0})^{-*}(\chi_i u)^{\Psi_i}\big]_3\big|_{z=0}=- ((\chi_i u\cdot \bN^i)|_{\p\Omega})^{\Psi_i}=0.
\eeq
Let us set 
 \beq\label{newvector}
 \mathcal{Z}_j=\partial_{y^j},\,\, j=1,2,\quad \mathcal{Z}_3=\phi(z)\partial_z
 \eeq
and define energy norm
\beq\label{newnotation}
\|f^{\Psi_i}\|_{m,t}=\sum_{|\alpha|\leq m}\|\mathcal{Z}^{\alpha} f^{\Psi_i}\|_{L^2([0,t]\times\mathbb{R}_{+}^3)},\, \|f^{\Psi_i}(t)\|_{m}=\sum_{|\alpha|\leq m}\|(\mathcal{Z}^{\alpha} f^{\Psi_i})(t)\|_{L^2(\mathbb{R}_{+}^3)}
\eeq
and $L_{t,x}^{\infty}$ norm: 
\beq\label{newinfty}
\il f^{\Psi_i}\il_{*,m,\infty,t}=\sum_{|\alpha|\leq m}\|\cZ^{\alpha} f^{\Psi_i}\|_{L^{\infty}([0,T]\times\mR_{+}^3)}, 
\eeq
where $\mathcal{Z}^{\alpha}=
\mathcal{Z}_1^{\alpha_1}\mathcal{Z}_2^{\alpha_2}\mathcal{Z}_3^{\alpha_3}, \alpha=(\alpha_1,\alpha_2,\alpha_3).$ We emphasize that the above norms involves only spatial conormal derivatives and will be frequently used throughout this section. For later use, let us also define here:
\beq\label{newinfty-time}
\il f^{\Psi_i}\il_{m,\infty,t}=\sum_{k\leq m}\|(\ep\pt)^k f^{\Psi_i}\|_{*,m-k,\infty,t}.
\eeq
Since 
${\Psi_i}^{-1}\circ \Phi_i$ is a diffeomorphism between  $\Phi_i^{-1}(\Omega_i\cap \Omega)\rightarrow \Psi_i^{-1}(\Omega_i\cap \Omega)$ and satisfies $({\Psi_i}^{-1}\circ \Phi_i)|_{z=0}=\Id,$ we can deduce from the Leibniz's rule and the definition of the conormal spaces \eqref{not3} that:
 \beq\label{norm-equiv-bd}
 \|f^{\Psi_i}\|_{k,t}\approx \|f\|_{L_t^2\cH^k(\Omega)}, \quad  \|f^{\Psi_i}(t)\|_{k}\approx \|f (t)\|_{\cH^k(\Omega)}.
 \eeq
 By \eqref{norm-equiv-bd}, the proof of \eqref{ineqofomegatimesn} can be reduced to the following estimate for $\tilde{\zeta}_i^j$.
  \end{proof}
  \begin{prop}
Under the same assumption as in Proposition \ref{prop-nauLinftyL2}, 
 there exists a constant $\vartheta_3>0$ such that for any $\ep \in (0,1], (\mu,\kpa)\in A,$ any $0<t\leq T,$ any $ j=1\cdots m-2,$ any $i=1,\cdots N,$  the following estimate holds:
  \begin{align}\label{es-tildezeta}
\sup_{0\leq t\leq T}  
 \|\tilde{\zeta}_i^j(t)\|_{m-2-j}^2\lesssim \lab Y_m(0)\big)+(T+\ep)^{\vartheta_3}\Lambda\big(\f{1}{c_0}, \cN_{m,T}\big). 
  \end{align}
  \end{prop}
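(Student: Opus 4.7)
The plan is to implement the splitting strategy laid out in Subsection~1.3. I decompose $\tilde\zeta_i^j=\zeta_1^j+\zeta_2^j$, where $\zeta_1^j$ solves the variable-coefficient heat equation
\[
\big(\p_t-\mu\lambda_1\overline{\Gamma}(1/r_0)^{\Psi_i}\big|_{z=0}\,\p_z^2\big)\zeta_1^j=0,\qquad \zeta_1^j\big|_{t=0}=0,\quad \zeta_1^j\big|_{z=0}=\tilde\zeta_i^j\big|_{z=0},
\]
while $\zeta_2^j=\tilde\zeta_i^j-\zeta_1^j$ satisfies the convection--diffusion equation inherited from \eqref{eqtildezeta} with homogeneous Dirichlet data and an augmented source term (the coupling with $\zeta_1^j$ through the lower-order drift and the tangential Laplacian). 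In the sub-case $j=0$, the trace $(1/r_0)^{\Psi_i}|_{z=0}$ is not uniformly bounded in $H^{m-1}$ with respect to $\kappa$, forcing a further three-way splitting $\tilde\zeta_i^0=\zeta_3+\zeta_4+\zeta_5$ along the lines of \eqref{eqzeta5-intro}; I treat both cases in parallel below.

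\textbf{Step 1 (analysis of $\zeta_1^j$).} Because the coefficient depends only on $(t,y)$ and tangential derivatives commute with the equation, I apply the one-dimensional heat kernel in $z$ with tangential frequencies frozen to derive
\[
\|\zeta_1^j\|_{L^\infty_t\mathcal H^{m-2-j}}+\mu^{1/2}\|\p_z\zeta_1^j\|_{L^2_t\mathcal H^{m-2-j}}\lesssim \mu^{1/4}\big|\tilde\zeta_i^j|_{z=0}\big|_{L^2_t\tilde H^{m-j-3/2}}+\big|\tilde\zeta_i^j|_{z=0}\big|_{L^\infty_t\tilde H^{m-j-2}}.
\]
Using the explicit trace formula \eqref{bcomernj} and the trace bound \eqref{Es-tsigma3} for $\p_{\bn}\tsigma$ -- which is precisely where the modification $\sigma\mapsto\tsigma$ pays off by cancelling the otherwise singular contribution of $\ep\p_t u\cdot\bn$ on the boundary -- together with the controls of $u^j|_{\p\Omega}$ that descend by trace inequality from \eqref{EE-3}, \eqref{nauL2-uniform} and \eqref{EE-v}, the right-hand side is dominated by $\Lambda(1/c_0,Y_m(0))+(T+\ep)^{\vartheta_3}\Lambda(1/c_0,\cN_{m,T})$; the factor $\ep$ in front of $\p_{\bn}\tsigma\Pi u^j$ gives the positive power of $\ep$.

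\textbf{Step 2 (conormal energy for $\zeta_2^j$).} Since $\zeta_2^j|_{z=0}=0$, integration by parts in $z$ generates no boundary flux and the decomposition \eqref{laplaceg} yields full coercivity of the diffusion. Testing the equation against $\cZ^{2\alpha}\zeta_2^j$ for $|\alpha|\leq m-2-j$, the commutators between $\cZ^\alpha$ and the variable coefficients $(1/r_0)^{\Psi_i}$, $|g_i|$, $\tilde u_i$ are absorbed through Proposition~\ref{prop-prdcom} together with \eqref{esr0-2}--\eqref{esr0-3}. The source $(H_i^j)^{\Psi_i}$, given by \eqref{defH}--\eqref{def-Gj}, is controlled by collecting the estimates of Sections~2--7: \eqref{EE-highest} for highest-order weighted norms, \eqref{EE-theta-final} for $\theta$, \eqref{Es-tsigma1} for $\tsigma$, \eqref{com-uniL2} and \eqref{na2sigmauni-0} for the compressible part, and \eqref{nauL2-uniform}, \eqref{EE-v} together with \eqref{nablavLinfty} at the lower index $j+1$ for $u$ and $v$.

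\textbf{Step 3 (the main obstacle, $j=0$).} As anticipated in Subsection~\ref{subs133}, two subterms require extra care. First, the factor $u^0(\p_t+u\cdot\na)r_0$ inside $G^0$ combined with $\mu\lambda_1\overline{\Gamma}r_0^{-1}[\curl\curl,r_0]u^0$ produces, via \eqref{timeder-r0} and one integration by parts, a contribution proportional to $\big(\mu-\kappa/(C_v\gamma\lambda_1)\big)\,u\cdot\Delta\theta$; this is absorbed thanks to the hypothesis \eqref{assumption-mukpa} paired with the only available bound $\kappa^{1/2}\|\na^2\theta\|_{L^2_t H_{co}^{m-1}}\lesssim\cE_{m,t}$ provided by \eqref{EE-theta-final}. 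Second, the top-order commutator $\mu^{1/2}\|\cZ^\alpha((1/r_0)^{\Psi_i})\p_z\zeta_2^0\|_{L^2_tL^2}$ with $|\alpha|=m-2$ is out of reach because neither $\mu^{1/2}\p_z\zeta_2^0$ admits a uniform $L^2_tL^p$ bound for $p>2$, nor does $\kappa^{1/2}\cZ^\alpha(1/r_0)^{\Psi_i}$ enjoy a uniform $L^\infty$ bound. Here I perform the secondary splitting $\zeta_2^0=\zeta_{51}+\zeta_{52}$ of \eqref{zeta51-52}: $\zeta_{51}$ solves a frozen-coefficient heat equation with source $(\curl G^0\times\bn)^{\Psi_i}$, and a Duhamel argument gives $\|\p_z\zeta_{51}\|_{L^2_tL^2}\lesssim \mu^{-1}\|G^0\|_{L^2_tL^2}$ -- again made finite only by \eqref{assumption-mukpa}, which turns $G^0$ into an $O\big((\mu-\kappa/(C_v\gamma\lambda_1))\,u\Delta\theta\big)=O(\mu\kappa^{1/2})$ quantity; and $\zeta_{52}$ is handled by a maximum-principle/Green's-function argument analogous to Step~1, yielding $\mu^{1/2}\il\p_z\zeta_{52}\il_{0,\infty,t}\lesssim \Lambda(1/c_0,Y_m(0))+\Lambda(1/c_0,\cN_{m,t})$ thanks to the compatibility condition $u_0^\ep|_{\p\Omega}=0$ encoded in $Y_m$. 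Gathering Steps~1--3 and absorbing the small factor $(T+\ep)^{\vartheta_3}$ -- produced by the various $T^{1/2}$ or $\ep$ gains from Cauchy--Schwarz, interpolation and trace estimates -- yields the claim \eqref{es-tildezeta}.
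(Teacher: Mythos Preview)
Your proposal follows the paper's approach closely: the two-way splitting for $j\geq 1$ (Green's-function analysis of the heat piece carrying the boundary data, plus energy estimate on the zero-trace convection--diffusion piece) and, for $j=0$, the refined three-way splitting $\zeta_3+\zeta_4+\zeta_5$ followed by the sub-splitting $\zeta_5=\zeta_{51}+\zeta_{52}$ exploiting \eqref{assumption-mukpa}---your ``$\zeta_2^0=\zeta_{51}+\zeta_{52}$'' in Step~3 is a slip for $\zeta_5$. Two technical points you gloss over that the paper makes explicit: first, before splitting one multiplies $\tilde\zeta_i^j$ by $|g|^{1/4}$ to eliminate the first-order term $\tfrac12\p_z(\ln|g|)\p_z$ coming from the normal-geodesic Laplacian \eqref{laplaceg}; second, to absorb the convection coupling $(\tilde u\cdot\nabla)\zeta_1^j$ into the source of $\zeta_2^j$ at conormal order $m-2-j$ one actually needs $\|\zeta_1^j\|_{L^2_t\cH^{m-1-j}}$ (one order higher than the $L^\infty_t\cH^{m-2-j}$ bound you record in Step~1), which the Green's-function analysis does provide but which your stated estimate on $\mu^{1/2}\|\p_z\zeta_1^j\|$ does not by itself supply.
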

  \begin{rmk}
  Note that the notations \eqref{newnotation}-\eqref{newinfty} will be used frequently in the proof of this proposition.
  \end{rmk}
 \begin{proof}
 In the following, we will drop the subscript $i$ for the sake of the notational clarity
 and in particular, we will write $\tilde{\zeta}_i, (H_i^{j})^{\Psi_i}, \tilde{u}_i$ by 
 $\tilde{\zeta}^j, (H^{j})^{\Psi}, \tilde{u}.$ We introduce further 
 $$\zeta^j= |g|^{\f{1}{4}}\tilde{\zeta}^j$$ and derive from \eqref{eqtildezeta} 
 the equation satisfied by $\zeta^j:$
 \begin{equation}\label{eqzeta}
\left\{
    \begin{array}{l}
   (\pt+ \tilde{u} \cdot \na){\zeta}^j-\mu \lambda_1\overline{\Gamma} 
  ( \f{
  1}{r_0})^{\Psi}(\partial_z^2  +\Delta_{{g}})\zeta^j =|g|^{\f{1}{4}} (H^j)^{\Psi}+ H^{j,\dag},  \quad (t,x) \in \mR_{+}\times \mR^3,  \\[2.5pt]
\zeta^j|_{z=0}= \big[\f{R\ep}{C_v\gamma}r_0\p_{\bn}\tsigma |g|^{\f{1}{4}}\chi\Pi u^j+2r_0|g|^{\f{1}{4}}\chi\Pi\big((D\bn)u^j-a u^j\big)\big]^{\Psi}|_{z=0},  \\[2.5pt]
\zeta^j|_{t=0}=|g|^{\f{1}{4}}\tilde{\zeta}^j|_{t=0},
    \end{array}
\right.
\end{equation}
where
\beq\label{defHdag}
H^{j,\dag}=\big(\tilde{u}\cdot\na (|g|^{\f{1}{4}})\big) (|g|^{-\f{1}{4}}\zeta^j)
- \mu \lambda_1\overline{\Gamma} r_0^{-1}\big( |g|^{-\f{1}{4}}\p_z^2(|g|^{\f{1}{4}})\zeta^j -[\Delta_g, |g|^{\f{1}{4}}](|g|^{-\f{1}{4}}\zeta^j)\big).
\eeq
Since $g$ involves two derivations of the boundary parameterization function $\varphi,$ one can deduce from the assumption on the smoothness of the boundary that $g, |g|, |g|^{-\f{1}{4}}\in C^m (\mR_{+}^3).$ Therefore, in order to show 
\eqref{es-tildezeta}, it suffices to establish the following estimate for $\zeta^j\, (j=0,1\cdots m-2),$ \begin{align}\label{es-zeta}
   \|\zeta^j(t)\|_{m-2-j}^2\lesssim  \lab Y_m(0)\big)+(T+\ep)^{\vartheta_3}\Lambda\big(\f{1}{c_0}, \cN_{m,T}\big) 
   \quad \forall\, 0< t\leq T. 
  \end{align}
Due to some technical reason, 
we have to distinguish the case $j=0$ or $1\leq j\leq m-2.$
The estimate \eqref{es-zeta} is thus the consequence of the following two estimates: 
\begin{align}
  \|\zeta^j(t)\|_{m-2-j}^2&\lesssim  \lab Y_m(0)\big)+(T+\ep)^{\vartheta_4}\Lambda\big(\f{1}{c_0}, \cN_{m,T}\big), \quad j=1,2\cdots m-2,\label{zetalow} \\
  \|\zeta^0(t)\|_{m-2}^2&\lesssim  \lab Y_m(0)\big)+(T+\ep)^{\vartheta_5}\Lambda\big(\f{1}{c_0}, \cN_{m,T}\big), \label{zetahigh}
\end{align}
where the constants $\vartheta_4, \vartheta_5\in (0,\f{1}{2}]$  are independent of $\ep,\mu,\kpa.$
The above two estimates can be achieved by splitting the system in suitable (different) ways, we will detail them in the following two subsections respectively.
\end{proof}
\subsection{Uniform high regularity with at least one time derivatives-proof of \eqref{zetalow}}
This section is dedicated to the high regularity estimate for $\zeta^j (1\leq j\leq m-2)$ which is summarized in 
 \eqref{zetalow}.  As explained before, such an estimate is unlikely to be obtained by sorely energy estimates due to the non-homogeneous boundary conditions. The strategy to prove \eqref{zetalow} is to
 split $\zeta^j$ into two unknowns:
$\zeta^j=\zeta_1^j+\zeta_2^j,$  where $\zeta_1^j, \zeta_2^j$ solve the following systems respectively:
\begin{equation}\label{eqzeta1}
\left\{
    \begin{array}{l}
   \pt{\zeta}_{1}^j-\mu \lambda_1\overline{\Gamma} ( \f{
   1}{r_0})^{\Psi}|_{z=0}\partial_z^2 \zeta_{1}^j =0 
   \quad (t,x) \in \mR_{+}\times \mR^3,  \\[2.5pt]
\zeta_{1}^j|_{z=0}= \zeta^j|_{z=0}=\big[\f{R\ep}{C_v\gamma}r_0\p_{\bn}\tsigma |g|^{\f{1}{4}}\chi\Pi u^j+2r_0|g|^{\f{1}{4}}\chi\Pi\big((D\bn)u^j-a u^j\big)\big]^{\Psi}|_{z=0},\\[2.5pt]
{\zeta}_{1}^j|_{t=0}=0,
    \end{array}
\right.
\end{equation}
\begin{equation}\label{eqzeta2}
\left\{
    \begin{array}{l}
   (\pt+ \tilde{u} \cdot \na){\zeta_2^j}-\mu \lambda_1\overline{\Gamma} ( \f{{1}}{r_0})^{\Psi}(\partial_z^2  +\Delta_{{g}})\zeta_2^j =\cS_2^j,  \quad (t,x) \in \mR_{+}\times \mR^3,  \\[2.5pt]
\zeta_2^j|_{z=0}= 0, \quad
\zeta_2^j|_{t=0}=\zeta^j|_{t=0},
    \end{array}
\right.
\end{equation}
with the source term 
\beq\label{source2}
\cS_2^j=- (\tilde{u} \cdot \na){\zeta_{1}^j}+\mu \lambda_1\overline{\Gamma}\bigg(\big( \f{
1}{r_0}\big)^{\Psi}\Delta_{{g}}\zeta_{1}^j+ \big(\big( \f{1}{r_0}\big)^{\Psi}-\big( \f{1}{r_0}\big)^{\Psi}\big|_{z=0
}\big)\p_z^2\zeta_{1}^j\bigg)+ H^{j,\dag}+|g|^{\f{1}{4}} (H^j)^{\Psi}.
\eeq
The estimate \eqref{zetalow} thus follows from 
the studies on $\zeta_1^j$ and $\zeta_2^j$ carried out in Proposition \ref{lemze1} and Proposition \ref{lemze2} respectively. 

We begin with the estimate for $\zeta_1^j (1\leq j\leq m-2).$
\begin{prop}\label{lemze1}
Let $\zeta_1^j$ be the solution of the equations \eqref{eqzeta1}. Suppose that $m\geq 7$ and the assumption \eqref{preasption} holds.
By using the notation in \eqref{newnotation}, 
the following estimates hold for $\zeta_{1}^j:$ 
 for  any $\ep \in (0,1], (\mu,\kpa)\in A,$
any $ 0<t\leq T,$ 
\begin{align}
 \il\zeta_1^j\il_{*,[\f{m}{2}]-1-j,t}&\lesssim \lat, \quad \forall j=1,2\cdots [\f{m}{2}]-1, \label{zeta1infty}\\
  \|\zeta_1^j(t)\|_{m-2-j}+ \|\zeta_1^j\|_{m-1-j,t}&\lesssim T^{\f{1}{4}}\lat\cE_{m,t}, \quad \forall j=1,2\cdots m-2.\label{zeta1energy}
\end{align}
\end{prop}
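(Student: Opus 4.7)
\textbf{Proof plan for Proposition \ref{lemze1}.} The strategy is to exploit the crucial feature of \eqref{eqzeta1}: the diffusion coefficient $b_0(t,y) =\mu\lambda_1\overline{\Gamma}(1/r_0)^{\Psi}|_{z=0}$ is independent of the normal variable $z$. Freezing $y$ and introducing the rescaled time $\tau(t,y) = \int_0^t b_0(s,y)\,ds$, the equation becomes a standard half-line heat equation in $(\tau,z)$ with Dirichlet boundary data and zero initial data, for which the solution admits the explicit Poisson-type representation
\beqs
\zeta_1^j(t,y,z) = -\int_0^t \partial_z K(t,s,y,z)\,\zeta^j|_{z=0}(s,y)\,ds,
\eeqs
where $K$ is the standard heat kernel with the $z$-Dirichlet condition rescaled by $b_0(\cdot,y)$. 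This reduces the matter to controlling conormal norms of the boundary data and to a kernel calculation.

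First, I would bound the boundary data. By the explicit expression
\beqs
\zeta^j|_{z=0} = \Big[\tfrac{R\ep}{C_v\gamma} r_0\,\p_{\bn}\tsigma\,|g|^{1/4}\chi\Pi u^j + 2 r_0 |g|^{1/4}\chi\Pi\big((D\bn)u^j - au^j\big)\Big]^{\Psi}\big|_{z=0},
\eeqs
the trace inequality \eqref{traceL2}, the product rule, the bounds for $\p_{\bn}\tsigma$ in \eqref{Es-tsigma3}, the $\cL^{\infty}$ bounds from $\cA_{m,t}$, together with the control of $u^j = (\ep\pt)^j u$ from the time-derivative estimate \eqref{EE-3}, would give (for $1\leq j\leq m-2$, $k\leq m-1-j$)
\beqs
|\zeta^j|_{z=0}|_{L^2_t\tilde H^{k}(\partial\Omega)} \lesssim \lat\cE_{m,t},\qquad \il \zeta^j|_{z=0}\il_{L^\infty_{t,y}}\lesssim \lat,
\eeqs
with analogous higher-order boundary bounds for $1\leq j\leq [m/2]-1$.

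Next, I would commute conormal derivatives with \eqref{eqzeta1}. The tangential derivatives $\cZ_1, \cZ_2$ and the scaled time derivative $\ep\pt$ commute with $\pt - b_0(t,y)\p_z^2$ up to lower-order terms involving derivatives of $b_0$, which by \eqref{esr0-2}--\eqref{esr0-3} and the $L^\infty$ information on $\theta$ can be absorbed into $\lat\cE_{m,t}$. The tangential-only estimates then follow by applying the maximum principle (for \eqref{zeta1infty}) and the standard $L^2$ trace-to-interior estimate for the heat equation of the form $\|\cdot\|_{L^2_tL^2(\mR_+^3)} \lesssim T^{1/4}|\cdot|_{L^2_t H^{1/2}(\partial\Omega)}$ cited in \eqref{appen-5} (for \eqref{zeta1energy}). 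The extra $T^{1/4}$ factor comes from the vanishing initial data, via the representation formula, and yields precisely the prefactor in \eqref{zeta1energy}.

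The main obstacle, as expected, is handling the normal conormal field $\cZ_3=\phi(z)\p_z$, which does not commute with $\p_z^2$. I would estimate $\cZ_3\zeta_1^j$ directly from the representation by differentiating the kernel; the key cancellation is $\phi(z)\lesssim z$, which combines with the pointwise bound $|z\,\p_z^2 K(t,s,y,z)|\lesssim (t-s)^{-1/2}$ (uniformly in $b_0$ once rescaled) to avoid the singular $(\mu t)^{-1/2}$ factor that would otherwise spoil uniformity in $\mu$. Iterating this argument (with commutators at each step producing only lower-order $\cZ_3$-factors already controlled) yields the full $\cZ^{\alpha}$-estimate up to order $m-1-j$ in $L^2_t$ and $m-2-j$ in $L^\infty_t$. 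A second delicate point is that the singular prefactor $\ep\p_{\bn}\tsigma$ in $\zeta^j|_{z=0}$ must be controlled uniformly in $\kpa$; here \eqref{Es-tsigma3} provides exactly the trace bound $\ep^{-1/2}|\p_{\bn}\tsigma|_{L^\infty_t\tilde H^{m-2}}\lesssim\lat\cE_{m,t}$, so that the factor $\ep$ compensates the loss and no extra $\kpa$-weight is needed.
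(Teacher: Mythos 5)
Your overall strategy is the right one and matches the paper's: reduce to a 1D heat equation in the rescaled time variable $\tau=\int b_0\,ds$, use the explicit Poisson-type representation with Dirichlet boundary data, exploit $\phi(z)\lesssim z$ to tame the $\cZ_3$ derivatives, and control the boundary data through the trace inequality and the $\p_{\bn}\tsigma$ estimate \eqref{Es-tsigma3}. You also correctly identify that the factor $\ep$ in $\zeta^j|_{z=0}$ compensates the singularity of $\p_{\bn}\tsigma$. However there are two genuine gaps.

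First, your hybrid plan --- commute $\cZ_1,\cZ_2,\ep\pt$ with the equation and only differentiate the kernel for $\cZ_3$ --- does not close. Commuting $\cZ^\alpha$ with $\pt - b_0(t,y)\p_z^2$ produces the source term $[\cZ^\alpha,b_0]\p_z^2\zeta_1^j$. The factor $\p_z^2\zeta_1^j$ carries the boundary layer: one cannot control it in $L^2$ uniformly in $(\mu,\kappa)$, and substituting the equation ($\p_z^2\zeta_1^j = b_0^{-1}\pt\zeta_1^j$) merely trades it for a time derivative of $\zeta_1^j$ that is not in the target spatial conormal norm \eqref{newnotation} and is no better behaved. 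The paper avoids this entirely by applying $\cZ^\gamma$ directly to the explicit kernel (Lemma \ref{pre-zeta1}), which never manufactures $\p_z^2$-type terms.

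Second, and more seriously, your proposal misses the top-order regularity loss in the tangential variables. To bound $\|\zeta_1^1\|_{m-2,t}$, you must let $\cZ^\gamma$ with $|\gamma|=m-2$ act on the kernel, which is a function of $Y(t,t',y)=\mu\lambda_1\overline\Gamma\int_{t'}^t b_0(\tau,y)\,d\tau$. When all $m-2$ tangential derivatives fall on $b_0$, you need $|b_0|_{m-2,\infty,t}$, which is not controlled by $\cA_{m,t}$ --- only $|b_0|_{m-3,\infty,t}$ is. Your plan to ``absorb'' this into $\lat\cE_{m,t}$ simply will not work at the top order with $L^\infty_y$ bounds. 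The paper's estimate \eqref{appen-5} is precisely the device that saves the day: it singles out the top-order tangential derivatives of the coefficient and puts them in $L^2_y$, where they are controlled via the trace $|b_0|_{L^\infty_t H^{m-2}(\mR^2)}\lesssim\cE_{m,t}$. This is why the paper explicitly distinguishes $j=1$ (use \eqref{appen-5}) from $j\geq 2$ (use \eqref{appen-4}). Without noticing this distinction your argument stalls exactly at $j=1$.

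As a smaller point, the kernel bound ``$|z\p_z^2 K|\lesssim(t-s)^{-1/2}$'' is not correct; the pointwise size is $\sim Y^{-1}\sim(\mu(t-s))^{-1}$. The $T^{1/4}$ prefactor in \eqref{zeta1energy} comes not from such a pointwise cancellation but from the $L^2_z$ kernel bound $\sim Y^{-3/4}$ combined with the overall $\mu$ prefactor in the representation, giving $\mu\int_0^t(\mu(t-t'))^{-3/4}\,dt'\lesssim(\mu T)^{1/4}$.
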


\begin{proof}
For notational convenience, we denote that   
\beq \label{defb}
b_0=b_0(t,y)=\big(\f{
1}{r_0}\big)^{\Psi}|_{z=0}.
\eeq
Note that by the assumption \eqref{preasption} and the property \eqref{preasption1}, 
\beq\label{prop-b}
c_0\leq b_0(t,y)\leq \f{1}{c_0}, \quad \forall \, (t,y)\in [0,T]\times\mR^2.
\eeq
Applying  \eqref{green-heat} with $a_1=a_2=a_3=0,$ we get the following formula:
\begin{align}\label{sec7:eq12}
\zeta_1^j(t,y,z)&=\mu \lambda_1\overline{\Gamma}\izt  \big(\f{1}{(\pi Y)^{\f{1}{2}}}\f{z}{2Y} e^{-\f{z^2}{4Y}} \big)(t,t', y) (b_0 \zeta^j|_{z=0}) (t',y)\,\d t',
\end{align}
where 
\beq\label{defY1}
Y=Y(t,t',y)=\mu \lambda_1\overline{\Gamma} \int_{t'}^t b_0(\tau,y)\, \d\tau. 
\eeq
Note that by \eqref{prop-b}, we have that:
\beq\label{prop-Y}
\mu\lambda_1\overline{\Gamma}c_0(t-t')\leq Y(t,t',y)\leq \mu\lambda_1\overline{\Gamma}(t-t')/c_0, \quad \forall\, 0\leq t'\leq t\leq T,  y\in \times\mR^2.
\eeq 

We first show the $L^{\infty}_{t,x}$ estimate \eqref{zeta1infty} by assuming $1\leq j\leq [\f{m}{2}]-1.$
Let $\alpha=(\alpha_1,\alpha_2,\alpha_3)$ be a multi-index with length $|\alpha|\leq [\f{m}{2}]-1-j.$ It follows from the property
\eqref{appen-4} found and proved in the appendix that: there is a polynomial $P_{2|\alpha|+1}$ with degree $2|\alpha|+1,$ such that: 
\begin{align}
   \| \cZ^{\alpha}\zeta_1^j(t,\cdot, z) \|_{L_y^{\infty}(\mR^2)}
    &\lesssim\mu  \izt \f{1}{Y} P_{2|\alpha|+1}\big( \f{z}{Y^{\f{1}{2}}}\big) e^{-\f{z^2}{4Y}} \d t' \notag\\
&\qquad\cdot  \lab |b|_{[\f{m}{2}]-1-j,\infty,t}\big) \sum_{\alpha'\leq \alpha}\big|\cZ^{\alpha'}\zeta^j|_{z=0}\big|_{L^{\infty}([0,t]\times\mR^2)}. \label{zeta1inf-proof}
\end{align}
Thanks to \eqref{prop-Y}, \eqref{prop-b}
we find that for any $z\in \mR_{+}, 0\leq t\leq T,$ 
\begin{align*}
\mu \izt  \f{1}{Y} P_{2|\alpha|+1}\big( \f{z}{Y^{\f{1}{2}}}\big) e^{-\f{z^2}{4Y}} \d t'\lesssim \mu z^{-2} \izt  \tilde{P}_{2|\alpha|+3}\bigg( \big(\f{z^2}{\mu(t-t')}\big)^{\f{1}{2}}
\bigg) e^{-\f{z^2}{\mu\lambda_1\overline{\Gamma}(t-t')/c_0}} \d t'\leq C(\f{1}{c_0}),
\end{align*}
where we denote $\tilde{P}_{2|\alpha|+3}(\cdot)=|\cdot|^2  P_{2|\alpha|+1}(\cdot).$ 
Moreover, in view of the expression of $\zeta|_{z=0}$ in $\eqref{eqzeta1}_2$ and the definition \eqref{defb} for $b,$  we get that for any $j=1,2\cdots [\f{m}{2}]-1,$
\begin{align}
&\il \zeta_1^j\il_{*,[\f{m}{2}]-1-j,\infty,t}\lesssim \lab \il (\p_{\bn}\tsigma , r_0)\il_{0,\infty,t}+\il (\ep\pt)^j u\il_{*,[\f{m}{2}]-1-j,\infty,t}\big)\lesssim \lat,\notag\\
&\il b_0\il_{*,[\f{m}{2}]-1-j,\infty,t}\lesssim \il b_0\il_{*, m-3,\infty,t}\lesssim \lab \il(\theta,\ep\tsigma)\il_{*, m-3,\infty,t}\big)\lesssim \lat. \label{binfty}
\end{align}
Inserting the previous three estimates into \eqref{zeta1inf-proof}, we 
find \eqref{zeta1infty}.

Let us now take a  vector field $\cZ^{\gamma}$ 
and start to estimate 
$\|\zeta_1^j(t)\|_{m-2-j}$ by assuming $0\leq |\gamma|\leq m-2-j\leq m-3.$
Thanks to the properties \eqref{prop-Y} and \eqref{appen-4}, we have by using Minkowski inequality that:
\begin{align}\label{sec7:eq13.5}
  & \|\cZ^{\gamma}\zeta_1^j(t)\|_{L^2(\mR_{+}^3)}\lesssim \mu \izt \f{1}{(\mu(t-t'))^{\f{3}{4}}}
 |\zeta^j|_{z=0}(t',\cdot)|_{H^{m-2-j}(\mR^2)}   \,\d t'  \lab|b_0|_{ m-3,\infty,t}\big),
\end{align}
which, combined with \eqref{binfty}, yields,
\begin{align}\label{sec7:eq19}
\|\cZ^{\gamma}\zeta_1^j(t)\|_{L^2(\mR_{+}^3)}&\lesssim (\mu T)^{\f{1}{4}} | \zeta^j|_{z=0}|_{L_t^{\infty}{H}^{m-2-j}(\mR_{+}^2)}\lat.
\end{align}
We now detail the estimate of $\|\zeta_1^j(t)\|_{m-1-j,t}.$ Taking a multi-index $\alpha=(\alpha_1,\alpha_2,\alpha_3),$ with $0\leq |\alpha|\leq m-1-j,$
we use the property \eqref{appen-5} if $j=1$ and  \eqref{appen-4} if $2\leq j\leq m-2$  to find that
\begin{align*}
&\|\cZ^{\alpha}\zeta_1^j(s)\|_{L^2(\mR_{+}^3)}\notag\\
&\lesssim 
\int_0^s \f{\mu^{\f{1}{4}}}{(s-s')^{\f{3}{4}}} \big(\big|\zeta^j|_{z=0}(s')\big|_{{H}^{m-1-j}(\mR^2)}+|b_0|_{L_t^{\infty}{H}^{m-2}(\mR^2)}\big)\d s'\lab | b_0|_{m-3,\infty,t}+|\zeta^1|_{z=0}|_{L_{t,y}^{\infty}}\big).
\end{align*}
By using Young's inequality in time (after extending $\zeta|_{z=0}$ by zero to $t\leq 0$ and $t\geq T$), and the trace inequality \eqref{traceL2},
we obtain that for any $0<t\leq T,$
\begin{align}\label{sec7:eq26}
    \|\cZ^{\alpha}\zeta_1^j\|_{L^2([0, t]\times \mR_{+}^3)}
    &\lesssim (\mu T)^{\f{1}{4}} \big(
    \big|\zeta^j|_{z=0}\big|_{L_t^2H^{m-1-j}(\mR^2)}+|b_0|_{L_t^{\infty}\tilde{H}^{m-2}}\big)\lat.
\end{align}
In view of the estimates \eqref{sec7:eq19} and \eqref{sec7:eq26},  we can finish the proof of \eqref{zeta1energy} by showing the following estimates for 
$\zeta^j|_{z=0}$ and $b:$
\begin{align}\label{sec7:eq27}
    \mu^{\f{1}{4}} |\zeta^j|_{z=0}|_{L_t^{\infty}{H}^{m-2-j}\cap L_t^{2}{H}^{m-1-j}(\mR^2)}+|b_0|_{L_t^{\infty}{H}^{m-2}(\mR^2)}\lesssim \lae, \, 1\leq j\leq m-2.
\end{align}
The estimate for $b$ follows from the definition \eqref{defb}, the  trace inequality \eqref{traceLinfty} and the estimate \eqref{esr0} for $r_0^{-1},$ we omit the detail.
Let us focus on the estimate of $\zeta^j|_{z=0}$
and detail the estimate of $ \mu^{\f{1}{4}} |\zeta^j|_{z=0}|_{L_t^{\infty}{H}^{m-2-j}(\mR^2)}.$ 
In view of its definition in $\eqref{eqzeta1}_2$
we can control this term in the following way.
If $m-2-j\leq [\f{m}{2}]-1,$ we use the trace inequality \eqref{traceLinfty}, the estimate \eqref{Es-tsigma4} for $\tsigma$ to find that:
\begin{align*}
 |\zeta^j|_{z=0}|_{L_t^{\infty}{H}^{m-2-j}(\mR^2)}&\lesssim
     |u^j|_{L_t^{\infty}H^{m-2-j}(\p\Omega)}\lab  \il \theta, \ep\tsigma, \ep\na\tsigma\il_{[\f{m}{2}]-1,\infty,t} \big)\\
  &\lesssim  \|(u,\na u)\|_{\infco^{m-2}}\lat\lesssim \lat\cE_{m,t}.
\end{align*}
If $m-2-j\geq [\f{m}{2}],$ or $j\leq [\f{m-3}{2}],$  we use successively the classical Gagliardo-Nirenberg inequality (in $\mR^2$),  the 
trace inequality \eqref{traceLinfty}, the estimate \eqref{esr0} for $r_0,$ \eqref{Es-tsigma3} for $\tsigma$
to obtain that
\begin{align*}
 \mu^{\f{1}{4}}|\zeta^j|_{z=0}|_{L_t^{\infty}{H}^{m-2-j}(\mR^2)}&\lesssim
     \mu^{\f{1}{4}} \big|(u^j, \ep\p_{\bn}\tsigma, r_0)
     \big|_{L_t^{\infty}H^{m-2-j}(\p\Omega)}\lab  \il (u^j, \p_{\bn}\sigma, r_0)\il_{0,\infty,t} \big)\\
  &\lesssim \big( \|(u,\na u)\|_{\infco^{m-2}}+\|
  \na r_0\|_{\infco^{m-3}}+ |\ep\p_{\bn}\tsigma |_{L_t^{\infty}H^{m-3}(\p\Omega)} \big)\lat\\
  &\lesssim \lat\cE_{m,t}.
\end{align*}
The  $L_t^{2}{H}^{m-1-j}(\mR_{+}^2)$ norm of $\zeta^j|_{z=0}$ can be controlled in a similar way. More precisely, one can  use the trace inequality \eqref{traceL2} and the estimates \eqref{Es-tsigma2} \eqref{Es-tsigma3} for $\tsigma$ to get:
\begin{align*}
    & \mu^{\f{1}{4}}|\zeta^j|_{z=0}|_{L_t^{2}{H}^{m-1-j}(\mR^2)}\lesssim \mu^{\f{1}{4}} \big|(u^j, \ep\p_{\bn}\tsigma, r_0)
    \big|_{L_t^{2}H^{m-1-j}(\p\Omega)}\lat\\
     &\qquad\lesssim \big(\|(u,\mu^{\f{1}{2}}\na u)\|_{\hco^{m-1}}+
    \|(\Id, \na)(\theta, \ep\tsigma)\|_{\hco^{m-2}}+\ep|\p_{\bn}\tsigma |_{L_t^{2}H^{m-2}(\p\Omega)}\big)\lat\\
    &\qquad\lesssim \lat\cE_{m,t}.
\end{align*}
The proof of \eqref{sec7:eq27} is now complete. 
\end{proof}
To show \eqref{zetalow}, it remains to control $\zeta_2^j$ (defined in \eqref{eqzeta2}),
which is the aim of the following proposition.
\begin{prop}\label{lemze2}
Let $\zeta_2^j\, (j=1,2\cdots m-2)$ be the solution to the system \eqref{eqzeta2}. Suppose that the assumption \eqref{preasption} holds and $m\geq 7.$  There exists $\vartheta_4\in (0,\f{1}{2}],$ 
such that for  any $\ep \in (0,1], (\mu,\kpa)\in A,$ any $0< t\leq T,$
\begin{equation}\label{EE-zeta2}
  \|\zeta_2^j\|_{m-2-j,t}^2+\mu \|\na \zeta_2^j\|_{m-2-j,t}^2 \lesssim \lab Y_m(0)\big) + (T+\ep)^{\vartheta_4}\lae.
\end{equation}
\end{prop}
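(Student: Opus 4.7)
My plan is to carry out a standard conormal energy estimate directly on \eqref{eqzeta2}, exploiting the crucial structural advantages: $\zeta_2^j$ vanishes on $\{z=0\}$, there are no singular (Mach-scale) terms, and the normal component $\tilde u_3$ of the transport field vanishes on the boundary by \eqref{tlu3-bd}. Applying a conormal vector field $\cZ^{\alpha}$ with $|\alpha|\leq m-2-j$, taking the $L^2(\mR_{+}^3)$ inner product of the commuted equation with $\cZ^{\alpha}\zeta_2^j$, and integrating from $0$ to $t$, I obtain
\begin{align*}
&\tfrac{1}{2}\|\cZ^{\alpha}\zeta_2^j(t)\|_{L^2}^2+\mu\lambda_1\overline{\Gamma}\!\izt\!\int_{\mR_{+}^3}\!\bigl(\tfrac{1}{r_0}\bigr)^{\Psi}\!\bigl(|\p_z \cZ^{\alpha}\zeta_2^j|^2+g^{lk}\p_{y^l}\cZ^{\alpha}\zeta_2^j\,\p_{y^k}\cZ^{\alpha}\zeta_2^j\bigr)\d x \d s\\
&\quad=\tfrac{1}{2}\|\cZ^{\alpha}\zeta^j(0)\|_{L^2}^2+(\text{commutators})+\izt\!\int_{\mR_{+}^3}\!\cZ^{\alpha}\cS_2^j\cdot\cZ^{\alpha}\zeta_2^j\,\d x\d s.
\end{align*}
The boundary terms from integration by parts vanish, coercivity of the dissipation follows from the uniform positivity of $g$ and the pointwise bound $(1/r_0)^{\Psi}\geq c_0$, and the initial datum $\|\zeta^j(0)\|_{m-2-j}$ is controlled by $\Lambda(1/c_0,Y_m(0))$ through the definition of $Y_m(0)$.

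The commutators $[\cZ^{\alpha},\tilde u\cdot\na]\zeta_2^j$, $\mu[\cZ^{\alpha},(1/r_0)^{\Psi}\p_z^2]\zeta_2^j$ and $\mu[\cZ^{\alpha},(1/r_0)^{\Psi}\Delta_g]\zeta_2^j$ are bounded in $L^2_{t,x}$ through Proposition \ref{prop-prdcom} transferred via $\Psi$, the estimates \eqref{esr0}--\eqref{esr0-3} for $r_0$, and the smoothness of $g$; the convection commutator uses Hardy's inequality \eqref{hardy-calculus} to absorb the possibly singular $\tilde u_3/\phi$ encountered when one $\cZ_3$ hits $\tilde u$. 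Among the five pieces of $\cS_2^j$ in \eqref{source2}, the contribution from $H^{j,\dag}$ is tame via \eqref{defHdag} and \eqref{esr0}, and the pieces $-\tilde u\cdot\na\zeta_1^j$ and $\mu(1/r_0)^{\Psi}\Delta_g\zeta_1^j$ are controlled by invoking the bound $\|\zeta_1^j\|_{m-1-j,t}\lesssim T^{1/4}\lat\cE_{m,t}$ from \eqref{zeta1energy} (after an integration by parts in $y$ for the second piece, pairing a factor $\mu^{1/2}$ with the dissipation on the left).

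The main obstacle is twofold. First, the term $\mu\lambda_1\overline{\Gamma}\bigl((1/r_0)^{\Psi}-(1/r_0)^{\Psi}|_{z=0}\bigr)\p_z^2\zeta_1^j$ contains $\p_z^2\zeta_1^j$, which, due to the viscous-layer profile of the explicit formula \eqref{sec7:eq12}, is \emph{not} uniformly bounded in $\mu$. The rescue is that the prefactor vanishes at $z=0$, so by the fundamental theorem of calculus it factors as $\phi(z)\tilde h(t,y,z)$ with $\tilde h$ bounded together with its conormal derivatives by $\lat$; the term is then rewritten as $\mu\lambda_1\overline{\Gamma}\,\tilde h\,\cZ_3\p_z\zeta_1^j$, trading one normal derivative for a conormal one, and $\mu^{1/2}\p_z\zeta_1^j$ is controlled by the dissipation norm appearing in \eqref{zeta1energy}. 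The remaining $\mu^{1/2}$ is paired with $\mu^{1/2}\|\na\zeta_2^j\|_{m-2-j,t}$ on the left via Young's inequality. Second, the piece $|g|^{1/4}(H^j)^{\Psi}$ with $H^j$ from \eqref{defH} contains, through \eqref{def-Gj} and \eqref{timeder-r0}, a term with three normal derivatives of $\theta$ carried by $u^j(\p_t+u\cdot\na)r_0\sim-(R/C_v\gamma)r_0\kpa\Gamma\div(\beta\na\theta)\,u^j$; after integration by parts transferring one derivative to $\cZ^{\alpha}\zeta_2^j$, this contributes $\mu^{1/2}\|\na\zeta_2^j\|_{m-2-j,t}\cdot(\kpa\mu^{-1/2}+\mu^{1/2})\|\kpa^{1/2}\na^2\theta\|_{\hco^{m-2}}\lat$, and the assumption \eqref{assumption-mukpa} (giving $\kpa\sim\mu$) is precisely what is needed to absorb the prefactor into $\mu^{1/2}\lat\cE_{m,t}$.

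Combining every contribution, summing over $|\alpha|\leq m-2-j$, and applying Gronwall deliver \eqref{EE-zeta2} with $\vartheta_4\in(0,1/2]$, the small positive power arising from the $T^{1/4}$ gain in \eqref{zeta1energy} and from H\"older in time on the source side. The genuinely delicate parts are the two just described: the $O(z)$ cancellation that tames $\p_z^2\zeta_1^j$, and the use of \eqref{assumption-mukpa} that tames the triple normal derivative of $\theta$ hidden in $\curl G^j$.
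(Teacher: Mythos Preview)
Your overall strategy coincides with the paper's: the same conormal energy identity on \eqref{eqzeta2}, the same commutator bookkeeping (cf.\ Lemma~\ref{lemcR}), the same decomposition of $\cS_2^j$ (cf.\ Lemma~\ref{lemsource2}), and the same two genuinely delicate points---the $O(z)$ vanishing of $(1/r_0)^{\Psi}-(1/r_0)^{\Psi}|_{z=0}$ that tames $\p_z^2\zeta_1^j$, and the use of $\kpa\sim\mu$ (Remark~\ref{rmkmuapproxkpa}) to control the $\kpa\na^2\theta$ hidden in $\curl G^j$.

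There is however one genuine gap: the estimate cannot be closed by Gronwall. The commutator $\mu[\cZ^{\gamma},(1/r_0)^{\Psi}]\p_z\zeta_2^j$ (what the paper calls $\cR_4^{j,\gamma}$), after integration by parts and Young's inequality, leaves on the right-hand side a term of the form
\[
C_{\delta}\,\mu\|\p_z\zeta_2^j\|_{k-1,t}^2\,\Lambda\bigl(\tfrac{1}{c_0},\il(\theta,\ep\tsigma)\il_{m-3,\infty,t}^2\bigr),
\]
with $k=|\gamma|$; the coefficient is $O(1)$ and carries no smallness in $T$, so neither absorption into the left-hand side nor a Gronwall argument in $t$ applies. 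The paper instead closes by \emph{induction on $k$ from $0$ to $m-2-j$}, and then invokes \eqref{thetainfty}, \eqref{tsigmainfty} to convert the iterated polynomial factor in $\il(\theta,\ep\tsigma)\il_{m-3,\infty,t}$ into the stated form $\lab Y_m(0)\big)+(T+\ep)^{\vartheta_4}\lae$. A smaller imprecision: \eqref{zeta1energy} does not control $\mu^{1/2}\p_z\zeta_1^j$; the correct object is $\cZ_3\zeta_1^j$, bounded via $\|\zeta_1^j\|_{m-1-j,t}$, while the $\mu^{1/2}$ is spent on $\il\mathscr{H}/\phi\il_{[\f{m-1}{2}],\infty,t}$ (this is where $\mu^{1/2}\il\na\theta\il$ enters, again using $\mu\sim\kpa$).
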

\begin{proof}
Taking a multi-index $\gamma=(\gamma_1,\gamma_2,\gamma_3)$ with $|\gamma|=k\leq m-2-j$ and applying $\cZ^{\alpha}$ on the equations \eqref{eqzeta2}, we find that $\zeta_{2}^{j,\gamma}=\colon\cZ^{\gamma}\zeta_2^j$ solves the following equation:
\begin{align}\label{eqzeta2jga}
\left\{\begin{array}{l}
      (\pt+\tilde{u}\cdot\na)\zeta_{2}^{j,\gamma}-\mu\lambda_1 \overline{\Gamma}\big(\p_z (\cZ^{\gamma}\p_z\zeta_2^{j}/{r}^{\Psi}_0)+\p_{y^k}(({g^{kl}}/{r}^{\Psi}_0)\p_{y^l}\zeta_2^{j,\gamma})\big)=\cZ^{\gamma}\cS_2^j+\sum_{i=1}^{5}\cR_{i}^{j,\gamma}, \\
   \zeta_{2}^{j,\gamma}|_{z=0}=0,\quad \zeta_{2}^{j,\gamma}|_{t=0}=\cZ^{\gamma}(\zeta^j|_{t=0}),
\end{array}
\right.
\end{align}
where $\cS_2^j$ is defined in \eqref{source2} and 
\beq\label{defcR}
\begin{aligned}
\cR_{1}^{j,\gamma}&=-[\cZ^{\gamma},\tlu \cdot\na]\zeta_2^j,\quad
\cR_{2}^{j,\gamma}=\mu\lambda_1 \overline{\Gamma}\cZ^{\gamma}\big(\p_z^2(1/r^{\Psi}_0)\zeta_2^j\big),\\
\cR_{3}^{j,\gamma}&=-\mu\lambda_1 \overline{\Gamma}([\cZ^{\gamma},\p_z]+\p_z\cZ^{\gamma})\big(\p_z(1/r^{\Psi}_0)\zeta_2^j\big),\\
\cR_{4}^{j,\gamma}&=\mu\lambda_1 \overline{\Gamma}\big([\cZ^{\gamma},\p_z](\p_z\zeta_2^{j}/{r}^{\Psi}_0)+\p_z([\cZ^{\gamma}, 1/r^{\Psi}_0]\p_z\zeta_2^j)\big),\\
\cR_{5}^{j,\gamma}&=\mu\lambda_1 \overline{\Gamma}\bigg(\p_{y^k}\big([\cZ^{\gamma}, {g^{kl}}/{r}^{\Psi}_0]\p_{y^l}\zeta_2^j\big)-\cZ^{\gamma}\big(\p_{y^k}({|g|^{-\f{1}{2}}}/{r}^{\Psi}_0)g^{kl}|g|^{\f{1}{2}}\p_{y^l}\zeta_2^j\big)\bigg).
\end{aligned}
\eeq
Note that Einstein's summation convention has been employed. 

Performing the standard energy estimates on  \eqref{eqzeta2jga} and using the boundary condition \eqref{tlu3-bd}, we find that:
\begin{align}\label{EI-zeta2}
   & \f{1}{2}\int_{\mR_{+}^3} |\zeta_2^{j,\gamma}|^2(t)\,\d x 
    +\mu\lambda_1 \overline{\Gamma}\izt\int_{\mR_{+}^3}\big( | \p_z \zeta_2^{j,\gamma}|^2+ g^{kl}\p_{y^k} \zeta_2^{j,\gamma} \p_{y^l}\zeta_2^{j,\gamma}\big)/ r^{\Psi}_0\,\d x\d s\notag\\
    & =\f{1}{2}\int_{\mR_{+}^3} |\zeta_2^{j,\gamma}|^2(0)\,\d x -\mu\lambda_1 \overline{\Gamma}\izt\int_{\mR_{+}^3} [\cZ^{\gamma},\p_z] \zeta_2^{j}\cdot \p_z \zeta_2^{j,\gamma} /r^{\Psi}_0 \,\d x\d s\\
    & \quad+
  \f{1}{2}  \izt\int_{\mR_{+}^3}|\zeta_2^{j,\gamma}|^2 \div \tlu  \,\d x\d s  + \izt\int_{\mR_{+}^3}\zeta_2^{j,\gamma}  \cdot \big(\cZ^{\gamma}\cS_2^j+\sum_{i=1}^{4}\cR_{i}^{j,\gamma}\big)
    \,\d x\d s. \notag
\end{align}
Since the matrix $g$ defined in \eqref{defmatrixg} is positive definite, and $r_0^{\Psi}\leq \f{1}{c_0}$ by the assumption \eqref{preasption} and the property \eqref{preasption1},
we can find a constant $\iota>0$ which are independent of $\ep,\mu,\kpa,$ such that
\beq\label{sec7:eq40}
\mu\lambda_1 \overline{\Gamma}\izt\int_{\mR_{+}^3}\big( | \p_z \zeta_2^{j,\gamma}|^2+ g^{kl}\p_{y^k} \zeta_2^{j,\gamma} \p_{y^l}\zeta_2^{j,\gamma}\big)/ r^{\Psi}_0\,\d x\d s\geq \iota c_0\mu\|\na \zeta_2^{j,\gamma} \|_{0,t}.
\eeq
Moreover, by induction, one gets that for $k=|\gamma|\geq 1,$
\begin{equation}\label{com-identity}
\begin{aligned}
 [\mathcal{Z}^{\gamma},\partial_z]=
\sum_{\beta<\gamma}
 C_{\phi,\beta,\gamma}\partial_z\mathcal{Z}^{\beta}
\end{aligned}
\end{equation}
where $C_{\phi,\beta,\gamma}$ are smooth functions that depend on $\phi$ and its derivatives. We can thus control the second term in the right hand side of \eqref{EI-zeta2} by using the 
Young's inequality: 
\begin{align}\label{sec7:eq40.5}
   & \mu\lambda_1 \overline{\Gamma}\izt\int_{\mR_{+}^3} [\cZ^{\gamma},\p_z] \zeta_2^{j}\cdot \p_z \zeta_2^{j,\gamma} /r^{\Psi}_0 \,\d x\d s\notag\\
    &\leq \delta c_0\mu
    \|\na \zeta_2^{j,\gamma} \|_{0,t}^2+ C(\delta,\f{1}{c_0})\mu \sum_{0\leq |\beta|\leq |\gamma|-1}
    \|\na\cZ^{\beta}\zeta_2^j\|_{0,t}.
\end{align}
Next, by using the fact \eqref{norm-equiv-bd} and the estimate \eqref{zeta1energy}, we have that: 
\beq\label{zeta2j-pre}
\begin{aligned}
\|\zeta_2^{j}\|_{m-2-j,t}&\lesssim \|(\zeta^j, \zeta_2^{j})\|_{m-2-j,t}\\
&\lesssim T^{\f{1}{2}}(\|\curl(r_0 (\ep\pt)^j u)\|_{\infcoch^{m-2-j}}+\sup_{0\leq s\leq t} \|\zeta_2^{j}(s)\|_{m-2-j}\big)\\
&\lesssim T^{\f{1}{2}}\lat\cE_{m,t},
\end{aligned}
\eeq
which, allows us to control the third term  as:
\begin{align}\label{sec7:eq41.5}
 \izt\int_{\mR_{+}^3}|\zeta_2^{j,\gamma}|^2 \div \tlu  \,\d x\d s &\leq   \|\zeta_2^{j}\|_{m-2-j,t}^2\il \div \tilde{u}\il_{0,\infty,t}\lesssim T\lat\cE_{m,t}^2.
\end{align}
Our following task is to control the commutator terms 
$ \izt\int_{\mR_{+}^3}\zeta_2^{j,\gamma}  \cdot \cR_{i}^{j,\gamma} \,\d x\d s$
($\cR_1^{j,\gamma}-\cR_5^{j,\gamma}$ are defined in \eqref{defcR}), 
which rely on the estimates for $\cR_i^{j,\gamma}$ presented in 
Lemma \ref{lemcR}.
We first use  \eqref{zeta2j-pre} and the estimate \eqref{es-cR1-2} to get that,
\begin{align}\label{sec7:eq41}
    \izt\int_{\mR_{+}^3}\zeta_2^{j,\gamma}  \cdot (\cR_{1}^{j,\gamma}+\cR_{2}^{j,\gamma}) \,\d x\d s\leq \|\zeta_2^{j}\|_{m-2-j,t}
    \| (\cR_1^{j,\gamma},\cR_2^{j,\gamma})\|_{0,t}\lesssim T^{\f{1}{2}}\lat\cE_{m,t}^2.
\end{align} 

Next, in view of the identity \eqref{com-identity},
the  vanishing boundary condition \eqref{eqzeta2jga}, we can integrate by parts in space and use the Young's inequality as well as the estimates \eqref{es-cR3} \eqref{es-cR4} to obtain that:
\begin{align}
  \izt\int_{\mR_{+}^3}\zeta_2^{j,\gamma}  \cdot \cR_{3}^{j,\gamma} \,\d x\d s&\lesssim \mu \|\p_z \zeta_2^{j,\gamma}\|_{0,t} \|\p_z(1/r^{\Psi}_0)\zeta_2^j\|_{m-2-j,t}\notag\\
&\leq \delta c_0\mu  \|\p_z \zeta_2^{j,\gamma}\|_{0,t}^2+T\lat\cE_{m,t}^2, \\
\izt\int_{\mR_{+}^3}\zeta_2^{j,\gamma}  \cdot \cR_{4}^{j,\gamma} \,\d x\d s&\lesssim \mu \|\p_z \zeta_2^{j,\gamma}\|_{0,t} \|\p_z\zeta_2^{j}\|_{m-3-j,t}\il 1/r^{\Psi}_0\il_{*,m-3,\infty,t} \notag\\
&\leq \delta c_0\mu  \|\p_z \zeta_2^{j,\gamma}\|_{0,t}^2+C_{\delta}
\mu\|\p_z\zeta_2^{j}\|_{k-1,t}^2\lab\il(\theta,\ep\tsigma)\il_{m-3,\infty,t}^2\big).
\end{align}
Finally, it follows from the estimates \eqref{zeta2j-pre}, \eqref{es-cR5} and Young's inequality that:
\beq\label{sec7:eq44}
\begin{aligned}
\izt\int_{\mR_{+}^3}\zeta_2^{j,\gamma}  \cdot \cR_{5}^{j,\gamma} \,\d x\d s&\leq \|\zeta_2^{j}\|_{m-2-j,t}
    \| \cR_5^{\gamma}\|_{0,t}\\
    &\leq \delta c_0\mu
    \|\na \zeta_2^{j} \|_{m-2-j,t}^2+ C(\delta) T\lat\cE_{m,t}^2.
 \end{aligned}
\eeq
Collecting \eqref{sec7:eq41}-\eqref{sec7:eq44}, 
we find:
\beq\label{sec7:eq45}
\begin{aligned}
   \sum_{i=1}^5 \izt\int_{\mR_{+}^3}\zeta_2^{j,\gamma}  \cdot \cR_{i}^{j,\gamma} \,\d x\d s&\leq 3\delta c_0\mu
    \|\na \zeta_2^{j} \|_{m-2-j,t}^2+ T^{\f{1}{2}}\lat\cE_{m,t}^2\notag\\
   &\quad+C_{\delta}\mu\|\p_z\zeta_2^{j}\|_{m-3-j,t}^2\lab\il(\theta,\ep\tsigma)\il_{m-3,\infty,t}^2\big).
\end{aligned}
\eeq
We are now left to control the term 
 $\izt\int_{\mR_{+}^3}\zeta_2^{j,\gamma}  \cdot \cZ^{\gamma}\cS_2^j\,\d x\d s$ appearing in 
 \eqref{EI-zeta2}. According to the definition \eqref{source2} for $\cS_2^j,$ it can be written as the sum of the following four terms:
 \begin{align*}
  \mathscr{S}_1&=-\izt\int_{\mR_{+}^3}\zeta_2^{j,\gamma}  \cdot  \cZ^{\gamma}((\tilde{u} \cdot \na){\zeta_{1}^j} ) \,\d x\d s,\\
  \mathscr{S}_2&=\mu \lambda_1\overline{\Gamma}\izt\int_{\mR_{+}^3}\zeta_2^{j,\gamma}  \cdot  \cZ^{\gamma} \bigg(\big( \f{1}{r_0}\big)^{\Psi}\Delta_{{g}}\zeta_{1}^j+ \big(\big( \f{1}{r_0}\big)^{\Psi}-\big( \f{1}{r_0}\big)^{\Psi}\big|_{z=0
}\big)\p_z^2\zeta_{1}^j\bigg) \,\d x\d s,\\
   \mathscr{S}_3&=\izt\int_{\mR_{+}^3}\zeta_2^{j,\gamma}  \cdot  \cZ^{\gamma} H^{j,\dag} \,\d x\d s , \qquad \mathscr{S}_4=\izt\int_{\mR_{+}^3}\zeta_2^{j,\gamma}  \cdot \cZ^{\gamma}( |g|^{\f{1}{4}} (H^j)^{\Psi} )\,\d x\d s .
 \end{align*}
 The control of these terms are based on the properties for the terms contained in $\cS_2^j,$ 
 which are listed in Lemma \ref{lemsource2}.
Thanks to the estimate \eqref{zeta2j-pre}, the estimate \eqref{convection-zeta1} for the convection term $(\tilde{u} \cdot \na){\zeta_{1}^j},$ we control the first term directly as:
\begin{align}\label{sec7:eq46}
    \mathscr{S}_1\lesssim \|\zeta_2^{j}\|_{m-2-j,t}  \| (\tilde{u} \cdot \na){\zeta_{1}^j} \|_{m-2-j,t}\lesssim T^{\f{1}{2}}\lat\cE_{m,t}^2.
\end{align}
Moreover, by the virtue of the identities \eqref{diffusion-zeta1}, \eqref{com-identity}, we integrate  by parts in space to control the term $\mathscr{S}_2$ as 
\begin{align*}
    \mathscr{S}_2 \lesssim \mu \|\na \zeta_2^{j,\gamma}\|_{0,t} \|(\cS_{21}^j, \cS_{22}^j, \cS_{23}^j)\|_{m-2-j,t}+ \|\zeta_2^j\|_{m-2-j,t} \|\cS_{24}^j\|_{0,t}.
\end{align*}
 It then follows from Young's inequality, the estimates \eqref{zeta2j-pre}, \eqref{es-viscous-zeta1} that 
\beq 
\mathscr{S}_2\leq \delta c_0\mu  \|\na \zeta_2^{j,\gamma}\|_{0,t}^2+T^{\f{1}{2}}\lat\cE_{m,t}^2.
\eeq
Next, in view of the definition \eqref{defHdag} for $H^{j,\dag}$ and the fact \eqref{norm-equiv-bd},
we can have that:
\begin{align*}
    \|H^{j,\dag}\|_{m-2-j,t}&\lesssim  \|(\tlu, \zeta^j, \mu^{\f{1}{2}}\p_y \zeta^j)\|_{m-2-j,t}\lat\\
 &\lesssim \|(u, (\Id, \mu^{\f{1}{2}}\p_y) \curl(r_0 u^j)) \|_{L_t^2\cH^{m-2}}\lat\lesssim \lat\cE_{m,t}.
\end{align*}
Consequently, one finds by the estimate \eqref{zeta2j-pre} that:
\begin{align}
   \mathscr{S}_3\lesssim  \|\zeta_2^{j}\|_{m-2-j,t} \|H^{j,\dag}\|_{m-2-j,t}\leq
   T^{\f{1}{2}}\lat\cE_{m,t}^2.
\end{align}
It remains to control $\mathscr{S}_4,$ which requires a closer look on $(H^j)^{\Psi}$ defined in \eqref{defH}.  We will show in Lemma \ref{lemsource2} that it takes the form 
\beqs 
(H^j)^{\Psi}= \mu^{\f{1}{2}} (\p_{y^1}\cH^{j}_1+\p_{y^2}\cH^{j}_2+\p_z \cH^{j}_3) +\cH_{4}^j, 
\eeqs
where $\cH_1^j-\cH_4^j$ are some vector valued functions with amenable property \eqref{prop-ch}. 
As a result, it follows successively the integration by parts, Young's inequality and the estimate \eqref{zeta2j-pre} that:
\beq\label{sec7:eq47}
\begin{aligned}
   \mathscr{S}_4&\lesssim \mu \|\na \zeta_2^j\|_{m-2-j,t}
   \|(\cH_1^j,\cH_2^j,\cH_3^j)\|_{m-2-j,t}+ \| \zeta_2^j\|_{m-2-j,t} \|\cH_4^j\|_{m-2-j,t}\\
  & \leq \delta c_0 \mu \|\na \zeta_2^j\|_{m-2-j,t}+ C_{\delta} (T+\ep)^{\f{1}{2}}\lat\cE_{m,t}^2.
\end{aligned}
\eeq
In view of \eqref{sec7:eq46}-\eqref{sec7:eq47}, we have obtained that:
\begin{align}\label{sec7:eq48}
    \izt\int_{\mR_{+}^3}\zeta_2^{j,\gamma}  \cdot \cZ^{\gamma}\cS_2^j\,\d x\d s\leq 3 \delta c_0 \mu \|\na \zeta_2^j\|_{m-2-j,t}+ C_{\delta} T^{\f{1}{2}}\lat\cE_{m,t}^2.
\end{align}
 Plugging the estimates \eqref{sec7:eq40} \eqref{sec7:eq40.5},  \eqref{sec7:eq41.5}, \eqref{sec7:eq45} \eqref{sec7:eq48} into \eqref{EI-zeta2}, one finds by choosing $\delta$  sufficiently small compared to $\iota$ that for any $0\leq k\leq m-2-j,$
 \beqs
 \begin{aligned}
    &\qquad \|\zeta_2^j\|_{k,t}^2+\mu \|\na \zeta_2^j\|_{k,t}^2\\
     &\lesssim_{\iota} Y_m^2(0)+ T^{\f{1}{2}}\lat\cE_{m,t}^2+\mu\|\p_z\zeta_2^{j}\|_{k-1,t}^2\lab\il(\theta,\ep\tsigma)\il_{m-3,\infty,t}^2\big).
 \end{aligned}
 \eeqs
 Note that the convention $\|\cdot\|_{l,t}=0$ if $l=-1$ has been employed.  This estimate in hand, we can derive by induction on $k$ that:
  \begin{align}\label{zeta2j-final-1}
     &\qquad \|\zeta_2^j\|_{m-2-j,t}^2+\mu \|\na \zeta_2^j\|_{m-2-j,t}^2 \lesssim \lab\il(\theta,\ep\tsigma)\il_{m-3,\infty,t}^2+Y_m^2(0)\big) + (T+\ep)^{\f{1}{2}}\lae. 
 \end{align}
 However, by the Sobolev embedding \eqref{sobebd}, the estimate \eqref{Es-tsigma2} for $\tsigma,$
 one has:
 \beq\label{tsigmainfty}
 \begin{aligned}
      \ep \il\tsigma\il_{m-3,\infty,t}\lesssim \ep \|\tsigma\|_{\infco^{m-1}}+\ep\|\na\tsigma\|_{\infco^{m-2}}\lesssim \ep\lae.
 \end{aligned}
 \eeq
 Inserting the above estimate and \eqref{thetainfty} into \eqref{zeta2j-final-1} and recalling that 
 $\Lambda$ are polynomials, we thus find a constant $\vartheta_4\in (0,\f{1}{2}],$ which is independent of $\ep\in(0,1], (\mu,\kpa)\in A,$ such that, 
\begin{align*}
    \|\zeta_2^j\|_{m-2-j,t}^2+\mu \|\na \zeta_2^j\|_{m-2-j,t}^2 \lesssim \lab Y_m(0)\big) + (T+\ep)^{\vartheta_4}\lae.
\end{align*}
\end{proof}
In the following two lemmas, we state and prove some estimates for $\cR_1^{j,\gamma}-\cR_5^{j,\gamma}$ and 
$\cS_2^j$ respectively, which have been used in the proof of the previous proposition.
\begin{lem}\label{lemcR}
Let $\cR_1^{j,\gamma}-\cR_5^{j,\gamma}$ be defined in \eqref{defcR}. Under the assumption \eqref{preasption}
we have 
for any $1\leq j\leq m-2, |\gamma|\leq m-2-j,$ any 
$\ep\in (0,1], (\mu,\kpa)\in A,$ any $0<t\leq T,$ 
\begin{align}
&\|(\cR_{1}^{j,\gamma}, \cR_{2}^{j,
\gamma})\|_{0,t}\lesssim 
\lat\cE_{m,t}, \label{es-cR1-2}\\
&\|\p_z(1/r^{\Psi}_0)\zeta_2^j\|_{m-2-j,t}\lesssim T^{\f{1}{2}} \lat\cE_{m,t},\label{es-cR3}\\
&\|\p_z\zeta_2^{j}/{r}^{\Psi}_0\|_{m-3-j,t}+ \|[\cZ^{\gamma}, 1/r^{\Psi}_0]\p_z\zeta_2^j\|_{0,t} \lesssim   \|\p_z\zeta_2^{j}\|_{m-3-j,t}\lab\il(\theta,\ep\tsigma)\il_{m-3,\infty,t}\big),  \label{es-cR4} \\
&\|\cR_{5}^{j,\gamma}\|_{0,t}\lesssim \mu^{\f{1}{2}}\|\na\zeta_2^j\|_{m-2-j,t}\lat+ T^{\f{1}{2}}\lat\cE_{m,t}.\label{es-cR5}
\end{align}
\end{lem}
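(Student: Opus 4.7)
The plan is to treat the five commutators in turn, using systematically the product and commutator estimates of Proposition~\ref{prop-prdcom} together with the norm equivalence \eqref{norm-equiv-bd}, the bounds \eqref{esr0}, \eqref{esr0-sec} for $r_0, r_0^{-1}$, the estimates \eqref{Es-tsigma1}--\eqref{Es-tsigma4} for $\tilde\sigma$, and the preliminary control \eqref{zeta2j-pre} on $\zeta_2^j$. The key structural remark is that every normal derivative of $1/r_0^{\Psi}$ costs one derivative of $(\theta,\ep\tilde\sigma)$ (cf.\ \eqref{defr0}), so that a factor of $\mu$ in front of $\partial_z^2(1/r_0^{\Psi})$ can always be absorbed by either $\mu^{\frac12}\|\nabla(\theta,\ep\tilde\sigma)\|_{L_t^2 H_{co}^{m-1}}$ or $\mu\|\nabla^2(\theta,\ep\tilde\sigma)\|_{L_t^2 H_{co}^{m-2}}$, both of which are part of $\cE_{m,t}$ (here the assumption $\mu\sim\kpa$ from Remark~\ref{rmkmuapproxkpa} is used to trade weights between $\mu$ and $\kpa$).

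For \eqref{es-cR1-2}, I expand $[\cZ^\gamma,\tilde u\cdot\nabla]\zeta_2^j$ by Leibniz and apply \eqref{roughcom1}: low-order derivatives of $\tilde u=(D\Psi)^{-*}(\chi u)^\Psi$ go in $L^\infty_{t,x}$ (these are contained in $\cA_{m,t}$ via $\il u\il_{[\frac{m}{2}],\infty,t}$), while the high-order factor on $\zeta_2^j$ is absorbed by $\|\zeta_2^j\|_{m-2-j,t}\lesssim T^{\frac12}\Lambda\cE_{m,t}$ from \eqref{zeta2j-pre}; symmetrically for the low-order $\zeta_2^j$ piece I use $\il\nabla\tilde u\il_{[\frac{m}{2}]-1,\infty,t}$. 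The bound on $\cR_2^{j,\gamma}$ follows by writing $\mu\partial_z^2(1/r_0^\Psi)$ as $\mu$ times a smooth nonlinear function of $(\theta^\Psi,\ep\tilde\sigma^\Psi)$ and their first two $\partial_z$-derivatives, and then applying the product estimate \eqref{roughproduct1}: the high-derivative part is $\mu\|\nabla^2(\theta,\ep\tilde\sigma)\|_{\hco^{m-2}}$, controlled by $\cE_{m,t}$ via \eqref{Es-tsigma1} and \eqref{EE-theta-final}, multiplied by $\|\zeta_2^j\|_{L^\infty_t L^\infty}\lesssim\Lambda$ (using \eqref{zeta2j-pre} and Sobolev in the chart).

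Estimate \eqref{es-cR3} is a direct product estimate: $\partial_z(1/r_0^\Psi)$ is a smooth nonlinear function of $(\theta^\Psi,\ep\tilde\sigma^\Psi)$ and their first $\partial_z$-derivatives, so $\|\partial_z(1/r_0^\Psi)\|_{*,m-2-j,\infty,t}+\|\partial_z(1/r_0^\Psi)\|_{m-2-j,t}\lesssim\Lambda(\cA_{m,t})$; then multiplying by $\zeta_2^j$ and using \eqref{zeta2j-pre} delivers the $T^{\frac12}$ factor. For \eqref{es-cR4} I first invoke the commutator identity \eqref{com-identity} to rewrite $[\cZ^\gamma,\partial_z]$ as a sum of $\partial_z\cZ^\beta$ with $|\beta|\le|\gamma|-1$, gaining one degree of regularity on the $\partial_z\zeta_2^j/r_0^\Psi$ factor, after which the remaining product is handled by \eqref{roughproduct1} and \eqref{roughcom1}, placing all $L^\infty_{t,x}$ norms of $1/r_0^\Psi$ and its conormal derivatives into $\Lambda(\il(\theta,\ep\tilde\sigma)\il_{m-3,\infty,t})$.

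The main obstacle is \eqref{es-cR5}, where the outer $\partial_{y^k}$ prevents a direct $L^2$-bound of the commutator and instead forces the placement of that derivative on $\zeta_2^{j,\gamma}$ inside the energy argument of \eqref{EI-zeta2}. Accordingly, I treat the two pieces of $\cR_5^{j,\gamma}$ differently: in the first piece, the commutator $[\cZ^\gamma,g^{kl}/r_0^\Psi]\partial_{y^l}\zeta_2^j$ is expanded by \eqref{roughcom}, so that either a high derivative falls on $g^{kl}/r_0^\Psi$ (bounded in $L^\infty_t H^{m-2}$ by $\Lambda$ after paying one $\mu^{\frac12}$, compensated by $\mu^{\frac12}\|\nabla\zeta_2^j\|_{m-2-j,t}$) or on $\partial_{y^l}\zeta_2^j$ (bounded by $\mu^{\frac12}\|\nabla\zeta_2^j\|_{m-2-j,t}$ with an $L^\infty_{t,x}$ coefficient contained in $\Lambda$); the outer $\partial_{y^k}$ is tangential and is absorbed by counting it as one of the $\cZ$'s on $\zeta_2^{j,\gamma}$ via integration by parts performed in \eqref{sec7:eq44}. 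The second piece of $\cR_5^{j,\gamma}$ is a genuine product without outer derivative, estimated by \eqref{roughproduct1} as a product of $(\theta,\ep\tilde\sigma)$-type factors with $\partial_{y^l}\zeta_2^j$, producing the $T^{\frac12}\Lambda\cE_{m,t}$ contribution. Gathering all contributions yields \eqref{es-cR5} and completes the lemma.
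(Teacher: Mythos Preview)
Your sketch has the right overall shape, but there is a genuine gap in the treatment of $\cR_1^{j,\gamma}$. The operator $\tilde u\cdot\nabla$ contains the normal piece $\tilde u_3\,\partial_z$, and $\partial_z$ is \emph{not} a conormal vector field. If you expand $[\cZ^\gamma,\tilde u_3]\partial_z\zeta_2^j$ naively and apply \eqref{roughcom1}, you are forced to control either $\|\partial_z\zeta_2^j\|_{m-3-j,t}$ or $\il\partial_z\zeta_2^j\il_{[\frac{|\gamma|}{2}],\infty,t}$ without any $\mu^{1/2}$ weight, and neither is available uniformly. The paper handles this by writing $\tilde u_3\,\partial_z=(\tilde u_3/\phi)\,\cZ_3$, so that the commutator becomes $[\cZ^\gamma,\tilde u_3/\phi]\cZ_3\zeta_2^j$; now $\cZ_3\zeta_2^j$ is conormal and sits in $\|\zeta_2^j\|_{m-2-j,t}$, while the factor $\tilde u_3/\phi$ is controlled via the Hardy inequality (using $\tilde u_3|_{z=0}=0$, see \eqref{sec7:eq80}). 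This step is essential and missing from your argument.

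A second, smaller gap concerns the $L^\infty_{t,x}$ control of $\zeta_2^j$ that you invoke for $\cR_2^{j,\gamma}$. You write ``$\|\zeta_2^j\|_{L^\infty_tL^\infty}\lesssim\Lambda$ (using \eqref{zeta2j-pre} and Sobolev in the chart)'', but \eqref{zeta2j-pre} is a conormal $L^2$ bound, and the Sobolev embedding \eqref{sobebd} requires one normal derivative, which you do not have uniformly. The paper instead writes $\zeta_2^j=\zeta^j-\zeta_1^j$, bounds $\il\zeta^j\il$ directly from $\il\curl(r_0 u^j)\il$ (contained in $\cA_{m,t}$), and uses the explicit heat-kernel estimate \eqref{zeta1infty} for $\zeta_1^j$; see \eqref{sec7:eq81}. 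Finally, a minor inaccuracy: in \eqref{sec7:eq44} no integration by parts is performed---$\|\cR_5^{j,\gamma}\|_{0,t}$ is bounded directly, the outer $\partial_{y^k}$ being conormal and therefore counting toward $\|\nabla\zeta_2^j\|_{m-2-j,t}$.
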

\begin{proof}
 We first control $\cR_1^{j,\gamma}$ in $L_t^2L^2$ norm. Let us write:
\beq\label{com-convetion-id}
[\cZ^{\gamma},\tlu \cdot\na]=[\cZ^{\gamma},\tlu_1 ]\p_{y^1}+[\cZ^{\gamma},\tlu_2 ]\p_{y^2}+[\cZ^{\gamma},\tlu_3/\phi ]\cZ_3.
\eeq
Therefore, by arguing separately for $1\leq j\leq [\f{m}{2}]-2$ and $j\geq [\f{m}{2}]-1,$ we can have that:
\begin{align}\label{escR1-proof}
    \|[\cZ^{\gamma},\tlu \cdot\na]\zeta_2^j\|_{0,t}&\lesssim 
       \|(\tlu_1, \tlu_2, \tlu_3/\phi)\|_{m-3,t}\il\zeta_2^j\il_{[\f{m}{2}]-1-j,\infty,t} \mathbb{I}_{\{1\leq j\leq [\f{m}{2}]-2\}}\notag\\
       &\quad +\sup_{0\leq s\leq t}\|\zeta_2^j(s)\|_{m-2-j} \big(\izt \|(\tlu_1, \tlu_2, \tlu_3/\phi)(s)\|_{[\f{m-1}{2}],\infty}^2\big)^{\f{1}{2}}.
\end{align}
For instance, if $1\leq j\leq [\f{m}{2}]-2,$ we control 
$[\cZ^{\gamma},\tlu_3/\phi ]\cZ_3\zeta_2^j$ as:
\begin{align*}
   \|[\cZ^{\gamma}, \tlu_3/\phi ]\cZ_3\zeta_2^j\|_{0,t}\lesssim \|\tlu_3/\phi\|_{m-3,t}\il\zeta_2^j\il_{[\f{m}{2}]-1-j,\infty,t}+\sup_{0\leq s\leq t}\|\zeta_2^j(s)\|_{m-2-j} \big(\izt \|( \tlu_3/\phi)(s)\|_{[\f{m-1}{2}],\infty}^2\big)^{\f{1}{2}} 
\end{align*}
while if $j\geq [\f{m}{2}]-1, |\gamma|\leq m-2-j\leq [\f{m-1}{2}],$ we have: 
\begin{align*}
   \|[\cZ^{\gamma}, \tlu_3/\phi ]\cZ_3\zeta_2^j\|_{0,t}\lesssim
   \sup_{0\leq s\leq t}\|\zeta_2^j(s)\|_{m-2-j} \big(\izt \|( \tlu_3/\phi)(s)\|_{[\f{m-1}{2}],\infty}^2\big)^{\f{1}{2}}.
\end{align*}
Since $\tlu_3|_{z=0}=0,$ we can use the Hardy inequality and the fundamental theorem of calculus respectively to obtain that:
\begin{align}\label{sec7:eq80}
   \|\tlu_3/\phi\|_{k,t}\lesssim \|\na \tlu_3\|_{k,t},\quad
   \|\tlu_3(s)/\phi \|_{k,\infty}\lesssim \il \p_z \tlu_3(s)\|_{k,\infty}.
\end{align}
Note that by the definition of $\tlu$ in \eqref{deftildeu}, one could find a smooth vector valued function $\psi$ which depends only on the boundary parametrization $\varphi,$ such that 
\beqs 
\p_z \tlu_3=(\p_{\bn}(\chi u)\cdot \bn)^{\Psi}+\psi \cdot z\p_z(\chi u)^{\Psi}+ \p_z (D\Psi)^{-*} (\chi u)^{\Psi}.
\eeqs
This, together with \eqref{sec7:eq80}, leads to that:
\begin{align}\label{sec7:eq80.5}
 \izt \| \tlu_3/\phi(s)\|_{[\f{m-1}{2}],\infty}^2\,\d s   \lesssim  \izt \|\div u(s)\|_{[\f{m-1}{2}],\infty}^2+\|u(s)\|_{[\f{m+1}{2}],\infty}^2\,\d s\lesssim \lat.
\end{align}
Moreover, in view of \eqref{zeta1infty}, for any $1\leq j\leq [\f{m}{2}]-2$,
\beq\label{sec7:eq81}
\il\zeta_2^j\il_{[\f{m}{2}]-1-j,\infty,t} \lesssim \il(\zeta^j, \zeta_1^j)\il_{[\f{m}{2}]-1-j,\infty,t}\lesssim \il(\curl (r_0 u^j))^{\Psi}, \zeta_1^j\il_{[\f{m}{2}]-1-j,\infty,t}\lesssim \lat.
\eeq
The estimate for $\cR_1^{j,\gamma}$ in \eqref{es-cR1-2} then follows from 
the facts \eqref{norm-equiv-bd}, \eqref{zeta2j-pre} and the estimates \eqref{escR1-proof}-\eqref{sec7:eq81}.

Let us switch to the control of $\|\cR_2^{\gamma}\|_{0,t}.$ We distinguish the case $1\leq j\leq [\f{m}{2}]-1,$ and
$j\geq [\f{m}{2}]$ to find  that:
\begin{align*}   \|\cR_2^{j,\gamma}\|_{0,t}&\lesssim  \mu \| \p_z^2(1/r^{\Psi}_0)\zeta_2^j \|_{m-3,t}\\  &\lesssim \mu \|\p_z^2(1/r^{\Psi}_0)\|_{m-2-j,t}\il\zeta_2^j\il_{[\f{m}{2}]-1,\infty,t}+\|\zeta_2^j\|_{m-2-j,t} \il\mu\p_z^2(1/r^{\Psi}_0)\il_{[\f{m}{2}]-1,\infty,t}.
\end{align*}
It results from \eqref{esr0-sec} and the definition \eqref{defr0} for $r_0$ that:
\begin{align*}
      \mu \| \p_z^2(1/r^{\Psi}_0)\|_{m-3,t}&\lesssim T^{\f{1}{2}} \mu \|(\na, \na^2) r_0^{-1}\|_{\infco^{m-3}}\lesssim T^{\f{1}{2}}\lat\cE_{m,t},\\
        \il\mu\p_z^2(1/r^{\Psi}_0)\il_{[\f{m}{2}]-1,\infty,t}&\lesssim \il (\Id, \na, \mu \na^2)(\theta,\ep\tsigma) \il_{[\f{m}{2}]-1,\infty,t}\lesssim \lat.
\end{align*}
 The proof of \eqref{es-cR1-2} is thus finished. 

Next, the property \eqref{es-cR3} follows from similar arguments as above,
    \begin{align*}
       \|\p_z(1/r^{\Psi}_0)\zeta_2^j\|_{m-2-j,t}
      & \lesssim (\|\p_z(1/r^{\Psi}_0)\|_{m-3,t}+\|\zeta_2^j\|_{m-2-j,t})(\|\p_z(1/r^{\Psi}_0)\|_{[\f{m}{2}]-1,\infty,t}+\il\zeta_2^j\il_{[\f{m}{2}]-1-j,\infty,t})\\
      & \lesssim T^{\f{1}{2}} \lat\cE_{m,t}.
    \end{align*}

As for the proof of \eqref{es-cR4},  by noticing the fact $|\gamma|\leq m-2-j\leq m-3,$ we attribute the $L_{t,x}^{\infty}$ norm on $1/r^{\Psi}_0$ to obtain:
\begin{align*}
    \|\p_z\zeta_2^{j}/{r}^{\Psi}_0\|_{m-3-j,t}+ \|[\cZ^{\gamma}, 1/r^{\Psi}_0]\p_z\zeta_2^j\|_{0,t} \lesssim   \|\p_z\zeta_2^{j}\|_{m-3-j,t}\il 1/r^{\Psi}_0\il_{m-3,\infty,t}.
\end{align*}
Observe that by the definition \eqref{defr0} for $r_0,$
\begin{align}\label{r0infty}
\il 1/r^{\Psi}_0\il_{m-3,\infty,t}&\lesssim 
 \lab\il(\theta,\ep\tsigma)\il_{m-3,\infty,t}\big).
 \end{align}
 
Finally, to show \eqref{es-cR5}, we count the number of derivatives hitting on
$1/r^{\Psi}_0.$ The highest one $m-2$ happens only when $j=1$ and the vector field $\cZ^{\gamma}$ hits on $\p_{y^k}(1/r^{\Psi}_0).$ Therefore, we can conclude that
\beqs 
\|\cR_5^{j,\gamma}\|_{0,t}\lesssim \mu^{\f{1}{2}}\|\na \zeta_2^j\|_{m-2-j,t} \il 1/r^{\Psi}_0\il_{m-3,\infty,t}+
\|\p_y (1/r^{\Psi}_0)\|_{m-3,t}\il\zeta_{2}^1\il_{1,\infty,t}.
\eeqs
Note that by \eqref{esr0}, it holds 
that:
\begin{align}
\|\p_y (1/r^{\Psi}_0)\|_{m-3,t}&\lesssim \|(\theta,\ep\tsigma)\|_{\hco^{m-2}}\lat\lesssim T^{\f{1}{2}}\lat.\notag
\end{align}
\end{proof}
In the next lemma, we give some estimates concerning $\cS_2^j.$
\begin{lem}\label{lemsource2}
Under the same assumption as in Lemma \ref{lemcR}, 
the following three aspects hold for any $1\leq j\leq m-2$ any 
$\ep\in (0,1], (\mu,\kpa)\in A,$ any $0<t\leq T:$\\[2pt]
(1). The convection term enjoys the estimate:
\begin{align}
    \|(\tilde{u} \cdot \na){\zeta_{1}^j}\|_{m-2-j,t}\lesssim \lat\cE_{m,t}. \label{convection-zeta1}
\end{align}
(2). There exist four functions $\cS_{21}^j-\cS_{24}^j$ such that: 
\begin{align}\label{diffusion-zeta1}
   (1/r^{\Psi}_0)\Delta_{{g}}\zeta_{1}^j+ \big(
    (1/r^{\Psi}_0)- (1/r^{\Psi}_0)|_{z=0}\big)\p_z^2\zeta_1^j=
     \p_{y^1}\cS_{21}^j +\p_{y^2}\cS_{22}^j +\p_z \cS_{23}^j +\cS_{24}^j,
\end{align}
and 
\begin{align}\label{es-viscous-zeta1}
\mu^{\f{1}{2}}\|(\cS_{21}^j,\cS_{22}^j,\cS_{23}^j)\|_{m-2-j,t}\lesssim T^{\f{1}{4}} \lat\cE_{m,t}, \quad \mu\|\cS_{24}^j\|_{m-2-j,t}\lesssim \lat\cE_{m,t}.
\end{align}
(3). Let  $H^j$ be defined in \eqref{defH}, 
$(H^j)^{\Psi}= H^j\circ \Psi$ has the following form:
\beqs 
(H^j)^{\Psi}= \mu^{\f{1}{2}} (\p_{y^1}\cH^{j}_1+\p_{y^2}\cH^{j}_2+\p_z \cH^{j}_3) +\cH_{4}^j, 
\eeqs
where the vector valued functions $\cH^{j}_1-\cH^{j}_4$ enjoy the property:
\beq\label{prop-ch}
\sum_{l=1}^{3}\|\cH^{j}_l
\|_{m-2-j,t}\lesssim (T+\ep)^{\f{1}{3}}\lat\cE_{m,t}, \quad 
\|\cH_4^j\|_{m-2-j}\lesssim \lat\cE_{m,t}.
\eeq
\end{lem}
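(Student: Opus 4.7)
\medskip
\noindent\textbf{Proof proposal.} The three parts are handled by rather different techniques, but they share a common philosophy: in each decomposition, the terms carrying the most derivatives must be written so that one (spatial) derivative sits outside the expression with a $\mu^{1/2}$ prefactor. This structure is exactly what allows the corresponding contributions in $\mathscr{S}_2$ and $\mathscr{S}_4$ of Proposition \ref{lemze2} to be paired, after integration by parts, with $\mu^{1/2}\nabla\zeta_2^{j,\gamma}$ and then absorbed by the dissipation in \eqref{sec7:eq40}. For part (1), I will expand $(\tilde u\cdot\nabla)\zeta_1^j$ via the identity \eqref{com-convetion-id} and use the fact that $\tilde u_3|_{z=0}=0$ to apply the Hardy-type estimates \eqref{sec7:eq80}--\eqref{sec7:eq80.5} to $\tilde u_3/\phi$. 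The resulting product is then split into two regimes: for $j\leq [\tfrac{m}{2}]-2$ the $L^\infty_{t,x}$ bound \eqref{zeta1infty} on $\zeta_1^j$ is combined with the energy control of $\tilde u$, whereas for $j\geq[\tfrac{m}{2}]-1$ one puts $\tilde u$ in $L^\infty_{t,x}$ (via the definition \eqref{deftildeu} and the estimates collected in $\cA_{m,t}$) and uses the energy bound \eqref{zeta1energy} on $\zeta_1^j$. In both regimes the result is $\lat\cE_{m,t}$.

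For part (2), the tangential contribution is put in divergence form by unfolding $\Delta_g$ through \eqref{defdeltag}:
\begin{equation*}
(1/r_0^\Psi)\Delta_g\zeta_1^j=\sum_{k=1}^{2}\p_{y^k}\!\bigl(|g|^{-1/2}(1/r_0^\Psi)g^{kl}|g|^{1/2}\p_{y^l}\zeta_1^j\bigr)-\sum_{k=1}^{2}\p_{y^k}\!\bigl(|g|^{-1/2}(1/r_0^\Psi)\bigr)g^{kl}|g|^{1/2}\p_{y^l}\zeta_1^j,
\end{equation*}
so that $\cS_{21}^j,\cS_{22}^j$ collect the two divergences (each involving $\p_y\zeta_1^j$) and the remainder is pushed into $\cS_{24}^j$. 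The key manipulation for the normal contribution is the identity
\begin{equation*}
\bigl((1/r_0^\Psi)-b_0\bigr)\p_z^2\zeta_1^j=\p_z\!\bigl[\bigl((1/r_0^\Psi)-b_0\bigr)\p_z\zeta_1^j\bigr]-\p_z(1/r_0^\Psi)\p_z\zeta_1^j,
\end{equation*}
which leads to the choice $\cS_{23}^j=((1/r_0^\Psi)-b_0)\p_z\zeta_1^j$. The crucial cancellation is $(1/r_0^\Psi)(t,y,z)-b_0(t,y)=z\int_0^1\p_z(1/r_0^\Psi)(t,y,sz)\,ds$, i.e.\ $\cS_{23}^j$ effectively reads $z\cdot(\text{bounded})\cdot\p_z\zeta_1^j$, which, in terms of the conormal field $\cZ_3=\tfrac{z}{1+z}\p_z$, is of order $(1+z)\cZ_3\zeta_1^j$. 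The bound $\mu^{1/2}\|\cS_{23}^j\|_{m-2-j,t}\lesssim T^{1/4}\lat\cE_{m,t}$ will then be obtained by plugging the explicit representation \eqref{sec7:eq12} for $\zeta_1^j$ and combining the heat-kernel decay estimates \eqref{appen-4}--\eqref{appen-5} (which produce the smoothing factor $\mu^{-1/4}$) with the $O(z)$ weight. The leftover terms in $\cS_{24}^j$ (the remainder from integration by parts and the remainder from $\Delta_g$) carry the milder prefactor $\mu$ and are estimated directly through the product estimates of Proposition \ref{prop-prdcom}.

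For part (3), I inspect each term in \eqref{defH} and sort according to its natural divergence structure. The term $\curl G^j\times(\chi\bn)$ is a derivative of $G^j$, and every summand in $G^j$ (see \eqref{def-Gj}) either carries an explicit $\mu$ (the three viscous terms), reduces under \eqref{timeder-r0} to an expression in $\kappa\Delta\theta$ and $\ep\mathfrak{N}$, or is the commutator $(1+\ep r_1)\cC_u^j$ from time derivatives; under the hypothesis \eqref{assumption-mukpa} the coefficient $\kappa\sim\mu$, so every derivative-carrying piece factors as $\mu^{1/2}\cdot\mu^{1/2}(\,\cdot\,)$ where the second factor is controlled uniformly. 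Similarly $\mu\nabla r_0^{-1}\times\curl\omega_{r_0}^j$ and $\mu r_0^{-1}\div(\omega_{r_0}^j\times\nabla(\chi\bn))$ are curls/divergences of $\mu^{1/2}$-sized quantities, while $\sum_l\nabla u_l\times\p_l(r_0u^j)$ is placed in divergence form using the identity $\curl a\cdot b=\div(a\times b)+a\cdot\curl b$ after transport through $\Psi$. All these go into $\cH_1^j,\cH_2^j,\cH_3^j$. The genuinely non-derivative contributions in $H_{i2}^j$, together with the algebraic remainders, form $\cH_4^j$, whose estimate follows from the product estimate \eqref{roughproduct1}, Corollary \ref{cor-gb}, and the uniform control \eqref{EE-theta-final}, \eqref{nauL2-uniform} already available.

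\smallskip
The main obstacle will be part (2), specifically the estimate of $\cS_{23}^j$: one must simultaneously exploit the vanishing of $(1/r_0^\Psi)-b_0$ at $z=0$ \emph{and} the heat-kernel smoothing of $\zeta_1^j$ in order to recover the $\mu^{1/2}$ prefactor without incurring a loss in $z$ at large $z$. A second, subsidiary difficulty is in part (3), where one must verify that the conversion of curls into divergences never spends a normal derivative on $r_0$ that is not controlled; this relies crucially on the improved regularity $\sqrt{\kappa}\,\nabla^2\theta\in L^2_tL^2$ provided by \eqref{EE-theta-final}, and is precisely where the assumption \eqref{assumption-mukpa} enters quantitatively.
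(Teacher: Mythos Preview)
Your strategy for parts (1) and (3) is essentially that of the paper, but part (2) contains a genuine gap and part (3) has a misplacement.

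\textbf{Part (2), the gap.} After the identity
\[
\bigl((1/r_0^\Psi)-b_0\bigr)\p_z^2\zeta_1^j=\p_z\bigl[\bigl((1/r_0^\Psi)-b_0\bigr)\p_z\zeta_1^j\bigr]-\p_z(1/r_0^\Psi)\,\p_z\zeta_1^j,
\]
you send the remainder $-\p_z(1/r_0^\Psi)\,\p_z\zeta_1^j$ to $\cS_{24}^j$ and propose to bound $\mu\|\cS_{24}^j\|_{m-2-j,t}$ directly. This does not close: $\p_z\zeta_1^j$ is \emph{not} in $L^2_tL^2$ uniformly. From the kernel formula \eqref{sec7:eq12}, one $\p_z$ on the kernel yields an $L^2_z$ norm of order $Y^{-5/4}\sim(\mu(t-t'))^{-5/4}$, and the time convolution diverges at $t'=t$; the extra factor $\mu$ is insufficient. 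Nor does the coefficient $\p_z(1/r_0^\Psi)$ vanish at $z=0$ (it carries an $\ep\,\p_{\bn}\tsigma$ piece), so no $z$-weight is gained from it. The paper's remedy is to integrate by parts \emph{once more},
\[
-\p_z(1/r_0^\Psi)\,\p_z\zeta_1^j=-\p_z\bigl[\p_z(1/r_0^\Psi)\,\zeta_1^j\bigr]+\p_z^2(1/r_0^\Psi)\,\zeta_1^j,
\]
so that $\cS_{23}^j$ has \emph{two} terms, $((1/r_0^\Psi)-b_0)\p_z\zeta_1^j-\p_z(1/r_0^\Psi)\,\zeta_1^j$, and $\cS_{24}^j$ receives $\p_z^2(1/r_0^\Psi)\,\zeta_1^j$ instead. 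The new term in $\cS_{23}^j$ is handled exactly as in \eqref{es-cR3} via \eqref{zeta1energy}, and $\mu\|\p_z^2(1/r_0^\Psi)\,\zeta_1^j\|$ is bounded through \eqref{esr0-sec}.

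\textbf{Part (3), a misplacement.} You route $\sum_l\na u_l\times\p_l(r_0 u^j)$ into the divergence block $\cH_1^j,\cH_2^j,\cH_3^j$. But the required bound there is $\|\cH_l^j\|_{m-2-j,t}\lesssim(T+\ep)^{1/3}\lat\cE_{m,t}$ \emph{after} extracting $\mu^{1/2}$, i.e.\ you would need $\mu^{-1/2}$ times this term to be bounded---which it is not, since it carries no viscous factor. The paper places this term (and the other zero-order pieces of $H_2^j$) directly in $\cH_4^j$, see \eqref{defcH4j}: as a product of two single gradients it is already controlled in $L^2_tH_{co}^{m-2-j}$. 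The divergence decomposition via \eqref{id-curlftimesn} is applied only to the three genuinely over-differentiated terms $H_{111}^j,H_{121}^j,H_{211}^j$ (defined in \eqref{defh111}--\eqref{defh211}); each of these carries an explicit $\mu$ or a $\kappa\sim\mu$ (through \eqref{timeder-r0}), which is what makes the $\mu^{-1/2}$ division harmless. Incidentally, the stronger relation \eqref{assumption-mukpa} is \emph{not} needed in this lemma---only $\kappa\sim\mu$ is used here; the sharper hypothesis enters later, in Lemma \ref{lemzeta5-prop}.
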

\begin{proof}
$\underline{Proof \,of \,(1)}.$
Let us write
\beq\label{identity-convection}
\tilde{u} \cdot \na
=\tlu_1\p_{y^1}+\tlu_2\p_{y^2}+(\tlu_3/\phi) \cZ_3. 
\eeq
Following the similar arguments as the estimate of 
$\cR_1^{\gamma}$ in the last lemma,
we can control the convection term as:
\begin{align*}
 \|{(\tilde{u} \cdot \na)\zeta_{1}^j}\|_{m-2-j,t} &\lesssim    \|(\tlu_1, \tlu_2, \tlu_3/\phi)\|_{m-2,t}\il\zeta_1^j\il_{[\f{m}{2}]-1-j,\infty,t} \mathbb{I}_{\{1\leq j\leq [\f{m}{2}]-2\}}\notag\\
       &\quad +\sup_{0\leq s\leq t}\|\zeta_1^j(s)\|_{m-2-j} \big(\izt \|(\tlu_1, \tlu_2, \tlu_3/\phi)(s)\|_{[\f{m-1}{2}],\infty}^2\big)^{\f{1}{2}}\\
       &\quad + \|\zeta_1^j(s)\|_{m-1-j,t}\il (\tlu_1, \tlu_2, \tlu_3/\phi)\il_{0,\infty,t},
\end{align*}
which, combined with \eqref{zeta1infty}, \eqref{zeta1energy}, \eqref{sec7:eq80}, \eqref{sec7:eq80.5}, yields \eqref{convection-zeta1}. \\[3pt]
$\underline{Proof of\, (2)}.$ By direct computation, we can set:
\begin{align*}
  &  \cS_{2k}^j= \sum_{l=1}^2 (1/r^{\Psi}_0) g^{kl}\p_{y^l}\zeta_1^j, \quad k=1,2, \quad   \cS_{23}^j=\big((1/r^{\Psi}_0)- (1/r^{\Psi}_0)|_{z=0}\big)\p_z\zeta_1^j-\p_z(1/r^{\Psi}_0)\zeta_1^j,\\
   & \cS_{24}^j=- \p_{y^k}(|g|^{-\f{1}{2}}/r^{\Psi}_0)
    g^{kl}|g|^{\f{1}{2}}\p_{y^l}\zeta_1^j+\p_z^2  (1/r^{\Psi}_0) \zeta_1^j.
\end{align*}
Since $j\geq 1,$ we can use \eqref{zeta1energy} and \eqref{r0infty} to control $\cS_{2k}^j$: 
\beqs 
\|\cS_{2k}^j\|_{m-2-j,t}\lesssim \|\zeta_1^j\|_{m-1-j,t} \il1/r^{\Psi}_0 \il_{m-3,\infty,t}\lesssim T^{\f{1}{4}}\lat\cE_{m,t}.
\eeqs
We now sketch the proof of \eqref{es-viscous-zeta1}.
For the first term of $\cS_{23}^j,$ 
we take benefit of the vanishing boundary condition for
 $\mathscr{H}=(1/r^{\Psi}_0)- (1/r^{\Psi}_0)|_{z=0}$ to obtain that:
 \begin{align}\label{sec7:eq85}
\|\mathscr{H}\cdot \p_z\zeta_1^j\|_{m-2-j,t}\lesssim 
     \|\mathscr{H}/\phi\|_{m-3,t} \il \cZ_3 \zeta_1^j\il_{[\f{m}{2}]-j-2} \mathbb{I}_{\{1\leq j\leq [\f{m}{2}]-2\}}+  \|\cZ_3 \zeta_1^j\|_{m-2-j, t} \il \mathscr{H}/\phi\il_{[\f{m-1}{2}],\infty,t}.
 \end{align}
 It follows from the fundamental theorem of calculus, the Hardy inequality as well as the estimate for $r_0^{-1}$ in \eqref{esr0} that:
 \begin{align*}
&\mu^{\f{1}{2}}\il \mathscr{H}/\phi\il_{[\f{m-1}{2}],\infty,t}\lesssim \mu^{\f{1}{2}}\il (\Id,\na)(\theta,\ep\tsigma)\il_{[\f{m-1}{2}],\infty,t}\lesssim\lat,\\ 
   &\|\mathscr{H}/\phi\|_{m-3,t}\lesssim \|\p_z\mathscr{H}\|_{m-3,t}\lesssim \|(\Id, \na)(\theta,\ep\tsigma)\|_{\hco^{m-3}}\lat\lesssim T^{\f{1}{2}}\lat\cE_{m,t},
 \end{align*}
 which, together with \eqref{sec7:eq85}, \eqref{zeta1energy} yield,
 \beqs 
 \mu^{\f{1}{2}} \|\mathscr{H}\cdot \p_z\zeta_1^j\|_{m-2-j,t}\lesssim T^{\f{1}{4}}\lat\cE_{m,t}.
 \eeqs
The second term in $\cS_{23}^j$ can be controlled in the same way as in \eqref{es-cR3}, the estimate for $ \cS_{24}^j$ bears some resemblances with the control of $\cR_2^{\gamma}$ and $\cR_5^{\gamma}$ in the previous lemma, we omit the detail and thus finish the proof of \eqref{es-viscous-zeta1}.\\[3pt] 
\underline{\textit{Proof of \,(3)}}.
For convenience, we write here the definitions of $H^j=H_1^j+H_2^j:$
\beqs
\begin{aligned}
H_{1}^j&=\big(\curl G^j -\mu\lambda_1 \overline{\Gamma}  \na (r_0^{-1} )\times \curl \omer^j -\sum_{l=1}^3\na u_{l}\times \p_{l}(r_0 u^j)\big) \times \chi \bn,\\
H_{2}^j&=\omer^j\times (u\cdot \na (\chi_i\bn))-\mu\lambda_1\overline{\Gamma} r_0^{-1}(2  \div(\omer^j\times\na (\chi\bn)-\omer^j \times\Delta(\chi\bn)),
\end{aligned}
\eeqs
and 
\beq\label{def-Gj-cuj}
\begin{aligned}
G^j-(1+\ep r_1)\cC_u^j&=u^j(\pt+u\cdot\na)r_0+\mu\lambda_1\overline{\Gamma}r_0^{-1}[\curl\curl, r_0]u^j\notag\\
&\quad- \ep\mu r_1\lambda_1\Gamma\curl\curl u^j-\mu\lambda_1(\Gamma-\overline{\Gamma})\curl\curl u^j,
\end{aligned}
\eeq
where $\cC_u^j$ is defined in \eqref{defCuj}.
By looking at every term appearing in the definition of $H_1^j$ and $H_2^j,$ we find that there are three troublesome terms that may involve one more normal derivative than what we can control, namely, 
\begin{align*}
  &H_{11}^j=\colon \curl\big(G^j-(1+\ep r_1)\cC_u^j \big)\times\chi\bn,\\ 
& H_{12}^j=\colon \mu\lambda_1\overline{\Gamma}\big(\na (r_0^{-1})\times \curl \omega_{r_0}^j\big)\times\chi\bn,  \quad  H_{21}^j=\colon- 2\mu\lambda_1\overline{\Gamma} r_0^{-1}  \div(\omer^j\times\na (\chi\bn)).
\end{align*}
To continue, it is convenient to write these four terms 
as a perfect derivative plus reminders. 
In view of the identity 
\begin{align}\label{id-curlftimesn}
    \curl f \times \chi\bn =\p_{\bn}(\chi f)-\na (\chi\bn\cdot f)+ D(\chi\bn) f-f\p_{\bn}\chi,
\end{align}
we split further 
\begin{align*}
H_{11}^j=  H_{111}^j +(H_{11}^j-  H_{111}^j) , 
\end{align*}
where 
\beq\label{defh111}
\begin{aligned}
  H_{111}^j = \big(\p_{\bn}(\chi \cdot)-\na (\chi\bn\cdot)\big)\big(G^j-(1+\ep r_1)\cC_u^j \big). 
\end{aligned}
\eeq
Moreover, let us write 
\beq\label{defh131}
\begin{aligned}
   H_{12}^j&=-\mu\lambda_1\overline{\Gamma} \big(\chi\p_{\bn}(r_0^{-1})\curl \omega_{r_0}^j-\chi\na(r_0^{-1}) \curl \omega_{r_0}^j\cdot\bn\big)\\
   &=-\mu\lambda_1\overline{\Gamma} \curl \big(\chi\p_{\bn}(r_0^{-1})\omer^j\big)+\mu\lambda_1\overline{\Gamma} \big(\na (\chi\p_{\bn}(r_0^{-1})\times \omer^j+\chi\na(r_0^{-1}) \curl \omega_{r_0}^j\cdot\bn \big)\\
   &=\colon   H_{121}^j+ H_{122}^j
\end{aligned}
\eeq
and 
\beq\label{defh211}
\begin{aligned}
  H_{21}^j&=-2\mu \lambda_1\overline{\Gamma} \div \big(r_0^{-1}\omer^j\times \na (\chi\bn)\big)+2\mu \lambda_1\overline{\Gamma} \na r_0^{-1}\cdot \big(\omer^j\times \na (\chi\bn)\big)=\colon    H_{211}^j+ H_{212}^j.
\end{aligned}
\eeq
By the change of variable, one can find three functions $\cH_1^j,\cH_2^j,\cH_3^j$ such that: 
\begin{align}\label{cv-newidentity}
 (H_{111}^j+ H_{121}^j+ H_{211}^j\big)^{\Psi}= \mu^{\f{1}{2}}\big(\p_{y^1}\cH^{j}_1+\p_{y^2}\cH^{j}_2+\p_z \cH^{j}_3\big)
\end{align}
and $\cH_1^j-\cH_3^j$ are the linear combinations of the following quantities (with the coefficients depending on $\chi, (D\Psi)^{-1}$ and their first two derivatives), 
$$\circled{1}=\mu^{-\f{1}{2}}\big(
G^j-(1+\ep r_1)\cC_u^j \big)^{\Psi}, \quad \circled{2}=\mu^{\f{1}{2}}\lambda_1\overline{\Gamma}  \big(\chi\p_{\bn}(r_0^{-1})\omer^j\big)^{\Psi}, \qquad \circled{3}=2 \mu^{\f{1}{2}}\lambda_1\overline{\Gamma}  \big(r_0^{-1}\omer^j\times \na (\chi\bn)\big)^{\Psi}.$$
We can then define 
$\cH_4^j$ as: 
\beq\label{defcH4j} 
\cH_4^j=\big(H_1^j-\sum_{l=1}^2 H_{1l1}^j+H_{2}^j-H_{211}^j\big)^{\Psi},
\eeq
where $H_{111}^j, H_{121}^j, H_{211}^j$ are defined in \eqref{defh111}-\eqref{defh211}.

Now it remains to justify the property \eqref{prop-ch}. 
First, by the definition, 
we have that:
\begin{align}\label{sec7:eq85.2}
  \|(\cH_{1}^j, \cH_{2}^j,\cH_{3}^j)\|_{m-2-j,t}\lesssim \big\|(\circled{1},\circled{2},\circled{3})\big\|_{m-2-j,t}.
\end{align}
Similar (and indeed easier) to the proof of \eqref{es-cR3}, it can be verified  that:
\beq\label{sec7:eq85.5}
\begin{aligned}
    \big\|\circled{2}, \circled{3}\big\|_{m-2-j,t}&\lesssim 
    \mu^{\f{1}{2}}\|(\chi\p_{\bn}(r_0^{-1})\omer^j, r_0^{-1}\omer^j)\|_{L_t^2\cH^{m-2-j}}\\
    &\lesssim T^{\f{1}{2}}\big(\|\omer^j\|_{L_t^{\infty}\cH^{m-2-j}}+ \|\chi\p_{\bn}(r_0^{-1}), (Z r_0^{-1})\|_{L_t^{\infty}\cH^{m-3}}\big) \lat\\
    &\lesssim T^{\f{1}{2}}\lat\cE_{m,t}.
\end{aligned}
\eeq
The estimate of $\circled{1}$ requires some efforts since it contains four terms (see \eqref{def-Gj-cuj}).
In view of the identity \eqref{timeder-r0}, we have that
modulus some constants $C=C(R, C_v,\gamma),$
\beqs
\begin{aligned}
\mu^{-\f{1}{2}}(\pt+u\cdot\na)r_0
\approx -\kpa\mu^{-\f{1}{2}}r_0 \Gamma \div(\beta\na\theta)
-\mu^{\f{1}{2}}\ep r_0 (\ep\pt+\ep u\cdot\na)( \Gamma\div u).
\end{aligned}
\eeqs
Since $(\mu,\kpa)\in A,$
it holds by  Remark \ref{rmkmuapproxkpa} that
$\kpa \mu^{-\f{1}{2}}\lesssim \mu^{\f{1}{2}}\lesssim \kpa^{\f{1}{2}}.$  Denoting $\mathscr{P}$ as a placeholder of $$(\kpa^{\f{1}{2}} \div(\beta\na\theta), u, Z r_0, Z\div u, Z\Gamma),$$ one finds that:
  \begin{align*}
    \il\mu^{-\f{1}{2}}(\pt+u\cdot\na)r_0\il_{0,\infty,t}\lesssim \lab \il \mathscr{P}\il_{0,\infty,t}\big)\lesssim \lat.
\end{align*}
It then follows from the generalized Gagliardo-Nirenberg inequality (in space) that:
\begin{align*}
  & \|\mu^{-\f{1}{2}}(\pt+u\cdot\na)r_0\|_{L_t^{\infty}\cH^{m-3}}\lesssim \|\mathscr{P}\|_{L_t^{\infty}\cH^{m-3}}\lab \il \mathscr{P}\il_{0,\infty,t}\big)\lesssim \lat\cE_{m,t}.
\end{align*}
We thus can control the term $\mu^{-\f{1}{2}} u^j ((\pt+u\cdot\na)r_0)$ as (note that $j\geq 1$):
\beq\label{sec7:eq86}
\begin{aligned}
   &\|\mu^{-\f{1}{2}} u^j (\pt+u\cdot\na)r_0)\|_{L_t^2\cH^{m-2-j}}\\
   &\lesssim
   \|u\|_{L_t^2\cH^{m-2}}\il\mu^{-\f{1}{2}}(\pt+u\cdot\na)r_0\il_{0,\infty,t} + \|\mu^{-\f{1}{2}}(\pt+u\cdot\na)r_0\|_{L_t^{2}\cH^{m-3}}  \il u\il_{m-3,\infty,t}\\
   &\lesssim T^{\f{1}{2}}\lat\cE_{m,t}.
\end{aligned}
\eeq
 Since $j\geq 1$ and $\il(\sigma, \tsigma)\il_{m-3,\infty,t}$ is uniformly bounded,  we control the  next two terms 
 by attributing all the $L_{t,x}^{\infty}$ norm on $r_1\Gamma, \, \ep^{-1}(\Gamma-\overline{\Gamma})$ 
 \beq\label{sec7:eq86.5}
 \begin{aligned}
    &\mu^{\f{1}{2}} \|(\ep r_1\Gamma\curl\curl u^j, (\Gamma-\overline{\Gamma})\curl\curl u^j)\|_{L_t^2\cH^{m-2-j}}\\
      &\lesssim \ep^{\f{1}{3}}\|\ep^{\f{2}{3}}\mu^{\f{1}{2}} \na^2 u^j\|_{L_t^{2}\cH^{m-2-j}} \lab \il(r_1 \Gamma, \ep^{-1}(\Gamma-\overline{\Gamma}))\il_{m-3,\infty,t}\big)\\
      &\lesssim \ep^{\f{1}{3}}\lae\cE_{m,t}.
 \end{aligned}
\eeq 
Finally, for the commutator term $ \mu^{\f{1}{2}}r_0^{-1}[\curl\curl, r_0]u^j,$
 one has by noticing the identity \eqref{sec7:eq-1} that:
\begin{align*}
   \mu^{\f{1}{2}}r_0^{-1}[\curl\curl, r_0]u^j\approx 
    \mu^{\f{1}{2}}(r_0^{-1}\na^2 r_0) u^j+  \mu^{\f{1}{2}}(r_0^{-1}\na r_0) \na u^j,
\end{align*}
which enables us to obtain:
\beq\label{sec7:eq87}
\begin{aligned}
  &\|\mu^{\f{1}{2}}r_0^{-1}[\curl\curl, r_0]u^j\|_{L_t^2\cH^{m-2-j}}\\
  &\lesssim 
T^{\f{1}{2}}  \big(\|\mu^{\f{1}{2}}r_0^{-1}(\na^2 r_0, \na r_0)\|_{L_t^{\infty}\cH^{m-3}}+\|(u,\na u)\|_{L_t^{\infty}\cH^{m-2}}\big)\cdot\\
&\quad  \big(\il u\il_{m-3,\infty,t}+ \il \mu^{\f{1}{2}}r_0^{-1}\na^2 r_0\il_{0,\infty,t}+\il \na u, r_0^{-1}\na r_0 \il_{[\f{m}{2}]-1,\infty,t}\big)\\
&\lesssim T^{\f{1}{2}}\lat\cE_{m,t}.
\end{aligned}
\eeq
Note that by \eqref{esr0}, \eqref{esr0-sec},
\begin{align*}
    \|\mu^{\f{1}{2}}r_0^{-1}(\na^2 r_0, \na r_0)\|_{L_t^{\infty}\cH^{m-3}}\lesssim \|(\Id, \na, \mu^{\f{1}{2}}\na^2)(\theta,\ep\tsigma)\|_{L_t^{\infty}\cH^{m-3}}\lat\lesssim \lat\cE_{m,t}.
\end{align*}
 We  can now conclude from \eqref{sec7:eq86}-\eqref{sec7:eq87}  that 
 \beqs 
 \|\circled{1}\|_{m-2-j,t}\lesssim T^{\f{1}{2}}\lat\cE_{m,t},
 \eeqs
which, together with \eqref{sec7:eq85.5} and \eqref{sec7:eq85.2}, yields the first inequality in \eqref{prop-ch}. 
   
The estimate for $\cH_4^j$ (defined in \eqref{defcH4j}) is a bit lengthy since it contains many terms. However, most of them can be dealt with in the similar manner as we performed before. We thus only detail one term appearing in $\curl\big((1+\ep r_1)\cC_{u}^j\big)$
which may not be obvious, namely  $$
\mu\lambda_1 \curl\big( (1+\ep r_1) [(\ep\pt)^j,\Gamma] \curl\curl u\big)=
\mu\lambda_1[(\ep\pt)^j,\Gamma]\curl \curl\curl u+ \cdots ,$$
where we denote $\cdots$ as the terms which are similar or easier to control. We thus focus on the first term in the right hand side.
The equation $\eqref{NCNS-S2}_2$ can be rewritten 
\beq 
-\ep\mu\lambda_1\Gamma\curl\curl u=\f{1}{R\beta}(\ep\pt+\ep u\cdot\na )u+ \na\tsigma+\ep\mu (2\lambda_1+\lambda_2)(\div u) \na\Gamma,
\eeq
from which we can find that:
\begin{align}
   \ep \mu\|\curl\curl\curl u\|_{L_t^{2}H_{co}^{m-3}}&\lesssim \big(\|(\Id,\na) (u, \theta, \sigma, \ep\mu\div u)\|_{\hco^{m-2}}+\|\na^2\tsigma\|_{\hco^{m-3}}\big) \lat \notag\\
    &\lesssim \lat\cE_{m,t}, \notag \\
  \ep\mu\il\curl\curl\curl u\il_{\f{[m]}{2}-2,\infty,t}&\lesssim \lab \il(\Id,\na)(\sigma, u,\theta)\il_{[\f{m}{2}]-1,\infty,t}+ \il\na^2\tsigma\il_{[\f{m}{2}]-2,\infty,t}\big)
  \lesssim \lat. \label{tcurl-infty}
\end{align}
Note that the properties \eqref{Es-tsigma2}, \eqref{Es-tsigma4} for $\tsigma$ have been employed.
One thus obtains from the above two estimates that:
\begin{align*}
    &\mu\lambda_1 \|\curl\big( (1+\ep r_1) [(\ep\pt)^j,\Gamma] \curl\curl u\big)\|_{L_t^2\cH^{m-2-j}}\\
   & \lesssim 
    \|(\pt \Gamma,  \mu\lambda_1\ep \curl\curl\curl u)\|_{\hco^{m-3}}\lat\lesssim \lat\cE_{m,t}.
\end{align*}
We now finished the proof of the second estimate in \eqref{prop-ch} and thus the proof of the aspect $(3).$
\end{proof}
\subsection{Uniform estimates for high spatial regularity-proof of  \eqref{zetahigh}}
In this subsection, we devote ourselves to proving the estimate \eqref{zetahigh} for $\zeta=\zeta^0=(\chi \curl(r_0 u) \times\bn)^{\Psi},$ which relies again on a suitable splitting of the system \eqref{eqzeta}.
Before doing so, we define
  \beq\label{defr2}
 r_2=\f{1}{R\beta(0)}\exp\big(-\theta+\f{R}{C_v\gamma}\ep\sigma\big), \quad b_2(t,y)=\big(\f{
   1}{r_2}\big)^{\Psi}|_{z=0},\,\, b_0(t,y)=\big(\f{
   1}{r_0}\big)^{\Psi}|_{z=0}.
 \eeq
  By  \eqref{defr0}, the following relation holds:
  \begin{align}\label{relationr0r2}
     r_0 =r_2 \exp(-\ep^2\mu \bar{C}\Gamma\div u) \quad \text{ with } \bar{C}=(2\lambda_1+\lambda_2) R/{C_v\gamma}.
  \end{align}
  
 Let us write $\zeta=\zeta_{3}+\zeta_{4}+\zeta_5,$ where $\zeta_{3}, \zeta_{4}, \zeta_5$ 
 solve the following three problems:
 \begin{equation}\label{eqzeta3}
\left\{
    \begin{array}{l}
   \pt{\zeta}_{3}-\mu \lambda_1\overline{\Gamma}  
   b_2\partial_z^2 \zeta_{3} =0,
   \quad (t,x) \in \mR_{+}\times \mR^3,  
   \\[2.5pt]
\zeta_{3}|_{z=0}=\big[\f{\ep^2  }{C_v\gamma}((\f{r_0}{\beta} u\cdot\na)\bn \cdot u )|g|^{\f{1}{4}}\chi\Pi u+2r_0|g|^{\f{1}{4}}\chi\Pi\big((D\bn)u-a u\big)\big]^{\Psi}|_{z=0},
\\[2.5pt]
{\zeta}_{3}|_{t=0}=0,
    \end{array}
\right.
\end{equation}
\begin{equation}\label{eqzeta4}
\left\{
    \begin{array}{l}
   \pt{\zeta}_{4}+\mu(1-\Delta)\zeta_{4}=0,  \quad (t,x) \in \mR_{+}\times \mR^3,  \\[2.5pt]
\zeta_{4}|_{z=0}=\big[\f{R\ep}{C_v\gamma}r_0 
(\p_{\bn}\tsigma-\f{1}{R\beta}(\ep u\cdot\na)\bn \cdot u)
|g|^{\f{1}{4}}\chi\Pi u\big]^{\Psi}|_{z=0},  
\\[2.5pt] {\zeta}_{4}|_{t=0}= 0,
    \end{array}
\right.
\end{equation}
\begin{equation}\label{eqzeta5}
\left\{
    \begin{array}{l}
   (\pt+ \tilde{u} \cdot \na){\zeta_5}-\mu \lambda_1\overline{\Gamma} ( \f{1}{r_0})^{\Psi}(\partial_z^2  +\Delta_{{g}})\zeta_5 =\cS_5,  \quad (t,x) \in \mR_{+}\times \mR^3,  \\[2.5pt]
\zeta_5|_{z=0}= 0,\quad
\zeta_5|_{t=0}=\zeta|_{t=0}, 
    \end{array}
\right.
\end{equation}
with the source term 
\beq\label{source5}
\begin{aligned}
\cS_5&=-(\tilde{u} \cdot \na)(\zeta_{3}+\zeta_4)+\cS_3+\cS_4+|g|^{\f{1}{4}} (H^0)^{\Psi}+ H^{0,\dag},
\end{aligned}
\eeq
where $(H^0)^{\Psi}, H^{0,\dag}$ are defined in \eqref{defH}, \eqref{defHdag} and 
\begin{align}
   & \cS_3=\mu \lambda_1\overline{\Gamma}\bigg(
\big( \f{
1}{r_0}\big)^{\Psi}\Delta_{{g}}\zeta_{3}+ \big(\big( \f{1}{r_0})^{\Psi}-b_2\big)\p_z^2\zeta_{3}
\bigg), \label{defcS3}\\
&\cS_4=\mu \lambda_1\overline{\Gamma}
\big( \f{1}{r_0}\big)^{\Psi}(\Delta_{{g}}+\p_z^2)\zeta_{4}-\p_t\zeta_4. \label{defcS4}
\end{align}
Note that the system \eqref{eqzeta4} and \eqref{eqzeta5} satisfy the compatibility condition thanks to the assumption $u_0|_{\p\Omega}=0.$ 
Moreover,
by the definition of $\zeta_3-\zeta_5,$ 
it holds that:
$$\zeta_3|_{z=0}+\zeta_4|_{z=0}+\zeta_5|_{z=0}=\zeta|_{z=0}.$$
The desired estimate \eqref{es-zeta} is thus the consequence of the Proposition \ref{lemze3}, Proposition \ref{lemze4} 
and Proposition \ref{lemze5} for $\zeta_3, \zeta_4$ and $\zeta_5$ respectively.
\begin{prop}\label{lemze3} 
 Suppose that $m\geq 7$ and the assumption \eqref{preasption} holds. We have the following estimates for $ \zeta_3$ which solves the system \eqref{eqzeta3}: for  any $\ep \in (0,1], (\mu,\kpa)\in A,$
any $ 0<t\leq T,$ 
 \begin{align}
    &\qquad \qquad\qquad\qquad \il\zeta_3\il_{1,\infty,t}\lesssim \lat, \label{zeta3infty}\quad   \\   &\|\zeta_3(t)\|_{m-2}\lesssim  (\mu T)^{\f{1}{4}} \lat\cE_{m,t}, \quad \|\zeta_3\|_{m-1,t}\lesssim  T^{\f{1}{4}} \lat\cE_{m,t},\label{zeta3energy1}\\
 &\ep\mu^{\f{1}{2}}\|\p_z \zeta_3\|_{m-2,t}
   \lesssim\lab Y_m(0)\big)+ (T^{\f{1}{2}}+\ep)\lat\cE_{m,t},  \label{zeta3energy3-1}\\
  & \quad  \|\ep\pt\zeta_3\|_{0,t}\lesssim \lab Y_m(0)\big)+\lae. \label{zeta3energy3-2}
 \end{align}
\end{prop}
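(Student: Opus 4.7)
The plan is to mimic, and then extend, the argument used for Proposition~\ref{lemze1}. Since \eqref{eqzeta3} is a heat equation on the half-space whose diffusion coefficient $\mu\lambda_1\overline{\Gamma}\,b_2(t,y)$ depends only on $(t,y)$, we can apply the Green-function formula \eqref{green-heat} to represent
\beqs
\zeta_3(t,y,z)=\mu\lambda_1\overline{\Gamma}\int_0^t\frac{1}{\sqrt{\pi Y_2}}\,\frac{z}{2Y_2}\,e^{-z^{2}/4Y_2}\,(b_2\,\phi_3)(t',y)\,\d t',\qquad Y_2(t,t',y)=\mu\lambda_1\overline{\Gamma}\!\int_{t'}^{t}\!b_2(\tau,y)\,\d\tau,
\eeqs
where $\phi_3=\zeta_3|_{z=0}$ is the explicit data from $\eqref{eqzeta3}_2$. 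The assumption \eqref{preasption1} together with \eqref{preasption} give uniform bounds $c_0\leq b_2\leq 1/c_0$, so $Y_2$ is equivalent to $\mu(t-t')$ with constants depending only on $c_0$.

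First I would establish the boundary-data estimates that will feed the kernel bounds. Since $\phi_3$ is built from $u|_{\partial\Omega}$, $r_0$, $g$, $\beta$, $\bn$ and their smooth combinations (and critically, does \emph{not} involve $\partial_{\bn}\tilde\sigma$, which has been absorbed into $\zeta_4$), one proves by trace inequalities \eqref{traceL2}--\eqref{traceLinfty}, product estimates from Proposition~\ref{prop-prdcom} and the bounds \eqref{esr0} on $r_0$ that
\beqs
\il\phi_3\il_{[m/2]-1,\infty,t}+|\phi_3|_{L_t^{\infty}\tilde H^{m-2}}\lesssim\Lambda\bigl(\tfrac1{c_0},\cA_{m,t}\bigr),\qquad \mu^{1/4}|\phi_3|_{L_t^2\tilde H^{m-1}(\partial\Omega)}\lesssim T^{1/4}\Lambda\bigl(\tfrac1{c_0},\cA_{m,t}\bigr)\cE_{m,t}.
\eeqs
With these in hand, the $L^{\infty}$ estimate \eqref{zeta3infty} and the two energy bounds \eqref{zeta3energy1} follow from exactly the same computations performed in Proposition~\ref{lemze1} for $\zeta_1^j$: apply the kernel bound \eqref{appen-4} to obtain $\|\cZ^\gamma\zeta_3(t)\|_{L^2}\lesssim \mu\int_0^t(\mu(t-t'))^{-3/4}|\phi_3(t')|_{H^{|\gamma|}}\d t'\,\Lambda$, then use Young/Minkowski in time.

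The main obstacle is \eqref{zeta3energy3-1}: a naive estimate from the formula gives a kernel $\sim\mu^{-1/4}(t-t')^{-5/4}$ for $\partial_z\zeta_3$ which is not locally integrable in $t'$. To avoid this, I would split $\phi_3=\phi_{3,\ep}+\phi_{3,0}$ corresponding to the two explicit terms in $\eqref{eqzeta3}_2$. The first piece carries an $\ep^2$ prefactor and is harmless: the extra $\ep^2$ absorbs the singularity, producing the $\ep$-contribution on the right-hand side. For the second piece, which is $O(1)$, the key is the compatibility assumption $u_0^{\ep}|_{\partial\Omega}=0$ of Theorem~\ref{thm1}, which forces $\phi_{3,0}|_{t=0}=0$. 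Writing $\phi_{3,0}(t')=\int_0^{t'}\partial_s\phi_{3,0}(s)\,\d s=\ep^{-1}\int_0^{t'}\ep\partial_s\phi_{3,0}(s)\,\d s$ and exchanging the order of integration in Duhamel's formula, one obtains
\beqs
\ep\,\zeta_{3,0}(t,y,z)=\int_0^t \bigl(\ep\partial_s\phi_{3,0}\bigr)(s,y)\,\Phi(t,s,y,z)\,\d s,\quad \Phi(t,s,y,z)=\int_s^t \mu\lambda_1\overline{\Gamma}\,\frac{1}{\sqrt{\pi Y_2}}\,\frac{z}{2Y_2}\,e^{-z^{2}/4Y_2}\,b_2(t',y)\,\d t',
\eeqs
so that $\Phi$ is the heat-lift that equals $1$ on $z=0$ during $[s,t]$; in particular $\|\mu^{1/2}\partial_z\Phi(t,s,\cdot)\|_{L^2_z}\lesssim(t-s)^{-1/2}$, which is integrable. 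Young's convolution inequality, combined with the bound on $|\ep\partial_s\phi_{3,0}|_{L_t^2H^{m-2}}$ provided by $\cE_{m,t}$ (through trace inequalities on $\ep\partial_t u|_{\partial\Omega}$), then delivers \eqref{zeta3energy3-1} with the announced $T^{1/2}+\ep$ factor. The $\Lambda(Y_m(0))$ summand comes from the commutators between the $\cZ^{\gamma}$'s and the factors $r_0,\beta,g,\bn$ at $t=0$.

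Finally, for \eqref{zeta3energy3-2}, I would use the equation directly: $\ep\partial_t\zeta_3=\ep\mu\lambda_1\overline{\Gamma}\,b_2\,\partial_z^2\zeta_3$. Multiplying by $\ep\partial_t\zeta_3$ and integrating on $[0,t]\times\mR^3_+$, integration by parts in $z$ yields
\beqs
\|\ep\partial_t\zeta_3\|_{0,t}^2+\tfrac{\mu\lambda_1\overline{\Gamma}}{2}\ep^{2}\iomega b_2|\partial_z\zeta_3|^2(t)\,\d x \leq \tfrac{\mu\lambda_1\overline{\Gamma}}{2}\ep^{2}\iomega b_2|\partial_z\zeta_3|^2(0)+\tfrac{\mu\ep^2}{2}\izto|\partial_t b_2||\partial_z\zeta_3|^2 + \ep^{2}\mu\izt\!\!\int_{\mR^2}\!b_2\,\partial_z\zeta_3\,\partial_t\phi_3\,\d y\,\d s,
\eeqs
and the last, critical, boundary contribution is controlled by Cauchy--Schwarz together with the already-established control of $\ep\mu^{1/2}\partial_z\zeta_3$ on $z=0$ (via \eqref{zeta3energy3-1} and trace) and the bound on $\ep\partial_t\phi_3$ coming from $\cE_{m,t}$. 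The initial term at $t=0$ vanishes because $\zeta_3|_{t=0}=0$, leaving only contributions absorbed by $\Lambda(Y_m(0))$ through the compatibility conditions satisfied by the data.
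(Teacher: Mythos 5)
For \eqref{zeta3infty} and \eqref{zeta3energy1} your plan follows the paper's route and is fine. For \eqref{zeta3energy3-1} and especially \eqref{zeta3energy3-2} your approach differs from the paper, and there are real difficulties.

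The paper's proof of \eqref{zeta3energy3-1}-\eqref{zeta3energy3-2} subtracts the full boundary data lift from $\ep\zeta_3$: it sets $\tilde\zeta_3=\ep(\zeta_3-h)$ with $h$ the interior extension of the boundary data, obtains a heat problem for $\tilde\zeta_3$ with \emph{homogeneous} Dirichlet data and bulk source $h_1=-\ep\pt h+\ep\mu\lambda_1\overline{\Gamma}b_2\p_z^2 h$, and then estimates the Duhamel (kernel) and initial-data contributions separately. You instead leave $\zeta_3$ alone, split its boundary data, and integrate by parts in time inside Duhamel. This is a genuinely different reorganization of the same calculus.

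Your route has a real gap in the proof of \eqref{zeta3energy3-2}. Multiplying $\ep\pt\zeta_3=\ep\mu\lambda_1\overline{\Gamma}b_2\p_z^2\zeta_3$ by $\ep\pt\zeta_3$ and integrating by parts in $z$ produces the boundary contribution $\ep^2\mu\lambda_1\overline{\Gamma}\int_0^t\int_{\mR^2}b_2\,(\p_z\zeta_3|_{z=0})\,\pt\phi_3\,\d y\,\d s$, which you propose to control by Cauchy--Schwarz using \eqref{zeta3energy3-1} and a trace inequality. But \eqref{zeta3energy3-1} only controls $\ep\mu^{1/2}\p_z\zeta_3$ in $L_t^2L^2(\mR_+^3)$; the trace $\p_z\zeta_3|_{z=0}$ requires roughly $H^{1/2}_z$ regularity for $\p_z\zeta_3$, which is not available and in fact cannot be uniform in $\mu$ — near $z=0$ the profile of $\p_z\zeta_3$ is of order $(\mu t)^{-1/2}$. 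Equivalently, computing $\p_z\zeta_3|_{z=0}$ from the explicit kernel formula \eqref{green-zeta3} produces a kernel of size $\mu(\mu(t-t'))^{-3/2}$, which is not time-integrable. The paper avoids this entirely: since $\tilde\zeta_3|_{z=0}=0$, there is no boundary term in its energy identity, and the only price is the initial-data term $\mu\|\p_z\tilde\zeta_3(0)\|_0^2\lesssim\ep^2\mu\|\p_z h|_{t=0}\|_0^2$, giving the $\Lambda(1/c_0,Y_m(0))$ summand.

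For \eqref{zeta3energy3-1} your integration-by-parts-in-time idea is defensible but underspecified. First, the claimed kernel bound $\|\mu^{1/2}\p_z\Phi(t,s,\cdot)\|_{L_z^2}\lesssim(t-s)^{-1/2}$ is not the right estimate; a direct computation (for the constant-coefficient erf profile, and more generally) gives $\|\p_z\Phi\|_{L_z^2}\sim Y(t,s,\cdot)^{-1/4}\sim(\mu(t-s))^{-1/4}$ — this is actually better, but it is worth getting the exponent right since this is where integrability is decided. Second, the sentence ``the extra $\ep^2$ absorbs the singularity'' has no mechanism: a direct Duhamel estimate for $\p_z$ of the $\ep^2$-weighted boundary piece still produces the non-integrable kernel $\|\p_z G(t,t',\cdot)\|_{L_z^2}\sim\mu^{-1/4}(t-t')^{-5/4}$, and the $\ep^2$ prefactor cannot compensate for a divergent $t'$-integral. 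You need the same integration-by-parts-in-time trick (or an equivalent Taylor expansion of the boundary datum about $t'=t$) on this piece as well — it works because $\phi_{3,\ep}|_{t=0}=0$ under the assumption $u_0^\ep|_{\p\Omega}=0$, and the cost of one $\p_t$ is $\ep^{-1}$, leaving a net factor $\ep$. Third, taking $\cZ^\gamma$ with $|\gamma|=m-2$ forces derivatives onto $b_2$ inside the kernel, and $|b_2|_{m-2,\infty}$ is not uniformly bounded; one needs the finer tensorization $L_y^2\otimes L_z^1$ bounds of the appendix (as in \eqref{appen-7}) to distribute high derivatives of $b_2$ into $L_t^\infty H^{m-2}(\mR^2)$. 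Finally, your explanation that the $\Lambda(Y_m(0))$ term arises from ``commutators at $t=0$'' does not match your construction: in your reorganization $\zeta_3|_{t=0}=0$ and $\phi_{3,0}|_{t=0}=0$, so there are no boundary terms at $t=0$; the $Y_m(0)$ term in the paper actually comes from the subtracted lift, whose initial trace does not vanish.
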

\begin{proof}

We apply \eqref{green-heat} with 
$a_1=a_2=a_3=0$ to obtain the following formula:
\beq\label{green-zeta3}
 \begin{aligned}
\zeta_3(t,y,z)&=
\mu \lambda_1\overline{\Gamma}\izt  \f{1}{(\pi Y)^{\f{1}{2}}}\f{z}{2Y} e^{-\f{z^2}{4Y}}b_2(t',y)  \zeta_3|_{z=0} (t',y)\,\d t'
\end{aligned}  
\eeq
where
\begin{align*}
    b_2= (\f{1}{r_2})^{\Psi}|_{z=0}, \quad
Y=Y(t,t',y)=\mu \lambda_1\overline{\Gamma} \int_{t'}^t b_2(\tau,y)\, \d\tau.
\end{align*}
By the assumption \eqref{preasption} and the property \eqref{preasption1}, it holds that for any $0\leq t'\leq t\leq T,  y\in \mR^2,$
\beq\label{prop-b2-Y2}
c_0\leq b_2(t,y)\leq {1}/{c_0}, \,\quad \mu\lambda_1\overline{\Gamma}c_0(t-t')\leq Y(t,t',y)\leq \mu\lambda_1\overline{\Gamma}(t-t')/c_0.
\eeq
The first estimate \eqref{zeta3infty} can be shown by  following the similar arguments as in the proof of 
\eqref{zeta1infty} done in Proposition \ref{lemze1},
we thus omit the details.

Let us now focus on the proof of \eqref{zeta3energy1}.
Thanks to \eqref{prop-Y} and the properties \eqref{appen-4}, \eqref{appen-5} in the appendix, 
we find for any vector fields $\cZ^{\gamma}$ with $|\gamma|\leq m-2,$
\begin{align}\label{sec7:eq13}
  & \|\cZ^{\gamma}\zeta_{3}(t)\|_{L^2(\mR_{+}^3)}\lesssim \mu \lab| b_2|_{m-3,\infty,t}+|\zeta_3|_{z=0}|_{L_{t,y}^{\infty}}\big)\cdot\notag\\
    &\bigg\|\izt \f{1}{(\mu(t-t'))^{\f{3}{4}}}\bigg(
\big(|\p_y^{m-2}b_2(t',\cdot)|+\big|\f{1}{t-t'}\int_{t'}^t \p_y^{m-2}b_2(\tau, \cdot)\d\tau\big|\big)+ \sum_{k\leq m-2}|\p_y^k \zeta_3|_{z=0}|\bigg)(t',\cdot)\,\d t' \bigg\|_{L_y^2}.
\end{align}
In view of the explicit expression of $\zeta_3|_{z=0}$ in $\eqref{eqzeta3}_2,$ we
apply successively the Minkowski inequality, 
the classical Gagliardo-Nirenberg inequality in $\mR^2$ 
to get that: 
\beq\label{zeta31-pre}
\begin{aligned}
\|\cZ^{\gamma}\zeta_{3}(t)\|_{L^2(\mR_{+}^3)}&\lesssim (\mu T)^{\f{1}{4}} \big(| \zeta_3|_{z=0}|_{L_t^{\infty}{H}^{m-2}(\mR^2)}+|\p_y b_2|_{L_t^{\infty}{H}^{m-3}(\mR^2)}\big)\cdot\\
&\qquad\qquad\qquad\qquad \lab| b_2|_{m-3,\infty,t}+|\zeta_3|_{z=0}|_{L_{t,y}^{\infty}}\big) \\
&\lesssim (\mu T)^{\f{1}{4}} |( u, r_0, r_0^{-1})|_{L_t^{\infty}{H}^{m-2}(\p\Omega)}\lat.
\end{aligned}
\eeq
Thanks to 
the trace inequality \eqref{traceLinfty} and the estimate \eqref{esr0}, we can conclude that: 
\begin{align}\label{eszeta31}
 \|\zeta_{31}(t)\|_{m-2}\lesssim (\mu T)^{\f{1}{4}} \lat\cE_{m,t}.
\end{align}
Let us proceed to control the $\|\cdot\|_{m-1,t}$ norm.
We use \eqref{prop-b2-Y2} and properties \eqref{appen-4}, \eqref{appen-5} in appendix to find that, for any vector field $\cZ^{\gamma}$ with $|\gamma|\leq m-1,$
\begin{align*}
&\qquad\|\cZ^{\gamma}\zeta_{31}(t)\|_{L^2(\mR_{+}^3)}\notag\\
&\lesssim 
\int_0^t \f{\mu^{\f{1}{4}}}{(t-t')^{\f{3}{4}}} \big(\big|\zeta_{3}|_{z=0}(t')\big|_{{H}^{m-1}}+|\p_y b_2|_{L_t^{\infty}{H}^{m-2}}\big)\d s'\lab| b_2|_{m-3,\infty,t}+|(\Id , \cZ)\zeta_{3}|_{z=0}|_{L_{t,y}^{\infty}}\big). 
\end{align*}
Hereafter, we denote for simplicity $H^k=H^k(\mR^2).$
By the convolution inequality in time (after extending $\zeta|_{z=0}$ by zero to $t\leq 0$ and $t\geq T$), we get further
\begin{align}\label{sec7:eq95}
 \|\zeta_{31}\|_{m-1,t}&\lesssim 
   (\mu T)^{\f{1}{4}} \big(|\zeta_3|_{z=0}|_{L_t^{2}{H}^{m-1}}+|\p_y b_2|_{L_t^{\infty}{H}^{m-2}}\big)\lat.
\end{align}
Moreover, one can apply the Gagliardo-Nirenberg inequality in 
$\mR^2$ and the trace inequality \eqref{traceL2} to derive that:
\begin{align*}
  \mu^{\f{1}{4}} |\zeta_3|_{z=0}|_{L_t^{2}{H}^{m-1}}&\lesssim
   \mu^{\f{1}{4}}  |(u, \beta^{-1}, r_0)|_{L_t^{2}{H}^{m-1}(\p\Omega)}\lat\notag\\
 &\lesssim   \|(\Id, \mu^{\f{1}{2}}\na) (u,\theta,\ep\sigma)\|_{\hco^{m-1}}\lat\lesssim \lat\cE_{m,t},\notag\\
  \mu^{\f{1}{4}} |\p_y b_2|_{L_t^{\infty}{H}^{m-2}}&\lesssim
  \|\mu^{\f{1}{2}}\na b_2\|_{\infco^{m-1}}+  \|\p_y b_2\|_{\infco^{m-2}}\\
 & \lesssim \|(\Id, \mu^{\f{1}{2}}\na)(\theta,\ep\sigma)\|_{\infco^{m-1}}\lat\lesssim \lat\cE_{m,t}. 
\end{align*}
Inserting these two estimates into \eqref{sec7:eq95},
we achieve  the second estimate in \eqref{zeta3energy1}.

\textit{Proof of \eqref{zeta3energy3-1}-\eqref{zeta3energy3-2}}.
Let us define 
$$\tilde{\zeta}_3=\ep(\zeta_3- h), \quad h=\colon\big[ \f{\ep^2  }{C_v\gamma}\big((\f{r_0}{\beta} u\cdot\na\big)\bn \cdot u )|g|^{\f{1}{4}}\chi\Pi u+2r_0|g|^{\f{1}{4}}\chi\Pi\big((D\bn)u-a u\big)\big]^{\Psi}.$$
By equations \eqref{eqzeta3}, we find that $\tilde{\zeta}_3$ solves the system:
\begin{align}\label{tzeta3}
\left\{
\begin{array}{l}
\pt{\tilde{\zeta}}_{3}-\mu \lambda_1\overline{\Gamma}  b_2(t,y)\partial_z^2 \tilde{\zeta}_3=-\ep\pt h+\ep\mu \lambda_1\overline{\Gamma}  b_2(t,y)\partial_z^2 h=\colon h_1,\\[2pt]
  {\tilde{\zeta}}_{3}|_{z=0}=0, \quad {\tilde{\zeta}}_{3}|_{t=0}=-\ep h|_{t=0}.
     \end{array}
     \right.
\end{align}
As one has:
\beqs
\ep \mu^{\f{1}{2}}\|\p_z h\|_{m-2,t}\lesssim \ep \mu^{\f{1}{2}} \lat\cE_{m,t},
\eeqs
to prove \eqref{zeta3energy3-1},
it suffices to show:
\begin{align}\label{estzeta3}
   \mu^{\f{1}{2}} \| \p_z\tilde{\zeta}_{3} \|_{m-2,t}
   \lesssim Y_m(0)+T^{\f{1}{2}}\lat\cE_{m,t}.
\end{align}
The solution of the system \eqref{tzeta3} has the form
\begin{align*}
  \tilde{\zeta}_{3}
 & =\izt \int_{\mR_{+}}(K_{-}-K_{+})(t,t', y, z', z)
 h_1(t', y, z') \,\d z' \d t'- \int_{\mR_{+}}(K_{-}-K_{+})(t,0, y, z', z) \ep h|_{t=0}(y,z')\, \d z'\\
 &=\colon  \tilde{\zeta}_{31}+ \tilde{\zeta}_{32}
\end{align*}
where 
\beqs 
 K_{\pm}(t,t', y, z', z)=\f{1}{(4\pi Y)^{\f{1}{2}}} e^{-\f{|z\pm z'|^2}{4Y}}, \quad Y=Y(t,t',y)=\mu \lambda_1\overline{\Gamma} \int_{t'}^t b_2(\tau,y)\, \d\tau.
\eeqs
It can be  verified that 
\begin{align}
   \| h_1\|_{m-2,t}\lesssim \|(\Id, \na, \mu\ep \na^2)(u,\theta, \ep\tsigma )\|_{L_t^2\cH^{m-2}}\lat\lesssim \lat\cE_{m,t}.  \label{sec7:eq122}
\end{align}
By taking benefits of the properties \eqref{appen-6}, \eqref{appen-7.5}, \eqref{appen-7} proved in the appendix, we have that, for any vector fields $\cZ^{\gamma}$ with $|\gamma|\leq m-2,$
\beq\label{sec7:eq14}
\begin{aligned}
 \cZ^{\gamma} \tilde{\zeta}_{31} = C_{\gamma_1}^{\tilde{\gamma}_1}C_{\gamma_2}^{\tilde{\gamma}_2} \sum_{0\leq \tilde{\gamma}_j\leq \gamma_j, j=1,2,3  } \izt
 \int_{\mR_{+}} (z'\p_{z'})^{\tilde{\gamma}_3}\big(&F_{\tilde{\gamma}_1, \tilde{\gamma}_2}^{{\gamma}_3,\tilde{\gamma}_3}(t,t', y, z-z')- F_{\tilde{\gamma}_1, \tilde{\gamma}_2}^{{\gamma}_3,\tilde{\gamma}_3}(t, t', y, z+ z')\big) \\
    &\cdot \cZ_1^{{\gamma}_1-\tilde{\gamma}_1} \cZ_2^{{\gamma}_1-\tilde{\gamma}_2} h_1(t', y, z') \,\d z'\d t'
\end{aligned}
\eeq
where $F_{\tilde{\gamma}_1, \tilde{\gamma}_2}^{{\gamma}_3,\tilde{\gamma}_3}(t,t', \cdot, \cdot)$ are smooth functions that satisfy:
\text{ if} $ \gamma_1+\gamma_2=\tilde{\gamma}_1+\tilde{\gamma}_2=m-2,$
\beq\label{sec7:eq20}
\begin{aligned}
\|\mu^{\f{1}{2}}\p_z F_{\tilde{\gamma}_1, \tilde{\gamma}_2}^{{\gamma}_3,\tilde{\gamma}_3}(t,t', \cdot, \cdot)\|_{L_y^2L_z^1(\mR_{+})}&\lesssim
\f{1}{(t-t')^{\f{1}{2}}}
\lab  | b_2|_{m-3,\infty,t}\big)|b_2|_{L_t^{\infty}H^{m-2}(\mR^2)} \\
&\lesssim \f{1}{(t-t')^{\f{1}{2}}}\lat\cE_{m,t}
\end{aligned}
\eeq
and if $\gamma_1+\gamma_2\leq m-3,$
\beq\label{sec7:eq21}
\begin{aligned}
\sup_{y\in \mR^2}\|\mu^{\f{1}{2}}\p_z F_{\tilde{\gamma}_1, \tilde{\gamma}_2}^{{\gamma}_3,\tilde{\gamma}_3}(t,t', \cdot, \cdot)\|_{L_z^1(\mR_{+})}&\lesssim\f{1}{(t-t')^{\f{1}{2}}}  \lab  | b_2|_{\gamma_1+\gamma_2,\infty,t}\big)\\
&\lesssim \f{1}{(t-t')^{\f{1}{2}}}\lat.
\end{aligned}
\eeq
Note that the following estimates  for $b_2$ have been employed to derive the second inequality in the above two estimates:
\begin{align*}
   & |b_2|_{m-3,\infty,t}\lesssim \lab \il(\theta,\ep\sigma)\il_{m-3,\infty,t}\big)\lesssim \lat, \\
 &|b_2|_{L_t^{\infty}H^{m-2}(\mR^2)}\lesssim \|(\Id,\na)(\theta,\ep\sigma)\|_{\infco^{m-2}}\lat\lesssim \lat\cE_{m,t}.
\end{align*}
In view of the estimates \eqref{sec7:eq14}-\eqref{sec7:eq21}, we find that by the convolution inequality in time variable (after zero extensions of $h_1$ on $t\leq 0,$ and $t\geq T$),
\beqs
\begin{aligned}
\mu^{\f{1}{2}}\|\p_z\cZ^{\gamma}\tilde{\zeta}_{31}(t)\|_{L^2(\mR_{+}^3)}&\lesssim   \izt  \|\mu^{\f{1}{2}}\p_z F_{\tilde{\gamma}_1, \tilde{\gamma}_2}^{{\gamma}_3,\tilde{\gamma}_3}(t,t', \cdot, \cdot)\|_{L_y^2L_z^1(\mR_{+})} \il h_1\il_{0,\infty,t} \mathbb{I}_{\{\gamma_1+\gamma_2=m-2\}}\\
&\qquad\quad+ \|\mu^{\f{1}{2}}\p_z F_{\tilde{\gamma}_1, \tilde{\gamma}_2}^{{\gamma}_3,\tilde{\gamma}_3}(t,t', \cdot, \cdot)\|_{L_y^{\infty}L_z^1(\mR_{+})}  \|h_1(t')\|_{m-2} \mathbb{I}_{\{\gamma_1+\gamma_2\leq m-3\}} \,\d t' \\
&\quad \lesssim T^{\f{1}{2}} \big( \|h_1\|_{m-2,t}+\cE_{m,t}\big)\lat.
\end{aligned}
\eeqs
Summing up for $|\gamma|\leq m-2,$ and using \eqref{sec7:eq122}, we can conclude that 
\beqs 
\mu^{\f{1}{2}}\|\p_z\tilde{\zeta}_{31}\|_{m-2,t}\lesssim T^{\f{1}{2}}\lat\cE_{m,t}.
\eeqs
Moreover, by integration by parts in $z$ variable, we can verify that:
\begin{align*}
   \mu^{\f{1}{2}} \|\p_{z}\tilde{\zeta}_{32}\|_{m-2,t}\lesssim \mu^{\f{1}{2}}\|\p_z h|_{t=0}\|_{m-2}\lesssim \lab Y_m(0)\big).
\end{align*}
We thus obtain \eqref{zeta3energy3-1} from the above two estimates.

Finally, as the proof of \eqref{zeta3energy3-2} can be reduced to the control $\|\pt \tilde{\zeta_3}\|_{0,t},$ we  multiply the system \eqref{tzeta3} by $\pt\tilde{\zeta}_3,$ integrate in space and time and use Young's inequality
 to obtain:
 \begin{align*}
     \|\pt \tilde{\zeta}_3\|_{0,t}^2+\mu \|\p_z\tilde{\zeta}_3(t)\|_{0}^2\lesssim \mu \|\p_z\tilde{\zeta}_3(0)\|_{0}^2+ \|h_1\|_{0,t}^2+ \lab \il \pt b\il_{0,\infty,t}\big)\mu \|\p_z \tilde{\zeta}_3\|_{0,t}^2 .
 \end{align*}
 The above estimate, together with the fact: 
 \begin{align*}
     \|\ep\pt h\|_{0,t}\lesssim \lae,
 \end{align*}
 leads to \eqref{zeta3energy3-2}.
\end{proof}
\begin{cor}
Let $\cS_3$ be defined in \eqref{defcS3}, the following estimate holds:
\beq\label{s3L2}
\begin{aligned}
    \mu^{-\f{1}{2}}\|\cS_3\|_{0,t}\lesssim \lab Y_m(0)\big)+ (T^{\f{1}{2}}+\ep) \lae.
\end{aligned}
\eeq
\end{cor}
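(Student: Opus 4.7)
The plan is to split $\cS_3=\mu\lambda_1\overline{\Gamma}\bigl[(1/r_0)^{\Psi}\Delta_g\zeta_3+\bigl((1/r_0)^{\Psi}-b_2\bigr)\p_z^2\zeta_3\bigr]$ into its tangential and normal components and control each piece using the bounds on $\zeta_3$ furnished by Proposition \ref{lemze3}. The main difficulty will be the normal part, because $\p_z^2\zeta_3$ carries a boundary-layer singularity from the heat kernel \eqref{green-zeta3} that is not uniformly $L^2$-integrable on its own.

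The tangential part is straightforward: since $\Delta_g$ involves only $y$-derivatives and $m\geq 7$, the estimate \eqref{zeta3energy1} applied with two conormal derivatives yields
\beqs
\mu^{-\f12}\|\mu\lambda_1\overline{\Gamma}(1/r_0)^{\Psi}\Delta_g\zeta_3\|_{0,t}\lesssim \mu^{\f12}\|\zeta_3\|_{2,t}\,\lat\lesssim \mu^{\f12}T^{\f14}\lat\cE_{m,t},
\eeqs
which, using $\mu\leq 1$, fits inside $(T^{\f12}+\ep)\lae$.

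For the normal piece, I would decompose the coefficient as $(1/r_0)^{\Psi}-b_2=\mathscr{H}_1+(b_0-b_2)$, where $b_0=(1/r_0)^{\Psi}|_{z=0}$ and $\mathscr{H}_1=(1/r_0)^{\Psi}-b_0$ vanishes at $z=0$. The constant-in-$z$ residue is uniformly small: by \eqref{relationr0r2} and a Taylor expansion of the exponential, $|b_0-b_2|\lesssim \ep^2\mu\lat$. Trading one normal derivative for one time derivative through the heat equation $\mu\lambda_1\overline{\Gamma}b_2\p_z^2\zeta_3=\pt\zeta_3$ inherited from $\eqref{eqzeta3}_1$, I obtain
\beqs
\mu^{-\f12}\|\mu(b_0-b_2)\p_z^2\zeta_3\|_{0,t}\lesssim \ep\,\mu^{\f12}\|\ep\pt\zeta_3\|_{0,t}\,\lat,
\eeqs
which by \eqref{zeta3energy3-2} is dominated by $\ep\bigl(\lab Y_m(0)\big)+\lae\bigr)$.

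The principal obstacle is the remaining term $\mathscr{H}_1\p_z^2\zeta_3$, and the cancellation must come from the vanishing of $\mathscr{H}_1$ at $z=0$. The crude bound $|\mathscr{H}_1|\leq z\,\|\p_z(1/r_0)^{\Psi}\|_{L^\infty}\lat$ may be sharpened by observing that the Neumann condition $\p_{\bn}\theta|_{\p\Omega}=0$ forces $\p_{\bn}\beta|_{\p\Omega}=0$, so that by the expression of $\p_{\bn}\tsigma$ coming from $\eqref{eq-tiltasigma}_2$ (which is itself of order $\ep$) one finds $\p_z(1/r_0)^{\Psi}|_{z=0}=O(\ep^2)$ pointwise; a second-order Taylor expansion in $z$ then gives the refined bound $|\mathscr{H}_1(t,y,z)|\lesssim (\ep^{2}z+z^{2})\lat$ near the boundary. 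Exploiting this together with the Green-function representation \eqref{green-zeta3}---specifically the direct computation $\|z^{2}\p_z^2 K(t,t',y,\cdot)\|_{L^2_z}\sim (\mu(t-t'))^{-3/4}$, for which Minkowski and Young's convolution inequality in time are applicable since $(t-t')^{-3/4}\in L^1(0,T)$---combined with the trace estimate on $\zeta_3|_{z=0}$ afforded by its explicit expression in $\eqref{eqzeta3}_2$ in terms of $u|_{\p\Omega}$, one produces a contribution bounded by $T^{3/4}\lat\cE_{m,t}$, which is absorbed into $T^{\f12}\lae$ since $T\leq 1$ implies $T^{3/4}\leq T^{\f12}$. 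The analogous $\ep^{2}z$-piece gains an additional $\ep^{2}$ factor that compensates the marginal non-integrability of its kernel after the equation-based conversion used above. Summing the three contributions yields \eqref{s3L2}.
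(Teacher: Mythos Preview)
Your overall decomposition---tangential part, $(b_0-b_2)$ residue via the heat equation, and the vanishing-at-the-boundary remainder $\mathscr{H}_1$---matches the paper's structure, and your treatments of the first two pieces are correct. For $\mathscr{H}_1$ you take a slightly different route than the paper: a second-order Taylor expansion of $\mathscr{H}_1$ itself (using $\p_z(r_0^{-1})^\Psi|_{z=0}=O(\ep^2)$), whereas the paper first writes $r_0^{-1}=R\beta(\theta)K(\ep\tsigma)$ and expands each factor separately (see \eqref{r0-r0b}--\eqref{defbetaK-betaKb}), producing an $\ep z+z^2$ structure rather than your $\ep^2 z+z^2$.

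The gap is in your $\ep^2 z$ piece. Saying the extra $\ep^2$ ``compensates the marginal non-integrability of its kernel after the equation-based conversion'' does not close the estimate: applying $\mu\lambda_1\overline{\Gamma}\, b_2\,\p_z^2\zeta_3=\pt\zeta_3$ as you did for the $(b_0-b_2)$ piece leaves a stray factor of $z$, giving $\ep\mu^{-1/2}\|z\cdot\ep\pt\zeta_3\|_{0,t}$, which \eqref{zeta3energy3-2} does not control. The missing ingredient is \eqref{zeta3energy3-1}, namely the bound on $\ep\mu^{1/2}\|\p_z\zeta_3\|_{m-2,t}$, combined with the elementary identity $z\p_z^2=(1+z)\p_z\cZ_3+\cZ_3-\p_z$; this is exactly how the paper handles its analogous $\ep z$ piece in \eqref{sec7:eq-10}, and it is the source of the $\Lambda\big(\tfrac{1}{c_0},Y_m(0)\big)$ contribution in \eqref{s3L2}. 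Two smaller points: your tangential bound $\mu^{1/2}T^{1/4}$ is not $\le T^{1/2}$ for small $T$ (use $\|\zeta_3\|_{2,t}\le T^{1/2}\sup_s\|\zeta_3(s)\|_2$ together with the first estimate in \eqref{zeta3energy1} instead), and the $z^2$ coefficient in your Taylor expansion involves $\na^2\theta$, which is only bounded with a $\kpa^{1/2}\sim\mu^{1/2}$ weight---harmless since the overall $\mu^{1/2}$ absorbs it, but it should be stated.
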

\begin{proof}
By the assumption \eqref{preasption}, the property \eqref{preasption1}
and estimate \eqref{zeta3energy1}, we have directly:
\beqs
\mu^{\f{1}{2}}\|\big(\f{1}{r_0}\big)^{\Psi}\Delta_{{g}}\zeta_{3}\|_{0,t}\leq (\mu T)^{\f{1}{2}}\Lambda\big(\f{1}{c_0}\big)\sup_{0\leq s\leq t}\|\zeta_3\|_2\lesssim (\mu T)^{\f{1}{2}}
\lat\cE_{m,t}.
\eeqs
Moreover, we write by using 
the relation \eqref{relationr0r2}, the equation $\eqref{eqzeta3}$ that:
\begin{align}
&\qquad \mu^{\f{1}{2}} \big( (r_0^{-1})^{\Psi}-b_2\big)\p_z^2\zeta_{3}\notag\\
&=  \mu^{\f{1}{2}}\big( (r_0^{-1})^{\Psi}-(r_0^{-1})^{\Psi}|_{z=0}\big)\p_z^2\zeta_{3}+ \f{1}{\lambda_1\overline{\Gamma}\mu^{\f{1}{2}}}\big(\exp(\ep^2\mu \bar{C}\Gamma\div u)-1\big)^{\Psi}|_{z=0 }\pt\zeta_3.
\label{zeta3-secnor}
\end{align}
Denote $K=K(\ep\tsigma)=\exp(-\f{R\ep}{C_v\gamma}\tsigma),$  we have by the definition \eqref{defr0} that
$r_0^{-1}=R\beta(\theta)K(\ep\tsigma),$  which gives rise to:
\beq\label{r0-r0b}
\begin{aligned}
(r_0^{-1})^{\Psi}-(r_0^{-1})^{\Psi}|_{z=0}=R (\beta^{\Psi}-\beta^{\Psi}|_{z=0})K^{\Psi}+ R\beta^{\Psi}|_{z=0}(K^{\Psi}-K^{\Psi}|_{z=0}).
\end{aligned}
\eeq
It follows from the Taylor expansion and the fact 
$(\p_z\theta^{\Psi})|_{z=0}=(\p_{\bn}\theta|_{\p\Omega})^{\Psi}=0$ that:
\beq\label{defbetaK-betaKb}
\begin{aligned}
&\beta^{\Psi}-\beta^{\Psi}|_{z=0}=z^2\int_0^1
\p_{z}^2(\beta^{\Psi}) (t,y, \tau z)(1-\tau)\d\tau, \\
& K^{\Psi}-K^{\Psi}|_{z=0}=\ep z\int_0^1 \big((K^{\Psi})'\p_z\tsigma^{\Psi}\big) (t,y,\tau z) \d \tau,
\end{aligned}
\eeq
which, together with the estimate \eqref{zeta3energy1}, yield: 
\beq\label{sec7:eq-10}
\begin{aligned}
&\mu^{\f{1}{2}}\|\big( (r_0^{-1})^{\Psi}-(r_0^{-1})^{\Psi}|_{z=0}\big)\p_z^2\zeta_{3}\|_{0,t}\\
&\lesssim(\|\zeta_3\|_{2,t}+\|\ep\mu^{\f{1}{2}}\p_z \zeta_3\|_{1,t})\lab \il  \na (\theta,\tsigma), \mu^{\f{1}{2}}\na^2\theta\il_{0,\infty,t} \big) \\
&\lesssim \lab Y_m(0)\big)+(T^{\f{1}{2}}+\ep)\lat\cE_{m,t}.
\end{aligned}
\eeq
Moreover, by again the Taylor expansion, 
\begin{align*}
\exp(\ep^2\mu \bar{C}\Gamma\div u)-1=\ep^2\mu \bar{C}\int_0^1 \exp(\tau \ep^2\mu \bar{C}\Gamma\div u)\Gamma\div u\, \d \tau,
\end{align*}
which, together with \eqref{zeta3energy3-2}, enable us to  control the second term in \eqref{zeta3-secnor} as:
\beqs 
\ep \mu^{\f{1}{2}}\|\ep\pt\zeta_3\|_{0,t}\lab \il(\div u,\sigma)\il_{0,\infty,t}\big)
\lesssim \lab Y_m(0)\big)+\ep\mu^{\f{1}{2}}\lae.
\eeqs
We thus finish the proof of \eqref{s3L2} by collecting the above estimate and \eqref{sec7:eq-10}.
\end{proof}

\begin{prop}\label{lemze4}
Let $\zeta_4$ be the solution to the system \eqref{eqzeta4}. Under the same assumption as in Proposition \ref{lemze3}, it holds that for  any $\ep \in (0,1], (\mu,\kpa)\in A,$
any $ 0<t\leq T,$ 
\begin{align}
&\il\zeta_4\il_{*,1,\infty,t}\lesssim \lat, \label{zeta4infty}\\ 
&\|\zeta_4(t)\|_{m-2}+
 \|\zeta_4\|_{m-1,t}+ \|\pt\zeta_4\|_{m-2,t}
\lesssim \ep\mu^{\f{1}{4}} \lae.\label{eszeta4}
\end{align}
\end{prop}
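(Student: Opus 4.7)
The plan is to exploit that the system \eqref{eqzeta4} is a \emph{constant}-coefficient parabolic equation on the half-space with prescribed Dirichlet data vanishing at $t=0$ (a consequence of the standing assumption $u_0|_{\p\Omega}=0$ of Theorem~\ref{thm1}). I would represent $\zeta_4$ explicitly via the Dirichlet half-space heat kernel as
\[
\zeta_4(t,y,z)=\mu\int_0^t \frac{z}{\sqrt{4\pi(\mu(t-t'))^3}}\exp\Bigl(-\tfrac{z^2}{4\mu(t-t')}-\mu(t-t')\Bigr)g_4(t',y)\,\d t',
\]
where $g_4:=\zeta_4|_{z=0}$; the damping factor $e^{-\mu(t-t')}$ from the zeroth-order term only helps. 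The whole argument then reduces to boundary norm bounds on $g_4$, since the kernel estimates from the appendix used in Proposition~\ref{lemze1} transfer everything over to the interior.

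First I would establish the boundary-data bounds
\[
\il g_4\il_{*,1,\infty,t}\lesssim \lat,\qquad
\mu^{1/4}|g_4|_{L_t^\infty H^{m-2}(\mR^2)}+\mu^{1/4}|g_4|_{L_t^2 H^{m-1}(\mR^2)}\lesssim \varepsilon\,\lae,
\]
using the trace inequalities \eqref{traceL2}, \eqref{traceLinfty}, the boundary product rule \eqref{product-bd}, the estimates of Lemma~\ref{lemtsigma} (especially \eqref{Es-tsigma3} for $\p_{\bn}\tilde\sigma$ and \eqref{Es-tsigma4} for its $L^\infty$ part), and the bound \eqref{esr0} for $r_0$. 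The explicit $\varepsilon$ in the definition of $g_4$ is what produces the $\varepsilon$-gain on the right. Once this is done, the $L^\infty_{t,x}$ estimate \eqref{zeta4infty} and the energy-type bounds on $\|\zeta_4(t)\|_{m-2}$ and $\|\zeta_4\|_{m-1,t}$ in \eqref{eszeta4} follow exactly as in the proof of \eqref{zeta1infty}--\eqref{zeta1energy} in Proposition~\ref{lemze1}: pointwise control via \eqref{appen-4} for the $L^\infty$ bound, and Minkowski plus Young's convolution in time combined with \eqref{appen-4}, \eqref{appen-5} for the $L^2$-based norms.

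For the time-derivative estimate $\|\p_t\zeta_4\|_{m-2,t}$, I would use that the compatibility $g_4|_{t=0}=0$ implies that $\p_t\zeta_4$ itself solves the same linear system with Dirichlet datum $\p_t g_4$ and zero initial condition; the same kernel estimates then apply and the problem reduces to controlling $|\p_t g_4|_{L_t^2 H^{m-2}(\mR^2)}$, which follows from \eqref{Es-tsigma3} applied to the family $((\varepsilon\p_t)^j)_j$ encoded in the $\tilde H^{m-2}$ norm together with the identity $\p_{\bn}(\varepsilon\p_t\tilde\sigma)=\varepsilon\p_t(\p_{\bn}\tilde\sigma)$. The main obstacle is the careful bookkeeping of $(\varepsilon,\mu,\kappa)$-powers in the bound on $|g_4|_{L_t^2 H^{m-1}(\mR^2)}$: the $\kappa^{-1/2}$ loss in the highest-regularity control of $\p_{\bn}\tilde\sigma$ from \eqref{Es-tsigma3} must be offset against the heat-kernel smoothing factor $\mu^{1/4}$ and the $\kappa\sim\mu$ relation from Remark~\ref{rmkmuapproxkpa}; these factors balance exactly to produce the target rate $\varepsilon\mu^{1/4}$, and weakening any of them would destroy the estimate.
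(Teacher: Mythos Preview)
Your representation formula for $\zeta_4$ is incorrect: the equation \eqref{eqzeta4} involves the \emph{full} Laplacian $\Delta=\p_{y^1}^2+\p_{y^2}^2+\p_z^2$, so the solution is a convolution of the boundary datum in the $y$-variables as well, not a purely one-dimensional heat kernel in $z$ as you wrote. This is not a cosmetic issue: the entire reason the system \eqref{eqzeta4} was set up with the full $\Delta$ (in contrast to \eqref{eqzeta1} and \eqref{eqzeta3}, which have only $\p_z^2$) is precisely to exploit the \emph{horizontal} smoothing. The paper's proof makes this explicit: after a Fourier transform in $(t,y)$ one obtains $\widehat{\zeta_4}(\tau,\xi,z)=e^{-z\sqrt{\mu(1+i\tau+|\xi|^2)}}\widehat{g_4}(\tau,\xi)$, and integrating in $z$ yields $\|\zeta_4\|_{m-1,t}\lesssim \mu^{-1/4}|g_4|_{L_t^2H^{m-3/2}}$, i.e.\ a gain of half a tangential derivative compared to a lemze1-type argument.

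The second gap is in the boundary-data estimate. Your proposed bound $\mu^{1/4}|g_4|_{L_t^2H^{m-1}}\lesssim\ep\,\lae$ is not reachable from \eqref{Es-tsigma3}, which only controls $\p_{\bn}\tsigma$ up to $\tilde H^{m-3/2}$; and even at that regularity your arithmetic is off: $\ep\cdot\kappa^{-1/2}\cdot\mu^{1/4}\sim\ep\mu^{-1/4}$, not $\ep\mu^{1/4}$. The paper avoids this entirely by observing (via the boundary identity $\eqref{eq-tiltasigma}_2$) that the specific combination $\p_{\bn}\tsigma-\tfrac{1}{R\beta}(\ep u\cdot\na)\bn\cdot u$ appearing in $g_4$ equals $-\ep\mu(2\lambda_1+\lambda_2)(\div u)\p_{\bn}\Gamma+\ep\mu\lambda_1\Gamma\,\curl\curl u\cdot\bn$ on $\p\Omega$. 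This produces an \emph{additional} factor $\ep\mu$ in $g_4$, so that $|g_4|_{L_t^2H^{m-3/2}}\lesssim\ep^2\mu^{1/2}\lat\cE_{m,t}$ using \eqref{sec4:eq12}; combined with the horizontal smoothing above this gives $\|\zeta_4\|_{m-1,t}\lesssim\ep^2\mu^{1/4}\lat\cE_{m,t}$. Both ingredients---the boundary simplification and the tangential parabolic gain---are essential and are missing from your plan.
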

\begin{proof}
We will only prove \eqref{eszeta4}, since \eqref{zeta4infty} can be shown by following the similar (and easier) arguments as in the proof of \eqref{zeta1infty}. 
By 
$\eqref{eq-tiltasigma}_2,$ it holds that: 
\beqs
\p_{\bn}\tsigma-\f{1}{R\beta}(\ep u\cdot\na)\bn \cdot u=-\ep\mu(2\lambda_1+\lambda_2)(\div u)\p_{\bn}\Gamma+\ep\mu\lambda_1\Gamma \curl\curl u\cdot\bn \text{ on } \p\Omega.
\eeqs
Consequently, in light of the boundary condition:
\begin{align*}
  \zeta_4|_{z=0}= \big[\f{R\ep}{C_v\gamma}|g|^{\f{1}{4}} r_0 \chi\Pi u
(\p_{\bn}\tsigma-\f{1}{R\beta}(\ep u\cdot\na)\bn \cdot u)
\big]^{\Psi}|_{z=0},
\end{align*}
we have by using  Gagliardo-Nirenberg inequality in $\mR^2,$ the estimates \eqref{curlcurl-infty0}, \eqref{sec4:eq12} and the trace inequality \eqref{traceL2} that: 
\begin{align}\label{zeta4bd1}
|\zeta_4|_{z=0}|_{L_t^2H^{m-\f{3}{2}}}&\lesssim 
\ep \big|\big( r_0\chi\Pi u(\p_{\bn}\tsigma-\f{1}{R\beta}(\ep u\cdot\na)\bn \cdot u)\big)^{\Psi}\big|_{L_t^2H^{m-\f{3}{2}}}\notag\\
&\lesssim 
\ep^2\mu |(\chi r_0, \chi\Gamma, u, \div u, \p_{\bn}\Gamma,  \curl\curl u\cdot \bn)|_{L_t^2H^{m-\f{3}{2}}(\p\Omega)}\lat\\
&\lesssim \ep^2\mu^{\f{1}{2}}\lat\cE_{m,t}. \notag
\end{align}
Accordingly, by noticing again the estimate \eqref{sec4:eq12}, one can show that
\beq\label{zeta4bd2}
|\zeta_4|_{z=0}|_{L_t^{\infty}H^{m-2}}+
|\pt\zeta_4|_{z=0}|_{L_t^2H^{m-\f{5}{2}}}\lesssim \ep\mu^{\f{1}{2}}\lat\cE_{m,t}. 
\eeq
The heat type equation \eqref{eqzeta4} admits the explicit formula:
\begin{align}
    \zeta_4=2\mu\izt\int_{\mR^2}\f{1}{(4\pi\mu(t-t'))^{\f{3}{2}}}e^{-(t-t')} \p_z\big(e^{-\f{z^2+|y-y'|^2}{4\mu(t-t')}}\big)
    \zeta_4|_{z=0}(t',y') \,\d y' \d t'.
\end{align}
We can follow the arguments as in the proof of 
\eqref{zeta31-pre} (by setting $b_2=1$) to obtain that:
\begin{align*}
   \| \zeta_4(t)\|_{m-2}\lesssim (T\mu)^{\f{1}{4}} \big|\zeta_4|_{z=0}\big|_{L_t^{\infty}H^{m-2}}\lesssim \ep(T\mu)^{\f{1}{4}}\lat\cE_{m,t}.
\end{align*}
To control the quantity  $\|\zeta_4\|_{m-1,t},$ we need to take benefit of the horizontal smoothing effects. 
To do so, we first extend 
$\zeta_4$ from $[0,T]$ to $\mR$ by zero extension and then perform Fourier transform in both $t$ and $y$ variable. By denoting $\widehat{f}(\tau, \xi)=\cF_{t\rightarrow \tau, y\rightarrow \xi}(f),$ it holds that:
\begin{align*}
    \widehat{\zeta_4}(\tau,\xi,z)= e^{-z\sqrt{\mu(1+i\tau+|\xi|^2})} \widehat{\zeta_4|_{z=0}}(\tau,\xi).
\end{align*}
Therefore, for any $0\leq k\leq m-1,$ 
one has that:
\begin{align*}
   & \|(z\p_z)^{k}(1+|\xi|^2)^{\f{m-1-k}{2}} \widehat{\zeta_4}\|_{L^2(\mR\times\mR_{+}^3)}^2\notag\\
    &\lesssim 
  \int_{\mR^3}  (1+|\xi|^2)^{m-1-k}\widehat{\zeta_4|_{z=0}}(\tau,\xi) \d \xi\d\tau \int_{\mR_{+}} P_{2k}\big(z\sqrt{\mu(1+i\tau+|\xi|^2})\big)e^{-z\sqrt{\mu(1+i\tau+|\xi|^2})}\d z\\
 &\lesssim \mu^{-\f{1}{4}}\big|\zeta_4|_{z=0}\big|_{L_t^2H^{m-\f{3}{2}-k}}
\end{align*}
where $P_{2k}$ is a polynomial with degree $2k.$
We thus obtain by using \eqref{zeta4bd1} that
\beqs
\|\zeta_4\|_{m-1,t}\lesssim |\zeta_4|_{z=0}|_{L_t^2H^{m-\f{3}{2}}}\lesssim \ep^2\mu^{\f{1}{4}}\lat\cE_{m,t}.
\eeqs
Finally, to control the quantity $\|\pt\zeta_4\|_{m-2,t},$ we first take time derivative to the system and then follow the similar arguments as in the proof of $\|\zeta_4\|_{m-1,t}.$ Doing so and using the estimate \eqref{zeta4bd2}, we find that
\begin{align*}
    \|\pt\zeta_4\|_{m-2,t}\lesssim |\pt\zeta_4|_{z=0}|_{L_t^2H^{m-\f{5}{2}}}\lesssim
    \ep\mu^{\f{1}{2}}\lat\cE_{m,t}.
\end{align*}
\end{proof}
\begin{cor}
Let $\cS_4$ be defined in \eqref{defcS4}, we have:
\begin{align}\label{es-cS4}
  \mu^{-\f{1}{2}} \| \cS_4\|_{0,t}\lesssim \ep\lat\cE_{m,t}.
\end{align}
\end{cor}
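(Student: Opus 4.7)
The plan rests on the observation that the diffusion operator $\p_z^2 + \Delta_g$ appearing in $\cS_4$ differs from the Euclidean Laplacian $\Delta = \p_z^2 + \p_{y^1}^2 + \p_{y^2}^2$—with respect to which $\zeta_4$ solves the heat equation \eqref{eqzeta4}—only in its tangential part:
\[
(\p_z^2 + \Delta_g) - \Delta \;=\; \Delta_g - \Delta_y, \qquad \Delta_y \;=\; \p_{y^1}^2 + \p_{y^2}^2.
\]
Since both $\Delta_g$ and $\Delta_y$ involve only the conormal directions $\p_{y^1},\p_{y^2}$, the difference $\Delta_g - \Delta_y$ is a second-order purely tangential operator with smooth $L^\infty$ coefficients depending on $g$.

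First I would use the equation \eqref{eqzeta4} in the form $\mu\Delta\zeta_4 = \pt\zeta_4 + \mu\zeta_4$ to algebraically eliminate the Euclidean Laplacian from the definition \eqref{defcS4} of $\cS_4$, yielding the exact identity
\begin{equation*}
\cS_4 \;=\; \bigl(\lambda_1\overline{\Gamma}(1/r_0)^{\Psi} - 1\bigr)\,\pt\zeta_4 \;+\; \mu\lambda_1\overline{\Gamma}(1/r_0)^{\Psi}\zeta_4 \;+\; \mu\lambda_1\overline{\Gamma}(1/r_0)^{\Psi}(\Delta_g - \Delta_y)\zeta_4.
\end{equation*}
The coefficient $(1/r_0)^{\Psi}$ is uniformly bounded in $L^{\infty}_{t,x}$ by $\Lambda(1/c_0)$ thanks to \eqref{preasption} and \eqref{preasption1}. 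For the last two terms the explicit $\mu$ prefactor combines with the weight $\mu^{-1/2}$ to leave $\mu^{1/2}$; bounding $\|(\Delta_g-\Delta_y)\zeta_4\|_{0,t}\lesssim \|\zeta_4\|_{2,t}\lesssim \|\zeta_4\|_{m-1,t}$ (valid as $m\ge 7$) together with \eqref{eszeta4} controls each of these contributions by $\ep\mu^{3/4}\lat\cE_{m,t}\le \ep\lat\cE_{m,t}$.

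The main obstacle is the first term, which requires the sharper estimate $\mu^{-1/2}\|\pt\zeta_4\|_{0,t}\lesssim \ep\lat\cE_{m,t}$. Using the stated bound $\|\pt\zeta_4\|_{m-2,t}\lesssim \ep\mu^{1/4}\lae$ from \eqref{eszeta4} would only yield $\ep\mu^{-1/4}\lae$, which is not uniform in $\mu$. However, inspection of the proof of Proposition~\ref{lemze4} reveals that the Fourier representation $\widehat{\zeta_4}(\tau,\xi,z) = e^{-z\sqrt{\mu(1+i\tau+|\xi|^2)}}\widehat{\zeta_4|_{z=0}}(\tau,\xi)$ actually delivers the sharper estimate $\|\pt\zeta_4\|_{0,t}\lesssim \ep\mu^{1/2}\lat\cE_{m,t}$. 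Indeed, the boundary trace $\zeta_4|_{z=0}$ carries an explicit factor of $\ep$ and, via $\eqref{eq-tiltasigma}_2$, the combination $\p_{\bn}\tsigma - \tfrac{1}{R\beta}(\ep u\cdot\na)\bn\cdot u$ reduces on $\p\Omega$ to the viscous remainder $-\ep\mu\lambda_1\Gamma\curl\curl u\cdot\bn - \ep\mu(2\lambda_1+\lambda_2)(\div u)\p_\bn\Gamma$ of size $O(\ep\mu)$, so the full boundary datum has size $\ep^2\mu$; combining with the $\mu^{-1/2}$ smoothing exponent of the half-space heat operator produces the claimed $\ep\mu^{1/2}$ bound on $\|\pt\zeta_4\|_{0,t}$. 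Substituting this back yields $\mu^{-1/2}\|\pt\zeta_4\|_{0,t}\lesssim \ep\lat\cE_{m,t}$, closing \eqref{es-cS4}.
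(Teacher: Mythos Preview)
Your approach is exactly the paper's: eliminate $\mu\p_z^2\zeta_4$ via the heat equation \eqref{eqzeta4} and estimate the three resulting pieces. You are also right that the uniform bound $\ep\mu^{1/4}$ recorded in \eqref{eszeta4} is, by itself, too weak for the $(\lambda_1\overline{\Gamma}(1/r_0)^{\Psi}-1)\pt\zeta_4$ contribution; what the paper's terse ``use \eqref{eszeta4}'' is really invoking is the sharper bound $\|\pt\zeta_4\|_{m-2,t}\lesssim \ep\mu^{1/2}$ displayed at the very end of the proof of Proposition~\ref{lemze4}, which immediately gives $\mu^{-1/2}\|\pt\zeta_4\|_{0,t}\lesssim\ep$.

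One small correction to your final heuristic: the mechanism is not a ``$\mu^{-1/2}$ smoothing exponent'' applied to an $\ep^2\mu$-sized datum. The correct Poisson exponent is $\sqrt{1+|\xi|^2+i\tau/\mu}$ (the paper's displayed formula $\sqrt{\mu(1+i\tau+|\xi|^2)}$ is a typo), whose real part is $\ge (1+|\xi|^2)^{1/2}$ \emph{independently of $\mu$}; hence the $z$-integral produces no $\mu$-loss, and one reads off $\|\pt\zeta_4\|_{m-2,t}\lesssim |\pt(\zeta_4|_{z=0})|_{L_t^2H^{m-5/2}}\lesssim \ep\mu^{1/2}$ directly from \eqref{zeta4bd2}. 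Your conclusion and overall argument remain correct.
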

\begin{proof}
It is just a matter to substitute $\p_z^2\zeta_4$ by 
$(\p_t-1-\p_{y^1}^2-\p_{y^2}^2)\zeta_4$ in the expression of $\cS_4$ and to use the property \eqref{eszeta4}.
\end{proof}

To finish the estimate for $\zeta,$ it remains to control $\zeta_5$ defined as the solution to the system \eqref{eqzeta5}. As a preparation, we begin with the following lemma:
\begin{lem}\label{lemzeta5-prop}
Let $\zeta_5$ be the solution to the system \eqref{eqzeta5}
and assume $\mu$ and $\kpa$ satisfy the relation
\eqref{assumption-mukpa}.
Then $\zeta_5$ can be written as the sum of two functions:
$\zeta_5=\zeta_{51}+\zeta_{52},$ where $\zeta_{51}$ and $\zeta_{52}$ admit the properties: 
\begin{align}
&\|\p_z \zeta_{51}\|_{L_t^2L^2}\lesssim  (T^{\f{1}{2}}+\ep)\big(\lab Y_m(0)\big)+\lae\big), \label{zeta5-prop1} \\
&\mu^{\f{1}{2}}\il \p_z \zeta_{52} \il_{0,\infty,t}\lesssim Y_m(0)+
\lat. \label{propzeta52}
\end{align}
\end{lem}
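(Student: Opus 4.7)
\medskip

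\noindent\textbf{Proof plan.} The strategy is dictated by the discussion in Subsection~\ref{subs133}: I will design the splitting $\zeta_5=\zeta_{51}+\zeta_{52}$ so that $\zeta_{51}$ absorbs precisely those parts of the source $\cS_5$ that involve the dangerous curl-type term $(\curl G_{r_0}\times\bn)^{\Psi}$ arising from $|g|^{\f14}(H^{0})^{\Psi}$ (see the definition \eqref{def-G} and \eqref{def-Gj} at $j=0$), while $\zeta_{52}$ carries the initial datum and the regular remainder. Concretely, I plan to let $\zeta_{51}$ solve a frozen-coefficient heat problem
\[
\bigl(\pt-\mu\lambda_1\overline{\Gamma}\,b_0\,\p_z^2\bigr)\zeta_{51}=\bigl(\curl G_{r_0}\times\bn\bigr)^{\!\Psi}+\text{``harmless'' remainder},\qquad \zeta_{51}\big|_{z=0}=0,\;\zeta_{51}\big|_{t=0}=0,
\]
with $b_0=(1/r_0)^{\Psi}|_{z=0}$, and define $\zeta_{52}=\zeta_5-\zeta_{51}$. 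By construction $\zeta_{52}|_{z=0}=0$ and $\zeta_{52}|_{t=0}=\zeta|_{t=0}$; moreover, the compatibility $u_0|_{\p\Omega}=0$ assumed in Theorem~\ref{thm1} guarantees $\zeta|_{t=0}|_{z=0}=0$, which will be essential in Step~3 below.

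\smallskip

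\noindent\textbf{Step 1: Estimate of $\zeta_{51}$.} I will use the explicit Green kernel formula (as in the derivation of \eqref{green-zeta3}, \eqref{sec7:eq14}) for the one-dimensional heat operator $\pt-\mu\lambda_1\overline{\Gamma}b_0\p_z^2$ with zero boundary and zero initial data. Applying the kernel bound \eqref{appen-6}--\eqref{appen-7} and Young's inequality in time yields roughly
\[
\|\p_z\zeta_{51}\|_{L_t^2L^2}\lesssim \mu^{-1}\bigl\|(\curl G_{r_0}\times\bn)^{\!\Psi}\bigr\|_{L_t^2L^2}+\text{(easy terms)}.
\]
The crucial point is to inspect $G_{r_0}$ defined in \eqref{defGr0}. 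Using \eqref{timeder-r0} to express $(\pt+u\cdot\na)r_0$ and expanding the commutator via \eqref{sec7:eq-1}, a direct computation produces the leading identity announced in Subsection~\ref{subs133}:
\[
G_{r_0}=\lambda_1\overline{\Gamma}\Bigl(\mu-\tfrac{\kpa}{C_v\gamma\lambda_1}\Bigr)u\,\Delta\theta+\mathcal{R},
\]
where $\mathcal{R}$ gathers terms that are either of lower order in normal derivatives of $\theta$ or carry an extra factor of $\ep$. Invoking hypothesis \eqref{assumption-mukpa} gives $|\mu-\kpa/(C_v\gamma\lambda_1)|\lesssim\mu\kpa^{1/2}$, and combined with the bound $\|\kpa^{1/2}\na^2\theta\|_{L_t^2L^2}\lesssim \cE_{m,t}(\theta)$ coming from \eqref{defcEmtheta}, the worst contribution yields $\mu^{-1}\|G_{r_0}\|_{L_t^2L^2}\lesssim \kpa^{1/2}\cdot\kpa^{-1/2}\cE_{m,t}\lesssim \cE_{m,t}$. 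The remainder $\mathcal{R}$ is controlled similarly via the product/commutator estimates of Proposition~\ref{prop-prdcom}, picking up a gain of $T^{1/2}+\ep$. The harmless part of the source ($\cS_3,\cS_4,H^{0,\dag}$, convection of $\zeta_3,\zeta_4$, and remaining entries of $|g|^{1/4}(H^0)^\Psi$) is already treated by \eqref{s3L2}, \eqref{es-cS4} and Lemmas \ref{lemze3}--\ref{lemze4}, and readily fits in the right-hand side of \eqref{zeta5-prop1}.

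\smallskip

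\noindent\textbf{Step 2: Estimate of $\zeta_{52}$.} Subtracting the equation of $\zeta_{51}$ from \eqref{eqzeta5}, the function $\zeta_{52}$ satisfies a parabolic equation of the same type with a source formed of (i) the commutator $\mu\lambda_1\overline{\Gamma}\bigl((1/r_0)^{\Psi}-b_0\bigr)\p_z^2\zeta_{51}$ and tangential dissipation $\mu\lambda_1\overline{\Gamma}(1/r_0)^\Psi\Delta_g\zeta_{51}$, (ii) convection $-\tilde u\cdot\na\zeta_{51}$, and (iii) the remaining harmless pieces of $\cS_5$, with zero boundary data and initial datum $\zeta|_{t=0}$. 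To obtain the $L^\infty_{t,x}$ bound \eqref{propzeta52} I will use the explicit Duhamel representation via the Dirichlet half-space heat kernel (as in the analysis leading to \eqref{sec7:eq14}), and differentiate once in $z$. Commutator-type source terms gain a factor of $\phi(z)$ or $z$ which is exactly what is needed to kill the boundary singularity of $\p_z K_\pm$; the compatibility $\zeta_{52}|_{t=0}|_{z=0}=0$, which I get from $u_0|_{\p\Omega}=0$, allows an integration by parts in $z'$ on the initial layer and produces a kernel with integrable $\p_z$-singularity, giving $\mu^{1/2}\|\p_z\zeta_{52}\|_{L^\infty_{t,x}}\lesssim Y_m(0)+\Lambda(1/c_0,\cN_{m,t})$.

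\smallskip

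\noindent\textbf{Main obstacle.} The delicate point is Step~1: one must verify that the decomposition of $G_{r_0}$ really isolates the $\bigl(\mu-\kpa/(C_v\gamma\lambda_1)\bigr)u\Delta\theta$ structure, so that the hypothesis \eqref{assumption-mukpa} can be cashed in against the only available bound $\|\kpa^{1/2}\na^2\theta\|_{L_t^2L^2}\lesssim\cE_{m,t}$. All other remainders in $G_{r_0}$ and in $\cS_5$ must be shown to either carry an extra $\ep$ or to live at conormal regularity $m-3$ with weight $\kpa$ compatible with $\cE_{m,t}(\theta)$. The second subtle point is Step~3's (actually Step~2's last part) compatibility check at $z=0, t=0$, which would fail without the assumption $u_0|_{\p\Omega}=0$ and which is why this assumption is imposed in Theorem~\ref{thm1}.
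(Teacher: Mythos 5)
Your decomposition strategy and the three key ideas — isolating the source term exhibiting the cancellation $\bigl(\mu-\kpa/(C_v\gamma\lambda_1)\bigr)u\Delta\theta$, invoking \eqref{assumption-mukpa} against $\kpa^{1/2}\|\Delta\theta\|_{L_t^\infty L^2}\lesssim\cE_{m,t}$, and using the half-space heat kernel representation for the $L^\infty_{t,x}$ estimate of $\zeta_{52}$ (with the $u_0|_{\p\Omega}=0$ compatibility supplying $\zeta|_{t=0}|_{z=0}=0$) — match the paper's proof. The paper's $\cQ$ in \eqref{defcQ} is a slight modification of your $G_{r_0}$ that absorbs the $\kpa(\Gamma-\overline{\Gamma})u\Delta\theta$ correction, but this difference is harmless for the reasons you already sketch.

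However, Step 1 has a genuine gap. Your display claims $\|\p_z\zeta_{51}\|_{L_t^2L^2}\lesssim\mu^{-1}\|(\curl G_{r_0}\times\bn)^\Psi\|_{L_t^2L^2}$, but the one-dimensional heat kernel with Dirichlet data, zero initial datum, and diffusion $\mu\lambda_1\overline{\Gamma}b_0$ gives $\|\p_z\zeta_{51}(t)\|_{L^2}\lesssim\int_0^t(\mu(t-s))^{-1/2}\|\text{source}(s)\|_{L^2}\,\d s$, i.e.\ a weight $\mu^{-1/2}T^{1/2}$, not $\mu^{-1}$. More importantly, the source in your equation for $\zeta_{51}$ is $\curl G_{r_0}$, which carries one more normal derivative than $G_{r_0}$; the bound you subsequently close is $\mu^{-1}\|G_{r_0}\|_{L_t^2L^2}$, not $\mu^{-1/2}\|\curl G_{r_0}\|_{L_t^2L^2}$, and the two are not interchangeable (the latter would require $\kpa^{1/2}\|\na\Delta\theta\|$, which is not uniformly available). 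To pass from $\curl G_{r_0}$ to $G_{r_0}$ one must integrate by parts in the representation — throwing the $\curl$ off the source — which is exactly what the paper does by rewriting $\cM_1=\p_{y^1}\cH_1+\p_{y^2}\cH_2+\p_z\cH_3+\cH_4$ (see \eqref{cv-newidentity} and \eqref{sec7:eq100}) and running a weighted energy estimate so that the $\p_z$ in the $\p_z\cH_3$ piece lands on $\p_z\zeta_{51}$ and is absorbed by the dissipation. If you keep the Green kernel route you must perform this integration by parts explicitly in the Duhamel formula and re-accrue the kernel norms; your sketch leaves this unstated.

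There is also an internal inconsistency in Step 2: you write that $\zeta_{52}$ satisfies a variable-coefficient convection-diffusion equation (with source including the commutator $\mu\lambda_1\overline{\Gamma}((1/r_0)^\Psi-b_0)\p_z^2\zeta_{51}$, the tangential dissipation $\mu\lambda_1\overline{\Gamma}(1/r_0)^\Psi\Delta_g\zeta_{51}$, and convection of $\zeta_{51}$), but then you invoke the explicit half-space heat kernel, which is the Green function for the \emph{frozen-coefficient} operator only. The paper avoids this by defining \emph{both} $\zeta_{51}$ and $\zeta_{52}$ as solutions of the same frozen operator $\pt-\mu\lambda_1\overline{\Gamma}b_0\p_z^2$, simply partitioning the full right-hand side (which already includes the variable-coefficient and convection corrections of $\zeta_5$) into $\cM_1$ and $\cM_2$; this makes the explicit Green formula \eqref{zeta52-explicit} legitimate for $\zeta_{52}$ without any additional argument. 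You should adopt the same partitioning, or otherwise justify the kernel representation for the variable-coefficient problem.
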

\begin{rmk}
The estimates \eqref{zeta5-prop1} \eqref{propzeta52} will be very useful later in order to close the energy estimates for $\zeta_5$.
\end{rmk}
\begin{rmk}\label{rmk-mukpa-1}
Since $\zeta_5$ solves a transport-diffusion equation \eqref{eqzeta5} with the viscosity parameter $\mu,$ one may expect $\mu^{\f{1}{2}}\na\zeta_5$ to be uniformly bounded in $L_{t,x}^{\infty}$ which is the case in the purely inviscid limit problem \cite{MR2885569, MR3485413, MR3419883}.
Nevertheless, such an estimate is unlikely to be uniform with respect to the Mach number $\ep.$ Indeed, incorporated with the fast oscillation effects, the width of  
 the viscous boundary layer is expected to be $(\ep\mu)^{\f{1}{2}},$ which means that the best one can hope is $\il (\ep\mu)^{\f{1}{2}}\na\zeta_5\il_{0,\infty,t}$ being uniformly bounded, which is not useful here. Moreover, since the source term $\cS_5$ involve the high order derivatives of unknown $r_0=r_0(\theta,\tsigma),$ there will be some interactions between the thermal boundary layer 
 and  the viscous boundary layer. Consequently,  we are forced to make some suitable assumption (see \eqref{assumption-mukpa}) on  $\kpa$ and $\mu.$ 
 Let us comment that all the  estimates established previously hold indeed for any $\ep\in(0,1], (\mu,\kpa)\in (0,1]^2$ satisfying $\mu\sim \kpa.$ The more restrictive assumption \eqref{assumption-mukpa} on  $\kpa$ and $\mu$  is made here to prove \eqref{zeta5-prop1}.
\end{rmk}
\begin{proof}
We define $\zeta_{51}$ and $\zeta_{52}$ to be the solutions of the following two systems: 
\begin{align}\label{eqzeta51}
 \left\{  
 \begin{array}{l}
   \big(\p_{t}-\mu\lambda_1\overline{\Gamma}
   b_0\p_z^2\big)  \zeta_{51}=\cM_1,    \\
 \zeta_{51}|_{z=0}=0,\quad   \zeta_{51}|_{t=0}=0,
    \end{array}
    \right.
    \quad 
    \left\{  
 \begin{array}{l}
   (\p_{t}-\mu\lambda_1\overline{\Gamma}b_0\p_z^2)  \zeta_{52}=\cM_2,    \\
 \zeta_{52}|_{z=0}=0,\quad   \zeta_{52}|_{t=0}=\zeta|_{t=0},
    \end{array}
    \right.
\end{align}
where $\cM_1,\cM_2$ are defined as:
\beq\label{defcM}
\begin{aligned}
    \cM_1&=\colon \cS_3
    +\cS_4+\tilde{\cS}_5+\big(\curl \cQ \times\chi\bn\big)^{\Psi} \\
    &\qquad -\mu\lambda_1\overline{\Gamma}|g|^{\f{1}{4}} \bigg(\big(\na(r_0^{-1})\times \curl\omer \big)\times\chi\bn+ 2 r_0^{-1} \div\big(\omer\times\na (\chi\bn)\big)\bigg)^{\Psi} 
    ,\\
    \cM_2&=\colon\cS_5-\cM_1-\tilde{u}\cdot\na\zeta_5,
\end{aligned}
\eeq
with $ \cS_3, \cS_4$ being defined in \eqref{source5}-\eqref{defcS4}
and 
\beqs 
\tilde{\cS}_5=\colon\mu\lambda_1\overline{\Gamma}\bigg(\big(\f{1}{r_0} \big)^{\Psi}\Delta_g\zeta_5+ \big(\big(\f{1}{r_0} \big)^{\Psi}-b_0\big)\p_{z}^2\zeta_{5} \bigg),
\eeqs
\beq\label{defcQ}
\cQ= u\bigg((\pt+u\cdot\na)r_0+\f{\kpa}{C_v\gamma}\big(\exp\big({\f{R\ep}{C_v\gamma}\tsigma}\big)\Gamma-\overline{\Gamma}\big)\Delta\theta \bigg)
+\mu\lambda_1\overline{\Gamma}r_0^{-1}[\curl\curl, r_0]u.
\eeq
Let us prove \eqref{zeta5-prop1}, \eqref{propzeta52} 
and start with the estimate for $\zeta_{51}.$ 
First,
corresponding to \eqref{cv-newidentity}, we can find that the last two terms of $\cM_1$ can be written as the 
gradient of some quantities 
\begin{align*}
    \cM_1=\colon \p_{y^1}\cH_1+\p_{y^2}\cH_2+\p_{z}\cH_3
\end{align*}
where $\cH_1-\cH_3$ are linear combinations of the following typical terms (with the coefficients depending on $\chi, (D\Psi)^{-1}$ and their first two derivatives):
\begin{align}\label{sec7:eq100}
\cQ^{\Psi},
\quad 
\mu\lambda_1\overline{\Gamma}  \big(\chi\p_{\bn}(r_0^{-1})\omer\big)^{\Psi}, \qquad 2 \mu\lambda_1\overline{\Gamma}  \big(r_0^{-1}\omer\times \na (\chi\bn)\big)^{\Psi},
\end{align}
plus a reminder $\cH_4$ which satisfy: 
\beq\label{esch4}
\mu^{-\f{1}{2}}\|\cH_4\|_{0,t}\lesssim \lat\cE_{m,t}, \quad \|\cH_4\|_{m-2,t}\lesssim T^{\f{1}{2}}\lat\cE_{m,t}.
\eeq

Now, taking the inner product with the system 
\eqref{eqzeta51} and $\mu^{-1}\zeta_{51},$ we find,
\beq\label{zeta51-zeroenergy}
\begin{aligned}
 \mu^{-1} \|&\zeta_{51}(t)\|_0^2+ \iota  c_0 \|\p_z\zeta_{51} \|_{0,t}^2\leq \big|\mu^{-1}\izt\int_{\mR_{+}^3}
 \zeta_{51}\cdot \cM_1 \d x\d s\big|\\
 &\lesssim \|\mu^{-\f{1}{2}}\zeta_{51}\|_{0,t}\|\mu^{-\f{1}{2}}(\cS_3,\cS_4, \tilde{\cS}_5, \p_{y^1}\cH_1, \p_{y^2}\cH_2, \cH_4)\|_{0,t}+\|\p_z \zeta_{51}\|_{0,t}\|\mu^{-1}\cH_3\|_{0,t}.
\end{aligned}
\eeq
Analogues to \eqref{s3L2}, we can verify that,
by using \eqref{zeta3-secnor}-\eqref{defbetaK-betaKb}, \begin{align*}
    \mu^{-\f{1}{2}}\|\tilde{\cS}_5\|_{0,t}\lesssim \mu^{\f{1}{2}}\big(\|\zeta_5\|_{2,t}+\ep \|\p_z\zeta_5\|_{1,t}\big)\lat\lesssim \lab Y_m(0)\big)+\lae,
\end{align*}
which, combined with the estimates \eqref{s3L2}, \eqref{es-cS4}, \eqref{esch4}, \eqref{zeta51-zeroenergy} and Young's inequality, gives rise to:
\begin{align*}
    \mu^{-1} \|\zeta_{51}(t)\|_0^2+ \iota  c_0 \|\na\zeta_{51} \|_{0,t}^2\lesssim 
    (T^{\f{1}{2}}+\ep)\big(\lab Y_m(0)\big)+\lae\big)+\|\mu^{-\f{1}{2}}(\cH_1,\cH_2)\|_{1,t}^2+\|\mu^{-1}\cH_3\|_{0,t}^2.
\end{align*}
The proof of  \eqref{zeta5-prop1} shall thus be finished once it is shown that:
\begin{align}\label{es-ch1-3}
    \|\mu^{-1}\cH_3\|_{0,t}+ \|\mu^{-\f{1}{2}}(\cH_1,\cH_2,\cH_3)\|_{m-2,t} \lesssim (T^{\f{1}{2}}+\ep)\lae,
\end{align}
which relies on the control
of three terms in \eqref{sec7:eq100}. Note that the estimate of $\|\mu^{-\f{1}{2}}\cH_3\|_{m-2,t}$ is not necessary here but rather useful later.
Let us estimate the last two terms in \eqref{sec7:eq100} as:
\begin{align}\label{es-otherterm}
    \| \big(\chi\p_{\bn}(r_0^{-1})\omer\big)^{\Psi}, \big(r_0^{-1}\omer\times \na (\chi\bn)\big)^{\Psi}\|_{m-2,t}&\lesssim T^{\f{1}{2}}\|(\omer, \na r_{0}^{-1})\|_{L_t^{\infty}\cH^{m-2}}\lat
    \notag\\
    &\lesssim T^{\f{1}{2}}\lat\cE_{m,t}.
\end{align}
It now remains  to control $\|\mu^{-1}\cQ^{\Psi}\|_{0,t}$ which is the very place where the relation \eqref{assumption-mukpa} is used. Let us look at the terms in \eqref{defcQ} more carefully. 
By the straightforward calculation,
\begin{align*}
  [\curl\curl, r_0]u= -(\Delta{r_0}) u+ \big(\div u\na r_0 -(\na r_0\cdot\na) u +(u \cdot \na)\na r_0+\na r_0\times\curl u\big).
\end{align*}
Moreover, by the definition \eqref{defr0}, 
\beqs 
-r_0^{-1}\Delta{r_0}=\Delta\theta-\f{R\ep}{C_v\gamma}\Delta\tsigma+\big|\f{R\ep}{C_v\gamma}\na\tsigma-\na\theta\big|^2,
\eeqs
which, together with the above identity, yields:
\begin{align}\label{identity1}
  \mu\lambda_1\overline{\Gamma}r_0^{-1}[\curl\curl, r_0]u
  = \mu\lambda_1\overline{\Gamma}(\Delta\theta)u+ l.o.t,
\end{align}
where hereafter we denote $l.o.t$ for the terms that admit the following property:
\begin{align}
    \|\mu^{-1}(l.o.t )^{\Psi}\|_{0,t}+\|\mu^{-\f{1}{2}}(l.o.t )^{\Psi}\|_{m-2,t}\lesssim (T^{\f{1}{2}}+\ep)\lat\cE_{m,t}. 
\end{align}
Next, by the equation \eqref{timeder-r0}, and the 
fact $R\beta r_0=\exp(\f{R\ep}{C_v\gamma}\tsigma),$ 
\begin{align}\label{timeder-r0-1}
(\pt+u\cdot\na)r_0=-\f{\kpa}{C_v\gamma} \exp\big(\f{R\ep}{C_v\gamma} \tsigma\big)\Gamma\Delta\theta + l.o.t,
\end{align} 
we thus obtained 
\begin{align}\label{identity2}
    u\bigg((\pt+u\cdot\na)r_0+\f{\kpa}{C_v\gamma}\big(\exp\big({\f{R\ep}{C_v\gamma}\tsigma}\big)\Gamma-\overline{\Gamma}\big)\Delta\theta \bigg)=-\f{\kpa}{C_v\gamma}\overline{\Gamma}u\Delta\theta+ l.o.t,
\end{align}
which, combined with \eqref{identity1}, leads to that:
\begin{align*}
    \cQ= \lambda_1\overline{\Gamma} \big(\mu-\f{\kpa}{C_v\gamma \lambda_1} \big)u\Delta\theta + l.o.t.
\end{align*}
Consequently, by the virtue of the fact:
\begin{align*}
    \kpa^{\f{1}{2}}\|\Delta\theta\|_{L_t^{\infty}L^2(\Omega)}+ \kpa \|\Delta\theta\|_{L_t^{\infty}\cH^{m-2}}\lesssim \cE_{m,t},
\end{align*}
once the relation \eqref{assumption-mukpa} holds, we can conclude that: 
\begin{align}\label{escQ}
   \mu^{-1} \|\cQ^{\Psi}\|_{0,t}+\mu^{-\f{1}{2}}
\|\cQ^{\Psi}\|_{m-2,t}\lesssim (T^{\f{1}{2}}+\ep)\lat\cE_{m,t}. 
\end{align}
Collecting \eqref{es-otherterm} and \eqref{escQ}, one finds the estimate \eqref{es-ch1-3} and thus finishes the proof of the first estimate in \eqref{zeta5-prop1}.

  Let us now switch to the estimate of $\zeta_{52}$ stated in \eqref{propzeta52}. 
  Defined as the solution of \eqref{eqzeta51}, $\zeta_{52}$ has the following form:
\beq\label{zeta52-explicit}
\begin{aligned}
\zeta_{52}&=\int_{\mR_{+}}(K_{-}-K_{+})(t,0, y, z', z)\zeta_{52}|_{t=0}(y,z') \, \d z'\\
&\quad+\izt \int_{\mR_{+}}(K_{-}-K_{+})(t,t', y, z', z) \cM_2(t', y, z') \,\d z' \d t' =\colon \zeta_{521}+\zeta_{522}
\end{aligned}
\eeq
 where 
 \begin{align*}
     K_{\pm}(t,t', y, z', z)=\f{1}{(4\pi Y)^{\f{1}{2}}} e^{-\f{|z\pm z'|^2}{4Y}},\quad Y=Y(t,t',y)=\mu \lambda_1\overline{\Gamma} \int_{t'}^t b_0(\tau,y)\, \d\tau.
 \end{align*}
 First,  we integrate by parts to obtain that 
  \begin{align}\label{zeta52intial}
     \mu^{\f{1}{2}}\il \p_z \zeta_{521}\il_{0,\infty,t}\lesssim  \mu^{\f{1}{2}}
     \| \p_z \zeta|_{t=0}\|_{L^{\infty}(\Omega)}\lesssim Y_m(0).
  \end{align}
  Moreover, by applying the convolution inequality in $z$ variable and the estimate \eqref{appen-7}, we find that
  \begin{align*}
       \mu^{\f{1}{2}}\il \p_z \zeta_{522}\il_{0,\infty,t}
      & \lesssim \izt\|\mu^{\f{1}{2}}\p_{z}(K_{-}-K_{+})\|_{L_y^{\infty}L_z^1}(t, t') \,\d t' \il \cM_2\il_{0,\infty,t}\lesssim  T^{\f{1}{2}}\il \cM_2\il_{0,\infty,t}.
  \end{align*}
  By the definitions \eqref{defcM} and \eqref{source5}
  \beq\label{cM2-precise}
  \begin{aligned}
      \cM_2&=\cS_5-\cM_1-u\cdot\na\zeta_5=-(\tilde{u} \cdot \na)\zeta+\bigg(\curl \big( \f{\kpa}{C_v\gamma}u\big(\exp\big({\f{R\ep}{C_v\gamma}\tsigma}\big)\Gamma-\overline{\Gamma}\big)\Delta\theta \big)\times\chi\bn\bigg)^{\Psi}\\
      &\quad+|g|^{\f{1}{4}} \bigg(H^0+\mu\lambda_1\overline{\Gamma}\big(\big(\na(r_0^{-1})\times \curl\omer \big)\times\chi\bn+ 2 r_0^{-1} \div(\omer\times\na (\chi\bn))\big)\bigg)^{\Psi}+ H^{0,\dag}.
  \end{aligned}
  \eeq
We have first by the definition of $\zeta,$
 \beqs 
 \il(\tilde{u} \cdot \na)\zeta\il_{0,\infty,t}\lesssim \lab \il\big(u, \na (r_0 u)\big)\il_{0,\infty,t} \big)\lesssim \lat.
 \eeqs
Next, by using the equation $\eqref{NCNS-S2}_3$ for $\theta,$
it can be verified that:
\begin{align*}
    \ep\kpa \|\na \Delta\theta\|_{0,\infty,t}\lesssim \il \ep\pt \na\theta\il_{0,\infty,t}+\lat \lesssim \lat.
\end{align*}
We thus find that the $\il\cdot\il_{0,\infty,t}$ norm of the first line in the right hand side of  \eqref{cM2-precise} can be controlled as $\lat.$
Moreover,  in view of the definition of $H^0$ in \eqref{defH}, 
 and $H^{0,\dag}$ in \eqref{defHdag},
 we find that the last line in \eqref{cM2-precise} contains the terms which are the smooth functions of quantities:
 \begin{align*}
     \zeta, (\omer)^{\Psi},\, ((\Id,\na) u)^{\Psi},\, (\na (r_0 u))^{\Psi},\, \big((\Id ,\na )r_1 \Gamma \big)^{\Psi},\, \ep^{-1}\big((\Id ,\na )r_1 (\Gamma-\bar{\Gamma}) \big)^{\Psi} , \,\ep\mu \big((\Id, \curl)\curl\curl u \big)^{\Psi},\,
 \end{align*}
 whose $L^{\infty}([0,t]\times\mR_{+}^3)$ norm, by using the estimate \eqref{tcurl-infty} for the last quantity, can be controlled by $\cA_{m,t}.$
The above arguments  enable us to conclude that
$\il\cM_2\il_{0,\infty,t}\lesssim \lat,$ and thus
\begin{align*}
    \il\zeta_{522}\il_{0,\infty,t}\lesssim T^{\f{1}{2}}\lat,
\end{align*}
which, together with \eqref{zeta52intial}, yields the second estimate in \eqref{zeta5-prop1}.
\end{proof}
\begin{prop}\label{lemze5}
Let $\zeta_5$ be the solution to the system \eqref{eqzeta5}. 
There exists a constant $\vartheta_5\in(0,\f{1}{2}],$ such that
 for any $\ep\in (0,1],$ any $(\mu,\kpa)\in (0,1]^2$ satisfying the relation \eqref{assumption-mukpa}, 
 any $0<t \leq T,$ 
\begin{align}\label{zeta4-energy}
    \|\zeta_5(t)\|_{m-2}^2+\mu\|\na\zeta_5\|_{m-2,t}^2\lesssim \lab Y_m(0)\big)+(T+\ep)^{\vartheta_5}\lae.
\end{align}
\end{prop}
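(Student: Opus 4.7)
The plan is to perform standard $L^2$ energy estimates on \eqref{eqzeta5} after applying a conormal vector field $\cZ^{\gamma}$ with $|\gamma|=k\leq m-2$, and then close the estimate by induction on $k$, following closely the template of Proposition~\ref{lemze2}. Since $\zeta_5$ vanishes on $\{z=0\}$, multiplying by $\cZ^{\gamma}\zeta_5$ and integrating on $[0,t]\times\mR_+^3$ yields an identity analogous to \eqref{EI-zeta2}, with the positive dissipation $\iota c_0\mu\|\na\cZ^{\gamma}\zeta_5\|_{0,t}^2$ on the left and, on the right, the initial data $\|\cZ^{\gamma}\zeta|_{t=0}\|_0^2$, five commutator remainders $\cR_1^{\gamma},\ldots,\cR_5^{\gamma}$ built as in \eqref{defcR} with $j=0$, and the source pairing $\int_0^t\!\!\int_{\mR_+^3}\cZ^{\gamma}\cS_5\cdot\cZ^{\gamma}\zeta_5\,\d x\d s$.

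The commutators $\cR_1^{\gamma}$--$\cR_4^{\gamma}$ are handled by the same arguments as in Lemma~\ref{lemcR}, producing (after Young's inequality) a fraction $\delta c_0\mu\|\na\cZ^{\gamma}\zeta_5\|_{0,t}^2$ to be absorbed on the left and lower-order inductive pieces of the form $\mu\|\p_z\zeta_5\|_{k-1,t}^2\lab\il(\theta,\ep\tsigma)\il_{m-3,\infty,t}\big)$. The source $\cS_5$ (see \eqref{source5}) breaks into four groups: the transports $(\tilde u\cdot\na)(\zeta_3+\zeta_4)$ are controlled using \eqref{zeta3energy1}, \eqref{eszeta4} together with the local decomposition \eqref{identity-convection}; the ``quasi-Laplacian'' remainders $\cS_3$ and $\cS_4$ are directly absorbable by \eqref{s3L2} and \eqref{es-cS4}; and the boundary-layer source $|g|^{\f{1}{4}}(H^0)^{\Psi}+H^{0,\dag}$ is treated exactly as in item (3) of Lemma~\ref{lemsource2}, namely by writing $(H^0)^{\Psi}=\mu^{\f{1}{2}}(\p_{y^1}\cH_1+\p_{y^2}\cH_2+\p_z\cH_3)+\cH_4$, integrating by parts in space in the three derivative terms, and using \eqref{prop-ch}.

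The main obstacle, and the one essential place where the hypothesis \eqref{assumption-mukpa} is needed, is twofold and concentrates at top order $|\gamma|=m-2$: first the commutator $\cR_5^{\gamma}$ when all derivatives land on $(1/r_0)^{\Psi}$, and second the estimate of the perfect-derivative components $\cH_1,\cH_2,\cH_3$ coming from $\cQ$ inside $H^0$. Because the viscous-acoustic boundary layer of width $(\ep\mu)^{\f{1}{2}}$ precludes any uniform bound on $\mu^{\f{1}{2}}\il\p_z\zeta_5\il_{0,\infty,t}$, the naive attribution of $L_{t,x}^{\infty}$ to $\p_z\zeta_5$ fails, and by \eqref{r0infty} the dual attribution of $L_{t,x}^{\infty}$ to $\cZ^{\gamma}(1/r_0)^{\Psi}$ is not uniform in $\kpa$ either. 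The remedy is precisely the splitting $\zeta_5=\zeta_{51}+\zeta_{52}$ of Lemma~\ref{lemzeta5-prop}: the pair $(\text{top-order coefficient},\,\p_z\zeta_{51})$ is estimated in $L_t^2L^2\times L_t^2L^2$ using \eqref{zeta5-prop1}, while $(\text{top-order coefficient},\,\mu^{\f{1}{2}}\p_z\zeta_{52})$ is estimated in $L_t^2L^2\times L_{t,x}^{\infty}$ using \eqref{propzeta52}. For the same reason one must bound $\mu^{-\f{1}{2}}\|\cQ^{\Psi}\|_{0,t}$ uniformly, and this is exactly where the identities \eqref{identity1}--\eqref{identity2} enter to reveal that $\cQ=\lambda_1\overline{\Gamma}(\mu-\kpa/(C_v\gamma\lambda_1))\,u\,\Delta\theta+l.o.t.$, so that $\kpa^{\f{1}{2}}\|\Delta\theta\|_{L_t^{\infty}L^2}\lesssim\cE_{m,t}$ together with \eqref{assumption-mukpa} forces the desired uniform control.

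Gathering everything, choosing $\delta$ small relative to $\iota$, and iterating on $k$ from $0$ up to $m-2$, one obtains an inequality of the form
\[
\|\zeta_5\|_{m-2,t}^2+\mu\|\na\zeta_5\|_{m-2,t}^2\lesssim \lab Y_m^2(0)+\il(\theta,\ep\tsigma)\il_{m-3,\infty,t}\big)+(T+\ep)^{\f{1}{2}}\lae.
\]
Replacing the $L_{t,x}^{\infty}$ factors via \eqref{thetainfty} and \eqref{tsigmainfty} converts the $\il(\theta,\ep\tsigma)\il_{m-3,\infty,t}$ contribution into a $(T+\ep)^{\vartheta_5}$ gain for some $\vartheta_5\in(0,\f{1}{2}]$ independent of $(\ep,\mu,\kpa)$, producing the claimed estimate \eqref{zeta4-energy}.
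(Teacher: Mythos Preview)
Your overall strategy matches the paper's proof: energy estimates on \eqref{eqzeta5}, commutator analysis, the gradient decomposition of the source, induction on $k$, and the crucial use of the splitting $\zeta_5=\zeta_{51}+\zeta_{52}$ from Lemma~\ref{lemzeta5-prop} at top order. Two points in the execution need correction.

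First, the problematic commutator is $\cR_4^{\gamma}$, not $\cR_5^{\gamma}$. The term $\cR_5^{\gamma}$ involves only \emph{tangential} derivatives $\p_{y^l}\zeta_5$, and when all $m-2$ fields land on $g^{kl}/r_0^{\Psi}$ one still controls $\mu\|\p_{y^l}\zeta_5\|_{0,t}$ by the dissipation; this is exactly \eqref{es-cR-5}. The genuine obstruction is in $\cR_4^{\gamma}=\mu\lambda_1\overline{\Gamma}\big([\cZ^{\gamma},\p_z](\p_z\zeta_5/r_0^{\Psi})+\p_z([\cZ^{\gamma},1/r_0^{\Psi}]\p_z\zeta_5)\big)$, whose top-order piece $\mu^{\f{1}{2}}\cZ^{\gamma}(1/r_0^{\Psi})\,\p_z\zeta_5$ is the one requiring the $\zeta_{51}/\zeta_{52}$ splitting; see \eqref{cR-4} and its proof in Lemma~\ref{lemcR-zeta4}.

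Second, the bounds \eqref{s3L2} and \eqref{es-cS4} are only $\|\cdot\|_{0,t}$ estimates and do not by themselves control $\cZ^{\gamma}(\cS_3+\cS_4)$ for $|\gamma|\leq m-2$. At high regularity $\cS_3$ contains $\mu\,\Delta_g\zeta_3$ and $\mu((1/r_0)^{\Psi}-b_2)\p_z^2\zeta_3$, which would otherwise require $\|\zeta_3\|_{m,t}$ or an extra normal derivative. The paper handles this via a gradient decomposition $\cS_3+\cS_4=\mu\lambda_1\overline{\Gamma}(\p_{y^1}\cS_{341}+\p_{y^2}\cS_{342}+\p_z\cS_{343})+\cS_{344}$ with the bounds \eqref{cs344-prop} (Lemma~\ref{lemsource5}(ii)), after which one integrates by parts exactly as you describe for the $(H^0)^{\Psi}$ piece. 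With these two fixes your argument is the paper's.
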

\begin{rmk}
This property can be shown by performing energy estimates for $\zeta_5,$ however, such estimates relies
crucially on the Lemma \ref{lemzeta5-prop} when dealing with some commutator terms. 
\end{rmk}
\begin{proof}
Taking a multi-index $\gamma=(\gamma_1,\gamma_2,\gamma_3)$  
and applying $\cZ^{\gamma}$ on the equations \eqref{eqzeta5}, we find that $\zeta_{5}^{\gamma}=\colon\cZ^{\gamma}\zeta_5$ solves the following equation:
\begin{align}\label{eqzeta4ga}
\left\{\begin{array}{l}
      (\pt+\tilde{u}\cdot\na)\zeta_{5}^{\gamma}-\mu\lambda_1 \overline{\Gamma}\big(\p_z (\cZ^{\gamma}\p_z\zeta_{5}^{\gamma}/{r}^{\Psi}_0)+\p_{y^k}(({g^{kl}}/{r}^{\Psi}_0)\p_{y^l}\zeta_{5}^{\gamma})\big)=\cZ^{\gamma}\cS_5+\sum_{i=1}^{5}\cR_{i}^{\gamma}, \\
   \zeta_{5}^{\gamma}|_{z=0}=0, \quad \zeta_{5}^{\gamma}|_{t=0}=\cZ^{\gamma}(\zeta|_{t=0}),
\end{array}
\right.
\end{align}
where $\cS_5$ is defined in \eqref{source5} and 
\beq\label{defcR-4}
\begin{aligned}
\cR_{1}^{\gamma}&=-[\cZ^{\gamma},\tlu \cdot\na]\zeta_5,\quad
\cR_{2}^{\gamma}=\mu\lambda_1 \overline{\Gamma}\cZ^{\gamma}\big(\p_z^2(1/r^{\Psi}_0)\zeta_5\big),\\
\cR_{3}^{\gamma}&=-\mu\lambda_1 \overline{\Gamma}([\cZ^{\gamma},\p_z]+\p_z\cZ^{\gamma})\big(\p_z(1/r^{\Psi}_0)\zeta_5\big),\\
\cR_{4}^{\gamma}&=\mu\lambda_1 \overline{\Gamma}\big([\cZ^{\gamma},\p_z](\p_z\zeta_5/{r}^{\Psi}_0)+\p_z([\cZ^{\gamma}, 1/r^{\Psi}_0]\p_z\zeta_5)\big),\\
\cR_{5}^{\gamma}&=\mu\lambda_1 \overline{\Gamma}\bigg(\p_{y^k}\big([\cZ^{\gamma}, {g^{kl}}/{r}^{\Psi}_0]\p_{y^l}\zeta_5\big)-\cZ^{\gamma}\big(\p_{y^k}({|g|^{-\f{1}{2}}}/{r}^{\Psi}_0)g^{kl}|g|^{\f{1}{2}}\p_{y^l}\zeta_5\big)\bigg).
\end{aligned}
\eeq
Before going into details, we summarize some useful properties for $\cR_{1}^{\gamma}-\cR_{5}^{\gamma}$ and $\cZ^{\gamma}\cS_5.$ 
 We first gather some estimates for $\cR_i^{\gamma}:$
\begin{lem}\label{lemcR-zeta4}
Let  $\cR_i^{\gamma} (i=1,\cdots 5)$  be defined in  \eqref{defcR-4}. Suppose that $|\gamma|=m-2,$ we have under the assumption \eqref{preasption}
the following estimates: for any $\ep\in(0,1], (\mu,\kpa)\in(0,1]^2$ satisfying  $\mu\sim\kpa$, any $0<t\leq T,$ 
\begin{align}
    \|(\cR_1^{\gamma},\cR_{2}^{\gamma})\|_{0,t}+\| \mu^{\f{1}{2}}\p_z(1/r_{0}^{\Psi})\zeta_5)\|_{m-2,t}\lesssim T^{\f{1}{2}}\lat\cE_{m,t}, \label{es-cR-124}\\
    \|\cR_5^{\gamma}\|_{0,t} \lesssim \mu \|\nabla \zeta_5\|_{m-2,t}\lat+T^{\f{1}{2}}\lat\cE_{m,t}. \label{es-cR-5}
\end{align}
Moreover, if $\mu$ and $\kpa$ satisfy the relation
\eqref{assumption-mukpa}, then it holds that:
\beq\label{cR-4}
\begin{aligned}
&\mu^{\f{1}{2}}\|\p_z\zeta_5/r_{0}^{\Psi}\|_{m-3,t}+\mu^{\f{1}{2}} \|[\cZ^{\gamma}, 1/r_{0}^{\Psi}]\p_z\zeta_5\|_{0,t}\\
&\lesssim \mu^{\f{1}{2}}\|\p_z\zeta_5\|_{m-3,t}\lab \il(\theta,\ep\tsigma)\il_{m-3,\infty,t}\big)
    +(T^{\f{1}{2}}+\ep)\big(\lab Y_m(0)\big)+\lae\big).
\end{aligned}
\eeq
\end{lem}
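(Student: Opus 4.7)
The three estimates are companions of those in Lemma \ref{lemcR}, but with $j=0$, so the splitting $\zeta_5=\zeta_{51}+\zeta_{52}$ from Lemma \ref{lemzeta5-prop} must be invoked for the last estimate. The plan is to dispatch \eqref{es-cR-124} and \eqref{es-cR-5} by mimicking the proofs of \eqref{es-cR1-2}--\eqref{es-cR5}, and to reserve the novelty for \eqref{cR-4}.

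For \eqref{es-cR-124}, I first expand the commutator $[\cZ^\gamma,\tlu\cdot\na]\zeta_5$ via \eqref{com-convetion-id} and estimate it as in \eqref{escR1-proof}, distinguishing the two regimes where either the derivatives on $\tlu$ are moderate (use $\il\tlu\il_{*,[\f{m}{2}]-1,\infty,t}\lesssim\lat$, shown via \eqref{sec7:eq80}--\eqref{sec7:eq80.5}) or the derivatives on $\zeta_5$ are moderate (use the a priori bound $\|\zeta_5\|_{m-2,t}\lesssim T^{\f{1}{2}}\lat\cE_{m,t}$ obtained as in \eqref{zeta2j-pre} via $\curl(r_0 u)$ plus the bounds on $\zeta_3,\zeta_4$ from Propositions \ref{lemze3}--\ref{lemze4}). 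The term $\cR_2^\gamma$ and $\mu^{\f{1}{2}}\p_z(1/r_0^\Psi)\zeta_5$ are handled exactly as in the proof of \eqref{es-cR1-2} and \eqref{es-cR3} by invoking \eqref{esr0}, \eqref{esr0-sec}. The proof of \eqref{es-cR-5} repeats the argument for $\cR_5^{j,\gamma}$: the only case where the top derivative $|\gamma|=m-2$ can fall onto $g^{kl}/r_0^\Psi$ is absorbed after integration by parts into $\mu\|\na\zeta_5\|_{m-2,t}\lat$ (via Young's inequality), while the remaining configurations are estimated by $T^{\f{1}{2}}\lat\cE_{m,t}$ using $\|\p_y(1/r_0^\Psi)\|_{m-3,t}\lesssim T^{\f{1}{2}}\lat\cE_{m,t}$.

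The delicate estimate is \eqref{cR-4}. The bound $\mu^{\f{1}{2}}\|\p_z\zeta_5/r_0^\Psi\|_{m-3,t}$ is immediate: since $|\gamma|-1=m-3$, all derivatives can be distributed with $\il 1/r_0^\Psi\il_{m-3,\infty,t}\lesssim\lat$ (cf.\ \eqref{r0infty}), giving the first term of the right-hand side. The real obstacle is the commutator $\mu^{\f{1}{2}}\|[\cZ^\gamma,1/r_0^\Psi]\p_z\zeta_5\|_{0,t}$: in the worst distribution, all $m-2$ derivatives fall on $1/r_0^\Psi$, and since $\mu^{\f{1}{2}}\il 1/r_0^\Psi\il_{m-2,\infty,t}$ is controlled only by $\kpa^{\f{1}{2}}\il(\theta,\ep\tsigma)\il_{m-2,\infty,t}\lesssim\cE_{m,t}$ through \eqref{thetasigmainfty-m-2} (which already pulls in a factor of $\cE_{m,t}$), I cannot afford to put $\p_z\zeta_5$ in any $L^2$ norm with large multiplier. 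This is exactly the situation that forces the splitting. Writing $\zeta_5=\zeta_{51}+\zeta_{52}$ and substituting, I bound the worst piece as
\begin{equation*}
\mu^{\f{1}{2}}\|\cZ^\gamma(1/r_0^\Psi)\p_z\zeta_5\|_{L^2_tL^2}\leq \mu^{\f{1}{2}}\il 1/r_0^\Psi\il_{m-2,\infty,t}\|\p_z\zeta_{51}\|_{L^2_tL^2}+\|\cZ(1/r_0^\Psi)\|_{m-3,t}\il\mu^{\f{1}{2}}\p_z\zeta_{52}\il_{0,\infty,t},
\end{equation*}
where the first factor of the first term is bounded by $\cE_{m,t}$ via \eqref{thetasigmainfty-m-2} and the second factor by \eqref{zeta5-prop1}, producing the $(T+\ep)^{\f{1}{2}}(\Lambda(\tfrac1{c_0},Y_m(0))+\lae)$ contribution; the second term is controlled by combining \eqref{esr0} with \eqref{propzeta52}. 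Intermediate configurations of the commutator are covered by the $\mu^{\f{1}{2}}\|\p_z\zeta_5\|_{m-3,t}\Lambda(\tfrac1{c_0},\il(\theta,\ep\tsigma)\il_{m-3,\infty,t})$ term: distribute the derivatives so that whichever factor reaches the top order lies in the space for which an unweighted bound is available, and use the Sobolev embeddings as in the proof of Lemma \ref{lemcR}. The hardest step in the whole lemma is this matching of the $\zeta_{51}$/$\zeta_{52}$ splitting with the distribution of derivatives on $1/r_0^\Psi$; once it is in place, the remaining work is bookkeeping.
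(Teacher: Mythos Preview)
Your proposal is correct and follows essentially the same route as the paper: \eqref{es-cR-124} and \eqref{es-cR-5} are obtained by repeating the arguments of Lemma~\ref{lemcR} (the paper invokes the Gagliardo--Nirenberg type product/commutator bounds \eqref{GN1}--\eqref{GN2} and the $L^\infty$ control $\il\zeta_5\il_{1,\infty,t}\lesssim\lat$ via \eqref{zeta3infty}, \eqref{zeta4infty}), and for \eqref{cR-4} the paper isolates the top-order piece $\mu^{\f12}\|\cZ^{\gamma}(1/r_0^{\Psi})\p_z\zeta_5\|_{0,t}$ and treats it exactly by the $\zeta_{51}/\zeta_{52}$ splitting you describe, pairing $\mu^{\f12}\il 1/r_0^\Psi\il_{m-2,\infty,t}$ with $\|\p_z\zeta_{51}\|_{L^2_tL^2}$ and $\|\cZ(1/r_0^\Psi)\|_{m-3,t}$ with $\mu^{\f12}\il\p_z\zeta_{52}\il_{0,\infty,t}$. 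One small wording issue: your account of $\cR_5^\gamma$ mentions integration by parts and Young's inequality, but the claim \eqref{es-cR-5} is a direct $L^2$ bound on $\cR_5^\gamma$ itself (the integration by parts happens later in the energy estimate \eqref{sec7:eq44}); the bound follows simply by counting derivatives in the commutator, which places at most $m-2$ tangential derivatives on $\nabla\zeta_5$.
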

\begin{rmk}
We remark that the estimate \eqref{cR-4} can be derived with the help of the properties \eqref{zeta5-prop1} and \eqref{propzeta52} in Lemma \ref{lemzeta5-prop}, which is the place where
the relation \eqref{assumption-mukpa} is used. 
\end{rmk}
The following lemma states some estimates for $\cZ^{\gamma}\cS_5, |\gamma|=m-2:$
\begin{lem}\label{lemsource5}
Suppose that $|\gamma|=m-2$ and \eqref{preasption} is satisfied. 
 For any $\ep\in(0,1], (\mu,\kpa)\in(0,1]^2$ satisfying the relation \eqref{assumption-mukpa}, any $0<t\leq T,$ the following aspects hold: \\ 
(i) It holds that
\begin{align}\label{convection-zeta34}
\|\tlu\cdot\na (\zeta_3+\zeta_4)\|_{m-2,t}\lesssim T^{\f{1}{2}}\lat\cE_{m,t}.
\end{align}
(ii) Let $\cS_3, \cS_4$ be defined in \eqref{defcS3}, \eqref{defcS4} respectively, then we have:
\begin{align}\label{cs34}
    \cS_3+\cS_4= \mu\lambda_1\overline{\Gamma} \big(\p_{y^1} \cS_{341}+\p_{y^2}\cS_{342}+\p_z\cS_{343}\big)+\cS_{344}
\end{align}
where $\cS_{341}-\cS_{344}$ are some functions that admit 
the properties:
\begin{align}\label{cs344-prop}
   \sum_{j=1}^3 \mu^{\f{1}{2}} \|\cS_{34j}\|_{m-2,t}\lesssim (T^{\f{1}{4}}+\ep)\lat\cE_{m,t}, \quad \|\cS_{344}\|_{m-2,t}\lesssim T^{\f{1}{2}}\lat\cE_{m,t}.
\end{align}
(iii) There exist $\cH_{51}-\cH_{54},$ such that:
\begin{align}\label{factHHdag}
    |g|^{\f{1}{4}} (H_1^0)^{\Psi}+ H^{0,\dag}=\p_{y^1} \cH_{51}+\p_{y^2}\cH_{52}+\p_z\cH_{53}+\cH_{54}
\end{align}
and 
\begin{align}\label{factHHdag1}
   \mu^{-\f{1}{2}} \|(\cH_{51},\cH_{52},\cH_{53})\|_{m-2,t}\lesssim (T+\ep)^{\f{1}{3}}\lae, \quad \|\cH_{53}\|_{m-2,t}\lesssim \lae.
\end{align}
\end{lem}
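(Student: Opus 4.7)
The three assertions parallel the corresponding items of Lemma~\ref{lemsource2}, but are now set at the top spatial regularity level $|\gamma|=m-2$ (with $j=0$), and the estimates for $\zeta_3,\zeta_4$ established in Propositions~\ref{lemze3}--\ref{lemze4} will play the role previously filled by the $\zeta_1^j$ bounds. My plan is to prove (i), (ii), (iii) in order, borrowing the structural ideas from the $j\geq 1$ case and pointing out where the $j=0$ situation forces genuinely new work.

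For (i), I would start from the decomposition \eqref{identity-convection}, $\tilde{u}\cdot\nabla=\tilde{u}_1\partial_{y^1}+\tilde{u}_2\partial_{y^2}+(\tilde{u}_3/\phi)\mathcal{Z}_3$, and distribute the $m-2$ conormal derivatives between $\tilde{u}$ and $\nabla(\zeta_3+\zeta_4)$ exactly as in the proof of \eqref{convection-zeta1}. The key inputs are $\|\zeta_3\|_{m-1,t}\lesssim T^{1/4}\Lambda\mathcal{E}_{m,t}$ from \eqref{zeta3energy1}, the $L^\infty$ bound \eqref{zeta3infty}, and the sharper bound $\|\zeta_4\|_{m-1,t}\lesssim\varepsilon\mu^{1/4}\Lambda\mathcal{E}_{m,t}$ from \eqref{eszeta4}, combined with the Hardy/fundamental-theorem estimates \eqref{sec7:eq80}--\eqref{sec7:eq80.5} for $\tilde{u}_3/\phi$. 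Summing the resulting terms produces the claimed $T^{1/2}$ factor.

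For (ii), the tangential piece of both $\mathcal{S}_3$ and $\mathcal{S}_4$ is handled by writing $\Delta_g=|g|^{-1/2}\partial_{y^k}(g^{kl}|g|^{1/2}\partial_{y^l}\cdot)$, which puts it into perfect tangential-divergence form $\partial_{y^1}\mathcal{S}_{341}+\partial_{y^2}\mathcal{S}_{342}$ plus a lower-order commutator contributing to $\mathcal{S}_{344}$. For the normal piece of $\mathcal{S}_3$, namely $((1/r_0)^{\Psi}-b_2)\partial_z^2\zeta_3$, I would repeat the factorisation already used in \eqref{zeta3-secnor}--\eqref{defbetaK-betaKb}, splitting the difference $(1/r_0)^{\Psi}-b_2=((1/r_0)^{\Psi}-(1/r_0)^{\Psi}|_{z=0})+((1/r_0)^{\Psi}|_{z=0}-b_2)$; the first summand vanishes at $z=0$ (giving a $z$-factor usable with Hardy), while the second is $O(\varepsilon^2\mu)$ via Taylor expansion of $\exp(-\varepsilon^2\mu\bar{C}\Gamma\,\mathrm{div}\,u)$ and, together with the equation $\partial_t\zeta_3=\mu\lambda_1\overline{\Gamma}b_2\partial_z^2\zeta_3$, reduces to controlling $\varepsilon\pt\zeta_3$ via \eqref{zeta3energy3-2}. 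Writing $\partial_z^2\zeta_3=\partial_z(\partial_z\zeta_3)$ then folds everything into $\partial_z\mathcal{S}_{343}$, where the required $\mu^{1/2}$-weighted bound follows from \eqref{zeta3energy3-1}. The contributions from $\mathcal{S}_4$ are absorbed the same way, with $\|\zeta_4\|_{m-1,t}=O(\varepsilon\mu^{1/4})$ and $\|\pt\zeta_4\|_{m-2,t}=O(\varepsilon\mu^{1/4})$ yielding a clean $\varepsilon$-gain.

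Part (iii) is the most delicate, and is the step I expect to be the main obstacle. I would follow verbatim the strategy of item~(3) of Lemma~\ref{lemsource2}: identify the three terms in $|g|^{1/4}(H^0)^{\Psi}$ that carry one ``extra'' normal derivative relative to what the energy estimate can absorb, namely the analogues of $H_{11}^0,H_{12}^0,H_{21}^0$ in \eqref{defh111}--\eqref{defh211}; rewrite each of them as a perfect derivative $H_{1\ell 1}^0+H_{2\ell 1}^0$ plus a remainder via \eqref{id-curlftimesn} and the identities in \eqref{defh131}, \eqref{defh211}; and then use the change-of-variables identity \eqref{cv-newidentity} to combine them into $\partial_{y^1}\mathcal{H}_{51}+\partial_{y^2}\mathcal{H}_{52}+\partial_z\mathcal{H}_{53}$. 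The remainder is put into $\mathcal{H}_{54}$ together with $H^{0,\dag}$, which is controlled directly from \eqref{defHdag} and \eqref{esr0}--\eqref{esr0-sec}. The crucial and delicate step is the $\mu^{-1/2}$-weighted estimate of the ``$\circled{1}$'' building block, which for $j=0$ becomes $\mu^{-1/2}\mathcal{Q}^{\Psi}$ with $\mathcal{Q}$ as in \eqref{defcQ}. Here the naive bound fails by the factor gap between $\mu$ and $\kappa$; instead I would invoke the identities \eqref{identity1}--\eqref{identity2} to write $\mathcal{Q}=\lambda_1\overline{\Gamma}(\mu-\tfrac{\kappa}{C_v\gamma\lambda_1})u\,\Delta\theta+l.o.t$ and then apply the assumption \eqref{assumption-mukpa}, which together with $\kappa^{1/2}\|\Delta\theta\|_{L_t^\infty L^2}+\kappa\|\Delta\theta\|_{L_t^\infty\mathcal{H}^{m-2}}\lesssim\mathcal{E}_{m,t}$ yields exactly the required $(T^{1/2}+\varepsilon)\Lambda\mathcal{E}_{m,t}$ bound, as already carried out in \eqref{escQ}. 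This is the precise mechanism by which the relation \eqref{assumption-mukpa} enters the proof.
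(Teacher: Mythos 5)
Your proposal tracks the paper's argument quite closely; parts (i) and (ii) are essentially identical to what the paper does (distribute conormal derivatives via \eqref{identity-convection} and \eqref{GN1} together with the $\zeta_3,\zeta_4$ bounds from Propositions~\ref{lemze3}--\ref{lemze4} for (i); for (ii) write $\Delta_g$ in divergence form and use the splitting \eqref{zeta3-secnor}--\eqref{defbetaK-betaKb} together with the equation $\pt\zeta_3=\mu\lambda_1\overline{\Gamma}b_2\p_z^2\zeta_3$, the $\zeta_3$ bounds \eqref{zeta3energy3-1}--\eqref{zeta3energy3-2} and the $\zeta_4$ bound \eqref{eszeta4}).

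For (iii) you have the right mechanism but one imprecision worth flagging. If you literally follow the $j\geq 1$ template, the block $\circled{1}$ at $j=0$ is $\mu^{-1/2}G^{\Psi}$, not $\mu^{-1/2}\cQ^{\Psi}$: the difference consists of the correction $-\tfrac{\kpa}{C_v\gamma}u\big(\exp(\tfrac{R\ep}{C_v\gamma}\tsigma)\Gamma-\overline{\Gamma}\big)\Delta\theta$ together with the terms $-\ep\mu\lambda_1 r_1\Gamma\curl\curl u-\mu\lambda_1(\Gamma-\overline{\Gamma})\curl\curl u-\mu(2\lambda_1+\lambda_2)(1+\ep r_1)\div u\na\Gamma$. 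These extra pieces are exactly what the paper isolates as the last curl-term in \eqref{sec7:eq173} and bounds via \eqref{id-curlftimesn}, and they are responsible for the $(T+\ep)^{1/3}$ exponent (through the $\ep^{1/3}\|\ep^{2/3}\mu^{1/2}\na^2 u\|_{L_t^2\cH^{m-2}}$ norm), not the $(T^{1/2}+\ep)$ rate of \eqref{escQ} alone. You would either need to make the reduction $G=\cQ+\text{corrections}$ explicit before invoking \eqref{identity1}--\eqref{identity2}, or reproduce the cancellation directly in $G$ (which works, since $\exp(\tfrac{R\ep}{C_v\gamma}\tsigma)\Gamma=\overline{\Gamma}+O(\ep)$). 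The paper avoids redoing this work by observing that $\cM_1-(\cS_3+\cS_4)$ was already decomposed into perfect-derivative form in the proof of Lemma~\ref{lemzeta5-prop}, with the key estimate \eqref{es-ch1-3}/\eqref{escQ} already in hand, and then only the leftover correction needs a fresh argument; your ``from scratch'' route would reach the same result once this bookkeeping is filled in.
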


We will postpone the proof of the above two lemmas until the end of this section and proceed to prove \eqref{zeta4-energy}. 
 
Analogue to \eqref{EI-zeta2}, 
we carry out direct energy estimates for \eqref{eqzeta4ga}
to obtain: 
\beq\label{EI-zeta4}
\begin{aligned}
   & \int_{\mR_{+}^3} |\zeta_5^{\gamma}|^2(t)\,\d x 
    + \mu \iota c_0 \izt\int_{\mR_{+}^3} |\na \zeta_5^{\gamma}|^2 \d x\d s\lesssim \int_{\mR_{+}^3} |\zeta_5^{\gamma}|^2(0)\,\d x\\
    &  +\underbrace{\bigg|\izt\int_{\mR_{+}^3} 2\mu [\cZ^{\gamma},\p_z] \zeta_5\cdot \p_z \zeta_5^{\gamma} /r^{\Psi}_0+ |\zeta_5^{\gamma}|^2 \div \tlu   \,\d x\d s\bigg|}_{\eqref{EI-zeta4}_1}
   +\underbrace{ \bigg|2\izt\int_{\mR_{+}^3}\zeta_5^{\gamma}  \cdot \big(\cZ^{\gamma}\cS_5+\sum_{i=1}^{5}\cR_{i}^{\gamma}\big)\,\d x\d s\bigg|}_{\eqref{EI-zeta4}_2}
\end{aligned}
\eeq
where $\iota>0$ is a universe constant.

First of all, corresponding to \eqref{zeta2j-pre}, we have by using \eqref{norm-equiv-bd},  \eqref{zeta3energy1} and \eqref{zeta4-energy} that, for any $0\leq k\leq m-2,$
\begin{align}\label{zeta4-pre}
    \|\zeta_5\|_{k,t}\lesssim \|(\zeta,\zeta_3,\zeta_4)\|_{k,t}
    \lesssim T^{\f{1}{2}} \sup_{0\leq s\leq t}\|(\zeta,\zeta_3,\zeta_4)(s)\|_{k}\lesssim T^{\f{1}{2}}\lat\cE_{m,t}.
\end{align}
This, together with  the identity \eqref{com-identity} and Young's inequality, enables us to control 
the term $\eqref{EI-zeta4}_1$  as 
\begin{align}\label{sec7:168}
 \eqref{EI-zeta4}_1\leq   \delta\mu c_0 \|\p_z \zeta_5^{\gamma}\|_{0,t}^2+ C_{\delta} \mu\|\p_z \zeta_5\|_{m-3,t}^2+T\lat\cE_{m,t}^2.
\end{align}

Let us now focus on the estimate of $\eqref{EI-zeta4}_2.$
By the virtue of favorable estimates established in Lemma \ref{lemcR-zeta4},
the integral involving the terms $\cR_1^{\gamma}-\cR_5^{\gamma}$ can be treated in the same way as we did in \eqref{sec7:eq41}-\eqref{sec7:eq44}. 
More precisely, by using the estimate \eqref{es-cR-124} for $\cR_1^{\gamma}-\cR_3^{\gamma},$ \eqref{cR-4} for $\cR_4^{\gamma},$ \eqref{es-cR-5} for $\cR_5^{\gamma},$
we can obtain after suitable integration by parts 
the following estimate corresponding to 
\eqref{sec7:eq45}:
\begin{align}\label{sec7:eq96}
  \bigg| \sum_{i=1}^5 \izt\int_{\mR_{+}^3}\zeta_5^{\gamma}  \cdot \cR_{i}^{\gamma} \,\d x\d s\bigg|&\leq \delta c_0\mu
    \|\na \zeta_5 \|_{k,t}^2+ (T+\ep)\big(\lab Y_m(0)\big)+\lae\big)\notag\\
   &\quad+C_{\delta}\mu\|\nabla\zeta_5\|_{k-1,t}^2\lab\il(\theta,\ep\tsigma)\il_{m-3,\infty,t}^2\big).
\end{align}
Let us now control the term $\big|  \izt\int_{\mR_{+}^3}\zeta_5^{\gamma}  \cdot \cZ^{\gamma}\cS_5 \,\d x\d s\big|.$ 
Thanks to the properties for the source term $\cS_5$ listed in Lemma \ref{lemsource5}, we get after suitable integration by parts that:
\beq\label{sec7:eq170}
\begin{aligned}
    \big|  \izt\int_{\mR_{+}^3}\zeta_5^{\gamma}  \cdot \cZ^{\gamma}\cS_5 \,\d x\d s\big|&\lesssim \mu^{\f{1}{2}}\|\na\cZ^{\gamma}\zeta_5\|_{0,t}\sum_{l=1}^3\mu^{-\f{1}{2}}\|(\mu\cS_{34l},\cH_{5l})\|_{m-2,t}\\
    &\quad+ \|\zeta_5\|_{m-2,t}\|\big(\tlu\cdot\na (\zeta_3+\zeta_4), \cS_{344}, \cH_{54}\big)\|_{m-2,t}\\
    &\leq \delta  c_0\mu
    \|\na \zeta_5 \|_{k,t}^2+(T+\ep)^{\f{1}{2}}\lat\cE_{m,t}^2.
\end{aligned}
\eeq
Inserting \eqref{sec7:168}-\eqref{sec7:eq170} into 
\eqref{EI-zeta4}, we find, by summing up for $|\gamma|=k$ and by choosing $\delta$ small enough,
\beqs
\begin{aligned}
   \|\zeta_5\|_{k,t}^2+\mu\|\na\zeta_5\|_{k,t}^2
     &\lesssim \lab Y_m(0)\big)+ (T+\ep)^{\f{1}{2}}\lae\\
    &\quad+\|\p_z \zeta_5\|_{k-1,t}^2 \lab \il(\theta,\ep\tsigma)\il_{m-3,\infty,t}\big).
\end{aligned}
\eeqs
Consequently, by noticing \eqref{thetainfty}, \eqref{tsigmainfty},
we can do induction on $k$ to find that, there exists a constant $\vartheta_5\in (0,\f{1}{2}]$ such that:
\beqs
\|\zeta_5\|_{m-2,t}^2+\mu \|\na \zeta_5\|_{m-2,t}^2 \lesssim \lab Y_m(0)\big) + (T+\ep)^{\vartheta_5}\lae.
\eeqs
\end{proof}


\begin{proof}[\textbf{Proof of Lemma \ref{lemcR-zeta4}}]
The estimates \eqref{es-cR-124} \eqref{es-cR-5} follow from the product estimate \eqref{GN1} and the commutator estimate \eqref{GN2}. Let us show for instance the estimate for $\cR_{1}^{\gamma}=-[\cZ^{\gamma},\tlu \cdot\na]\zeta_4.$ 
In view of the identity \eqref{com-convetion-id}, we can apply the commutator estimate \eqref{GN2} to obtain that:
\begin{align*}
   \| \cR_{1}^{\gamma}\|_{0,t}=\|[\cZ^{\gamma},\tlu \cdot\na]\zeta_5\|_{0,t}\lesssim \|(\tlu_1,\tlu_2,\tlu_3/\phi)\|_{m-2,t}\il\zeta_5\il_{1,\infty,t}+\|\zeta_5\|_{m-2,t}\il (\tlu_1,\tlu_2,\tlu_3/\phi)\il_{1,\infty,t}.
\end{align*}
Note that by \eqref{zeta3infty}, \eqref{zeta4infty}, one has:
\beqs 
\il\zeta_5\il_{1,\infty,t}\lesssim \il (\zeta, \zeta_3,\zeta_4) \il_{1,\infty,t}\lesssim \lat,
\eeqs
which, together with \eqref{zeta4-pre} \eqref{sec7:eq80} and the fact 
$$\|(\tlu,\na\tlu)\|_{m-2,t}\lesssim \|(u,\na u)\|_{L_t^2\cH^{m-2}}\lesssim T^{\f{1}{2}}\cE_{m,t},
$$
gives rise to:
\begin{align*}
   \| \cR_{1}^{\gamma}\|_{0,t} \lesssim T^{\f{1}{2}}\lat\cE_{m,t}.
\end{align*}

We now prove \eqref{cR-4}, which relies crucially on \eqref{zeta5-prop1}. 
By using \eqref{r0infty}, we can have directly: 
\beq\label{sec7:eq98}
\begin{aligned}
   & \mu^{\f{1}{2}}\|\p_z\zeta_5/r_{0}^{\Psi}\|_{m-3,t}+\mu^{\f{1}{2}} \|[\cZ^{\gamma}, 1/r_{0}^{\Psi}]\p_z\zeta_5-\cZ^{\gamma}(1/r_{0}^{\Psi})\p_z\zeta_4\|_{0,t}\\
&\lesssim\mu^{\f{1}{2}}\|\p_z\zeta_5\|_{m-3,t} \il  1/r_{0}^{\Psi} \il_{m-3,\infty,t}\lesssim \mu^{\f{1}{2}}\|\p_z\zeta_5\|_{m-3,t}\lab \il(\theta,\ep\tsigma)\il_{m-3,\infty,t}\big).
\end{aligned}
\eeq
It now remains to control $\mu^{\f{1}{2}}\|\cZ^{\gamma}(1/r_{0}^{\Psi})\p_z\zeta_4\|_{0,t}$ which is the most problematic one. As a matter of fact, in general, the term $\mu^{\f{1}{2}}\p_z\zeta_5(s)$ 
cannot be uniformly bounded in $L^P(\mR_{+}^3)$ for $2<p<+\infty,$ one is forced to attribute the $L^{\infty}(\mR_{+}^3)$ norm on $\cZ^{\gamma}(1/r_{0}^{\Psi}),$  which is not uniformly (in $\mu$) bounded. To get around this difficulty, we assume the relation \eqref{assumption-mukpa} holds and take benefit of \eqref{zeta5-prop1}-\eqref{propzeta52} to  obtain that:
\beq\label{sec7:eq99}
\begin{aligned}
  \mu^{\f{1}{2}}\|\cZ^{\gamma}(1/r_{0}^{\Psi})\p_z\zeta_5\|_{0,t}&\lesssim T^{\f{1}{2}} 
  \| \cZ^{\gamma}(1/r_{0}^{\Psi})\|_{L_t^{\infty}L^2}
  \il\mu^{\f{1}{2}} \p_z\zeta_{51}\il_{0,\infty,t} 
  + \il\mu^{\f{1}{2}}/r_{0}^{\Psi}\il_{m-2,\infty,t}
  \|\p_{z}\zeta_{52}\|_{0,t}\\
  &\lesssim (T^{\f{1}{2}}+\ep)\big(\lab Y_m(0)\big)+\lae\big).
\end{aligned}
\eeq
Note that by the definition \eqref{defr0} for $r_0$
\begin{align*}
    \il\mu^{\f{1}{2}}/r_{0}^{\Psi}\il_{m-2,\infty,t}\lesssim \lab \il (\Id, \mu^{\f{1}{2}}\cZ)(\theta,\ep\tsigma) \il_{m-3,\infty,t}\big)\lesssim \lat.
\end{align*}
We thus finish the proof of \eqref{cR-4} by collecting \eqref{sec7:eq98} and \eqref{sec7:eq99}.
\end{proof}
\begin{proof}[\textbf{Proof of Lemma \ref{lemsource5}}]
We begin with the proof of the first aspect, namely the estimate \eqref{convection-zeta34}. 
It follows from the identity \eqref{identity-convection}, 
the commutator estimate \eqref{GN1} that:
\begin{align*}
\| ( \tlu \cdot\na)  (\zeta_3+\zeta_4)\|_{m-2,t}\lesssim \|(\zeta_3,\zeta_4)\|_{m-1,t} \il(\tlu, \tlu_3/\phi) \il_{0,\infty,t}+
\|(\tlu, \tlu_3/\phi)\|_{m-2,t}\il (\zeta_3,\zeta_4)\il_{1,\infty,t},
\end{align*}
which, combined with estimates \eqref{zeta3infty}, \eqref{zeta3energy1} for $\zeta_3$, \eqref{zeta4infty}, \eqref{eszeta4} for $\zeta_4$ and the fact $\|(\tlu, \tlu_3/\phi )\|_{m-2,t}\lesssim T^{\f{1}{2}}\|(u,\na u)\|_{L_t^{\infty}\cH^{m-2}},$ yields \eqref{convection-zeta34}.

Let us now sketch the second aspect concerning the fact
\eqref{cs34} which can be shown by following the similar arguments
as in the proof of \eqref{diffusion-zeta1}. In view of the definitions \eqref{defcS3}, \eqref{defcS4} for $\cS_3, \cS_4$ and the identity 
\eqref{zeta3-secnor},  let us set
\begin{align*}
    &  \cS_{34k}=  (1/r^{\Psi}_0)\big( g^{kl}\p_{y^l}(\zeta_3+\zeta_4)-\p_{y^k}\zeta_4\big),\, k=1,2, \quad  \\
   & \cS_{343}=\big((1/r^{\Psi}_0)- (1/r^{\Psi}_0)|_{z=0}\big)\p_z\zeta_3-\p_z(1/r^{\Psi}_0)\zeta_3+b_2\big(\exp(\ep^2\mu \bar{C}\Gamma\div u)-1\big)^{\Psi}|_{z=0}\p_z\zeta_3,\\
   & \cS_{344}= \mu\lambda_1\overline{\Gamma}\bigg(-\p_{y^k}(|g|^{-\f{1}{2}}/r^{\Psi}_0)
    g^{kl}|g|^{\f{1}{2}}\p_{y^l}(\zeta_3+\zeta_4)+\p_{y^k}(1/r^{\Psi}_0)\p_{y^k}\zeta_4 +\p_z^2  (1/r^{\Psi}_0) \zeta_3+(1/r^{\Psi}_0)(\pt\zeta_4-\zeta_4)\bigg)
-\pt\zeta_4.
\end{align*}
The desired estimates listed in \eqref{cs344-prop} are
the consequences of the product estimate \eqref{GN1},
Lemma \ref{lemze3} and Lemma \ref{lemze4} for 
$\zeta_3$ and $\zeta_4,$ we omit the details.

Finally, for the third aspect, we check the terms appearing in the definitions \eqref{defH}, \eqref{defHdag} for $(H^0)^{\Psi}$ and $ H^{0,\dag}$ to find that:
\beq\label{sec7:eq173}
\begin{aligned}
    &|g|^{\f{1}{4}} (H^0)^{\Psi}+ H^{0,\dag}
    = l.o.t+\cM_1-(\cS_3+\cS_4)\\
    & +\bigg(\curl\bigg(  \f{\kpa}{C_v\gamma}u\big(\exp\big({\f{R\ep}{C_v\gamma}\tsigma}\big)\Gamma-\overline{\Gamma}\big)\Delta\theta+\ep\mu\lambda_1 \big(\big(r_1\Gamma +\ep^{-1}(\Gamma-\overline{\Gamma})\big)\curl\curl u \big)\bigg)\times\chi\bn\bigg)^{\Psi}
\end{aligned}
\eeq
where $\cM_1$ is defined in \eqref{defcM}
and $l.o.t$ stands for the terms that admit the property:
\begin{align*}
    \|l.o.t\|_{m-2,t}\lesssim 
    \lat\cE_{m,t}.
\end{align*}
However, in the proof of Lemma \ref{lemzeta5-prop},
we found that there exists $\cH_{1}-\cH_{3}$ such that
\begin{align*}
    \cM_1-(\cS_3+\cS_4)= \p_{y^1}\cH_{1}+\p_{y^2}\cH_{2}+\p_{z}\cH_{3}+l.o.t.
\end{align*}
Moreover, if the relation \eqref{assumption-mukpa} holds, then (see \eqref{es-ch1-3}):
\begin{align}\label{sec7:171}
    \mu^{-\f{1}{2}} \|(\cH_{1}, \cH_{2}, \cH_{3})\|_{m-2,t}\lesssim (T^{\f{1}{2}}+\ep)\lat\cE_{m,t}.
\end{align}
By using the identity \eqref{id-curlftimesn}, we can show again that 
there exists $\cH_5-\cH_7$, such that the last term of $\eqref{sec7:eq173}$ can be rewritten in the following form:
\begin{align*}
\p_{y^1}\cH_{5}+\p_{y^2}\cH_{6}+\p_{z}\cH_{7}+l.o.t.,
\end{align*}
where $\cH_5-\cH_7$ are smooth functions with respect to the quantities:
\beqs 
\bigg(\kpa u\big(\exp\big({\f{R\ep}{C_v\gamma}\tsigma}\big)\Gamma-\overline{\Gamma}\big)\Delta\theta \big)\bigg)^{\Psi},\quad \ep\mu\lambda_1 \bigg(\big(r_1\Gamma +\ep^{-1}(\Gamma-\overline{\Gamma})\big)\curl\curl u \bigg)^{\Psi}
\eeqs
we thus can conclude from  the product estimate \eqref{GN1} that:
\begin{align*}
    \mu^{-\f{1}{2}}\|\cH_{5},\cH_{6},\cH_{7}\|_{m-2,t}&\lesssim \big(\ep\| \kpa^{\f{1}{2}}\Delta\theta\|_{L_t^2\cH^{m-2}}
    +\ep^{\f{1}{3}}\|(\ep^{\f{2}{3}}\mu^{\f{1}{2}})\na^2 u \|_{L_t^2\cH^{m-2}}+T^{\f{1}{2}}\|(\sigma,\tsigma)\|_{L_t^{\infty}\cH^{m-2}}\big)\lat\\
    &\lesssim (T+\ep)^{\f{1}{3}}\lat\cE_{m,t}.
\end{align*}
This, together with \eqref{sec7:171}, enables us to 
finish the proof of \eqref{factHHdag} and \eqref{factHHdag1}  by setting $\cH_{51}=\cH_{1}+\cH_5, \cH_{52}=\cH_{2}+\cH_6, \cH_{53}=\cH_{3}+\cH_7.$
\end{proof}

\section{$L_{t,x}^{\infty}$ estimates for $\theta$}
In this section, we aim to bound the several $L_{t,x}^{\infty}$ norms for $\theta,$ which will be useful to recover higher order $L_t^{\infty}$ type conormal regularities of $(\na\sigma, \div u)$ in the next section.
\begin{lem}
Recall the definition of $\mathcal{A}_{m,t}(\theta):$
\beq\label{amttheta}
\begin{aligned}
 \mathcal{A}_{m,t}(\theta)&= \il (\theta, \kpa \na\theta)\il_{m-3,\infty,t}
 +\kpa^{\f{1}{2}}\il \na\theta\il_{m-4,\infty,t}+\il
  (\nabla \theta, \kpa\na^2\theta)\il_{m-5,\infty,t}
 + \kpa^{\f{1}{2}}\il\na^2\theta\il_{m-6,\infty,t}.
\end{aligned}
\eeq
Under the assumption \eqref{preasption}, there exists a constant $\vartheta_6>0$ such that for any $\ep\in (0,1], (\mu,\kpa)\in A,$ any $0<t\leq T,$ the following estimate holds:
\begin{align}\label{nathetainfty}
    \cA_{m,t}(\theta)\lesssim  \lab Y_m(0)\big) +(T+\ep)^{\vartheta_6}\lae.
\end{align}
\end{lem}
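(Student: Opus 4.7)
The plan is to split $\mathcal{A}_{m,t}(\theta)$ into an easy group, recovered from the conormal Sobolev embedding \eqref{sobebd} applied to the $L_t^\infty H_{co}$ ingredients of $\mathcal{E}_{m,t}(\theta)$ already bounded by \eqref{EE-theta-final}, and a single genuinely delicate piece $\il\nabla\theta\il_{m-5,\infty,t}$. For the easy group: $\il\theta\il_{m-3,\infty,t}$ comes from $\|\theta\|_{L_t^\infty H_{co}^{m-1}}$; $\kappa^{1/2}\il\nabla\theta\il_{m-4,\infty,t}$ from $\kappa^{1/2}\|\nabla\theta\|_{L_t^\infty H_{co}^{m-1}}$; $\kappa^{1/2}\il\nabla^2\theta\il_{m-6,\infty,t}$ and $\il\kappa\nabla^2\theta\il_{m-5,\infty,t}$ from $\kappa^{1/2}\|\nabla^2\theta\|_{L_t^\infty H_{co,\sqrt\kappa}^{m-2}}$; and $\il\kappa\nabla\theta\il_{m-3,\infty,t}$ from the preceding together with \eqref{normalofnormalder}, \eqref{divbetanatta}, which are used near the boundary to trade normal derivatives for $\div$ and tangential derivatives.

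The genuinely hard term is $\il\nabla\theta\il_{m-5,\infty,t}$, since the only $L_t^\infty$ control of $\nabla\theta$ in $\mathcal{E}_{m,t}(\theta)$ carries a weight $\kappa^{1/2}$. Tangentially this is harmless: $\Pi\nabla\theta$ is a tangential conormal derivative of $\theta$ and is controlled pointwise by $\|\theta\|_{L_t^\infty H_{co}^{m-1}}$ via \eqref{sobebd}, and in the interior the conormal and full spatial derivatives are equivalent. Hence only $\chi_i\partial_{\bn}\theta$ in each chart $\Omega_i$ covering the boundary remains. I would pass to the normal geodesic coordinates \eqref{normal geodesic coordinates}, set $\eta_i=(\chi_i\partial_{\bn}\theta)^{\Psi_i}$, and derive that $\eta_i$ satisfies a transport--diffusion equation in $\mathbb{R}^3_+$ of the form
\begin{equation*}
(\partial_t+\tilde u_i\cdot\nabla)\eta_i-\kappa\gamma^{-1}(\Gamma\beta)^{\Psi_i}(\partial_z^2+\Delta_{g_i})\eta_i=\mathcal{F}_i,\quad \eta_i|_{z=0}=0,\quad \eta_i|_{t=0}=(\chi_i\partial_{\bn}\theta)^{\Psi_i}|_{t=0},
\end{equation*}
with \emph{homogeneous} Dirichlet boundary condition inherited from $\partial_{\bn}\theta|_{\partial\Omega}=0$. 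The source $\mathcal{F}_i$ collects $\partial_{\bn}(\div u)$, commutators coming from the change of variables and from $[\partial_{\bn},\Gamma\beta\Delta]$, and lower-order curvature contributions generated by the second fundamental form of $\partial\Omega$ and by the factor $\chi_i$.

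Then I would use the Green function for the variable-coefficient heat operator $\partial_t-\kappa\gamma^{-1}(\Gamma\beta)^{\Psi_i}|_{z=0}\partial_z^2$ with homogeneous Dirichlet data, exactly in the spirit of the analyses of $\zeta_3$ (Proposition~\ref{lemze3}) and $\zeta_{51}$ (Lemma~\ref{lemzeta5-prop}) via the Gaussian estimates \eqref{appen-4}--\eqref{appen-7}, treating the lateral Laplacian $\Delta_{g_i}$ and the $z$-variation of the coefficient as additional source terms. Applying $\mathcal{Z}^\alpha$ with $|\alpha|\le m-5$ and taking $L^\infty_{t,x}$ yields the desired pointwise bound, with initial data absorbed into $\Lambda(1/c_0,Y_m(0))$ and a small power $(T+\epsilon)^{\vartheta_6}$ emerging from the time convolution against the heat kernel (together with the various $\epsilon$-factors of the dissipative balance). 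The main obstacle I anticipate is the bookkeeping for $\mathcal{F}_i$ in $L^\infty_{t,x}$: its most singular ingredient, essentially $\kappa\partial_{\bn}\Delta\theta$, sits at the threshold of what the available norms in $\mathcal{E}_{m,t}(\theta)$ permit. I expect to close the loop by an iterative argument in the conormal regularity in the spirit of the two-tier iteration of Step~9 of the roadmap, combining \eqref{divbetanatta} with the easy-group bounds just established and with $\mathcal{N}_{m,t}$ to ensure the estimate is uniform in $(\epsilon,\mu,\kappa)$ with $(\mu,\kappa)\in A$.
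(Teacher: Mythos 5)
Your overall architecture is sound and matches the paper's strategy in outline (split into Sobolev-embedding terms plus boundary terms handled via the transport-diffusion equation for $\chi_i\partial_{\bn}\theta$ with homogeneous Dirichlet condition, resolved by the variable-coefficient Green's function). However, there is a genuine gap in the way you classify the terms.

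You place $\kappa^{1/2}\il\nabla^2\theta\il_{m-6,\infty,t}$ and $\il\kappa\nabla^2\theta\il_{m-5,\infty,t}$ in the ``easy group,'' claiming they follow from $\kappa^{1/2}\|\nabla^2\theta\|_{L_t^\infty H_{co,\sqrt\kappa}^{m-2}}$ via \eqref{sobebd}. This does not work. The Sobolev embedding \eqref{sobebd} applied to $\nabla^2\theta$ at level $m-6$ requires $\|\nabla^3\theta(t)\|_{H_{co}^{m-5}}$ in $L^\infty_t$, but $\cE_{m,T}^{\kappa}(\theta)$ (see \eqref{defcEmtheta}) only controls $\nabla^3\theta$ in $L^2_t$: the term $\kappa\|\nabla^3\theta\|_{L_T^2 H_{co,\sqrt\kappa}^{m-2}}$ gives no $L^\infty_t$ information. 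Neither the interpolation factor trick nor trading $\partial_{\bn}^2\theta$ for $\Delta\theta$ plus tangential derivatives via the $\theta$-equation saves the $\kappa^{1/2}$-weighted term, because the equation gives $\kappa\Delta\theta$ (weight one), not $\kappa^{1/2}\Delta\theta$. So only $\il\theta\il_{m-3,\infty,t}$, $\kappa\il\nabla\theta\il_{m-3,\infty,t}$ and $\kappa^{1/2}\il\nabla\theta\il_{m-4,\infty,t}$ are genuinely accessible via embedding; the three remaining pieces $\il\nabla\theta\il_{m-5,\infty,t}$, $\kappa^{1/2}\il\nabla^2\theta\il_{m-6,\infty,t}$ and $\kappa\il\nabla^2\theta\il_{m-5,\infty,t}$ all require the boundary Green's function analysis.

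Concretely, once you have $\eta_i=\chi_i\partial_{\bn}\theta\circ\Psi_i$ solving a half-space transport--diffusion problem with zero boundary data, you must invoke not only the $L^\infty_{t,x}$ bound for $\eta_i$ (giving $\il\nabla\theta\il_{m-5,\infty,t}$), but also the Green's-function estimates for $\nu^\alpha\partial_z\eta_i$, namely \eqref{infty3}, which exploit the heat-kernel smoothing to trade one $z$-derivative for $\nu^{-1/2}(t-t')^{-1/2}$. This produces exactly $\kappa^{1/2}\il\partial_z\eta_i\il_{m-6,\infty,t}$ and $\kappa\il\partial_z\eta_i\il_{m-5,\infty,t}$, and hence the missing two members of $\cA_{m,t}(\theta)$. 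Two further small inaccuracies: the transport field in the $\eta_i$-equation must be $\tilde w_i$ with $w=u-\frac{\gamma-1}{\gamma}\kappa\overline\Gamma\beta\nabla\theta$ (not $\tilde u_i$) to prevent an extra $\kappa\nabla\theta\cdot\nabla\eta_i$ term in the source, which would otherwise be incompatible with the $\kappa^{1/2}$-weighted $\partial_z$-estimate; and the most singular ingredient of the source is the coefficient correction $\big((\Gamma\beta)^{\Psi_i}-[(\Gamma\beta)^{\Psi_i}]^b\big)\partial_z^2\eta_i$, which is handled by Taylor-expanding about $z=0$ and using $\partial_{\bn}\theta|_{\partial\Omega}=0$ to gain an extra $z$ factor that absorbs the heat-kernel singularity — not the $\kappa\partial_{\bn}\Delta\theta$ term you flag.
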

\begin{proof}
First of all, it follows from the inequality \eqref{thetainfty} and  the estimates 
\eqref{EE-theta-1}, \eqref{EE-theta-2-1} that:
\begin{align}\label{thetasob}
   \il \theta\il_{m-3,\infty,t}+ \kpa^{\f{1}{2}}\il \na\theta\il_{m-4,\infty,t}\lesssim \lab Y_m(0)\big)+(T+\ep)^{\f{\tilde{\vartheta}_0}{2}}\lae.
\end{align}
Similarly, by the Sobolev embedding \eqref{sobebd} and the estimates \eqref{EE-2}, \eqref{EE-theta-2-2},
\beq\label{thetasob-2}
\begin{aligned}
   \kpa\il\na \theta\il_{m-3,\infty,t}
  &\lesssim \kpa \|\na^2\theta\|_{\infco^{m-2}}+\kpa \|\na\theta\|_{\infco^{m-1}}
 \\
 &\lesssim \lab Y_m(0)\big)+(T+\ep)^{\f{\tilde{\vartheta}_0}{2}}\lae.
\end{aligned}
\eeq

It thus remains to control the last three terms in \eqref{amttheta}. We note however that away from the boundaries  where the conormal Sobolev norm is equivalent to the usual Sobolev norm,  these three terms can be bounded by the standard Sobolev embedding. Therefore, it suffices to control 
 \beq\label{inftytheta-tobebound}
 \il\chi_{i}\partial_{\bn} \theta\il_{m-5,\infty,t}, \quad \kpa^{\f{1}{2}}\il\chi_{i}\partial_{\bn} ^2\theta\il_{m-6,\infty,t}, \quad \kpa \il\chi_i \p_{\bn}^2 \theta\il_{m-5,\infty,t},
 \eeq
 where $\chi_i, (1\leq i\leq N)$ are smooth functions compactly supported in $\Omega_i.$ 
 
 Let us start with the derivation of $\chi_i\p_{\bn}\theta.$ The equation $\eqref{NCNS-S2}_3$ together with $\eqref{NCNS-S2}_1,$ allow us to rewrite the equation satisfied by $\theta:$
 \begin{align*}
 (\pt+u\cdot \na)\theta-\f{R\kpa}{C_v\gamma}(\Gamma\beta) \Delta\theta= \f{R}{ C_v\gamma}\big(\ep(\pt+u\cdot\na)\sigma+\ep\mathfrak{N}+\kpa\Gamma \na\beta\cdot\na\theta\big)=\colon \mathfrak{M}_1^i
 \end{align*}
 where $\mathfrak{N}$ is defined in \eqref{defgamma-beta}. 
 Taking the gradient of the above equation and multiplying the resultant by $\chi_i\bn,$ we find that 
 $\chi_i\p_{\bn}\theta$ solves the system:
 \begin{align}\label{eqpntheta}
     (\pt + w\cdot\na)(\chi_i\p_{\bn}\theta)-\f{R\kpa}{C_v\gamma}(\Gamma\beta) \Delta(\chi_i\p_{\bn}\theta)= \mathfrak{M}_2^i, \quad \chi_i\p_{\bn}\theta|_{\p\Omega}=0
 \end{align}
 where $w=u-\f{\gamma-1}{\gamma}\kpa\overline{\Gamma}\beta\na\theta$ and 
 \beq\label{defm2}
 \begin{aligned}
\mathfrak{M}_2^i&=\big( \na  \mathfrak{M}_1^i+\f{R\kpa}{C_v\gamma}\na(\Gamma\beta)\Delta\theta -\na u\cdot\na\theta\big)\cdot\chi_i\bn\\
&\quad+ u\cdot \na(\chi_i\bn)\cdot\na\theta+\f{R\kpa}{C_v\gamma}\Gamma\beta[\chi_i\p_{\bn}, \Delta]\theta - \f{\gamma-1}{\gamma}\kpa\overline{\Gamma}\beta\na\theta \cdot\na (\chi_i\p_{\bn}\theta).
 \end{aligned}
 \eeq
 The reason why we change the transport term $u\cdot \na $ to $w\cdot\na$ will be clear later.
 
The estimates of the three quantities in \eqref{inftytheta-tobebound} will rely on the careful studies on the transport-diffusion equation \eqref{eqpntheta}. Such an equation, when transformed in half space by using the normal geodesic coordinates \eqref{normal geodesic coordinates}, has the similar form as \eqref{eqheattransport}, whose solution admits the 'explicit' expression \eqref{green-heat}.
Denote $\eta_i=\chi_i\p_{\bn}\theta\circ \Psi_i,\, f^{\Psi_i}=f\circ \Psi_i,$  where 
$\Psi_i$ is the normal geodesic coordinates defined in \eqref{normal geodesic coordinates}. By using \eqref{change-variable}-\eqref{laplaceg}, $\eta_i$ solves:
\begin{align}\label{eqetai-0}
\left\{
 \begin{array}{l}
      (\pt+\tilde{w}_i\cdot\na)\eta_i-\f{R\kpa}{C_v\gamma}
      ({\Gamma\beta})^{\Psi_i} \p_z^2\eta_i= \f{R\kpa}{C_v\gamma}
      \big({\Gamma\beta})^{\Psi_i} (\f{1}{2}\p_z\ln |g_i|\p_z+\Delta_{g_i}\big)\eta_i+({\mathfrak{M}_2^i})^{\Psi_i}, \\[3pt]
      \eta_i|_{t=0}=\chi_i\p_{\bn}\theta_0\circ \Psi_i,  \quad  \eta_i|_{z=0}=0,
 \end{array} 
 \right.
\end{align}
where $\na=(\p_{y^1}, \p_{y^2}, \p_{y^3})$ and $\div=(\na)^{*}$ 
stand for the gradient and the divergence in the new coordinates  and $\Delta_{g_i},$ defined in \eqref{defdeltag}, involves only tangential derivatives.
 Moreover, $\tilde{w}_i$ is defined as:
\beq\label{deftildew}
\tilde{w}_i=(\tilde{w}_{i,1},\tilde{w}_{i,2},\tilde{w}_{i,3})=( D\Psi_i|_{z=0})^{-*}(\chi_i w)^{\Psi_i}.
\eeq
Note that on the boundary $z=0,$ 
\beq
\tilde{w}_{i,3}|_{z=0}=\big[( D\Psi_i|_{z=0})^{-*}(\chi_i w)^{\Psi_i}\big]_3\big|_{z=0}=- ((\chi_i w\cdot \bN^i)|_{\p\Omega})^{\Psi_i}=0.
\eeq
In order to apply Lemma \ref{lemheatgreen} in the appendix, let us rewrite the system \eqref{eqetai-0} as:
\begin{align}\label{eqetai}
\left\{
 \begin{array}{l}
      (\pt+\tilde{w}_{i,1}^b\p_{y^1}+\tilde{w}_{i,2}^b\p_{y^2}+z(\p_z\tilde{w}_{i,3})^b\p_z)\eta_i-\f{R\kpa}{C_v\gamma}
     [ ({\Gamma\beta})^{\Psi_i}]^b \p_z^2\eta_i={\mathfrak{M}}^{\Psi_i}_{3}, 
     \\[3pt]
      \eta_i|_{t=0}=\chi_i\p_{\bn}\theta_0\circ \Psi_i,  \quad  \eta_i|_{z=0}=0,
 \end{array} 
 \right.
\end{align}
where we denote $f^b=f|_{z=0}$ and ${\mathfrak{M}}^{\Psi_i}_3={\mathfrak{M}}^{\Psi_i}_{3,1}+{\mathfrak{M}}^{\Psi_i}_{3,2}+{\mathfrak{M}}^{\Psi_i}_{3,3}+{\mathfrak{M}}^{\Psi_i}_{3,4}$ with
\beq\label{deffrakm3}
\begin{aligned}
  &{\mathfrak{M}}^{\Psi_i}_{3,1}=\f{R\kpa}{C_v\gamma}
      ({\Gamma\beta})^{\Psi_i} \big(\f{1}{2}(\p_z\ln |g_i|)\p_z+\Delta_{g_i}\big)\eta_i,\quad    {\mathfrak{M}}^{\Psi_i}_{3,2}=({\mathfrak{M}_2^i})^{\Psi_i}, \, ({\mathfrak{M}_2^i}
        \textnormal{ is defined in } \eqref{defm2})\\
        & {\mathfrak{M}}^{\Psi_i}_{3,3}=-\sum_{l=1}^2 (\tilde{w}_{i,l}-\tilde{w}_{i,l}^b)\p_{y^l}\eta_i- \big(\tilde{w}_{i,3}-\tilde{w}_{i,3}^b-z  (\p_z\tilde{w}_{i,3})^b\big)\p_z \eta_i,  \\
 & {\mathfrak{M}}^{\Psi_i}_{3,4}= \f{R\kpa}{C_v\gamma}
  \bigg(  ({\Gamma\beta})^{\Psi_i}- [ ({\Gamma\beta})^{\Psi_i}]^b \bigg)\p_z^2\eta_i.
\end{aligned}
\eeq
From now on, we  will drop the subscript $i$  for the sake of notational convenience.  
Applying the properties \eqref{infty1}, \eqref{infty3} in Lemma \ref{lemheatgreen} by setting $a_1= \tilde{w}_1^b, \, a_2= \tilde{w}_2^b, \, a_3= (\p_z \tilde{w}_3)^b, \, b=[({\Gamma\beta})^{\Psi}]^b, \, \nu=\f{R\kpa}{C_v\gamma},$ we find that: 
\beq\label{etainfty-pre}
\begin{aligned}
   & \il\eta\il_{m-5,\infty,t}\lesssim \Lambda_1^{m-5} \|\p_{\bn}\theta(0)\|_{m-5,\infty}\\
   & \qquad  \qquad \qquad+T^{\f{1}{2}}\Lambda_1^{m-5}
    \bigg( \Lambda_2^{m-5}\il\eta\il_{m-5,\infty,t}+ \big(\izt \| {\mathfrak{M}}^{\Psi}_3 \|_{m-5,\infty}^2\d s\big)^{\f{1}{2}} \bigg),\\
   & \kpa^{\f{1}{2}}\il\p_z\eta\il_{m-6,\infty,t}\lesssim \kpa^{\f{1}{2}}\Lambda_1^{m-6}\|\p_{\bn}^2\theta(0)\|_{m-6,\infty}+ T^{\f{1}{2}}\Lambda_1^{m-6}\big(\Lambda_2^{m-6}\il\eta\il_{m-6,\infty,t} +\il {\mathfrak{M}}^{\Psi}_3\il_{m-6,\infty,t}\big),\\ 
  & \kpa\il\p_z\eta\il_{m-5,\infty,t}\lesssim \kpa \Lambda_1^{m-5}\|\p_{\bn}^2\theta(0)\|_{m-5,\infty}+ T^{\f{1}{2}}\Lambda_1^{m-5}\Lambda_2^{m-5}
 \il(\eta,  \kpa^{\f{1}{2}} {\mathfrak{M}}^{\Psi}_3)\il_{m-5,\infty,t},
\end{aligned}
\eeq
where we denote
\begin{align*}
    \Lambda_1^{k}= \lab | T(\tilde{w}^b, (\p_z\tilde{w}_3)^b), [({\Gamma\beta})^{\Psi}]^b |_{k,\infty,t} \big), \quad \Lambda_2^k=\lab |(\tilde{w}^b, (\p_z\tilde{w}_3)^b), \pt [({\Gamma\beta})^{\Psi}]^b |_{k,\infty,t}
    \big) .
\end{align*}
We derive from the definition of $\tilde{w}$ in \eqref{deftildew} that:
\beq\label{es-Lambda12}
\begin{aligned}
   &  \Lambda_1^{m-5}\lesssim \lab \il T( w, \na w), \Gamma\beta \il_{m-5,\infty,t} \big) \lesssim 
     \lab \il (\ep\sigma, \theta) \il_{m-5,\infty,t}+T \il u, \na u, \kpa (\na\theta, \na^2\theta)\il_{m-5,\infty,t} \big), \\
     & \Lambda_2^{m-5}\lesssim \lab \il \ep\pt,\Id) (\sigma,\theta),u, \na u, \kpa (\na\theta, \na^2\theta) \il_{m-5,\infty,t} \big)\lesssim \lat,
\end{aligned}
\eeq
By using the inequality \eqref{thetainfty} 
and the fact that $\Lambda$ is a polynomial with respect to its arguments, we can find a constant $\vartheta_7>0,$ such that
\begin{align}\label{es-Lambda1}
 \Lambda_1^{m-5}
 \lesssim \lab Y_m(0)\big) + (T+\ep)^{\f{\vartheta_7}{2}}\lae.
\end{align}
Therefore, it holds that:
\begin{align*}
   & \Lambda_1^{m-5}\big(\|\p_{\bn}\theta(0)\|_{m-5,\infty}+\kpa^{\f{1}{2}}\|\p_{\bn}^2\theta(0)\|_{m-6,\infty}+\kpa\|\p_{\bn}^2\theta(0)\|_{m-5,\infty} \big)\\
    &\lesssim 
    \lab Y_m(0)\big) + (T+\ep)^{\vartheta_7}\lae.
\end{align*}
In view of  \eqref{etainfty-pre}
and the fact $\il\eta\il_{m-5,\infty,t}\lesssim \il\na\theta\il_{m-5,\infty,t}\lesssim \lat,$ it suffices for us to control the three terms involving ${\mathfrak{M}}^{\Psi}_3$ which is done in the next lemma.  
Inserting \eqref{es-frakm3} into the above three inequalities and changing back to the original variable, 
we achieve that,  by defining $\vartheta_6=\min \{ \vartheta_7, \f{1}{2}, \f{\tilde{\vartheta}_0}{2}\},$
\begin{align*}
    \il\chi_{i}\partial_{\bn} \theta\il_{m-5,\infty,T}+\kpa^{\f{1}{2}}\il\chi_{i}\partial_{\bn} ^2\theta\il_{m-6,\infty,T}+ \kpa \il\chi_i \p_{\bn}^2 \theta\il_{m-5,\infty,T}\lesssim \lab Y_m(0)\big) + (T+\ep)^{\vartheta_6}\lae,
\end{align*}
which, together with \eqref{thetasob}, \eqref{thetasob-2},  leads to \eqref{amttheta}.
\end{proof}
\begin{lem}
Let ${\mathfrak{M}}^{\Psi}_3={\mathfrak{M}}^{\Psi}_{3,1}+{\mathfrak{M}}^{\Psi}_{3,2}+{\mathfrak{M}}^{\Psi}_{3,3}+{\mathfrak{M}}^{\Psi}_{3,4}$ be defined in \eqref{deffrakm3}, the following estimates hold:
\begin{align}\label{es-frakm3}
  \big( \izt \| {\mathfrak{M}}^{\Psi}_3 \|_{m-5,\infty}^2\d s\big)^{\f{1}{2}}+ \il{\mathfrak{M}}^{\Psi}_3\il_{m-6,\infty,t}+ \kpa^{\f{1}{2}}\il{\mathfrak{M}}^{\Psi}_3\il_{m-5,\infty,t}\lesssim \lae.
\end{align}
\end{lem}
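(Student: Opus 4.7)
The plan is to estimate each of the four pieces $\mathfrak{M}_{3,1}^{\Psi},\ldots,\mathfrak{M}_{3,4}^{\Psi}$ separately, and to convert the resulting pointwise norms either to quantities appearing in $\cA_{m,t}(\theta)$ or, for the $L_t^2$-based norm, to conormal Sobolev norms entering $\cE_{m,t}$ via the Sobolev embedding \eqref{sobebd}. Throughout I identify $\eta = \chi \p_{\bn}\theta\circ\Psi$ and transfer conormal vector fields between the two coordinate systems using \eqref{norm-equiv-bd}; since $\cA_{m,t}(\theta)\le \cN_{m,t}$, any bound in terms of $\cA_{m,t}(\theta)$ is absorbed into $\lae$.

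For $\mathfrak{M}_{3,1}^{\Psi}$, the factor $\kappa(\Gamma\beta)^{\Psi}$ combined with $\p_z \eta$ or $\Delta_g \eta$ produces $\kappa$ times second (normal or mixed normal-tangential) derivatives of $\theta$, controlled directly by the quantities $\il(\na\theta,\kappa\na^2\theta)\il_{m-5,\infty,t}$ and $\kappa^{1/2}\il \na^2\theta\il_{m-6,\infty,t}$ contained in $\cA_{m,t}(\theta)$. For $\mathfrak{M}_{3,2}^{\Psi}=(\mathfrak{M}_2^i)^{\Psi}$, I would expand the definition \eqref{defm2} term by term: the pieces $\na\mathfrak{M}_1^i\cdot\chi\bn$ and $\tfrac{R\kappa}{C_v\gamma}\na(\Gamma\beta)\Delta\theta\cdot\chi\bn$ reduce, after product-rule expansion, to combinations treatable with \eqref{roughproduct1}--\eqref{roughproduct2} and $L^\infty$ bounds from $\cA_{m,t}$; the remaining convective and commutator pieces such as $u\cdot\na(\chi\bn)\cdot\na\theta$, $\kappa\Gamma\beta[\chi\p_{\bn},\Delta]\theta$, and $\tfrac{\gamma-1}{\gamma}\kappa\bar\Gamma\beta\na\theta\cdot\na(\chi\p_{\bn}\theta)$ yield only lower-order conormal and tangential derivatives of $\theta$ together with derivatives of $(\sigma,u)$, all within $\cA_{m,t}$.

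The essential step is the treatment of $\mathfrak{M}_{3,3}^{\Psi}$ and $\mathfrak{M}_{3,4}^{\Psi}$, where one must exploit the vanishing of the coefficients at $z=0$. For $\mathfrak{M}_{3,3}^{\Psi}$ I would use the Taylor expansions $\tilde{w}_l - \tilde{w}_l^b = z\int_0^1 \p_z\tilde{w}_l(\tau z)\,d\tau$ for $l=1,2$ and $\tilde{w}_3 - \tilde{w}_3^b - z(\p_z\tilde{w}_3)^b = z^2\int_0^1(1-\tau)\p_z^2\tilde{w}_3(\tau z)\,d\tau$; here $\tilde{w}_3|_{z=0}=0$ holds because \eqref{tlu3-bd} applies to $w = u - \tfrac{\gamma-1}{\gamma}\kappa\bar\Gamma\beta\na\theta$ as both $u\cdot\bn$ and $\p_{\bn}\theta$ vanish on $\p\Omega$. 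The $L^\infty$ bounds then involve only $\p_z\tilde{w}$ or $\p_z^2\tilde{w}_3$, which reduce to $\il(u,\na u,\kappa\na\theta,\kappa\na^2\theta)\il$, paired with $\p_y\eta$ or $\p_z\eta$, all supplied by $\cA_{m,t}$. For $\mathfrak{M}_{3,4}^{\Psi}$ the same Taylor decomposition extracts a factor $z$ from $(\Gamma\beta)^{\Psi} - [(\Gamma\beta)^{\Psi}]^b$, and then the algebraic identity $z\p_z^2 = (1+z)\cZ_3\p_z$ (with $\cZ_3 = \tfrac{z}{1+z}\p_z$) rewrites $\kappa z\p_z^2\eta$ as $\kappa(1+z)\cZ_3\p_z\eta$, effectively trading a third normal derivative of $\theta$ for one conormal derivative of $\kappa\p_{\bn}^2\theta$.

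The hard part is precisely this last piece $\mathfrak{M}_{3,4}^{\Psi}$, which formally carries the uncontrolled factor $\kappa\p_{\bn}^3\theta$. After the algebraic reduction above, the $L_t^2 L^\infty_{m-5}$ bound is obtained by applying \eqref{sobebd} to $\kappa\na^2\theta\in L_t^2 H_{co,\sqrt{\kappa}}^{m-1}$ (a component of $\cE_{m,t}(\theta)$), while the two $L_{t,x}^\infty$ bounds at levels $m-6$ and $m-5$ (the latter carrying the weight $\kappa^{1/2}$) follow directly from the corresponding terms in $\cA_{m,t}(\theta)$. Summing the four estimates then delivers \eqref{es-frakm3}.
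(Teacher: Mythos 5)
Your treatment of $\mathfrak{M}_{3,1}^{\Psi},\mathfrak{M}_{3,2}^{\Psi},\mathfrak{M}_{3,3}^{\Psi}$ is sound and matches the paper in spirit, including the use of $\tilde w_3|_{z=0}=0$ and the Taylor expansions of $\tilde w_l-\tilde w_l^b$. The gap is in $\mathfrak{M}_{3,4}^{\Psi}$, and specifically in the $\kappa^{1/2}\il\cdot\il_{m-5,\infty,t}$ piece of \eqref{es-frakm3}.

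With a single-$z$ Taylor expansion you write $\mathfrak{M}_{3,4}^{\Psi}\sim \kappa\,A\cdot B$ with $A=\int_0^1\p_z(\Gamma\beta)^{\Psi}(\tau z)\,\d\tau$ and $B=(1+z)\cZ_3\p_z\eta$. The factor $A$ contains $\p_z\theta^{\Psi}$ (a first normal derivative of $\theta$ with no $\ep$ in front, since $\beta$ is a function of $\theta$ alone); the factor $B$ is one conormal derivative of $\p_z\eta\sim \na^2\theta$. When $m-5$ vector fields are distributed by Leibniz, the unavoidable worst case is all of them hitting $B$, which forces you to bound $\kappa^{3/2}\il \na^2\theta\il_{m-4,\infty,t}$ (after giving the extra $\kappa^{1/2}$ to whichever factor). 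This quantity is \emph{not} controlled: $\cA_{m,t}(\theta)$ only has $\il\kappa\na^2\theta\il_{m-5,\infty,t}$ and $\kappa^{1/2}\il\na^2\theta\il_{m-6,\infty,t}$, and Sobolev embedding \eqref{sobebd} from $\cE_{m,t}(\theta)$ fails because only $L_t^2$, not $L_t^{\infty}$, control of $\na^3\theta$ is available. Your claim that the weighted $m-5$ estimate ``follows directly from the corresponding terms in $\cA_{m,t}(\theta)$'' is therefore false, even though the other two norms in \eqref{es-frakm3} do close with your approach.

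The paper avoids this by exploiting the Neumann condition $\p_{\bn}\theta|_{\p\Omega}=0$, which makes $\p_z\beta^{\Psi}|_{z=0}=\beta'(\theta^{\Psi})\p_z\theta^{\Psi}|_{z=0}=0$. Hence $\beta^{\Psi}-(\beta^{\Psi})^b$ vanishes to \emph{second} order, giving $z^2\cU_2$ with $\cU_2\sim\na^2\theta$, while the $\Gamma^{\Psi}$-contribution, which only vanishes to first order, carries a compensating factor $\ep$ since $\Gamma=\Gamma(\ep\sigma)$. The key consequence is that the derivative operator acting on $\eta$ becomes $z^2\p_z^2$, which is purely conormal of order two on $\eta\sim\na\theta$, instead of your $z\p_z^2\sim\cZ_3\p_z$ acting on $\na^2\theta$. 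The resulting product is $\kappa\,\cU_2\cdot(z^2\p_z^2\eta)$, and the $\kappa^{1/2}\il\cdot\il_{m-5,\infty,t}$ bound closes as $\il\kappa^{1/2}\cU_2\il_{m-6,\infty,t}\cdot\il\kappa\na\theta\il_{m-3,\infty,t}$, both of which are in $\cA_{m,t}(\theta)$. Without the second-order Taylor expansion of the $\beta$-part, the weighted estimate cannot be closed.
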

\begin{proof}
First, in view of the very definitions of ${\mathfrak{M}}^{\Psi}_{3,1}, {\mathfrak{M}}^{\Psi}_{3,2}$ in \eqref{deffrakm3}, it is direct to verify that:
\beq\label{escm3-1,2}
\begin{aligned}
\il{\mathfrak{M}}^{\Psi}_{3,1}, {\mathfrak{M}}^{\Psi}_{3,2}\il_{m-5,\infty,t}
\lesssim \lab & \, \kpa \il\na\theta\il_{m-3,\infty,t}+\il\na\sigma\il_{m-4,\infty,t}\\
&+ \il(\Id, \na)(\sigma, u, \theta), \kpa\na^2\theta, \ep\mu\na^2 u\il_{m-5,\infty,t} \big)\lesssim \lat.
\end{aligned}
\eeq
We now focus on the control of 
${\mathfrak{M}}^{\Psi}_{3,3}, {\mathfrak{M}}^{\Psi}_{3,4}.$ For the term ${\mathfrak{M}}^{\Psi}_{3,3},$ we bound $(\tilde{w}_{l}-\tilde{w}_{l}^b)\p_{y^l}\eta, \, l=1,2,$ as: 
\begin{align*}
    \il (\tilde{w}_{l}-\tilde{w}_{l}^b)\p_{y^l}\eta \il_{m-5,\infty,t}&\lesssim 
    \il z^{-1}(\tilde{w}_{l}-\tilde{w}_{l}^b) \il_{m-5,\infty,t}
    \il z\eta\il_{m-4,\infty,t}\\
    &\lesssim  \il \na w\il_{m-5,\infty,t} \il\theta\il_{m-3,\infty,t}.
\end{align*}
Note that the last inequality results from the fundamental theorem of calculus and the definitions of $\tilde{w}$ in \eqref{deftildew}, 
as well as the fact $\eta=\chi^{\Psi}\p_z \theta^{\Psi}.$ Since by the definition of
${w}$ in \eqref{def-w}, 
\begin{align*}
\il \na w\il_{m-5,\infty,t}\lesssim \lab \il (\na u, \theta, \na\theta, \kpa\Delta\theta) \il_{m-5,\infty,t}\big),
\end{align*}
we find that:
\begin{align}\label{mathfrakm33-1}
     \sum_{l=1}^2\il (\tilde{w}_{l}-\tilde{w}_{l}^b)\p_{y^l}\eta \il_{m-5,\infty,t}\lesssim \lat.
\end{align}
Next, let us control the second term in 
${\mathfrak{M}}^{\Psi}_{3,3}$ in the following way:
\begin{align*}
   \il \big(\tilde{w}_{3}-\tilde{w}_{3}^b-z  (\p_z\tilde{w}_{3})^b\big)\p_z \eta_i\il_{m-5,\infty,t}&\lesssim \il z^{-2}(\tilde{w}_{3}-\tilde{w}_{3}^b-z  (\p_z\tilde{w}_{3})^b)\il_{0,\infty,t}\il z^2\p_z\eta\il_{m-5,\infty,t}\\
 &+\il z^{-1}(\tilde{w}_{3}-\tilde{w}_{3}^b-z  (\p_z\tilde{w}_{3})^b)\il_{m-5,\infty,t}\il z\p_z\eta\il_{m-6,\infty,t}  \\
 &\lesssim \il\p_z^2 \tilde{w}_{3}\il_{0,\infty,t} 
 \il\theta\il_{m-3,\infty,t}+\il \p_z \tilde{w}_3\il_{m-5,\infty,t}\il\na\theta\il_{m-5,\infty,t}.
\end{align*}
By the virtue of the definition \eqref{deftildew}
and the change of variable \eqref{change-variable},  one can write that:
\beqs 
\div \tilde{w}=\div ((D\Psi)^{-\star}(\chi w)^{\Psi})
=(\div(\chi w))^{\Psi}-\f{\na d}{d}(D\Psi)^{-\star}(\chi w)^{\Psi}, 
\eeqs
where $d=\det(D\Psi).$ Therefore, 
using the identity $\p_z^2\tilde{w}_3=\p_z \div \tilde{w}- \sum_{l=1}^2 \p_z\p_{y^l}\tilde{w}_l, $
we derive that:
\begin{align*}
    \il\p_z^2 \tilde{w}_3\il_{0,\infty,t}\lesssim 
    \il \na w\il_{1,\infty,t}+\il\na\div w\il_{0,\infty,t}.
\end{align*}
In light of the equations $\eqref{NCNS-S2}_1, \eqref{NCNS-S2}_3,$ we can write:
\begin{align*}
    \div w&=\div \big(u-\f{\gamma-1}{\gamma}\overline{\Gamma}\kpa\beta\na\theta\big)\\
    &=-\f{\ep}{\gamma}(\pt+u\cdot\na)\sigma+\f{\gamma-1}{\gamma}\ep \mathfrak{N}+\f{\gamma-1}{\gamma}\f{\Gamma-\overline{\Gamma}}{\ep\Gamma}\big[ (\ep\pt+\ep u\cdot\na)\big(\f{C_v\gamma}{R}\theta-\ep\sigma\big)-\ep^2 \mathfrak{N}\big]
\end{align*}
which leads to that:
\begin{align*}
    \il\na\div w\il_{0,\infty,t}\lesssim \lab \il(\sigma,u,\theta)\il_{2,\infty,t}+ \il\na(\sigma, u, \theta, \ep\mu\na^2 u)\il_{1,\infty,t} \big)\lesssim \lat. 
\end{align*}
Consequently, we find $\il \p_z^2 \tilde{w}_3\il_{0,\infty,t}\lesssim \lat$ and thus 
\begin{align*}
    \il \big(\tilde{w}_{3}-\tilde{w}_{3}^b-z  (\p_z\tilde{w}_{3})^b\big)\p_z \eta\il_{m-5,\infty,t}\lesssim \lat
\end{align*}
which, together with \eqref{mathfrakm33-1}, finishes the control of $\mathfrak{M}_{3,3}^{\Psi}:$ 
\beq\label{escm3-3}
\il\mathfrak{M}_{3,3}^{\Psi}\il_{m-3,\infty,t}\lesssim \lat.
\eeq

Finally, let us detail the estimate of  $\mathfrak{M}_{3,4}^{\Psi}$ defined in \eqref{deffrakm3}.  By the Taylor expansion and the Neumann boundary condition: $\p_z\theta^{\Psi}|_{z=0}= (\p_{\bn}\theta|_{\p\Omega})^{\Psi}=0,$ one can write:
\begin{align*}
     ({\Gamma\beta})^{\Psi}- [ ({\Gamma\beta})^{\Psi}]^b =\ep(\beta^{\Psi})^b z\cU_1+\Gamma^{\Psi} z^2\cU_2,
\end{align*}
where 
\begin{align*}
    \cU_1=\int_0^1 ((\Gamma')^{\Psi}\p_z\sigma^{\Psi})(t,y,\tau z)\d \tau,\qquad \cU_2=\int_0^1 \p_z^2(\beta)^{\Psi}(t,y, \tau z)(1-\tau)\, \d \tau.
\end{align*}
It can be verified directly that:
\begin{align}
& \il\cU_1\il_{m-5,\infty,t}\lesssim \lab \il\na\sigma\il_{m-5,\infty,t}\big) \lesssim \lat, \notag\\
& \kpa \il\cU_2\il_{m-5,\infty,t}+\kpa^{\f{1}{2}}\il\cU_2\il_{m-6,\infty,t} \notag\\
&\lesssim \lab \kpa \il(\Id,\na,\na^2)\theta\il_{m-5,\infty,t}+\kpa^{\f{1}{2}}\il(\Id,\na,\na^2)\theta\il_{m-6,\infty,t}\big)\lesssim \lat.\label{escU2}
\end{align}
We thus can bound $(\beta^{\Psi})^b \cU_1 (\ep\kpa z)\p_z^2\eta$ as: 
\begin{align}\label{escm341}
    \il(\beta^{\Psi})^b \cU_1 (\ep\kpa z)\p_z^2\eta \il_{m-5,\infty,t}&\lesssim \lab\il(\sigma,\na\sigma)\il_{m-5,\infty,t}+\ep\kpa \il\na^2\theta\il_{m-4,\infty,t} \big) \lesssim \lat.
\end{align}
Note that by the equation \eqref{sec4:eq0}, it holds that: 
\begin{align*}
    \ep\kpa \il\na^2\theta\il_{m-4,\infty,t}\lesssim \lab \il(\ep\sigma, u, \theta)\il_{m-3,\infty,t}+  \il(\ep\kpa\na\theta,\ep\mu\na u)\il_{m-4,\infty,t} \big)\lesssim \lat.
\end{align*}
For the term $\kpa \Gamma^{\Psi} \cU_2 z^2\p_z^2\eta,$  
it is not easy to control it in $\il\cdot\il_{m-5,\infty,t},$ we thus need to control it in the three norms appearing in \eqref{es-frakm3}, which, thanks to \eqref{escU2}, can be done in the following way:
\begin{align}
\il \kpa \Gamma^{\Psi} \cU_2 z^2\p_z^2\eta \il_{m-6,\infty,t}\lesssim 
(1+\il \sigma \il_{m-6,\infty,t})\il\kpa^{\f{1}{2}}\na\theta\il_{m-4,\infty,t}\il\kpa^{\f{1}{2}}\cU_2\il_{m-6,\infty,t} 
\lesssim \lat.
\end{align}
Next, the other two norms can be dealt with 
\beq\label{es-cm34-2}
\begin{aligned}
 &\big(\int_0^t \| \kpa \Gamma^{\Psi} \cU_2 z^2\p_z^2\eta (s)\|_{m-5,\infty}^2\d s\big)^{\f{1}{2}} + \kpa^{\f{1}{2}}\il \kpa \Gamma^{\Psi} \cU_2 z^2\p_z^2\eta \il_{m-5,\infty,t}\\
& \lesssim 
\big(1+\il \sigma \il_{m-5,\infty,t}\big)\bigg(\il \na\theta\il_{2,\infty,t} \il\kpa 
\cU_2\il_{m-5,\infty,t}\\
&\qquad\qquad\qquad+  \il\kpa^{\f{1}{2}}
\cU_2\il_{m-6,\infty,t}\big(\il\kpa\na\theta\il_{m-3,\infty,t}+ \big(\int_0^t \| \kpa^{\f{1}{2}}\p_z\theta (s)\|_{m-3,\infty}^2\d s\big)^{\f{1}{2}}\big)\bigg)\\
&\lesssim \lae.
\end{aligned}
\eeq
We remark that again from Sobolev embedding, 
\begin{align*}
  \big(\int_0^t \| \kpa^{\f{1}{2}}\p_z\theta (s)\|_{m-3,\infty}^2\d s\big)^{\f{1}{2}}\lesssim \kpa^{\f{1}{2}}  \big(\|\na^2\theta\|_{\hco^{m-2}}+\|\na\theta\|_{\hco^{m-1}}\big)\lesssim \cE_{m,t}.
\end{align*}
Collecting \eqref{escm341}-\eqref{es-cm34-2}, we obtain that: 
\begin{align*}
     \big( \izt \il {\mathfrak{M}}^{\Psi}_{3,4} \il_{m-5,\infty}^2\d s\big)^{\f{1}{2}}+ \il{\mathfrak{M}}^{\Psi}_{3,4}\il_{m-6,\infty,t}+ \kpa^{\f{1}{2}}\il{\mathfrak{M}}^{\Psi}_{3,4}\il_{m-5,\infty,t}\lesssim \lae,
\end{align*}
which, together with \eqref{escm3-1,2}, \eqref{escm3-3}, 
yields \eqref{es-frakm3}.
\end{proof}
\begin{rmk}
It follows from the property \eqref{infty4} that, for any $2\leq q <+\infty,$ 
\begin{align*}
\big( \int_0^t \|\kpa^{\f{1}{2}}\p_z \eta\|_{m-5,\infty}^q\d s\big)^{\f{1}{q}}\lesssim  
T^{\f{1}{q}}\Lambda_1^{m-5}\Lambda_2^{m-5}
 \bigg(\|\kpa^{\f{1}{2}} \p_{\bn}^2\theta(0)\|_{m-5,\infty}+\big(\izt \|(\eta, {\mathfrak{M}}^{\Psi}_3) (s)\|_{m-5,\infty}^2\d s\big)^{\f{1}{2}} \bigg).
 \end{align*}
 Therefore, as long as $\|\kpa^{\f{1}{2}} \p_{\bn}^2\theta(0)\|_{m-5,\infty}$ being bounded, 
 we have by \eqref{es-Lambda12}, \eqref{es-frakm3} that:
 \begin{align}\label{na2theta-Lp}
\big( \int_0^t \|\kpa^{\f{1}{2}}\na^2\theta\|_{m-5,\infty}^q\d s\big)^{\f{1}{q}}\lesssim T^{\f{1}{q}}\big( Y_m(0)+\lae\big).
 \end{align}
\end{rmk}

\section{Uniform estimates for the compressible part-II}
\begin{prop}
Under the assumption of \eqref{preasption}, there exists $\vartheta_8>0,$ such that  for any $\ep\in(0,1], (\mu,\kpa)\in A,$
 any $0<t\leq T,$ the following estimate holds:
\begin{align}\label{com-uniLinfty}
\|(\na\sigma,\div u)\|_{\infco^{m-2}}\lesssim \lab Y_m(0)\big)+
(T+\ep)^{{\vartheta_8}}\lae. 
\end{align}
\end{prop}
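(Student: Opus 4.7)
The plan is to mimic the structure of Proposition~\ref{lem-com1} (which established the analogous $L_t^2$ bound \eqref{com-uniL2}), but now working in $L_t^{\infty}\cH^{j,l}$ norms and using a two-tier iteration as advertised in Step 9 of the outline. The backbone is the same pair of identities: from $\eqref{NCNS-S2}_1$ we get
\beq\label{planA}
\|\div u\|_{L_t^{\infty}\cH^{j,l}} \lesssim \|\na\sigma\|_{L_t^{\infty}\cH^{j+1,l-1}} + \kpa\|\Delta\theta\|_{L_t^{\infty}H_{co}^{m-3}} + (T+\ep)^{\f12}\lae,
\eeq
by substituting $\div u = -\f{1}{\gamma}(\ep\pt+\ep u\cdot\na)\sigma + \f{\gamma-1}{\gamma}\kpa\Gamma\div(\beta\na\theta) + \f{\gamma-1}{\gamma}\mathfrak{N}$ and counting derivatives. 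On the other hand, applying the elliptic estimate of Proposition~\ref{prop-elliptic-type} (as used around \eqref{eq-tiltasigma-1}) to $\tilde\sigma$ yields
\beq\label{planB}
\|\na\tsigma\|_{L_t^{\infty}\cH^{j,l}} \lesssim \|\div u\|_{L_t^{\infty}\cH^{j+1,l-1}} + \text{l.o.t.},
\eeq
where the boundary data $\p_{\bn}\tilde\sigma$ is handled exactly as in \eqref{Es-tsigma3} and the bulk source $f+\ep\pt\bp(r_0^{-1}u) + \mu\ep\lambda_1\bar\Gamma\curl\curl u$ is controlled using \eqref{nauL2-uniform}, \eqref{EE-v}, and the $L_{t,x}^{\infty}$ bounds already in $\cA_{m,t}$. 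Combined with the relation $\na\sigma = \na\tsigma + \ep\mu(2\lambda_1+\lambda_2)\na(\Gamma\div u)$, estimates \eqref{planA}--\eqref{planB} give a one-step induction that trades a time derivative for a spatial conormal derivative with a gain of one index.

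The two-tier feature enters because the product/commutator estimates hidden in the ``l.o.t.'' of \eqref{planA}--\eqref{planB} require, at the highest regularity $m-2$, an $L_{t,x}^{\infty}$ bound on $\il u\il_{[m/2]-1,\infty,t}$; by Sobolev embedding \eqref{sobebd} this in turn demands control of $\|\div u\|_{L_t^{\infty}H_{co}^{[m/2]}}$, which is precisely what we are trying to prove. I will therefore first run the induction only up to regularity $m-4$: at that level the commutators only call for $L_{t,x}^{\infty}$ norms of $\sigma,u,\theta,\na\theta$ and of $\kpa\Delta\theta,\ep\mu\na^2 u$, all of which are already controlled by $\cA_{m,t}(\theta)$ (thanks to \eqref{nathetainfty}) and $\cA_{m,t}(\sigma,u)$ and the already-proved $L^2$-type estimates \eqref{com-uniL2}, \eqref{na2uhigh}. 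Starting from the time-derivative base case $\|(\sigma,w,\kpa\na\theta)\|_{L_t^{\infty}\cH^{m-1,0}}$ proved in \eqref{EE-3} and iterating on $l$ via \eqref{planA}--\eqref{planB}, this yields
\[
\|(\na\sigma,\div u)\|_{L_t^{\infty}H_{co}^{m-4}}^2 \lesssim \lab Y_m(0)\big) + (T+\ep)^{\vartheta'}\lae
\]
for some $\vartheta'>0$.

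With the first tier in hand, Sobolev embedding gives $\il u,\na u\il_{[m/2]-1,\infty,t}\lesssim \lab\cN_{m,t}\big)$, as well as improved $L^\infty$ bounds on $\na\sigma$ via \eqref{na2sigmauni-0}, which is exactly what is missing to close the commutators at the higher level. I then rerun the induction \eqref{planA}--\eqref{planB}, now up to the full target regularity $m-2$, using these improved $L_{t,x}^{\infty}$ inputs to bound the product and commutator terms; the smallness in $(T+\ep)^{\vartheta_8}$ comes, as in earlier sections, from Cauchy--Schwarz in time against the a priori bounded quantities in $\cE_{m,t}$ and from the factor $\ep$ appearing in $\ep\mu\na^2 u$, $\ep\na\tsigma$, etc. Picking $\vartheta_8\leq\min(\vartheta',\tilde{\vartheta}_0,\vartheta_6,1/2)$ sufficiently small absorbs all such factors.

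The main technical obstacle is the bootstrap closure: one must verify that the iteration in the second tier genuinely closes, i.e.\ that every commutator produced when applying $Z^I$ with $|I|\leq m-2$ to \eqref{planA}--\eqref{planB} is either (i) controlled by previously-bounded quantities in $\cE_{m,t}$ with a factor of $(T+\ep)^{\vartheta_8}$, or (ii) expressible in terms of $\|(\na\sigma,\div u)\|_{L_t^{\infty}H_{co}^{m-4}}$ times an $L_{t,x}^{\infty}$ norm supplied by $\cA_{m,t}$. The delicate cases are the terms involving $\ep\mu\na^2 u$ and $\kpa\na(\Gamma\div(\beta\na\theta))$, for which one uses \eqref{secnoru-LinftyL2} and the $L_{t,x}^{\infty}$ bound \eqref{divbetanatta} respectively; and the boundary term in the elliptic estimate for $\tilde\sigma$, which as in \eqref{Es-tsigma3} is handled by the algebraic identity $\p_{\bn}\tilde\sigma = f\cdot\bn$ together with \eqref{curlcurlu-infty}. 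Once each such term is individually verified to fit this pattern, \eqref{com-uniLinfty} follows.
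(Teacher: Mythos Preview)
Your two-tier induction is the same strategy as the paper's. The main structural difference is in which elliptic problem recovers $\na\sigma$: you propose to reuse the $\tsigma$ equation from Section~6 (your ``$r_0^{-1}u$'' in the source should be $\tfrac{1}{R\beta}u$ to match \eqref{eq-tiltasigma-1}), whereas the paper writes a fresh elliptic equation \eqref{sigma-elliptic} for $\sigma$ with source $-\ep\pt(r_0 u)$ and controls $\|r_0 u\|_{L_t^\infty\cH^{j+1,l}}$ by splitting $r_0 u=v+\bq(r_0 u)$, with $v=\bp(r_0 u)$ already bounded in $L_t^\infty H_{co}^{m-1}$ by \eqref{EE-v}. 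The paper's route feeds directly on $v$; yours would need an extra step relating $\bq(\tfrac{1}{R\beta}u)$ back to the controlled quantities, but both encounter the same obstruction---the product $\na\theta\cdot u$ (or $\na r_0\cdot u$) at regularity $m-2$ calls for $\il u\il_{[m/2]-1,\infty,t}$---and hence the same first tier at level $m-4$.

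One point needs sharpening. You write that after the first tier ``Sobolev embedding gives $\il (u,\na u)\il_{[m/2]-1,\infty,t}\lesssim\lab\cN_{m,t}\big)$''. That bound is trivially true (these quantities already sit in $\cA_{m,t}$) and, if used as stated, would make the second-tier coefficient $\lab\cN_{m,t}\big)$ multiply the already-controlled factors, producing a leading term of the form $\lae$ rather than the required $\lab Y_m(0)\big)$. What the first tier actually delivers---and what the paper exploits through the intermediate estimate \eqref{ruinfty}---is the \emph{sharper} bound $\il r_0 u\il_{m-5,\infty,t}\lesssim\lab Y_m(0)\big)+(T+\ep)^{\vartheta}\lae$; only with this form does the product of such a coefficient with another quantity of the same form remain of that form after absorbing powers of $(T+\ep)$.
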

\begin{proof}
The estimate can be obtained again by using the equations $\eqref{NCNS-S2}$ and the induction arguments. 
Nevertheless, as so far only the control of $\bp (r_0u)$ (rather than $\bp u$) is shown, in the process of induction, we need to deal with some
product terms like $\na r_0\cdot\ep\pt u$ in the $L_t^{\infty}H_{co}^{m-3}$ norm. This requires the control of $\il u\il_{[\f{m}{2}]-1,\infty,t},$ which have not been controlled. 
Therefore,  we need to do induction in the following two steps:

$\bullet$ Control $(\na\sigma,\div u)$ in a lower order regularity space $\infco^{m-4},$ 

$\bullet$ Control $(\na\sigma,\div u)$ in higher order regularity space $\infco^{m-2}.$\\[3pt]
We remark that after the first step, we can bound $\il u\il_{m-5,\infty,t} \gtrsim \il u\il_{[\f{m}{2}]-1,\infty,t},$
which enables us to proceed the second step.

As a preparation, before going into the detail for the first step, we show two estimates concerning the higher order time derivatives for $(\na\sigma,\div u)$ 
and the higher order conormal norms for $\div u.$
Namely,  for any $k\leq m-2,$
\beq\label{sec5:eq13}
\begin{aligned}
\|(\na\sigma,\div u)\|_{\infcoch^{k,0}}&\lesssim \|(\sigma,u)\|_{\infcoch^{k+1,0}}
+\ep \lae\\
&\quad+ \|\big(\mu\ep\pt\na(\theta,\ep\sigma),  \kpa\div(\beta\na\theta)\big)\|_{\infco^{k}},
\end{aligned}
\eeq
and for any $j+l\leq m-2, l\geq 1,$
\begin{align}\label{sec5:eq20}
 \|\div u\|_{\infcoch^{j,l}}\lesssim \|\sigma\|_{\infcoch^{j+1}}+\|\na\sigma\|_{\infcoch^{j+1,l-1}} +\|\kpa\div(\beta\na\theta)\|_{\infco^{j+l}}+\ep\lat.
\end{align}

We start with the proof of \eqref{sec5:eq13}. 
By using the equation \eqref{rew-sigma}, one can obtain that:
\begin{align}\label{sec5:eq17}
 \|\div u\|_{\infcoch^{k,0}}&\lesssim \|\sigma\|_{\infcoch^{k+1,0}}+\|\kpa\overline{\Gamma}\div(\beta\na\theta)\|_{\infco^{k}}
 + \ep \|\big(u\cdot\na\sigma, \mathfrak{N},
 \ep^{-1}(\Gamma-\overline{\Gamma})\kpa\div(\beta\na\theta)\big)\|_{\infco^{k}}.\notag
\end{align}
By the product estimate \eqref{roughproduct1}, we find that the last term in the above can be bounded by $\ep\lat\cE_{m,t},$ which, in turn together with the above estimate, yields \eqref{sec5:eq13} for $\div u.$

Let us now control $\na\sigma$. We rewrite the equation \eqref{eq-tiltasigma} satisfied by $\tilde{\sigma}=\sigma-
\ep\mu(2\lambda_1+\lambda_2){\Gamma}\div u$ as:
\beq\label{eq-tiltasigma-2}
\left\{
\begin{array}{l}
      \Delta\tilde{\sigma}=\div\big( f+\mu\ep\lambda_1\overline{\Gamma}\curl\curl u\big) \text{ in } \Omega,\\[4pt]
      \p_{\bn} \tilde{\sigma}=f\cdot\bn \text{ on } \p\Omega,
\end{array}
   \right.
\eeq 
where  $$f=-\f{1}{R\beta}(\ep\pt+\ep u\cdot\na)u-\ep\mu\lambda_1\Gamma\curl\curl u-\ep\mu(2\lambda_1+\lambda_2)(\div u)\na\Gamma.$$
By using the elliptic estimate 
\eqref{highconormal},  we find that:
\begin{align}\label{sec5:eq15}
    \|\na\tilde{\sigma}\|_{\infcoch^{k,0}}&\lesssim \|f+\mu\ep\lambda_1\overline{\Gamma}\curl\curl u\|_{\infcoch^{k,0}}+\mu\ep\sum_{j\leq k}|(\ep\pt)^j\curl\curl u\cdot\bn|_{H^{-\f{1}{2}}(\p\Omega)}. 
\end{align}
On the one hand, it can be verified that:
\begin{align*}
\|f+\mu\ep\lambda_1\overline{\Gamma}\curl\curl u+\f{1}{R\beta}\ep\pt u\|_{\infcoch^{m-2,0}}\lesssim \ep\lae.
\end{align*}
On the other hand, for any $1\leq  k\leq m-2,$
\begin{align*}
\big\|\big[(\ep\pt)^k, \f{1}{R\beta}\big]\ep\pt u\big\|_{L_t^{\infty}L^2}&\lesssim \ep \big(\|\pt(\f{1}{R\beta})\|_{\infcoch^{k-1,0}}+\il u\il_{\infcoch^{k,0}}\big)\lat\\
&\lesssim \ep\lae.
\end{align*}
Consequently, we obtain, for any $k\leq m-2,$
\begin{align*}
\|f+\mu\ep\lambda_1\overline{\Gamma}\curl\curl u\|_{\infcoch^{k,0}}   &\lesssim \|u\|_{\infcoch^{k+1,0}}+ \ep\lae.
\end{align*}
Inserting this inequality and \eqref{curlcurlu-infty} into \eqref{sec5:eq15}, one finds that:
\beqs
 \|\na\tilde{\sigma}\|_{\infcoch^{k,0}}\lesssim\|u\|_{\infcoch^{k+1,0}}+\mu\ep\|\na\div u\|_{\infco^{k}}+\ep\lae.
\eeqs
Since $\sigma=\tilde{\sigma}+\ep\mu(2\lambda_1+\lambda_2){\Gamma}\div u, $ we derive further that 
\beqs
\|\na{\sigma}\|_{\infcoch^{k,0}}\lesssim\|u\|_{\infcoch^{k+1,0}}+\mu\ep\|\na\div u\|_{\infco^{k}}+\ep\lae.
\eeqs
By the equation $\eqref{newsys}_1$ for 
$\vr=\f{\ep}{\gamma}\sigma-\f{(\gamma-1)C_v}{\gamma R}\theta,$ we can bound 
$\mu\ep\na\div u$ in the following way:
\begin{align}\label{nadiv-infty}
    \mu\ep\|\na\div u\|_{\infco^k}&\lesssim \mu \|\ep\pt\na(\theta,\ep\sigma)\|_{\infco^k}+\ep\lae,
\end{align}
which, together with the previous estimate, leads to that for any $0\leq k\leq m-2,$
\beq\label{sec5:eq16}
\|\na{\sigma}\|_{\infcoch^{k,0}}\lesssim\|u\|_{\infcoch^{k+1,0}}+\mu\|\ep\pt\na (\theta, \ep\sigma)\|_{\infco^{k}}+\ep\lae.
\eeq
We thus finish the proof of \eqref{sec5:eq13} for $\na\sigma.$ 

As for the proof of \eqref{sec5:eq20}, we can use again the equation \eqref{rew-sigma} to derive that for any $j+l\leq m-2, l\geq 1$
\begin{align*}
 \|\div u\|_{\infcoch^{j,l}}&\lesssim \|\sigma\|_{\infcoch^{j+1,l}}+\|\kpa\overline{\Gamma}\div(\beta\na\theta)\|_{\infco^{j+l}}+\ep\lae,
\end{align*}
which, combined with the fact 
$\|\sigma\|_{\infcoch^{j+1,l}}\lesssim\|\sigma\|_{\infcoch^{j+1}}+\|\na\sigma\|_{\infcoch^{j+1,l-1}}, $
yields \eqref{sec5:eq20}.

$\bullet$\underline{ Control of $\|(\na\sigma,\div u)\|_{\infco^{m-4}}$.}
We will show that: there exists $\vartheta_9>0,$ such that: 
\beq\label{nasidivu-low}
\|(\na\sigma,\div u)\|_{\infco^{m-4}}\lesssim \lab Y_m(0)\big) + (T+\ep)^{\vartheta_9}\lae.
\eeq
By noticing the estimates \eqref{EE-highest}, \eqref{EE-3}, \eqref{EE-theta-final},  \eqref{divbetanatta}, \eqref{EE-v}, \eqref{thetainfty}, \eqref{nathetainfty},  we find that the above property is the consequence of the following estimate:
\begin{align*}
\|(\na\sigma,\div u)\|_{\infco^{m-4}}
&\lesssim\ep\lae+\Lambda\big(\f{1}{c_0}, \il \theta \il_{m-3,\infty,t}+\il\na\theta\il_{m-5,\infty,t} \big)\cdot\\
&\big(\|(\sigma, u)\|_{\infcoch^{m-3,0}}
+\|v\|_{\infco^{m-3}}+\|\big(\mu\ep\pt\na\theta, \kpa\div(\beta\na\theta)\big)\|_{\infco^{m-4}}\big),
\end{align*}
whose proof relies further on the following estimate:
for any $j+l\leq m-4, l\geq 1,$ 
\begin{align}\label{sec5:eq21}
\|(\na\sigma,\div u)\|_{\infcoch^{j,l}}
&\lesssim\big(\|(\sigma, u)\|_{\infcoch^{m-3,0}}+\|(\na\sigma,\div u)\|_{\infcoch^{j+1,l-1}}
+\|v\|_{\infco^{m-3}}\big)\notag\\
&\qquad\qquad\cdot\Lambda\big(\f{1}{c_0}, \il \theta \il_{m-3,\infty,t}+\il\na\theta\il_{m-5,\infty,t} \big)\\
&\quad+\|\big(\mu\ep\pt\na\theta, \kpa\div(\beta\na\theta)\big)\|_{\infco^{m-4}}+\ep\lae.\notag
\end{align}
In view of \eqref{sec5:eq13}, \eqref{sec5:eq20}, 
 to prove \eqref{sec5:eq21}, it suffices to show the following estimate for $\|\na\sigma\|_{\infcoch^{j,l}}:$ 
\begin{align}\label{sec5:eq22}
   & \|\na\sigma\|_{\infcoch^{j,l}}\lesssim \mu\|\ep\pt\na\theta\|_{\infco^{m-4}}+\ep\lae\notag\\
  &\quad  +\big(\|u\|_{\infcoch^{j+1,0}}+\|v\|_{\infco^{m-3}} +\|\div u\|_{\infcoch^{j+1,l-1}}\big)\Lambda\big(\f{1}{c_0}, \il \theta \il_{m-3,\infty,t}+\il\na\theta\il_{m-5,\infty,t} \big),
\end{align}
which is the task of what follows. 

 Let us first notice that by $\eqref{NCNS-S2}_2$, $\sigma$ solves the following elliptic problem:
\beq\label{sigma-elliptic}
\left\{
\begin{array}{l}
      \Delta{\sigma}=\div\big( -\ep\pt(r_0 u)+\mu\ep(2\lambda_1+\lambda_2)\na\div u+\ep d\big) \text{ in } \Omega,\\[4pt]
      \p_{\bn}{\sigma}=(-\ep\pt(r_0 u)+\mu\ep(2\lambda_1+\lambda_2)\na\div u+\ep d)\cdot\bn-\mu\ep\lambda\overline{\Gamma}\curl\curl u\cdot\bn \text{ on } \p\Omega,
\end{array}
   \right.
\eeq 
where
$$r_0=\f{1}{R\beta}\exp (\ep R\sigma/C_v\gamma)
,\quad r_1= \ep^{-1}(\exp (\ep R\sigma/C_v\gamma)-1),$$
$$d=\colon u\pt r_0-r_0 u\cdot\na u+r_1(-\na\sigma+\ep\mu\div\cL u)+\f{\Gamma-\overline{\Gamma}}{\ep}\ep\mu \div \cL u.  $$
By using Proposition \ref{prop-prdcom}, Corollary \ref{cor-gb} as well as the definition of $\cN_{m,t}$ (see \eqref{defcalNmt}), one can verify that, 
\beqs
\|d\|_{\infco^{m-2}}\lesssim \lae.
\eeqs
We thus get from the elliptic estimate \eqref{highconormal} that:
for any $j,l,$ with $j+l\leq 4, l\geq 1,$
\begin{align*}
    \|\na\sigma\|_{\infcoch^{j,l}}&\lesssim \|\ep\pt(r_0 u)\|_{\infcoch^{j,l}}
    + \mu\ep\|\na\div u\|_{\infco^{m-4}}\notag\\
    &\qquad+ \mu\ep |\curl\curl u\cdot\bn|_{L_t^{\infty}\tilde{H}^{m-\f{9}{2}}}+\ep\lae,
\end{align*}
which, together with the estimates \eqref{curlcurlu-infty} and \eqref{nadiv-infty}, 
yields that:
\begin{align}\label{sec5:eq39}
   \|\na\sigma\|_{\infcoch^{j,l}}&\lesssim \|r_0 u\|_{\infcoch^{j+1,l}}
    +\mu\|\ep\pt\na\theta\|_{\infco^{m-4}}+\ep\lae.
\end{align}
The proof of \eqref{sec5:eq22} is thus complete if we show that, for $j+l\leq m-4,$
\begin{align}\label{sec5:eq23}
\|r_0 u\|_{\infcoch^{j+1,l}}&\lesssim \big(\|u\|_{\infcoch^{j+1,0}}+\|v\|_{\infco^{m-3}} +\|\div u\|_{\infcoch^{j+1,l-1}}\big)\notag\\
&\qquad\cdot\Lambda\big(\f{1}{c_0}, \il \theta \il_{m-3,\infty,t}+\il\na\theta\il_{m-5,\infty,t} \big),
\end{align}
which can be done in the following way. 
It follows from the elliptic estimate \eqref{secderelliptic-1}, the definition $v=\mathbb{P}(r_0 u)$ and the condition $r_0u\cdot\bn|_{\p\Omega}=0$ that:
\beq\label{sec5:eq24}
\begin{aligned}
    \|r_0 u\|_{\infcoch^{j+1,l}}&\lesssim \|v\|_{\infco^{m-3}}+\|\div(r_0 u)\|_{\infcoch^{j+1,l-1}}\\
    &\lesssim \|v\|_{\infco^{m-3}}+\|\na r_0\cdot u\|_{\infcoch^{j+1,l-1}}+ \ep \lae\\
    &\quad +\lab \il\theta\il_{m-3,\infty,t}\big)\|\div u\|_{\infcoch^{j+1,l-1}}.
\end{aligned}
\eeq
Noticing the fact
$$\|\pt(\na r_0/r_0)\cdot r_0 u\|_{\infco^{m-4}}\lesssim \lae,$$ we can bound the last term in the last line as:
\begin{align*}
    \|\na r_0\cdot u\|_{\infcoch^{j+1,l-1}}\lesssim \lab \|\na\theta\|_{m-5,\infty,t} \big) \|r_0 u\|_{\infcoch^{j+1,l-1}}+\ep\lae.
\end{align*}
Inserting the above estimate into \eqref{sec5:eq24}, we find that 
\begin{align*}
    \|r_0 u\|_{\infcoch^{j+1,l}}&\lesssim  \lab \|\na\theta\|_{m-5,\infty,t} \big) \|r_0 u\|_{\infcoch^{j+1,l-1}}\\
    &+\|v\|_{\infco^{m-3}}+\ep\lae+ \lab \il\theta\il_{m-3,\infty,t}\big) \|\div u\|_{\infcoch^{j+1,l-1}}.
\end{align*}
Therefore,  we eventually find \eqref{sec5:eq23} by induction on $l.$

$\bullet$\underline{ Control of $\|(\na\sigma,\div u)\|_{\infco^{m-2}}$.}

In the previous step, we have controlled $\|(\div u,\na\sigma)\|_{\infco^{m-4}}.$ In this step, we recover the spatial conormal derivatives to the higher order $m-2.$ Before going into the detail, let us first note that 
$\il r_0 u\il_{m-5,\infty,t}$ has essentially  been bounded by the quantities that we have controlled. 
Indeed, by using the identity \eqref{normalofnormalder} and the fact that near the boundaries $$u^j\cdot\na =u_1^j\p_{y^1}+ u_2^j\p_{y^2}+ \f{u^j\cdot\bn}{\phi} \phi\p_z, \text{ where} \,\, u^j=(\ep\pt)^j u,$$
 we find that: for any $0\leq j\leq m-4, 0\leq k\leq m-4-j,$
\begin{align*}
    \|\div (r_0 u^j)\|_{L_t^{\infty}\cH^{0,k}}&\lesssim \lab \il\theta\il_{m-4,\infty,t}\big)\|\div u^j\|_{L_t^{\infty}\cH^{0,k}}+\|u^j\cdot\na r_0\|_{L_t^{\infty}\cH^{0,k}}
    +\ep\lae\notag\\
    &\lesssim  \lab\il\theta\il_{m-3,\infty,t}+ \|\na\theta\|_{m-5,\infty,t} \big)(\|\div u\|_{\infco^{m-4}}+\|r_0 u^j\|_{L_t^{\infty}\cH^{0,k}})+\ep\lae\notag\\
    &\lesssim \ep \lae+\lab\il\theta\il_{m-3,\infty,t}+ \|\na\theta\|_{m-5,\infty,t}\big) \notag \\
   & \quad \cdot \big(\|\div u\|_{\infco^{m-4}}+\|\div (r_0 u^j)\|_{L_t^{\infty}\cH^{0,k-1}}\mathbb{I}_{\{k\geq 1\}}+\|v^j\|_{L_t^{\infty}\cH^{0,m-4-j}} +\|u\|_{L_t^{\infty}\cH^{m-4,0}}\big),
\end{align*}
where we have used the notation $v^j=\bp(r_0 u^j).$
Therefore, one obtains by induction on $0\leq k\leq m-4-j$ that: for any $0\leq j\leq m-4,$
\begin{align*}
     \|\div (r_0 u^j)\|_{L_t^{\infty}\cH^{0,m-4-j}}&\lesssim \ep \lae+\lab\il\theta\il_{m-3,\infty,t}+ \|\na\theta\|_{m-5,\infty,t}\big) \notag\\
    &\qquad \qquad \qquad\cdot \big(\|\div u\|_{\infco^{m-4}}+\|v^j\|_{L_t^{\infty}\cH^{0,m-4-j}} +\|u\|_{L_t^{\infty}\cH^{m-4,0}}\big).
\end{align*}
Moreover, by again writing $r_0 u^j=\bq (r_0 u^j)+v^j,$ we have:
\begin{align*}
    \|r_0 u^j\|_{L_t^{\infty}\cH^{0,m-3-j}}&\lesssim \|r_0 u^j\|_{L_t^{\infty}\cH^{m-3-j,0}}+ \|\div (r_0 u^j)\|_{L_t^{\infty}\cH^{0,m-4-j}}+ \|v^j\|_{L_t^{\infty}\cH^{0,m-3-j}}.
\end{align*}
The previous estimates, together with the  Sobolev embedding \eqref{sobebd} and the fact $\il\pt r_0\il_{m-5,\infty,t}\lesssim \lat,$
 enable us to get that 
\beqs
\begin{aligned}
\il r_0 u\il_{m-5,\infty,t}&\lesssim 
\sum_{j=0}^{m-5}\il r_0 u^j\il_{\star, m-5-j,\infty,t}+\ep\lat\\
&\lesssim \sum_{j=0}^{m-5}\big( \|r_0 u^j\|_{L_t^{\infty}\cH^{0,m-3-j}}+\|(\div (r_0 u^j), \na v^j)\|_{L_t^{\infty}\cH^{0,m-4-j}}\big)+\ep\lat\\
&\lesssim  \ep \lae+\lab\il\theta\il_{m-3,\infty,t}+ \|\na\theta\|_{m-5,\infty,t}\big) \\
    &\qquad \cdot \big(\|\div u\|_{\infco^{m-4}} +\|u\|_{L_t^{\infty}\cH^{m-4,0}}+\sum_{j=0}^{m-5}(\| \na v^j\|_{L_t^{\infty}\cH^{0,m-4-j} }+\|v^j\|_{L_t^{\infty}\cH^{0,m-3-j}})\big).
\end{aligned}
\eeqs
Note that we used a slightly different notation here $\il\cdot\il_{\star,l,\infty,t}$ to stress that only  spatial conormal derivatives  involved in the norm. By \eqref{EE-3}, 
 \eqref{EE-vj}, \eqref{nablavLinfty}, \eqref{nasidivu-low},  \eqref{thetainfty}, \eqref{nathetainfty}, we can thus find from the above estimate that, there exists a constant $\vartheta_{10}>0,$ such that:
\begin{align}\label{ruinfty}
    \il r_0 u\il_{m-5,\infty,t}&\lesssim 
    \lab Y_m(0)\big)+ (T+\ep)^{\vartheta_{10}}\lae.
\end{align}

We now are able to control $\|(\div u,\na\sigma)\|_{\infco^{m-2}}.$ Our ultimate goal is to show that 
\eqref{com-uniLinfty} holds, which relies on the following estimate: 
 \begin{align}\label{sec5:eq40}
\|(\na\sigma, &\div u)\|_{\infco^{m-2}}
\lesssim\ep\lae+\Lambda\big(\f{1}{c_0}, \il \theta \il_{m-3,\infty,t}+\il(r_0 u,\na\theta)\il_{m-5,\infty,t} \big)\cdot\notag\\
&\big(\|(\sigma, u)\|_{\infcoch^{m-1,0}}
+\|v\|_{\infco^{m-1}}+\|\na\theta\|_{\infco^{m-2}}+\|\big(\mu\ep\pt\na(\theta, \ep\sigma), \kpa\div(\beta\na\theta)\big)\|_{\infco^{m-2}}\big).
\end{align}
Indeed, by \eqref{EE-highest}, \eqref{EE-3}, \eqref{EE-theta-final}, \eqref{divbetanatta},  \eqref{EE-v}; 
\eqref{thetainfty}, \eqref{nathetainfty}, \eqref{ruinfty}, one derives \eqref{com-uniLinfty} 
from the above estimate. 
However, by induction, we find that to show \eqref{sec5:eq40}, it suffices to prove that, for any $j+l\leq m-2, l\geq 1,$
\begin{align*}
\|(\na\sigma,\div u)\|_{\infcoch^{j,l}}
&\lesssim\Lambda\big(\f{1}{c_0}, \il \theta \il_{m-3,\infty,t}+\il(r_0 u, \na\theta)\il_{m-5,\infty,t} \big)\notag\\
&\cdot \big(\|(\sigma, u)\|_{\infcoch^{m-2,0}}+\|(\na\sigma,\div u)\|_{\infcoch^{j+1,l-1}}
+\|v\|_{\infco^{m-1}}+\|\na\theta\|_{\infco^{m-2}}\big)\\
&\quad +\|\big(\mu\ep\pt\na(\theta, \ep\sigma), \kpa\div(\beta\na\theta)\big)\|_{\infco^{m-2}}
+\ep\lae.\notag
\end{align*}
This, in turn, would be the consequence of  \eqref{sec5:eq13}, \eqref{sec5:eq20},
and the following estimate for $\|\na\sigma\|_{\infcoch^{j,l}}:$ 
\begin{align}\label{sec5:eq41}
   \|\na\sigma\|_{\infcoch^{j,l}}&\lesssim   \mu\|\ep\pt\na(\theta, \ep\sigma)\|_{\infco^{m-2}}+\ep\lae+\Lambda\big(\f{1}{c_0}, \il \theta \il_{m-3,\infty,t}+\il(r_0 u, \na\theta)\il_{m-5,\infty,t} \big)\notag\\
  &\qquad \qquad\qquad\cdot\big(\|u\|_{\infcoch^{j+1,0}}+\|\div u\|_{\infcoch^{j+1,l-1}}+\|v\|_{\infco^{m-1}}+\|\na\theta\|_{\infco^{m-2}}\big).
\end{align}
The remaining paragraph is thus devoted to the proof of \eqref{sec5:eq41}.

Similar to \eqref{sec5:eq39}, one derives from  \eqref{curlcurlu-infty}, \eqref{nadiv-infty} and  the elliptic estimate \eqref{highconormal} that for any $j,l,$ with $j+l\leq m-2, l\geq 1,$
\begin{align}\label{sec5:eq42}
   \|\na\sigma\|_{\infcoch^{j,l}}&\lesssim \|r_0 u\|_{\infcoch^{j+1,l}}
    +\mu\|\ep\pt\na(\theta, \ep\sigma)\|_{\infco^{m-2}}+\ep\lae.
\end{align}
 It thus remains to control the term $\|r_0 u\|_{\infcoch^{j+1,l}},$ which can be bounded by using the decomposition $r_0 u=\bq (r_0 u)+v:$
 \beq\label{sec5:eq43}
\|r_0 u\|_{\infcoch^{j+1,l}} \lesssim \|u\|_{\infcoch^{j+1,0}}+\|\div (r_0 u)\|_{\infcoch^{j+1,l-1}}+\|v\|_{\infco^{m-1}}+\ep\lae.
\eeq
 Let us estimate $\|\div (r_0 u)\|_{\infcoch^{j+1,l-1}}.$
On the one hand, since $j+l\leq m-2,$ and 
\begin{align*}
    \|(\ep\pt r_0)\div u\|_{\infco^{m-3}}\lesssim 
    \ep \|(\pt r_0,\div u)\|_{\infco^{m-3}} \Lambda\big(\f{1}{c_0}, \il (\pt r_0, \div u)\il_{[\f{m}{2}]-1,\infty,t}\big)\lesssim
    \ep \lae,
\end{align*}
we have that: 
\begin{align}\label{sec5:eq44}
    \|r_0 \div u\|_{\infcoch^{j+1,l-1}}\lesssim \Lambda\big(\f{1}{c_0}, \il\theta\il_{m-3,\infty,t}\big)\|\div u\|_{\infcoch^{j+1,l-1}}+\ep\lae.
\end{align}
On the other hand, since $\na r_0=r_0(-\na\theta+\ep\f{R}{C_v\gamma}\na\tsigma)$
 and $[\f{m}{2}]-1\leq m-5$ by the assumption $m\geq 7,$ the term $\|\na r_0\cdot u\|_{\infcoch^{j+1,l-1}}$ can be estimated  as follows:
\begin{align}\label{sec5:eq45}
    \|\na r_0\cdot u\|_{\infcoch^{j+1,l-1}}&\lesssim  \|\na \theta \cdot r_0 u\|_{\infcoch^{j+1,l-1}}+\ep\lae \notag\\
   &\lesssim  \lab \il (r_0 u, \na\theta)\il_{m-5,\infty,t} \big) (\|r_0 u\|_{\infcoch^{j+1,l-1}}+\|\na\theta\|_{\infco^{m-2}})+\ep\lae.
\end{align}
Inserting estimates \eqref{sec5:eq44} and \eqref{sec5:eq45} into \eqref{sec5:eq43}, we find that:
\begin{align*}
    \|r_0 u\|_{\infcoch^{j+1,l}}  &\lesssim  \lab \il (r_0 u, \na\theta)\il_{m-5,\infty,t} \big) (\|r_0 u\|_{\infcoch^{j+1,l-1}}+\|\na\theta\|_{\infco^{m-2}})\\
    &\qquad+\Lambda\big(\f{1}{c_0}, \il\theta\il_{m-3,\infty,t}\big)\|\div u\|_{\infcoch^{j+1,l-1}}+\|v\|_{\infco^{m-1}}+\ep\lae.
\end{align*}
Using the above estimate recursively $l$ times, we obtain
that:
\beq\label{r0uch}
\begin{aligned}
     \|r_0 u\|_{\infcoch^{j+1,l}}  &\lesssim \ep\lae+\Lambda\big(\f{1}{c_0}, \il\theta\il_{m-3,\infty,t}+\il (r_0 u, \na\theta)\il_{m-5,\infty,t}  \big)
    \\
    &\qquad \qquad\cdot \big(\|u\|_{L_t^{\infty}\cH^{j+1}}+\|\div u\|_{\infcoch^{j+1,l-1}}+\|v\|_{\infco^{m-1}}+\|\na\theta\|_{\infco^{m-2}}\big).
\end{aligned}
\eeq
Plugging the above estimate into \eqref{sec5:eq42}, we eventually get \eqref{sec5:eq41} and thus finish the proof.
\end{proof}
 \begin{rmk}
 Thanks to the relation $\f{1}{R\beta r_0}=\exp({-\f{R\ep}{C_v\gamma}\tsigma})$ 
 and the fact $\|\pt r_0\|_{\infco^{m-2}}\lesssim \lae,$
it holds that:
 \begin{align*}
    \big\| \f{1}{R\beta}\ep\pt u\big\|_{\infco^{m-2}}\lesssim \|(\ep\pt)(r_0 u)\|_{\infco^{m-2}}+\ep\lae,
 \end{align*}
 inserting which into \eqref{secnoru-LinftyL2}, and using the estimates \eqref{na2uhigh}, \eqref{EE-theta-final}, \eqref{com-uniLinfty}, \eqref{r0uch}, we can find a constant $\vartheta_{11}$ such that:
 \begin{align}\label{secnoru-LinftyL2-final}
     \ep\mu \|\na^2 u \|_{\infco^{m-2}}\lesssim\lab Y_m(0)\big)+ (T+\ep)^{\vartheta_{11}}\lae.
 \end{align}
 \end{rmk}
 \begin{rmk}
In view of estimates \eqref{na2sigmauni-0}, \eqref{EE-theta-final}
and \eqref{com-uniLinfty}, 
we have, by choosing $\vartheta_{12}=\min\{ \f{\tilde{\vartheta}_0}{2}, \vartheta_8, \f{1}{12}\}$ that
 \begin{align}\label{na2sigmauni}   \|\na^2\sigma\|_{\infco^{m-3}}\lesssim \lab Y_m(0)\big)+ (T+\ep)^{\vartheta_{12}}\lae.
 \end{align}
 \end{rmk}
\begin{prop}
Assume \eqref{preasption} holds true, then there exists a constant $\vartheta_{13}>0,$ such that for any $\ep\in(0,1], (\mu,\kpa)\in A,$
 any $0<t\leq T:$
\begin{align}
&\|\na u\|_{L_t^{\infty}H_{co}^{m-3}}\lesssim  \lab Y_m(0) \big)+ (T+\ep)^{\vartheta_{13}}\lae, \label{nauhighLinftyL2-m-3}\\
&\|\na u\|_{L_t^{\infty}H_{co}^{m-2}}\lesssim  \lab Y_m(0) \big)+ (T+\ep)^{\vartheta_{13}}\lae+ \|\theta\|_{\infco^{m-1}}\lab \il \na u\il_{0,\infty,t}\big). \label{nauhighLinftyL2-m-2}
\end{align}
\end{prop}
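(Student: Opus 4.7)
The plan is to prove both estimates by combining a Helmholtz-type decomposition, the conormal tangent/normal identities near the boundary, and the already-established uniform control of $v=\mathbb{P}(r_0u)$ and $\omega_{r_0}^{\bn}=\curl(r_0u)\times\bn$. First, in the interior cut-off $\Omega_0$ where $H_{co}^k$ coincides with the standard $H^k$, the estimate is a direct consequence of Proposition~\ref{prop-nauLinftyL2} applied with $j=0$ together with the control of $\|\div u\|_{L_t^{\infty}H_{co}^{m-2}}$ from \eqref{com-uniLinfty}, after converting between $\nabla\mathbb{P}u$ and $\nabla v$ in the spirit of \eqref{sec6:eq40}. So only the estimate in each boundary chart $\Omega_i$ is at stake, and there I use the standard identities
\[
\partial_{\bn}u\cdot\bn=\div u-(\Pi\partial_{y^1}u)^1-(\Pi\partial_{y^2}u)^2,\qquad \Pi(\partial_{\bn}u)=\Pi(\curl u\times\bn)+\Pi\nabla(u\cdot\bn)-\Pi((D\bn)u),
\]
to reduce the control of $\|\chi_i\nabla u\|_{L_t^{\infty}H_{co}^{m-2-j}}$ (for $j=0,1$) to the control of $\chi_i\curl u\times\bn$ at the same regularity, plus tangential derivatives of $u$ (already in $\cN_{m,t}$) and $\div u$ (controlled by \eqref{com-uniLinfty}).

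Second, to convert $\curl u\times\bn$ into the modified vorticity $\omega_{r_0}^{\bn}$ for which we have the uniform $L_t^\infty H_{co}^{m-2}$ estimate \eqref{ineqofomegatimesn}, I use
\[
\curl u\times\bn=r_0^{-1}\omega_{r_0}^{\bn}-r_0^{-1}(\nabla r_0\times u)\times\bn,
\]
and expand the last cross product using $(\nabla r_0\times u)\times\bn=(u\cdot\bn)\nabla r_0-(\partial_{\bn}r_0)u$. Applying Proposition~\ref{omeganLinfty} with $j=0$ takes care of the principal piece $\chi_i r_0^{-1}\omega_{r_0}^{\bn}$, while the remaining terms are products involving $r_0^{\pm1}$, $\nabla r_0\sim r_0(-\nabla\theta+\frac{R\varepsilon}{C_v\gamma}\nabla\tilde{\sigma})$, $u$, and (via the normal/tangential split) $\nabla u$ and $\div u$. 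For the lower-regularity bound \eqref{nauhighLinftyL2-m-3} I run the product estimates of Proposition~\ref{prop-prdcom} at order $m-3$: every resulting term admits either an $L^\infty_{t,x}$ norm bounded by $\cA_{m,t}$ or an $L_t^{\infty}H_{co}^{m-3}$ norm bounded by $\cE_{m,t}(\theta)$ together with the already-proven quantities, and I pay the small prefactor $(T+\ep)^{\vartheta_{13}}$ either from an integration $\int_0^t\cdot\,\d s$ on commutator pieces or from the presence of $\varepsilon$-factors in $\nabla\tilde{\sigma}$-type terms (cf. \eqref{Es-tsigma1}--\eqref{Es-tsigma4}).

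For the $m-2$ estimate \eqref{nauhighLinftyL2-m-2} the same scheme applies, but when redistributing $m-2$ spatial-conormal derivatives in the product $\chi_i r_0^{-1}\nabla r_0\otimes u$ (equivalently $\chi_i(-\nabla\theta+\frac{R\varepsilon}{C_v\gamma}\nabla\tilde{\sigma})\otimes u$), and in the companion product arising from $\nabla u=r_0^{-1}\nabla(r_0u)-r_0^{-1}\nabla r_0\otimes u$, one configuration is critical: when a full $m-1$ spatial derivatives effectively land on $r_0$ (i.e.\ $m-2$ conormal derivatives land on $\nabla r_0$, which must be estimated by $\|\nabla r_0\|_{L_t^{\infty}H_{co}^{m-2}}\lesssim\|\theta\|_{L_t^{\infty}H_{co}^{m-1}}+\Lambda(\tfrac{1}{c_0},\cN_{m,t})$), the only available pairing puts the $L^\infty_{t,x}$ norm on $\nabla u$. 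This forces the residual term $\|\theta\|_{L_t^{\infty}H_{co}^{m-1}}\Lambda(\tfrac{1}{c_0},|\!|\!|\nabla u|\!|\!|_{0,\infty,t})$ which cannot be absorbed by the $(T+\varepsilon)^{\vartheta_{13}}$ smallness, while all other distributions of derivatives still close inside $\Lambda(\tfrac{1}{c_0},Y_m(0))+(T+\varepsilon)^{\vartheta_{13}}\Lambda(\tfrac{1}{c_0},\cN_{m,t})$. The main obstacle is precisely this critical pairing at the $m-2$ level: bookkeeping the distribution of conormal derivatives in the products $r_0^{\pm 1}$ against $\nabla(r_0u)$ and $\nabla r_0\otimes u$, and recognising that the endpoint configuration intrinsically produces the structural term $\|\theta\|_{L_t^{\infty}H_{co}^{m-1}}|\!|\!|\nabla u|\!|\!|_{0,\infty,t}$-type factor, which must be retained for later use in bounding $\cA_{m,t}(\sigma,u)$ rather than absorbed.
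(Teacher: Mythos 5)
Your route is conceptually close to the paper's but packaged differently. The paper writes $u^j=(\ep\pt)^ju=\bp u^j+\bq u^j$ for every $j\leq m-2$, controls $\na\bq u^j$ from $\div u^j$ and the elliptic estimate \eqref{secderelliptic-1}, and then controls $r_0\na\bp u^j$ by relating it to $\na v^j=\na\bp(r_0u^j)$ via \eqref{sec8:eq25} (the identity $r_0\na\bp u^j-\na\bp(r_0u^j)=-\bp u^j\cdot\na r_0+\na\cW^j$ with $\cW^j=\bq(r_0u^j)-r_0\bq u^j$ and a div-curl argument). You instead stay with the boundary normal/tangential split for $\na u$ and pass from $\curl u\times\bn$ to $r_0^{-1}\omega_{r_0}^{\bn}$; the two decompositions eventually hit the same two black boxes (control of $\na v^j$ or $\omega_{r_0}^j\times\bn$, and control of $\div u$) and the same critical $m-2$ pairing in which all the derivatives land on $r_0^{-1}$ (equivalently $\na r_0$), forcing $\|\theta\|_{\infco^{m-1}}\Lambda(\f{1}{c_0},\il\na u\il_{0,\infty,t})$. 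Both routes are legitimate and comparably heavy.

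One genuine gap to flag: the norm $\|\na u\|_{L_t^\infty H_{co}^{m-2}}$ contains \emph{mixed} time-and-space conormal derivatives, not just the $j=0$ spatial slice. You only invoke Proposition~\ref{omeganLinfty} ``with $j=0$'', but that gives $\|\chi_i(\omega_{r_0}^0\times\bn)\|_{L_t^\infty\cH^{0,m-2}}$. To cover the full $H_{co}^{m-2}$ norm you need all $j\leq m-2$, and then the translation between $(\ep\pt)^j(\curl u\times\bn)$ and $\omega_{r_0}^j\times\bn=\curl(r_0u^j)\times\bn$ is not free: $(\ep\pt)^j$ hits $r_0$ as well, producing commutators $[(\ep\pt)^j,r_0]$ which must be absorbed into the error. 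The paper sidesteps this by performing the decomposition at the $u^j$ level from the outset, so the objects it feeds into Proposition~\ref{prop-nauLinftyL2}/\eqref{ineqofomegatimesn} are exactly $v^j$, $\omega_{r_0}^j$. Your argument can be made to work, but you should state explicitly that the reduction is done for each $u^j$, and account for the commutators with $r_0$ (they are controlled by the same $r_0$-type bounds and yield small $\ep$-prefactors, so they do not spoil the estimate). Incidentally the sign in $(\na r_0\times u)\times\bn$ is off — it should read $(\p_{\bn}r_0)u-(u\cdot\bn)\na r_0$ — but this is cosmetic: since $u\cdot\bn$ vanishes on $\p\Omega$, the second piece is harmless near the boundary and the first is indeed the term you correctly tracked.
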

\begin{proof}
For any $0\leq j\leq m-2,$ we write $(\ep\pt)^j u=u^j=\bp u^j+\bq u^j.$ 
First, by the elliptic estimate \eqref{secderelliptic-1} and the estimate \eqref{com-uniLinfty}, it holds that for any $j\leq k\leq m-2,$
\beq\label{nabquj}
\|\na \bq u^j\|_{L_t^{\infty}\cH^{k-j}}\lesssim \|\div u^j\|_{L_t^{\infty}\cH^{m-2-j}}
\lesssim \lab Y_m(0)\big)+
(T+\ep)^{{\vartheta_8}}\lae.
\eeq
Note that we denote $\cH^l=\cH^{0,l}$ for spaces that involves only spatial conormal derivatives.
It remains to control $\na \bp u^j.$ We will show later that for
$0\leq j\leq k\leq m-2,$
\begin{align}\label{sec8:eq25}
  \|r_0 \na \bp u^j\|_{L_t^{\infty}\cH^{k-j}}&\lesssim 
  \|\na v^j\|_{L_t^{\infty}\cH^{k-j}}+\|\na r_0  (u^j, \bq u^j)\|_{L_t^{\infty}\cH^{k-j}}
\end{align}
where $v^j=\bp (r_0 u^j).$ The last term in the above estimate can be bounded as:
\begin{align*}
\|(\theta, \na\theta, r_0 u, \div u)\|_{L_t^{\infty}\cH^{m-2}}\lab \il (\theta, \na\theta, r_0 u, \div u)\il_{m-5,\infty,t} \big).
\end{align*}
We observe that the quantities $\il r_0 u \il_{m-5,\infty,t}, \|r_0 u\|_{L_t^{\infty}\cH^{m-2}}$ have been controlled in \eqref{ruinfty} and \eqref{r0uch}, the quantity  $\il\bq u\il_{m-5,\infty,t}$ can be estimated thanks to the Sobolev embedding \eqref{sobebd} by $\|\div u\|_{\infco^{m-4}}$ which has been controlled in \eqref{com-uniLinfty}.
These facts, together with the estimate \eqref{nablavLinfty}, 
enable us to find a constant  $\vartheta_{14}>0$
\begin{align}\label{r0napuj}
     \|r_0 \na \bp u^j\|_{L_t^{\infty}\cH^{k-j}}&\lesssim \lab Y_m(0) \big)+ (T+\ep)^{\vartheta_{14}}\lae.
\end{align}
Therefore, by noticing 
 \eqref{thetainfty}, \eqref{tsigmainfty}, we have by by choosing  $\vartheta_{14}>0$ smaller if necessary,
for any $1\leq j\leq m-2,$
\begin{align*}
 \|\na \bp u^j\|_{L_t^{\infty}\cH^{m-2-j}}&\lesssim   \|r_0 \na \bp u^j\|_{L_t^{\infty}\cH^{m-2-j}}\lab \il r_0^{-1}\il_{m-3,\infty,t}\big)\\
 &\lesssim \lab Y_m(0) \big)+ (T+\ep)^{\vartheta_{14}}\lae,
\end{align*}
and for any $0\leq j\leq m-3,$
\begin{align*}
    \|\na \bp u^j\|_{L_t^{\infty}\cH^{m-3-j}}\lesssim \lab Y_m(0) \big)+ (T+\ep)^{\vartheta_{14}}\lae.
\end{align*}
The last estimate, together with \eqref{nabquj}, yields
\eqref{nauhighLinftyL2-m-3} by letting $\vartheta_{13}=\min\{\vartheta_{8}, \vartheta_{14} \}.$
To prove \eqref{nauhighLinftyL2-m-2}, we still need to control $\na \bp u$ in $L_t^{\infty}\cH^{m-2}:$
\begin{align*}
\|\na \bp u\|_{L_t^{\infty}\cH^{m-2}}\lesssim \|r_0\na\bp u\|_{L_t^{\infty}\cH^{m-2}}\lab \il r_0^{-1}\il_{m-3,\infty,t}\big) + 
\|Z r_0^{-1}\|_{L_t^{\infty}\cH^{m-2}}\il r_0\na \bp u\il_{0,\infty,t}.
\end{align*}
Since $\il r_0\na \bp u\il_{0,\infty,t}\lesssim \il(\na u, \na\mathbb{Q} u)\il_{0,\infty,t}\lesssim \il\na u\il_{0,\infty,t}+\|(u ,\div u)\|_{\infco^{2}},$ we can use \eqref{EE-theta-final}, \eqref{com-uniLinfty}, \eqref{nabquj}, \eqref{r0napuj} to derive \eqref{nauhighLinftyL2-m-2} (choosing $\vartheta_{13}$ smaller if necessary).

It now remains to show \eqref{sec8:eq25}. Let us write:
\begin{align}\label{sec8:eq27}
    r_0 \na\bp u^j-\na \bp(r_0 u^j)=-\bp u^j\cdot\na r_0 +\na \cW^j, \quad \cW^j=\colon \bq(r_0 u^j)-(r_0\bq u^j),
\end{align}
 By the definition of $\bq,$ one has $\cW^j\cdot\bn|_{\p\Omega}=0.$ Consequently, it follows from the  div-curl lemma that:
\begin{align*}
    \|\na \cW^j\|_{L_t^{\infty}\cH^{m-2-j}}&\lesssim \|(\div \cW^j,\curl \cW^j) \|_{L_t^{\infty}\cH^{m-2-j}}\\
    &\lesssim 
    \|\na r_0 \times \bq u^j, \na r_0 \cdot( u^j, \bq u^j)\|_{L_t^{\infty}\cH^{m-2-j}},
\end{align*}
which, together with \eqref{sec8:eq27}, yields \eqref{sec8:eq25}. 
\end{proof}
We are now in position to summarize the obtained estimates concerning the energy norms $\cE_{m,T}(\sigma, u):$
\begin{prop}
Denote 
\begin{align}\label{tildecE}
\tilde{\cE}_{m,t}(\sigma,u)=\cE_{m,t}(\sigma,u) -\|\na u\|_{\infco^{m-2}}+\|\na u\|_{\infco^{m-3}}.
\end{align}
Suppose that the assumption \eqref{preasption} holds, 
then there exists $\vartheta_{15}>0,$ such that:
  any $\ep \in (0,1], (\mu,\kpa)\in A,$
any $ 0<t\leq T,$ 
\begin{align}\label{es-tildecE}
    \tilde{\cE}_{m,t}(\sigma,u)\lesssim \lab Y_m(0)\big)+ (T+\ep)^{\vartheta_{15}} \lae.
\end{align}
\end{prop}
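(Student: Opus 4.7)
The plan is to observe that this proposition is essentially a bookkeeping result: it collects and assembles the uniform estimates for $(\sigma, u)$ established in Sections 2 through 10. By inspecting the definition \eqref{defcEmsigmau}, each individual term of $\tilde{\cE}_{m,t}(\sigma, u)$ has already been bounded by an estimate of the form $\Lambda(\tfrac{1}{c_0}, Y_m(0)) + (T+\ep)^{\vartheta_i}\Lambda(\tfrac{1}{c_0}, \cN_{m,t})$ for some $\vartheta_i>0$ independent of $\ep,\mu,\kpa$. It then suffices to set $\vartheta_{15} = \min_i \vartheta_i$ and sum the estimates.

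More concretely, I would go through the terms in \eqref{defcEmsigmau} in groups. The first line ($\ep\mu^{1/2}$-weighted highest-order norms including the $\ep\mu^{1/2}\kpa^{1/2}\|\na u\|_{L_T^2\underline{H}_{co}^m}$ contribution) comes directly from Proposition \ref{prop-highest}, namely \eqref{EE-highest}. The $\ep^{1/2}\mu$ and $\ep^{2/3}\mu^{1/2}$ weighted norms of $\na^2 u$ are supplied by \eqref{na2uhigh} together with the vorticity estimates \eqref{es-omegan-m-3}, \eqref{es-omegan-m-1}. The $\ep\mu\|\na^2(\sigma,u)\|_{\infco^{m-2}}$ piece is given by \eqref{na2sigma} for $\sigma$ and by \eqref{secnoru-LinftyL2-final} for $u$. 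The third line of \eqref{defcEmsigmau} decomposes as follows: $\|(\sigma,u)\|_{\infco^{m-1}}$ combines the time-derivative control \eqref{EE-3} with the iterated recovery of spatial conormal derivatives from \eqref{com-uniL2} and \eqref{EE-v}; $\|\na\sigma\|_{\infco^{m-2}}$ comes from \eqref{com-uniLinfty}; $\|\na u\|_{\infco^{m-3}}$ (this is the point where we use the modified $\tilde{\cE}_{m,t}$ in place of $\cE_{m,t}$) is given by \eqref{nauhighLinftyL2-m-3}; the dissipation norms $\mu^{1/2}\|\na(\sigma,u)\|_{L_T^2 H_{co}^{m-1}}$ follow from the $\na u$ bound \eqref{nauL2-uniform} together with \eqref{com-uniL2}; the $\kpa^{1/2}\|\na(\sigma,\div u)\|_{L_T^2 H_{co}^{m-2}}$ piece is furnished by \eqref{com-uniL2} and \eqref{nadivuL2}; and finally $\|\na^2\sigma\|_{\infco^{m-3}}$ is \eqref{na2sigmauni}.

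The essential reason for introducing $\tilde{\cE}_{m,t}$ rather than $\cE_{m,t}$ at this stage is that the estimate \eqref{nauhighLinftyL2-m-2} for $\|\na u\|_{\infco^{m-2}}$ contains the extra contribution $\|\theta\|_{\infco^{m-1}}\Lambda\bigl(\tfrac{1}{c_0},\il \na u\il_{0,\infty,t}\bigr)$, which carries no small prefactor $(T+\ep)^{\vartheta}$; both $\|\theta\|_{\infco^{m-1}}$ and $\il\na u\il_{0,\infty,t}$ are controlled only by $\cN_{m,t}$ itself, so if it were kept it would lead to an estimate that cannot be absorbed by a smallness argument in $T+\ep$. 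This leftover $\|\na u\|_{\infco^{m-2}}$ piece will instead be handled together with $\cA_{m,t}(\sigma,u)$ in the next section using the Green function arguments analogous to those of Section 8.

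There is no genuine obstacle here beyond careful bookkeeping: the only thing to verify is that each $\vartheta_i>0$ produced in the preceding sections is indeed strictly positive and independent of $(\ep,\mu,\kpa)\in (0,1]\times A$, so that $\vartheta_{15} = \min_i \vartheta_i > 0$ yields a valid conclusion of the form \eqref{es-tildecE}. Summing the resulting inequalities and using the fact that $\Lambda$ is a polynomial in its arguments (so that sums and products of such $\Lambda$-terms remain of the same form) completes the proof.
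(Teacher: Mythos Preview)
Your proposal is correct and follows essentially the same approach as the paper: the proposition is indeed a bookkeeping result that collects the estimates \eqref{EE-highest}, \eqref{na2uhigh}, \eqref{secnoru-LinftyL2-final}, \eqref{EE-3}, \eqref{com-uniL2}, \eqref{nadivuL2}, \eqref{na2sigma}, \eqref{nauL2-uniform}, \eqref{com-uniLinfty}, \eqref{na2sigmauni}, \eqref{nauhighLinftyL2-m-3} and takes $\vartheta_{15}$ to be the minimum of the exponents. Your explanation of why $\tilde{\cE}_{m,t}$ replaces $\cE_{m,t}$ (namely, that \eqref{nauhighLinftyL2-m-2} carries the extra term $\|\theta\|_{\infco^{m-1}}\Lambda(\tfrac{1}{c_0},\il \na u\il_{0,\infty,t})$ with no small prefactor) is correct and more explicit than the paper itself.
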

\begin{proof}
Collecting \eqref{EE-highest}, \eqref{na2uhigh}, \eqref{secnoru-LinftyL2-final} for the control of  $\ep-$dependent norms appearing in the first two line of $\cE_{m,t}$ (see definition \eqref{defcEmt})
and \eqref{EE-3}  \eqref{com-uniL2}, \eqref{nadivuL2} \eqref{na2sigma},  \eqref{nauL2-uniform}, 
\eqref{com-uniLinfty}, \eqref{na2sigmauni}, \eqref{nauhighLinftyL2-m-3} for the uniform estimates: 
$$\|(\sigma,u)\|_{\infco^{m-1}}, \quad \|\na(\sigma, u) \|_{L^2H_{co,\sqrt{\mu}}^{m-1}}, \quad \kpa^{\f{1}{2}}\|\na (\na\sigma,\div u)\|_{\hco^{m-2}},
\quad \|\na\sigma\|_{\infco^{m-2}}, \quad 
\|(\na u, \na^2\sigma)\|_{\infco^{m-3}}, $$
we can find a constant $\vartheta_{15}>0,$ such that \eqref{es-tildecE} holds.
\end{proof}

\section{$L_{t,x}^{\infty}$ estimates for $(\sigma, u)$}
In this section, we focus on  the a-priori estimates for $\cA_{m,t}(\sigma, u):$ 
\begin{align*}
    \mathcal{A}_{m,t}(\si, u)
  =\colon \il (\si, u)\il_{m-3,\infty, t}+ \il (\na\si, (\ep\mu)^{\f{1}{2}}\na u)\il_{m-4,\infty,t}
 + \il (\nabla u, \ep\mu^{\f{1}{2}}\na^2(\sigma, u) 
 \big)\il_{m-5,\infty,t}. 
\end{align*}
 Before going into the details, we begin with the following lemma which will be useful later.
 \begin{lem}
 Define the following non-uniform $L^{\infty}_{t,x}$ norms:
 \begin{align}
     \mathcal{I}_{m,t}(u)=\colon 
     \il\ep\mu\na u\il_{m-3,\infty,t}
    + \il
    \ep\mu\na^2 u\il_{m-4,\infty,t}.
 \end{align}
 Then it holds that:
 \begin{align}\label{epmuna2u}
  \mathcal{I}_{m,t}(u)\lesssim \lae.   
 \end{align}
 \end{lem}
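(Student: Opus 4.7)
The plan is to derive both bounds by the Sobolev embedding \eqref{sobebd} applied to $\ep\mu\na u$ and $\ep\mu\na^2 u$, and then to control the resulting $L_t^{\infty}H_{co}$ norms by quantities that are either already part of $\cE_{m,t}$ or have been shown to be bounded by $\lae$ in the previous sections. Since the bound claimed is non-uniform (no $(T+\ep)^{\vartheta}$ smallness is asked for), the whole weight $\ep\mu$ gives us plenty of room.

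First, for $\il\ep\mu\na u\il_{m-3,\infty,t}$, the embedding gives
\begin{align*}
\il\ep\mu\na u\il_{m-3,\infty,t}\lesssim \|\ep\mu\na^2 u\|_{\infco^{m-2}}+\|\ep\mu\na u\|_{\infco^{m-1}}.
\end{align*}
The first term is $\lesssim \lae$ by \eqref{secnoru-LinftyL2-final}; the second is controlled by $\mu^{1/2}\cE_{m,t}$ thanks to the first line of \eqref{defcEmsigmau} (which contains $\ep\mu^{1/2}\|\na(\sigma,u)\|_{\infco^{m-1}}\leq \cE_{m,t}$).

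The harder piece is $\il\ep\mu\na^2 u\il_{m-4,\infty,t}$. Again by \eqref{sobebd},
\begin{align*}
\il\ep\mu\na^2 u\il_{m-4,\infty,t}\lesssim \|\ep\mu\na^3 u\|_{\infco^{m-3}}+\|\ep\mu\na^2 u\|_{\infco^{m-2}},
\end{align*}
and the second summand is controlled by \eqref{secnoru-LinftyL2-final}. To treat $\|\ep\mu\na^3 u\|_{\infco^{m-3}}$, I would apply the same strategy as in the proof of \eqref{secnoru-LinftyL2}: use the rewriting \eqref{rewrite-equ} of the momentum equation,
\begin{align*}
\ep\mu\lambda_1\Gamma\Delta u=-\ep\mu(\lambda_1+\lambda_2)\Gamma\na\div u+\tfrac{\ep}{R\beta}(\pt+u\cdot\na)u+\na\sigma,
\end{align*}
together with the local-coordinate decomposition \eqref{Laplace-local} of the Laplacian. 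This reduces $\ep\mu\na^3 u$ at conormal level $m-3$, modulo purely tangential derivatives of $\ep\mu\na^2 u$ (which are $\lae$ by \eqref{secnoru-LinftyL2-final}) and lower-order commutators with $\Gamma, \beta, \bn$ (handled via Corollary \ref{cor-gb}), to controlling
\begin{align*}
\ep\mu\|\na^2\div u\|_{\infco^{m-3}},\quad \|\ep\pt\na u\|_{\infco^{m-3}},\quad \|\na^2\sigma\|_{\infco^{m-3}}.
\end{align*}

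The last two are already controlled by $\lae$: $\|\ep\pt\na u\|_{\infco^{m-3}}\lesssim \|\na u\|_{\infco^{m-2}}$ is part of $\cE_{m,t}$, and \eqref{na2sigmauni} gives the bound on $\|\na^2\sigma\|_{\infco^{m-3}}$. The main obstacle is the term $\ep\mu\|\na^2\div u\|_{\infco^{m-3}}$, since \eqref{nadivuLinftyL2} only supplies one normal derivative of $\ep\mu^{1/2}\div u$. I would gain the extra derivative by inserting \eqref{rew-sigma} (or, more conveniently, using \eqref{nadiv-infty} which expresses $\ep\mu\na\div u$ in terms of $\mu\ep\pt\na(\theta,\ep\sigma)$ plus an $\ep\lae$ remainder) once more: differentiating once further and using that $\mu\|\na\theta\|_{\infco^{m-1}}\lesssim \mu\cE_{m,t}$ together with $\|\na^2(\theta,\ep\sigma)\|_{\infco^{m-3}}\lesssim \cE_{m,t}$ (from \eqref{EE-theta-final} and \eqref{na2sigmauni}) yields the required $\lae$ bound. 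Collecting everything proves \eqref{epmuna2u}.
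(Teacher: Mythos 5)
Your treatment of $\il\ep\mu\na u\il_{m-3,\infty,t}$ is exactly the paper's: Sobolev embedding \eqref{sobebd} reduces it to $\ep\mu\|\na^2 u\|_{\infco^{m-2}}$ and $\ep\mu\|\na u\|_{\infco^{m-1}}$, both of which lie in $\cE_{m,t}$.

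For the second quantity $\il\ep\mu\na^2 u\il_{m-4,\infty,t}$, however, you take a genuinely different and riskier route. You apply Sobolev embedding again, which forces you to control $\|\ep\mu\na^3 u\|_{\infco^{m-3}}$ in $L_t^{\infty}$. That object is \emph{not} among the controlled quantities: Remark \ref{rmk-thirdordervelocity} only gives $\ep\mu\|\na^3 u\|_{\hco^{m-3}}\lesssim\lae$ in $L_t^2$. Tracking $\na^3 u$ via \eqref{rewrite-equ} then pushes you to $\ep\mu\|\na^2\div u\|_{\infco^{m-3}}$, a two-normal-derivative bound on $\div u$ that the paper never establishes in $L_t^\infty$ either (only the one-derivative statement \eqref{nadiv-infty}). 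Making this work would require reproducing the whole Hardy-inequality/commutator bookkeeping for the convective term at a higher derivative order than the rest of the argument needs, which is precisely the complication the paper avoids. The paper instead never converts $\il\ep\mu\na^2 u\il_{m-4,\infty,t}$ to an $L^2$-type norm: it rewrites $\na^2 u$ directly at the $L^\infty_{t,x}$ level using \eqref{rewrite-equ} and the local Laplacian expansion \eqref{Laplace-local}, giving
\begin{align*}
     \ep\mu \il\na^2 u\il_{m-4,\infty,t}\lesssim \il(\ep\mu\na u, u)\il_{m-3,\infty,t}+\il(\ep\mu \na\div u, \na\sigma, \theta)\il_{m-4,\infty,t}+\ep\lat,
\end{align*}
and then handles $\ep\mu\il\na\div u\il_{m-4,\infty,t}$ by one application of $\eqref{newsys}_1$, never touching a third normal derivative of $u$. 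You should follow this direct route; as written, your version has a gap at $\|\ep\mu\na^3 u\|_{\infco^{m-3}}$ that would need substantial extra work to close.
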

 \begin{proof}
 By the Sobolev embedding \eqref{sobebd},
 \begin{align*}
    & \ep\mu \il\na u\il_{m-3,\infty,t}\lesssim \ep\mu\big( \|\na^2 u\|_{\infco^{m-2}}+ \|\na u\|_{\infco^{m-1}}\big)\lesssim \cE_{m,t}. 
 \end{align*}
 Moreover, we use successively the equation  $\eqref{newsys}_1,$ the Sobolev embedding \eqref{sobebd} to get that:
 \begin{align*}
     \ep\mu \il\na\div u\il_{m-4,\infty,t}&\lesssim 
     \mu \il\na(\ep\pt+\ep u\cdot\na)(\ep\sigma,\theta)\il_{m-4,\infty,t}\\
     &\lesssim \mu \il \na (\ep\sigma,\theta)\il_{m-3,\infty,t}+\ep\lat  \lesssim \lat.
 \end{align*}
 Finally, it follows from 
\eqref{rewrite-equ} and the
 local expression of the Laplacian \eqref{Laplace-local} that:
 \begin{align*}
     \ep\mu \il\na^2 u\il_{m-4,\infty,t}&\lesssim \il(\ep\mu\na u, u)\il_{m-3,\infty,t}+\il(\ep\mu \na\div u, \na\sigma, \theta)\il_{m-4,\infty,t}+\ep\lat \\
     &\lesssim \lat.
 \end{align*}
\end{proof}
 \begin{prop}\label{prop-amtsiu}
 Assume that \eqref{preasption} holds true. 
There exists a constant $\vartheta_{16}>0,$ such that for any $\ep\in(0,1], (\mu,\kpa)\in A,$ any $0<t\leq T,$
the following property holds:
  \begin{align}\label{amtsi-u}
      \cA_{m,t}(\sigma, u)
      \lesssim  \lab Y_m(0)\big) +(T+\ep)^{\vartheta_{16}}\lae.
  \end{align}
 \end{prop}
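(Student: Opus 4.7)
The plan is to reduce the estimate of $\cA_{m,t}(\sigma,u)$, via the identities \eqref{normalofnormalder} and \eqref{tan-nor} relating the normal derivatives of a vector field to its divergence, curl, and tangential derivatives, plus the local expression of the Laplacian \eqref{Laplace-local}, to two classes of quantities: (i) tangential-type quantities already encoded in $\tilde\cE_{m,t}(\sigma,u)$, which are handled by the Sobolev embedding \eqref{sobebd} and the estimate \eqref{es-tildecE}; and (ii) the boundary-layer quantities $\omega_{r_0}^\bn=\curl(r_0 u)\times\bn$, $\p_\bn\omega_{r_0}^\bn$, and $\Delta\sigma$ near the boundary, whose pointwise control requires Green-function analysis as in Section 8.

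For the first class, one has directly
\beqs
\il(\sigma,u)\il_{m-3,\infty,t}\lesssim \|(\sigma,u)\|_{\infco^{m-1}}+\|\nabla(\sigma,u)\|_{\infco^{m-2}},\quad
\il\nabla\sigma\il_{m-4,\infty,t}\lesssim \|\nabla\sigma\|_{\infco^{m-2}}+\|\nabla^2\sigma\|_{\infco^{m-3}},
\eeqs
which are controlled by $\tilde\cE_{m,t}$. The tangential parts of $(\ep\mu)^{1/2}\nabla u$ and $\ep\mu^{1/2}\nabla^2 u$ reduce via \eqref{normalofnormalder}–\eqref{tan-nor} to $\div u$, horizontal derivatives of $u$, and the weighted $L^\infty$ norms of $\omega_\bn^{r_0}$ and its $\p_\bn$; similarly $\ep\mu^{1/2}\nabla^2\sigma$ reduces, via \eqref{Laplace-local}, to $\nabla\sigma$, tangential derivatives, and $\ep\mu^{1/2}\Delta\sigma$. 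After multiplying $u$ by $r_0$ (harmless since $\il r_0,r_0^{-1}\il_{m-3,\infty,t}\lesssim \lat$), the hard core is therefore
\beqs
\il\omega_{r_0}^\bn\il_{m-5,\infty,t},\quad
\il(\ep\mu)^{1/2}\omega_{r_0}^\bn\il_{m-4,\infty,t},\quad
\il\ep\mu^{1/2}\p_\bn\omega_{r_0}^\bn\il_{m-5,\infty,t},\quad
\il\ep\mu^{1/2}\Delta\sigma\il_{m-5,\infty,t}.
\eeqs

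The $\Delta\sigma$ piece is immediate: integrating the damped transport equation \eqref{eqDeltasigma} along characteristics and bounding its right-hand side through Lemma \ref{lem9.4}, together with the already-established bounds \eqref{na2sigmauni} and \eqref{secnoru-LinftyL2-final}, yields the required estimate. For $\omega_{r_0}^\bn$ and $\p_\bn\omega_{r_0}^\bn$, we apply the method of Section 8 to the transport-diffusion equation \eqref{eqomegar0}–\eqref{bcomernj}: after freezing coefficients on $\{z=0\}$ in the normal-geodesic coordinates $\Psi_i$, we rewrite it in the form \eqref{eqheattransport} and invoke Lemma \ref{lemheatgreen} (properties \eqref{infty1}–\eqref{infty3}). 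The boundary data at the required order is controlled via the trace inequality, the elliptic equation \eqref{eq-tiltasigma} for $\tsigma$, and the $L^\infty$ estimates \eqref{Es-tsigma2}, \eqref{Es-tsigma4} for $\p_\bn\tsigma$; the source terms in the half-space equation are the analogues of $\mathfrak{M}_3^{\Psi}$ from \eqref{deffrakm3} and are bounded using \eqref{es-tildecE}, \eqref{nathetainfty}, \eqref{epmuna2u}, \eqref{na2sigmauni}.

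The main obstacle is the precise interplay between the viscous weight $(\ep\mu)^{1/2}$ and the regularity of the coefficients in the half-space heat equation. The source terms in the equation for $\omega_{r_0}^\bn\circ\Psi$ contain (cf.\ the discussion in Subsection \ref{subs133} and the splittings $G=G_1+(G-G_1)$, $\cQ$ in \eqref{defcQ}) contributions of the form $\mu\lambda_1\overline\Gamma r_0^{-1}[\curl\curl,r_0]u+u(\pt+u\cdot\nabla)r_0$, whose $L^\infty$ norm would not be uniformly bounded without the relation \eqref{assumption-mukpa}; it is precisely the identity \eqref{timeder-r0-1} combined with the cancellation $\mu-\kpa/(C_v\gamma\lambda_1)=O(\mu\kpa^{1/2})$ that, together with $\kpa^{1/2}\il\Delta\theta\il$-type bounds from \eqref{na2theta-Lp}, allows the absorption of these bad factors and closes the $L^\infty_{t,x}$ estimates in $\cA_{m,t}(\sigma,u)$ with a final exponent $\vartheta_{16}>0$ by shrinking all previous $\vartheta_\ast$ if necessary.
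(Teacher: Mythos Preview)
Your overall strategy is correct, but there is a genuine gap in the reduction step. You claim that $\il u\il_{m-3,\infty,t}\lesssim \|u\|_{\infco^{m-1}}+\|\na u\|_{\infco^{m-2}}$ is controlled by $\tilde\cE_{m,t}$, but by definition \eqref{tildecE} the norm $\|\na u\|_{\infco^{m-2}}$ has been \emph{removed} from $\tilde\cE_{m,t}$ (only $\|\na u\|_{\infco^{m-3}}$ remains). Bounding it by $\cE_{m,t}\leq\cN_{m,t}$ is useless since the target inequality \eqref{amtsi-u} requires the small factor $(T+\ep)^{\vartheta_{16}}$ in front of $\cN_{m,t}$. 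The paper resolves this by a bootstrap: first establish \eqref{catildemt}, i.e.\ the estimate for $\cA_{m,t}$ \emph{minus} $\il u\il_{m-3,\infty,t}$ and $\il(\ep\mu)^{1/2}\na u\il_{m-4,\infty,t}$; since $\il\na u\il_{m-5,\infty,t}$ is part of that reduced quantity, one can then feed $\il\na u\il_{0,\infty,t}$ into \eqref{nauhighLinftyL2-m-2} to obtain \eqref{nauinftyL2-final} for $\|\na u\|_{\infco^{m-2}}$ with the desired smallness, and only then recover the two excluded terms by Sobolev embedding. This circular-looking step is essential and is missing from your outline; correspondingly, your ``hard core'' should not contain $\il(\ep\mu)^{1/2}\omega_{r_0}^{\bn}\il_{m-4,\infty,t}$.

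There is a second issue in the normal-derivative piece. You propose to treat $\ep\mu^{1/2}\il\p_{\bn}\omega_{r_0}^{\bn}\il_{m-5,\infty,t}$ via \eqref{infty3}, but that estimate requires $\underline f=0$, while $\omega_{r_0}^{\bn}$ has the nonzero boundary data \eqref{bcomernj}. The paper instead switches at this point to the \emph{unmodified} quantity $\omega_{\bn i}=\curl u\times\chi_i\bn-2\Pi(\chi_i(-au+D\bn\cdot u))$, which vanishes on $\p\Omega$ by \eqref{bdryomegan} and whose source $F_i^\omega+L_i$ (see \eqref{defFomegaL}) satisfies $\ep\il F^\varsigma\il_{m-5,\infty,t}\lesssim\lae$. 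Finally, your last paragraph overstates the role of \eqref{assumption-mukpa}: in the $L^\infty_{t,x}$ argument (Proposition \ref{lem-source}) only the weaker consequence $\mu\sim\kpa$ is used, so that $\mu^{1/2}\Delta\theta\sim\kpa^{1/2}\Delta\theta$ can be bounded via \eqref{na2theta-Lp}; the precise cancellation $\mu-\kpa/(C_v\gamma\lambda_1)=O(\mu\kpa^{1/2})$ is needed only earlier, in the $L^\infty_tH_{co}^{m-2}$ estimate of Lemma \ref{lemzeta5-prop}.
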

 \begin{proof}
 We first observe that it suffices to show that there exists a constant $\vartheta_{17}>0,$ such that:
 \beq\label{catildemt}
 \cA_{m,t}-\il u\il_{m-3,\infty,t}-\il(\ep\mu)^{\f{1}{2}}\na u\il_{m-4,\infty,t}\lesssim \lab Y_m(0)\big) +(T+\ep)^{\vartheta_{17}}\lae.
 \eeq
 Note that $\il\na u\il_{m-5,\infty,t}$ is contained in 
 the definition of $\cA_{m,t}.$ 
 Inserting this estimate into
 \eqref{nauhighLinftyL2-m-2}, we can find, by using 
 \eqref{EE-theta-0}, 
a constant $\vartheta_{18}>0$ such that:
 \begin{align}\label{nauinftyL2-final}
  \|\na u\|_{\infco^{m-2}}\lesssim \lab Y_m(0)\big)+ 
  (T+\ep)^{\vartheta_{18}}\lae.
 \end{align}
 We then get by the Sobolev embedding $\eqref{sobebd}$ and the estimate \eqref{es-tildecE} that
 \begin{align*}
    \il u\il_{m-3,\infty,t}+\il(\ep\mu)^{\f{1}{2}}\na u\il_{m-4,\infty,t}&\lesssim \| u\|_{\infco^{m-1}}+\|\na u\|_{\infco^{m-2}}+ \|\ep \mu\na^2 u\|_{\infco^{m-3}}\\
    &\lesssim \lab Y_m(0)\big)+ 
  (T+\ep)^{\vartheta_{16}}\lae,
 \end{align*}
 where $\vartheta_{16}=\min \{\vartheta_{15}, \vartheta_{17}, \vartheta_{18} \}.$
This, combined \eqref{catildemt}, gives rise to \eqref{amtsi-u}. 

Let us now prove \eqref{catildemt}. Thanks to the 
 Sobolev embedding \eqref{sobebd}, some quantities
 in $\cA_{m,t}(\sigma, u)$ can be controlled directly 
 by $\tilde{\cE}_{m,t}$ which is defined in \eqref{tildecE}. For instance:
\begin{align*}
 \il u\il_{m-4,\infty,t}&\lesssim \|\na u\|_{\infco^{m-3}}+\|u\|_{\infco^{m-2}}\lesssim \tilde{\cE}_{m,t}(\sigma, u),\\
    \il\sigma\il_{m-3,\infty,t}&\lesssim \|\na\sigma\|_{\infco^{m-2}}+\|\sigma\|_{\infco^{m-1}}\lesssim \tilde{\cE}_{m,t}(\sigma, u),\\
     \il\na \sigma\il_{m-4,\infty,t}&\lesssim \|\na^2\sigma\|_{\infco^{m-3}}+\| \na\sigma\|_{\infco^{m-2}}\lesssim \tilde{\cE}_{m,t}(\sigma, u). 
\end{align*}
Consequently, in view of \eqref{es-tildecE}, we are left to control $\il(\nabla u, \ep\mu^{\f{1}{2}}\na^2 u )\il_{m-5,\infty,T}.$ 

As before, in the interior domain $\Omega_0$, by using the Sobolev embedding as well as the equivalence of the conormal  spaces and
usual Sobolev spaces, we have directly that:
\begin{align*}
    \il\chi_0(\nabla u, \ep\mu^{\f{1}{2}}\na^2 u , (\ep\mu)^{\f{1}{2}}\na^2 \sigma)\il_{m-5,\infty,t}\lesssim \|\chi_0 (u,\sigma)\|_{L^{\infty}H^{m-1}}\lesssim \cE_{m,t},
\end{align*}
where $\chi_0$ is a smooth function supported on $\Omega_0.$ Therefore, we focus on the estimate of 
$$\il\chi_i(\nabla u, \ep\mu^{\f{1}{2}}\na^2 u, (\ep\mu)^{\f{1}{2}}\na^2 \sigma )\il_{m-5,\infty,t},$$ where $\chi_i (i=1,\cdots, N)$ are the cut-off functions supported on the charts $\Omega_i.$

It can be deduced from the identity \eqref{normalofnormalder}, the fact
\beqs
\Pi(\partial_{\bn} u)
=\Pi(\curl u \times \bn)+\Pi \na (u\cdot\bn) +\Pi(-(D \bn) u)
\eeqs
and the local expression of the Laplacian \eqref{Laplace-local} that:
\begin{align*}
\il \chi_i \na u \il_{m-5,\infty,t}&\lesssim \il u\il_{m-4,\infty,t}+\il\div u\il_{m-5,\infty,t}+ \il\chi_i \curl u\times\bn\il_{m-5,\infty,t},\\
\ep\mu^{\f{1}{2}}\il \chi_i \na^2 u \il_{m-5,\infty,t}&\lesssim \ep^{\f{1}{2}}\il (\ep\mu)^{\f{1}{2}}\na u\il_{m-4,\infty,t}+\ep\mu^{\f{1}{2}}\il\na\div u\il_{m-5,\infty,t}\\
&\qquad\qquad\qquad\qquad+\ep\mu^{\f{1}{2}} \il\chi_i\p_{\bn} (\curl u\times\bn)\il_{m-5,\infty,t}, \\
\ep\mu^{\f{1}{2}}\il \chi_i \na^2 \sigma \il_{m-5,\infty,t}&\lesssim \ep\mu^{\f{1}{2}}\il \na \sigma \il_{m-4,\infty,t}+\ep\mu^{\f{1}{2}} \il \chi_i \Delta \sigma \il_{m-5,\infty,t}.
\end{align*}
In light of \eqref{rew-sigma}, 
we control the $L_{t,x}^{\infty}$ of $\div u$ by those having been controlled:
\begin{align*}
\il\div u\il_{m-5,\infty,t}&\lesssim \il\sigma\il_{m-4,\infty,t}+\kpa \il\Delta\theta\il_{m-5,\infty,t}+\kpa \il\na\theta\il_{m-5,\infty,t}^2+\ep \lat. 
\end{align*}
Moreover, by using $\eqref{newsys}_1,$
we also see that:
\begin{align*}
\ep\mu^{\f{1}{2}}\il\na \div u\il_{m-5,\infty,t}&\lesssim 
\mu^{\f{1}{2}} \il\na(\ep\pt+\ep u\cdot\na)(\ep\sigma,\theta)\il_{m-5,\infty,t}\\
     &\lesssim \mu^{\f{1}{2}} \il \na \ep\theta\il_{m-4,\infty,t}+\ep\lae. 
\end{align*}
It thus remains for us to control $$\il\chi_i \curl u\times\bn\il_{m-5,\infty,t}, \quad  \ep\mu^{\f{1}{2}} \il\chi_i\p_{\bn} (\curl u\times\bn)\il_{m-5,\infty,t}, \quad \ep\mu^{\f{1}{2}} \il \chi_i \Delta \sigma \il_{m-5,\infty,t}, $$
which will be presented in the following three lemmas.
\end{proof}
\begin{lem}
Under the same assumption as in Proposition \ref{prop-amtsiu}, there exists a constant $\vartheta_{19}>0$ such that, for any $\ep \in (0,1], (\mu,\kpa)\in A,$ any $0<t\leq T,$ 
\begin{align*}
     \il \chi_i\curl u\times\bn\il_{m-5,\infty,t}\lesssim   \lab Y_m(0)\big) +(T+\ep)^{\vartheta_{19}}\lae, \qquad  \forall \, i=1,\cdots N.
\end{align*}
\end{lem}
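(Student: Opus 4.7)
The plan is to import the Green-function strategy already carried out in Step~8 for $\partial_{\bn}\theta$ (the transport-diffusion equation \eqref{eqpntheta} combined with Lemma \ref{lemheatgreen}) and apply it to the modified vorticity rather than $\curl u$ itself. Working directly with $\curl u\times\bn$ is precluded by the fact that, upon taking the curl of the momentum equation, the source $F_{1,i}^\omega$ contains $\nabla(R\beta)\times\nabla\sigma/\varepsilon$, whose scaling is not uniform in~$\varepsilon$. The cure is the same as in Section~7: pass to $\omega_{r_0}=\curl(r_0u)$, where the analogous term $\nabla r_1\times\nabla\tsigma$ vanishes identically because $\nabla r_1\parallel\nabla\tsigma$, so that \eqref{eqr0u} has no singular source. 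Using
$\chi_i\,\curl u\times\bn=\chi_i r_0^{-1}(\omega_{r_0}\times\bn)-\chi_i r_0^{-1}(\nabla r_0\times u)\times\bn,$
the second term is controlled pointwise in $\il\cdot\il_{m-5,\infty,t}$ by the product of $\il(r_0,r_0^{-1})\il_{m-5,\infty,t}$, $\il(\nabla\theta,\varepsilon\nabla\tsigma)\il_{m-5,\infty,t}$ and $\il u\il_{m-5,\infty,t}$, all already absorbed in $\cA_{m,t}+\cE_{m,t}$ through \eqref{esr0}, \eqref{Es-tsigma4} and \eqref{amtsi-u}. This reduces the claim to bounding $\il\chi_i\omega_{r_0}\times\bn\il_{m-5,\infty,t}$.

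Next I would flatten the boundary via the normal geodesic chart $\Psi_i$ of \eqref{normal geodesic coordinates}, set $\zeta=(\chi_i\omega_{r_0}\times\bn)\circ\Psi_i$, and reuse the three-piece decomposition $\zeta=\zeta_3+\zeta_4+\zeta_5$ from \eqref{eqzeta3}--\eqref{eqzeta5}. The pieces $\zeta_3$ and $\zeta_4$ are given by explicit heat-kernel representations (cf.\ \eqref{green-zeta3} and the Fourier formula used in Proposition \ref{lemze4}), whose conormal $L^\infty_{t,x}$ bounds at order $m-5$ are read off from the kernel estimates \eqref{appen-4}--\eqref{appen-7} together with the $L^\infty_{t,y}$ control of their traces: $u|_{\partial\Omega}$ for $\zeta_3$ (uniformly bounded by $\cA_{m,t}$) and $\varepsilon\,\partial_{\bn}\tsigma|_{\partial\Omega}$ for $\zeta_4$, which is $\cO(\varepsilon)$-small thanks to \eqref{Es-tsigma4}, \eqref{curlcurl-infty0} and the identification of $\partial_{\bn}\tsigma$ via $\eqref{NCNS-S2}_2$. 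Both pieces are therefore controlled by $\Lambda(\tfrac1{c_0},Y_m(0))+(T+\varepsilon)^{\vartheta}\Lambda(\tfrac1{c_0},\cN_{m,t})$ with an appropriate $\vartheta>0$.

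For $\zeta_5$, which vanishes on $\{z=0\}$ and solves \eqref{eqzeta5} with source $\cS_5$ given by \eqref{source5}, I would rewrite the equation by freezing the diffusion coefficient at $z=0$ (exactly as in \eqref{eqetai}) with $\nu=\mu\lambda_1\overline\Gamma$ and $b=(1/r_0)^{\Psi}|_{z=0}$, and invoke the first infinity-bound of Lemma \ref{lemheatgreen} (analogue of the first line of \eqref{etainfty-pre}). This yields
\begin{equation*}
\il\zeta_5\il_{m-5,\infty,t}\lesssim \Lambda_1^{m-5}\,\|\zeta|_{t=0}\|_{m-5,\infty}+T^{\frac12}\Lambda_1^{m-5}\Bigl(\Lambda_2^{m-5}\,\il\zeta_5\il_{m-5,\infty,t}+\bigl(\textstyle\int_0^t\|\widetilde{\cS}_5\|_{m-5,\infty}^2\,\d s\bigr)^{\frac12}\Bigr),
\end{equation*}
where $\widetilde\cS_5$ gathers $\cS_5$ and the remainders from unfreezing the coefficients (the analogues of $\mathfrak{M}^{\Psi}_{3,3},\mathfrak{M}^{\Psi}_{3,4}$ of Step~8). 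The polynomial coefficients $\Lambda_1^{m-5},\Lambda_2^{m-5}$ depend only on the traces of $\tilde w,\,(\Gamma\beta)^{\Psi}$ and have already been bounded in \eqref{es-Lambda12}-\eqref{es-Lambda1}; the $T^{1/2}\Lambda_2^{m-5}\il\zeta_5\il_{m-5,\infty,t}$ term on the right is absorbed for small $T+\varepsilon$. Changing back to the original variables and combining with the bounds on $\zeta_3$, $\zeta_4$ furnishes \eqref{amtsi-u} with some $\vartheta_{19}\in(0,\tfrac12]$.

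The hard part will be the estimate of $\widetilde\cS_5$ in $L^2_t W^{m-5,\infty}$, in complete parallel with the role played by \eqref{es-frakm3} in Step~8 and by Lemmas \ref{lemcR-zeta4}--\ref{lemsource5} in the $L^2$ control of $\zeta_5$. The most delicate contributions come from $G$ in \eqref{def-G}, specifically the commutator $\mu\lambda_1\overline\Gamma\, r_0^{-1}[\curl\curl,r_0]u$ and the transport term $u(\partial_t+u\cdot\nabla)r_0$: both expand as $\mu u\Delta\theta-\tfrac{\kpa}{C_v\gamma}u\Delta\theta$ plus lower-order pieces, as exploited already in \eqref{escQ}. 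It is precisely here that the hypothesis \eqref{assumption-mukpa}, together with the $\kpa^{1/2}\il\Delta\theta\il_{m-6,\infty,t}$-type control built into $\cA_{m,t}(\theta)$ (already established in \eqref{nathetainfty} and \eqref{na2theta-Lp}), will provide the required smallness. The remaining quadratic terms in $\nabla u$, as well as $\nabla^2 u$-contributions, are absorbed by $\il(\nabla u,\varepsilon\mu^{1/2}\nabla^2 u)\il_{m-5,\infty,t}$ from the preceding nine steps and by \eqref{Es-tsigma4} for $\tsigma$, while the $r_0$-dependent factors are handled through \eqref{esr0}--\eqref{esr0-sec} under $\mu\sim\kpa$ (Remark \ref{rmkmuapproxkpa}). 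No commutator structure beyond that already present in Lemmas \ref{lemcR-zeta4}--\ref{lemsource5} is required.
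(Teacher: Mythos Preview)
Your reduction to the modified vorticity $\omega_{r_0}=\curl(r_0u)$ is exactly what the paper does, and the bound on the correction $\chi_i r_0^{-1}(\nabla r_0\times u)\times\bn$ is fine. However, two points diverge from the paper, and the second one is a genuine gap.

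First, the paper does \emph{not} reuse the three-piece decomposition $\zeta=\zeta_3+\zeta_4+\zeta_5$ here. It works directly with $\zeta=(\chi_i\omega_{r_0}\times\bn)\circ\Psi_i$, rewrites the equation in the form \eqref{eqzeta-infty}, and applies the Green-function bound \eqref{infty2} (not \eqref{infty1}). The boundary contribution is handled directly by the $|\underline{f}|_{k,\infty,t}$ term in \eqref{infty2}, so the splitting into $\zeta_3,\zeta_4$ is unnecessary; moreover the paper only established $\il\zeta_3\il_{1,\infty,t}$ and $\il\zeta_4\il_{*,1,\infty,t}$, not order $m-5$.

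Second, and more importantly, your plan to invoke \eqref{infty1} for $\zeta_5$ requires $(\int_0^t\|\widetilde\cS_5\|_{m-5,\infty}^2\,\d s)^{1/2}<\infty$, and this fails. The source $\cS_5$ contains $(H^0)^\Psi$, which in turn contains $(\curl G\times\chi\bn)^\Psi$, the tangential Laplacian $\mu(r_0^{-1})^\Psi\Delta_g\zeta$, and the term $\mu\lambda_1\overline{\Gamma}(\nabla r_0^{-1}\times\curl\omega_{r_0})\times\chi\bn$. Each of these carries one normal derivative more than what is controlled in $L^\infty_{t,x}$. For instance, your expansion stops at $G\sim(\mu-\tfrac{\kappa}{C_v\gamma\lambda_1})u\Delta\theta+\text{l.o.t.}$, but the source involves $\curl G$, hence terms like $\mu u\,\partial_{\bn}\Delta\theta$ at order $m-5$: the quantity $\mu\il\nabla^3\theta\il_{m-5,\infty,t}$ is \emph{not} part of $\cA_{m,t}(\theta)$ and is not recoverable from $\cE_{m,t}(\theta)$ either. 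Likewise $\Delta_g\zeta$ hides $\partial_z\partial_y^2(r_0u)^\Psi$, which at order $m-5$ would require $\mu\il\nabla u\il_{m-3,\infty,t}$---again unavailable uniformly.

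The paper's cure is precisely to isolate these extra normal derivatives: it writes the right-hand side of \eqref{eqzeta-infty} as $\mathfrak{F}_1+\mu^{1/2}\partial_z\mathfrak{F}_2$ (Proposition~\ref{lem-source}), where the $\partial_z$ is traded for a $(t-t')^{-1/2}$ singularity of the heat kernel via \eqref{infty2}. This forces $\mathfrak{F}_2\in L^p_tL^\infty_x$ with $p>2$ (the paper takes $p=4$, using \eqref{na2theta-Lp} for the $\kappa^{1/2}\nabla^2\theta$ contributions), which is why \eqref{infty2} is stated for $p>2$ only. Without this $\mu^{1/2}\partial_z$-splitting of the source, the $L^\infty$ estimate cannot close.
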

\begin{proof}
Let us write:
\begin{align*}
    \omega\times\chi_i\bn=\colon\curl u\times  \chi_i\bn=\f{1}{r_0}\curl(r_0 u)\times\chi_i\bn-\f{\na r_0\times u}{r_0}\times \chi_i\bn,
\end{align*}
where $r_0$ is defined in \eqref{defr0}. Consequently, 
we have that:
\begin{align*}
  \il \omega \times \chi_i\bn \il_{m-5,\infty,t}\lesssim 
 \il r_0^{-1}\il_{m-5,\infty,t} \il\curl(r_0 u)\il_{m-5,\infty,t}+  \il (r_0^{-1}\na r_0, u)\il_{m-5,\infty,t}^2.
\end{align*}
It can be derived from the definition \eqref{defr0} for $r_0,$ the estimate \eqref{Es-tsigma4} for $\tsigma$
that
\begin{align*}
   \il( r_0^{-1}, r_0^{-1}\na r_0) \il_{m-5,\infty,t}&\lesssim \lab \il(\Id, \na)(\ep\tsigma, \theta) \il_{m-5,\infty,t}
   \big) \\
   &\lesssim \lab \ep\lat+ \cA_{m,t}(\theta)\big)
\end{align*}
where $\cA_{m,t}(\theta)$ has been controlled already in \eqref{nathetainfty}. Therefore, it suffices to prove that, there exists a constant $\vartheta_{20}>0,$ such that:
\begin{align}\label{toprove}
 \il \chi_i\curl(r_0 u)\times\bn\il_{m-5,\infty,t}\lesssim   \lab Y_m(0)\big) +(T+\ep)^{\vartheta_{20}}\lae, \qquad  \forall \, i=1,\cdots N.
\end{align}
Denote $\zeta_i=|g|^{\f{1}{4}}\big(\curl(r_0 u)\times \chi_i\bn\big)^{\Psi_i},$ where $\Psi_i$ is the transformation associated to the normal geodesic coordinates defined in \eqref{normal geodesic coordinates}, we are lead to  control $\il \zeta_i\il_{m-5,\infty,t}, \, i=1,\cdots N.$ In what follows, we shall drop the  subscript  $i$ for the notational clarity.

By \eqref{eqzeta}, $\zeta$ solves the system:
\begin{equation}\label{eqzetainfty}
\left\{
    \begin{array}{l}
   (\pt+ \tilde{u} \cdot \na){\zeta}-\mu \lambda_1\overline{\Gamma}
  ( \f{{1}}{r_0})^{\Psi}(\partial_z^2  +\Delta_{{g}})\zeta =|g|^{\f{1}{4}} H^{\Psi}+ H^{\dag},  \quad (t,x) \in \mR_{+}\times \mR^3,  \\[2.5pt]
\zeta|_{z=0}= \big[\f{R\ep}{C_v\gamma}r_0\p_{\bn}\tsigma |g|^{\f{1}{4}}\chi\Pi u+2r_0|g|^{\f{1}{4}}\chi\Pi\big((D\bn)u-a u\big)\big]^{\Psi}|_{z=0},  \\[2.5pt]
\zeta|_{t=0}=|g|^{\f{1}{4}}\big(\curl(r_0 u)\times \chi\bn\big)^{\Psi}|_{t=0},
    \end{array}
\right.
\end{equation}
where $H^{\dag}$ is defined in \eqref{defHdag}, $H^{\Psi}=H^0\circ \Psi$ with $H^0$ defined in \eqref{defH}, $\Delta_g$ defined in \eqref{defdeltag} involves only tangential derivatives. Moreover, the definitions of $r_0, \tsigma, \tilde{u} $ can be found in \eqref{defr0},  \eqref{deftildeu}.
In order to take benefit of the property \eqref{infty2}
for the diffusion-transport equation \eqref{eqheattransport}, 
let us rewrite the system in the following way:
\beq\label{eqzeta-infty}
\begin{aligned}
  & \big(\pt+\tilde{w}_1^b\p_{y^1}+\tilde{w}_2^b\p_{y^2}+  z (\p_z\tilde{w}_3)^b\p_z\big){\zeta}-\mu \lambda_1\overline{\Gamma}
  [(r_0^{-1})^{\Psi}]^b\partial_z^2  \zeta\\
  &=|g|^{\f{1}{4}} H^{\Psi}+ H^{\dag}-\sum_{l=1}^2 (\tilde{w}_{l}-\tilde{w}_{l}^b)\p_{y^l}\zeta- \big(\tilde{w}_{3}-\tilde{w}_{3}^b-z  (\p_z\tilde{w}_{3})^b\big)\p_z \zeta\\
 &\quad+\mu \lambda_1\overline{\Gamma}
 \bigg( (r_0^{-1})^{\Psi}\Delta_g \zeta+  ((r_0^{-1})^{\Psi}-[(r_0^{-1})^{\Psi}]^b) \partial_z^2  \zeta \bigg)- \kpa\overline{\Gamma}(D\Psi)^{-*} (\beta\na\theta)^{\Psi}\cdot\na\zeta
\end{aligned}
\eeq
where $\tilde{w}=\tilde{u}-\kpa\overline{\Gamma}(D\Psi)^{-*} (\beta\na\theta)^{\Psi}$ is defined in \eqref{deftildew}.  
To control $\il\zeta\il_{m-5,\infty,t},$ it suffices to 
show the following proposition: 
\begin{prop}\label{lem-source}
The right hand side of \eqref{eqzeta-infty} can be written into the form $\mathfrak{F}_1+\mu^{\f{1}{2}}\p_z\mathfrak{F}_2,$ where 
$\mathfrak{F}_1, \mathfrak{F}_2$ admit the property: 
\begin{align}\label{mainprop}
    \big(\int_0^t \|\mathfrak{F}_1(s)\|_{m-5,\infty}^2\d s\big)^{\f{1}{2}}+  \big(\int_0^t \|\mathfrak{F}_2(s)\|_{m-5,\infty}^4\d s\big)^{\f{1}{4}}\lesssim \lae. 
\end{align}
\end{prop}
Indeed, once this proposition is proved, it then follows from \eqref{infty2} (by setting $\nu=\mu, b=[(r_0^{-1})^\Psi]^b$) that,
\beq\label{zetainfty-pre}
\begin{aligned}
    \il\zeta\il_{m-5,\infty,t}\lesssim \Lambda_3^{m-5}\bigg(\|\zeta|_{t=0}\|_{m-5,\infty}+ 
    |\zeta|_{z=0}|_{m-5,\infty,t}+T  \Lambda_4^{m-5} \il\zeta\il_{m-5,\infty,t}\\
 \qquad \quad  + T^{\f{1}{2}} \big(\int_0^t \|\mathfrak{F}_1(s)\|_{m-5,\infty}^2\d s\big)^{\f{1}{2}}+ T^{\f{1}{4}} \big(\int_0^t \|\mathfrak{F}_2(s)\|_{m-5,\infty}^4\d s\big)^{\f{1}{4}}    \bigg)
\end{aligned}
\eeq
where we denote
\begin{align*}
    \Lambda_3^{k}= \lab | T(\tilde{w}^b, (\p_z\tilde{w}_3)^b), [(r_0^{-1})^{\Psi}]^b |_{k,\infty,t} \big), \quad \Lambda_4^k=\lab |(\tilde{w}^b, (\p_z\tilde{w}_3)^b), \pt [(r_0^{-1})^{\Psi}]^b |_{k,\infty,t}
    \big) .
\end{align*}
Analogues to the estimate \eqref{es-Lambda12} and 
\eqref{es-Lambda1}, we can find a positive constant 
$\vartheta_{21}>0,$ such that:
\begin{align}\label{eslembda35}
\Lambda_3^{m-5} \lesssim \lab Y_m(0)\big) + (T+\ep)^{\f{\vartheta_{21}}{2}}\lae, \,\qquad  \Lambda_4^{m-5}\lesssim \lat,
\end{align}
which yields that:
\begin{align}\label{zetainitial}
    \Lambda_3^{m-5}\|\zeta|_{t=0}\|_{m-5,\infty}
 \lesssim \lab Y_m(0)\big)+(T+\ep)^{\vartheta_{21}}\lae.
\end{align}
Moreover, by the definition $\eqref{eqzetainfty}_2$ and the Sobolev embedding  \eqref{sobebd}, 
we can have that:
\begin{align*}
  |\zeta|_{z=0}|_{m-5,\infty,t}&\lesssim \lab \il(\theta, u)\il_{m-5,\infty,t}+ \ep \il(\na \sigma, \ep\mu \na\div u)\il_{m-5,\infty,t}\big) \\
  &\lesssim \lab \tilde{\cE}_{m,t}+ \ep \cA_{m,t}\big),
\end{align*}
which, combined with \eqref{es-tildecE} for $\tilde{\cE}_{m,t}$, allows us to find a constant $\vartheta_{22}>0$ such that:
\begin{align*}
|\zeta|_{z=0}|_{m-5,\infty,t}\lesssim  \lab Y_m(0)\big) + (T+\ep)^{\vartheta_{22}}\lae.
\end{align*}
Collecting \eqref{eslembda35} and the above estimate, we then find a constant $\vartheta_{23}>0,$ such that:
\begin{align}\label{zetabdry}
\Lambda_3^{m-5}|\zeta|_{z=0}|_{m-5,\infty}\lesssim  \lab Y_m(0)\big) + (T+\ep)^{\vartheta_{23}}\lae.
\end{align}
Inserting now \eqref{zetainitial}, \eqref{zetabdry}, 
\eqref{eslembda35} and \eqref{mainprop} into \eqref{zetainfty-pre}, we can ensure the existence of some constant $\vartheta_{20}>0,$ such that:
\begin{align}
    \il\zeta\il_{m-5,\infty,t}\lesssim\lab Y_m(0)\big) + (T+\ep)^{\vartheta_{20}}\lae,
\end{align}
which, by the change of variable, lead to \eqref{toprove} and thus finishes the proof.
 \end{proof}
\begin{proof}[Proof of Proposition $\ref{lem-source}$]
Let us come back to prove Proposition \ref{lem-source}. 
We sort up the right hand side of the above equations and clarify the definition of $\mathfrak{F}_1$ and $\mathfrak{F}_2.$ First of all, as $\zeta=(\curl(r_0 u)\times\chi\bn)^{\Psi},$ we can rewrite the term 
$\lambda_1\overline{\Gamma}
 (r_0^{-1})^{\Psi}\Delta_g \zeta$ as:
 \begin{align}\label{defFR12}
  \mu\lambda_1\overline{\Gamma}
 (r_0^{-1})^{\Psi}\Delta_g \zeta= \mu\p_z\mathfrak{R}_1\bigg(
 (r_0^{-1})^{\Psi}, (\p_y^2, \p_y, \Id)(r_0 u)^{\Psi} \bigg)+\mu \mathfrak{R}_2 \bigg( (\p_z,\Id)  (r_0^{-1})^{\Psi}, (\p_y^3, \p_y^2, \p_y, \Id)(r_0 u)^{\Psi}
 \bigg)
 \end{align}
 where $\mathfrak{R}_1$ and $\mathfrak{R}_2$ are smooth functions concerning their arguments with coefficients depending on $(\chi\bn)^{\Psi}, D\Psi$ and their derivatives up to the second order. 
 Similarly, let us split the term $\kpa\overline{\Gamma}(D\Psi)^{-*} (\beta\na\theta)^{\Psi}\cdot\na\zeta$ as
 \begin{align}\label{defFR34}
   \kpa\overline{\Gamma}(D\Psi)^{-*} (\beta\na\theta)^{\Psi}\cdot\na\zeta=\kpa \p_z 
   \mathfrak{R}_3 \big((\beta\na\theta)^{\Psi}, \na (r_0 u)^{\Psi}\big)
   + \kpa \mathfrak{R}_4\big((\Id ,\p_z) (\beta\na\theta)^{\Psi}, (\na, \p_{y}^2) (r_0 u)^{\Psi} \big).
 \end{align}
 Next, let us write: 
 \begin{align*}
     \mu \lambda_1\overline{\Gamma}
  \big((r_0^{-1})^{\Psi}-[(r_0^{-1})^{\Psi}]^b\big) \partial_z^2 \zeta=\mu \p_z \mathfrak{R}_5+  \mu\mathfrak{R}_6
 \end{align*}
 with 
 \begin{align}\label{defFR56}
    \mathfrak{R}_5=\lambda_1\overline{\Gamma} 
 \bigg( \big((r_0^{-1})^{\Psi}-[(r_0^{-1})^{\Psi}]^b\big)\p_z\zeta- \p_z (r_0^{-1})^{\Psi} \zeta\bigg), \quad
 \mathfrak{R}_6=\lambda_1\overline{\Gamma} \p_z^2 (r_0^{-1})^{\Psi}  \zeta.
 \end{align}
 Finally, for the term $H^{\Psi}$,  there are also some
 terms that may involve one more normal derivative than what can be controlled, namely the terms $(1)^{\Psi}, (2)^{\Psi}:$
 \begin{align*}
(1)= \curl G\times\chi\bn,\qquad 
(2)= -\mu\lambda_1\overline{\Gamma}\big(\na (r_0^{-1})\times \curl \omega_{r_0}\big)\times\chi\bn- 2\mu\lambda_1\overline{\Gamma} r_0^{-1}  \div\big(\omer\times\na (\chi\bn)\big),
 \end{align*}
where $\omega_{r_0}=\curl(r_0 u)$ and $G$ is defined in \eqref{def-G}. These two terms could be split into the following forms respectively:
\begin{align}\label{defFR78}
(1)^{\Psi}=\mu^{\f{1}{2}}\p_z\mathfrak{R}_7\big(\mu^{-\f{1}{2}}G^{\Psi}\big)+\mathfrak{R}_8\big((\Id, \p_y)G^{\Psi}\big),
\end{align}
\begin{align}\label{defFR910}
   (2)^{\Psi}=\mu  \p_z\mathfrak{R}_9\big(\omega_{r_0}^{\Psi}, \p_y (r_0 u)^{\Psi}, (\Id, \na) (r_0^{-1})^{\Psi}\big)
   + \mu  \mathfrak{R}_{10} \big( \omega_{r_0}^{\Psi}, (\p_y,\p_y^2)(r_0 u)^{\Psi},(\Id,\na, \na^2)(r_0^{-1})^{\Psi}\big).
\end{align}

 We now can define $ \mathfrak{F}_1, \mathfrak{F}_2$ as:
 \begin{align*}
    & \mathfrak{F}_1=\big(H-(1)-(2)\big)^{\Psi}+ H^{\dag}+\mu (\mathfrak{R}_2+\mathfrak{R}_6+\mathfrak{R}_{10})+\kpa \mathfrak{R}_4+\mathfrak{R}_8\\
    &\qquad \qquad+\sum_{l=1}^2 (\tilde{w}_{l}-\tilde{w}_{l}^b)\p_{y^l}\zeta- \big(\tilde{w}_{3}-\tilde{w}_{3}^b-z  (\p_z\tilde{w}_{3})^b\big)\p_z \zeta, \\
 & \mathfrak{F}_2=\mu^{\f{1}{2}}(\mathfrak{R}_1+\mathfrak{R}_5+\mathfrak{R}_9)+\kpa\mu^{-\f{1}{2}}\mathfrak{R}_3
+\mathfrak{R}_7(\mu^{-\f{1}{2}}G),
 \end{align*}
 where $\mathfrak{R}_1-\mathfrak{R}_{10}$ are defined just before in 
 \eqref{defFR12}- \eqref{defFR910} 
 and 
 \beqs
\begin{aligned}
H-(1)-(2)=  -\sum_{l=1}^3\na u_{l}\times \p_{l}(r_0 u) \times \chi \bn+\omer\times (u\cdot \na (\chi\bn))+\mu\lambda_1\overline{\Gamma} r_0^{-1}
\omer \times\Delta(\chi\bn),
\end{aligned}
\eeqs
 see also  \eqref{defHdag} for the definitions of $H^{\dag}.$
 
It now remains for us to show that $ \mathfrak{F}_1, \mathfrak{F}_2$ satisfy the property \eqref{mainprop}.
Most of the terms appearing in $\mathfrak{F}_1, \mathfrak{F}_2$ can be controlled directly in view of their expressions. For instance, by using the assumption 
$\kpa \approx \mu,$
\begin{align}\label{frakF-easyterm}
    \il(H-(1)-(2))^{\Psi}, H^{\dag}, \mathfrak{R}_1, \kpa \mu^{-\f{1}{2}} \mathfrak{R}_3,  \kpa \mathfrak{R}_4, \mu \mathfrak{R}_6,  \mathfrak{R}_9, \mu\mathfrak{R}_{10} )\il_{m-5,\infty,t}\lesssim \lae.
\end{align}
Moreover, it follows similarly from the derivation of \eqref{escm3-3} that:
\beq\label{es-corectedconvection}
\begin{aligned}
&\il \sum_{l=1}^2 (\tilde{w}_{l}-\tilde{w}_{l}^b)\p_{y^l}\zeta+ \big(\tilde{w}_{3}-\tilde{w}_{3}^b-z  (\p_z\tilde{w}_{3})^b\big)\p_z \zeta \il_{m-5,\infty,t}\\
&\lesssim \lab \il(\na u, \na\tilde{w})\il_{m-5,\infty,t} +\il\p_z^2\tilde{w}_3\il_{0,\infty,t}+\il u \il_{m-3,\infty,t}\big) \lesssim \lae.
\end{aligned}
\eeq
It remains to control $\mu\mathfrak{R}_2, \mu^{\f{1}{2}}\mathfrak{R}_5, \mathfrak{R}_7, \mathfrak{R}_8$  which are the tasks of the following.
First, by the Sobolev embedding \eqref{sobebd},
it holds that:
\beq
 \begin{aligned}
 \mu^{\f{1}{2}}\big(\int_0^t &\|\mathfrak{R}_2(s)\|_{m-5,\infty}^2 \d s\big)^{\f{1}{2}}\lesssim \mu^{\f{1}{2}} 
   \big(\int_0^t \|(r_0 u)(s)\|_{m-2,\infty}^2\d s\big)^{\f{1}{2}}+T^{\f{1}{2}}\lat\\
  & \lesssim \mu^{\f{1}{2}} \big(\|\na(r_0 u)\|_{\hco^{m-1}}+ \|r_0 u\|_{\uhco^m}\big)+T^{\f{1}{2}}\lat \lesssim \lae.
 \end{aligned}
 \eeq
 Let us proceed to estimate  $\mu^{\f{1}{2}}\mathfrak{R}_5$ defined in \eqref{defFR56}. Thanks to \eqref{r0-r0b}, \eqref{defbetaK-betaKb}, we have that:
 \begin{align*}
   \mu^{\f{1}{2}}  \big((r_0^{-1})^{\Psi}-[(r_0^{-1})^{\Psi}]^b\big)\p_z\zeta = \mu^{\f{1}{2}} (\ep z\mathcal{V}_1\p_z\zeta+z^2\mathcal{V}_2 \zeta )
 \end{align*}
 where, by the property \eqref{na2theta-Lp}  (for $p=4$),
 \begin{align*}
 \il \mathcal{V}_1\il_{m-5,\infty,t}\lesssim \lat, \quad
 \big(\int_0^t\|\mu^{\f{1}{2}}\mathcal{V}_2 \|_{m-5,\infty}^4\d s\big)^{\f{1}{4}}\lesssim  \big(\int_0^t\|\mu^{\f{1}{2}}(\Id,\na,\na^2)\theta \|_{m-5,\infty}^4\d s\big)^{\f{1}{4}}\lesssim \lae.
 \end{align*}
 Therefore, by noting the following estimates:
 \begin{align*}
& \il\ep\mu^{\f{1}{2}}z \p_z\zeta\il_{m-5,\infty,t}+\il z^2\zeta\il_{m-5,\infty,t}\lesssim \ep\mu^{\f{1}{2}}\il\na u\il_{m-4,\infty,t}+\
 \il u\il_{m-3,\infty,t}\lesssim \lat,\\
&\il\p_z (r_0^{-1})^{\Psi} \zeta\il_{m-5,\infty,t}\lesssim \lab \il(\Id,\na)(\theta,\ep\tsigma, u)\il_{m-5,\infty,t}\big)
\lesssim \lat,
 \end{align*}
 we can conclude that:
 \begin{align}\label{esFr5}
      \big(\int_0^t\|\mu^{\f{1}{2}}\mathfrak{R}_5 \|_{m-5,\infty}^4\d s\big)^{\f{1}{4}}\lesssim 
\lae.
 \end{align}
 Next, we control $\mu^{\f{1}{2}}\mathfrak{R}_7, \mathfrak{R}_8$ as:
 \begin{align}\label{esFr78-pre}
    \big( \int_0^t \|\mathfrak{R}_7(s)\|_{m-5,\infty}^4 \d s\big)^{\f{1}{4}}\lesssim  \big( \int_0^t \|\mu^{-\f{1}{2}}G(s)\|_{m-5,\infty}^4 \d s\big)^{\f{1}{4}},\quad 
 \big(\int_0^t \|\mathfrak{R}_8(s)\|_{m-5,\infty} ^2\d s\big)^{\f{1}{2}} \lesssim \big(\int_0^t \|G(s)\|_{m-4,\infty} ^2\d s\big)^{\f{1}{2}}
 \end{align}
 where $G=G_1+G_2$ with
 \begin{align*}
   &  G_1=u(\pt+u\cdot\na)r_0+\mu\lambda_1\overline{\Gamma}r_0^{-1}[\curl\curl, r_0]u,\\
   & G_2=-\ep\mu r_1\lambda_1\Gamma\curl\curl u-\mu\lambda_1(\Gamma-\overline{\Gamma})\curl\curl u-\mu(2\lambda_1+\lambda_2)(1+\ep r_1)\div u \na\Gamma.
 \end{align*}
By the definition of $\cA_{m,t}$ and the estimate \eqref{epmuna2u},  we see that:
\beq\label{propG2}
\begin{aligned}
 &\mu^{-\f{1}{2}}\il G_2 \il_{m-5,\infty,t}\lesssim 
 \lab \il(\sigma, \na\sigma, \ep\mu \div u, \ep\mu^{\f{1}{2}}\na^2 u)\il_{m-5,\infty,t}\big)\lesssim \lat ,\\
 &\il G_2\il_{m-4,\infty,t}\lesssim \lab \il(\sigma, \na\sigma, \ep\mu \div u, \ep\mu\na^2 u)\il_{m-4,\infty,t}\big) \lesssim \lae.
\end{aligned}
\eeq
Moreover, by \eqref{identity1}, \eqref{timeder-r0-1}, 
\begin{align*}
u(\pt+u\cdot\na)r_0=-\f{\kpa}{C_v\gamma} \exp\big(\f{R\ep}{C_v\gamma} \tsigma\big)\Gamma u\Delta\theta + l.o.t, \quad  \mu\lambda_1\overline{\Gamma}r_0^{-1}[\curl\curl, r_0]u
  = \mu\lambda_1\overline{\Gamma}(\Delta\theta
  )u+ l.o.t. \, ,
\end{align*}
where we denote $l.o.t.$ for the terms which satisfy: 
\begin{align*}
\il \mu^{-\f{1}{2}}  (l.o.t)\il_{m-5,\infty,t} +\big(\int_0^t \| (l.o.t)(s)\|_{m-4,\infty}^2\d s\big)^{\f{1}{2}}\lesssim \lae. 
\end{align*}
Noticing the property \eqref{na2theta-Lp} and the fact
$$(\mu+\kpa)\il\Delta\theta\il_{m-5,\infty,t}\lesssim \lat,\quad  \kpa\big(\int_0^t \| \Delta\theta(s)\|_{m-4,\infty}^2\d s\big)^{\f{1}{2}}\lesssim \kpa(\|\na\Delta\theta\|_{\hco^{m-3}}+\|\Delta\theta\|_{\hco^{m-2}}) \lesssim \cE_{m,t},$$ we can conclude that:
\begin{align*}
    \big( \int_0^t \|\mu^{-\f{1}{2}}G_1(s)\|_{m-5,\infty}^4 \d s\big)^{\f{1}{4}}+ \big(\int_0^t \|G_1(s)\|_{m-4,\infty} ^2\d s\big)^{\f{1}{2}}\lesssim \lae,
 \end{align*}
 which, combined with  \eqref{propG2} and \eqref{esFr78-pre}, yields
 \begin{align}\label{frakR78}
    \big( \int_0^t \|\mathfrak{R}_7(s)\|_{m-5,\infty}^4 \d s\big)^{\f{1}{4}}+
 \big(\int_0^t \|\mathfrak{R}_8(s)\|_{m-5,\infty} ^2\d s\big)^{\f{1}{2}} \lesssim \lae.
 \end{align}
We now finish the proof of \eqref{mainprop} by 
collecting \eqref{frakF-easyterm}-\eqref{esFr5} and \eqref{frakR78}. 
 \end{proof}
\begin{lem}
Under the same assumption as in Proposition \ref{prop-amtsiu}, there exists a constant $\vartheta_{24}>0$ such that, for any $\ep \in (0,1], (\mu,\kpa)\in A,$ any $0<t\leq T,$ 
\begin{align*}
   \ep\mu^{\f{1}{2}}  \il \p_{\bn}(\curl u\times\chi_i\bn)\il_{m-5,\infty,t}\lesssim   \lab Y_m(0)\big) +(T+\ep)^{\vartheta_{24}}\lae, \qquad  \forall \, i=1,\cdots N.
\end{align*}
\end{lem}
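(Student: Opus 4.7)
The plan is to follow the blueprint developed for $\il \chi_i\curl u\times\bn\il_{m-5,\infty,t}$ in the previous lemma, but applied to one extra normal derivative. I would localize to the chart $\Omega_i$, transform to the half-space via the normal geodesic coordinates $\Psi_i$ of \eqref{normal geodesic coordinates}, and again work with the unknown $\zeta=|g|^{\f14}(\curl(r_0 u)\times\chi_i\bn)^{\Psi_i}$ governed by the transport--diffusion system \eqref{eqzeta-infty}. Since $\p_{\bn}$ reduces on the half-space to $\p_z$ up to tangential corrections, one has
\begin{equation*}
\p_{\bn}(\curl u\times \chi_i\bn)\circ\Psi_i=(r_0^{-1})^{\Psi_i}|g|^{-\f14}\p_z\zeta + \mathrm{l.o.t.},
\end{equation*}
where the remainder involves $\nabla r_0^{-1}$, $\nabla|g|^{\f14}$ and $\curl(r_0u)\times\chi_i\bn$ itself. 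These remainders are controlled in $\il\cdot\il_{m-5,\infty,t}$ (even without the $\ep\mu^{\f12}$ weight) by combining \eqref{esr0}, \eqref{esr0-2}, \eqref{toprove} and the previously-obtained bound on $\cA_{m,t}(\theta)$. The task thus reduces to controlling $\ep\mu^{\f12}\il\p_z\zeta\il_{m-5,\infty,t}$.

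For this I would apply the Green's-function representation of Lemma~\ref{lemheatgreen} to the frozen-boundary form \eqref{eqzeta-infty} of the $\zeta$-equation, with $\nu=\mu$ and boundary coefficient $b=[(r_0^{-1})^{\Psi_i}]^b$, and use the $\p_z$-branch of the estimate parallel to the $\kpa\il\p_z\eta\il_{m-5,\infty,t}$ bound of \eqref{etainfty-pre}. This produces three contributions. The initial-data contribution $\ep\mu^{\f12}\|\p_z\zeta(0)\|_{m-5,\infty}$ is absorbed into $\lab Y_m(0)\big)$ since by its very definition \eqref{initialnorm} $Y_m(0)$ controls $\|\ep\mu^{\f12}\na^2(\sigma,u)(0)\|_{m-5,\infty}$. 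The boundary-data contribution is dominated by $|\zeta|_{z=0}|_{m-5,\infty,t}$ up to tangential smoothing; this was already handled in the previous lemma and contributes a $(T+\ep)^{\vartheta}\lae$ term. The source contribution is decomposed using Proposition~\ref{lem-source}, which splits the source term of \eqref{eqzeta-infty} into $\mathfrak{F}_1+\mu^{\f12}\p_z\mathfrak{F}_2$ with $\mathfrak{F}_1$ controlled in $\bigl(\int_0^t\|\cdot(s)\|_{m-5,\infty}^2\d s\bigr)^{\f12}$ and $\mathfrak{F}_2$ in $\bigl(\int_0^t\|\cdot(s)\|_{m-5,\infty}^4\d s\bigr)^{\f14}$ by $\lae$.

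The main obstacle will be the $\mu^{\f12}\p_z\mathfrak{F}_2$ piece of the source, since the additional $\p_z$ interacts with the $\p_z$ of the Green's function to produce a kernel that is only borderline integrable near the boundary. This is precisely where the extra $\ep$ factor on the left-hand side is spent: one integrates by parts in $z$ to move the $\p_z$ onto the kernel, and the resulting kernel behaves like the second $z$-derivative of the heat semigroup, whose near-boundary singularity is compensated by the $\ep$ weight together with the $L^4_t$ gain on $\mathfrak{F}_2$ provided by \eqref{mainprop}. Once these three contributions are assembled, the $\lat$-type prefactors are absorbed back into $\lae$ as done throughout Section~10, the chain of $L^\infty$ bounds previously established (in particular \eqref{amtsi-u} through the bootstrap of $\tilde{\cE}_{m,t}$) feeds back consistently, and the estimate closes with some $\vartheta_{24}\in(0,\min\{\vartheta_{6},\vartheta_{20}\}]$.
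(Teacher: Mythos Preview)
Your plan has a genuine gap at the boundary step. The $\p_z$-branch of Lemma~\ref{lemheatgreen}, namely \eqref{infty3}, is stated only under the hypothesis $\underline{f}=0$; the quantity $\zeta=|g|^{1/4}(\curl(r_0u)\times\chi_i\bn)^{\Psi_i}$ you propose to use has the nontrivial boundary value \eqref{eqzetainfty}${}_2$, so you cannot invoke \eqref{infty3} as written. If you try to carry the boundary term through the Green's formula \eqref{green-heat} by hand, taking $\mu^{1/2}\p_z$ of the boundary contribution produces the kernel $\mu^{3/2}\p_z^2K_+(t,t',y,z,0)$, whose size is of order $(t-t')^{-3/2}$ uniformly in $\mu$; this is not integrable in $t'$, and no ``tangential smoothing'' or extra factor of $\ep$ cures it. The same obstruction hits your treatment of the $\mu^{1/2}\p_z\mathfrak{F}_2$ piece of the source: after the integration by parts you describe, the kernel $\mu\p_z^2K_\pm$ has $L^1_z$ norm $\sim (t-t')^{-1}$, which cannot be paired with $\mathfrak{F}_2\in L^4_t$ (or any $L^p_t$) via Young. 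The factor $\ep$ is a constant multiplier and does not improve kernel integrability.

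The paper sidesteps both issues by changing the unknown. It works with $\omega_{\bn i}=\curl u\times\chi_i\bn-2\Pi(\chi_i(-au+D\bn\cdot u))$, which by the Navier-slip condition \eqref{bd-curlun} satisfies $\omega_{\bn i}|_{\p\Omega}=0$; thus \eqref{infty3} applies directly to $\varsigma=\omega_{\bn i}\circ\Psi_i$ with no boundary contribution at all. Moreover, since $\omega_{\bn i}$ does not involve $r_0$, its source $F^{\varsigma}$ (see \eqref{defFvarsigma}) is simpler than the right-hand side of \eqref{eqzeta-infty}: there is no need for the $\mathfrak{F}_1+\mu^{1/2}\p_z\mathfrak{F}_2$ splitting, and one verifies $\il\ep F^{\varsigma}\il_{m-5,\infty,t}\lesssim\lae$ directly, the worst term $\ep\mu\p_z^2\varsigma$ being absorbed by the non-uniform bound \eqref{epmuna2u} on $\ep\mu\na^2u$. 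The passage from $\p_{\bn}(\curl u\times\chi_i\bn)$ to $\p_{\bn}\omega_{\bn i}$ costs only $\il\na u\il_{m-5,\infty,t}$, which is already controlled.
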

\begin{proof}
Since $\il\na u\il_{m-5,\infty,t}$ has been controlled, 
it suffices for us to control $\ep\mu^{\f{1}{2}}\il\p_{\bn}\omega_{\bn i}\il_{m-5,\infty,t},$ where 
$\omega_{\bn i}=\colon \curl u\times\chi_i \bn -2 \Pi (\chi_i(-a u+D\bn\cdot u))$
solves the equations:
\begin{align*}
    (\pt +u\cdot\na)\omega_{\bn i}-\lambda_1 R\mu (\beta\Gamma)\Delta \omega_{\bn i}=F_i^{\omega}+L_i, \, \text{ in } \Omega_i\cap \Omega 
    \quad \omega_{\bn i}|=0, \, \text{ on } {\p(\Omega_i\cap \Omega)} ,
\end{align*}
we refer to \eqref{defFomegaL} for the definitions of 
$F_i^{\omega}$ and $L_i.$ 
As before, we define $  \varsigma_i=\colon \omega_{\bn i}\circ \Psi_i,$
($\Psi_i$ is defined in \eqref{normal geodesic coordinates}) and extend it  by zero from 
$\Psi^{-1}(\overline{\Omega_i\cap \Omega})$ to $\overline{\mathbb{R}_{+}^3}$ without changing the notation.
Thanks to \eqref{change-variable}-\eqref{laplaceg}, we can find that $ \varsigma_i$ is governed by the system: (the subscript is dropped)
\begin{align*}
\left\{ 
\begin{array}{l}
   \big(\pt+\tilde{w}_1^b\p_{y^1}+\tilde{w}_2^b\p_{y^2}+  z (\p_z\tilde{w}_3)^b)\p_z\big) \varsigma-\mu \lambda_1 R
  [(\Gamma\beta)^{\Psi}]^b\partial_z^2  \varsigma= F^{\varsigma} \text{ in } \mR_{+}^3,\\[2.5pt]
   \varsigma|_{t=0}=\omega_{\bn}|_{t=0}\circ \Psi, \quad  \varsigma|_{z=0}=0,
\end{array}
\right.
\end{align*}
  where $\tilde{w}$ is defined in \eqref{deftildew} and 
  \beq\label{defFvarsigma}
  \begin{aligned}
 F^{\varsigma}& =(F^{\omega}+L)^{\Psi}-\sum_{l=1}^2 (\tilde{w}_{l}-\tilde{w}_{l}^b)\p_{y^l}\varsigma- \big(\tilde{w}_{3}-\tilde{w}_{3}^b-z  (\p_z\tilde{w}_{3})^b\big)\p_z \varsigma\\
 &\quad+\mu \lambda_1 R
 \bigg( (\Gamma\beta)^{\Psi}(\f{1}{2}\p_z \ln |g|\p_z+\Delta_g) \varsigma+  ((\Gamma\beta)^{\Psi}-[(\Gamma\beta)^{\Psi}]^b) \partial_z^2  \varsigma \bigg)- \kpa\overline{\Gamma}(D\Psi)^{-*} (\beta\na\theta)^{\Psi}\cdot\na\varsigma.
\end{aligned}
\eeq
We can now apply \eqref{infty3} in appendix (setting $\nu=\mu$)
to obtain that:
\begin{align}\label{varsigma-pre}
    \ep\mu^{\f{1}{2}}\il\p_z \varsigma\il_{m-5,\infty,t}\lesssim \Lambda_3^{m-5} \|\ep \mu^{\f{1}{2}}\varsigma|_{t=0}\|_{m-5,\infty}+
   T^{\f{1}{2}} \Lambda_3^{m-5}\Lambda_4^{m-5} \il\ep F^{\varsigma}\il_{m-5,\infty,t}
\end{align}
where $\Lambda_3^{m-5}, \Lambda_4^{m-5}$ admit the property \eqref{eslembda35}. Similar to \eqref{zetainitial}, we have directly by \eqref{eslembda35} that:
\begin{align}\label{varsigma-intial}
    \Lambda_3^{m-5} \|\ep \mu^{\f{1}{2}}\varsigma|_{t=0}\|_{m-5,\infty}\lesssim 
\Lambda_3^{m-5}Y_m(0)
 \lesssim \lab Y_m(0)\big)+(T+\ep)^{\vartheta_{21}}\lae.
\end{align}
It thus remains to control $\il \ep F^{\varsigma}\il_{m-5,\infty,t}.$ 
By checking every term in \eqref{defFomegaL} where $F^{\omega}, L$ are defined, one can have that:
\begin{align*}
    \il\ep (F^{\omega}+L)^{\Psi}\il_{m-5,\infty,t}\lesssim \lab \il\big((\Id,\na, \ep\pt)(\sigma, u, \theta), \ep\mu\na^2 u\big)\il_{m-5,\infty,t}\big) \lesssim \lat.
\end{align*}
Next, it follows almost identically as in \eqref{es-corectedconvection} that:
\begin{align*}
   \il \sum_{l=1}^2 (\tilde{w}_{l}-\tilde{w}_{l}^b)\p_{y^l}\varsigma+ \big(\tilde{w}_{3}-\tilde{w}_{3}^b-z  (\p_z\tilde{w}_{3})^b\big)\p_z \varsigma\il_{m-5,\infty,t}\lesssim \lae.
\end{align*}
Finally, it is directly to verify from their expression that the multiplication of $\ep$ and the last line in \eqref{defFvarsigma} can be controlled as:
\begin{align*}
    \il\ep (\textnormal{last line of \eqref{defFvarsigma}})\il_{m-5,\infty,t}&\lesssim \lab \ep\mu(\il\na u\il_{m-3,\infty,t}+\il\na^2 u\il_{m-4,\infty,t})+\il\na(\sigma,\theta)\il_{m-5,\infty,t}\big)\\
    &\lesssim \lae.
\end{align*}
Note that the intermediate estimate \eqref{epmuna2u} has been used in the derivation of the second inequality. 
The previous three estimates now give rise to:
$ \il \ep F^{\varsigma}\il_{m-5,\infty,t}\lesssim \lae,
$ inserting which and \eqref{varsigma-intial} into 
\eqref{varsigma-pre} stems
\begin{align*}
    \ep\mu^{\f{1}{2}}\il\p_z\varsigma\il_{m-5,\infty,t}\lesssim \lab Y_m(0)\big)+(T+\ep)^{\vartheta_{24}}\lae, \quad
\end{align*}
where $ \vartheta_{24}=\min \{\vartheta_{21},\f{1}{2}\}.$
The proof of this lemma is thus completed.
\end{proof}
\begin{lem}
Under the assumption \eqref{preasption}, it holds that, for any  $\ep \in (0,1], (\mu,\kpa)\in A,$ any $0<t\leq T,$
\begin{align}\label{Deltasigmainftytx}
\ep\mu^{\f{1}{2}} \il \Delta \sigma \il_{m-5,\infty,t}\lesssim Y_m(0)+\ep^{\f{1}{2}}\lae.
\end{align}
\end{lem}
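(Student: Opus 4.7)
\smallskip

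\noindent\textit{Proof proposal.} The plan is to exploit the damped transport structure of equation \eqref{eqDeltasigma}, namely
\beqs
\ep^2\mu(2\lambda_1+\lambda_2)\Gamma(\pt+u\cdot\na)\Delta\sigma+\Delta\sigma=\div \mathfrak{H}_2-\ep^2\mu(2\lambda_1+\lambda_2) \big(\na\Gamma\cdot\pt\na\sigma+\na(\Gamma u)\cdot\na\sigma\big),
\eeqs
which can be recast as $(\pt+u\cdot\na)\Phi+\lambda\Phi=\tilde F$ for $\Phi=\ep\mu^{\f12}\Delta\sigma$, with damping coefficient $\lambda=(\ep^2\mu(2\lambda_1+\lambda_2)\Gamma)^{-1}$ of size $\sim(\ep^2\mu)^{-1}$ and a source $\tilde F$ comparable to $\lambda\cdot\ep\mu^{\f12}$ times the right-hand side of \eqref{eqDeltasigma}. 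Along the characteristics $X(s;t,x)$ of $u$, Duhamel's formula then gives, schematically, $\Phi(t,x)=e^{-\int_0^t\lambda}\Phi_{|t=0}+\int_0^t e^{-\int_s^t\lambda}\tilde F(s)\,\d s$, and the identity $\int_0^t\lambda e^{-\int_s^t\lambda}\,\d s\leq 1$ absorbs the singular prefactor in $\tilde F$, yielding at the base level
\beqs
\il\Phi\il_{0,\infty,t}\lesssim \|\ep\mu^{\f12}\Delta\sigma(0)\|_{L^\infty}+\il\ep\mu^{\f12}(\text{RHS of }\eqref{eqDeltasigma})\il_{0,\infty,t}.
\eeqs

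Next I would recover the conormal regularity up to order $m-5$ by applying $Z^I$ to the equation and inducting on $|I|$. This produces three types of commutators on the right-hand side: $Z^I\tilde F$, $[Z^I,u\cdot\na]\Phi$, and $[Z^I,\lambda]\Phi$. The first two are standard and controlled via the product/commutator tools of Proposition \ref{prop-prdcom} together with the bounds on $\cA_{m,t}$ already at our disposal. The delicate point is $[Z^I,\lambda]\Phi$, because $\lambda\sim(\ep^2\mu)^{-1}$ is singular; however $\lambda$ depends on $\ep\sigma$ through $\Gamma(\ep\sigma)$, so every conormal derivative falling on $\lambda$ pulls out an $\ep$-factor (e.g.\ $Z\lambda=-\Gamma^{-2}\Gamma'(\ep\sigma)\ep Z\sigma/[\ep^2\mu(2\lambda_1+\lambda_2)]\sim(\ep\mu)^{-1}Z\sigma$). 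After Duhamel the strong damping contributes $\lambda^{-1}\sim\ep^2\mu$, and the net effect of each such commutator is $\ep\lae$ times already-controlled pieces, so the induction closes.

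The remaining work is to bound the source $\ep\mu^{\f12}(\text{RHS of }\eqref{eqDeltasigma})$ in $\il\cdot\il_{m-5,\infty,t}$. Using the expressions \eqref{deffrakh1}--\eqref{deffrakh12} for $\mathfrak{H}_2$, we face the $L^\infty_{t,x}$ analogues of the $L^2$/$L^\infty_tL^2$ estimates of Lemma \ref{lem9.4}: typical terms are $\ep\mu^{\f12}\div\big(\f{1}{R\beta}(\ep\pt+\ep u\cdot\na)u\big)$, $\ep^2\mu^{\f32}\div(\Gamma\curl\curl u)$, $\ep^2\mu^{\f32}(\pt+u\cdot\na)\Delta\theta$, $\ep^2\mu^{\f32}\na u\cdot\na\vr$, together with $\ep^2\mu^{\f32}(\na\Gamma\cdot\pt\na\sigma+\na(\Gamma u)\cdot\na\sigma)$. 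Each of these can be estimated by $\lae$ using the very definition of $\cA_{m,t}$ together with intermediate bounds such as \eqref{secnoru-LinftyL2-final}, \eqref{amtsi-u} (already known), the identity $\kpa\Delta\theta=\f{C_v\gamma}{R}[(\ep\pt+\ep u\cdot\na)\theta+\f{\gamma-1}{\gamma}\ep\mathfrak{N}]/\Gamma +\text{l.o.t.}$, and the smoothness estimates \eqref{esofGamma-1}--\eqref{esofGamma-2} for $\Gamma$.

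Finally, the initial-data contribution $\|\ep\mu^{\f12}\Delta\sigma(0)\|_{m-5,\infty}\lesssim\|\ep\mu^{\f12}\na^2\sigma(0)\|_{m-5,\infty}$ is exactly bounded by $Y_m(0)$ in light of the definition \eqref{initialnorm} of $Y_{m,\mu,\kpa}^\ep(0)$. Combining these ingredients yields \eqref{Deltasigmainftytx}. The main obstacle is the rigorous handling of the $[Z^I,\lambda]\Phi$ commutators in the induction: one must check that the $\ep$-gain from the chain rule on $\Gamma(\ep\sigma)$ always appears in the right number to be swallowed by the damping factor $\ep^2\mu$, for every intermediate order $1\leq|I|\leq m-5$; the worst case is when a single $Z$ hits $\lambda$ and the remaining $|I|-1$ derivatives act on $\Phi$, but there the gain $\ep$ multiplied against $\|Z\sigma\|_{L^\infty_{t,x}}\lesssim\lat$ and the inductive bound on $Z^{|I|-1}\Phi$ produces precisely a term $\ep^{1/2}\lae$, consistent with the stated estimate.
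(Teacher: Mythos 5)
Your strategy matches the paper's: both rely on the damped-transport form \eqref{eqDeltasigma} of the equation for $\Delta\sigma$, a Duhamel/characteristics estimate in the $L^\infty_{t,x}$ conormal scale, and a direct bound on the source; the paper delegates the Duhamel step to Lemma 3.3 of \cite{MR4403626}, while you develop it explicitly (including the observation that derivatives of the damping coefficient $(\ep^2\mu(2\lambda_1+\lambda_2)\Gamma(\ep\sigma))^{-1}$ gain a factor of $\ep$ from the chain rule on $\Gamma(\ep\sigma)$, which is exactly what makes the commutators harmless after the gain $\lambda^{-1}\sim\ep^2\mu$ from Duhamel).

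However, your accounting for the $\ep^{\f12}$ factor on the right-hand side of \eqref{Deltasigmainftytx} is off. In your third paragraph you claim the source $\ep\mu^{\f12}\mathfrak{H}_3$ is bounded only by $\lae$, and in your last paragraph you then attribute the $\ep^{\f12}$ gain to the $[Z^I,\lambda]\Phi$ commutator; but the commutator in fact produces $\ep\lae$ (not $\ep^{1/2}\lae$), and the $\ep^{\f12}$ in the stated lemma actually has to come from the source, i.e.\ you must show $\ep\mu^{\f12}\il\mathfrak{H}_3\il_{m-5,\infty,t}\lesssim\ep^{\f12}\lae$. This does hold: e.g.\ the leading term $\ep\mu^{\f12}\div\big(\tfrac{1}{R\beta}\ep\pt u\big)$ is $\ep\mu^{\f12}\na u$ measured with one extra conormal derivative, i.e.\ in $\il\cdot\il_{m-4,\infty,t}$, and since $\ep\mu^{\f12}=\ep^{\f12}(\ep\mu)^{\f12}$ this is $\ep^{\f12}$ times the quantity $\il(\ep\mu)^{\f12}\na u\il_{m-4,\infty,t}$ appearing in $\cA_{m,t}(\sigma,u)$; the other terms carry even more $\ep$'s (note also that the last source term has prefactor $\ep^3\mu^{\f32}$, not $\ep^2\mu^{\f32}$, once you multiply $\ep\mu^{\f12}$ by the $\ep^2\mu$ already in \eqref{eqDeltasigma}). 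With these prefactors traced carefully the argument closes; as written, your paragraph three would only yield $Y_m(0)+\lae$, which is weaker than \eqref{Deltasigmainftytx} and would not supply the smallness in $\ep$ needed downstream.
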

\begin{proof}
It has been computed in Section 6 that $\Delta\sigma$ solves the damped transport equation \eqref{eqDeltasigma}. 
One can verify, by using the similar arguments as in [Lemma 3.3, \cite{MR4403626}] that:
\begin{align}\label{Deltasigmainftytx-1}
  \ep\mu^{\f{1}{2}}   \il \Delta\sigma\il_{m-5,\infty,t}\lesssim \ep\mu^{\f{1}{2}}  \| \Delta\sigma(0)\|_{m-5,\infty}+ \ep\mu^{\f{1}{2}} \il \mathfrak{H}_3\il_{m-5,\infty,t}+ \ep\lat,
\end{align}
where 
$\mathfrak{H}_3=\div \mathfrak{H}_2-\ep^2\mu(2\lambda_1+\lambda_2) \big(\na\Gamma\cdot\pt\na\sigma+\na(\Gamma u)\cdot\na\sigma\big)$
with $\mathfrak{H}_2$ defined in
\eqref{deffrakh12}. By checking every term in the expression of $\mathfrak{H}_3,$ we find that:
\begin{align*}
   \ep\mu^{\f{1}{2}} \il \mathfrak{H}_3\il_{m-5,\infty,t}\lesssim \ep^{\f{1}{2}}\lat,
\end{align*}
which, together with \eqref{Deltasigmainftytx-1}, yields \eqref{Deltasigmainftytx}.
\end{proof}
\section{Uniform estimates-proof of Proposition \ref{prop-uniform es}}
The estimates established in the previous sections allow us to give the proof of Proposition \ref{prop-uniform es}:
\begin{proof}[Proof of \ref{prop-uniform es}]
In light of the estimates \eqref{es-tildecE}, \eqref{nauinftyL2-final}, we find that 
\begin{align}\label{es-cemt-final}
     \cE_{m,T}(\sigma,u)\lesssim \lab Y_m(0)\big) + (T+\ep)^{\vartheta_{25}} \lab \cN_{m,T}\big),
\end{align}
where $\vartheta_{25}=\min\{ \vartheta_{15}, \vartheta_{18} \}>0.$ 
This estimate, together with the estimate \eqref{EE-theta-final} for $\cE_{m,T}(\theta),$ yields that:
\begin{align*}
    \cE_{m,T}(\sigma, u, \theta)\lesssim \lab Y_m(0)\big) + (T+\ep)^{\vartheta_{26}}\lab \cN_{m,T}\big),
\end{align*}
where $\vartheta_{26}=\min\{ \vartheta_{25}, \f{\tilde{\vartheta}_0}{2}\}>0.$ 
Moreover, by the virtue  of \eqref{nathetainfty}, \eqref{amtsi-u}, we conclude the following estimate for 
$\cA_{m,t}(\sigma, u, \theta):$ 
\begin{align}\label{escamt-final}
    \cA_{m,T}(\sigma, u, \theta)\lesssim \lab Y_m(0)\big)+(T+\ep)^{\vartheta_{27}} \lab \cN_{m,T}\big),
\end{align}
where $\vartheta_{27}=\min\{ \vartheta_{6},  \vartheta_{16}\}>0.$ 
By choosing $\vartheta_0=\min\{\vartheta_{26}, \vartheta_{27} \},$ the desired estimate \eqref{enerineq} then follows from \eqref{es-cemt-final} and \eqref{escamt-final}. 
\end{proof}

\section{Proof of Theorem \ref{thm1}}
This section is devoted to the proof of Theorem  \ref{thm1}, which relies on Proposition \ref{prop-uniform es} and
the local well-posedness result stated in below: 
\begin{thm}[Local existence for fixed $\ep,\mu,\kpa$]\label{classical-local}
Given the initial data $(\sigma_0^{\ep}, u_0^{\ep}, \theta_0^{\ep})$ satisfying the compatibility condition up to order 2 and
\begin{align*}
\pt^l (\sigma^{\ep}, u^{\ep}, \theta^{\ep})(0) \in H^{2-l}(\Omega),  \quad l=0,1,2,
\end{align*}
$$-\bar{c}
\leq \ep \sigma_0^{\ep}(x), 
\,
\theta_0^{\ep}(x)\leq 
\bar{c}, \quad \forall x\in\Omega,$$
where $\bar{c}>0$ is a fixed constant. 
Then 
there exists $T_{\mu,\kpa}^{\ep}>0,$ such that
system \eqref{NCNS-S2}, \eqref{bdryconditionofu} has a unique strong solution 
\beq\label{prop-strongsol}
(\si^{\ep},u^{\ep},\theta^{\ep})\in C^{l}\big([0, T_{\mu,\kpa}^{\ep}], H^{2-l}\big),\quad \pt^l (u^{\ep}, \theta^{\ep}) \in L^2\big([0, T_{\mu,\kpa}^{\ep}], H^{3-l}\big), \quad l=0,1,2.
\eeq
Moreover, the following property holds:
 \beqs
 -3{\bar{c}}\leq \ep\sigma^{\ep}(t,x),\, 
  \theta^{\ep}(t,x)\leq 3\bar{c},\quad  \forall (t,x)\in [0, T_{\mu,\kpa}^{\ep}]\times\Omega.
 \eeqs
\end{thm}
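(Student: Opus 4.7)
The plan is to obtain $(\sigma^\ep, u^\ep, \theta^\ep)$ as the fixed point of an iteration scheme on the quasilinear hyperbolic–parabolic system \eqref{NCNS-S2}, for the parameters $\ep, \mu, \kpa$ now held fixed (so all constants in this proof may depend on them). Concretely, given a current iterate $(\bar\sigma, \bar u, \bar\theta)$ in a suitable ball of the energy space $X_T = \{f : \partial_t^l f \in C([0,T],H^{2-l}),\, l=0,1,2\}$ (augmented by the appropriate parabolic gain $L^2_tH^{3-l}$ for $\bar u$ and $\bar\theta$), I would define the next iterate by solving three linear problems: a transport equation
\[
\tfrac{1}{\gamma}(\p_t+\bar u\cdot\na)\sigma+\tfrac{1}{\ep}\div u=\bar F_\sigma
\]
(where $\bar F_\sigma$ collects all terms of $\eqref{NCNS-S2}_1$ involving $\bar\theta,\bar\sigma,\bar u$); a Stokes–type parabolic problem for $u$ with the Navier-slip boundary condition \eqref{bdryconditionofu}, the coefficient $1/(R\beta(\bar\theta))$ in the time derivative and the frozen viscosity $\mu\bar\Gamma\,\mathrm{div}\mathcal{L}$, sourced by $-\ep^{-1}\na\sigma$; and a linear parabolic problem for $\theta$ with Neumann boundary condition and coefficient $\kpa\bar\Gamma\beta(\bar\theta)$ in front of $\Delta\theta$.

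The linear theory required at each step is classical. For the transport equation with $\bar u\cdot\bn|_{\p\Omega}=0$, one gets energy estimates by multiplying by $\sigma$ and by its time derivatives; the boundary term disappears thanks to $\bar u \cdot \bn = 0$. For the Stokes system with Navier-slip, existence and $H^s$-regularity follow from Agmon–Douglis–Nirenberg/Solonnikov theory (or, more directly for this setting, from the variational framework using that $a\geq 0$ can be absorbed into a coercive bilinear form), giving $u \in L^2_t H^3 \cap H^1_t H^1$ together with the compatibility of traces $\Pi(\mathbb{S}u\bn)+a\Pi u=0$. For the heat equation with Neumann BC and smooth $t$-dependent coefficients, analogous regularity follows from standard parabolic theory. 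Combining these three linear solvers in a Picard iteration produces a sequence $(\sigma^{(n)},u^{(n)},\theta^{(n)})$ in $X_T$.

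The key analytic step is to close a \emph{uniform} energy estimate on the iterates on a time interval $[0,T_{\mu,\kpa}^\ep]$ independent of $n$, and then to show that the map $(\bar\sigma,\bar u,\bar\theta)\mapsto(\sigma,u,\theta)$ is a contraction in a lower-regularity norm, say $C([0,T],L^2)\times C([0,T],L^2)\times L^2_tH^1$. The higher-order bounds are obtained by differentiating in time (up to order $2$, the number permitted by the data) and using the equations to trade normal derivatives for time derivatives; here the Navier-slip and Neumann conditions at each time-derivative level are exactly the compatibility conditions imposed on the data (through order $2$), so the traces $(\p_t^l u)(0)\cdot\bn$, $\Pi(\mathbb{S}\p_t^l u(0))\bn+a\Pi\p_t^l u(0)$, $\p_\bn \p_t^l\theta(0)$ all vanish as required by the linear parabolic theory. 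Taking the limit in the iteration gives a solution with the claimed regularity \eqref{prop-strongsol}; uniqueness in this class is standard and follows from the contraction estimate applied to the difference of two solutions.

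The main obstacle is a pair of mildly subtle points rather than a genuinely hard analytical step. First, since the system is only degenerately symmetrizable (the coefficient $1/(R\beta(\theta))$ in front of $\p_t u$ depends on an unknown), one must check that the symmetrizer stays uniformly positive, which follows as long as $\theta$ remains in a compact subset of its admissible range; this is ensured by the maximum principle for the parabolic equation $\eqref{NCNS-S2}_3$ applied to $\theta$ (with Neumann condition) and by integrating in time the transport equation for $\sigma$ with the source terms controlled in $L^1_tL^\infty_x$, which yields, by shrinking $T_{\mu,\kpa}^\ep$ if necessary,
\[
-3\bar c \leq \ep\sigma^\ep(t,x),\ \theta^\ep(t,x)\leq 3\bar c,\qquad \forall (t,x)\in[0,T_{\mu,\kpa}^\ep]\times\Omega.
\]
Second, one has to verify that the traces at $t=0$ of $\p_t^l(u,\theta)$, computed inductively from the equations as in the formula for $(\ep\p_t u)(0)$ given in the introduction, lie in the correct Sobolev spaces for the compatibility conditions to be imposed; this is purely a matter of counting derivatives given that $(\sigma_0^\ep,u_0^\ep,\theta_0^\ep)\in H^2$ with the assumed boundary conditions, and it dictates why the compatibility condition is required up to order $2$.
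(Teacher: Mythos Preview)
The paper does not actually prove this theorem: immediately after the statement it says only that ``Such a result can be established in a similar way (Galerkin method and fixed point arguments) as in \cite{MR1667031} or \cite{MR2914244}.'' Your Picard iteration scheme, freezing the coefficients and solving the resulting linear transport/Stokes/heat problems, is exactly the kind of fixed-point argument the paper has in mind, and your sketch is a reasonable fleshing out of what is left implicit.

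One small imprecision worth flagging: the bound $-3\bar c\leq\theta^\ep\leq 3\bar c$ does not follow from a genuine maximum principle for $\eqref{NCNS-S2}_3$, since that equation carries the source $\div u^\ep$ (and $\ep\mathfrak N$). What one actually does---and what the paper does explicitly when propagating these bounds in the proof of Theorem~\ref{thm1}, see the characteristic formulas \eqref{lagranian}--\eqref{lagranian1}---is a short-time argument: the deviation of $\theta^\ep$ from its initial data is controlled by $T$ times the $L^\infty_{t,x}$ norm of the source, which is finite once the solution is constructed in the regularity class \eqref{prop-strongsol}, and can be made small by shrinking $T^\ep_{\mu,\kappa}$. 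Your phrasing ``integrating in time \dots with the source terms controlled in $L^1_tL^\infty_x$'' is the right mechanism; it applies to $\theta$ as well as to $\ep\sigma$.
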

Such a result can be established  in a similar way (Galerkin method and fixed point arguments) as in \cite{MR1667031} or \cite{MR2914244}. 

\begin{proof}[Proof of Theorem \ref{thm1}]
On the one hand, by the assumption $(\sigma_0^{\ep},u_0^{\ep}, \theta_0^{\ep})\in Y_m(0) \, (\text{defined in} \eqref{initialnorm}),$ we have that:
\begin{align*}
   \ep \mu u_0^{\ep}, \sigma_0^{\ep}, \kpa \theta_0^{\ep} \in H^2(\Omega), \,\quad 
   \ep\pt (\sigma^{\ep}, u^{\ep}, \theta^{\ep})(0),\,  \in H^1(\Omega),\quad (\ep\pt)^2 (\sigma^{\ep}, u^{\ep})(0),\, (\ep\pt^2\theta^{\ep})(0) \in L^2(\Omega).
\end{align*}
Therefore, by Theorem \ref{classical-local}, one can find some $T_{\mu,\kpa}^{\ep}>0$ such that there is a unique solution of \eqref{NCNS-S2}, \eqref{bdryconditionofu}  satisfying the 
\eqref{prop-strongsol} and \eqref{preasption}.
On the other hand, as $(\sigma_0^{\ep},u_0^{\ep}, \theta_0^{\ep})\in Y_m,$ 
a higher regularity space, 
by propagation of regularity arguments (for example based on applying  finite difference instead of derivatives)
 in the estimates of Section 2-11, we find that
 the estimates of  Proposition \ref{prop-uniform es} hold on $[0, T_{\mu,\kpa}^\ep]$.
More specifically, we can find 
two increasing polynomials $\Lambda_0, \Lambda_1$ that are independent of $\ep$ and $T_{\mu,\kpa}^{\ep},$ and a constant $\vartheta_0>0,$
such that for any $0\leq T\leq \min\{1, T_{\mu,\kpa}^{\ep}\}, 0< \ep\leq 1, (\mu,\kpa)\in A,$
\beq\label{sec6:eq1}
 \mathcal{N}_{m,T}^{\mu,\kpa}(\si^{\ep}, u^{\ep}, \theta^{\ep})\leq \Lambda_0\big( \f{1}{c_0}, 
 Y_{m, \mu,\kpa}^{\ep}(0)\big)+
 (T+\ep)^{\vartheta_0} \Lambda_1\big(\f{1}{c_0}
,\mathcal{N}_{m,T}^{\mu,\kpa}(\sigma^{\ep},u^{\ep},\theta^{\ep})\big).
\eeq
Moreover, by using the characteristics method, we have  that $\ep\sigma$ can be expressed as,
\begin{align}
\ep\sigma^{\ep}(t,x)=\ep\sigma_0^{\ep}\big(X^{-1}(t,x)\big)
-\int_0^t S_{\sigma^{\ep}}\big(X(s,X^{-1}(t,x))\big)\,\d s,\label{lagranian} \\
\theta^{\ep}(t,x)=\theta_0^{\ep}\big(X^{-1}(t,x)\big)
-\int_0^t S_{u^{\ep}} \big(X(s,X^{-1}(t,x))\big)\,\d s , \label{lagranian1}
\end{align}
where $X(t,\cdot)$ is the flow associated to $u^{\ep}$
and $$S_{\sigma^{\ep}}=\gamma\div u^{\ep}-(\gamma-1)\big(\kpa\Gamma\div(\beta\na\theta^{\ep})+\mathfrak{N} \big), \qquad S_{u^{\ep}}=\f{R}{C_v}\big(\div u^{\ep}-\kpa\Gamma\div(\beta\na\theta^{\ep})-\ep\mathfrak{N}\big).$$
From the definition of $\cA_{m,T}^{\mu,\kpa}(\sigma^{\ep},u^{\ep}, \theta^{\ep})$ in \eqref{defcAmt}, we can find a polynomial 
$\Lambda_2,$ such that as long as \eqref{preasption} and \eqref{preasption1} hold,
\begin{align}\label{souceinftysi-u}
    \|(S_{\sigma^{\ep}},  S_{u^{\ep}})\|_{L^{\infty}([0,T]\times\Omega)}\leq \Lambda_2\big( \f{1}{c_0}, \cA_{m,T}^{\mu,\kpa}(\sigma^{\ep},u^{\ep}, \theta^{\ep})\big).
\end{align}

Let us define for any $\ep\in [0,1], (\mu, \kpa) \in A,$     
\beqs
T^{\ep}_{*,\mu,\kpa}=\sup\big\{T\big| (\sigma^{\ep},u^{\ep}, \theta^{\ep})\in C^l([0,T],H^{2-l}), \, \pt^l (u^{\ep}, \theta^{\ep}) \in L^2([0,T],H^{3-l}),\, l=0,1,2\big\},
\eeqs
\beqs
\begin{aligned}
T_{0,\mu,\kpa}^{\ep}=\sup\big\{&T\leq\min\{T^{\ep}_{*,\mu,\kpa},1\}\big| \cN_{m,T}^{\mu,\kpa}(\sigma^{\ep}, u^{\ep})\leq 2\Lambda_0\big(\f{1}{c_0}, M \big), \\
&-2{\bar{c}}\leq \ep\sigma^{\ep}(t,x),\, \ep(\sigma^{\ep}-\ep\mu(2\lambda_1+\lambda_2)\Gamma(\ep\sigma^{\ep})\div u^{\ep})(t,x),\, \theta^{\ep}(t,x)\leq 2\bar{c}, \,\, \forall (t,x)\in[0,T]\times\Omega 
\big\}
\end{aligned}
\eeqs
where $M>\sup_{(\mu,\kpa)\in A, \ep\in(0,1]}Y_{m,\mu,\kpa}^{\ep}(0).$ 

 We now choose successively two  constants $0<\ep_0\leq 1$ and $0<T_0\leq 1$ (uniform in $\ep\in(0,\ep_0]$) which are small enough, such that:
 \begin{align*}
& (2\lambda_1+\lambda_2)\f{\ep_0^2}{c_0}\Lambda_0\big(\f{1}{c_0}, M \big)\leq \f{1}{4}\bar{c}, \\
 ({T_0}+\ep_0)^{\vartheta_0}\Lambda_1\big( \f{1}{c_0}, 2\Lambda_0&\big(\f{1}{c_0}, M \big)\big)<\f{1}{2}\Lambda_0\big(\f{1}{c_0}, M \big),\quad
 T_0\Lambda_2\big(\f{1}{c_0}, 2\Lambda_0\big(\f{1}{c_0}, M \big)\big)< \f{1}{2}\bar{c}.
 \end{align*}
In order to prove Theorem \ref{thm1}, it suffices  to show that $T_{0,\mu,\kpa}^{\ep}\geq {T_0}$ for every  $0<\ep\leq \ep_0, (\mu,\kpa)\in A.$ Suppose otherwise $T_{0,\mu,\kpa}^{\ep}<{T_0}$ for some $0<\ep\leq \ep_0, (\mu,\kpa)\in A,$ then in view of the inequality 
\eqref{sec6:eq1}, the formulas \eqref{lagranian}, \eqref{lagranian1}  and the estimate \eqref{souceinftysi-u},
we have by the definition of $\ep_0$ and $T_0$ that, for any $0<\ep \leq \ep_0,$
\beq\label{sec6:eq3}
\cN_{m,T}^{\mu,\kpa}(\sigma^{\ep},u^{\ep}, \theta^{\ep})\leq \f{3}{2}\Lambda_0\big(\f{1}{c_0}, M \big), \qquad \forall\, T\leq \tilde{T}=\min\{T_0, T^{\ep}_{*,\mu,\kpa}\},
\eeq
\beq\label{sec6:eq4}
-2{\bar{c}}\leq \ep\sigma^{\ep}(t,x),\, 
 \ep (\sigma^{\ep}-\ep\mu(2\lambda_1+\lambda_2){\Gamma}(\ep\sigma^{\ep})\div u^{\ep})(t,x), \, \theta^{\ep}(t,x)\leq 2\bar{c},\quad \forall\, (t,x)\in [0,\tilde{T}]\times\Omega.
\eeq
We will prove that $\tilde{T}=T_0\leq T_{*,\mu,\kpa}^{\ep}.$ This fact, combined with the definition of $T_{0,\mu,\kpa}^{\ep}$ and the estimates \eqref{sec6:eq3}, \eqref{sec6:eq4},
yields $T_{0,\mu,\kpa}^{\ep}\geq T_0,$ which is a contradiction with the  assumption $T_{0,\mu,\kpa}^{\ep}<T_0.$ To continue, we shall need the claim stated and proved below.
 Indeed, once the following claim holds, we have by \eqref{sec6:eq3} that 
 $$\|\pt^l(\sigma^{\ep},u^{\ep}, \theta^{\ep})(T_0)\|_{H^{2-l}(\Omega)}<+\infty,\quad \forall \, l=0,1,2, $$
 which, combined with the local existence result stated in 
 Theorem \ref{classical-local},  yields that $T_{*,\mu,\kpa}^{\ep}>T_0.$
\end{proof}
 $\textbf{Claim.}$ 
For all $\ep\in(0,1], (\mu,\kpa)\in A,$
if $\cN_{m,T}^{\mu,\kpa}(\sigma^{\ep},u^{\ep}, \theta^{\ep})<+\infty,$ then $$(\ep\pt)^l(\sigma^{\ep},u^{\ep}, \theta^{\ep})\in C^{l}([0,T], H^{2-l}), \quad (\ep\pt)^l(u^{\ep},\theta^{\ep})\in L^2([0,T],H^3), \quad l=0,1,2.$$
\begin{proof}[Proof of the claim]
We see from the definition of $\cN_{m,T}^{\mu,\kpa}$ in \eqref{defcalNmt} and Remark \ref{rmk-thirdordervelocity} that:
\begin{align*}
 & (\ep \mu u^{\ep}, \sigma^{\ep}, \kpa \theta^{\ep}) \in L^{\infty}([0,T],H^{2}), \quad  (\ep\pt, (\ep\pt)^2) ( u^{\ep}, \sigma^{\ep}, \theta^{\ep}) 
 \in L^{\infty}([0,T], H^1),\\
&(\ep\mu  u^{\ep}, \kpa \theta^{\ep})\in 
L^2([0,T],H^{3}), \quad  
\ep\pt((\ep\mu)^{\f{1}{2}}u^{\ep}, \kpa^{\f{1}{2}} \theta^{\ep})\in  L^2([0,T], H^{2}), \quad \ep\pt^2 (\ep u^{\ep}, \theta^{\ep})\in  L^2([0,T], H^{1}).
\end{align*}
Moreover, as $m\geq 7,$
\begin{align*}
    \pt^3(\ep^3 u^{\ep},  \ep^3\sigma^{\ep}, \ep^2\theta^{\ep}), \quad \ep\pt\na^2\sigma^{\ep}\in  L^{\infty}([0,T],L^2),
\end{align*}
 one deduces from the interpolation that $$\pt^2(\ep^3 u^{\ep}, \ep^3 \sigma^{\ep}, \ep^2\theta^{\ep})\in C([0,T], L^2), \quad \, \pt(\ep^2 u^{\ep}, \ep^2\sigma^{\ep}, \ep \theta^{\ep})\in C([0,T],H^1), \quad (\ep\mu u^{\ep}, \ep\sigma^{\ep}, \ep\kpa\theta^{\ep})\in C([0,T],H^2).$$
 This ends  the proof of the claim.
Note that at this stage we do not require the norms listed in the last line to be bounded uniformly in $\ep, \mu,\kpa$.
\end{proof}

\section{Proof of Theorem \ref{thm-conv1}}
In this section, we sketch the proof of  the convergence result stated in Theorem \ref{thm-conv1}, which is essentially adapted from  \cite{MR2106119,MR1834114}. 

Before going into details, we first verify, 
based on the uniform estimates for $\theta^{\ep},$  that:
\begin{align}\label{conv-theta}
  \theta^{\ep}\rightarrow \theta^0\quad  \text{ in } C([0,T_1], H_{loc}^1(\Omega))\cap C([0,T_1]\times\Omega).
\end{align}
Indeed, it follows from the uniform estimate \eqref{unies-fixmukap} that 
\begin{align*}
\|\theta^{\ep}\|_{L_{T_1}^{\infty}H_{co}^{m-1}}+ \|(\na\theta^{\ep}, \kpa \na^2\theta^{\ep}, \pt\theta^{\ep})\|_{L_{T_1}^{\infty}H_{co}^{m-2}}+
\|\pt\na\theta^{\ep}\|_{L_{T_1}^2H_{co}^{m-3}}<+\infty,
\end{align*}
which stems the convergence (up to extraction of the subsequences) of $(\theta^{\ep}, \na\theta^{\ep})$ to $(\theta^0, \na\theta^0)$ in 
$C([0,T_1], H_{co, loc}^{m-3}(\Omega)).$ 
By using further the Sobolev embedding \eqref{sobebd}, 
we have also that $\il\theta^{\ep}-\theta^0\il_{m-5,\infty,T_1}\rightarrow 0$ and thus finish the proof of \eqref{conv-theta}
by remembering the assumption $m\geq 7.$ 

The delicate part concerning the proof of Theorem \ref{thm-conv1} 
is the first half which is re-summarize in the following proposition for convenience:  
\begin{prop}\label{prop-conv}
 Let $(\sigma^{\ep}, u^{\ep}, \theta^{\ep})\in C([0,T_1], L^2)$ be the solution to system \eqref{NCNS-S2}, \eqref{bdryconditionofu} satisfying  the uniform estimates \eqref{unies-fixmukap}. Under the assumptions made in Theorem  \ref{thm-conv1},  the following strong convergence for the penalized terms holds:
\beq\label{conv-compressible}
\sigma^{\ep}\rightarrow 0, \,\quad \div u^{\ep}-\f{\gamma-1}{\gamma}\kappa\Gamma(\ep\sigma^{\ep})\div(\beta(\theta^{\ep})\na\theta^{\ep})\rightarrow 0 \qquad \text{ in } \, L^2([0,T_1], L_{loc}^2(\Omega)).
\eeq
\end{prop}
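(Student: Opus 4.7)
The plan is to reduce Proposition \ref{prop-conv} to a local energy decay statement for a linear penalized wave system in the exterior domain. The starting point is the reformulation \eqref{sys-uni} in terms of $(\sigma^\ep, w^\ep, \theta^\ep)$ with $w^\ep = u^\ep - \f{\gamma-1}{\gamma}\overline{\Gamma}\kpa\beta(\theta^\ep)\na\theta^\ep$. In this system the singular $1/\ep$ terms appear in the skew-symmetric form $\f{1}{\ep}(\div w^\ep, \na\sigma^\ep)$, while the remaining terms, including the source terms $F_\sigma, F_w$ and the viscous contribution $\mu\, \overline{\Gamma}\, \div\mathcal{L}u^\ep$, are uniformly bounded in sufficiently regular spaces by the uniform estimate \eqref{unies-fixmukap}. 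Note that the quantity whose strong convergence we want is essentially $\div w^\ep$, up to terms involving $\Gamma(\ep\sigma^\ep)-\overline{\Gamma}$ and $\beta(\theta^\ep)-\beta(\theta^0)$ whose strong convergence to their limit is free from \eqref{unies-fixmukap}, the strong convergence $\theta^\ep \to \theta^0$ in $C([0,T_1], H^1_{loc}\cap C)$, and the Sobolev embedding.

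First, I would apply the Leray--Helmholtz decomposition $w^\ep = \bp w^\ep + \bq w^\ep$ and observe that only the compressible part $\bq w^\ep = \na \Psi^\ep$ together with $\sigma^\ep$ carries the acoustic oscillations. Using \eqref{sys-uni} and the strong convergence $\beta(\theta^\ep) \to \beta(\theta^0)$, $\Gamma(\ep\sigma^\ep)\to\overline{\Gamma}$, the pair $(\sigma^\ep, \na\Psi^\ep)$ satisfies, modulo source terms converging to $0$ in $L^2([0,T_1], L^2_{loc})$,
\begin{equation*}
\ep\,\pt\sigma^\ep + \gamma \Delta\Psi^\ep = r_1^\ep, \qquad \ep\,\pt \na\Psi^\ep + R\beta(\theta^0)\, \na\sigma^\ep = r_2^\ep,
\end{equation*}
with Neumann-type boundary condition $\p_\bn\Psi^\ep|_{\p\Omega}=0$ inherited from $w^\ep\cdot\bn|_{\p\Omega}=0$, and with $(r_1^\ep,r_2^\ep)$ bounded in, say, $L^2([0,T_1], L^2(\Omega))$. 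Eliminating $\na\Psi^\ep$ gives a penalized wave equation
\begin{equation*}
\ep^2\pt^2 \sigma^\ep - \gamma R\, \div(\beta(\theta^0)\na\sigma^\ep) = \text{bounded source},
\end{equation*}
with the same Neumann boundary condition, on the exterior domain $\Omega$.

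Second, I would invoke local energy decay for this variable-coefficient acoustic operator to obtain $\sigma^\ep, \na\Psi^\ep \to 0$ strongly in $L^2([0,T_1], L^2_{loc}(\Omega))$. There are two natural routes: the defect-measure / $H$-measure method of Alazard \cite{MR2106119}, or the spectral approach via the RAGE theorem in the spirit of \cite{MR2575476, MR3729430}. In either case one rescales time by $t\mapsto t/\ep$ to convert the penalized equation into a standard acoustic equation whose self-adjoint generator $\mathcal{A} = -\gamma R\, \div(\beta(\theta^0(t,\cdot))\na\,\cdot\,)$ with Neumann condition is purely absolutely continuous on $\Omega$ (an exterior domain admits no trapped acoustic modes), so that its spectral projector on $L^2_{loc}$ is vanishing in the limit. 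Time-dependence of $\beta(\theta^0)$ is handled by freezing coefficients on small intervals and using the strong continuity of $\theta^0$ in $t$.

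Third, once $\sigma^\ep \to 0$ and $\div w^\ep = \div\bq w^\ep = \Delta\Psi^\ep \to 0$ in $L^2([0,T_1], L^2_{loc})$, the second claim follows by writing
\begin{equation*}
\div u^\ep - \f{\gamma-1}{\gamma}\kpa\,\Gamma(\ep\sigma^\ep)\div(\beta(\theta^\ep)\na\theta^\ep) = \div w^\ep + \f{\gamma-1}{\gamma}\kpa\big(\overline{\Gamma}-\Gamma(\ep\sigma^\ep)\big)\div(\beta(\theta^\ep)\na\theta^\ep),
\end{equation*}
and noticing that the last term converges strongly to $0$ thanks to the uniform $L^\infty_T H^{m-1}_{co}$ bound on $\kpa \div(\beta(\theta^\ep)\na\theta^\ep)$ and the strong convergence $\Gamma(\ep\sigma^\ep)\to\overline{\Gamma}$ in $L^\infty([0,T_1]\times K)$ for any compact $K\subset\overline{\Omega}$. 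The main obstacle is the second step: the temperature admits large variation, so $\beta(\theta^0)$ is genuinely variable in $(t,x)$ and one must verify absence of point spectrum for the time-frozen operator $\mathcal{A}(t)$ with Neumann boundary condition in the exterior geometry, together with a Rellich-type uniqueness result, before patching the local decay via the RAGE machinery (or equivalently, extracting and propagating the defect measure and showing its support is empty).
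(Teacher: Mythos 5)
Your overall strategy coincides with the paper's: rewrite the penalized terms, derive a wave equation with Neumann data on the exterior domain, and invoke local energy decay via defect measures (or RAGE). The paper applies exactly the Alazard/M\'etivier--Schochet wave-packet machinery, and your second and third steps reproduce that skeleton. However, there is one genuine gap, one spurious simplification, and a looseness in your bookkeeping that should be flagged.

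The genuine gap is the boundary Neumann data for the wave equation. You eliminate the velocity to get a wave equation for $\sigma^{\ep}$ and observe that $\bq w^{\ep}=\na\Psi^{\ep}$ satisfies $\p_{\bn}\Psi^{\ep}|_{\p\Omega}=0$, but you never compute the Neumann data carried by $\sigma^{\ep}$ itself, which is what the local-decay theorem requires. Projecting the momentum equation on $\bn$ at the boundary shows that $R\beta\,\p_{\bn}\sigma^{\ep}$ equals the normal component of the source $r_2^{\ep}$, and $r_2^{\ep}\cdot\bn$ contains $\ep\mu\,\overline{\Gamma}\,\div\cL u^{\ep}\cdot\bn$, hence $\ep\mu(2\lambda_1+\lambda_2)\p_{\bn}\div u^{\ep}$. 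This second normal derivative is \emph{not} controlled by the uniform estimate \eqref{unies-fixmukap} --- indeed the viscous boundary layer has width $(\ep\mu)^{1/2}$, so this trace is not obviously vanishing. This is precisely why the paper replaces $\sigma^{\ep}$ by $\tilde{\sigma}^{\ep}=\sigma^{\ep}-\ep\mu(2\lambda_1+\lambda_2)\Gamma\div u^{\ep}$ before writing the wave equation: the correction absorbs the dangerous $\p_{\bn}\div u^{\ep}$ term, leaving a boundary datum $\p_{\bn}\tilde{\sigma}^{\ep}=f\cdot\bn$ that only involves $\curl\curl u^{\ep}\cdot\bn$, whose vanishing trace is established in \eqref{Es-tsigma3} and Lemma \ref{lemcurlcurl}. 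Without this modification your Neumann-data requirement \eqref{asption-source} is unverified.

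Two secondary remarks. First, you freeze the coefficient to $\beta(\theta^0)$ in the principal part and move the difference into the source. This is not how the paper proceeds: it keeps $\beta(\theta^{\ep})$ and lets the wave-packet transform handle the commutator $[W^{\ep}, R\beta(\theta^{\ep})]$, exploiting the mere convergence \eqref{conv-theta} rather than a commutator estimate on $\div\big((\beta(\theta^{\ep})-\beta(\theta^0))\na\sigma^{\ep}\big)$. Your shortcut can be made to work by dominated convergence since $\na\sigma^{\ep}$ and $\Delta\sigma^{\ep}$ are uniformly bounded in $L^\infty_{T_1}L^\infty_{loc}$, but this should be stated; it is not automatic. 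Second, you describe $(r_1^{\ep}, r_2^{\ep})$ as merely bounded in $L^2$ --- for the theorem to yield $\sigma^{\ep}\to 0$ the interior source must converge to $0$ in $L^2$, which in the paper requires the extra assumption \eqref{additionasp-theta} on $\pt\na\theta^{\ep}(0)$ to control $\ep\pt F_1^{\ep}$ (the thermal diffusion term in $F_1^{\ep}$ carries no $\ep$ factor, only the $\ep\pt$ in front does). Your summary glosses over which terms in the sources are $O(\ep)$ and which need this extra hypothesis.
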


Let us first assume that the above proposition holds true and verify that the limit $(u^0, \theta^0)$ of the sequences $(u^{\ep}, \theta^{\ep})$ solves the equations \eqref{NINS-1}.  To proceed, we need the following claim: 

\textbf{Claim}: Let $(\sigma^{\ep}, u^{\ep}, \theta^{\ep})$ be the solution to system \eqref{NCNS-S2}, \eqref{bdryconditionofu} satisfying  the uniform estimates \eqref{unies-fixmukap}. The following convergence properties hold:
up to extraction of the subsequences, 
\begin{align}
    u^{\ep}\rightarrow u^0 \text{ in } L^2([0,T_1], L_{loc}^2(\Omega)), \quad \mathbb{Q}u^0=\f{\gamma-1}{\gamma}\kappa\overline{\Gamma}\beta(\theta^0)\na\theta^0, \label{conv-u}
\end{align}
where $\mathbb{Q}$ is the projection defined in \eqref{proj-Q} and $\overline{\Gamma}=1/\overline{P}.$ Note that by the definition of $\beta$ in \eqref{defgamma-beta}, the term $\beta(\theta^0)\na\theta^0=\na (\beta(\theta^0)-\beta(0)).$

Let us postpone the proof of this claim and justify the convergence of the system \eqref{NCNS-S2} to \eqref{NINS-1}. Thanks to the facts \eqref{conv-theta}, \eqref{conv-u},
 the uniform estimates \eqref{unies-fixmukap}, 
it is easy to verify from the equation of 
$\theta^{\ep}$ in $\eqref{NCNS-S2}_3$ that 
\begin{align*}
    \f{C_v}{R} (\pt+u^0\cdot\nabla)\theta^0
    -\f{1}{\gamma}\kappa \overline{\Gamma}\div(\beta(\theta^0)\na\theta^0)=0
\end{align*}
holds in the sense of distribution. However, as every term in the above equation belongs to $C([0,T_1], L^2(\Omega)),$ it also holds in the strong sense. It now remains to find $\na\pi^0\in L^2( [0,T_1], L^2(\Omega))$ 
and show that $(u^0, \na\pi^0)$ solves $\eqref{NINS-1}_1,$ \eqref{bc-limit} in the sense of \eqref{incom-EI}. 
To start, it is convenient to look at the equations satisfied by $w^{\ep}=\colon u^{\ep}-\f{\gamma-1}{\gamma}\kappa \overline{ \Gamma} \beta(\theta^{\ep})\nabla\theta^{\ep},$ which, in light of $\eqref{sys-uni}_2,$ takes the form:
\begin{align*}
    \f{1}{R\beta(\theta^{\ep})}(\pt+u^{\ep}\cdot\nabla)w^{\ep} +\nabla\pi^{\ep}-\mu\overline{\Gamma}\div\cL u^{\ep}-F(\theta^{\ep}, u^{\ep})=\ep \tilde{F}_{w}^{\ep}  
\end{align*}
where $$\pi^{\ep}=\sigma^{\ep}/\ep- \f{(\gamma-1)}{C_v\gamma}\overline{\Gamma}\kpa\big(-\div w^{\ep}+\f{\kpa}{\gamma}\overline{\Gamma} \div(\beta(\theta^{\ep})\nabla\theta^{\ep})\big),$$
$$F(\theta^{\ep}, u^{\ep})=\kpa\overline{\Gamma}\f{\gamma-1}{R\gamma}(\na u^{\ep}\cdot \na \theta^{\ep}-\na\theta^{\ep}(\pt+u^{\ep}\cdot\na)\theta^{\ep}), \quad \sup_{\ep\in(0,\ep_1]}\ep^{\f{1}{2}}\|\tilde{F}_{w}^{\ep}\|_{L^2( [0,T_1], L^2(\Omega))}<+\infty.$$
With the help of  \eqref{conv-theta}, \eqref{conv-u},
and the uniform estimates \eqref{unies-fixmukap}, we deduce by testing the divergence-free, compactly supported test function that
\begin{align}\label{bpw}
    \mathbb{P}\bigg( \f{1}{R\beta(\theta^{0})}(\pt+u^{0}\cdot\nabla)w^{0} -F(\theta^{0}, u^{0}) +\mu\lambda_1\overline{\Gamma}\curl\curl u^{0}\bigg)=0
\end{align}
holds in the sense of distribution. 
That is, for any  $\psi\in C^{\infty}([0,T_1], C_c^{\infty}(\ \overline{\Omega}))$ with
 $\div \psi=0,\,\psi\cdot\bn|_{\p\Omega}=0,$ the following identity holds: for every  $0<t\leq T_1,$
 \beqs \begin{aligned} &\f{1}{R}\int_{\Omega}\big(\f{w^0}{\beta(\theta^0)}\cdot\psi\big)(t,\cdot) \ \d x+\mu\lambda_1\overline{\Gamma}\izt\int_{\Omega}\curl w^0\cdot\curl \psi \ \d x\d s\\&=\f{1}{R}\int_{\Omega}\big(\f{w^0}{\beta(\theta^0)}\cdot\psi\big)(0,\cdot) \ \d x+\izt\int_{\Omega}\bigg(F(\theta^0, u^0)-\f{u^0}{R\beta(\theta^0)}\cdot\nabla w^0\bigg)\cdot\psi  \ \d x\d s\\&\quad+\f{1}{R}\izt\int_{\Omega}\f{w^0}{\beta(\theta^0)}\cdot\pt\psi+ \pt(\f{1}{\beta(\theta^0)})w^0\cdot\psi \ \d x\d s+\mu\lambda_1\overline{\Gamma}\int_0^t\int_{\p\Omega}2\Pi\big(-a u^0+(D\bn) u^0\big)\cdot\psi \ \d S_y \d s.\end{aligned} \eeqs
The identity \eqref{bpw} lead us to find
a $\na \tilde{\pi}^0\in L^2( [0,T_1], L^2(\Omega))$ by solving the elliptic problem:
 \begin{align}\label{def-pi0}
          \Delta \tilde{\pi} ^0 =\div H^0 \text{ in } \Omega\qquad \qquad
          \p_{\bn} \tilde{\pi} ^0|_{\p\Omega}=H^0\cdot\bn-\mu\overline{\Gamma}\lambda_1\curl\curl u^0\cdot\bn, \quad  \tilde{\pi} ^0 \xlongrightarrow{|x|\rightarrow+\infty} 0
 \end{align}
 with 
 $H^0=-\f{1}{R\beta(\theta^{0})}(\pt+u^{0}\cdot\nabla)w^{0} +F(\theta^{0}, u^{0}).$ 
 Indeed, as $$\div (\f{1}{R\beta(\theta^{0})} \pt w^0)=\na\big(\f{1}{R\beta(\theta^{0})}\big)\pt w^0, $$ 
 it holds that $\div H^0\in L^2([0,T_1], H^{-1}(\Omega)).$ Moreover, it follows from \eqref{curlcurludotn}, \eqref{omegatimesn}, \eqref{conv-u} that: 
 \begin{align*}
     \mu\curl\curl u^0\cdot\bn \in L^2\big([0, T_1], H^{-\f{1}{2}}(\p\Omega)\big).
 \end{align*}

Next, direct computations show that:
\begin{align*}
    \f{1}{R\beta(\theta^{0})}(\pt+u^{0}\cdot\nabla)w^{0} -F(\theta^{0}, u^{0})= \f{1}{R\beta(\theta^{0})}(\pt+u^{0}\cdot\nabla)u^{0} -\kpa\overline{\Gamma}\f{\gamma-1}{R\gamma}\na (\pt+u^0\cdot\na)\theta^0,
\end{align*}
which, together with \eqref{bpw}, implies that the following  holds in the sense of distribution:
\begin{align*}
    \mathbb{P}\bigg( \f{1}{R\beta(\theta^{0})}(\pt+u^{0}\cdot\nabla)u^{0}  -\mu\overline{\Gamma}\div\cL u^{0}\bigg)=0
\end{align*}

 Define $\na\pi^0=\na\big(\tilde{\pi}^0-\kpa\overline{\Gamma}\f{\gamma-1}{R\gamma} (\pt+u^0\cdot\na)\theta^0\big)\in  L^2( [0,T_1], L^2(\Omega)),$ we thus find that $(u^0, \na\pi^0)$ solves 
 $\eqref{NINS-1}_2$ in the sense of \eqref{incom-EI}.  Finally, it results from the uniform bounds
 \begin{align*}
   \sup_{\ep\in (0,\ep_1] } \big( \| ( u^{\ep} ,\theta^{\ep})\|_{  L_{T_1}^{\infty}\cH^{0,m-1}}+\|(\nabla  u^0, \nabla \theta^0, \pt\theta^0)\|_{
  L_{T_1}^{\infty}\cH^{0,m-2}
\cap L^{\infty}([0,T_1]\times\Omega)}\big)<+\infty
 \end{align*}
 that $(u^0, \theta^0)$ satisfy the additional regularity \eqref{addition-reg}. The uniqueness of the solution to the limit system stems from the boundedness of the
Lipschitz norm, which in turn, means that the convergence of $(u^{\ep},\theta^{\ep})$ holds for the whole sequence.


\begin{proof}[Proof of the claim:]
Let us prove \eqref{conv-u}. Recall that $w^{\ep}=u^{\ep}-\f{\gamma-1}{\gamma}\kappa \overline{ \Gamma} \beta(\theta^{\ep})\nabla\theta^{\ep}.$ The property 
\eqref{conv-compressible} together with
uniform boundedness of $\div\big(\beta(\theta^{\ep})\nabla\theta^{\ep}\big)$
in $L^2([0,T_1], L^2(\Omega))$ imply that $\div w^{\ep}\rightarrow 0$ in $L^2([0,T_1], L_{loc}^2(\Omega)),$ which combined with the weak convergence of $\mathbb{Q}w^{\ep},$ yields 
$\mathbb{Q} w^{\ep}\rightarrow 0$ in $L^2([0,T_1], L_{loc}^2(\Omega)).$ Therefore, in view of \eqref{conv-theta},
we find that 
$\mathbb{Q} u^{\ep}$ converges strongly in $L^2([0,T_1], L_{loc}^2(\Omega))$ to $\f{\gamma-1}{\gamma}\kappa\overline{\Gamma}\beta(\theta^0)\na\theta^0.$

It thus remains to show $\mathbb{P}u^{\ep}\rightarrow \mathbb{P} u^0 \text{ in } L^2([0,T_1], L_{loc}^2(\Omega)).$ As $\theta^{\ep}, u^{\ep}$ are uniformly bounded in $(L^2\cap L^{\infty})([0,T_1]\times\Omega),$ we easily see that
\begin{align}\label{bpu-weak}
   \f{ u^{\ep}}{\beta(\theta^{\ep})} \rightharpoonup
    \f{u^{0}}{\beta(\theta^{0})} \Rightarrow 
    \mathbb{P}\big( \f{ u^{\ep}}{\beta(\theta^{\ep})}\big) \rightharpoonup
     \mathbb{P}\big( \f{u^{0}}{\beta(\theta^{0})}\big)
    \quad \text{ in }
 L^2([0,T_1]\times\Omega).
\end{align}
On the other hand, it follows from the identity 
\begin{align*}
  \pt \mathbb{P} \big(\f{ u^{\ep}}{\beta(\theta^{\ep})}\big)= \mathbb{P}\bigg(u^{\ep}\pt \big(\f{1}{\beta(\theta^{\ep})}\big)- \f{ u^{\ep}}{\beta(\theta^{\ep})}\cdot\na u^{\ep}+\mu\Gamma\div\cL u^{\ep}\bigg)
\end{align*}
that $\pt \mathbb{P} \big(\f{ u^{\ep}}{\beta(\theta^{\ep})}\big)$ is uniformly bounded in $L^{\infty}([0,T_1], H^{-1}(\Omega)),$ which, combined with the uniform boundedness of 
$\f{ u^{\ep}}{\beta(\theta^{\ep})}$ in  $L^{\infty}([0,T_1], H^{1}(\Omega))$ and the Aubin-Lions Lemma that $\mathbb{P} \big(\f{ u^{\ep}}{\beta(\theta^{\ep})}\big)$ converges in 
$C([0,T_1], L_{loc}^2(\Omega)).$ We thus find by noticing \eqref{bpu-weak} that:
\begin{align*}
  \mathbb{P} \big(\f{ u^{\ep}}{\beta(\theta^{\ep})}\big) \rightarrow \mathbb{P}\big( \f{u^{0}}{\beta(\theta^{0})}\big) \text{ in } C([0,T_1], L_{loc}^2(\Omega)).
\end{align*}
Next, we derive  from 
the convergence results \eqref{conv-theta} \eqref{conv-u}  and the uniform estimates in $L^{\infty}([0,T_1]\times\Omega)$
for $(\theta^{\ep}, u^{\ep})$ that:
\begin{align*}
  \mathbb{P} \big(\f{  \mathbb{Q}u^{\ep}}{\beta(\theta^{\ep})}\big) \rightarrow \mathbb{P}\big( \f{\mathbb{Q}u^{0}}{\beta(\theta^{0})}\big) \text{ in } L^2([0,T_1], L_{loc}^2(\Omega)).
\end{align*}
The previous two properties yield further that:
\begin{align*}
  \mathbb{P} \big(\f{  \mathbb{P}u^{\ep}}{\beta(\theta^{\ep})}\big) \rightarrow \mathbb{P}\big( \f{\mathbb{P}u^{0}}{\beta(\theta^{0})}\big) \text{ in } L^2([0,T_1], L_{loc}^2(\Omega)),
\end{align*}
which, combined with the fact 
$\f{1}{\beta(\theta^{\ep})}\rightarrow \f{1}{\beta(\theta^{0})} $ in $ L^2([0,T_1], L_{loc}^2(\Omega)),$ imply that:
\begin{align*}
    \mathbb{P}\bigg( \f{\mathbb{P}(u^{\ep}-u^{0})}{\beta(\theta^{0})}\bigg)\rightarrow 0 \text{ in } L^2\big([0,T_1], L_{loc}^2(\Omega)\big).
\end{align*}
Consequently, 
we find that, by noticing $\beta(\theta^0)(t,x)\geq c_0$ for all $(t,x)\in[0,T_1]\times\Omega,$ $\mathbb{P}(u^{\ep}) $ strongly converges to $\mathbb{P}(u^{0}) $ in $ L^2\big([0,T_1], L_{loc}^2(\Omega)\big).$ The proof of \eqref{conv-u} is now complete. 
\end{proof}

To finish the proof of Theorem \ref{thm-conv1}, it now remains to show Proposition \ref{prop-conv} 
which is based on the following Theorem:
\begin{thm}
Assume that 
\beq\label{asmption-theta}
\begin{aligned}
   & \theta^{\ep}, \pt\na\theta^{\ep} \text{ are uniformly bounded respectively  in } C^1([0,T_1]\times\Omega), L^2([0,T_1]\times\Omega) \\
    & \theta^{\ep}\rightarrow \theta^0\quad  \text{ in } C([0,T_1], H_{loc}^1(\Omega))\cap C([0,T_1]\times\Omega),
\end{aligned}
\eeq
where $\theta^0$ enjoys the decay property \eqref{decaytheta0}. Let $q^{\ep}$ be a bounded sequence in $C([0,T_1], H^2(\Omega))$ satisfying $\sup_{\ep\in (0,\ep_1]}\|\na^2 q^{\ep}\|_{H_{co}^1}<+\infty$ and
solving the equations 
\begin{align}\label{def-acousticeq}
\f{1}{\gamma}(\ep\pt)^2 q^{\ep}-\div\big(R\beta(\theta^{\ep})\na q^{\ep}\big)= G_1^{\ep}+ \div\big(R\beta(\theta^{\ep})G_2^{\ep}\big)=\colon G^{\ep}, \quad \p_{\bn}q^{\ep}=g^{\ep},
\end{align}
where 
\beq\label{asption-source}
G^{\ep}\rightarrow 0 \text{ in } L^2([0,T_1]\times\Omega),
\qquad g^{\ep}\rightarrow 0 \text{ in } \, L^2([0,T_1], H^{\f{1}{2}}(\p\Omega)),
\eeq
then it holds that:  
$$q^{\ep}\rightarrow 0 \text{ in } \, L^2([0,T_1], L_{loc}^2(\Omega)).$$
\end{thm}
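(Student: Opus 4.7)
The plan is to view \eqref{def-acousticeq} as a semiclassical wave equation whose principal part, as $\varepsilon\to 0$, becomes the acoustic operator $-\gamma R\beta(\theta^{0})\Delta$ with Neumann boundary condition, and to exploit the local energy decay / non-trapping character of this operator on an exterior domain. Roughly speaking, I would first reduce to an autonomous spectral framework by freezing the time-dependence and treating it perturbatively, using the assumption that $\pt\theta^{\varepsilon}$ is uniformly bounded and $\pt\nabla\theta^{\varepsilon}$ is bounded in $L^{2}$. This allows one to view the evolution $(\varepsilon\partial_{t})^{2}q^{\varepsilon}/\gamma +P^{\varepsilon}(t)q^{\varepsilon}=G^{\varepsilon}$, with $P^{\varepsilon}(t)=-\operatorname{div}(R\beta(\theta^{\varepsilon}(t))\nabla\cdot)$ subject to $\p_{\bn}\cdot=g^{\varepsilon}$, as a perturbation of the frozen operator $P^{0}=-\operatorname{div}(R\beta(\theta^{0})\nabla\cdot)$.

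The first step is to homogenize the boundary condition by subtracting off an extension $\tilde q^{\varepsilon}$ of $g^{\varepsilon}$ with $\p_{\bn}\tilde q^{\varepsilon}=g^{\varepsilon}$ and $\|\tilde q^{\varepsilon}\|_{L^{2}_{t}H^{1}}\to 0$; this is possible by the trace lifting and the assumption \eqref{asption-source} on $g^{\varepsilon}$. Then $\hat q^{\varepsilon}=q^{\varepsilon}-\tilde q^{\varepsilon}$ satisfies the same type of equation with homogeneous Neumann boundary condition and with a source $\hat G^{\varepsilon}$ still converging to $0$ in $L^{2}$. The self-adjoint realization $A^{\varepsilon}$ of $P^{\varepsilon}$ with Neumann boundary condition on $L^{2}(\Omega;R\beta(\theta^{\varepsilon})^{-1}dx)$ has, by standard Mourre theory or limiting absorption on an asymptotically Euclidean exterior domain (since $\theta^{0}$ decays like $|x|^{-1-\delta}$ with $\nabla\theta^{0}$ like $|x|^{-2-\delta}$, and $\theta^{\varepsilon}\to\theta^{0}$ uniformly), purely absolutely continuous spectrum away from $0$, and $0$ is not an eigenvalue on the exterior domain.

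Next, I would apply the RAGE theorem exactly as in \cite{MR2575476, MR3729430}. Writing the solution by Duhamel's formula relative to the frozen operator at time $t_{0}$, one obtains, schematically,
\[
\hat q^{\varepsilon}(t)=\cos\!\bigl(\tfrac{t-t_0}{\varepsilon}\sqrt{\gamma A^{\varepsilon}(t_{0})}\bigr)\hat q^{\varepsilon}(t_{0})+\tfrac{\varepsilon}{\sqrt{\gamma A^{\varepsilon}(t_{0})}}\sin(\cdots)(\varepsilon\partial_{t}\hat q^{\varepsilon})(t_{0})+\text{source terms}.
\]
For any compactly supported cutoff $\chi\in C^{\infty}_{c}(\overline{\Omega})$, the RAGE theorem applied to the absolutely continuous part of $\sqrt{A^{\varepsilon}(t_{0})}$ gives
\[
\int_{0}^{T_1}\bigl\|\chi\,e^{it\sqrt{A^{\varepsilon}}/\varepsilon}f\bigr\|_{L^{2}}^{2}\,dt\xrightarrow[\varepsilon\to 0]{}0
\]
uniformly on bounded sets of $f\in L^{2}$, the key ingredient being that $\chi(1+A^{\varepsilon})^{-1}$ is compact and $A^{\varepsilon}$ has no singular spectrum. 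The convergence of $\theta^{\varepsilon}$ to $\theta^{0}$ and the uniform bound on $\pt\theta^{\varepsilon}$ let one replace $A^{\varepsilon}(t_{0})$ by $A^{0}(t_{0})$ with an error going to $0$ in the relevant norms, and let one control the commutator arising from the time-dependence by a Grönwall argument in $t_{0}$.

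The main obstacle, in my view, is twofold: (i) verifying the absence of embedded eigenvalues / absence of trapping for the time-varying family $A^{\varepsilon}(t)$ uniformly in $\varepsilon$, which forces a quantitative version of the limiting absorption principle depending only on $\theta^{0}$ and on the uniform modulus of convergence $\theta^{\varepsilon}\to\theta^{0}$; and (ii) dealing with the non-homogeneous Neumann data $g^{\varepsilon}$ in a way that is compatible with energy estimates for $\hat q^{\varepsilon}$, since naive lifting loses $\varepsilon$-powers. The first issue is handled by the decay assumption \eqref{decaytheta0} on $\theta^{0}$ which makes $A^{0}$ a short-range perturbation of $-R\beta(0)\Delta_{N}$, hence non-trapping; the second is circumvented by integrating the Duhamel formula against a smooth test function and moving derivatives onto the test function so that only $\|g^{\varepsilon}\|_{L^{2}H^{1/2}}$ is needed. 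Once these two points are in place, the strong convergence $q^{\varepsilon}\to 0$ in $L^{2}([0,T_1],L^{2}_{loc}(\Omega))$ follows by combining the RAGE estimate with Duhamel on a partition $0=t_{0}<t_{1}<\cdots<t_{N}=T_{1}$ of mesh $o(1)$ and letting $\varepsilon\to 0$ first, then refining the partition.
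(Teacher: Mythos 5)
Your proposal goes a genuinely different route from the paper. The paper proves this theorem by the \emph{defect measure / wave packet transform} technique, following Proposition 3.1 of \cite{MR2106119}: one applies the semiclassical wave-packet transform $W^{\ep}$ to $q^{\ep}$, constructs an operator-valued defect measure $(\mu,M)$ associated with $\Theta^{\ep}=(1-\Delta)W^{\ep}q^{\ep}$, shows that $M$ is supported in the kernel of $P^{0}(t,\tau,\na)(1-\Delta_N)^{-1}$, and then uses the decay assumption \eqref{decaytheta0} to deduce that this kernel is trivial (Lemma 5.1 of \cite{MR1834114}). You instead propose the RAGE-theorem route of \cite{MR2575476,MR3729430}, which the paper does mention as an alternative avenue but does not pursue. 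Both start from the same observation — on an exterior domain the acoustic propagator has no point spectrum, hence local energy decay — but the machinery is entirely different.

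However, as written, your sketch has a genuine gap in the treatment of the time-dependence of $\beta(\theta^{\ep}(t))$. Your Duhamel formula with the \emph{frozen} propagator $\cos\bigl(\f{t-t_0}{\ep}\sqrt{\gamma A^{\ep}(t_0)}\bigr)$ is exact only when the operator does not depend on $t$; the error you must absorb into the ``source terms'' is $\f{1}{\ep^2}\big(A^{\ep}(t_0)-A^{\ep}(s)\big)\hat q^{\ep}(s)$, and since $\pt\theta^{\ep}$ is only bounded (not $o(1)$ or $O(\ep)$), this term has size $|s-t_0|/\ep^2$. After convolution with the $\f{\ep}{\sqrt{A}}\sin(\cdot)$ kernel, the accumulated error over a window $[t_0,t_0+\delta]$ is of order $\delta^2/\ep$, which forces $\delta\ll\sqrt{\ep}$; but the RAGE local-energy-decay estimate requires $\delta/\ep\to\infty$ \emph{uniformly} to see the decay. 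Reconciling these two scales while also summing the RAGE $o(1)$'s over $O(T_1/\delta)$ subintervals is precisely the delicate point your ``Gr\"onwall in $t_0$'' and ``partition of mesh $o(1)$'' do not actually close. The wave-packet transform sidesteps this entirely: the commutator $[W^{\ep},R\beta(\theta^{\ep})]\na q^{\ep}$ produced by the time-dependent coefficient is $O(\ep)$ by construction of $W^{\ep}$, and this is exactly what is controlled by the assumption that $\pt\theta^{\ep}\in C^1$ and $\pt\na\theta^{\ep}\in L^2$. A secondary gap is the boundary lift: subtracting $\tilde q^{\ep}$ with $\p_{\bn}\tilde q^{\ep}=g^{\ep}$ introduces $(\ep\pt)^2\tilde q^{\ep}$ into the source, which would require two time derivatives of $g^{\ep}$ that are not available under \eqref{asption-source}; the paper instead inverts $(1-\Delta_N)$ and carries the boundary datum as $N(W^{\ep}g^{\ep})$, which only uses $g^{\ep}\in L^2_t H^{1/2}(\p\Omega)$. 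To make your route rigorous one would have to follow the Feireisl--Novotn\'y scheme closely rather than freeze and iterate.
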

\begin{proof}
This theorem has essentially been proved in Proposition 3.1 of \cite{MR2106119}. 
For the reader's convenience, we outline the proof detailed in \cite{MR2106119} and more importantly, make some comments  on the assumptions imposed in \eqref{asmption-theta}, \eqref{asption-source} on 
$\theta^{\ep}$ and $G^{\ep}, g^{\ep}$ as they are slightly different from those in 
[Prop 3.1, \cite{MR2106119}]. 
To get the strong compactness in time, the idea is to construct the defect measures associated to the sequence $q^{\ep}$ and then prove it vanishes. More precisely, one first extends the functions to $t\in \mR,$ (without changing the notation) 
and then uses the wave packet transform: 
\begin{align*}
    W^{\ep}f(t,\tau, x)=(2\pi^3)^{-\f{1}{4}}\ep^{-\f{3}{4}}\int_{\mR} e^{\big(i(t-s)\tau-(t-s)^2\big)/\ep} f(s,x)\,\d s
\end{align*}
to write the equation \eqref{def-acousticeq} in $\mR^2\times\Omega:$
\begin{align*}
    P^{\ep}(t,\tau,\na)(W^{\ep}q^{\ep})=W^{\ep} G^{\ep}+\div\big([W^{\ep}, R\beta(\theta^{\ep})]\na q^{\ep} \big)-\f{1}{\gamma}\big(W^{\ep}\big((\ep\pt)^2 q^{\ep}\big)+\tau^2W^{\ep}q^{\ep}\big)
\end{align*}
where $ P^{\ep}(t,\tau,\na)=\colon- \f{1}{\gamma}\tau^2\cdot 
    -\div(R\beta(\theta^{\ep})\na\cdot).$
Next, by defining $\Theta^{\ep}=(\Id-\Delta)W^{\ep}q^{\ep}\in L^2(\mR_{t,\tau}^2\times\Omega) $
following the similar arguments in [Lemma 4.3, \cite{MR1834114}], one shows the existence of the defect measure $\mu$ in $\mR^2$ associated to the sequence $\Theta^{\ep}$ and the existence of $M\in L^1(\mR^2, \mathcal{L}_{+}^1,\mu),$ such that, up to subsequence:
for any $A\in C_0(\mR^2, \mathcal{K}),$
\begin{align}\label{conv-measure}
    \int_{\mR^2}(A(t,\tau)\Theta^{\ep}, \Theta^{\ep})_{L^2(\Omega)}\,\d t\d \tau\xlongrightarrow{\ep\rightarrow 0}  \int_{\mR^2}\text{tr} \big(A(t,\tau) M(t,\tau)\big) \mu(\d t\d\tau)
\end{align}
Note that in the above $ \mathcal{K}, \mathcal{L}_{+}^1$ denote the spaces of compact operators and non-negative trace class operators in $L^2(\Omega).$ 
The remaining task is to show the following two facts:
\begin{align}
   & P^{0}(t,\tau,\na)(1-\Delta_{N})^{-1}M(t,\tau)=0 \quad \mu-a.e, \label{supp-measure}\\
    &\text{Ker}_{L^2(\Omega)} \big(P^{0}(t,\tau,\na)(1-\Delta_{N})^{-1}\big)={0},\,\, \forall (t,\tau)\in\mR^2 \label{zero-kernel}
\end{align}
\text{ where}   $P^{0}(t,\tau,\na)=\f{1}{\gamma}\tau^2\cdot 
    -\div(R\beta(\theta^{0})\na\cdot)$ and $(1-\Delta_{N})^{-1}$ is defined as:
    \begin{align*}
        u=(1-\Delta_{N})^{-1}f \Leftrightarrow (1-\Delta) u=f, \quad \p_{\bn} u=0.
    \end{align*}
    Under the assumption \eqref{decaytheta0} on $\theta^0,$ \eqref{zero-kernel} has been shown more or less in [Lemma 5.1, \cite{MR1834114}]. The matter is thus reduced to the justification of \eqref{supp-measure} where the assumptions \eqref{asmption-theta}, \eqref{asption-source} on 
$\theta^{\ep}$ and $G^{\ep}, g^{\ep}$ are used. Thanks to \eqref{conv-measure}, it suffices to show that for any $\varphi\in C_0(\mR^2), K\in \mathcal{K},$ 
\begin{align*}
    \varphi K P^0(t,\tau,\na)(1-\Delta_{N})^{-1}\Theta^{\ep}\rightarrow 0 \text{ in } L^2(\mR^2\times\Omega),
\end{align*}
which is in turn the consequence of the following:
\begin{align}
     \varphi K (P^{\ep}-P^0)(t,\tau,\na)(1-\Delta_N)^{-1}\Theta^{\ep}\rightarrow 0  \text{ in } L^2(\mR^2\times\Omega), \label{toprove-1}\\
     \quad \varphi K P^{\ep} (t,\tau,\na)\big(W^{\ep}q^{\ep}-(1-\Delta_{N})^{-1}N(W^{\ep} g^{\ep})\big)\rightarrow 0 \text{ in } L^2(\mR^2\times\Omega), \label{toprove-2}
\end{align}
where $N(h)$ is defined as:
\begin{align*}
    v=N(h) \Leftrightarrow (1-\Delta) u=0, \quad \p_{\bn} u=h.
\end{align*}
By the definition of $P^{\ep}(t,\tau,\na)$ and $P^{0}(t,\tau,\na),$ we have that:
\begin{align*}
   & \|\varphi K (P^{\ep}-P^0)(t,\tau,\na)(1-\Delta_N)^{-1}\Theta^{\ep}\|_{L^2(\mR^2\times\Omega)}\\
    &\lesssim \|(\theta^{\ep}-\theta^0)\|_{L^{\infty}\big([0,T_1], \, H^1_{loc}\cap L^{\infty}(\Omega)\big)} \|(1-\Delta_N)^{-1}\Theta^{\ep}\|_{L^2\big(\mR^2,\,  W^{1,\infty}\cap H^2(\Omega)\big)},
\end{align*}
which tends to $0$.  Therefore, by noticing \eqref{asmption-theta} and 
the fact $W^{\ep}$ commutes with the spatial derivatives, 
and the properties
\begin{align*}
   \| W^{\ep} f\|_{L^2(\mR^2, L^p(\Omega))}\lesssim \|f\|_{L^2([0,T_1], L^p(\Omega))}, \,
   ( p=2,+\infty),\quad
   \|(1-\Delta_{N})^{-1}f\|_{W^{1,\infty}\cap H^2(\Omega)}\lesssim \|f\|_{H_{co}^1(\Omega)},
\end{align*}
one finds \eqref{toprove-1}.
Next, taking the benefits of the properties on $\theta^{\ep}, q^{\ep}$ 
we can follow the similar arguments as in [Lemma 3.3, \cite{MR2106119}] that: 
\begin{align*}
\varphi K \div\big([W^{\ep}, R\beta(\theta^{\ep})]\na q^{\ep} \big),\quad  \varphi K \big(W^{\ep}\big((\ep\pt)^2 q^{\ep}\big)+\tau^2W^{\ep}q^{\ep}\big) \longrightarrow 0 \text{ in } L^2(\mR^2\times\Omega),
\end{align*}
which, combined with the assumption \eqref{asption-source}, leads to \eqref{toprove-2}.
Note that in \cite{MR2106119}, it is assumed that 
 $\theta^{\ep}\rightarrow \theta^0$ in $C([0,T_1], H^s), s>1+{d}/{2}$ and $\|G_2^{\ep}\|_{L^2([0,T_1]\times H^1(\Omega))}\rightarrow 0.$ But it turns out from the above computations that \eqref{asmption-theta}, \eqref{asption-source} are enough to obtain the desired convergence result.
\end{proof}
\begin{proof}[Proof of Proposition \ref{prop-conv}]
Let $\tilde{\sigma}^{\ep}=\sigma^{\ep}-\ep\mu(2\lambda_1+\lambda_2)\Gamma(\ep\sigma^{\ep})\div u^{\ep},$ we derive from the equations \eqref{NCNS-S2} that: 
\begin{align*}
    \f{1}{\gamma}\ep\pt\tilde{\sigma}^{\ep}+\div u^{\ep}=F_1^{\ep},
    \quad \f{1}{R\beta(\theta^{\ep})}\ep\pt u^{\ep}+\na \tilde{\sigma}^{\ep}= F_2^{\ep},
\end{align*}
where 
\begin{align*}
F_1^{\ep}&=\f{\gamma-1}{\gamma}\kappa \Gamma(\ep\sigma^{\ep})\div(\beta(\theta^{\ep})\nabla\theta^{\ep})+\f{\gamma-1}{\gamma}\mu\ep^2\Gamma(\ep\sigma^{\ep})\cL u^{\ep}\cdot \mathbb{S}u^{\ep}\\
&\quad -\f{1}{\gamma}\big(\ep u^{\ep}\cdot\na\sigma^{\ep}+\ep\mu(2\lambda_1+\lambda_2)\ep\pt(\Gamma\div u^{\ep})\big),\\
F_2^{\ep}&=-\f{\ep}{R\beta (\theta^{\ep})}u^{\ep}\cdot \na u^{\ep}-\ep\mu \big(\lambda_1\Gamma \curl\curl u^{\ep}+\na\Gamma\div u^{\ep} \big).
\end{align*}
Consequently, we find that $\tsigma^{\ep}$ solves \eqref{def-acousticeq} with $G_1^{\ep}=\ep\pt F_1^{\ep}, G_2^{\ep}=F_2^{\ep}, g^{\ep}=\p_{\bn}\sigma^{\ep}|_{\p\Omega}.$
By the virtue of Remark \ref{rmkptna2theta}, we see that by assuming \eqref{additionasp-theta}, $(\ep\kpa)^{\f{1}{2}}\pt\na^2\theta^{\ep}$ are uniformly bounded in $L_{T_1}^2H_{co}^1,$ which, combined with the uniform estimate \eqref{unies-fixmukap}, yields that:
\begin{align*}
    \|(\ep\pt F_1^{\ep}, F_2^{\ep}, \div F_2^{\ep} )\|_{L_{T_1}^2H_{co}^1}\xlongrightarrow{\ep\rightarrow 0} 0.
\end{align*}
In light of the uniform boundedness of 
$\il(\theta^{\ep},\na\theta^{\ep})\il_{1,\infty,T_1},$ we find that, 
\begin{align*}
    G^{\ep}=\colon \ep\pt F_1^{\ep}+\div\big(R\beta(\theta^{\ep})F_2^{\ep}\big)\xlongrightarrow{\ep\rightarrow 0} 0 \ \text{ in } L_{T_1}^2H_{co}^1.
\end{align*}
For the boundary term, it has been justified in \eqref{Es-tsigma3} that:
\begin{align*}
(\Id,\ep\pt)\p_{\bn}\sigma^{\ep}|_{\p\Omega}\xlongrightarrow{\ep\rightarrow 0}  0  \, \text{ in } L^2([0,T_1]\times\p\Omega).
\end{align*}
Furthermore,  on can verify by using the uniform estimate \eqref{unies-fixmukap} that:
\begin{align*}
   \ep \div \big(\pt(R\beta(\theta^{\ep}))\na\tsigma^{\ep}\big)\xlongrightarrow{\ep\rightarrow 0} 0 \ \text{ in } L^2([0,T_1]\times\Omega).
\end{align*}
The previous four estimates enable us to apply Theorem \ref{thm-conv1} to obtain that 
\begin{align*}
    \tsigma^{\ep}, \ep\pt\tsigma^{\ep}\xlongrightarrow{\ep\rightarrow 0} 0 \ \text{ in } L^2([0,T_1], L^2_{loc}(\Omega)),
\end{align*}
 which implies readily \eqref{conv-compressible}.
 
\end{proof}
\section{Appendix}
In this appendix, we gather some useful estimates/facts and prove some technical lemmas. 
We begin with the product  and commutator estimates 
  which are used throughout the paper:
\begin{prop}\label{prop-prdcom}
 For each $0\leq t\leq T,$ and for any integer $k\geq 2,$ one has the (rough) product estimates: 
   \beq\label{roughproduct1}
 \|(fg)(t)\|_{H_{co}^k}\lesssim \|f(t)\|_{H_{co}^k}\il g \il_{[\f{k}{2}]-1,\infty,t}+\|g(t)\|_{H_{co}^k}\il f \il_{[\f{k+1}{2}],\infty,t},
 \eeq
    \beq\label{roughproduct2}
 \|(fg)(t)\|_{H_{co}^k}\lesssim \|(f,g)(t)\|_{H_{co}^k}\il (f,g) \il_{[\f{k}{2}]-1,\infty,t},
 \eeq
 and commutator estimates:
 \beq\label{roughcom}
 \|[Z^{I},f]g(t)\|_{L^2}\lesssim 
 \|Z f(t)\|_{H_{co}^{k-1}}
 \il g\il_{[\f{k}{2}]-1,\infty,t}+\|g(t)\|_{H_{co}^{k-1}}\il Z f\il_{[\f{k-1}{2}],\infty,t}, \quad\, |I|=k,
 \eeq
 or 
  \beq\label{roughcom1}
 \|[Z^{I},f]g(t)\|_{L^2}\lesssim 
 \|Z f(t)\|_{H_{co}^{k-1}}
 \il g\il_{[\f{k}{2}],\infty,t}+\|g(t)\|_{H_{co}^{k-1}}\il Z f\il_{[\f{k}{2}]-1,\infty,t}, \quad\, |I|=k.
 \eeq
Moreover, on the boundary the following estimate  also holds
\begin{align}\label{product-bd}
  |(fg)(t)|_{\tilde{H}^k}\lesssim  
  |(f,g)(t)|_{\tilde{H}^k} \il(f,g)\il_{[\f{k}{2}],\infty,t}.
\end{align}
 \end{prop}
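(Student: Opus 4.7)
\medskip

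\noindent\textbf{Proof plan.} All four inequalities are pure consequences of the Leibniz rule for conormal vector fields together with an $L^{\infty}$–$L^2$ splitting. My plan is to fix $|I|=k$ and expand
\[
Z^{I}(fg)=\sum_{|J|+|K|\leq k}C_{J,K}^{I}\,Z^{J}f\cdot Z^{K}g,
\]
where the $C^{I}_{J,K}$ are smooth coefficients depending only on derivatives of the defining function $\varphi_{i}$ of the boundary (this holds because the commutators $[Z_{j},Z_{l}]$ are again finite linear combinations of the $Z_{j}$'s with smooth coefficients, cf.\ \eqref{comu}). Taking the $L^{2}(\Omega)$-norm and applying Hölder produces, for each $(J,K)$, a term bounded by either $\|Z^{J}f\|_{L^{2}}\|Z^{K}g\|_{L^{\infty}}$ or $\|Z^{J}f\|_{L^{\infty}}\|Z^{K}g\|_{L^{2}}$; the whole proof is just choosing the right placement.

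For \eqref{roughproduct1}, I would split the multi-index sum according to whether $|J|\geq\bigl[\tfrac{k+1}{2}\bigr]+1$ or $|J|\leq \bigl[\tfrac{k+1}{2}\bigr]$. In the first case, $|K|\leq k-\bigl[\tfrac{k+1}{2}\bigr]-1\leq \bigl[\tfrac{k}{2}\bigr]-1$ (verified by treating the parities of $k$ separately), so placing $Z^{K}g$ in $L^{\infty}$ yields a contribution bounded by $\|f\|_{H_{co}^{k}}\,\interleave g\interleave_{[k/2]-1,\infty,t}$. In the opposite case, placing $Z^{J}f$ in $L^{\infty}$ yields $\|g\|_{H_{co}^{k}}\,\interleave f\interleave_{[(k+1)/2],\infty,t}$. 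The symmetric estimate \eqref{roughproduct2} is then an immediate consequence obtained by interpolating the two roles. For the commutator identities \eqref{roughcom}–\eqref{roughcom1} I would use
\[
[Z^{I},f]g=\sum_{|J|+|K|\leq k,\ |J|\geq 1}C^{I}_{J,K}\,Z^{J}f\cdot Z^{K}g,
\]
i.e.\ the Leibniz expansion minus the $J=0$ term. Since every surviving $J$ satisfies $|J|\geq 1$, we can write $Z^{J}f=Z^{J'}(Zf)$ with $|J'|=|J|-1\leq k-1$, and repeat the splitting above with $Zf$ in place of $f$; this produces exactly the norm $\|Zf\|_{H_{co}^{k-1}}$ on the $L^{2}$ side and the two variants \eqref{roughcom} vs.\ \eqref{roughcom1} simply correspond to the two ways of distributing the one-derivative gain between the $L^{\infty}$ factor on $Zf$ and the $L^{\infty}$ factor on $g$.

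Finally, for the boundary estimate \eqref{product-bd}, I would repeat the argument after localizing in a chart, using the tangential vector fields that generate $\widetilde{H}^{k}(\partial\Omega)$ in the sense of \eqref{bdynorm} (namely $\partial_{y^{1}},\partial_{y^{2}}$ and the scaled time derivative $\varepsilon\partial_{t}$). These commute among themselves up to lower-order tangential fields, so the same Leibniz-plus-splitting scheme applies, with the symmetric $\interleave(f,g)\interleave_{[k/2],\infty,t}$ bound coming from placing whichever factor has at most $[k/2]$ derivatives in $L^{\infty}(\partial\Omega)$ (extended to $L^{\infty}([0,t]\times\Omega)$ by the trace inequality, which only costs constants).

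The main obstacle, though a bookkeeping one rather than an analytic one, is to verify the index arithmetic uniformly in the parity of $k$ and to check that the numerology $\bigl[\tfrac{k}{2}\bigr]-1$ versus $\bigl[\tfrac{k+1}{2}\bigr]$ in \eqref{roughproduct1} is sharp: this is the only place where asymmetry between $f$ and $g$ enters, and it forces one to separate $k$ even and $k$ odd when verifying that $|K|\leq [k/2]-1$ whenever $|J|\geq [(k+1)/2]+1$. Everything else reduces to Hölder's inequality and the standard product rule for the commuting family $\{Z_{0},Z_{1},\dots,Z_{M}\}$.
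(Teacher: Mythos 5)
Your proposal is correct and follows essentially the same route as the paper: the Leibniz expansion of $Z^{I}(fg)$ with smooth coefficients (handling the non-commutativity of the $Z_j$ via \eqref{comu}), a splitting of the sum according to which factor carries at most $[\tfrac{k}{2}]-1$ (resp.\ $[\tfrac{k+1}{2}]$) derivatives, and Hölder's inequality; your index arithmetic for both parities of $k$ checks out, and the treatment of the commutators by discarding the $J=0$ term and rewriting $Z^{J}f$ as derivatives of $Zf$ matches the paper's identity \eqref{id-product-com}.
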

 \begin{proof}
 This proposition follows from  simply  counting the derivatives hitting on $f$ or $g.$ For instance, to prove the product estimate \eqref{roughproduct1} and the commutator estimate \eqref{roughcom},
 one can use the following expansion:
 \beq\label{id-product-com}
 \begin{aligned}
 Z^I (fg)&
 =\big(\sum_{1\leq |J|\leq [k/2]-1}+\sum_{1\leq |I-J|\leq [(k+1)/2]}\big)(C_{I,J}Z^J g Z^{I-J}f)+f Z^I g+g Z^I f , \quad |I|=k.
 \end{aligned}
 \eeq
 We can indeed have the more precise product estimate: for $j+l\leq k,$
 \beq\label{roughproduct-2}
  \|(f g)(t)\|_{\cH^{j,l}}\lesssim \|g(t)\|_{\cH^{j,l}}\il f \il_{0, \infty,t} 
  +
  \|g(t)\|_{H_{co}^{m-1}}\il  Z f\il_{[\f{k-1}{2}],\infty,t}
+\|f(t)\|_{\cH^{j,l}} \il  g\il_{[\f{k}{2}]-1,\infty,t}.
  \eeq
 \end{proof}

 As a corollary of Proposition \ref{prop-prdcom},
 the following composition estimates hold:
 \begin{cor}
 Suppose that $h\in C^0([0,T]\times\Omega)\cap L_t^pH_{co}^m \,(p=2,+\infty)$ with 
 $$A_1\leq  h(t,x)\leq A_2, \quad \forall(t,x)\in [0,T]\times\Omega.$$
 Let
 $F(\cdot):[A_1,A_2]\rightarrow \mathbb{R}$ be a smooth function satisfying 
 $$\sup_{s\in [A_1,A_2]}|F^{(m)}|(s)\leq B.$$
 Then we have the composition estimate:
 \beqs
 \|F(h(\cdot,\cdot))-F(0)\|_{L_t^pH_{co}^m}\leq  \Lambda(B,\il h\il_{[\f{m}{2}],\infty,t})\|h\|_{L_t^pH_{co}^m},
 \eeqs
 where $\Lambda(B,\il h\il_{[\f{m}{2}],\infty,t})$ is a polynomial with respect to
 $B$ and $\il h\il_{[\f{m}{2}],\infty,t}.$ 
 \end{cor}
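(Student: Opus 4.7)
The plan is to deduce the corollary from Proposition \ref{prop-prdcom} via the Faà di Bruno chain rule, which reduces $Z^I F(h)$ to a linear combination of products of $F^{(k)}(h)$ and conormal derivatives of $h$. An easy induction on $|I|=l$, using the fact that each $Z_j$ is a first-order differential operator with smooth coefficients, shows that for $1 \leq l \leq m$,
$$
Z^I F(h) \;=\; \sum_{k=1}^{l} \sum_{\substack{(I_1,\ldots,I_k)\\|I_1|+\cdots+|I_k|=l\\ |I_j|\geq 1}} c_{I, I_1,\ldots,I_k}\, F^{(k)}(h)\, \prod_{j=1}^{k} Z^{I_j} h,
$$
where the $c_{I,I_1,\ldots,I_k}$ are bounded smooth functions on $\Omega$ (depending on the base vector fields $Z_j$). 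The $|I|=0$ case is handled separately by the mean value theorem: $\|F(h)-F(0)\|_{L_t^p L^2} \leq \sup_{[A_1,A_2]}|F'|\cdot\|h\|_{L_t^pL^2}$.

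Next, in each product I would order the multi-indices so that $|I_1| \geq |I_2| \geq \cdots \geq |I_k|$. Since $|I_1|+\cdots+|I_k|=l \leq m$ and $|I_2| \leq (|I_1|+|I_2|)/2 \leq l/2 \leq [m/2]$, every factor beyond the first carries a multi-index of length at most $[m/2]$. Putting the single ``high'' factor $Z^{I_1}h$ in $L_t^pL^2$ and the remaining ``low'' factors in $L^\infty_{t,x}$ gives
$$
\bigl\|F^{(k)}(h)\prod_{j=1}^{k} Z^{I_j}h\bigr\|_{L_t^p L^2} \;\leq\; \|F^{(k)}(h)\|_{L^\infty_{t,x}}\,\|h\|_{L_t^pH_{co}^l}\, \il h\il_{[m/2],\infty,t}^{\,k-1}.
$$
The $L^\infty_{t,x}$ bound on $F^{(k)}(h)$ for each $1\leq k \leq m$ follows from the uniform bound $A_1 \leq h \leq A_2$ combined with the smoothness of $F$ on the compact interval $[A_1,A_2]$: the lower-order derivatives of $F$ are controlled by their values at a fixed point and by integrating the top bound $|F^{(m)}|\leq B$, so one gets $\|F^{(k)}(h)\|_{L^\infty_{t,x}} \leq C_k(B)$ depending polynomially on $B$.

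Summing these contributions over $k=1,\ldots,l$, over all admissible tuples $(I_1,\ldots,I_k)$, and over $|I|\leq m$, one obtains
$$
\|F(h)-F(0)\|_{L_t^pH_{co}^m} \;\leq\; \Lambda\bigl(B,\il h\il_{[m/2],\infty,t}\bigr)\, \|h\|_{L_t^pH_{co}^m},
$$
where $\Lambda(B,x)$ is a polynomial of degree at most $m-1$ in $x$ with coefficients depending on $B$ and on the uniform bounds of $F, F',\ldots, F^{(m-1)}$. Since there is essentially no analytic difficulty, the only real task is bookkeeping: verifying the Faà di Bruno expansion with conormal vector fields and checking that the partition $|I_1|\geq l-[m/2]$, $|I_j|\leq [m/2]$ for $j\geq 2$ is consistent with the product estimate \eqref{roughproduct1}--\eqref{roughproduct-2}. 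This is the main (minor) obstacle, and it is purely combinatorial.
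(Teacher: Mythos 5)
Your proof is correct and is essentially the argument the paper intends (the corollary is stated without proof, as a consequence of Proposition \ref{prop-prdcom}): Faà di Bruno with the conormal fields, ordering the multi-indices so that only one factor exceeds length $[\f{m}{2}]$, and placing that factor in $L_t^pL^2$ and the rest in $L^{\infty}_{t,x}$ is exactly the high-low splitting underlying \eqref{roughproduct1}. One remark: your final observation that the constants depend on the sup of $F,F',\dots,F^{(m-1)}$ as well as on $B$ is not a defect of your argument but an (unavoidable) correction to the literal hypothesis, which controls only $F^{(m)}$; the corollary is only used in the paper with the full $C^m$ bound of \eqref{preasption1}, so your reading is the intended one.
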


 This Corollary, combined with Proposition \ref{prop-prdcom}, leads to the following estimates:
 \begin{cor}\label{cor-gb}
 Let $\Gamma(\ep\sigma),\beta(\theta)$ be defined in \eqref{defgamma-beta}, 
 $r_0, r_1$ be defined as:
$$r_0=\f{1}{R\beta(0)}\exp(-\theta+\ep R\tilde{\sigma}/C_v\gamma),\quad r_1= \ep^{-1}(\exp (\ep R\tilde{\sigma}/C_v\gamma)-1),$$
where $ \tilde{\sigma}=\sigma-\ep\mu(2\lambda_1+\lambda_2)\Gamma\div u.$
Assume that   \eqref{preasption},  \eqref{preasption1} hold.
 Then one has the following 
 estimates: for $j=1,2,\, p=2,+\infty, X^m=H_{co}^m \text{ or } \underline{H}_{co}^m ,$ $\tilde{Z}= Z \text{ or } \na$ or $\pt,$ 
 \begin{align}
&\| \tilde{Z} \beta\|_{L_t^p X^{m-1}}\lesssim \|(\Id, \tilde{Z})\theta\|_{L_t^p X^{m-1}}\Lambda\big(\f{1}{c_0},\il \theta\il_{[\f{m}{2}],\infty,t}+\il \tilde{Z}\theta\il_{[\f{m}{2}]-1,\infty,t}\big),\label{esofbeta}\\
&\kappa\|\na^2 \beta\|_{L_t^PX^{m-1}} \lesssim \|(\Id, \kappa^{\f{1}{2}}\na, \kappa\na^2)\theta \|_{L_t^PX^{m-1}}\notag
 \\
&\qquad \cdot \Lambda\big (\f{1}{c_0}, 
\il (\Id, \kappa^{\f{1}{2}}\na ) \theta \il_{[\f{m-1}{2}],\infty,t}+\il\kappa\na^2\theta \il_{[\f{m}{2}]-1,\infty,t}\big), \label{esbeta-sec} \\
&\|\Gamma(\ep\sigma)-\Gamma(0)\|_{L_t^p X^m}\lesssim \ep \|\sigma\|
 _{L_t^p X^m}\Lambda\big (\f{1}{c_0}, \il \sigma\il_{[\f{m}{2}],\infty,t}\big),\label{esofGamma-1}\\
 &\|\tilde{Z}\Gamma\|_{L_t^p X^{m-1}}\lesssim \ep\|(\Id, \tilde{Z})\sigma\|_{L_t^p X^{m-1}} \Lambda\big(\f{1}{c_0},\il \sigma\il_{[\f{m}{2}],\infty,t}+\il \tilde{Z}\sigma\il_{[\f{m}{2}]-1,\infty,t}\big),\label{esofGamma-2} \\
 &\|\tilde{Z} (r_0, r_0^{-1} )\|_{L_t^PX^{m-1}}\notag
 \\
& \lesssim  \|(\Id, \tilde{Z})(\theta, \ep\tsigma)\|_{L_t^PX^{m-1}}\Lambda\big (\f{1}{c_0}, \il (\theta, \ep\tsigma)\il_{[\f{m}{2}],\infty,t}+\il\tilde{Z}(\theta, \ep\tsigma)\il_{[\f{m}{2}]-1,\infty,t}\big),\label{esr0}\\
&\mu^{\f{1}{2}}\|\na^2(r_0, r_0^{-1})\|_{L_t^PX^{m-2}} \lesssim \|(\Id, \na, \mu^{\f{1}{2}}\na^2)(\theta, \ep\tsigma)\|_{L_t^PX^{m-2}}\notag
 \\
&\qquad \cdot \Lambda\big (\f{1}{c_0}, \il(\theta, \ep\tsigma)\il_{[\f{m}{2}],\infty,t}+ \il \na  (\theta, \ep\tsigma)\il_{[\f{m}{2}]-1,\infty,t}+\il\mu^{\f{1}{2}}\na^2(\theta, \ep\tsigma)\il_{[\f{m}{2}]-2,\infty,t}\big),\label{esr0-sec}\\
&\|\tilde{Z} r_1\|_{L_t^PX^{m-1}}\lesssim \| (\Id ,\tilde{Z})\tsigma\|_{L_t^PX^{m-1}}\Lambda\big (\f{1}{c_0}, \il  \tsigma\il_{[\f{m}{2}],\infty,t}+
 \il \tilde{Z} \tsigma\il_{[\f{m}{2}]-1,\infty,t}) \big), \label{esr1}\\
 &\mu^{\f{1}{2}}\|\na^2 r_1\|_{L_t^PX^{m-2}} \lesssim \ep\|(\Id, \na, \mu^{\f{1}{2}}\na^2) \tsigma\|_{L_t^PX^{m-2}}\notag
 \\
&\qquad \cdot \Lambda\big (\f{1}{c_0}, \il \tsigma\il_{[\f{m}{2}],\infty,t}+ \il \na   \tsigma\il_{[\f{m}{2}]-1,\infty,t}+\il\na^2\tsigma\il_{[\f{m}{2}]-2,\infty,t}\big).\label{esr1-sec}
 \end{align}
 \end{cor}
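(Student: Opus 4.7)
The strategy is to reduce every bound in Corollary \ref{cor-gb} to a routine combination of (a) the composition estimate in the previous corollary, applied to each of the smooth functions $\beta(\cdot)$, $\Gamma(\cdot)$, $\exp(\cdot)$ and $s\mapsto (s-1)/\ep$, using the uniform pointwise bound \eqref{preasption1} together with the running hypothesis \eqref{preasption} to control the arguments and their $L^\infty$ norms; and (b) the product/commutator estimates \eqref{roughproduct1}--\eqref{roughcom1} of Proposition \ref{prop-prdcom}. The $\ep$ factors in \eqref{esofGamma-1}, \eqref{esofGamma-2}, \eqref{esr1}, \eqref{esr1-sec} will come out automatically from the Taylor expansions $\Gamma(\ep\sigma)-\Gamma(0)=\ep\sigma\int_0^1\Gamma'(\tau\ep\sigma)\,\d\tau$ and $r_1=R\tsigma/(C_v\gamma)\int_0^1\exp(\tau\ep R\tsigma/C_v\gamma)\,\d\tau$, after which one writes the integral as a smooth function of $\ep\sigma$ (resp.\ $\ep\tsigma$) with bounded derivatives on $[-3\bar c,3\bar c]$.

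For the first-order bounds \eqref{esofbeta}, \eqref{esofGamma-2}, \eqref{esr0}, \eqref{esr1}, I would apply $\tilde Z$ via the chain rule, e.g.\ $\tilde Z\beta=\beta'(\theta)\tilde Z\theta$ and $\tilde Z r_0=r_0(-\tilde Z\theta+\ep R\tilde Z\tsigma/C_v\gamma)$, and then estimate with \eqref{roughproduct1}: one factor is the derivative of $\theta$ (resp.\ $\ep\sigma$, $\ep\tsigma$), the other factor is the composition $\beta'(\theta)$ (resp.\ $\Gamma'(\ep\sigma)$, $r_0$), whose $X^{m-1}$ norm and $\il\cdot\il_{[\f{m}{2}],\infty,t}$ norm are both bounded by $\Lambda(1/c_0,\il\theta\il_{[\f{m}{2}],\infty,t})$ (resp.\ the $\sigma$ or $\tsigma$ analogue) via the composition estimate and the pointwise bound \eqref{preasption1}. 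The $\Id$--piece on the right-hand side is necessary because the composition estimate bounds $F(h)-F(0)$ in $X^{m-1}$ by $\|h\|_{X^{m-1}}$ times a $\Lambda$-factor, and $F(0)\neq 0$ produces a leftover controlled by $\|(\Id,\tilde Z)\cdot\|_{X^{m-1}}$.

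For the second-order bounds \eqref{esbeta-sec}, \eqref{esr0-sec}, \eqref{esr1-sec}, I would expand
\begin{equation*}
\kappa\na^2\beta=\kappa\beta'(\theta)\na^2\theta+\kappa\beta''(\theta)\,\na\theta\otimes\na\theta,
\end{equation*}
and similarly for $r_0$, $r_0^{-1}$, $r_1$; then distribute the weight $\kappa^{1/2}$ (resp.\ $\mu^{1/2}$) evenly between the two $\na\theta$ factors in the quadratic term so that each appears as $\kappa^{1/2}\na\theta$ (resp.\ $\mu^{1/2}\na\tsigma$), which matches the norms that actually occur in the stated inequalities. The product estimate \eqref{roughproduct2} then handles the two-factor product at regularity $m-2$, while the linear piece $\kappa\beta'(\theta)\na^2\theta$ is handled by \eqref{roughproduct1} with one factor $\kappa\na^2\theta$ and the other the composition $\beta'(\theta)$. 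The composition estimate provides the bound on $\beta'(\theta),\beta''(\theta)$ (and analogously the first two derivatives of the exponential in $r_0,r_1$) through the $\Lambda(1/c_0,\cdot)$ prefactor.

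The only genuinely delicate point will be the $r_0$ and $r_1$ estimates, because their argument $\ep\tsigma=\ep\sigma-\ep^2\mu(2\lambda_1+\lambda_2)\Gamma(\ep\sigma)\div u$ is itself a composite object; however this introduces no new difficulty, since $\Gamma(\ep\sigma)$ has already been controlled in \eqref{esofGamma-1}--\eqref{esofGamma-2}, and the extra $\ep^2\mu$ factor in the correction makes that piece strictly better than the $\ep\sigma$ piece. The weighted $\Lambda$-arguments on the right-hand side of \eqref{esr0}, \eqref{esr0-sec}, \eqref{esr1-sec} are chosen precisely to absorb both $\ep\sigma$ and the $\ep^2\mu\Gamma\div u$ correction uniformly in $(\ep,\mu,\kappa)$, which is the only bookkeeping point requiring attention.
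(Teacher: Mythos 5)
Your proposal is correct and uses exactly the tools the paper points to (the composition corollary plus the product and commutator estimates of Proposition \ref{prop-prdcom}): chain rule to reduce to products of derivatives of the argument with compositions of smooth functions, Taylor expansion to extract the $\ep$ factors, and even distribution of the $\kappa^{1/2}$, $\mu^{1/2}$ weights in the quadratic terms. The paper omits the proof entirely, so your reconstruction fills the gap in the manner the authors clearly intend.
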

When there is no time derivatives involved in the norm, we have also the following  inequalities proved in \cite{MR1070840},
 \begin{lem}
 Using the notation \eqref{newvector}-\eqref{newinfty}, 
 we have the following product estimate and commutator estimate 
 \begin{align}
 &\|(fg)(t)\|_{m}\lesssim \|f(t)\|_{m}\il g \il_{0,\infty,t}+ \|g(t)\|_{m}\il f \il_{0,\infty,t}, \label{GN1}\\
& \|[\cZ^{\gamma}, f]g (t)\|_{0}\lesssim \|g(t)\|_{m-1}\il f\il_{1,\infty,t}+\|f(t)\|_{m}\il f\il_{0,\infty,t} \qquad \text{ for } |\gamma|=m.  \label{GN2}
  \end{align}
 \end{lem}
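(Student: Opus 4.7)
The plan is to establish \eqref{GN1}--\eqref{GN2} by the standard Gagliardo--Nirenberg approach adapted to the conormal setting on the half-space $\mathbb{R}_+^3$, following the scheme of \cite{MR1070840} which the lemma explicitly cites. The key structural observation is that the vector fields $\mathcal{Z}_1=\partial_{y^1},\mathcal{Z}_2=\partial_{y^2},\mathcal{Z}_3=\phi(z)\partial_z$ with $\phi(z)=z/(1+z)$ act on smooth functions on $\overline{\mathbb{R}_+^3}$ as a finite set of first-order derivations, and the weight $\phi$ is smooth and bounded together with all its higher-order analogues $\phi^k \partial_z^k$ expressed in terms of the $\mathcal{Z}_3^j$. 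Consequently, the Leibniz rule applies verbatim: for any multi-index $\gamma$,
\begin{equation*}
\mathcal{Z}^\gamma(fg)=\sum_{\beta\leq\gamma}\binom{\gamma}{\beta}\mathcal{Z}^\beta f\,\mathcal{Z}^{\gamma-\beta}g,\qquad [\mathcal{Z}^\gamma,f]g=\sum_{0<\beta\leq\gamma}\binom{\gamma}{\beta}\mathcal{Z}^\beta f\,\mathcal{Z}^{\gamma-\beta}g.
\end{equation*}

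For \eqref{GN1}, I would fix $|\gamma|=m$ and split the sum according to whether $|\beta|\in\{0,m\}$ (boundary cases, handled trivially by pulling the $L^\infty$ norm on $g$ or $f$) or $1\leq |\beta|=k\leq m-1$ (intermediate cases). For the intermediate cases, the goal is the bound
\begin{equation*}
\|\mathcal{Z}^\beta f\,\mathcal{Z}^{\gamma-\beta}g\|_{L^2}\lesssim \|f\|_m\,\il g\il_{0,\infty,t}+\|g\|_m\,\il f\il_{0,\infty,t},
\end{equation*}
which I would obtain by Hölder with exponents $(p,q)=(2m/k,2m/(m-k))$ together with the conormal Gagliardo--Nirenberg interpolation
\begin{equation*}
\|\mathcal{Z}^\beta h\|_{L^{2m/k}(\mathbb{R}_+^3)}\lesssim \|h\|_{L^\infty}^{1-k/m}\,\|h\|_m^{k/m},\qquad 1\leq k\leq m-1.
\end{equation*}
This conormal interpolation is the analogue of the classical Gagliardo--Nirenberg inequality on $\mathbb{R}_+^3$ and is proved in \cite{MR1070840} exactly by iterating the flat-space estimate $\|\partial h\|_{L^{2m/k}}\lesssim \|h\|_{L^\infty}^{1-k/m}\|\partial^m h\|_{L^2}^{k/m}$ along the tangential directions and handling the weighted normal direction $\mathcal{Z}_3=\phi\partial_z$ as a bounded multiplication operator composed with $\partial_z$. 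A weighted Young inequality then collapses the product into the claimed form.

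For \eqref{GN2}, the expansion above already removes the trivial $\beta=0$ term, so only indices with $1\leq|\beta|\leq m$ appear. Writing $\mathcal{Z}^\beta f=\mathcal{Z}^{\beta'}(\mathcal{Z}_i f)$ with $|\beta'|=|\beta|-1$, every commutator term has one derivative already spent on $f$; one then applies the same Hölder/interpolation procedure as for \eqref{GN1}, but now with $f$ and $g$ playing asymmetric roles, distributing $m-1$ derivatives between $\mathcal{Z} f$ (already taken once) and $g$. The endpoint case $|\beta|=m$ gives the term $\|g\|_{L^\infty}\|\mathcal{Z}^m f\|_{L^2}\lesssim \|f\|_m\,\il g\il_{0,\infty,t}$ (up to the evident typo in the statement), while the endpoint $|\beta|=1$ gives $\|\mathcal{Z} f\|_{L^\infty}\|\mathcal{Z}^{\gamma-\beta}g\|_{L^2}\lesssim \il f\il_{1,\infty,t}\|g\|_{m-1}$.

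The only mildly delicate point, and the one I expect to be the main obstacle, is the rigorous justification of the conormal Gagliardo--Nirenberg interpolation near the boundary $z=0$: the vector field $\mathcal{Z}_3=\phi(z)\partial_z$ degenerates at $z=0$, so the standard proof by Fourier multipliers on $\mathbb{R}^3$ does not apply directly. One handles this by either (i) using the change of variable $s=\log(1+z)$, which conjugates $\mathcal{Z}_3$ to $\partial_s$ on a half-line and reduces the estimate to the flat case with a smooth non-degenerate Jacobian, or (ii) invoking the real-interpolation characterization of the anisotropic Sobolev spaces $H^m_{co}$ developed in \cite{MR1070840}. Either route yields the full family of inequalities and closes the proof; since the lemma is explicitly attributed to \cite{MR1070840}, I would simply cite that reference for the interpolation step and present only the Leibniz/Hölder assembly in detail.
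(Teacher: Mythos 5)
Your proposal is correct: the paper itself gives no proof of this lemma, simply attributing it to \cite{MR1070840}, and your Leibniz-expansion plus conormal Gagliardo--Nirenberg interpolation argument is exactly the standard proof given in that reference. You also correctly identified that the second term on the right of \eqref{GN2} should read $\|f(t)\|_{m}\il g\il_{0,\infty,t}$ (coming from the endpoint $|\beta|=m$ of the commutator expansion), which is indeed a typo in the statement.
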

 
 We will use often the following Sobolev embedding inequality whose proof is sketched in \cite{MR4403626}.
 \begin{prop}\label{propembeddding}
Let  $\Omega$ be a  smooth  bounded domain or an exterior domain, we have the following Sobolev embedding inequality: by using the notation \eqref{normfixt},
\begin{equation}\label{sobebd}
 \| f (t)\|_{k,\infty}\lesssim 
 \|\nabla f(t)\|_{H_{co}^{k+1}}^{\frac{1}{2}}\|f(t)\|_{H_{co}^{k+2}}^{\frac{1}{2}}+\|f(t)\|_{H_{co}^{k+2}}.
\end{equation}
 \end{prop}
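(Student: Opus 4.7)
\textbf{Proof plan for Proposition \ref{propembeddding}.}

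The plan is to reduce the estimate first to the case $k=0$ by covering $\Omega$ with the charts $\Omega_0, \Omega_1, \dots, \Omega_N$ from \eqref{covering1} and a subordinate partition of unity, and then to prove the base case separately in the interior chart and in each boundary chart. For any multi-index $I$ with $|I|\leq k$ the vector field $Z^I$ is tangent to $\partial\Omega$, so if the $k=0$ inequality
\begin{equation*}
\|g(t)\|_{L^\infty(\Omega)}\lesssim \|\nabla g(t)\|_{H_{co}^{1}}^{1/2}\|g(t)\|_{H_{co}^{2}}^{1/2}+\|g(t)\|_{H_{co}^{2}}
\end{equation*}
is applied with $g=Z^I f$, the commutator identity \eqref{comu} lets us replace $\nabla Z^I f$ by $Z^I\nabla f$ plus terms of the form $Z^J\nabla f$ with $|J|<|I|$, giving the stated bound with the shifts $k+1$ and $k+2$. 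So I focus on $k=0$.

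Next I would handle the interior chart $\Omega_0$ trivially: on the support of the cutoff $\chi_0$ the conormal vector fields $Z_j$ are just $\partial_j$, so $H_{co}^{2}(\chi_0 f)\cong H^2(\chi_0 f)$ and the classical Sobolev embedding $H^2(\mathbb{R}^3)\hookrightarrow L^\infty(\mathbb{R}^3)$ closes the case. For each boundary chart $\Omega_i$ I would pull back via the flattening map $\Phi_i$ from \eqref{local coordinates}, so that $\chi_i f$ becomes a function $F$ on $(-\delta_i,\delta_i)^2\times(0,\varepsilon_i)$ compactly supported away from the lateral and top boundaries, with the conormal fields becoming $\partial_{y^1},\partial_{y^2}$ and $\phi(z)\partial_z$ modulo smooth lower-order terms. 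The goal is then the anisotropic Sobolev inequality
\begin{equation*}
\|F\|_{L^\infty(\mathbb{R}_+^3)}\lesssim \|\partial_z F\|_{H_{co}^{1}}^{1/2}\|F\|_{H_{co}^{2}}^{1/2}+\|F\|_{H_{co}^{2}},
\end{equation*}
after which the identity $\nabla f \circ\Phi_i=(D\Phi_i)^{-t}\nabla_{y,z} F$ allows us to replace $\partial_z F$ by $\nabla f$ on the physical side, since $(D\Phi_i)^{-t}$ is smooth and bounded.

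The core of the argument is the half-space inequality, which I would establish as follows. Freeze $z>0$; the 2D Sobolev embedding $H^2(\mathbb{R}^2)\hookrightarrow L^\infty(\mathbb{R}^2)$ yields
\begin{equation*}
\|F(\cdot,z)\|_{L^\infty(\mathbb{R}^2)}^2\lesssim \sum_{|\alpha|\leq 2}\|\partial_{y}^{\alpha}F(\cdot,z)\|_{L^2(\mathbb{R}^2)}^2.
\end{equation*}
To remove the $z$-dependence I would apply the 1D identity $G(z)^2=-2\int_z^\infty G\,\partial_z G\,dz'$ to each tangential derivative $G=\partial_y^\alpha F(y,z)$, which gives
\begin{equation*}
\|\partial_y^\alpha F(\cdot,z)\|_{L^2(\mathbb{R}^2)}^2\lesssim \|\partial_y^\alpha F\|_{L^2(\mathbb{R}_+^3)}\|\partial_z\partial_y^\alpha F\|_{L^2(\mathbb{R}_+^3)};
\end{equation*}
summing over $|\alpha|\leq 2$ and taking supremum in $z$ produces $\|F\|_{L^\infty}^2\lesssim \|F\|_{H^{2}_{y}L^2_z}\|\partial_z F\|_{H^{2}_{y}L^2_z}$. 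The right-hand side norms are then dominated by $\|F\|_{H^2_{co}}$ and $\|\partial_z F\|_{H^1_{co}}+\|F\|_{H^2_{co}}$ respectively, using the pointwise identity $\partial_{y^j}=Z_j$ and the elementary bound $\|\partial_z G\|_{L^2}\lesssim \|\phi(z)\partial_z G\|_{L^2}+\|G\|_{L^2}$ coming from $\phi(z)=z/(1+z)\geq 1/2$ for $z\geq 1$ and $\partial_z G = \phi(z)^{-1}Z_3 G$ for $z$ away from $0$, combined with the Hardy-type control of the region $z\leq 1$ by the source $\partial_z F$ itself (the point being that on the right-hand side we allow the genuine $\partial_z$, not just the conormal $Z_3$). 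An application of the elementary inequality $(a+b)^{1/2}\leq a^{1/2}+b^{1/2}$ yields the asserted split into the interpolation term and the $H^2_{co}$ term.

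The only genuinely delicate point is the last reduction: one must be sure that the degenerate weight $\phi(z)$ in the conormal vector field $Z_3$ does not obstruct the control of $\partial_z F$ in $H_{co}^1$ by $\nabla f$ in $H_{co}^1$. This is where the asymmetry of the statement — conormal on the left but full gradient on the right — is essential, and I expect this weighted $z$-analysis near $z=0$ to be the step requiring the most care.
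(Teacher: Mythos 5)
The paper does not prove this proposition itself (it defers to \cite{MR4403626}), but your overall architecture — partition of unity over the charts \eqref{covering1}, reduction to $k=0$ via \eqref{comu}, classical Sobolev in $\Omega_0$, chart-flattening plus an anisotropic half-space embedding near $\partial\Omega$ — is the standard and correct skeleton. The reduction to $k=0$ and the interior case are fine. The gap is in the core half-space inequality: by applying the integer embedding $H^2(\mathbb{R}^2_y)\hookrightarrow L^\infty(\mathbb{R}^2_y)$ at fixed $z$ and then the fundamental-theorem-of-calculus trick to each $\partial_y^\alpha F$ with $|\alpha|\le 2$, you necessarily produce the factor $\|\partial_z F\|_{H^2_yL^2_z}$, i.e.\ \emph{two} tangential derivatives on $\partial_z F$. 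Your claim that this is dominated by $\|\partial_z F\|_{H^1_{co}}+\|F\|_{H^2_{co}}$ is false by a simple derivative count (and the auxiliary bound $\|\partial_z G\|_{L^2}\lesssim\|\phi(z)\partial_z G\|_{L^2}+\|G\|_{L^2}$ fails near $z=0$, as functions oscillating or concentrating at scale $1/n$ in $z$ show). Integrating by parts in $y$ to rebalance only trades this for $\|F\|_{H^3_yL^2_z}$. So as written your argument proves only the weaker estimate $\|f(t)\|_{k,\infty}\lesssim\|\nabla f(t)\|_{H^{k+2}_{co}}^{1/2}\|f(t)\|_{H^{k+2}_{co}}^{1/2}+\|f(t)\|_{H^{k+2}_{co}}$, which is not the statement and is in fact unusable where the paper applies \eqref{sobebd} (e.g.\ in \eqref{thetainfty} one needs precisely $\|\nabla\theta\|_{H^{m-2}_{co}}$, not $\|\nabla\theta\|_{H^{m-1}_{co}}$).

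The missing idea is the \emph{asymmetric} anisotropic embedding
\begin{equation*}
\|u\|_{L^\infty(\mathbb{R}^3_+)}^2\lesssim \|\partial_z u\|_{H^{s_1}_{tan}L^2_z}\,\|u\|_{H^{s_2}_{tan}L^2_z},\qquad s_1,s_2\ge 0,\ s_1+s_2>2,
\end{equation*}
applied with $s_1=1$, $s_2=2$. It cannot be obtained by composing an integer 2D Sobolev embedding with the 1D trace trick, because that symmetrizes the tangential regularity between $u$ and $\partial_z u$. Instead one takes the tangential Fourier transform, writes $|\hat u(\xi,z)|^2=-\int_z^\infty\partial_{z'}|\hat u|^2\,dz'\le 2\|\hat u(\xi,\cdot)\|_{L^2_z}\|\partial_z\hat u(\xi,\cdot)\|_{L^2_z}$, and then estimates $\int|\hat u(\xi,z)|\,d\xi$ by H\"older with the weights $\langle\xi\rangle^{-(s_1+s_2)/2}$, $\langle\xi\rangle^{s_2/2}$, $\langle\xi\rangle^{s_1/2}$; the condition $s_1+s_2>2$ makes $\langle\xi\rangle^{-(s_1+s_2)}$ integrable on $\mathbb{R}^2$. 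With this lemma in hand the rest of your plan (the identity $\nabla f\circ\Phi_i=(D\Phi_i)^{-t}\nabla_{y,z}F$ converting $\partial_zF$ into $\nabla f$, and $\partial_{y^j}=Z_j$ for the tangential norms) goes through and yields exactly \eqref{sobebd}; note also that with this formulation you never need the problematic passage from $\phi(z)\partial_z$ back to $\partial_z$, since the right-hand side of \eqref{sobebd} carries the full gradient from the start.
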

 The following trace inequalities are also used:
  \begin{lem}
  For multi-index $I=(I_0,\cdots, I_{M})$ with
$|I|=k,$ we have the following trace inequalities:
  \begin{align}
   &  |Z^{I}f(t)|_{L^2(\p\Omega)}^2 \lesssim \|\nabla f(t)\|_{H_{co}^{k}}\|f(t)\|_{H_{co}^{k}}+\|f(t)\|_{H_{co}^{k}}^2,  \label{traceLinfty}\\
   & \int_0^t |Z^{I}f(s)|_{L^2(\p\Omega)}^2\,\d s\lesssim \|\nabla f\|_{L_t^2H_{co}^{k}}\|f\|_{L_t^2H_{co}^{k}}+\|f\|_{L_t^2H_{co}^{k}}^2, \label{traceL2}\\
  & \int_0^t |Z^{I}f(s)|_{H^{\f{1}{2}}(\p\Omega)}^2\d s\lesssim \|\nabla f\|_{L_t^2H_{co}^{k}}^2+\|f\|_{L_t^2H_{co}^{k}}^2. \label{normaltraceineq}
  \end{align}
  \end{lem}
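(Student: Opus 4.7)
The plan is to reduce to a local computation in the boundary charts $\Omega_i$ using the covering \eqref{covering1}, then exploit the one-dimensional trace inequality together with the commutator structure of the conormal vector fields $Z_j$. Since the vector fields in the interior chart $\Omega_0$ coincide (up to cut-off) with the standard derivatives $\partial_k$, they contribute nothing to the boundary integral, so we may assume $f$ is supported in a single $\Omega_i\cap\Omega$ for some $i\geq 1$ and work in the local coordinates \eqref{local coordinates}.

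The core ingredient is the elementary one-dimensional trace inequality: for any $g\in H^1(\Omega_i\cap\Omega)$ supported in $\Omega_i$,
\[
|g|_{L^2(\partial\Omega)}^2\lesssim \|\partial_z g\|_{L^2(\Omega_i\cap\Omega)}\|g\|_{L^2(\Omega_i\cap\Omega)}+\|g\|_{L^2(\Omega_i\cap\Omega)}^2,
\]
which follows by writing $g(y,0)^2=-\int_0^{\epsilon_i}\partial_z(g^2\chi)(y,z)\,\mathrm{d}z$ for a suitable cut-off $\chi$, integrating in $y$, and applying Cauchy–Schwarz. I would then apply this with $g=Z^I f$. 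The point is that although $Z_3=\phi(z)\partial_z$ degenerates at $z=0$, the commutator $[\partial_z,Z^I]$ behaves well: since $\{\partial_k\}_{k=1,2,3}$ and $\{Z_j\}_{j=0,\dots,M}$ together form an algebra under bracketing (each $[\partial_k,Z_j]$ is a smooth combination of the $\partial_l$), an induction analogous to \eqref{comu} yields
\[
[\partial_z,Z^I]f=\sum_{|J|\leq |I|-1}\sum_{l=1}^{3}c_{I,J,l}\,\partial_l Z^J f
\]
with smooth coefficients $c_{I,J,l}$ depending on $\nabla\varphi_i$ and $\phi$. Consequently $\|\partial_z Z^I f\|_{L^2}\lesssim \|Z^I\partial_z f\|_{L^2}+\sum_{|J|\leq k-1}\|\nabla Z^J f\|_{L^2}\lesssim \|\nabla f\|_{H_{co}^{k}}$, and the first inequality \eqref{traceLinfty} follows directly. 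The inequality \eqref{traceL2} is then obtained by integrating \eqref{traceLinfty} over $[0,t]$ and applying Cauchy–Schwarz in time to the product $\|\nabla f(s)\|_{H_{co}^k}\|f(s)\|_{H_{co}^k}$.

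For \eqref{normaltraceineq}, I would use the standard trace theorem $|g|_{H^{1/2}(\partial\Omega)}^2\lesssim \|g\|_{H^{1}(\Omega)}^2=\|\nabla g\|_{L^2}^2+\|g\|_{L^2}^2$ applied with $g=Z^I f$, again supported in a boundary chart. This produces $|Z^I f|_{H^{1/2}(\partial\Omega)}^2\lesssim \|\nabla Z^I f\|_{L^2(\Omega)}^2+\|Z^I f\|_{L^2(\Omega)}^2$, and the same commutator identity $[\nabla,Z^I]f=\sum_{|J|\leq |I|-1}c_{I,J}\,\nabla Z^J f$ shows that $\|\nabla Z^I f\|_{L^2}\lesssim \|\nabla f\|_{H_{co}^k}$. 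Integrating in time yields \eqref{normaltraceineq}.

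The only non-trivial point—and the place I would be most careful—is the commutator $[\partial_z,Z^I]$: one must verify inductively that each application of $\partial_z$ against a string of conormal fields can be rewritten so that all $\partial$'s end up to the left of $|I|-1$ conormal fields, without picking up any inverse power of $\phi(z)$. This relies crucially on the fact that $\phi(z)=z/(1+z)$ is smooth on $\overline{\mathbb{R}_+}$, so that $[\partial_z,\phi(z)\partial_z]=\phi'(z)\partial_z$ produces a bounded coefficient rather than a singular one at $z=0$. Once this is established the three inequalities are immediate; the statement is essentially a repackaging of the standard trace theorem in the conormal framework.
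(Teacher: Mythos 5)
Your argument is correct and is the standard one (the paper itself states this lemma without proof, deferring implicitly to the conormal-space literature it cites): localization to boundary charts, the one-dimensional identity $g(y,0)^2=-\int_0^{\epsilon_i}\partial_z(g^2\chi)\,\mathrm{d}z$ with Cauchy--Schwarz for \eqref{traceLinfty}--\eqref{traceL2}, the classical $H^{1}\to H^{1/2}(\partial\Omega)$ trace for \eqref{normaltraceineq}, and the commutation identity \eqref{comu} to convert $\partial Z^I f$ into terms controlled by $\|\nabla f\|_{H_{co}^{k}}$. Your remark on $[\partial_z,Z_3]=\phi'(z)(\partial_1\varphi_i\partial_1+\partial_2\varphi_i\partial_2-\partial_3)$ having bounded coefficients is exactly the point that makes the induction close without singular factors, so no gap remains.
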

  In the next proposition, we state some elliptic estimates which are used frequently.
  \begin{prop}\label{propneumann}
 Given a bounded or an exterior domain $\Omega$ with $C^{k+2}$ boundary.
Consider the following elliptic equation with Neumann boundary condition:
\begin{equation}\label{Neumann problem}
  \left\{
  \begin{array}{l}
  \Delta q=\div f \quad \text{in}\quad  \Omega,\\
  \partial_{\bn} q=f\cdot \bn+g \quad \text{on} \quad  \partial\Omega,\\
  \int_{\Omega} q\, \d x=0, \text{ if } \Omega \text{ is bounded, } \quad 
  q\xlongrightarrow{|x|\rightarrow+\infty} 0 \text{ if }  \Omega \text{ is an exterior domain. }
  \end{array}
  \right.
\end{equation}
The system  \eqref{Neumann problem} has  a unique solution in $H^1(\Omega)$ which satisfies the following gradient estimate:
 \begin{equation}\label{gradient estimate}
     \|\nabla q(t)\|_{L^2(\Omega)}\lesssim \|f(t)\|_{L^2(\Omega)}
     +|g(t)|_{H^{-\frac{1}{2}}(\partial\Omega)}.
 \end{equation}
Moreover, for $j+l=k,$
\begin{align}
     &\|\nabla q(t)\|_{\cH^{j,l}(\Omega)}\lesssim \|f(t)\|_{\cH^{j,l}(\Omega)}
     +|g(t)|_{\tilde{H}^{k-\frac{1}{2}}(\partial\Omega)}, \label{highconormal}\\
   &  \|\nabla^2 q(t)\|_{\cH^{j,l}(\Omega)}\lesssim \|f(t)\|_{\cH^{j,l+1}}
 +\|\div f(t)\|_{\cH^{j,l}(\Omega)}
     +|g(t)|_{\tilde{H}^{k+\frac{1}{2}}(\partial\Omega)}, \label{secderelliptic}\\
   &  \|\nabla^2 q(t)\|_{\cH^{j,l}(\Omega)}\lesssim
 \|\div f(t)\|_{\cH^{j,l}(\Omega)}
     +|(f\cdot\bn, g)(t)|_{\tilde{H}^{k+\frac{1}{2}}(\partial\Omega)}.\label{secderelliptic-1}
 \end{align}
\end{prop}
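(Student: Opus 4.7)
The plan is to establish the four assertions of Proposition \ref{propneumann} in order: first well-posedness in $H^1$ together with \eqref{gradient estimate}, then the tangential conormal estimate \eqref{highconormal}, then \eqref{secderelliptic} by reading the normal second derivatives off the equation, and finally the refined bound \eqref{secderelliptic-1} which is the delicate point. The basic estimate and existence follow from the variational formulation: $q$ is the unique solution in $\{\varphi\in H^1(\Omega):\int_\Omega\varphi=0\}$ (or the homogeneous $\dot H^1$ in the exterior case) of $\int_\Omega\nabla q\cdot\nabla\varphi\,\d x=-\int_\Omega f\cdot\nabla\varphi\,\d x+\langle g,\varphi\rangle_{\partial\Omega}$ for all admissible $\varphi$. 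Lax--Milgram delivers the solution, and testing with $\varphi=q$, combined with the duality $|\langle g,q\rangle_{\partial\Omega}|\lesssim |g|_{H^{-1/2}(\partial\Omega)}\|\nabla q\|_{L^2}$ (using Poincaré in the bounded case and Hardy in the exterior case), yields \eqref{gradient estimate}.

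For \eqref{highconormal} I would argue by induction on the spatial conormal order $l$, the time derivatives $(\epsilon\partial_t)^j$ commuting freely with the $x$-elliptic operator. Given a vector field $Z^I$ of order $j+l=k$, the function $w=Z^Iq$ solves a Neumann problem of the same form with new data $\tilde f=Z^If+\cC^I_1$ and $\tilde g=[Z^I,\bn\cdot]f+\cC^I_2$, where the commutators $[\Delta,Z^I]$ and $[Z^I,\partial_{\bn}]$ produce only terms of order $\le k$ involving at most one more derivative of $q$ (crucially no extra normal derivative, since the spatial $Z_j$ are tangent and $\partial\Omega$ is $C^{k+2}$). Applying \eqref{gradient estimate} to $w$, absorbing the commutator contributions by the inductive hypothesis at order $k-1$, and bounding the boundary commutator via the trace inequality \eqref{normaltraceineq} gives \eqref{highconormal}. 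Interior contributions near a cut-off $\chi_0$ are controlled by standard $H^k$ elliptic regularity and glued via partition of unity.

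To obtain \eqref{secderelliptic} I would use the local expression \eqref{Laplace-local} of the Laplacian in the boundary chart to solve for the pure normal second derivative:
\[
\partial_z^2 q=\Delta q-\sum_{(i,j)\neq(3,3)}\partial_{y^i}(a_{ij}\partial_{y^j}q).
\]
Applying $Z^I$ with $I=(j,l)$ conormal indices and using the equation $\Delta q=\div f$, the right-hand side is controlled by $\|\div f\|_{\cH^{j,l}}$ plus one tangential derivative of $\nabla q$, which by \eqref{highconormal} at order $k+1$ is bounded by $\|f\|_{\cH^{j,l+1}}+|g|_{\tilde H^{k+1/2}(\partial\Omega)}$. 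Combining normal--normal, normal--tangential and tangential--tangential second derivatives produces \eqref{secderelliptic}.

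The main obstacle is the refined bound \eqref{secderelliptic-1}, which removes the interior $\|f\|$ norm in favour of the single boundary trace $|f\cdot\bn|$ plus $\|\div f\|$. The point is that $f$ enters the problem only through $\div f$ in $\Omega$ and $f\cdot\bn$ on $\partial\Omega$, so a genuine improvement should be possible but requires not using integration by parts that expose the tangential components of $f$. The strategy I would follow is to introduce the auxiliary function $\Phi$ solving the auxiliary Neumann problem $\Delta\Phi=\div f$ in $\Omega$, $\partial_{\bn}\Phi=f\cdot\bn$ on $\partial\Omega$ and with the normalization above; then $\tilde q=q-\Phi$ is harmonic with $\partial_{\bn}\tilde q=g$, and a boundary potential-theoretic bound gives $\|\nabla^2\tilde q\|_{\cH^{j,l}}\lesssim |g|_{\tilde H^{k+1/2}(\partial\Omega)}$. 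For $\Phi$ one bypasses the loss of derivative by performing energy estimates directly on $Z^I\Phi$: integrating by parts against $\Delta Z^I\Phi$ produces only the quantities $\|\div f\|_{\cH^{j,l}}$ and $|f\cdot\bn|_{\tilde H^{k+1/2}}$ in the principal term, while the boundary commutators $[Z^I,\bn\cdot]f$ and interior commutators $[\Delta,Z^I]\Phi$ are absorbed by induction using \eqref{product-bd}, \eqref{normaltraceineq} and the already-proved \eqref{highconormal}, \eqref{secderelliptic} applied at one order lower. The delicate bookkeeping of these boundary commutator terms, which must avoid ever reintroducing the $L^2$ norm of $f$ itself, is where the proof requires the most care.
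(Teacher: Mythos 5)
The paper itself gives no proof of Proposition \ref{propneumann}; it is stated in the appendix as a known conormal elliptic estimate (of the type proved in \cite{MR4403626}), so there is no internal argument to compare against. Your overall architecture — variational solvability and \eqref{gradient estimate}, then tangential conormal regularity by commuting $Z^I$, then normal second derivatives from the equation via \eqref{Laplace-local} — is the standard and correct one. One sign is off in your weak formulation (it should read $\int_\Omega\nabla q\cdot\nabla\varphi\,\d x=\int_\Omega f\cdot\nabla\varphi\,\d x+\langle g,\varphi\rangle_{\partial\Omega}$), but that is cosmetic.

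There is, however, a genuine gap in your argument for \eqref{highconormal}. You assert that $[\Delta,Z^I]$ produces "no extra normal derivative," but this is false for the non-divergence-form Laplacian: since $[\partial_3,Z_3]$ contains $\phi'(z)\partial_3$, the commutator $[\Delta,Z^I]q$ contains terms of the form $c\,\partial_{\bn}^2 Z^Jq$ with $|J|\le k-1$, which are controlled only by $\|\nabla^2 q\|_{\cH^{k-1}}$ — a quantity you have not yet estimated at that stage, so the induction as written does not close. The standard repair is to never commute with $\Delta$ itself but to keep the equation in divergence form, $\div(\nabla q-f)=0$, and commute $Z^I$ separately through $\div$ and through $\nabla$ using \eqref{comu}; then $w=Z^Iq$ solves a Neumann problem whose data involve only $\nabla Z^Jq$, $|J|\le k-1$, inside a divergence, and the gradient estimate \eqref{gradient estimate} applied to $w$ closes the induction with only $\|\nabla q\|_{\cH^{k-1}}$ on the right. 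For \eqref{secderelliptic-1} your decomposition $q=\Phi+\tilde q$ is workable but roundabout, and in the exterior-domain case the variational solvability of the auxiliary problem with an $L^2$ source needs care; the cleaner route is to apply the conormal div--curl estimate directly to $v=\nabla q$, which satisfies $\div v=\div f$, $\curl v=0$, $v\cdot\bn=f\cdot\bn+g$, yielding $\|\nabla v\|_{\cH^{j,l}}\lesssim\|\div f\|_{\cH^{j,l}}+|(f\cdot\bn,g)|_{\tilde H^{k+\frac12}}$ in one step without ever exposing $\|f\|_{L^2}$.
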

\begin{lem}\label{pre-zeta1}
 Let  $Y$ and $K_{\pm}$ be defined as: 
\beq\label{defY-0}
Y=Y(t,t',y)=\mu \lambda_1\overline{\Gamma} \int_{t'}^t b(\tau,y)\, \d\tau,\quad K_{\pm}=\f{1}{(4\pi {Y})^{\f{1}{2}}}e^{-\f{|z\pm z'|^2}{4{Y}}}
\eeq
where $b=b(t,y)$ is such that
\beq\label{def-b-prop}
c_0<b(t, y)<\f{1}{c_0}  \qquad \forall (t,y)\in [0, T]\times \mR^2.
\eeq
Recall the definition of the vector fields in $\mR_{+}^3:$ 
$$\cZ_1=\p_{y^1}, \,\cZ_2=\p_{y^2}, \,\cZ_3=\f{z}{1+z}\p_z.$$
Let multi-index $\gamma=(\gamma_1,\gamma_2,\gamma_3),$ there exists  polynomial  $P_{2|\gamma|+1} $ 
with degree $2|\gamma|+1,$
such that if $|\gamma|\leq m-3,$
\beq\label{appen-4}
\bigg|\cZ_1^{\gamma_1}\cZ_2^{\gamma_2}\cZ_3^{\gamma_3}
\bigg(\f{1}{(\pi Y)^{\f{1}{2}}}\f{z}{2Y} e^{-\f{z^2}{4Y}}b(t',y) \bigg)\bigg|\lesssim \f{1}{Y} 
P_{2|\gamma|+1}\bigg(\f{z}{Y^{\f{1}{2}}}\bigg)e^{-\f{z^2}{4Y}} \lab | b|_{\gamma_1+\gamma_2,\infty,t}\big),
\eeq
and if $m-2\leq |\gamma|
\leq m-1,$
\begin{align}\label{appen-5}
&\qquad\bigg|\cZ_1^{\gamma_1}\cZ_2^{\gamma_2}\cZ_3^{\gamma_3}
\bigg(\f{1}{(\pi Y)^{\f{1}{2}}}\f{z}{2Y} e^{-\f{z^2}{4Y}}b(t',y) \bigg)\bigg|\\
&\lesssim\sum_{m-2\leq k\leq |\gamma|} \f{1}{Y} 
P_{2|\gamma|+1}\bigg(\f{z}{Y^{\f{1}{2}}}\bigg)e^{-\f{z^2}{4Y}}\big(1+|\p_{y}^k
b(t',y)|+\big|\f{1}{t-t'}\int_{t'}^t \p_{y}^k b(\tau, y)\d\tau\big|\big)\lab  | b|_{m-3,\infty,t}\big).\notag
\end{align}
Moreover, 
we have the following property for $K_{-}-K_{+}:$ there exist  smooth functions $F^{\gamma_3,j}_{\gamma_1,\gamma_2}=F_{\gamma_1,\gamma_2}^{\gamma_3,j}(t,t', y,\cdot), j=0,\cdots \gamma_3$ such that:
\beq\label{appen-6}
\cZ_1^{\gamma_1}\cZ_2^{\gamma_2}(z\p_z)^{\gamma_3}(K_{-}-K_{+})=\sum_{i=0}^{\gamma_3} (z'\p_{z'})^j \big(F_{\gamma_1,\gamma_2}^{\gamma_3,j}(t,t',y,z-z')-F_{\gamma_1,\gamma_2}^{\gamma_3,j}(t,t', y, z+z')\big).
\eeq
In addition, $F^{\gamma_3,j}_{\gamma_1,\gamma_2}$ admits the following properties:
if $\gamma_1+\gamma_2\leq m-3,$
\beq\label{appen-8}\sup_{0\leq t'\leq t\leq T, y\in \mR^2}\|F_{\gamma_1,\gamma_2}^{\gamma_3,k} (t,t',y,\cdot)\|_{L_z^1(\mR_{+})}\lesssim \lab  | b|_{\gamma_1+\gamma_2,\infty,t}\big),
\eeq
\beq\label{appen-7.5}
\sup_{y\in \mR^2}\|\mu^{\f{1}{2}}\p_z F_{\gamma_1,\gamma_2}^{\gamma_3,k} (t,t',y,\cdot)\|_{L_z^1(\mR_{+})}\lesssim\f{1}{(t-t')^{\f{1}{2}}}  \lab  | b|_{\gamma_1+\gamma_2,\infty,t}\big), \forall \, 0\leq t'< t\leq T
\eeq
and if $m-2\leq |\gamma|\leq  m-1,$
\beq\label{appen-7}
\|\mu^{\f{1}{2}}\p_z F_{\gamma_1,\gamma_2}^{\gamma_3,k} (t,t',y,\cdot)\|_{L_y^2L_z^1(\mR_{+}^3)}\lesssim\f{1}{(t-t')^{\f{1}{2}}}  \lab  | b|_{m-3,\infty,t}\big)(1+|\p_y b|_{L_t^{\infty}\tilde{H}^{|\gamma|-1}}), \forall \, 0\leq t'< t\leq T.
\eeq

\end{lem}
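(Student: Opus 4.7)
The plan is to prove all four estimates by direct computation on the explicit Gaussian kernel $\frac{1}{(4\pi Y)^{1/2}}e^{-|z\pm z'|^2/(4Y)}$, exploiting two scaling facts: $z/Y^{1/2}$ is the dimensionless variable in which Gaussian integrals and pointwise bounds become scale-invariant, and $\partial_y Y/Y$ is bounded (independently of $t-t'$) because $\partial_y Y=\mu\lambda_1\overline\Gamma\int_{t'}^t \partial_y b(\tau,y)\,\d\tau$ is controlled by $(t-t')|\partial_y b|_\infty\lesssim Y\,c_0^{-1}|\partial_y b|_\infty$ thanks to \eqref{def-b-prop}. Throughout, we apply $\cZ_3^{\gamma_3}=(\phi(z)\p_z)^{\gamma_3}$ first — since $\p_z$ acting on $e^{-z^2/(4Y)}$ pulls down $-z/(2Y)$ — generating a polynomial of degree $\leq 2\gamma_3$ in $z/Y^{1/2}$ times the Gaussian, which accounts for the polynomial factor $P_{2|\gamma|+1}(z/Y^{1/2})$ in \eqref{appen-4}.

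For \eqref{appen-4}, after the $z$-derivatives are dispatched, the tangential derivatives $\cZ_1^{\gamma_1}\cZ_2^{\gamma_2}=\p_y^{\gamma_1+\gamma_2}$ act either on the explicit factor $b(t',y)$ or on $Y(t,t',y)$, using the Fa\`a di Bruno formula. Each derivative hitting $Y$ contributes, after multiplication by the appropriate power of $1/Y$, a bounded quantity expressible via $\p_y^k b/b$ for $k\leq \gamma_1+\gamma_2$; when $\gamma_1+\gamma_2\leq m-3$ we estimate every such factor in $L^\infty_{t,y}$, yielding the polynomial $\Lambda(|b|_{\gamma_1+\gamma_2,\infty,t})$. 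For \eqref{appen-5}, when $\gamma_1+\gamma_2\in\{m-2,m-1\}$ there is at most one factor carrying the top-order derivative; isolating this factor, the Leibniz term in which the derivative lands directly on $b(t',y)$ produces $|\p_y^k b(t',y)|$, whereas the term in which the derivative lands on $Y^{-\alpha}$ produces an expression proportional to $\frac{1}{Y}\int_{t'}^t\p_y^k b(\tau,y)\,\d\tau$, which gives the $\frac{1}{t-t'}\int_{t'}^t\p_y^k b$ structure after pulling out the $\mu(t-t')$ scaling of $Y$. The remaining factors are lower order and controlled by $\Lambda(|b|_{m-3,\infty,t})$.

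For \eqref{appen-6}, I exploit the identities $\p_{z'}K_-=-\p_z K_-$ and $\p_{z'}K_+=\p_z K_+$ (since $K_\pm$ depend only on $z\pm z'$). Writing $z=(z-z')+z'$ on the $K_-$ side and $z=(z+z')-z'$ on the $K_+$ side allows every $z\p_z$ to be converted into a sum of (i) a multiplier depending only on $z\mp z'$ acting on $K_\mp$ (which preserves the ``function of $z\mp z'$'' structure), and (ii) a $z'\p_{z'}$ with the sign relation above. Inducting on $\gamma_3$ and collecting powers of $z'$ as $(z'\p_{z'})^j$ gives the decomposition \eqref{appen-6} with $F^{\gamma_3,j}_{\gamma_1,\gamma_2}(t,t',y,u)$ a polynomial in $u/Y^{1/2}$ times $\frac{1}{\sqrt{4\pi Y}}e^{-u^2/(4Y)}$ with coefficients that are the same derivatives of $b$ quantified in the previous step. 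The $L^1_z$-bound \eqref{appen-8} follows from the standard Gaussian scaling $\int_0^\infty|u|^k Y^{-1/2}e^{-u^2/(4Y)}\,\d u\lesssim Y^{k/2}$, which cancels the polynomial denominators; applying one additional $\p_z$ produces an extra factor $Y^{-1/2}\sim\mu^{-1/2}(t-t')^{-1/2}$, yielding \eqref{appen-7.5}; for \eqref{appen-7} the top-order $y$-derivative of $b$ cannot be taken in $L^\infty_y$, so we keep it in $L^\infty_t H^{|\gamma|-1}(\mR^2)$ and integrate against the $L^1_z$-bounded Gaussian factor, obtaining the claimed $L^2_y L^1_z$ bound.

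The main obstacle will be the bookkeeping in the high-regularity estimate \eqref{appen-5}/\eqref{appen-7}: one must verify that Fa\`a di Bruno never produces two factors each carrying order $\geq m-2$ (so that exactly one is ``heavy'' and the rest are in $L^\infty$), and that when the heavy factor arises from differentiating an inverse power $Y^{-\alpha}$ the resulting time-integral can be written in the normalized form $\frac{1}{t-t'}\int_{t'}^t\p_y^k b(\tau,y)\,\d\tau$ after cancelling exactly the right number of $Y$'s. All other steps are routine Gaussian manipulations and induction.
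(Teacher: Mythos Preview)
Your proposal is correct and follows essentially the same route as the paper: the paper likewise handles the $z$-derivatives first by an induction showing that $(z\partial_z)^{\gamma_3}$ applied to the Gaussian produces $\frac{1}{Y}$ times a polynomial in $z/Y^{1/2}$ times $e^{-z^2/(4Y)}$, then expands the tangential derivatives via the identities $\partial_{y^j}(1/Y)=-Y^{-1}\,\partial_{y^j}Y/Y$ and $\partial_{y^j}(z/Y^{1/2})=-\tfrac{1}{2}(\partial_{y^j}Y/Y)\,z/Y^{1/2}$, using exactly your observation that $\partial_y^k Y/Y=\big(\int_{t'}^t\partial_y^k b\big)/\big(\int_{t'}^t b\big)$ is bounded by $\Lambda(|b|_{k,\infty,t})$ for low $k$ and by $\frac{1}{t-t'}\int_{t'}^t\partial_y^k b$ times $\Lambda(1/c_0)$ for high $k$. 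For \eqref{appen-6} the paper also proves by induction on $\gamma_3$ the decomposition $(z\partial_z)^{\gamma_3}(K_--K_+)=\sum_{j}(z'\partial_{z'})^j\big(P_{2(\gamma_3-j)}(\tfrac{z-z'}{Y^{1/2}})e^{-|z-z'|^2/4Y}-P_{2(\gamma_3-j)}(\tfrac{z+z'}{Y^{1/2}})e^{-|z+z'|^2/4Y}\big)/\sqrt{4\pi Y}$, which is precisely your $z=(z\mp z')\pm z'$ trick, and the $L^1_z$ and $L^2_yL^1_z$ bounds \eqref{appen-8}--\eqref{appen-7} follow from the same Gaussian scaling you indicate.
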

\begin{proof}
By induction, we can find a polynomial with degree $2\gamma_3+1,$
such that:
\beq\label{appen-1}
\p_z^{\gamma_3}\bigg( \f{1}{(\pi Y)^{\f{1}{2}}}\f{z}{2Y} e^{-\f{z^2}{4Y}}\bigg)=\f{1}{Y} P_{2\gamma_3+1}\bigg(\f{z}{Y^{\f{1}{2}}}\bigg)
 e^{-\f{z^2}{4Y}} z^{-\gamma_3}.
\eeq
Moreover, it follows from the basic facts:
\beq\label{appen-1.5}
\p_{y^j}\big(\f{1}{Y} \big)=-\f{1}{Y} \f{\p_{y^j}Y}{Y},  \quad \p_{y^j}\bigg(\f{z}{Y^{\f{1}{2}}}\bigg)=-\f{\p_{y^j}Y}{2Y}\f{z}{Y^{\f{1}{2}}}, \quad j=1,2
\eeq
and induction arguments that, there are some (multi-variable) polynomials $F_{l,J_{1l},J_{2l}}^{\gamma_3}, l=0,\cdots (\gamma_1+\gamma_2), J_{jl}=(\gamma_{j1},\cdots \gamma_{jl}), j=1,2$, 
such that:
\begin{align}\label{appen-2}
  &\p_{y^1}^{\gamma_1}  \p_{y^2}^{\gamma_2} \big(\f{1}{Y} P_{2\gamma_3+1}\big(\f{z}{Y^{\f{1}{2}}}\big)
 e^{-\f{z^2}{4Y}} \big)\notag\\
&= \sum_{l=0}^{\gamma_1+\gamma_2}\sum_{
{ |J_{1l}|=\gamma_1,  |J_{2l}|=\gamma_2}}\f{1}{Y} F_{l,J_{1l},J_{2l}}^{\gamma_3}\bigg(\f{z}{Y^{\f{1}{2}}}, \f{\p_{y^1}^{\gamma_{11}}\p_{y^2}^{\gamma_{21}}Y}{Y},\cdots \f{\p_{y^1}^{\gamma_{1l}}\p_{y^2}^{\gamma_{2l}}Y}{Y}\bigg) e^{-\f{z^2}{4Y}}.
\end{align}
By the definition $Y=\mu\lambda_1\overline{\Gamma}\int_{t'}^t b(\tau, y)\d \tau,$ we see that:
\beqs 
  \f{\p_{y^1}^{\gamma_{1k}}\p_{y^2}^{\gamma_{2k}}Y}{Y}=\f{\int_{t'}^t \p_{y^1}^{\gamma_{1k}}\p_{y^2}^{\gamma_{2k}}b(\tau, y)\d y}{\int_{t'}^t b(\tau, y)\d \tau}, \quad k=1,\cdots l,
\eeqs
which, by noticing \eqref{def-b-prop},
can be bounded from above by $\Lambda\big(\f{1}{c_0}, \il b \il_{\gamma_1+\gamma_2,\infty,t}\big)$ if $\gamma_{1}+\gamma_{2}\leq m-3$ and by $$\Lambda\big(\f{1}{c_0}\big)\sum_{m-2\leq k\leq |\gamma_1+\gamma_2|}\big|\f{1}{t-t'}\int_{t'}^t 
\p_y^{k}b(\tau, y)\d\tau\big|$$ if $m-2\leq \gamma_{1k}+\gamma_{2k}\leq m-1.$
We thus finish the proof of \eqref{appen-4} and \eqref{appen-5}
by gluing \eqref{appen-1} and \eqref{appen-2}. 

Let us switch to the proof of \eqref{appen-6}. By induction, one can verify that there exist  polynomials $P_{2l}, l=0,\cdots \gamma_3$ with degree $2j,$ such that: 
\beq
\begin{aligned}
    (z\p_z)^{\gamma_3}(K_{-}-K_{+})&= \f{1}{(4\pi Y)^{\f{1}{2}}}
    (z\p_z)^{\gamma_3}\big(e^{-\f{|z-z'|^2}{4Y}}-e^{-\f{|z+z'|^2}{4Y}}\big) \\
    &= \f{1}{(4\pi Y)^{\f{1}{2}}} \sum_{j=0}^{\gamma_3} (z'\p_{z'})^j \bigg(P_{2(\gamma_3-j)}\big(\f{z-z'}{Y^{\f{1}{2}}}\big)e^{-\f{|z-z'|^2}{4Y}}- P_{2(\gamma_3-j)}\big(\f{z+z'}{Y^{\f{1}{2}}}\big)e^{-\f{|z+z'|^2}{4Y}}\bigg).
\end{aligned}
\eeq
Using again the fact \eqref{appen-1.5}, on can find some (multi-variable) polynomials $F_{l, J_{1l}, J_{2l}}^{\gamma_3, j},$ such that:
\beq\label{appen-useful}
\begin{aligned}
&\cZ_1^{\gamma_1}\cZ_2^{\gamma_2} \bigg( \f{1}{(4\pi Y)^{\f{1}{2}}} P_{2(\gamma_3-j)}\big(\f{z\pm z'}{Y^{\f{1}{2}}}\big)e^{-\f{|z\pm z'|^2}{4Y}}\bigg)\\
&=\f{1}{ Y^{\f{1}{2}}} \sum_{l=0}^{\gamma_1+\gamma_2}\sum_{{ |J_{1l}|=\gamma_1,  |J_{2l}|=\gamma_2}} F_{l,J_{1l},J_{2l}}^{\gamma_3,j}\bigg(\f{z\pm z'}{Y^{\f{1}{2}}}, \f{\p_{y^1}^{\gamma_{11}}\p_{y^2}^{\gamma_{21}}Y}{Y},\cdots \f{\p_{y^1}^{\gamma_{1l}}\p_{y^2}^{\gamma_{2l}}Y}{Y}\bigg) e^{-\f{|z\pm z'|^2}{4Y}}\\
&=\colon F_{\gamma_1,\gamma_2}^{\gamma_3,j}(t,t',y, z\pm z').
\end{aligned}
\eeq
We thus finish the proof of \eqref{appen-6}. The properties \eqref{appen-8}- \eqref{appen-7} then follow from the direct computations.
\end{proof}

\begin{lem}\label{lemheatgreen}
Let $f: \mR_{+}^3\rightarrow \mR$ be the solution to the system:
\beq\label{eqheattransport}
\left\{
\begin{array}{l}
   \p_t f + a_1\p_{y^1}f+a_2\p_{y^2}f+z a_3\p_z f-\nu b\p_z^2 f=F\quad\, (t,y,z)\in [0,T]\times \mR^2\times\mR_{+}, \\[2.5pt]
f|_{z=0}=\underline{f}(t,y), \quad f|_{t=0}=f_0,
\end{array}
\right.
\eeq
where $(a,b)=(a_1, a_2, a_3, b)=(a_1, a_2, a_3, b)(t,y),$  $b(t,y)>c_0,$ for all $(t,y)\in [0,T]\times \mR^2$ and the constant $\nu>0$ is a placeholder of $\mu$ or $\kpa.$ Using the notation \eqref{newinfty-time},
the following uniform in $\nu$ estimates hold: for any $0<t\leq T,$
\begin{align}\label{infty1}
\il  f\il_{k,\infty,t}\lesssim \Lambda_{a,b}^k \bigg(\|f_0\|_{k,\infty,t}+|\underline{f}|_{k,\infty,t}+
T {\Lambda}_{a, \pt b}^k \il f\il_{k,\infty,t}+ T^{\f{1}{2}}\big(\izt \|  F(s)\|_{k,\infty}^2\d s\big)^{\f{1}{2}}\bigg), 
\end{align}
where we denote for simplicity 
\beq\label{deflambda-ab}
\Lambda_{a,b}^k=\lab |(T(a_1,a_2, a_3), b)|_{k,\infty,t}\big) e^{4(k+1) T|a_3|_{0,\infty,t}}, \quad 
{\Lambda}_{a, \pt b}^k=\lab |(a_1, a_2,a_3,\p_t b|_{k,\infty,t} \big) .
\eeq
Moreover, if $F=F_1+\nu \p_z F_2,$ then for any $2<p\leq +\infty,$
\beq\label{infty2}
\begin{aligned}
\il f\il_{k,\infty,t}\lesssim_p \Lambda_{a,b}^k &\bigg(\|f_0\|_{k,\infty}+|\underline{f}|_{k,\infty,t}+
+T{\Lambda}_{a,\pt b}^k \il f\il_{k,\infty,t}+
T^{\f{1}{2}}\big(\izt \|  F_1(s)\|_{k,\infty}^2\d s\big)^{\f{1}{2}}\\
&\qquad\qquad\qquad\qquad+
T^{\f{p-2}{2p}}\big( \izt \| \nu^{\f{1}{2}}F_2(s)\|_{k,\infty}^p\d s\big)^{\f{1}{p}}   \bigg).
\end{aligned}
\eeq
Finally, if $\underline{f}=0,$ it holds also that for $\alpha\geq \f{1}{2},$
\begin{align}\label{infty3}
    \nu^{\alpha}\il\p_z f\il_{k,\infty,t}\lesssim 
    \Lambda_{a,b}^k  \bigg(\nu^{\alpha}\|f_0\|_{k,\infty}+T^{\f{1}{2}}{\Lambda}_{a, \pt b}^k\nu^{\alpha-\f{1}{2}} \il (f, F)\il_{k,\infty,t}\bigg),
\end{align}
and for $2\leq q<+\infty,$
\begin{align}\label{infty4}
 \big( \int_0^t \|\nu^{\f{1}{2}}\p_z f\|_{k,\infty}^q\d s\big)^{\f{1}{q}}\lesssim_q T^{\f{1}{q}} \Lambda_{a,b}^k 
 \bigg(\|\nu^{\f{1}{2}} f_0\|_{k,\infty}+
 {\Lambda}_{a, \pt b}^k\big(\izt \| (f, F)(s)\|_{k,\infty}^2\d s\big)^{\f{1}{2}} \bigg).
\end{align}
\end{lem}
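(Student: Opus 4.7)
My plan is to reduce \eqref{eqheattransport} to a pure half-space heat equation via characteristics, then use the explicit Green's function representation and prove each of the four estimates through kernel convolution bounds.

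First, I would absorb the first-order part of the operator into a change of variables. Let $Y(t;\eta)$ solve $\dot Y=(a_1,a_2)(t,Y)$ with $Y(0;\eta)=\eta$, and set $Z(t;\eta,\zeta)=\zeta\,e^{\int_0^t a_3(\tau,Y(\tau;\eta))\,d\tau}$, which preserves the boundary $\{z=0\}$. The pullback $\hat f(t,\eta,\zeta)=f(t,Y(t;\eta),Z(t;\eta,\zeta))$ satisfies the pure diffusion problem $\p_t \hat f-\nu\hat b\,\p_\zeta^2 \hat f=\hat F$ with $\hat f|_{\zeta=0}=\underline{f}\circ Y$ and $\hat f|_{t=0}=f_0$, where $\hat b=b(t,Y)\,e^{-2\int_0^t a_3}$ remains bounded between $c_0 e^{-2T|a_3|_{0,\infty,t}}$ and $c_0^{-1}e^{2T|a_3|_{0,\infty,t}}$. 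Derivatives in the original variables transfer to derivatives in $(\eta,\zeta)$ via the flow; the exponential factor $e^{4(k+1)T|a_3|_{0,\infty,t}}$ in $\Lambda_{a,b}^k$ arises precisely from differentiating $Z$ up to order $k$.

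Next, for the half-space heat equation with Dirichlet data, I would use the explicit representation
\begin{align*}
\hat f(t,\eta,\zeta)&=\int_{\mathbb{R}_+}(K_--K_+)\bigl(Y(t,0;\eta),\zeta,\zeta'\bigr)f_0(\eta,\zeta')\,d\zeta'+\int_0^t\!\!\!\int_{\mathbb{R}_+}(K_--K_+)(Y,\zeta,\zeta')\hat F(s,\cdot,\zeta')\,d\zeta' ds\\
&\quad+\nu\lambda_1\overline{\Gamma}\int_0^t\frac{\zeta\,\hat b(s,\eta)}{(4\pi Y)^{1/2}\,Y}e^{-\zeta^2/4Y}(\underline f\circ Y)(s,\eta)\,ds,
\end{align*}
with $Y(t,s;\eta)=\nu\int_s^t \hat b$, and then bound each contribution in the $\|\cdot\|_{k,\infty}$ norm. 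The key kernel estimates are $\|G(t,s;\cdot,\cdot)\|_{L^\infty_\zeta L^1_{\zeta'}}\lesssim 1$, $\|\nu^{1/2}\p_\zeta G\|_{L^\infty_\zeta L^1_{\zeta'}}\lesssim (t-s)^{-1/2}$, and $\int_0^t\int_0^{\infty}\zeta\,Y^{-3/2}e^{-\zeta^2/4Y}\,ds\lesssim 1$ uniformly in $\zeta$ (which is a refined form of Lemma \ref{pre-zeta1}). For \eqref{infty1} and \eqref{infty3} I extract $T^{1/2}$ by Cauchy--Schwarz in time against the source; for \eqref{infty2} I integrate by parts in $\zeta'$ to transfer $\p_\zeta$ from $F_2$ onto $G$, picking up $\nu^{1/2}(t-s)^{-1/2}$, which lies in $L^{p/(p-1)}_t$ for any $p>2$ so that Hölder closes the bound; and \eqref{infty4} follows from the Young-in-time inequality applied to the $L^q_t$-integrable kernel $\nu^{1/2}\|\p_\zeta G\|_{L^1_{\zeta'}}$ for $q<\infty$.

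The main obstacle is commuting the conormal vector fields $\cZ_j$ past the equation up to order $k$, which produces commutators with $\hat b\,\p_\zeta^2$ and with the pulled-back transport. The identity $[\cZ_3,\p_\zeta]=-(1+\zeta)^{-2}\p_\zeta$ gives nice commutators on the diffusion side, but at high order these generate terms involving $\p_\zeta f$ itself; these are absorbed inductively by the $\nu^{1/2}\p_\zeta f$ quantity controlled in \eqref{infty3}--\eqref{infty4}. The commutators with the transport part involve time derivatives of the flow and of $\hat b$ and feed into the $T\,\Lambda_{a,\p_t b}^k\,\il f\il_{k,\infty,t}$ term on the right of \eqref{infty1}--\eqref{infty2}. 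Once all commutators are absorbed either in this feedback term or into the initial/boundary/source contributions via induction on $|\gamma|$, the kernel bounds above simultaneously close the four estimates.
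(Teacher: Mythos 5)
Your overall route — reduce to a half-space heat problem by following the characteristics of the transport part, write the Green's function, and estimate each contribution by kernel bounds — matches the paper's, and your identification of where the exponential $e^{4(k+1)T|a_3|_{0,\infty,t}}$ comes from, the Cauchy--Schwarz/H\"older steps, and the transfer of $\p_z$ onto the kernel by integration by parts for \eqref{infty2} are all in agreement. Your stray $\lambda_1\overline{\Gamma}$ factors in the boundary-term integral are a notational slip (they do not belong in this general lemma) but harmless.

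There is, however, a genuine problem in the way you propose to handle the spatial conormal derivatives. You write that you will \emph{commute the conormal vector fields past the equation}, that this produces commutators with $\hat b\,\p_\zeta^2$, and that the resulting terms ``are absorbed inductively by the $\nu^{1/2}\p_\zeta f$ quantity controlled in \eqref{infty3}--\eqref{infty4}.'' This cannot work as stated. A direct computation gives $[\phi\p_z,\p_z^2]=-\phi''\p_z-2\phi'\p_z^2$ (similarly $[z\p_z,\p_z^2]=-2\p_z^2$), so after multiplying by $\nu b$ the commutator contains $\nu b\p_z^2 f$ at lower conormal order. This is one full factor of $\nu^{1/2}\p_z$ \emph{more} than the quantity controlled in \eqref{infty3}--\eqref{infty4}, so it cannot be absorbed there. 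The only way to eliminate $\nu b\p_z^2 f$ is to substitute from the equation, giving $\nu b\p_z^2 f=\p_t f+a\cdot\na f-F$. But the norm $\il\cdot\il_{k,\infty,t}$ is built from $\ep\pt$-derivatives, so $\p_t f=\ep^{-1}(\ep\pt f)$ costs an uncontrolled factor $\ep^{-1}$; the resulting estimate is not uniform in $\ep$, whereas the lemma must be. In addition, \eqref{infty3}--\eqref{infty4} are proved only under $\underline f=0$, so invoking them inside the proof of \eqref{infty1}--\eqref{infty2} (where $\underline f\neq 0$) is circular.

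The paper avoids this issue by not commuting the spatial vector fields with the equation at all. Instead, it applies $\cZ^\gamma=\p_{y^1}^{\gamma_1}\p_{y^2}^{\gamma_2}(z\p_z)^{\gamma_3}$ directly to the explicit Green's function representation and uses Lemma \ref{pre-zeta1} (properties \eqref{appen-4}, \eqref{appen-6}--\eqref{appen-7.5}, suitably modified for the present $Y$) to show that $\cZ^\gamma$ hitting the kernel produces another kernel of the same type with the correct $\Lambda_{a,b}^k$ factor, so the spatial-only estimates \eqref{infty1-1}--\eqref{infty4-1} follow from kernel bounds with no commutator at all. Only for the time derivatives does the paper commute: applying $(\ep\pt)^j$ to the equation produces $[(\ep\pt)^j,b]\p_z^2 f$, but each $\ep\pt$ landing on $b$ supplies a free factor of $\ep$, so the problematic term is $\ep\nu\p_z^2 f$, which \emph{can} be uniformly rewritten from the equation as $\f{1}{b}(\ep\pt f)+\ep(a\cdot\na f)-\ep F/b$, yielding precisely the feedback term $T\Lambda_{a,\pt b}^k\il f\il_{k,\infty,t}$. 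You should rewrite the conormal-commutation paragraph along these lines: differentiate the kernel for $\cZ^\gamma$, commute with the equation only for $(\ep\pt)^j$, and drop the claimed absorption by $\nu^{1/2}\p_z f$.
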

\begin{proof}
We first show that the solution to \eqref{eqheattransport} can be expressed in the following way:
\beq\label{green-heat}
\begin{aligned}
f(t,y,z)&=\int_{\mR_{+}} ({K}_{-}-{K}_{+})(t, 0, y, z, z') f_0\big(X(0,t,y), z' e^{-\Theta(t,0, y)}\big)\,\d z'\\
&\quad + 2 \int_0^t 
\p_{z}{K}_{+}(t,t',y, z, 0) (b\underline{f})\big(t', X(t',t, y)\big) e^{2\Theta(t,t',y)} \d t'\\
&\,+\izt \int_{\mR_{+}} ({K}_{-}-{K}_{+})(t, t', y, z, z')F(t', X(t',t, y), z'e^{-\Theta(t,t',y)})\d z' \d t'
\end{aligned}
\eeq
where 
\begin{align}\label{defY-1}
 & K_{\pm}(t,t', y, z,z' )=\f{1}{(4\pi {Y})^{\f{1}{2}}}e^{-\f{|z\pm z'|^2}{4{Y}}}\notag\\
 \Theta(t,t',y)=\int_{\tau}^t a_3 &(\tau, X(\tau, t, y))\d \tau,\quad Y(t,t',y)=\nu\int_{t'}^t  b(\tau, X(\tau, t, y))e^{2\Theta(t,\tau,y)}\d\tau,
\end{align}
and $X:\mR\times\mR\times\mR^2\rightarrow \mR^2$ is  the characteristic associated to the vector $(a_1,a_2)^t:$
\beqs 
\pt X(t,\tau, y) =(a_1, a_2)(t,X(t,\tau, y)), \quad X(\tau,\tau,y )=y.
\eeqs
Let 
$$ \tilde{b}(\tt,y)=b(\tt,X(\tt,0, y)), \quad \tilde{\Theta}(\tt,y)= \int_0^{\tt} a_3(\tau, X(\tau,0, y))\,\d \tau, \quad z=\tz \exp (-{\tilde{\Theta}(\tt,y)}), \quad t=\nu\int_0^{\tilde{t}}\tilde{b} e^{-2\tilde{\Theta}}(\tau, y)\d \tau$$
It follows from the direct computation that
the function $\tilde{f}(t,y, z)=\colon f(\tt, X(\tt, 0, y), \tz)$
solves the heat equation with constant variable in the half line ($y$ is now considered to be a parameter):
    \beqs
\left\{
\begin{array}{l}
   \p_t \tilde{f} -\p_z^2 \tilde{f}=\big((\nu\tilde{b})^{-1}e^{2\tilde{\Theta}}\big)(\tt,y)F(\tt, X(\tt, 0, y), \tz), 
   \\[2.5pt]
\tilde{f}|_{z=0}(t,y)=\underline{f}(\tt,X(\tt,0, y)), \quad \tilde{f}|_{t=0}(y,z)=f_0(y, z),
\end{array}
\right.
\eeqs
which can be solved explicitly:
\begin{align*}
    \tilde{f}(t, y, z)&=\int_{\mR_{+}} (E_{-}-E_{+})(t,0 ,y,z,z')f_0(y, z') \d z'+2\izt \p_{z'}E_{+}(t, t',y, z, 0) \underline{f}(\tt', X(\tt',0, y)) \d t'\\
 & \quad + \izt\int_{\mR_{+}} (E_{-}-E_{+})  (t,t' ,y,z,z')F(\tt', X(\tt', 0, y), \tz')\d z'\d t'
\end{align*}
where $E_{\pm}(t,t',y,z,z')=\f{1}{(4\pi(t-t'))^{\f{1}{2}}}e^{-\f{|z\pm z'|^2}{4(t-t')}}.$
Changing back to the variable $(\tt, X(\tt,0, y), \tz)$ and 
still denoting it by $(t, X(t,0, y), z),$ we find that:
\beq\label{green-interm}
\begin{aligned}
&f(t,X(t,0, y), z)=\int_{\mR_{+}} (\tilde{K}_{-}-\tilde{K}_{+})(t, 0, y, z, z') f_0(y, z' e^{-\tilde{\Theta}(t,0,y)})\,\d z'\\
&\quad + 2 \nu\int_0^t 
\p_{z} \tilde{K}_{+}(t,t',y, z, 0) \underline{f}(t', X(t',0, y)) \tb(t',y) e^{2\tilde{\Theta}(t,t',y)} \d t'\\
&\, +\izt \int_{\mR_{+}} (\tilde{K}_{-}-\tilde{K}_{+})(t, t', y, z, z') F(t', X(t',0, y), z' e^{-\tilde{\Theta}(t,t',y)}) 
\,\d z'\d t'
\end{aligned}
\eeq
where 
\beqs 
\tilde{\Theta}(t,t',y)=\tilde{\Theta}(t,y)-\tilde{\Theta}(t',y)
\eeqs
and
\begin{align*}
\tilde{K}_{\pm}(t, t', y, z, z')=\f{1}{(4\pi \tilde{Y})^{\f{1}{2}}}e^{-\f{|z\pm z'|^2}{4\tilde{Y}}}, \text{ with }
\tilde{Y}(t, t',y)=\nu\int_{t'}^t \tb(\tau,y) e^{2\tilde{\Theta}(t,\tau,y)}\d \tau.
\end{align*}
The explicit formula \eqref{green-heat} then follows readily from \eqref{green-interm} by changing variable $X(t,0,y)$ to $y.$ 

We are now in position to sketch the proof of  \eqref{infty1}-\eqref{infty3}. Changing the definition of $Y$ in \eqref{defY-0} by \eqref{defY-1}, we can check that the properties \eqref{appen-4}, \eqref{appen-6}-\eqref{appen-7.5} still hold after minor revision-just replacing $\lab | b|_{\gamma_1+\gamma_2,\infty,t}\big)$ by $\Lambda_{a,b}^{\gamma_1+\gamma_2}$ defined in \eqref{deflambda-ab}. Indeed, the identities \eqref{appen-1}-\eqref{appen-useful} used in the derivation of these properties remain formally unchanged. However, the  properties for $Y$ used to derive \eqref{appen-4}, \eqref{appen-6}-\eqref{appen-7.5} from \eqref{appen-1}-\eqref{appen-useful}  are now:
\begin{align*}
    Y
    \gtrsim \f{\nu}{c_0} e^{-2T|a_3|_{0,\infty,t}}, \quad  \p_{y^1}^{\gamma_1}\p_{y^2}^{\gamma_1} Y \lesssim 
\lab |T(a_1,a_2,a_3),b|_{\gamma_1+\gamma_2,\infty,t}\big) e^{2 T|a_3|_{0,\infty,t}}.    
\end{align*}
These properties in hand, one can verify that, by using 
the convolution inequality in $z$ and $t$ variables, 
for $\cZ^{\gamma}=\p_{y^1}^{\gamma_1}\p_{y^2}^{\gamma_2}(z\p_z)^{\gamma_3},$ 
\begin{align}\label{infty1-1}
\sum_{|\gamma|\leq k}\il \cZ^{\gamma} f\il_{0,\infty,t}\lesssim \Lambda_{a,b}^k \bigg(\|f_0\|_{k,\infty}+|\underline{f}|_{k,\infty,t}+
T^{\f{1}{2}}\big(\izt \|  F(s)\|_{k,\infty}^2\d s\big)^{\f{1}{2}} \bigg)
\end{align}
and if $F=F_1+\nu \p_z F_2,$
\beq
\begin{aligned}
\sum_{|\gamma|\leq k}\il \cZ^{\gamma} f\il_{0,\infty}\lesssim_p \Lambda_{a,b}^k &\bigg(\|f_0\|_{k,\infty,t}+|\underline{f}|_{k,\infty,t}+ T^{\f{1}{2}}\big(\izt \|  F_1(s)\|_{k,\infty}^2\d s\big)^{\f{1}{2}} 
\\
&\qquad+
T^{\f{p-2}{2p}} \big(\izt \| \nu^{\f{1}{2}}F_2(s)\|_{k,\infty}^p\d s\big)^{\f{1}{p}}   \bigg), \quad \forall\, 2< p\leq +\infty,
\end{aligned}
\eeq
and if $\underline{f}=0,$ 
\begin{align}\label{infty3-1}
   \sum_{|\gamma|\leq k} \nu^{\alpha}\il\cZ^{\gamma}\p_z f\il_{k,\infty,t}\lesssim 
    \Lambda_{a,b}^k  \bigg(\nu^{\alpha}\|f_0\|_{k,\infty}+T^{\f{1}{2}}\nu^{\alpha-\f{1}{2}} \il F\il_{k,\infty,t}\bigg), \, \alpha\geq \f{1}{2},
\end{align}
\begin{align}\label{infty4-1}
 \big( \int_0^t \big(\sum_{|\gamma|\leq k}\|\nu^{\f{1}{2}}\cZ^{\gamma}\p_z f\|_{0,\infty}\big)^q\d s\big)^{\f{1}{q}}\lesssim_q T^{\f{1}{q}} \Lambda_{a,b}^k 
 \bigg(\|\nu^{\f{1}{2}} f_0\|_{k,\infty}+
 \big(\izt \|  F(s)\|_{k,\infty}^2\d s\big)^{\f{1}{2}} \bigg),\, 2\leq q <+\infty.
\end{align}
To show \eqref{infty1}-\eqref{infty3}, it remains to control the weighted time derivatives. To do so, we 
apply $(\ep\pt)^j, 1\leq j\leq k$ on the equation \eqref{eqheattransport} and obtain the following equation for $f^j=\colon (\ep\pt)^j f:$
\begin{align*}
\left\{
\begin{array}{l}
   \p_t f^j + a_1\p_{y^1}f^j+a_2\p_{y^2}f^j+z a_3\p_z f^j-\nu b\p_z^2 f^j=F^j\quad\, (t,y,z)\in [0,T]\times \mR^2\times\mR_{+}, \\[2.5pt]
f|_{z=0}=\underline{f}(t,y), \quad f|_{t=0}=f_0,
\end{array}
\right.
\end{align*}
where
\begin{align*}
  F^j=(\ep\pt)^j F-\sum_{l=1}^2 [(\ep\pt)^j,a_l]\p_{y^l}f-[(\ep\pt)^j,a_3](z\p_{z} f)+\nu[(\ep\pt)^j, b]\p_{z}^2 f.
\end{align*}
The term $\ep\nu\p_z^2 f$ can be expressed by using the equation \eqref{eqheattransport} as:
\beqs 
\f{1}{b}(\ep\pt f)+\ep (a_1\p_{y^1}f+a_2\p_{y^2}f+z a_3\p_z f),
\eeqs
which allows us to bound the term $F^j$ as:
\begin{align*}
  \sum_{|\gamma|\leq k-j}\|\cZ^{\gamma}F^j(s)\|_{0,\infty}\lesssim  \Lambda_{a,b}^k \big(\Lambda_{a, \pt b}^k  \| f(s)\|_{k,\infty}+\| F(s)\|_{k,\infty}  \big), \, \forall \,
  0<s\leq t.
\end{align*}
Applying \eqref{infty1-1}-\eqref{infty4-1}, we achieve eventually \eqref{infty1}-\eqref{infty4}.
\end{proof}
\textbf{Data Availability Statement}
No datasets were generated or analyzed during the current study. 
 
\textbf{Conflicts of Interest Statement:}
The author declares that he has no conflicts of interest.
\section*{Acknowledgement}
The work of the author is supported by the ANR LabEx CIMI (grant ANR-11-LABX-0040) within the French State Programme “Investissement d’Avenir”. He wishes to thank Professors Nader Masmoudi and Fr\'ed\'eric  Rousset for the valuable discussions and suggestions.
\bibliographystyle{plain}
\nocite{*}
\bibliography{referencencns}

\end{document}